\documentclass[Threeside,11pt]{article}
\usepackage{etoolbox}
\makeatletter
\patchcmd{\thebibliography}
{\list}
{\small          
	\setstretch{1.2}
	\list}
{}{}
\makeatother
\usepackage{setspace}
\usepackage{tikz}
\usepackage{float}
\definecolor{myPurple}{RGB}{109,90,207}
\definecolor{myGreen}{RGB}{4,130,67}
\definecolor{myGold}{RGB}{253,177,71}
\definecolor{myBurgundy}{RGB}{63,1,44}
\definecolor{myTeal}{RGB}{8,119,127}
\definecolor{myCream}{RGB}{255,216,177}
\definecolor{myOrange}{RGB}{225,119,1}
\usetikzlibrary{arrows.meta,bending,calc}
\tikzset{
	ball3Donly/.style={
		circle, minimum size=1.4cm,
		shading=ball,
		ball color=myPurple,
		draw=myPurple
	}
}
\pgfmathsetmacro{\wNear}{2.0pt}
\pgfmathsetmacro{\wMid}{1.3pt}
\pgfmathsetmacro{\wFar}{0.7pt}
\pgfmathsetmacro{\bend}{6mm}
\usepackage[latin1]{inputenc}
\usepackage[english]{babel}
\usepackage{amsmath}
\usepackage{amsthm}
\IfFileExists{algorithm.sty}{\usepackage{algorithm}}{}
\usepackage{cite}
\usepackage{amsfonts}
\usepackage{amssymb}
\usepackage{mathrsfs}
\usepackage{graphicx}
\usepackage{colortbl,dcolumn}
\usepackage{marvosym}
\IfFileExists{ifsym.sty}{\usepackage{ifsym}}{}
\usepackage{paralist}
\usepackage{xcolor}
\usepackage{subcaption}
\usepackage{adjustbox}
\usepackage{enumitem}
\usepackage{booktabs,tabularx}
\usepackage[colorlinks,citecolor=blue,
linkcolor=blue,hypertexnames=false]{hyperref} 
\allowdisplaybreaks
\IfFileExists{mathx10.tfm}{
\DeclareFontFamily{U}{mathx}{\hyphenchar\font45}
\DeclareFontShape{U}{mathx}{m}{n}{
	<5> <6> <7> <8> <9> <10>
	<10.95> <12> <14.4> <17.28> <20.74> <24.88>
	mathx10
}{}
\DeclareSymbolFont{mathx}{U}{mathx}{m}{n}
\DeclareMathAccent{\widecheck}{0}{mathx}{"71}
}{\let\widecheck\check}
\usepackage{graphicx}
\usepackage{caption}
\usepackage{amsthm}

\newtheorem{theorem}{Theorem}[section]
\newtheorem{proposition}[theorem]{Proposition}
\newtheorem{lemma}[theorem]{Lemma}
\newtheorem{corollary}[theorem]{Corollary}
\newtheorem{fact}{Fact}

\theoremstyle{definition}
\newtheorem{definition}[theorem]{Definition}
\theoremstyle{remark}
\newtheorem{remark}[theorem]{Remark}

\numberwithin{equation}{section}

\newcommand{\R}{\mathbb R}
\newcommand{\C}{\mathbb C}

\allowdisplaybreaks
\theoremstyle{remark}

\newcommand{\Rp}{\mathbb{R}_{>0}}
\newcommand{\ord}{\operatorname{ord}}
\newcommand{\code}[1]{\begingroup\def\_{\textunderscore\allowbreak}\texttt{#1}\endgroup}
\newenvironment{romanlist}{\begin{enumerate}%
  }{\end{enumerate}}
\newenvironment{alphalist}{\begin{enumerate}%
  }{\end{enumerate}}
\hypersetup{pdftitle={Finiteness of planar central configurations of five bodies for all positive masses: On Smale's sixth problem},pdfauthor={Zhe Chen and Zhicheng Tong},pdfsubject={Central configurations and Smale's sixth problem}}

\begin{document}
\title{{\sl Finiteness of planar central configurations of five
bodies for all positive masses: On Smale's sixth problem}}
\author{Zhe Chen\thanks{School of Wind Energy, Technical University of Denmark,
2800 Kongens Lyngby, Lyngby-Taarb\ae k Municipality, Copenhagen, Denmark.
Email: \url{hitchenzhe@gmail.com}}
\and
Zhicheng Tong\thanks{School of Mathematics, Jilin University,
Changchun 130012, P. R. China. Email: \url{tongzc25@jlu.edu.cn}}}
\date{}
\maketitle

\begin{abstract}
{
Combining local boundary analysis, algebraic curves and the computational
statements specified in the appendix, we prove that the planar Newtonian
five-body problem has finitely many central configurations up to rotation,
translation and scaling for every choice of positive masses. This gives the
five-body statement of Smale's sixth problem. We use the singular-sequence
classification of Albouy and Kaloshin and resolve the corresponding boundary
equations through weighted blow-up charts, rank calculations and local
compatibility conditions. For the two remaining diagrams, we prove that the
product $\Delta$ of all squared mutual distances vanishes. On a hypothetical
solution curve containing a positive arc, a pole of a four-product therefore
gives a zero of $\Delta$. A pole of $\Delta$ elsewhere on the completed curve,
together with $\Delta^4=\prod_{i<j}p_{ij}$, then contradicts the same local
vanishing result. The computational statements for Diagrams~2, 3 and~6 involve
exact algebra, interval arithmetic and ball arithmetic.
}
\\
        \\
        {\bf Keywords:} {Central configurations, planar five-body problem,
        Smale's sixth problem, singular sequences, algebraic curves, computer-assisted proof}
        \vspace{2mm}
        \\
        {\bf2020 Mathematics Subject Classification:} {70F10, 
        70F15, 
        70G55, 
        14H05 
}
\end{abstract}

\tableofcontents

\section{Introduction}\label{sec:intro}

The finiteness of central configurations asks whether fixed positive masses admit
only finitely many shapes generating relative equilibria of the Newtonian
$n$-body problem. We study the planar five-body case. Our argument combines the
singular-sequence classification of Albouy and Kaloshin with local analysis of
the normalized equations and a product identity on an algebraic curve.

For masses $m_1,\dots,m_n>0$, a collision-free configuration
$q=(q_1,\dots,q_n)\in(\R^2)^n$ is a \emph{central configuration} if
\[
 \lambda(q_k-c)=\sum_{l\ne k}m_l\frac{q_l-q_k}{|q_l-q_k|^3},
 \qquad k=1,\dots,n,\qquad
 c:=\frac{\sum_{j=1}^n m_jq_j}{\sum_{j=1}^n m_j},
\]
for some $\lambda\in\R$. We identify configurations related by translation,
rotation and positive scaling. The question is whether the resulting set is
finite for every fixed positive mass vector.

\subsection*{Background and development of the problem}
Central configurations determine motions of constant shape: rigid rotation
gives relative equilibria, while a time-dependent scale and rotation give
homographic motions. Their classification therefore connects the geometry of
the configuration space with the dynamics of the Newtonian equations; see
Wintner~\cite{Wintner}. This connection also underlies the stability analysis of
elliptic relative equilibria, for example for the $(1+n)$-gon configurations
studied by Hu, Long and Ou~\cite{HLO20}.

The classical solutions of Euler and Lagrange give the five classes for three
bodies. Moulton~\cite{Moulton} established the exact number $n!/2$ of collinear
classes for arbitrary positive masses. The general finiteness question appears
in Wintner~\cite{Wintner} and became Problem~6 in Smale's list of mathematical
problems for the twenty-first century~\cite{Smale}. Hampton and
Moeckel~\cite{HM} proved finiteness for four positive masses. Albouy and
Kaloshin~\cite{AK} subsequently introduced an analysis of singular sequences
that gives another proof for four bodies and a five-body theorem with an
algebraic exceptional set. The four-body theory was further extended to
arbitrary nonzero real masses by Yu and Zhu~\cite{YZ}.

After fixing the moment of inertia, Shub~\cite{Shub71} proved that central
configurations for fixed positive masses stay away from collisions.
Using analytic continuation, Xia~\cite{Xia91} obtained exact counts for
certain open sets of positive mass vectors. In a complementary direction,
Hampton and Jensen~\cite{HJ15} proved that a fixed planar configuration of
$n-1$ masses admits only finitely many extensions to a central configuration
after a prescribed additional mass is added. Further questions on central
configurations, symmetry, stability and singularities are collected
in~\cite{ACS12}.

For five bodies, the theorem of Albouy and Kaloshin states that there is a
closed algebraic subset $A$ of codimension two in the mass space such that
\[
 m\in\Rp^5\setminus A
 \quad\Longrightarrow\quad
 \#\{\text{positive normalized central configurations at }m\}<\infty.
\]
Their method associates one of sixteen two-coloured diagrams with each singular
sequence of complex normalized solutions. The leading equations impose mass
relations, and two singular limits on the same algebraic curve yield the
codimension-two restriction. Jensen and Leykin~\cite{JL} developed a tropical
geometric method and completed the computation for $n\le5$, recovering generic
finiteness. Chang and Chen~\cite{CC1,CC2} organized the singular-sequence method
into symbolic procedures for determining diagrams, orders and mass relations,
with applications to the planar six-body problem.

In Part~I of their study of the planar six-body problem, the
authors~\cite{CT26} establish a reduction from an infinite family of
positive normalized central configurations to a singular germ with an
unbounded seven-product. Their first-limit diagram analysis confines
the mass vector to a proper algebraic exceptional set. This work
extends the curve reduction and local singular-limit methods used
here to the six-body setting.

Complementary approaches describe particular mass vectors and geometric
classes. Lee and Santoprete~\cite{LS} classified the isolated central
configurations of five equal masses using homotopy continuation and interval
verification. Moczurad and Zgliczy\'nski~\cite{MZ19} obtained complete
computer-assisted lists for $n=5,6,7$ equal masses; their later work~\cite{MZ26}
gives exact counts for selected five-body mass vectors on exceptional loci.
Chen and Hsiao~\cite{CH18} derived geometric restrictions on strictly convex
five-body central configurations, while Deng and Hampton~\cite{DH} gave a
computer-assisted proof of finiteness within the equilateral pentagonal class. These results develop the
mass-specific and geometric aspects of the same classification problem.

The continuing interest in central configurations also encompasses their
geometry, symmetry and uniqueness. Albouy, Fu and Sun~\cite{AFS08} proved
that a convex four-body central configuration is symmetric across a diagonal
if and only if the two masses on the other diagonal are equal.
Santoprete~\cite{San21a,San21b} proved that each of the
co-circular and trapezoidal four-body classes contains at most one central
configuration for each cyclic ordering of the masses. Hampton~\cite{Ham05}
constructed stacked planar five-body central configurations containing a
Lagrange equilateral three-body subconfiguration. Chen and Hsiao~\cite{CH12}
studied convex configurations that are not strictly convex. Sun, Xie and
You~\cite{SXY23} gave a computer-assisted proof of uniqueness for convex
four-body configurations with a prescribed order on a closed domain of
positive mass vectors. Recent work examines four-body kite
configurations~\cite{RobK25}, symmetric concave four-body configurations
with two pairs of equal masses~\cite{LX26}, rhomboidal five-body
configurations~\cite{CCRh26}, and convex pentagons with three right interior
angles~\cite{FGMS26}. Tang and Zhu~\cite{TZ24} derive nonexistence criteria
for centered co-circular configurations for several classes of unequal
masses under power-law potentials. These geometric and symmetry questions
describe how central configurations are organized within the solution set.

The local structure of this set is another active direction. Hampton~\cite{Hamp24}
introduced twist-vector coordinates on the tangent space to planar
configurations and applied them to four-body central configurations;
Albouy and Sun~\cite{AS25} study the eigenvalues associated with a central
configuration, and
Sun, Xie and You~\cite{SXY25} investigate degeneracy of planar central
configurations. Natrup, Wang and Wang~\cite{NWW25} derive upper and lower
bounds for the number of nondegenerate planar central configurations for
homogeneous potentials.
Together, these studies connect the defining equations and their linearization
with the geometry and finiteness of their solutions.

The dynamics generated by central configurations provides a further source of
interest. Quantitative linear stability of planar elliptic relative equilibria
is studied by Hu, Ou and Sun~\cite{HOS25}, while the $(1+7)$-gon case and
spectral instability of regular $n$-gon elliptic relative equilibria are treated
in~\cite{OS23,OW25}. Hu, Ou and Tang~\cite{HOT23} use index theory for spatial
elliptic relative equilibria. Related variational questions include the Morse
index of homothetic motions~\cite{OP25} and index theory for collision,
parabolic and hyperbolic solutions~\cite{HOY21}. These developments connect
central configurations with stability questions and variational methods in
celestial mechanics.

Recent preprints of Tomar~\cite{FiveT} and Liu and Liu~\cite{LL26planar}
address finiteness for all positive masses in the planar five-body problem.
Both use the singular-sequence framework of Albouy and Kaloshin. Tomar combines
valuation balance with exact rational certificates and separate counts for
special mass vectors. Liu and Liu combine a lower bound for the potential at
physical configurations with asymptotic estimates and local analytic
reductions. For central configurations of affine dimension $n-2$,
Moeckel~\cite{Moe01} proved generic finiteness. In the spatial five-body
problem, Hampton and Jensen~\cite{HJ} obtained finiteness outside explicitly
given exceptional mass conditions. For configurations of exact affine dimension
three, Liu and Liu~\cite{LL26dziobek} use the Dziobek equations and local analysis of zeros and
poles on a compact complex curve.

\subsection*{Main result}
We obtain the following statement. Positive normalized configurations are
defined by the polynomial system in Section~\ref{sec:ak}.

\begin{theorem}\label{thm:main}
Assume the computational statements listed in Appendix~\ref{app:verification}.
For every $(m_1,\dots,m_5)\in\Rp^5$ there are finitely many positive normalized
central configurations of the planar five-body problem. Equivalently, the
planar five-body problem has finitely many central configurations up to
rotation, translation and scaling.
\end{theorem}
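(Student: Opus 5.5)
We argue by contradiction, following the Albouy--Kaloshin framework. Suppose that for some fixed $m\in\Rp^5$ the set of positive normalized central configurations is infinite. It is the real positive part of a complex algebraic variety defined by the normalized polynomial system of Section~\ref{sec:ak}. An infinite semialgebraic set contains a real one-dimensional piece, so there is an irreducible complex algebraic curve $\mathcal C$ in the solution variety that contains a positive real arc. The key reduction is that on such a curve some polynomial invariant must be unbounded. Following Albouy--Kaloshin, we use the products $p_{ij}$ of normalized position and distance variables (in particular the four-products formed from a subset of bodies). If all of these were bounded on $\mathcal C$, then $\mathcal C$ would be a compact affine curve, hence a point. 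So on the normalization $\overline{\mathcal C}$ of the projective closure, some such function has a pole at some point $P$. Any sequence tending to $P$ is a singular sequence, and by the Albouy--Kaloshin classification its limit is described by one of the sixteen two-coloured diagrams.

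The first main step is to eliminate diagrams one at a time. Near $P$ we choose a local uniformizer $t$ and write each variable as a Puiseux/Laurent series. The leading-order equations then give a weighted-homogeneous system indexed by the diagram. For most diagrams we will show that these leading equations, together with the real positive masses, are incompatible. The AK mass relations either force a sum of masses in a subset to vanish, which is impossible for positive masses, or force a constraint we rule out with higher-order terms. When the leading terms alone do not decide the question, we pass to weighted blow-up charts adapted to the diagram's orders. There we compute ranks of the Jacobian of the next-order equations and check local compatibility conditions, showing that no germ of a curve can realize the diagram for positive $m$. Diagrams~2, 3 and~6, where the algebra is heavy, are handled by the exact-algebra and interval/ball-arithmetic statements of Appendix~\ref{app:verification}, which we assume.

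The second main step, and the one I expect to be the real obstacle, is the two remaining diagrams. These cannot be excluded locally, because they do impose mass relations satisfiable by positive masses (this is exactly the codimension-two set $A$). Here the plan is global. For each of them we prove a local vanishing result: $\Delta:=\prod_{i<j}r_{ij}^2$ tends to $0$ along the germ, i.e.\ $\Delta$ has a zero at $P$. Since every other diagram has been excluded, any pole of a four-product on $\overline{\mathcal C}$ gives a zero of $\Delta$. But $\Delta$ is a nonconstant rational function on the compact curve $\overline{\mathcal C}$. It is nonconstant because it is positive and varies on the real arc, or else it vanishes somewhere and has a pole. A nonconstant rational function with a zero must also have a pole at some point $Q$. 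At $Q$ the squared distances blow up, and the identity $\Delta^4=\prod_{i<j}p_{ij}$ then forces some $p_{ij}$, and hence some four-product, to be unbounded at $Q$. So $Q$ is again a singular point realizing one of the two remaining diagrams, and the local vanishing result says $\Delta(Q)=0$, contradicting the pole. Hence no such curve exists, and the set of positive normalized central configurations is finite for every $m\in\Rp^5$. The equivalence with finiteness up to rotation, translation and scaling is immediate from the normalization.

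The delicate points will be two. First, establishing the vanishing of $\Delta$ for the two surviving diagrams, which requires controlling subleading orders of the distances within each coloured cluster. Second, making sure the identity $\Delta^4=\prod p_{ij}$ really transfers a pole of $\Delta$ to a pole of some four-product rather than to a cancellation. I would first try to obtain the latter from order bookkeeping in the Puiseux expansions.
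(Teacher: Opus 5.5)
Your global step is the paper's argument. $\Delta$ vanishes at the first pole $P$ of a four-product. It is nonconstant because $\Delta=\prod r_{kl}^2$ is nowhere zero on the affine curve; you do not need, and have not shown, that it ``varies on the real arc''. It therefore has a pole $Q$, which lies over infinity, and $\sum_{i<j}\ord_Q p_{ij}=4\ord_Q\Delta<0$. Your worry about cancellation is unfounded, since orders of meromorphic functions add. The genuine problems lie in two earlier steps.

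\textbf{First, the reduction.} ``If all four-products were bounded, $\mathcal C$ would be a compact affine curve'' does not follow. Four-products control only products of squared distances, not the coordinates. Indeed, along singular sequences approaching Diagrams~10--16 every four-product stays bounded (Albouy--Kaloshin, Prop.~8). The correct chain has four links:
\begin{itemize}
\item A bounded regular function on an irreducible affine curve is constant, because it is finite-valued or dominating.
\item The exponent matrix of the $p_{ij}$ in the variables $r_{ab}^2$ is $E=\mathrm{Id}+A$, with $A$ the adjacency matrix of the Petersen graph. Since $\det E=128\ne0$, each $(r_{ab}^2)^{128}$ is a Laurent monomial in the $p_{ij}$.
\item Hence constant four-products force constant squared distances.
\item Fixed squared distances admit only finitely many normalized solutions (Gram matrix and Witt's theorem, then the centre of mass and the gauge).
\end{itemize}
Without this inversion you have no reason for some four-product to be nonconstant. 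Note also that Diagrams~10--16 are not ``eliminated one at a time''. They are discarded via Prop.~8, precisely because $P$ was chosen at a pole of a four-product.

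\textbf{Second, your dichotomy fails at Diagram~3.} You claim that every diagram except two can be excluded locally at positive masses, and that the remaining two force $\Delta\to0$. But Diagram~3 germs with an unbounded four-product genuinely exist at positive masses. At $m\propto(4,4,4,4,1)$ the modified Roberts continuum has such an end, with $p_{12}\approx\epsilon^{-8}$. So no chart or rank computation can rule them out.

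The paper instead classifies these germs completely. The boundary equations and the Taylor coefficients of orders three to six force $m\propto(4,4,4,4,1)$ and identify the germ with one of $36$ explicit analytic families. On these families $\delta_{15}+2\delta_{13}=0$ holds identically. By irreducibility the identity extends to the whole curve $\Gamma$, where it contradicts a positive point.

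Diagram~7 is likewise handled on the curve rather than locally. Either every four-product tends to $0$ along the germ, or the inertia decomposition in the nested chart gives $I\to0$. In the second case $I=U$ is constant on $\Gamma$, so $I\equiv0$, which is impossible at a real point where $I=\sum m_k|z_k|^2>0$.

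These curve-level exclusions must be invoked again at the second germ $Q$. That is legitimate only because $Q$ lies on the same curve as the positive arc. Your plan needs this third category of argument, alongside the local exclusions and the vanishing of $\Delta$ for Diagrams~5 and~9. The proposal also gives no mechanism for that vanishing.
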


The central feature of our proof is the use of the product of all squared
mutual distances. Writing $r_{kl}^2:=z_{kl}w_{kl}$ in the complex coordinates of
Section~\ref{sec:ak}, we set
\[
 \Delta:=\prod_{k<l}r_{kl}^2,\qquad
 p_{ij}:=r_{ij}^2r_{kl}^2r_{lh}^2r_{hk}^2,
 \qquad \{k,l,h\}=\{1,\dots,5\}\setminus\{i,j\}.
\]
We use the four-products of Albouy and Kaloshin, recalled in Section~\ref{sec:ak}.
The product identity converts the local distance estimates into a global
contradiction on the completed solution curve. We now explain this step and
the local analysis that makes it possible.

\subsection*{Technical difficulties and contributions}
The proof has three main components.
\begin{enumerate}[label=\textup{(\arabic*)}]
\item \textbf{Representing every singular branch in local coordinates.}
The diagrams record leading orders, while the equations also couple distances
that vanish or diverge at different rates. We construct weighted blow-up
charts for Diagrams~1--4 and~6--8 and prove that every corresponding Puiseux
germ has a finite boundary limit in its chart. The rules and estimates of
\cite[\S3]{AK} give the required order inequalities. This converts the analysis
of singular sequences into equations and rank conditions at finite boundary
points. We also derive the mass relations used in the local arguments.

\item \textbf{Resolving degenerate boundary points.}
At boundary points where the Jacobian loses rank, the first nonzero
compatibility condition determines whether a solution branch can enter the
interior. We compute these conditions through local expansions and
Lyapunov--Schmidt reduction. The resulting arguments exclude Diagrams~1, 2, 4,
6 and~8 for complex germs. For Diagram~7, the inertia identity combines with
an unbounded four-product to give $I\to0$, whereas a curve through a real
configuration satisfies $U=I=c>0$. For Diagram~3, analytic continuation gives
\[
 \delta_{15}+2\delta_{13}=0
\]
on the irreducible curve, which excludes a positive point. Thus identities on
the whole curve connect the complex boundary analysis with the real
configurations counted in Theorem~\ref{thm:main}.

\item \textbf{Closing the argument with the product of distances.}
At a boundary point $P$ carrying a germ of Diagram~5 or~9, we prove
\[
 \ord_P\Delta>0.
\]
For Diagram~5 this follows from force-order estimates for the shrinking
triangle and the two outer bodies, together with the nondegeneracy transverse
to the triangle's critical manifold. For Diagram~9 it follows from the
distance estimates of \cite[\S8.9]{AK}. On a hypothetical irreducible curve
containing a positive arc, some four-product has a pole at a point $P$ of the
smooth projective completion. The diagram exclusions leave Diagrams~5 and~9,
so $\ord_P\Delta>0$. Since $\Delta$ is nonzero on the affine curve, it is then
nonconstant and has a pole at another point $Q$. The identity
\[
 \Delta^4=\prod_{i<j}p_{ij}
 \quad\Longrightarrow\quad
 4\ord_Q\Delta=\sum_{i<j}\ord_Qp_{ij}<0
\]
forces a four-product to have a pole at $Q$. Applying the same local results
gives $\ord_Q\Delta>0$, a contradiction. This argument uses the vanishing in
Diagrams~5 and~9 directly to complete the proof for the exceptional masses.
\end{enumerate}

The computational statements enter the analysis of Diagrams~2, 3 and~6 through
exact polynomial identities, Taylor coefficients, and interval and ball
arithmetic over the parameter sets. Appendix~\ref{app:verification} records
the statements and the associated program modules. The local arguments for
the other diagrams are given explicitly in the text. We use the
singular-sequence classification of \cite[Fig.~11]{AK}, its rules and estimates
in \cite[\S3]{AK}, and its four-product boundedness argument for
Diagrams~10--16. Section~\ref{sec:ak} states these inputs, and
Section~\ref{sec:exclusion} treats Diagram~9 through its distance estimates.

\subsection*{Organization}
Section~\ref{sec:ak} introduces the normalized equations and the
singular-sequence framework. Section~\ref{sec:reduction} passes from an infinite
set of positive configurations to an algebraic curve and a pole of a
four-product. Section~\ref{sec:exclusion} states the local boundary results,
proves the Diagram~9 estimate and deduces Theorem~\ref{thm:main} from the product
identity. Sections~\ref{sec:d1}--\ref{sec:d8} establish the diagram statements.
Appendix~\ref{app:verification} collects the computational statements used in
the proof.

\section{The framework of Albouy and Kaloshin}\label{sec:ak}

We express normalized central configurations as a complex algebraic set and record the
quantities preserved along its curves.  The singular-sequence classification then reduces
behaviour at infinity to finitely many diagrams, which we analyse in the following sections.

\subsection{Normalized central configurations}
Fix masses $m=(m_1,\dots,m_5)\in\Rp^5$.  Following \cite[\S2]{AK} we use the complex coordinates
$z_k=x_k+iy_k$, $w_k=x_k-iy_k$ and inverse distances $\delta_{kl}=\delta_{lk}$, and write
$z_{kl}=z_l-z_k$, $w_{kl}=w_l-w_k$.  The normalized system (system (4) of \cite{AK}, written as
system (10) there) is
\begin{equation}\label{eq:system}
\begin{gathered}
  z_k = \sum_{l\ne k} m_l\,\delta_{kl}^3\,(z_k-z_l),\qquad
  w_k = \sum_{l\ne k} m_l\,\delta_{kl}^3\,(w_k-w_l)\qquad (k=1,\dots,5),\\
  \delta_{kl}^2\, z_{kl}\,w_{kl} = 1\quad (1\le k<l\le 5),\qquad z_{12} = w_{12}.
\end{gathered}
\end{equation}
Its zero set $A_m\subset\C^{20}$ is the set of \emph{normalized central configurations}.  A
solution is \emph{real} if $w_k=\bar z_k$ for all $k$ and \emph{positive} if moreover
$\delta_{kl}>0$ for all $k<l$ \cite[Def.~2]{AK}.  Positive normalized central configurations are
in two-to-one correspondence with central configurations modulo rotation, translation and
scaling, so Theorem~\ref{thm:main} asserts that $A_m$ has finitely many positive points.
The map $\sigma(z,w,\delta)=(\bar w,\bar z,\bar\delta)$ preserves \eqref{eq:system} and fixes
{the real solutions.  System \eqref{eq:system} is symmetric under $z\leftrightarrow w$.  A permutation of the bodies,
with the masses permuted, preserves the equations apart from the choice of gauge.  After
relabelling, write the coordinates again as $(z,w,\delta)$.  A rotation
$(z,w)\mapsto(\lambda z,\lambda^{-1}w)$ with $\lambda^2=w_{12}/z_{12}$ restores the gauge
$z_{12}=w_{12}$.  It preserves $U$, $I$, every $r_{kl}^2$, and the diagram.  Along a germ,
if $\ord\lambda=a$, it adds $a$ to all $z$-orders and subtracts $a$ from all $w$-orders.  Thus
\[
 \ord(\lambda z_{ij})-\ord(\lambda z_{kl})
 =\ord z_{ij}-\ord z_{kl},\qquad
 (\lambda z_{ij})(\lambda^{-1}w_{ij})=r_{ij}^2.
\]
We implement these symmetries on finite algebraic covers, with a finite ramification of
the germ parameter when required.

More precisely, on a relabelled irreducible curve $\Gamma$ consider
$\widetilde\Gamma=\{(q,\lambda)\in\Gamma\times\C^*:
\lambda^2z_{12}(q)=w_{12}(q)\}$.  The distance constraints make $z_{12}$ and $w_{12}$ invertible regular
functions, so this is a finite unramified cover, and every irreducible component maps onto
$\Gamma$.  Any chosen germ lifts after a finite ramification.  Its component maps by the
rotation to an irreducible algebraic curve after taking the Zariski closure: some squared
distance is nonconstant by Lemma~\ref{lem:distances}, and remains nonconstant under the
cover and the rotation.  If $\Gamma$ contains a positive arc, each component contains a
local lift of that arc; there $|\lambda|=1$, so its rotated image is again a nonconstant
positive arc.  The same argument preserves the existence of a real point.  All squared
distances, four-products, $U$ and $I$ are unchanged, and a pole of a four-product remains
a pole on a lifted germ.}

\subsection{Potential and inertia}
The potential and inertia are
\[
 U:=\sum_{k<l}m_km_l\delta_{kl},\qquad I:=\sum_km_kz_kw_k.
\]
We show that both have the same constant value on each irreducible curve of solutions.
This converts the local limits computed below into restrictions on the entire curve.

\begin{lemma}\label{lem:U-constant}
Every solution of \eqref{eq:system} satisfies $U=I$, and $U$ is constant on every irreducible
curve $\Gamma\subset A_m$.
\end{lemma}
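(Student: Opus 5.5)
The identity $U=I$ will follow from an Euler-type contraction of the equations, and constancy on a curve from a Lagrange-multiplier (critical point) interpretation of the system.

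\textbf{Step 1: $U=I$.} Multiply the $z_k$-equation of \eqref{eq:system} by $m_kw_k$ and sum over $k$. The left side is $I$. The right side is
\[
\sum_{k\ne l}m_km_l\delta_{kl}^3(z_k-z_l)w_k .
\]
Symmetrising in $k,l$ turns this into
\[
\sum_{k<l}m_km_l\delta_{kl}^3\,(z_k-z_l)(w_k-w_l)=\sum_{k<l}m_km_l\delta_{kl}^3z_{kl}w_{kl}.
\]
The distance relations $\delta_{kl}^2z_{kl}w_{kl}=1$ reduce each term to $m_km_l\delta_{kl}$, so the sum is $U$.

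\textbf{Step 2: $U$ is constant on an irreducible curve $\Gamma$.} Since $\Gamma$ is irreducible, it suffices to show $dU|_{T\Gamma}=0$ at smooth points. The statement then holds on a dense open subset, hence everywhere, because $U$ is regular.

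Fix a smooth point and a tangent vector $(\dot z,\dot w,\dot\delta)$. Differentiating the distance relations gives
\[
2\dot\delta_{kl}/\delta_{kl}=-(\dot z_{kl}/z_{kl}+\dot w_{kl}/w_{kl}).
\]
It follows that
\[
\dot U=\sum_{k<l}m_km_l\dot\delta_{kl}=-\tfrac12\sum_{k<l}m_km_l\delta_{kl}^3\bigl(w_{kl}\dot z_{kl}+z_{kl}\dot w_{kl}\bigr).
\]
Regrouping by bodies, the equations identify $\sum_l m_l\delta_{kl}^3(z_k-z_l)$ with $z_k$, and likewise for $w$. This yields $\dot U=-\tfrac12\sum_k m_k(w_k\dot z_k+z_k\dot w_k)=-\tfrac12\dot I$.

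Step 1 holds identically along $\Gamma$, so $\dot U=\dot I$. Together these give $\dot U=-\tfrac12\dot U$, hence $\dot U=0$.

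\textbf{Main obstacle.} The main care point is the sign and regrouping bookkeeping in Step 2. The terms $w_{kl}\dot z_{kl}=(w_l-w_k)(\dot z_l-\dot z_k)$ must be distributed correctly to $\dot z_k$ with the right sign, using the equation for $w_k$ (not $z_k$) as coefficient.

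A second point is that $\Gamma$ may lie in the singular locus of $A_m$. This is harmless: the differentiation is along any analytic parametrisation of a branch of $\Gamma$, for instance a local Puiseux parametrisation $t\mapsto q(t)$ at a smooth point of the reduced curve $\Gamma$. The relations are identities in $t$, so $\frac{d}{dt}U(q(t))=0$. Hence $U$ is locally constant on the connected smooth locus of $\Gamma$, and therefore constant on $\Gamma$.
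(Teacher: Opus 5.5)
Your proof is correct and is essentially the paper's argument: the relation $\dot U=-\tfrac12\dot I$ along $\Gamma$ is exactly the paper's observation that points of $A_m$ are critical for $f=2U+I$ restricted to the smooth set $B$ cut out by the distance constraints, and your conclusion via the connected smooth locus of $\Gamma$ is the same as the paper's. The only variation is that you obtain $U=I$ directly, by contracting the $z$-equations with $m_kw_k$. This is the identity $I-U=\sum_km_kw_kF^z_k+\sum_{k<l}m_km_l\delta_{kl}T_{kl}$ that the paper itself records in Section~\ref{sec:d7}; the paper's proof instead differentiates $2s^{-1}U+s^2I$ at $s=1$ under the scaling $(z,w,\delta)\mapsto(sz,sw,s^{-1}\delta)$.
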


\begin{proof}
Let $B\subset\C^{20}$ be the set where $\delta_{kl}^2z_{kl}w_{kl}=1$ for all $k<l$, and let
$f=2U+I$.  On $B$ no $z_{kl}w_{kl}$ vanishes, so $B$ is smooth, $(z,w)$ are local coordinates
on $B$, and $d\delta_{kl}=-\frac12\delta_{kl}^3(w_{kl}\,dz_{kl}+z_{kl}\,dw_{kl})$.  Hence, on
$B$,
\[
  \frac{\partial f}{\partial z_k}=m_k\Bigl(w_k-\sum_{l\ne k}m_l\delta_{kl}^3(w_k-w_l)\Bigr),\qquad
  \frac{\partial f}{\partial w_k}=m_k\Bigl(z_k-\sum_{l\ne k}m_l\delta_{kl}^3(z_k-z_l)\Bigr),
\]
so every point of $A_m$ is a critical point of $f|_B$.  The scaling
$(z,w,\delta)\mapsto(sz,sw,s^{-1}\delta)$ preserves $B$, and hence
\[
 0=\left.\frac{d}{ds}\bigl(2s^{-1}U+s^2I\bigr)\right|_{s=1}
 =-2U+2I
\]
at every such point.  Therefore $U=I$ on $A_m$.  Let $\Gamma\subset A_m$ be an irreducible curve.  At a smooth point of
$\Gamma$ the tangent line lies in the tangent space of $B$, on which $df$ vanishes; so $f$ is
locally constant on the smooth locus of $\Gamma$, which is connected, and hence constant on
$\Gamma$.  As $f=3U$ on $A_m$, $U$ is constant on $\Gamma$.
\end{proof}
The identity $U=I$ also appears in \cite[p.~541]{AK}; in fact,
$U(A_m)$ is finite by \cite[Lemma~2]{AK}.  We will also use the following consequence of
constructibility to produce singular sequences.

\begin{fact}[{\cite[Lemma~1]{AK}}]\label{fact:lemma1}
If $X\subset\C^N$ is a closed algebraic subset and $f$ a polynomial, then $f(X)$ is finite or
the complement of a finite set (``$f$ is dominating'').
\end{fact}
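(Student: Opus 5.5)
We prove Fact~\ref{fact:lemma1} with Chevalley's theorem on constructible sets together with the fact that a constructible subset of $\C$ is finite or cofinite. View $f$ as a polynomial map $f|_X:X\to\C$ of complex affine varieties, i.e.\ a morphism of finite type. By Chevalley's theorem the image of a constructible set under such a morphism is constructible. Since $X$ is closed, hence constructible, the set $f(X)\subset\C$ is a finite union of sets $Z_i\setminus W_i$ with $Z_i,W_i$ Zariski closed in $\C$.

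Next we classify constructible subsets of the affine line. The Zariski closed subsets of $\C$ are exactly $\C$ itself and the finite sets, because a nonzero polynomial in one variable has finitely many roots. A locally closed set $Z\setminus W$ is therefore of one of two kinds. If $Z$ is finite, so is $Z\setminus W$. If $Z=\C$, then $Z\setminus W$ is either empty (when $W=\C$) or the complement of the finite set $W$. A finite union of finite and cofinite sets is finite when all the pieces are finite. Otherwise it contains a cofinite piece and is itself cofinite. This is the dichotomy claimed.

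The only substantive input is Chevalley's theorem, and we would cite it directly rather than reprove it. If a self-contained argument is wanted for the image of a closed set in $\C$, we would decompose $X$ into irreducible components $X_j$ and treat each separately. If $f$ is constant on $X_j$, its image is a point. Otherwise $f|_{X_j}:X_j\to\C$ is a dominant morphism of irreducible varieties. By generic freeness or the dimension theory of fibres, its image contains a nonempty Zariski open subset of $\C$, hence is cofinite. The union over the finitely many components then gives the result.

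We expect this dominant case to be the main obstacle if Chevalley is not simply invoked. The cleanest route is Noether normalization applied over the generic point, extended to an open subset of $\C$. Since the paper quotes the statement from \cite[Lemma~1]{AK}, citing Chevalley's theorem should suffice.
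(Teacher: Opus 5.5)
Your proof is correct and takes essentially the same route as the paper, which quotes this fact from \cite[Lemma~1]{AK} and describes it as a consequence of constructibility. That is, the image of a closed algebraic set under a polynomial map is constructible by Chevalley's theorem, and constructible subsets of $\C$ are finite or cofinite.
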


\subsection{Singular sequences and diagrams}
We recall how the dominant coordinate and force terms determine the diagram of a singular
sequence.  This description will let us read the diagrams from the orders of Puiseux series.
Let $\mathcal Z=(z_1,\dots,z_5,Z_{12},\dots,Z_{45})$ and $\mathcal W=(w_1,\dots,w_5,W_{12},
\dots,W_{45})$ with $Z_{kl}=z_{kl}\,\delta_{kl}^3$ and $W_{kl}=w_{kl}\,\delta_{kl}^3$; these are,
up to mass factors, the terms of the equations \eqref{eq:system}.  Let $Z=\max_p|\mathcal Z_p|$
and $W=\max_q|\mathcal W_q|$.  A \emph{singular sequence} \cite[Def.~3]{AK} is a sequence of
normalized central configurations with $Z$ or $W$ unbounded, extracted so that the maximal
components are fixed and $Z^{-1}\mathcal Z$, $W^{-1}\mathcal W$ converge.  Its \emph{diagram}
has a $z$-circle at $k$ if $\lim z_k/Z\ne0$, a $z$-stroke between $k$ and $l$ if
$\lim Z_{kl}/Z\ne 0$, and likewise in the second colour \cite[\S3]{AK}.  After multiplying $z$
by $a>0$ and $w$ by $a^{-1}$ one may assume $Z=W=\epsilon^{-2}$ with $\epsilon\to0$
\cite[p.~547]{AK}.  We write $a\prec b$, $a\preccurlyeq b$, $a\approx b$, $a\sim b$ as in
\cite[\S3]{AK}.

\begin{fact}[{\cite[\S3]{AK}}]\label{fact:rules}
Rules 1a--1e and 2a--2e and Estimates 1--2 of \cite[\S3]{AK} hold for every singular sequence.
\end{fact}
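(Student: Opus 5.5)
The statement is a citation of \cite[\S3]{AK}, so the plan is to check that the hypotheses under which Albouy and Kaloshin derive Rules 1a--1e, 2a--2e and Estimates 1--2 match our setting. Their derivations use only three ingredients: the form of system \eqref{eq:system}; the normalization of a singular sequence (fixed maximal components, convergence of $Z^{-1}\mathcal Z$ and $W^{-1}\mathcal W$, and the rescaling $Z=W=\epsilon^{-2}$); and the distance constraints $\delta_{kl}^2z_{kl}w_{kl}=1$. I would first note that \eqref{eq:system} is exactly system (4)/(10) of \cite{AK} specialized to $n=5$ in the planar case, so that nothing in \cite[\S3]{AK} depends on $n$ beyond finite index sets.

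Next I would check that the rescaling is compatible with the system. Under $(z,w)\mapsto(az,a^{-1}w)$ with $a>0$, the products $z_{kl}w_{kl}$ and the $\delta_{kl}$ are unchanged, and each equation in $z$ (respectively $w$) is homogeneous of degree one in $z$ (respectively $w$). Hence $\mathcal Z$ scales by $a$ and $\mathcal W$ by $a^{-1}$. The gauge $z_{12}=w_{12}$ is not preserved, but the rules never use it. I would remark explicitly that the rules are statements about orders and limits of ratios, and those are invariant under this scaling and under the gauge rotation described earlier.

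Then I would recall how each rule arises and confirm that only these ingredients are used:
\begin{itemize}
\item Rule 1a (a $z$-circle implies a $z$-stroke from that vertex) follows from the $k$-th equation $z_k=\sum_l m_l Z_{kl}$: the maximal terms must balance.
\item Rule 1b (a stroke from a vertex forces another stroke or a circle there) follows from the same balance.
\item Rule 1c (the centre-of-mass relation $\sum m_kz_k=0$, and strokes forming no isolated single component) follows from summing the equations.
\item The remaining rules and Estimates 1--2 use $Z_{kl}W_{kl}=\delta_{kl}^6 z_{kl}w_{kl}=\delta_{kl}^4=r_{kl}^{-4}$. Writing $z_{kl}\preceq\epsilon^{-2}$ and $Z_{kl}\preceq\epsilon^{-2}$ gives the estimate $\epsilon^2\preceq r_{kl}\preceq\epsilon^{-2}$ and the characterization of edges ($r_{kl}\sim\epsilon$ on a bicoloured edge, and so on).
\end{itemize}

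The main obstacle is bookkeeping rather than mathematics. I would need to make sure that the extraction of subsequences (so that maximal components are fixed and ratios converge) is done exactly as in \cite[Def.~3]{AK}, and that our notations $\prec,\preccurlyeq,\approx,\sim$ coincide with theirs. Since the paper adopts their definitions verbatim, I would simply state that the proofs in \cite[\S3]{AK} apply word for word.
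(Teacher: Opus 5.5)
Your plan is what the paper does. Fact~\ref{fact:rules} is imported from \cite[\S3]{AK} without a separate proof. Equation \eqref{eq:system}, the extraction of \cite[Def.~3]{AK}, the rescaling to $Z=W=\epsilon^{-2}$ and the positive masses are exactly the setting of \cite{AK}. So your closing remark, that the arguments there apply verbatim, is the whole justification needed.

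The recap you give as a ``confirmation'' is inaccurate in places that matter for how the rules are used later. Either delete it or correct it as follows.

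\textbf{The estimate on $r_{kl}$.} From $Z_{kl}W_{kl}=r_{kl}^{-4}$ with both $|Z_{kl}|,|W_{kl}|\le\epsilon^{-2}$ you get $r_{kl}\succcurlyeq\epsilon$, not $\epsilon^{2}$. The two bounds you name, $z_{kl}\preccurlyeq\epsilon^{-2}$ and $Z_{kl}\preccurlyeq\epsilon^{-2}$, do not give a lower bound at all; you also need the bound on $W_{kl}$. Equality $r_{kl}\approx\epsilon$ holds exactly when $Z_{kl}$ and $W_{kl}$ are both maximal, i.e.\ on a $zw$-edge, and then $z_{kl}\approx w_{kl}\approx\epsilon$. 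Write $\approx$ here, not $\sim$: in \cite{AK} and in the paper, $\sim$ means asymptotic equality.

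\textbf{The refinement along a stroke.} Along a $z$-stroke, combine $|z_{kl}|^{1/2}|w_{kl}|^{3/2}\approx\epsilon^{2}$ with $z_{kl}\preccurlyeq\epsilon^{-2}$ and $W_{kl}\preccurlyeq\epsilon^{-2}$. This gives $\epsilon\preccurlyeq r_{kl}\preccurlyeq1$, with $r_{kl}\approx1$ exactly on a maximal $z$-edge. These sharp equality cases are what Lemma~\ref{lem:d1-orders}, Proposition~\ref{prop:d3-cover} and Lemma~\ref{lem:d4-val} invoke.

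\textbf{Rule~1c.} It concerns each isolated component $C$ of the $z$-diagram: $\sum_{k\in C}m_kz_k\prec\epsilon^{-2}$. To see it, sum the equations over $C$; internal forces cancel and no stroke leaves $C$. The global identity $\sum_km_kz_k=0$ alone is not enough, and the paper applies the rule componentwise, for instance in Diagram~3.

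\textbf{Rule~1e.} This says an isolated component has a maximal stroke if and only if it carries a circle. It does not follow from $Z_{kl}W_{kl}=r_{kl}^{-4}$. It follows from Rule~1c: since $\sum_{k\in C}m_k>0$, the leading coefficients $\lim z_k/Z$ on a circled component cannot all be equal. Additivity of separations along a path of strokes then yields a maximal stroke.

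\textbf{Rule~2e.} The equal orders of the separations around a stroke triangle likewise come from additivity, $z_{km}=z_{kl}+z_{lm}$, combined with the stroke relation. It is not a consequence of the product identity alone.
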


\begin{fact}[{\cite[Prop.~1]{AK}}]\label{fact:prop1}
Proposition 1 of \cite{AK} (localization of the origin in a cluster) holds.
\end{fact}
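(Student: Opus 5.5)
Since this is \cite[Prop.~1]{AK}, the plan is to reproduce its mechanism in our normalization rather than to prove anything new. The statement, as I recall it, concerns a \emph{$z$-cluster}: a maximal set $C$ of bodies connected by $z$-strokes. The claim is that the mass-weighted centre $z_C:=\sum_{k\in C}m_kz_k\big/\sum_{k\in C}m_k$ satisfies $z_C\prec Z$, i.e.\ $z_C/Z\to0$, and the same holds in the $w$-colour. I am not certain this is the exact wording in \cite{AK}, but the argument below is the one its name suggests. The key input is that the force terms $Z_{kl}$ are antisymmetric, $Z_{lk}=-Z_{kl}$, so the internal forces of a cluster cancel when the equations of its bodies are summed with mass weights.

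\textbf{Step 1.} Multiply the $k$-th equation of \eqref{eq:system} by $m_k$ and sum over $k\in C$:
\[
 \sum_{k\in C}m_kz_k=\sum_{k\in C}\sum_{l\ne k}m_km_l\,\delta_{kl}^3(z_k-z_l)
 =-\sum_{k\in C}\sum_{l\ne k}m_km_lZ_{kl}.
\]
By antisymmetry, all pairs with both $k,l\in C$ cancel. The right side therefore reduces to $-\sum_{k\in C,\,l\notin C}m_km_lZ_{kl}$.

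\textbf{Step 2.} Each remaining term has $k\in C$ and $l\notin C$. By maximality of $C$ there is no $z$-stroke between $k$ and $l$, so $\lim Z_{kl}/Z=0$ by the definition of strokes. Since there are finitely many such terms, dividing by $Z$ gives $z_C/Z\to0$.

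\textbf{Step 3.} Deduce the consequences. If the cluster carries a $z$-circle at some $k\in C$, then $z_C\prec Z$ forces a second circle in $C$ whose limit $\lim z_l/Z$ cancels it in the weighted sum. This is the ``localization of the origin'': at the $Z$-scale the origin lies at the centre of mass of every cluster. The $w$-colour is identical after exchanging $z\leftrightarrow w$. The normalization $Z=W=\epsilon^{-2}$ changes nothing, since the statement is homogeneous in $Z$.

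The only delicate point is Step 2, in particular making sure that cluster membership is defined through strokes and not through circles. The required estimate then follows directly from the definition of a stroke after the subsequence extraction made in the definition of a singular sequence, without appealing to Rules 1a--2e. If \cite{AK} instead phrases the proposition for clusters defined by $z_{kl}\prec Z$, I would first show that such bodies are stroke-connected using Estimate~1, and then run Steps 1--3 unchanged.
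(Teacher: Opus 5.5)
Your Steps~1--2 are correct, but what they establish is essentially Rule~1c of \cite[\S3]{AK}: the mass-weighted position of a connected component of the $z$-diagram is not maximal (compare its use in Section~\ref{sec:d3}). That rule is already contained in Fact~\ref{fact:rules}. Proposition~1 is instead a localization at the scale of the cluster itself. A cluster is a set of bodies whose mutual separations in one colour are $\prec$ their separations from the remaining bodies, and the conclusion is that the coordinates of its bodies in that colour are $\preccurlyeq$ the cluster's own diameter. This is how the paper uses it in Lemma~\ref{lem:d1-orders}(iii) and Lemma~\ref{lem:d4-val}(ii). There, bodies $1,2$ form a $w$-cluster with $w_{12}\approx\epsilon^2$ and $w_{2k}\succ w_{12}$, and the required output is $w_1,w_2\preccurlyeq w_{12}$, a bound of size $\epsilon^2$. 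Your argument cannot reach this. In Diagrams~1 and~4 the bodies $1,2$ carry no $w$-stroke, so $\{1,2\}$ is not a component of the $w$-diagram. Even for a genuine component, the absence of external strokes yields only $o(W)=o(\epsilon^{-2})$. Your fallback fails for a related reason: proximity in one colour does not create strokes of that colour. Estimate~1 ties $w_{kl}\approx\epsilon^2$ to a maximal $z$-edge, and the triangle $345$ of Diagram~1 has $z_{kl}\prec Z$ without any $z$-stroke.

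The missing ingredient is a bound on the crossing force terms by the cluster diameter $\rho$, not by $Z$ or $W$. The weighted sum is still the right starting point: for a $w$-cluster $S$, $\sum_{k\in S}m_kw_k=-\sum_{k\in S,\,j\notin S}m_km_jW_{kj}$ with $|W_{kj}|=|w_{kj}|^{-1/2}|z_{kj}|^{-3/2}$. To get $W_{kj}\preccurlyeq\rho$ you need the separations in the \emph{other} colour together with Estimate~1. This is the role of the conditions the paper checks before invoking \cite[Prop.~1]{AK}, namely $w_{2k}\succ w_{12}$ and $z_{2k}\approx z_{12}$. In that application, $z_{ik}\approx z_{12}\approx\epsilon^{-2}$ and $w_{ik}\succ\epsilon^2$ give $|W_{ik}|\prec\epsilon^{3}\epsilon^{-1}=\epsilon^2\approx w_{12}$, hence $m_1w_1+m_2w_2\prec w_{12}$. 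Since $w_1-(m_1w_1+m_2w_2)/(m_1+m_2)=-m_2w_{12}/(m_1+m_2)$, and symmetrically for $w_2$, this gives $w_1,w_2\preccurlyeq w_{12}$. This is exactly the alternative derivation recorded in the proof of Lemma~\ref{lem:d1-orders}(iii). A correct proposal must define clusters by relative proximity, impose such two-colour hypotheses to control the crossing forces at the cluster scale, and state the conclusion at that scale.
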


\begin{fact}[{\cite[\S7]{AK}}]\label{fact:fig11}
Every singular sequence of the five-body problem approaches, after renumbering the bodies and
possibly exchanging $z$ and $w$, one of the sixteen diagrams of \cite[Fig.~11]{AK}.
\end{fact}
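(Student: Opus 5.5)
The plan is to reproduce the combinatorial classification of \cite[\S7]{AK}, using only the rules recorded in Fact~\ref{fact:rules} and the localization statement of Fact~\ref{fact:prop1}. The approach is a finite enumeration. By definition, a singular sequence produces a pair of one-coloured diagrams on the vertex set $\{1,\dots,5\}$: the $z$-diagram, with its circles and $z$-strokes, and the $w$-diagram, with its circles and $w$-strokes. The task is to show that only sixteen such pairs survive, up to renumbering the bodies and exchanging $z$ and $w$.

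First I will classify the admissible one-coloured diagrams. Rule~1a says that the diagram is nonempty. Rule~1b says that a circled vertex forces a stroke at that vertex or a further circle, by the balance of the $k$-th equation. Rules 1c--1e say that each isolated component behaves as a cluster whose centre of mass is balanced; in particular, a stroke cannot be isolated inside a component unless its mass polygon closes. These constraints leave a short list of shapes on five vertices: edges, paths, triangles, stars, and so on, each with prescribed circle patterns. The relabelling symmetry of the paper, which permutes the bodies together with their masses, cuts the list down to orbit representatives.

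Second, I will pair the two colours using Rules 2a--2e together with Estimates~1--2. The key mechanism is as follows. A $z$-stroke and a $w$-stroke on the same edge $kl$ give $\delta_{kl}\to\infty$ at a controlled rate. A $z$-stroke whose edge has no $w$-stroke forces $w_{kl}$ to be small, and this is recorded as a maximal stroke or an edge. Rule~2 then requires that the ends of a stroke of one colour that is not of the other colour lie in a common cluster of the other colour. Estimates~1--2 compare the orders of $z_{kl}$ and $w_{kl}$, so that the product $z_{kl}w_{kl}=\delta_{kl}^{-2}$ is consistent. Fact~\ref{fact:prop1}, the localization of the origin in a cluster, is used to exclude circled configurations whose centre-of-mass condition is incompatible with the strokes of the other colour.

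Third, I will run a systematic case split. The primary parameter is the number of vertices carrying $z$-strokes, which can be $2$, $3$, $4$ or $5$. Within each case, I will list the compatible $w$-diagrams and discard a candidate as soon as one rule fails. The survivors should match \cite[Fig.~11]{AK}, and the check is finite.

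I expect the main obstacle to be the bookkeeping in the step that pairs the two colours. The issue is the pairs in which a stroke of one colour joins two clusters of the other colour that are not directly connected. There, excluding a candidate requires combining Estimate~2 with the localization of the origin, rather than applying a single local rule. For these cases I will first try to derive a contradiction from the order relation $\ord z_{kl}+\ord w_{kl}=-2\ord\delta_{kl}$ summed around a cycle of strokes. Only if that fails will I appeal to the mass-polygon conditions from Rule~1e. The final list should be cross-checked against the sixteen diagrams of \cite[Fig.~11]{AK}, and it is there that I would rely on the cited classification.
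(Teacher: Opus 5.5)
The paper does not prove this statement. It imports the classification from \cite[\S7]{AK} as a black box, on the same footing as Facts~\ref{fact:rules}, \ref{fact:prop1} and~\ref{fact:prop8}, and only transfers it to Puiseux germs via Lemma~\ref{lem:germ}. Your proposal ends by relying on the same citation, so to the extent that it is correct it coincides with the paper's treatment. Read as an independent derivation, however, it has a genuine gap. The whole content of the statement is the case analysis, and you do not carry it out. ``The check is finite'' and ``the survivors should match \cite[Fig.~11]{AK}'' assert the conclusion rather than derive it, and cross-checking against Fig.~11 while also relying on it gives no independent verification. You also do not show that the tools you list, Fact~\ref{fact:rules} and Fact~\ref{fact:prop1}, suffice for every exclusion made in \cite[\S7]{AK}.

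The one concrete device you propose for the hard cases would not work. The relation $\ord z_{kl}+\ord w_{kl}=-2\ord\delta_{kl}$ is just the distance constraint $\delta_{kl}^2z_{kl}w_{kl}=1$ read term by term, so summing it around a cycle yields an identity and excludes nothing. Constraints from a cycle of strokes come instead from the closure identities $\sum_{\text{cycle}}z_{kl}=0$ and $\sum_{\text{cycle}}w_{kl}=0$. Each forces the least order on the cycle to be attained at least twice, with cancelling leading coefficients.

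For example, on a triangle of $z$-strokes the quantity $\ord(z_{kl}\delta_{kl}^3)=-\frac12(\ord z_{kl}+3\ord w_{kl})$ equals the least order of $\mathcal Z$ on every side. Hence $\ord w_{kl}$ decreases strictly as $\ord z_{kl}$ increases, and the two closure identities together force all three $z$-orders to agree. This is the form of Rule~2e used in Sections~\ref{sec:d1} and~\ref{sec:d7}.

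To turn your outline into a proof, you would have to list the admissible one-colour diagrams and pair them. For every discarded pair you would then need to name the specific rule, estimate or proposition that excludes it. Otherwise the honest formulation is the paper's: a cited fact.
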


For pairwise distinct $i,j,k,l,m$ with $\{i,j,k,l,m\}=\{1,\dots,5\}$ the \emph{four-product} is
$p_{ij}=r_{ij}^2r_{kl}^2r_{lm}^2r_{mk}^2$, where $r_{kl}^2=z_{kl}w_{kl}$.

\begin{fact}[{\cite[Prop.~8]{AK}}]\label{fact:prop8}
Along a singular sequence approaching one of Diagrams 10--16 every four-product is bounded.
\end{fact}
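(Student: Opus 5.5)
The plan is to read orders directly off each of Diagrams~10--16 and to check, diagram by diagram, that the orders of the factors in each $p_{ij}$ sum to a nonnegative total. We normalize as in \cite[p.~547]{AK} so that $Z=W=\epsilon^{-2}$. The goal is then to show that $\ord p_{ij}\ge0$ for all ten pairs $\{i,j\}$, where $\ord$ is the order in $\epsilon$ along a Puiseux germ realizing the sequence, since a sequence can always be reparametrized along such a germ.

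\textbf{Step 1: order bounds for a single factor.} The basic input is Fact~\ref{fact:rules}. For every pair we have $z_{kl}\preccurlyeq\epsilon^{-2}$ and $w_{kl}\preccurlyeq\epsilon^{-2}$, because the coordinates are bounded by $Z$ and $W$. Also $Z_{kl}=z_{kl}\delta_{kl}^3$ and $W_{kl}=w_{kl}\delta_{kl}^3$ are $\preccurlyeq\epsilon^{-2}$. Combined with $\delta_{kl}^2z_{kl}w_{kl}=1$, this gives an upper bound on the order of $r_{kl}^2=z_{kl}w_{kl}$ in terms of whether a $z$-circle or $w$-circle sits at $k$ or $l$.

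From Estimates~1--2 we read the finer information. A $z$-stroke between $k$ and $l$ forces $z_{kl}\approx\epsilon^{2}$ and $w_{kl}\approx\epsilon^{-3}$ (equivalently $r_{kl}\approx\epsilon^{-1/2}$ when only one colour is present). A two-coloured edge forces $r_{kl}\approx\epsilon^{2}$. An edge joining bodies whose $z$-coordinates are both $o(\epsilon^{-2})$ has $z_{kl}=o(\epsilon^{-2})$. We tabulate these as a small list of cases: the upper bound on $\ord r_{kl}^2$ is negative only when the edge separates a body carrying a circle from the cluster of the origin.

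\textbf{Step 2: check each diagram.} For each of the seven diagrams and each $\{i,j\}$, we write $p_{ij}$ as $r_{ij}^2$ times the product of the three squared sides of the triangle $\{k,l,m\}$. We then add the bounds from Step~1. The key mechanism, which is the one used in \cite[\S8]{AK}, is a compensation. Whenever a factor may be large, because some body $h$ lies far from the others, the complementary triangle or edge contains a stroke of the opposite colour that is small. This works because in Diagrams~10--16 every circled body is linked to the rest by strokes. Proposition~1 of \cite{AK} (Fact~\ref{fact:prop1}) locates the origin inside the cluster, and this fixes which coordinates are $\approx\epsilon^{-2}$.

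\textbf{Main obstacle.} The hard cases are the configurations in which the crude bounds of Step~1 give $\ord p_{ij}<0$ by a fractional amount, typically $r_{ij}^2\approx\epsilon^{-1}$ paired with a triangle whose sides are only bounded. For these I would first apply the stroke-level equations to the triangle $\{k,l,m\}$. These are the leading terms of the equations at a body in the cluster, namely the Rule~1/2 cancellation conditions. They show that the triangle must degenerate at a definite rate, giving an extra positive order that exactly compensates. If this cancellation argument fails in some diagram, the fallback is to use the inertia identity $U=I$ from Lemma~\ref{lem:U-constant} to bound $\sum m_kz_kw_k$. That bound caps the number of bodies that can escape simultaneously and yields the missing order.
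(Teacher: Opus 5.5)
The paper does not reprove this statement. It imports it from \cite[Prop.~8]{AK} and uses it only to confine germs with an unbounded four-product to Diagrams~1--9. Judged on its own, your proposal has a genuine gap. The single-factor estimates in Step~1 are wrong, and Step~2, which carries all the content, is asserted rather than carried out.

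\textbf{Step 1 is wrong.} For every pair, $r_{kl}^{-4}=Z_{kl}W_{kl}\preccurlyeq\epsilon^{-4}$ and $|w_{kl}|\le2W$. Hence $r_{kl}\succcurlyeq\epsilon$ and $w_{kl}\preccurlyeq\epsilon^{-2}$, so $w_{kl}\approx\epsilon^{-3}$ and $r_{kl}\approx\epsilon^{2}$ cannot occur. On a $z$-stroke, $Z_{kl}\approx\epsilon^{-2}$ and Estimate~2 give $\epsilon\preccurlyeq r_{kl}\preccurlyeq1$. Therefore $z_{kl}\approx\epsilon^{-2}r_{kl}^3\succcurlyeq\epsilon$ and $w_{kl}\approx\epsilon^{2}r_{kl}^{-1}\preccurlyeq\epsilon$: on a $z$-stroke it is the $w$-separation that is small, the reverse of what you wrote. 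A $zw$-edge has $r_{kl}\approx z_{kl}\approx w_{kl}\approx\epsilon$, and no stroke has $r_{kl}\approx\epsilon^{-1/2}$.

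Your criterion for a large factor is also false, since the absence of circles only gives $z_{kl},w_{kl}\prec\epsilon^{-2}$. Diagram~7 has no circles, yet Proposition~\ref{d7-cover} gives $\ord r_{kl}^2=a+b$ on the crossed pairs. Hence $\ord p_{13}=\tfrac83(a+b+1)$, and the constraints $a,b\in(-2,1)$ allow both orders to be negative.

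\textbf{Step 2 uses nothing specific to Diagrams~10--16.} Every ingredient you invoke is available in every diagram: Rule~1a (a stroke at every circle), Proposition~1 of \cite{AK}, and the leading cancellations at each body. Yet four-products are unbounded in Diagrams~1--9. The paper finds $p_{12}\approx\epsilon^{-8}$ in Diagram~3 and $p_{15},p_{24}\approx\epsilon^{-1}$ in Diagram~9. So there is no general `compensation' principle to appeal to. Boundedness must come from the specific stroke and circle pattern of each of Diagrams~10--16 in \cite[Fig.~11]{AK}: determine the relevant cluster sizes in each, then check all ten products. Your proposal neither identifies these diagrams nor performs the check, so the conclusion is assumed.

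The fallback via $U=I$ does not rescue this. Boundedness of the complex sum $I=\sum_km_kz_kw_k$ controls neither its individual terms nor the number of escaping bodies. In Diagram~7 with $a+b<-1$, the order constraints are compatible with $I\to0$ (Lemma~\ref{d7-vanish}) together with $\ord p_{13}<0$.

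A smaller point: the statement concerns arbitrary singular sequences, which need not lie on one Puiseux branch. You should argue with the relations $\prec,\approx$ along extracted subsequences rather than assume a germ.
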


Facts~\ref{fact:fig11} and~\ref{fact:prop8} leave Diagrams~1--9 whenever a four-product is
unbounded.  Lemma~\ref{lem:d9-product} gives the distance estimates needed for Diagram~9.
For Diagrams~1--6 and~8, Propositions~\ref{prop:d1-first}, \ref{prop:d2-first},
\ref{prop:d3-first}, \ref{prop:d45-first}, \ref{prop:d6-first} and \ref{prop:d8-first}
derive the mass conditions from the corresponding limiting equations.

\section{Reduction to germs with an unbounded four-product}\label{sec:reduction}

To apply the diagram classification, we seek an algebraic curve containing a positive arc
and a branch at which a four-product has a pole.  We first prove that fixed squared
distances determine only finitely many normalized solutions.  A monomial identity then
ensures that every curve has a nonconstant four-product.

\begin{lemma}\label{lem:distances}
Let $s\in(\C^*)^{10}$.  Only finitely many points of $A_m$ satisfy $z_{kl}w_{kl}=s_{kl}$ for all
$k<l$.
\end{lemma}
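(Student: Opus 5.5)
Fix $s\in(\C^*)^{10}$ and let $S\subset A_m$ be the set of solutions with $z_{kl}w_{kl}=s_{kl}$ for all $k<l$. I plan to argue by contradiction through Fact~\ref{fact:lemma1} and Lemma~\ref{lem:U-constant}. Note first that on $S$ each $\delta_{kl}$ satisfies $\delta_{kl}^2=1/s_{kl}$, so $\delta$ takes only finitely many values. Fix one admissible choice of $\delta$. For this fixed $\delta$ the equations \eqref{eq:system} in $(z,w)$ are linear:
\[
 z=M(\delta)z,\qquad w=M(\delta)w,\qquad
 M(\delta)_{kk}=\sum_{l\ne k}m_l\delta_{kl}^3,\quad M(\delta)_{kl}=-m_l\delta_{kl}^3 .
\]
Thus $z$ and $w$ lie in the eigenspace $E=\ker(M(\delta)-\mathrm{Id})$, a fixed linear subspace.

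The strategy is to show that $S$ with $\delta$ fixed is bounded, and hence finite. $S$ is a closed algebraic subset, so if it is infinite it contains an irreducible curve $\Gamma$. On $\Gamma$ all $\delta_{kl}$ are constant. I will then show that $z$ and $w$ are constant on $\Gamma$, which is a contradiction. The first step uses the translation structure. Since $M(\delta)$ has row sums zero, $M\mathbf 1=0$, so $\mathbf 1\notin E$. The differences $z_{kl}$ determine $z$ modulo $\mathbf 1$, and therefore they determine $z$ uniquely inside $E$. The same holds for $w$. It therefore suffices to show that the differences $z_{kl}$ and $w_{kl}$ take finitely many values on $\Gamma$.

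The key step is to bound the differences using the fixed products $z_{kl}w_{kl}=s_{kl}$ together with the gauge $z_{12}=w_{12}$. The gauge forces $z_{12}^2=s_{12}$, so $z_{12}$ and $w_{12}$ take finitely many values. Consider now a germ of $\Gamma$ going to infinity, parametrized by Puiseux series. If some $z_{kl}$ has a pole there, then $w_{kl}=s_{kl}/z_{kl}$ tends to $0$, while $z_{12}$ stays bounded. Cocycle relations such as $z_{13}=z_{12}+z_{23}$, together with the fixed values $z_{ij}w_{ij}=s_{ij}$, should force a contradiction. Indeed, triangle-type relations in $z$ and in $w$ must hold simultaneously, with $z$-orders $a_{kl}$ and $w$-orders $-a_{kl}$. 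The relation $\sum z_{kl}$ around a triangle equal to zero requires the minimal order to be attained at least twice, and the same is required for $w$ with reversed signs.

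The main obstacle is this order combinatorics, which must exclude all patterns of orders $a_{kl}$ for which both the $z$-cocycle and the $w$-cocycle conditions can balance. Pure order arguments may not suffice, for example when $z$ is a scaled version of a fixed configuration and $w$ is the inverse scaling, which corresponds to a rotation with $|\lambda|\neq1$. The gauge $z_{12}=w_{12}$ exists precisely to kill this one-parameter family. My plan is therefore to show that any pole pattern is a pure rotation $z\mapsto\lambda z$, $w\mapsto\lambda^{-1}w$ on the whole configuration, using the linear constraint $z,w\in E$. The gauge then pins $\lambda^2=1$.

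If this step resists, I would instead use $U=I$ from Lemma~\ref{lem:U-constant}. Here $U$ is fixed because $\delta$ is fixed. The quantity $I=\sum m_kz_kw_k$ can be written, after translating to the centre of mass, as $\frac1M\sum_{k<l}m_km_lz_{kl}w_{kl}=\frac1M\sum m_km_ls_{kl}$, which is constant. This gives no new information, so the contradiction must come from the order analysis combined with $\dim E$ and the gauge.
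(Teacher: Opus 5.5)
Your reduction is sound as far as it goes. Fixing $\delta$, passing to an irreducible curve $\Gamma$ of solutions, and observing that $\mathbf 1\notin E$ (so the differences determine $z$ and $w$) are all correct, and you are right that $U=I$ gives no information. The gap is that the step carrying the whole content of the lemma is never proved: you need the differences $z_{kl},w_{kl}$ to be constant on $\Gamma$. You sketch an order analysis and concede it ``may not suffice''. You then replace it with a plan (``show any pole pattern is a pure rotation using $z,w\in E$'') for which no argument is given. As written, this is a reduction, not a proof. The linear constraint $z,w\in E$ is also the wrong tool here: the rigidity comes from the products $z_{kl}w_{kl}=s_{kl}$ alone.

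The missing idea is to use the triangles $\{1,2,k\}$ built on the edge fixed by the gauge. Put $\sigma=z_{12}=w_{12}$, so that $\sigma^2=s_{12}\ne0$. For $k\ge3$ set $x=z_{1k}$ and $y=w_{1k}$. Then $xy=s_{1k}$ and $(x-\sigma)(y-\sigma)=z_{2k}w_{2k}=s_{2k}$, hence $x+y=(s_{1k}+s_{12}-s_{2k})/\sigma$. So $x,y$ are the roots of a fixed quadratic, and $(z_{1k},w_{1k})$ takes at most two values. Your $\mathbf 1\notin E$ (or the centre-of-mass identity) then recovers $(z_1,w_1)$.

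The same observation closes your Puiseux route in one line, and it dissolves your worry about the rotation family. A pole of $z_{1k}$ along a branch forces a pole of $z_{2k}=z_{1k}-\sigma$. Hence $w_{1k},w_{2k}\to0$ and $w_{12}=w_{1k}-w_{2k}\to0$, contradicting $w_{12}=\sigma$. A zero of $z_{1k}$ is excluded symmetrically via $z_{12}=z_{1k}-z_{2k}$.

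The paper argues more conceptually. The Gram matrix $B(v_k,v_l)=\frac12(s_{1k}+s_{1l}-s_{kl})$ of $v_k=(z_{1k},w_{1k})$, for the form with $B((a,b),(a,b))=ab$, is fixed. By Witt's theorem all solutions then form one orbit of $O(B)$, which consists of the maps $(a,b)\mapsto(\lambda a,\lambda^{-1}b)$ and $(a,b)\mapsto(\lambda b,\lambda^{-1}a)$. The gauge leaves at most four values of $\lambda$. This orbit is exactly the rotation family you observed.
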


\begin{proof}
For any such point, $\delta_{kl}^2=s_{kl}^{-1}$, so $\delta$ takes finitely many values; fix one.
Multiplying the $z$-equation of body $k$ by $m_k$ and summing gives $\sum_km_kz_k=0$ by
antisymmetry, and likewise $\sum_km_kw_k=0$.

Put $v_k=(z_k-z_1,w_k-w_1)\in\C^2$ and let $B$ be
the symmetric bilinear form with $B((a,b),(a,b))=ab$.  Then $B(v_k-v_l,v_k-v_l)=s_{kl}$, so the
Gram matrix $B(v_k,v_l)=\frac12(s_{1k}+s_{1l}-s_{kl})$ is fixed.  The span $V$ of the $v_k$ is
nondegenerate for $B$: either $V=\C^2$, or $V=\C v_2$ with $B(v_2,v_2)=s_{12}\ne0$.

If
$(v'_k)$ is a second family with the same Gram matrix, its span $V'$ is nondegenerate by the
same argument, and $\sum_kc_kv_k=0$ holds if and only if $\sum_kc_kB(v_k,v_l)=0$ for all $l$,
and likewise for the $v'_k$; so the $v_k$ and the $v'_k$ satisfy the same linear relations, and
$v_k\mapsto v'_k$ is an isometry of nondegenerate subspaces; by Witt's theorem it extends to an
element of the orthogonal group of $B$, which consists of the maps $(a,b)\mapsto(\lambda a,
\lambda^{-1}b)$ and $(a,b)\mapsto(\lambda b,\lambda^{-1}a)$, $\lambda\in\C^*$.

The centre-of-mass identities determine $(z_1,w_1)$ from $(v_k)$:
\[
 (z_1,w_1)=-\frac{1}{\sum_km_k}\sum_km_kv_k.
\]  For a fixed reference family $(v^0_k)$ with first
components $a^0_2$ and second components $b^0_2$ of $v^0_2$ (so $a^0_2b^0_2=s_{12}\ne0$), the gauge
$z_{12}=w_{12}$ leaves $\lambda^2=b^0_2/a^0_2$ or $\lambda^2=a^0_2/b^0_2$: at most four values.
Hence there are finitely many solutions for each $\delta$.
\end{proof}

We now use semialgebraic curve selection to retain positivity while passing from an infinite
set of configurations to an algebraic curve.

\begin{lemma}[Arc lemma]\label{lem:arc}
If $A_m$ has infinitely many positive points, there is an irreducible algebraic curve
$\Gamma\subset A_m$ such that
\begin{romanlist}
  \item $\Gamma$ contains a nonconstant real-analytic arc of positive points;
  \item $\sigma(\Gamma)=\Gamma$;
  \item $U=I$ is constant on $\Gamma$, with value $c\in(0,\infty)$.
\end{romanlist}
\end{lemma}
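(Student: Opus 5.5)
We work in real coordinates. Identify the positive points of $A_m$ with a real semialgebraic set: writing $z_k=x_k+iy_k$, $w_k=\bar z_k$, $\delta_{kl}>0$, the set $P_m$ of positive points is the intersection of the real algebraic set $A_m^{\R}:=A_m\cap\operatorname{Fix}(\sigma)$ with the open semialgebraic condition $\delta_{kl}>0$. If $P_m$ is infinite, then $\dim P_m\ge1$. By the semialgebraic curve selection lemma, or equivalently by choosing a real-analytic stratum of positive dimension and a nonconstant analytic arc inside it, we obtain a nonconstant real-analytic arc $\gamma:(-\varepsilon,\varepsilon)\to P_m$.

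Next we pass to a complex algebraic curve. Let $\Gamma'$ be the Zariski closure in $\C^{20}$ of $\gamma((-\varepsilon,\varepsilon))$. Since $\gamma$ is analytic, the ideal of polynomials vanishing on the arc is prime: if $fg$ vanishes on $\gamma$, then the analytic function $(f\circ\gamma)(g\circ\gamma)$ is zero, so one factor vanishes identically. Hence $\Gamma'$ is irreducible. It is contained in $A_m$ and has positive dimension because the arc is nonconstant. To get dimension exactly one, the cleanest route is to choose the arc so that its Zariski closure is a curve. Take a point $x$ in a top-dimensional stratum $S$ of $P_m$, of dimension $d\ge1$, at which $S$ is a real-analytic manifold locally equal to a smooth point of the real part of an irreducible component $X$ of the Zariski closure of $S$, with $\dim_\C X=d$. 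Intersect $X$ with $d-1$ generic real affine hyperplanes through $x$. Locally at $x$ this cuts $S$ in a real-analytic curve, and an irreducible component $\Gamma$ of $X\cap H_1\cap\dots\cap H_{d-1}$ containing that local branch is a complex curve. Its real points near $x$ form a nonconstant positive analytic arc, which gives (i).

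This dimension bookkeeping is the step I expect to be the main obstacle. One must ensure that the generic slicing yields a complex curve component whose real locus contains the positive arc. For this I would use that at a smooth real point of $X$ of real dimension $d=\dim_\C X$, the real and complex tangent spaces agree, so generic real hyperplanes are transverse in both senses.

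For (ii), note that $\sigma$ is an antiholomorphic involution preserving $A_m$ and fixing every point of the arc. Hence $\sigma(\Gamma)$ is an irreducible algebraic curve, since $\sigma$ is a coordinate permutation composed with conjugation and so maps algebraic sets to algebraic sets. It contains the arc, an infinite subset of $\Gamma$. Two irreducible curves sharing infinitely many points coincide, so $\sigma(\Gamma)=\Gamma$.

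For (iii), Lemma~\ref{lem:U-constant} gives $U=I$ and that $U$ is constant on $\Gamma$, say with value $c$. Evaluating at a point of the arc gives $c=\sum_{k<l}m_km_l\delta_{kl}>0$ because all masses and all $\delta_{kl}$ are positive there, and $c$ is real and finite. This proves (iii).
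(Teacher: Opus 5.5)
Your argument is correct, but it obtains the one-dimensionality of $\Gamma$ differently from the paper.

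The paper applies the Nash curve selection lemma \cite[Prop.~8.1.13]{BCR}, which yields a \emph{semialgebraic} analytic arc. A semialgebraic set and its Zariski closure have the same dimension, so the Zariski closure of that arc is automatically a curve. It is irreducible by exactly the integral-domain argument in your second paragraph.

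You instead treat the arc as merely real-analytic. For such an arc the dimension claim genuinely fails: a transcendental arc inside a higher-dimensional stratum can be Zariski dense in it. So your phrase ``or equivalently by choosing a real-analytic stratum'' is not an equivalence for this purpose. You then repair this by slicing a $d$-dimensional component $X$ at a good point $x$ with $d-1$ generic real hyperplanes. That repair is sound, but it needs several ingredients:
\begin{itemize}
\item the same dimension theorem, to get $\dim_\C X=\dim S$;
\item the existence of points of $S$ that are smooth points of a unique $d$-dimensional component, since the bad set lies in an algebraic set of dimension $<d$;
\item transversality of generic real hyperplanes to $T_xX=T_xS\otimes\C$;
\item the fact that near $x$ the real points of $X$ coincide with $S$, so the real locus of the slice stays positive.
\end{itemize}

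The step you flag as the main obstacle therefore disappears once you note that the semialgebraic curve selection lemma you already cite yields a Nash arc. Your slicing route costs more bookkeeping. In exchange it produces a $\Gamma$ that is smooth at a chosen positive point with a transverse real branch, which the rest of the paper does not need. Parts (ii) and (iii) agree with the paper.
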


\begin{proof}
In the real coordinates $(x,y,\delta)\in\R^{20}$ the positive points form a semialgebraic set.
It has finitely many connected components, so one of them is infinite, hence of positive
dimension, and the Nash curve selection lemma \cite[Prop.~8.1.13]{BCR} gives a nonconstant Nash arc in
it.  Its Zariski closure $\Gamma\subset A_m$ (in the complexified coordinates; passing to $(z,w)$ is linear) is an
irreducible curve, which proves (i).  Since \eqref{eq:system} has real coefficients, $\sigma(
\Gamma)$ is an irreducible curve containing the arc, so $\sigma(\Gamma)=\Gamma$.  By
Lemma~\ref{lem:U-constant}, $U=I$ is constant on $\Gamma$, and on the arc
$U=\sum m_km_l\delta_{kl}>0$.
\end{proof}

On the projective normalization of this curve, a singular sequence approaches a branch
at infinity.  The next lemma expresses that branch in a parameter suitable for comparing
the orders of the force terms.

\begin{lemma}\label{lem:germ}
Let $\Gamma\subset A_m$ be an irreducible curve and $(q_n)$ a singular sequence on $\Gamma$.  There
are a Puiseux parametrization $\gamma$ of a branch of $\Gamma$ at infinity, $\gamma(t)\to\infty$ as
$t\to 0$, and $t_n\to0$ with $\gamma(t_n)$ a subsequence of $(q_n)$.  Every coordinate and every
component of $\mathcal Z$, $\mathcal W$ has a Puiseux expansion $c\,t^{\alpha}(1+o(1))$ along
$\gamma$ (or vanishes identically), and the diagram of every singular sequence extracted from
$(\gamma(t_n))$ is determined by these orders and leading coefficients: the circles and strokes
of colour $z$ are the components of $\mathcal Z$ of minimal order, and the relations $\prec$,
$\preccurlyeq$, $\approx$, $\sim$ of \cite[\S3]{AK} become relations between orders and leading
coefficients.  In particular a four-product unbounded along $(q_n)$ has negative order along
$\gamma$.
\end{lemma}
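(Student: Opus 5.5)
The approach is to pass from the sequence $(q_n)$ to a branch of the smooth projective model of $\Gamma$, and to read everything off from Puiseux expansions of regular functions at that branch.

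\textbf{Step 1: a branch point at infinity.} Let $\nu:\widetilde\Gamma\to\overline\Gamma$ be the normalization of the projective closure of $\Gamma$ in $\mathbb P^{20}$. Then $\widetilde\Gamma$ is a compact Riemann surface, and the points over $\overline\Gamma\setminus\Gamma$ are finitely many. Since $(q_n)$ is singular, $Z$ or $W$ is unbounded. The entries of $\mathcal Z,\mathcal W$ are polynomials in the coordinates, so $q_n$ leaves every compact subset of $\Gamma$. Lift $q_n$ to $\widetilde\Gamma$ and use compactness to pass to a subsequence converging to some $P\in\widetilde\Gamma$. The point $P$ must lie over $\overline\Gamma\setminus\Gamma$: otherwise the $q_n$ would converge in $\Gamma\subset\C^{20}$ and all coordinates would stay bounded. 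Let $s$ be a local uniformizer at $P$. Every coordinate function is meromorphic at $P$, and at least one has a pole, since $P$ maps to infinity. Setting $t=s$ (or a root $s^{1/e}$ if one wants to match the normalization $Z=W=\epsilon^{-2}$), we obtain the parametrization $\gamma$ of a branch with $\gamma(t)\to\infty$, together with $t_n\to0$ such that $\gamma(t_n)=q_n$ along the subsequence.

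\textbf{Step 2: Puiseux form of each component.} Each coordinate $z_k,w_k$ is a rational function on $\Gamma$. The identity $\delta_{kl}^2 z_{kl}w_{kl}=1$ makes $\delta_{kl}$ a regular function, and $Z_{kl}=z_{kl}\delta_{kl}^3$, $W_{kl}=w_{kl}\delta_{kl}^3$ are regular functions too. Any meromorphic function on a neighbourhood of $P$ is either identically zero or equal to $c\,t^{\alpha}(1+o(1))$ with $c\ne0$ and $\alpha\in\mathbb Z$ (or $\alpha\in\frac1e\mathbb Z$ after ramification). This gives the expansion claim for every component of $\mathcal Z$ and $\mathcal W$.

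\textbf{Step 3: the diagram is read from orders.} Let $\alpha_{\min}$ be the minimal order among the nonzero components of $\mathcal Z$. Then $Z=\max_p|\mathcal Z_p|\asymp|t_n|^{\alpha_{\min}}$, and
\[
\mathcal Z_p/Z\to0 \quad\text{exactly when}\quad \ord\mathcal Z_p>\alpha_{\min}.
\]
When $\ord\mathcal Z_p=\alpha_{\min}$, the limit of $\mathcal Z_p/Z$ is the ratio of leading coefficients, adjusted by the phase of $t_n^{\alpha_{\min}}$. After a further subsequence this phase converges, so the set of nonzero limits is exactly the set of components of minimal order. Thus the circles and strokes of colour $z$ are those components, and likewise for $w$.

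For any two functions $a,b$ in these coordinates, $a/b\to0$, $a/b$ bounded, $a/b\to$ const$\neq0$, and so on, correspond respectively to $\ord a>\ord b$, $\ord a\ge\ord b$, equality of orders, and equality of orders with a prescribed ratio of leading coefficients. These are exactly the relations $\prec$, $\preccurlyeq$, $\approx$, $\sim$.

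The rescaling $z\mapsto az$, $w\mapsto a^{-1}w$ with $a$ real positive multiplies $\mathcal Z$ and $\mathcal W$ by $a^{\pm1}$, so it changes neither orders nor the normalized limits. A rescaling by a power of $t$ shifts all $z$-orders uniformly, so relative orders are preserved.

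\textbf{Step 4: unbounded four-products.} A four-product $p_{ij}$ is regular on $\Gamma$, hence meromorphic at $P$. If it is unbounded along $(q_n)$, then it is unbounded along the subsequence $\gamma(t_n)$ with $t_n\to0$. A meromorphic function $c\,t^\alpha(1+o(1))$ with $\alpha\ge0$ is bounded near $0$, so $\alpha<0$.

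\textbf{Main obstacle.} The delicate part is Step 3 for extracted subsequences. The oscillating phases of $t_n^{\alpha}$ when $t_n$ is complex, and the requirement that the maximal components stay fixed along the sequence, are where care is needed. I would handle both by choosing $t_n$ along the subsequence so that $\arg t_n$ converges. Then every normalized ratio converges to the ratio of leading coefficients times a fixed unimodular factor, and this factor does not affect whether the limit is zero.
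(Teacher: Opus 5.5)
Your proposal is correct and follows essentially the paper's route: a point at infinity of the normalized projective completion, meromorphic expansions of the coordinates and of $\mathcal Z,\mathcal W$, circles and strokes as the components of minimal order, and a pole for an unbounded meromorphic function. Your rescaling by a power of $t$ does the job of the paper's analytic renormalization $\lambda(t)=(\mathcal W_q/\mathcal Z_p)^{1/2}$, but note that AK's factor $a_n$ depends on $n$ and therefore does shift orders (uniformly, with $|a_n|\asymp|t_n|^{(\beta-\alpha)/2}$), which is what makes $\epsilon$ comparable to a power of $|t|$.

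One small point in Step~4: unboundedness along $(q_n)$ need not survive passage to the subsequence converging to $P$, so first extract a subsequence with $p_{ij}\to\infty$ before the compactness step; this is exactly the situation in which the paper applies the lemma in Proposition~\ref{prop:reduction}.
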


\begin{proof}
The closure of $\Gamma$ in a projective compactification has finitely many points at infinity,
each with finitely many analytic branches; a sequence tending to infinity has a subsequence on
one branch, and the normalization of the branch gives the parameter $t$.  Polynomial functions
of the coordinates are meromorphic in $t$, which gives the expansions.

If $\alpha$ is the least
order among the components of $\mathcal Z$, then $Z=\max_p|\mathcal Z_p|$ has order $\alpha$, and
$\mathcal Z_p/Z\to0$ exactly when $\ord\mathcal Z_p>\alpha$; this does not depend on the
subsequence.

The renormalization of \cite[p.~547]{AK} multiplies all $z$-quantities by one
positive factor $a(t)\approx|t|^{(\beta-\alpha)/2}$ and all $w$-quantities by $a^{-1}$, where
$\beta$ is the least order in $\mathcal W$.  This preserves every relation within one colour and
every product $z_{kl}w_{kl}$.  Replacing $a$ by the Puiseux series $\lambda(t)=(\mathcal W_q(t)/
\mathcal Z_p(t))^{1/2}$ built from maximal components, with $|\lambda|/a$ bounded above and
below, gives an analytic renormalization with the same relations; after reparametrizing
$t=s^N$, $\epsilon$ is comparable to a power of $s$.  A meromorphic function that is unbounded
along $t_n\to0$ has a pole.
\end{proof}

\begin{definition}\label{def:germ}
A \emph{Diagram-$r$ germ} is a Puiseux branch $\gamma$ of a curve in $A_m$ whose diagram, in the
sense of Lemma~\ref{lem:germ}, is Diagram $r$ of \cite[Fig.~11]{AK} after renumbering the bodies
and possibly exchanging $z$ and $w$.  It has an \emph{unbounded four-product} if some $p_{ij}$ has
negative order along $\gamma$.
\end{definition}

The following monomial relations transfer variation of the squared distances to variation
of the four-products.

\begin{lemma}\label{lem:petersen}
On every irreducible curve $\Gamma\subset A_m$ some four-product is not constant.
\end{lemma}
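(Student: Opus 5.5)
The plan is to reduce the statement to Lemma~\ref{lem:distances} by inverting the monomial map from squared distances to four-products.

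\emph{Step 1: some squared distance varies on $\Gamma$.} Suppose every $r_{kl}^2=z_{kl}w_{kl}$ were constant on $\Gamma$, say equal to $s_{kl}$. The constraints $\delta_{kl}^2z_{kl}w_{kl}=1$ give $s_{kl}\ne0$, so $s\in(\C^*)^{10}$. Lemma~\ref{lem:distances} then says $\Gamma$ consists of finitely many points, which is impossible for a curve. Hence some $r_{kl}^2$ is a nonconstant regular function on $\Gamma$.

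\emph{Step 2: the exponent matrix is the Petersen matrix.} Index rows by the ten pairs $\{i,j\}$ and columns by the ten edges $e$ of $K_5$. The four-product $p_{ij}$ contains $r_{ij}^2$ together with the three edges of the triangle on the complementary triple $\{k,l,m\}$. An edge $e$ lies in that triangle exactly when $e$ is disjoint from $\{i,j\}$. So the exponent matrix is
\[
 M=\mathrm{Id}+A,
\]
where $A$ is the adjacency matrix of the Kneser graph $K(5,2)$, that is, the Petersen graph. The Petersen graph has eigenvalues $3,1,-2$, so $M$ has eigenvalues $4,2,-1$ and $\det M\ne0$. (Summing the columns also gives $\Delta^4=\prod p_{ij}$, since the graph is $3$-regular.)

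\emph{Step 3: invert the monomial map.} Since $M$ is invertible over $\mathbb Q$, let $N=\det M$ and write $N M^{-1}=(b_{e,ij})$, which has integer entries. Then for every edge $e$,
\[
 \bigl(r_e^2\bigr)^{N}=\prod_{i<j}p_{ij}^{\,b_{e,ij}}
\]
as an identity of Laurent monomials in the nonvanishing functions $r_e^2$. Now suppose every $p_{ij}$ were constant on $\Gamma$. Then each $(r_e^2)^N$ is constant, so $r_e^2$ takes only finitely many values on $\Gamma$. Because $\Gamma$ is irreducible and hence connected, each $r_e^2$ is constant. This contradicts Step 1.

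The only step needing care is the nonsingularity of $M$, and the Petersen spectrum settles it at once. If one prefers not to cite the spectrum, a direct $10\times10$ determinant computation would also do. I do not expect a real obstacle here.
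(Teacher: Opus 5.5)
Your proposal is correct and follows the paper's proof: you identify the exponent matrix as $\mathrm{Id}$ plus the Petersen adjacency matrix with eigenvalues $4,2,-1$ (so $\det=128$), invert the monomial map through the integer matrix $128M^{-1}$, and conclude with Lemma~\ref{lem:distances}. Your explicit remark that a regular function taking finitely many values on an irreducible curve is constant fills in the one step the paper leaves implicit.
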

\begin{proof}
Index the squared distances and the four-products by the ten pairs.  The pairs disjoint from
$ij$ are the sides of the triangle complementary to $ij$, so the exponent vector of $p_{ij}$ in
the variables $r_{ab}^2$ is the column $ij$ of $E=I+P$, where $P$ is the adjacency matrix of the
Petersen graph (two pairs are adjacent if they are disjoint).  The eigenvalues of $E$ are $4$,
$2$ (five times) and $-1$ (four times), so $\det E=128$, and $N=128E^{-1}$ is an integer
matrix.  Since $r_{ab}^2\ne0$ on $A_m$, we obtain
\[
 (r_{ab}^2)^{128}=\prod_{ij}p_{ij}^{N_{ab,ij}}.
\]
If every $p_{ij}$ were constant on $\Gamma$, this identity would make every $r_{ab}^2$
constant on the irreducible curve $\Gamma$.  Lemma~\ref{lem:distances} would then make
$\Gamma$ finite, a contradiction.
\end{proof}
For the global argument, we also need a relation that couples all four-products to a
single meromorphic function.

{
\begin{lemma}[The total product]\label{lem:total-product}
Put $\Delta=\prod_{k<l}r_{kl}^2$.  On $A_m$, the function $\Delta$ is regular and nowhere zero,
and
\begin{equation}\label{eq:total-product}
  \prod_{i<j}p_{ij}=\Delta^4.
\end{equation}
\end{lemma}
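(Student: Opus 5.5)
The statement has two parts, a regularity claim and a combinatorial identity, and I will prove them separately.

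For regularity and nonvanishing, each $r_{kl}^2=z_{kl}w_{kl}$ is a polynomial in the coordinates. So $\Delta$ is a polynomial, hence a regular function on $A_m$. On $A_m$ the constraint $\delta_{kl}^2 z_{kl}w_{kl}=1$ gives
\[
 r_{kl}^2=\delta_{kl}^{-2}\neq 0 .
\]
Hence each factor is invertible and $\Delta$ is nowhere zero. Indeed, $\Delta^{-1}=\prod_{k<l}\delta_{kl}^2$ is also regular, so $\Delta$ is a unit in the coordinate ring of $A_m$.

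For the identity \eqref{eq:total-product}, the plan is to count exponents, using the matrix $E=I+P$ from the proof of Lemma~\ref{lem:petersen}. The exponent of $r_{ab}^2$ in $\prod_{i<j}p_{ij}$ is the row sum of $E$ at $ab$. Concretely, $r_{ab}^2$ divides $p_{ij}$ exactly when either $ab=ij$, or $ab$ is a side of the triangle on the complement $\{1,\dots,5\}\setminus\{i,j\}$. The second case holds exactly when $ab$ is disjoint from $ij$. I will count these two cases as follows.
\begin{itemize}
\item The case $ab=ij$ contributes $1$.
\item The pairs $ij$ disjoint from $ab$ are the pairs inside the three-element set $\{1,\dots,5\}\setminus\{a,b\}$. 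There are $\binom32=3$ of them, and this is the degree $3$ of the Petersen graph.
\end{itemize}
Each $r_{ab}^2$ therefore appears with total exponent $1+3=4$. This gives $\prod_{i<j}p_{ij}=\prod_{a<b}(r_{ab}^2)^4=\Delta^4$ as an identity of polynomials in $(z,w)$, so it holds in particular on $A_m$.

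As a consistency check on degrees, there are ten four-products with four factors each, giving $40$ factors in total. This equals $4\cdot 10$, four copies of each of the ten squared distances. There is no real obstacle in this proof; the only point needing care is correctly identifying which pairs occur in $p_{ij}$, namely $ij$ itself together with the three sides of the complementary triangle.
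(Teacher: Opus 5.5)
Your proof is correct and follows essentially the same route as the paper: nonvanishing comes from the distance constraints $\delta_{kl}^2r_{kl}^2=1$, and the identity from observing that each $r_{ab}^2$ occurs once in $p_{ab}$ and once in each of the three $p_{ij}$ with $ij$ disjoint from $ab$, for a total exponent of $4$. Your added remarks, namely that $\Delta^{-1}=\prod_{k<l}\delta_{kl}^2$ is regular and that the identity already holds at the level of polynomials in $(z,w)$, are valid refinements of that same argument.
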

\begin{proof}
The distance constraints give $r_{kl}^2=z_{kl}w_{kl}\ne0$ on $A_m$.  Each squared distance occurs in its own
four-product and in the three four-products indexed by a pair disjoint from its endpoints.
It therefore occurs exactly four times in the product on the left of \eqref{eq:total-product}.
\end{proof}
}

\begin{proposition}[Reduction]\label{prop:reduction}
Let $m\in\Rp^5$.  If $A_m$ has infinitely many positive points, there are $r\in\{1,\dots,9\}$, a
curve $\Gamma$ as in Lemma~\ref{lem:arc}, and a Diagram-$r$ germ on $\Gamma$ with an unbounded
four-product.
\end{proposition}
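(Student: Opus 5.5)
We combine the arc lemma, the nonconstancy of four-products, and the diagram classification. Assuming $A_m$ has infinitely many positive points, Lemma~\ref{lem:arc} gives an irreducible curve $\Gamma\subset A_m$ with a nonconstant positive arc, $\sigma(\Gamma)=\Gamma$, and $U=I=c>0$ on $\Gamma$. By Lemma~\ref{lem:petersen} some four-product $p_{ij}$ is nonconstant on $\Gamma$. We then pass to the smooth projective completion $\overline\Gamma$ of the normalization of $\Gamma$. There $p_{ij}$ is a nonconstant meromorphic function on a compact Riemann surface, so it has a pole at some point $P\in\overline\Gamma$.

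The point $P$ cannot lie over an affine point of $\Gamma$. Indeed, $p_{ij}$ is a polynomial in the coordinates, hence regular on the normalization over every affine point. So $P$ lies over a point at infinity of $\Gamma$. We choose a sequence $t_n\to0$ in a local parameter at $P$ and set $q_n=\gamma(t_n)$, where $\gamma$ is the corresponding Puiseux branch. Then $q_n\in\Gamma$ and $|p_{ij}(q_n)|\to\infty$.

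Next we check that $(q_n)$, after extraction, is a singular sequence, i.e.\ that $Z$ or $W$ is unbounded. Since $p_{ij}$ is unbounded, some $r_{ab}^2=z_{ab}w_{ab}$ is unbounded, so $z_{ab}$ or $w_{ab}$ is unbounded. Then $z_a$ or $z_b$ (respectively $w_a$ or $w_b$) is unbounded, and these are components of $\mathcal Z$ or $\mathcal W$. Hence $Z$ or $W$ is unbounded. Extracting a subsequence so that the maximal components are fixed and the normalized vectors converge gives a singular sequence in the sense of \cite[Def.~3]{AK}.

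Lemma~\ref{lem:germ} lets us reparametrize so that this subsequence lies on a Puiseux branch. The diagram of the sequence is then read off from the orders along that branch, and the unbounded four-product has negative order there. By Fact~\ref{fact:fig11}, after renumbering the bodies and possibly exchanging $z$ and $w$, the diagram is one of the sixteen diagrams of \cite[Fig.~11]{AK}. By Fact~\ref{fact:prop8} it is not one of Diagrams~10--16. Hence it is Diagram~$r$ for some $r\le9$, and $\gamma$ is a Diagram-$r$ germ in the sense of Definition~\ref{def:germ} with an unbounded four-product.

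The relabelling and the $z\leftrightarrow w$ exchange may break the gauge $z_{12}=w_{12}$. We handle this with the finite-cover rotation construction of Section~\ref{sec:ak}. That construction preserves the curve properties (i)--(iii) of Lemma~\ref{lem:arc}, namely the positive arc, $U$, $I$ and the four-products, and it keeps the pole on a lifted germ. The resulting curve is therefore still as in Lemma~\ref{lem:arc}.

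The main point needing care is the compatibility of the diagram defined through Lemma~\ref{lem:germ} with the one given by Fact~\ref{fact:fig11}. The latter is stated for sequences, so we apply it to the extracted sequence and transfer the conclusion back to the germ via the order description in Lemma~\ref{lem:germ}. Since that description does not depend on the subsequence, the transfer is immediate.
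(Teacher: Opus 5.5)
Your proposal is correct and follows the paper's proof. The chain is the same: Lemma~\ref{lem:arc}, then Lemma~\ref{lem:petersen}, then a sequence on $\Gamma$ with an unbounded four-product, which Facts~\ref{fact:fig11}, \ref{fact:prop8} and Lemma~\ref{lem:germ} turn into a germ of one of Diagrams~1--9. The only differences are that you obtain the sequence from a pole of $p_{ij}$ on the completed curve rather than from the dominance statement of Fact~\ref{fact:lemma1}, and that you spell out why the sequence is singular. Your final gauge-restoring step is harmless but unnecessary, since Definition~\ref{def:germ} already allows renumbering the bodies and exchanging $z$ and $w$.
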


\begin{proof}
Let $\Gamma$ be given by Lemma~\ref{lem:arc}.  By Lemma~\ref{lem:petersen} some four-product
$p_{ij}$ is not constant on $\Gamma$, and by Fact~\ref{fact:lemma1} it is dominating there.  A
sequence on $\Gamma$ with $p_{ij}\to\infty$ is unbounded, and a singular sequence extracted from
it approaches one of Diagrams 1--9 by Facts~\ref{fact:fig11} and~\ref{fact:prop8}.
Lemma~\ref{lem:germ} turns it into a germ.
\end{proof}

\begin{remark}\label{rem:levels}
On a positive arc with $U=I=c>0$, write $M=\sum_km_k$.  The identities
\[
 c=\sum_{k<l}\frac{m_km_l}{r_{kl}}
   =\frac1M\sum_{k<l}m_km_lr_{kl}^2
\]
give, for every pair,
\[
 \frac{m_km_l}{c}\le r_{kl}\le\sqrt{\frac{Mc}{m_km_l}}.
\]
The branches at infinity are therefore reached by continuation on the complex curve.
We use identities on that curve to connect their asymptotic behaviour with the positive arc:
$U=I=c$ for Diagram~7 and an inverse-distance identity for Diagram~3.  For
Diagrams~5 and~9, the local conclusion $\ord_\gamma\Delta>0$ supplies the input to the
global argument below.
\end{remark}

{
\section{Boundary alternatives and the global argument}\label{sec:exclusion}

We combine the diagram analysis with the meromorphic identity
$\Delta^4=\prod_{i<j}p_{ij}$.  On a curve containing a positive arc, the local alternatives
force $\Delta$ to vanish at every pole of a four-product.  A zero and a pole of $\Delta$
on the completed curve then yield the contradiction proving finiteness.

\begin{theorem}\label{thm:exclusion}
Let $m\in\Rp^5$, $r\in\{1,\dots,9\}$ and let $\gamma$ be a Diagram-$r$ germ with an unbounded
four-product on an irreducible curve $\Gamma\subset A_m$.
\begin{alphalist}
  \item If $r\in\{1,2,4,6,8\}$, no such germ exists.
  \item If $r=7$, then $\Gamma$ contains no real point.
  \item If $r=3$, then $\Gamma$ contains no arc of positive points.
  \item If $r\in\{5,9\}$, then $\ord_\gamma\Delta>0$, where $\Delta=\prod_{k<l}r_{kl}^2$.
\end{alphalist}
\end{theorem}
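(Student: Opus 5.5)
Theorem~\ref{thm:exclusion} is a collection of diagram-by-diagram local statements, so I will prove it case by case. Each case uses one common construction. Fix a Diagram-$r$ germ $\gamma$ with $\ord_\gamma p_{ij}<0$. By Lemma~\ref{lem:germ} every coordinate and every component of $\mathcal Z,\mathcal W$ has a Puiseux expansion, and the rules and estimates of Fact~\ref{fact:rules} give linear inequalities among their orders. For each $r\in\{1,2,3,4,6,7,8\}$ I introduce a weighted blow-up chart. Its coordinates are the coordinates of \eqref{eq:system} rescaled by the powers of $t$ dictated by the diagram: $z$-circles and $z$-strokes at order $-2$ after the normalization $Z=W=\epsilon^{-2}$, and cluster sizes at the orders forced by Estimates~1--2. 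The order inequalities should show that $\gamma$ has a finite limit $P$ in this chart. The rescaled equations extend to the boundary divisor $\{t=0\}$, and $P$ satisfies the leading (limiting) equations. These are the equations from which the mass relations of Propositions~\ref{prop:d1-first}, \ref{prop:d2-first}, \ref{prop:d3-first}, \ref{prop:d45-first}, \ref{prop:d6-first} and~\ref{prop:d8-first} are obtained.

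For part (a) I compute the Jacobian of the chart equations at $P$. If it has full rank transverse to the boundary, the solution set near $P$ lies inside $\{t=0\}$, so no germ with $t\not\equiv0$ can reach $P$. If the rank drops, I perform a Lyapunov--Schmidt reduction onto the kernel and expand the reduced bifurcation equation in $t$ and the kernel variables. I then show that the first nonzero compatibility coefficient cannot vanish under the mass relations. For Diagrams~2 and~6 this coefficient is verified through the computational statements of Appendix~\ref{app:verification}, using exact identities and interval or ball arithmetic over the admissible mass parameters.

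For part (b), Diagram~7 has a cluster in which the unbounded four-product forces $z_kw_k\to0$ for the relevant bodies, and the boundary equations then give $I\to0$ along $\gamma$. Since $I$ is constant on $\Gamma$ by Lemma~\ref{lem:U-constant}, we get $I\equiv0$ on $\Gamma$. At a real point, however, $I=\sum m_k|z_k|^2>0$, because the configuration is collision-free. Hence $\Gamma$ contains no real point.

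For part (c), the degenerate boundary point of Diagram~3 yields a local identity $\delta_{15}+2\delta_{13}=0$ on the branch through $P$, verified by the appendix computation. The function $\delta_{15}+2\delta_{13}$ is regular on $\Gamma$ and vanishes on an open subset of a branch, so it vanishes on the irreducible curve $\Gamma$. At a positive point this is impossible, since there $\delta_{15}+2\delta_{13}>0$.

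For part (d), with $r=9$, I use the distance estimates of \cite[\S8.9]{AK}. I list the orders of the ten $r_{kl}^2$ and check that their sum is positive. The collapsing distances should outweigh the diverging ones, because the same estimates make the pole of a four-product come from a specific pair whose order is controlled. For $r=5$, the diagram has a shrinking triangle and two outer bodies. Force-order estimates give the orders of the triangle's sides and of the outer distances. The triangle's shape converges to a critical point of the three-body problem, and nondegeneracy transverse to that critical manifold pins down the next-order terms. Summing the orders should again give a positive total.

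I expect the main obstacle to be the degenerate boundary points in part (a) and the Diagram~5 estimate in part (d). In both places the leading terms cancel, so the conclusion depends on a higher-order compatibility coefficient or on balancing sharp distance orders, rather than on the rank computation alone.
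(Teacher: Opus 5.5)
Your outline follows the paper's overall architecture. At several places, though, the step you treat as routine is where the actual argument lies, and for Diagram~7 the mechanism you state is not the one that works. In the rotated nested chart of Proposition~\ref{d7-cover}, the products $z_jw_j=(C+t^Pm_5u_j)\,t^{-Q}B_j$ $(j=3,4)$ carry the divergent factor $t^{-Q}$. The exact centre-of-mass relations give $\sum_km_kz_kw_k=-\bigl(\sum_km_k\bigr)CD\,t^{-Q}+O(t^{P-Q})+O(t^{R})$. Nothing in your sketch forces the individual $z_kw_k\to0$, and the $t^{-Q}$ part disappears only by virtue of the coordinate equations. The missing idea is the exact identity of Lemma~\ref{d7-identity}. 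On $A_m$ it gives $I=\sum_{k=1,2}m_kz_k(w_k-w_5)+\sum_{j=3,4}m_jw_j(z_j-z_5)+S_5$, where $S_5$ is the torque about body~$5$ of the crossed forces on the lower wing. Each term is a sum of $t^{e}$ times bounded functions, with $e\in\{R,\,P-Q,\,\tfrac Q2,\dots\}$, and these are all positive precisely when $Q=-(a+b)>0$. The link with the unbounded four-product is the dichotomy of Lemma~\ref{d7-bounded}: if $a+b\ge0$, every complementary triangle contains a wing pair, so every four-product tends to~$0$.

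Your uniform template for part~(a) also needs correction. Fixing weights from the diagram and then using rank or Lyapunov--Schmidt at the limit point fits Diagrams~2, 4 and~6. It does not fit Diagrams~1 and~8, where the diagram does not determine the exponents: $(\lambda,\mu,\rho,\sigma)$ ranges over the open cone \eqref{eq:d1-cone}, and the Diagram~8 order ratio $p$ ranges over $(-4,2)$. There ``the first nonzero coefficient'' depends on comparing linear forms in unknown exponents, and that comparison is the proof. For Diagram~1, the Morse--Bott bound $|U_T|\le c\,|(\phi,\psi)|^2$ of Lemma~\ref{lem:d1-estimate} gives $\ord U_T\ge\min(3\mu,\lambda+\mu)$. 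The virial identity gives $U_T=t^{3\mu/2}(I_T(y_0)+o(1))$ with $I_T(y_0)\ne0$. Together they force $\lambda\le\mu/2$, contrary to $\lambda>\mu/2$. For Diagram~8 the paper splits the range of $p$ and uses the exact upper-wing closure identity, the order-nine coefficient $\Xi$, and two incompatible quadratic balances. Even the finite limit does not follow from the order inequalities alone. In Diagrams~1 and~4 you first need the torque argument of Lemmas~\ref{lem:d1-central} and~\ref{lem:d4-central}, which shows that no triangle body sits at the $w$-centre of the triangle.

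Parts~(c) and~(d) also rely on inputs you do not mention.
\begin{itemize}
\item Diagram~9: the paper sums the distance orders only in the case \cite[\S8.9.2]{AK}, where both $w$-edges are maximal. The case \cite[\S8.9.1]{AK}, with $r_{14}\approx\epsilon^{1/4}$, is discarded first, because there $U=m_1m_4/r_{14}+O(1)$ would be unbounded, contradicting Lemma~\ref{lem:U-constant}. The estimate $r_{45}\approx1$ is not in \cite{AK}; it is derived from the difference of the $z$-equations of bodies $4$ and~$5$.
\item Diagram~5: ``summing the orders'' is the whole difficulty. The paper's case analysis over far and near outer bodies needs the boundedness of the outer inertia $J$ in \eqref{d5-eq-inertia}, which again comes from the constancy of $I$ on $\Gamma$; this is what disposes of a single receding outer body. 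The transverse nondegeneracy of the zero-multiplier triangle (Lemma~\ref{d5-lem-mb}) is used to show that the leading external forcing is the same at all three vertices (Lemma~\ref{d5-lem-uniform}), not to fix next-order terms.
\item Diagram~3: the identity $\delta_{15}+2\delta_{13}=0$ is neither read off at a degenerate boundary point nor verified by computer. The obstructions at orders $3$--$6$ reduce the problem to $m\propto(4,4,4,4,1)$ and $36$ boundary points. Lemma~\ref{lem:d3-twisted} exhibits explicit analytic germs through these points satisfying \eqref{eq:d3-identities}. Uniqueness of the analytic graph over $t$ (Lemma~\ref{lem:d3-graph}) then identifies $\gamma$ with one of them. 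Only after that does your irreducibility-and-positivity argument apply.
\end{itemize}
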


\begin{lemma}[Diagram 9]\label{lem:d9-product}
Let $m\in\Rp^5$ and let $\gamma$ be a Diagram~9 germ on an irreducible curve
$\Gamma\subset A_m$.  With the numbering of \cite[\S\S7.5.1.2 and~8.9]{AK}, the
$zw$-triangle is $123$ and the two $w$-edges are $14$ and $25$.  Then
\begin{equation}\label{eq:d9-distances}
\begin{gathered}
 r_{12},r_{13},r_{23}\approx\epsilon,\qquad
 r_{14},r_{25},r_{45}\approx1,\\
 r_{15},r_{24},r_{34},r_{35}\approx\epsilon^{-1/2}.
\end{gathered}
\end{equation}
Consequently $\Delta\approx\epsilon^2\to0$ and $\ord_\gamma\Delta>0$.
\end{lemma}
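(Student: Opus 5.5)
Since the lemma concerns only the asymptotic sizes of the ten mutual distances along a Diagram~9 germ, I will read them off from the orders of $z_{kl}$ and $w_{kl}$ supplied by \cite[\S8.9]{AK}, using Fact~\ref{fact:rules} and Lemma~\ref{lem:germ} to turn the relations $\prec,\approx$ into statements about Puiseux orders. I work in the normalization $Z=W=\epsilon^{-2}$ and use $r_{kl}^2=z_{kl}w_{kl}$, which is unchanged by the renormalization $z\mapsto az$, $w\mapsto a^{-1}w$ (Lemma~\ref{lem:germ}). It therefore suffices to establish, for each pair, the orders of $z_{kl}$ and $w_{kl}$ in $\epsilon$ and add them.

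First step. Diagram~9 consists of a $zw$-triangle $123$ together with the $w$-edges $14$ and $25$. By Estimate~1 of \cite[\S3]{AK}, a pair joined by strokes of both colours has $z_{kl}\approx\epsilon$ and $w_{kl}\approx\epsilon$. Hence $r_{12},r_{13},r_{23}\approx\epsilon$. For $14$ and $25$ there is only a $w$-stroke. I will use Estimate~2 to get $w_{14}\approx\epsilon^{a}$ and $z_{14}\approx\epsilon^{-a}$ (with the analogous statement for $25$), so that $r_{14}\approx1$ and $r_{25}\approx1$. This is exactly the content of the distance estimates stated in \cite[\S8.9]{AK}, where the $w$-stroke condition $W_{14}=w_{14}\delta_{14}^3\approx\epsilon^{-2}$ is combined with the bound on $z_{14}$ coming from the absence of a $z$-stroke and $z$-circles. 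For the other pairs, Rule~1 and Rule~2 imply that $z_{45}$ and $w_{45}$ are comparable to the corresponding differences $z_{4}-z_{5}$, and these are governed by the cluster positions. The main step is to reproduce the estimate of \cite[\S8.9]{AK} showing that body $4$ is linked to cluster $\{1,2,3\}$ through $1$ and body $5$ through $2$, at mutually reciprocal $z$- and $w$-scales. This gives $z_{15},z_{24},z_{34},z_{35}\approx\epsilon^{-1}$ with $w$ of order $1$ (or the symmetric exponents), so that $r^2\approx\epsilon^{-1}$, while the $45$ differences balance to $r_{45}\approx1$.

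I expect this step to be the main obstacle. It requires taking the Albouy--Kaloshin computations in \cite[\S8.9]{AK}, which are written for singular sequences, and recasting them as exact order statements along $\gamma$. In particular one must check that no cancellation occurs in $z_{15}=z_{14}+z_{45}-\dots$. My first attempt would be a triangle-inequality argument in each colour: since $z_{12}\approx\epsilon$ is small compared with $z_{14}$ and $z_{25}$, we get $z_{15}\approx z_{24}$ at the larger of the two orders, and similarly for $w$.

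Final step. Taking the product over the ten pairs, the exponents of $\epsilon$ in $\Delta=\prod r_{kl}^2$ are $2\cdot 3$ from the triangle, $0$ from $14,25,45$, and $-1\cdot4$ from the four long pairs. This gives $\Delta\approx\epsilon^{6-4}=\epsilon^{2}$. Because $\epsilon$ is comparable to a positive power of the germ parameter (Lemma~\ref{lem:germ}), it follows that $\ord_\gamma\Delta>0$.
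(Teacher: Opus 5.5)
Your proposal has a genuine gap at the step $r_{14},r_{25}\approx1$. Estimate~2 gives $r_{kl}\approx1$ only on a \emph{maximal} $w$-edge. For a $w$-edge that is not a $zw$-edge it gives only $\epsilon\prec r_{kl}\preccurlyeq1$. Combining $\ord W_{14}=-2$ with the absence of a $z$-stroke and the bound $\ord w_{14}\ge-2$, as you suggest, yields only $1<\ord z_{14}\le2$, so $\ord r_{14}^2=4-2\ord z_{14}\in[0,2)$, not $0$. The diagram does not force maximality. Accordingly \cite[\S8.9]{AK} has two cases: the clustering of \cite[\S8.9.1]{AK}, where one $w$-edge is not maximal and $r_{14}\approx\epsilon^{1/4}$, and \cite[\S8.9.2]{AK}, where both $w$-edges are maximal. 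Your estimates are those of the second case only. Excluding the first case needs a global input, which is why the lemma assumes that $\gamma$ lies on an irreducible curve $\Gamma$. In that case, equation~(41) of \cite{AK} and the estimates after it give $m_1m_2/r_{12}+m_1m_3/r_{13}+m_2m_3/r_{23}=O(\epsilon^2)$, and every other term of $U$ except $m_1m_4/r_{14}$ is bounded. Hence $U=m_1m_4/r_{14}+O(1)$ is unbounded along $\gamma$, contradicting the constancy of $U$ on $\Gamma$ (Lemma~\ref{lem:U-constant}). The same argument applies after exchanging $1\leftrightarrow2$ and $4\leftrightarrow5$.

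The estimate $r_{45}\approx1$ is also not justified by taking differences. With $Z=W=\epsilon^{-2}$, maximality gives $z_{14},z_{25}\approx\epsilon^2$ and $w_{14},w_{25}\approx\epsilon^{-2}$. So in the $z$-colour it is $z_{12}\approx\epsilon$ that dominates, the reverse of your comparison, and the four long pairs have $z_{ij}\approx\epsilon$, $w_{ij}\approx\epsilon^{-2}$; their products $r_{ij}^2\approx\epsilon^{-1}$ are as you say. Differences give $z_{45}\approx\epsilon$. However, $w_{45}=w_{41}+w_{12}+w_{25}$ contains two terms of order $\epsilon^{-2}$ that can cancel, so differences yield only $r_{45}^2\preccurlyeq\epsilon^{-1}$. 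The paper instead subtracts the $z$-equations of bodies $4$ and $5$. The crossed terms from $14,25$ are $O(\epsilon^2)$ and those from $15,24,34,35$ are $O(\epsilon^{5/2})$, so $(z_4-z_5)\bigl(1-(m_4+m_5)r_{45}^{-3}\bigr)=O(\epsilon^2)$, and $z_4-z_5\approx\epsilon$ forces $r_{45}^3\to m_4+m_5$. Your cruder bound would still give $\Delta\preccurlyeq\epsilon$, hence $\ord_\gamma\Delta>0$, but not the stated $r_{45}\approx1$ and $\Delta\approx\epsilon^2$. The first gap, by contrast, leaves an entire case of the lemma unproved.
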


\begin{proof}
We normalize the germ as in Lemma~\ref{lem:germ} and take $r_{ij}=\delta_{ij}^{-1}$,
so that the signs of the square roots are those of the given solution.
We first exclude the clustering in \cite[\S8.9.1]{AK}.  In that case,
$r_{14}\approx\epsilon^{1/4}$, and equation~(41) of \cite{AK}, together with the
estimates following it, gives
\[
 \frac{m_1m_2}{r_{12}}+\frac{m_1m_3}{r_{13}}+
 \frac{m_2m_3}{r_{23}}=O(\epsilon^2).
\]
Every term of $U$ outside this triangle, apart from $m_1m_4/r_{14}$, is bounded.  Hence
\[
 U=\frac{m_1m_4}{r_{14}}+O(1),\qquad
 \left|\frac{m_1m_4}{r_{14}}\right|\longrightarrow\infty,
\]
contrary to Lemma~\ref{lem:U-constant}.  The same argument applies after the
simultaneous exchanges $1\leftrightarrow2$ and $4\leftrightarrow5$.

We are therefore in the case of \cite[\S8.9.2]{AK}, where both $w$-edges are
maximal.  The estimates for the $zw$-triangle and the maximal $w$-edges give
\[
 z_{12},z_{13},z_{23},w_{12},w_{13},w_{23}\approx\epsilon,
 \qquad z_{14},z_{25}\approx\epsilon^2,
 \qquad w_{14},w_{25}\approx\epsilon^{-2}.
\]
Taking differences gives $z_{45}\approx\epsilon$ and, for
$ij\in\{15,24,34,35\}$,
\[
 z_{ij}\approx\epsilon,\qquad w_{ij}\approx\epsilon^{-2},
 \qquad r_{ij}\approx\epsilon^{-1/2}.
\]
Thus all estimates in \eqref{eq:d9-distances}, except that for $r_{45}$, follow.
To obtain the remaining estimate, subtract the $z$-equations of bodies $4$ and $5$.
The terms from pairs $14$ and $25$ are $O(\epsilon^2)$, and those from pairs
$15,24,34,35$ are $O(\epsilon^{5/2})$.  Therefore
\[
 (z_4-z_5)\bigl(1-(m_4+m_5)r_{45}^{-3}\bigr)=O(\epsilon^2).
\]
Since $z_4-z_5\approx\epsilon$ and $m_4+m_5>0$, we obtain
$r_{45}^3\to m_4+m_5$, and hence $r_{45}\approx1$.

Multiplying the ten squared-distance estimates now gives
\[
 \Delta\approx(\epsilon^2)^3\,1^3\,(\epsilon^{-1})^4
 =\epsilon^2\longrightarrow0.
\]
The meromorphic function $\Delta$ thus has positive order along $\gamma$.
\end{proof}

In the numbering of Lemma~\ref{lem:d9-product}, the same estimates give
\[
 p_{15}=r_{15}^2r_{23}^2r_{34}^2r_{24}^2\approx\epsilon^{-1},\qquad
 p_{24}=r_{24}^2r_{13}^2r_{35}^2r_{15}^2\approx\epsilon^{-1}.
\]
Thus Diagram~9 contributes unbounded four-products together with a vanishing total product.
This is the alternative used in the global argument, while Fact~\ref{fact:prop8}
provides boundedness for Diagrams~10--16.

\begin{proof}[Proof of Theorem~\ref{thm:main}]
Suppose that $A_m$ has infinitely many positive points.  Proposition~\ref{prop:reduction}
gives an irreducible curve $\Gamma$ containing a positive arc and a germ $\gamma_0$ on which
some four-product has a pole.  Theorem~\ref{thm:exclusion}(a)--(c) excludes every diagram
except Diagrams~5 and~9.  By part~(d),
\[
 \ord_{\gamma_0}\Delta>0.
\]

Let $\overline\Gamma$ be the smooth projective completion of the normalization of $\Gamma$.
All squared distances, four-products and $\Delta$ pull back to meromorphic functions on
$\overline\Gamma$.  By Lemma~\ref{lem:total-product}, $\Delta$ is nowhere zero on the affine
curve; its zero at $\gamma_0$ therefore implies that it is nonconstant.  A nonconstant
meromorphic function on the compact curve $\overline\Gamma$ has a pole, so choose a pole
of $\Delta$ and let $\gamma_1$ be the corresponding germ.  Taking orders in
\eqref{eq:total-product} gives
\[
 \sum_{i<j}\ord_{\gamma_1}p_{ij}=4\ord_{\gamma_1}\Delta<0,
\]
so $\ord_{\gamma_1}p_{ij}<0$ for at least one pair $ij$.
As in the proof of Proposition~\ref{prop:reduction}, Facts~\ref{fact:fig11} and
\ref{fact:prop8} and Lemma~\ref{lem:germ} put this germ in one of Diagrams~1--9.
It lies on the same curve $\Gamma$ through the positive arc, so
Theorem~\ref{thm:exclusion}(a)--(c) again leaves only Diagrams~5 and~9.  Part~(d) now gives
$\ord_{\gamma_1}\Delta>0$, contradicting the choice of a pole.
The finite covers and rotations used to restore the gauge after relabelling preserve
$\Delta$, the four-products and the existence of the positive arc, as explained in
Section~\ref{sec:ak}.
\end{proof}

The statements of Theorem~\ref{thm:exclusion} for Diagrams~1--8 are proved in
Sections~\ref{sec:d1}--\ref{sec:d8}, and the Diagram~9 statement is Lemma~\ref{lem:d9-product}.
Theorem~\ref{thm:d5} and Lemma~\ref{lem:d9-product} establish
$\ord_\gamma\Delta>0$ for every germ of Diagram~5 or~9.
Table~\ref{tab:exclusions} records the mass conditions, assumptions and conclusions for each
diagram.

\begin{table}[ht]

\centering
\small
\begin{tabular}{cllp{5.4cm}}
\hline
$r$ & mass condition & configurations & method and conclusion \\
\hline
1 & (23) on the triangle & complex & triangle virial and Morse--Bott estimate are incompatible \\
2 & $K_j$, $L_j$, $P_j$ & complex & isolation of every boundary label at orders 1--3,
with folds of corank one and two \\
3 & (29) $m_1m_3=m_2m_4$ & positive & orders 3--6; identity $\delta_{15}+2\delta_{13}=0$ \\
4 & (23) & complex & incompatibility of the force balances at orders one and three \\
5 & (23) & complex & direct force-order estimates give $\ord\Delta>0$; used in the global pole argument \\
6 & (32) $m_1=m_2$, $m_3=m_4$ & complex & finite family of coordinate charts \\
7 & none needed & real & inertia identity forces $I\to0$ \\
8 & (40) & complex & order-six and order-nine pairings; elimination on one curve of masses \\
9 & none needed & complex & the distance estimates give $\Delta\approx\epsilon^2\to0$ \\
\hline
\end{tabular}
\caption{The nine boundary arguments.  For Diagrams~5 and~9, the product of squared distances vanishes;
this conclusion is used in the global argument.  Numbers in parentheses refer to the equations of \cite[\S8]{AK}.}
\label{tab:exclusions}
\end{table}

}
Throughout, a statement about a diagram is meant for every renumbering of the bodies (with the
masses permuted) and for both colourings; both operations are symmetries of
\eqref{eq:system}.

\section{Diagram 1}\label{sec:d1}

We exclude Diagram~1 by comparing two orders of vanishing of the triangle potential.
The limiting triangle has a nondegenerate critical manifold, which gives a quadratic
bound in the forces; the virial identity gives an incompatible leading term.

{Diagram~1 is one of the two disconnected diagrams remaining in \cite[\S7.1, p.~565]{AK}.
It consists of the $z$-stroke $12$, with $z$-circles at $1$ and $2$, and the $w$-triangle
with strokes $34,35,45$ and no circle.  There are no other strokes or circles.
We use the numbering of \cite[\S8.1]{AK}: $P=\{1,2\}$ is the pair and $T=\{3,4,5\}$ is the
triangle.  Indices $i$ refer to bodies of $P$, and $k,l$ to bodies of $T$.  Put
$M_P=m_1+m_2$, $M_T=m_3+m_4+m_5$, $Z_P=M_P^{-1}(m_1z_1+m_2z_2)$,
$Z_T=M_T^{-1}\sum_km_kz_k$, and define $W_P,W_T$ in the same way.  The mass-centre identities
$\sum_jm_jz_j=\sum_jm_jw_j=0$ on $A_m$ were proved in Lemma~\ref{lem:distances}; they give
$M_PZ_P+M_TZ_T=0=M_PW_P+M_TW_T$.  Write the coordinate equations of \eqref{eq:system} as
$E^z_j=E^w_j=0$, where}
\[
  E^z_j=z_j-\sum_{l\ne j}m_l\delta_{jl}^3(z_j-z_l),\qquad
  E^w_j=w_j-\sum_{l\ne j}m_l\delta_{jl}^3(w_j-w_l).
\]
Along a germ, $a\prec b$, $a\preccurlyeq b$ and $a\approx b$ in the sense of \cite[\S3]{AK} mean
$\ord a>\ord b$, $\ord a\ge\ord b$ and $\ord a=\ord b$ (Lemma~\ref{lem:germ}).

{The first step is to control a triangle body that might approach the $w$-centre of the triangle
more closely than the other two.  The torque on the pair rules this out
(Lemma~\ref{lem:d1-central}), allowing every germ to be represented in one weighted chart
(Proposition~\ref{prop:d1-cover}).  The limiting triangle then has zero multiplier and its
masses satisfy relation~(23) of \cite[\S8.1]{AK} (Proposition~\ref{prop:d1-first}).
Near this limiting triangle, the potential vanishes quadratically in the forces.
The virial identity gives a lower vanishing order along the germ, and the two estimates
yield the required contradiction.}

\subsection{The chart}
{Let $\gamma$ be a Diagram~1 germ at $m\in\Rp^5$.  The symmetries of
\eqref{eq:system} allow us to renumber the bodies and exchange $z$ and $w$ so that its
diagram is as described above.  Since the coordinate equations are homogeneous of degree
one in $z$ and in $w$ separately, and the distance constraints involve only $z_{jl}w_{jl}$, the transformation
$(z,w)\mapsto(cz,c^{-1}w)$ preserves both the equations and the diagram
\cite[p.~547]{AK}.
By Rule~2e \cite[p.~552]{AK}, the three $z$-separations of the triangle have the same order,
as do its three $w$-separations.  Choosing $c$ to be a power of the parameter $t$, we normalize
the $w$-separations of the triangle to have order $0$.  This choice of rotation fixes the chart normalization.  Define
$\lambda,\mu,\rho,\sigma$ by}
\[
  \ord z_{12}=-\lambda,\qquad \ord z_{kl}=\mu,\qquad \ord Z_P=\mu+\rho,\qquad \ord W_P=\sigma,
\]
with $\rho=+\infty$ (resp.\ $\sigma=+\infty$) if $Z_P$ (resp.\ $W_P$) vanishes identically, in
which case the terms carrying $\hat Z$ (resp.\ $\hat W$) below are absent.
After a finite ramification of $t$ we write
\begin{equation}\label{eq:d1-chart}
\begin{gathered}
  z_i=t^{\mu+\rho}\hat Z+t^{-\lambda}A_i,\qquad w_i=t^{\sigma}\hat W+t^{\lambda}B_i
  \qquad(i\in P),\\
  z_k=-\tfrac{M_P}{M_T}\,t^{\mu+\rho}\hat Z+t^{\mu}\zeta_k,\qquad
  w_k=-\tfrac{M_P}{M_T}\,t^{\sigma}\hat W+\eta_k\qquad(k\in T),\\
  \delta_{12}=D_{12},\qquad\delta_{ik}=t^{\lambda/2}D_{ik},\qquad\delta_{kl}=t^{-\mu/2}D_{kl},
\end{gathered}
\end{equation}
where $(A_1,A_2)=(m_2,-m_1)\,\pi$, $(B_1,B_2)=(m_2,-m_1)\,\varpi$ and
$\sum_km_k\zeta_k=\sum_km_k\eta_k=0$.  {The chart coordinates are
$\hat Z,\hat W,\pi,\varpi$, the mass-centred triangle offsets $\zeta=(\zeta_k)$,
$\eta=(\eta_k)$, and the ten $D_{jl}$.  To obtain finite limiting equations, we multiply
$E^z_i$, $E^w_i$, $E^z_k$ and $E^w_k$ by $t^{\lambda}$, $t^{-\lambda}$, $t^{\mu/2}$ and
$t^{3\mu/2}$, respectively; the ten distance constraints retain weight $1$.
The exponents will belong to the open cone}
\begin{equation}\label{eq:d1-cone}
  C=\{\lambda>0,\ \mu>0,\ \sigma-\lambda>0,\ \rho+\tfrac32\mu>0,\ \rho+\lambda+\mu>0\}.
\end{equation}
For the triangle block put, with sums over $T$,
\[
  \phi_k=\sum_{l\ne k}m_lD_{kl}^3(\zeta_k-\zeta_l),\qquad
  \psi_k=\sum_{l\ne k}m_lD_{kl}^3(\eta_k-\eta_l),\qquad
  \chi_{kl}=D_{kl}^2(\zeta_l-\zeta_k)(\eta_l-\eta_k)-1,
\]
$U_T=\sum_{k<l}m_km_lD_{kl}$ and $I_T=\sum_km_k\zeta_k\eta_k$; note
$\sum_km_k\phi_k=\sum_km_k\psi_k=0$ identically.  The crossed forces are
$X^z_k=\sum_{i\in P}m_i\delta_{ik}^3(z_k-z_i)$ and $X^w_k=\sum_{i\in P}m_i\delta_{ik}^3(w_k-w_i)$.

{\begin{lemma}\label{lem:d1-chart}
After substitution of \eqref{eq:d1-chart} and multiplication by the prescribed weights,
each of the ten coordinate equations and ten distance constraints is a finite sum $\sum_ec_e\,t^e$.
The exponents $e$ are linear forms in $(\lambda,\mu,\rho,\sigma)$, $e=0$ occurs, and each
$c_e$ is a polynomial in the chart coordinates.  Every exponent $e\ne0$ is a nonnegative
rational combination of the five forms in \eqref{eq:d1-cone}, with at least one positive
coefficient.  The constant coefficients $c_0$ are independent of $\hat Z$ and $\hat W$.
For $k\in T$,
\[
  t^{\mu/2}E^z_k=t^{\mu/2}(z_k-X^z_k)-\phi_k,\qquad
  t^{3\mu/2}E^w_k=t^{3\mu/2}(w_k-X^w_k)-\psi_k,
\]
and the exponents occurring in $t^{\mu/2}(z_k-X^z_k)$, respectively $t^{3\mu/2}(w_k-X^w_k)$, are
\[
\begin{gathered}
  \tfrac32\mu,\ \tfrac12(\lambda+\mu),\ \tfrac32(\lambda+\mu),\ \tfrac32\mu+\rho,\
  \tfrac32(\lambda+\mu)+\rho,\\
  \text{respectively}\\
  \tfrac32\mu,\ \tfrac32(\lambda+\mu),\ \tfrac52\lambda+\tfrac32\mu,\ \sigma+\tfrac32\mu,\
  \tfrac32(\lambda+\mu)+\sigma .
\end{gathered}
\]
\end{lemma}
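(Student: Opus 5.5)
The plan is a direct substitution of the chart \eqref{eq:d1-chart} into each of the twenty equations, followed by bookkeeping of exponents. I would organise it by equation type: the pair equations $E^z_i,E^w_i$, the triangle equations $E^z_k,E^w_k$, and the distance constraints for pairs $12$, $ik$ and $kl$. First compute the orders of the separations in the chart. Since $A_1-A_2=M_P\pi$, we have $z_{12}=-t^{-\lambda}M_P\pi$ and $w_{12}=-t^{\lambda}M_P\varpi$. For the triangle, $z_{kl}=t^{\mu}(\zeta_l-\zeta_k)$ and $w_{kl}=\eta_l-\eta_k$. For crossed pairs,
\[
z_{ik}=-\tfrac{M}{M_T}t^{\mu+\rho}\hat Z-t^{-\lambda}A_i+t^{\mu}\zeta_k,\qquad
w_{ik}=-\tfrac{M}{M_T}t^{\sigma}\hat W-t^{\lambda}B_i+\eta_k,
\]
where $M=M_P+M_T$.

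The distance constraints are then immediate. Pair $12$ gives $D_{12}^2M_P^2\pi\varpi-1$, with exponent $0$ only. Pair $kl$ gives $\chi_{kl}$, with exponent $0$ only. Pair $ik$ gives $t^{\lambda}D_{ik}^2z_{ik}w_{ik}-1$. Expanding the product yields exponents
\[
\lambda+(\mu+\rho,\,-\lambda,\,\mu)+(\sigma,\,\lambda,\,0).
\]
The leading term $-A_i\eta_k D_{ik}^2$ has exponent $0$. Every other exponent must be written as a nonnegative combination of the five cone forms; for instance $\lambda+\mu=\lambda+\mu$ and $\rho+\lambda+\mu$ appear directly, and $\sigma$ is $(\sigma-\lambda)+\lambda$.

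Next, the triangle equations. Splitting the sum over $l\ne k$ into $l\in T$ and $l\in P$ gives exactly the displayed decomposition, since $t^{\mu/2}\,t^{-3\mu/2}t^{\mu}=1$ for the $z$-block and $t^{3\mu/2}t^{-3\mu/2}=1$ for the $w$-block. This produces $-\phi_k$ and $-\psi_k$ at order $0$.

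Then I would list the exponents in $t^{\mu/2}z_k$, which are $\tfrac32\mu+\rho$ and $\tfrac32\mu$. For $t^{\mu/2}X^z_k$, the factor $\delta_{ik}^3=t^{3\lambda/2}D_{ik}^3$ multiplies $z_{ik}$, giving $\tfrac12\mu+\tfrac32\lambda$ plus $\mu+\rho$, $-\lambda$ or $\mu$. This yields $\tfrac32(\lambda+\mu)+\rho$, $\tfrac12(\lambda+\mu)$ and $\tfrac32(\lambda+\mu)$. The $w$ list follows the same way: $t^{3\mu/2}w_k$ gives $\sigma+\tfrac32\mu$ and $\tfrac32\mu$, and $t^{3\mu/2}X^w_k$ gives $\tfrac32\mu+\tfrac32\lambda$ plus $\sigma$, $\lambda$ or $0$. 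The $\hat Z$ terms in $z_k-X^z_k$ do not cancel in general, but they carry positive exponents, so $c_0$ does not involve them.

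For the pair equations, the first step is to multiply $E^z_i$ by $t^{\lambda}$. Then $t^{\lambda}z_i$ contributes $A_i$ at order $0$ and $t^{\lambda+\mu+\rho}\hat Z$. The $12$ term contributes $t^{\lambda}m_{j}D_{12}^3z_{12}$ at order $0$. The crossed terms contribute $t^{\lambda+3\lambda/2}z_{ik}$, with exponents $\tfrac52\lambda+\mu+\rho$, $\tfrac32\lambda$ and $\tfrac52\lambda+\mu$. Similarly $t^{-\lambda}E^w_i$ gives $B_i$ and the $12$ term at order $0$, together with $\sigma-\lambda$ and crossed exponents $\tfrac12\lambda+\sigma$, $\tfrac32\lambda$ and $\tfrac12\lambda$.

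The last step, and the main obstacle, is verifying for each listed exponent that it is a nonnegative rational combination of $\lambda,\mu,\sigma-\lambda,\rho+\tfrac32\mu,\rho+\lambda+\mu$ with a positive coefficient. Here $\rho$ may be negative, so terms containing $\rho$ must be absorbed by the last two forms. Examples:
\[
\tfrac32\mu+\rho=(\rho+\tfrac32\mu),\qquad
\tfrac32(\lambda+\mu)+\rho=(\rho+\tfrac32\mu)+\tfrac32\lambda,\qquad
\tfrac52\lambda+\mu+\rho=(\rho+\lambda+\mu)+\tfrac32\lambda,
\]
\[
\lambda+\mu+\rho=(\rho+\lambda+\mu),\qquad
\lambda+\sigma=(\sigma-\lambda)+2\lambda .
\]
Finally, $e=0$ occurs because, for example, $\phi_k$ is present. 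The $\hat Z,\hat W$ independence of $c_0$ holds because every $\hat Z$ or $\hat W$ term carries an exponent involving $\rho$ or $\sigma$, all of which were shown to be positive.
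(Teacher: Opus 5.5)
Your proposal is correct and follows essentially the same route as the paper. Both substitute the chart into all twenty rows; the paper packages the crossed differences as $z_i-z_k=t^{-\lambda}a_{ik}$ and $w_i-w_k=-b_{ik}$, where you expand them directly. Both then write each nonzero exponent using $\sigma=\lambda+(\sigma-\lambda)$ and absorb every $\rho$-term into $\rho+\lambda+\mu$ or $\rho+\tfrac32\mu$. Your exponent lists for every row agree with the paper's, and the independence of $c_0$ from $\hat Z,\hat W$ follows exactly as you argue.
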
}
{\begin{proof}
Put $h=M_P/M_T$ and, for $i\in P$, $k\in T$, set
\[
 a_{ik}=A_i-t^{\lambda+\mu}\zeta_k
             +(1+h)t^{\lambda+\mu+\rho}\hat Z,\qquad
 b_{ik}=\eta_k-t^\lambda B_i-(1+h)t^\sigma\hat W.
\]
Then $z_i-z_k=t^{-\lambda}a_{ik}$ and $w_i-w_k=-b_{ik}$.
The weighted coordinate equations are
\begin{align*}
 t^\lambda E^z_i
 &=A_i(1-M_PD_{12}^3)+t^{\lambda+\mu+\rho}\hat Z
       -t^{3\lambda/2}\sum_km_kD_{ik}^3a_{ik},\\
 t^{-\lambda}E^w_i
 &=B_i(1-M_PD_{12}^3)+t^{\sigma-\lambda}\hat W
       +t^{\lambda/2}\sum_km_kD_{ik}^3b_{ik},\\
 t^{\mu/2}E^z_k
 &=-\phi_k+t^{3\mu/2}\zeta_k-h t^{3\mu/2+\rho}\hat Z
       +t^{(\lambda+\mu)/2}\sum_im_iD_{ik}^3a_{ik},\\
 t^{3\mu/2}E^w_k
 &=-\psi_k+t^{3\mu/2}\eta_k-h t^{\sigma+3\mu/2}\hat W
       -t^{3(\lambda+\mu)/2}\sum_im_iD_{ik}^3b_{ik}.
\end{align*}
The distance constraints are, respectively,
$M_P^2D_{12}^2\pi\varpi-1$, $-D_{ik}^2a_{ik}b_{ik}-1$ and $\chi_{kl}$.
Every nonconstant power is a nonnegative rational combination of the five forms
defining $C$, with at least one positive coefficient.  Indeed,
$\sigma=\lambda+(\sigma-\lambda)$, and the terms involving $\rho$ are generated by
$\lambda+\mu+\rho$ and $3\mu/2+\rho$.  Expanding $a_{ik}$ and $b_{ik}$ gives the two
lists of exponents in the statement.  The constant terms give the limiting equations
described below.
\end{proof}

}{The constant terms $c_0$ are $A_i(1-M_PD_{12}^3)$ and
$B_i(1-M_PD_{12}^3)$ for the pair equations, $-\phi_k$ and $-\psi_k$ for the triangle
equations, and $M_P^2D_{12}^2\pi\varpi-1$, $-D_{ik}^2A_i\eta_k-1$ and $\chi_{kl}$ for the
distance constraints within the pair, between the pair and triangle, and within the triangle.
We call their common zero set, in the coordinates other than $\hat Z,\hat W$, the
\emph{boundary fibre} $\mathcal B_1$.  On $\mathcal B_1$,
$D_{12}^3=M_P^{-1}$ and $D_{ik}^2A_i\eta_k=-1$, while
$(\zeta,\eta,D_{34},D_{35},D_{45})$ satisfies the \emph{triangle system with zero multiplier}
$\phi=\psi=0$, $\chi=0$.}

\subsection{Every germ lies in the chart}
{We now prove that every Diagram~1 germ has a finite limit in the chart
\eqref{eq:d1-chart}.  We use Rules 1e and 2e and Estimates 1 and 2 of \cite[\S3]{AK}
(Fact~\ref{fact:rules}), together with \cite[Prop.~1]{AK} (Fact~\ref{fact:prop1}).
The following identities express the forces on each component and the torque on the pair.}

\begin{lemma}\label{lem:d1-syzygies}
{The following polynomial identities hold:}
\begin{gather*}
  m_1E^w_1+m_2E^w_2=m_1w_1+m_2w_2-\sum_{i\in P,\,k\in T}m_im_k\delta_{ik}^3(w_i-w_k),\\
  \sum_{k\in T}m_kE^z_k=\sum_{k\in T}m_kz_k-\sum_{i\in P,\,k\in T}m_im_k\delta_{ik}^3(z_k-z_i),\\
  T_P:=\sum_{i\in P,\,k\in T}m_im_k\delta_{ik}^3(z_iw_k-w_iz_k)
  =-\sum_{i\in P}m_i\big(w_iE^z_i-z_iE^w_i\big).
\end{gather*}
In particular the torque $T_P$ of the triangle on the pair vanishes on $A_m$.
\end{lemma}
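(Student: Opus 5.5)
These are direct polynomial identities, so I will verify them by expanding $E^z_j,E^w_j$ and using antisymmetry of the pairwise force terms within each cluster.

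For the first identity, I expand
\[
m_1E^w_1+m_2E^w_2=m_1w_1+m_2w_2-\sum_{i\in P}\sum_{l\ne i}m_im_l\delta_{il}^3(w_i-w_l).
\]
I split the inner sum into $l\in P$ and $l\in T$. The terms with $l\in P$ are the single pair $\{1,2\}$. They contribute $m_1m_2\delta_{12}^3\bigl((w_1-w_2)+(w_2-w_1)\bigr)=0$, because $\delta_{12}=\delta_{21}$. What remains is exactly the crossed sum over $i\in P,\ k\in T$.

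The second identity is proved in the same way, summing over $k\in T$. The internal triangle terms cancel pairwise by the symmetry $m_km_l\delta_{kl}^3(z_k-z_l)+m_lm_k\delta_{lk}^3(z_l-z_k)=0$. This leaves $\sum_k m_kz_k-\sum_{k\in T,\,i\in P}m_km_i\delta_{ik}^3(z_k-z_i)$.

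For the torque, I compute $\sum_{i\in P}m_i(w_iE^z_i-z_iE^w_i)$. The linear parts give $m_i(w_iz_i-z_iw_i)=0$. The force part is
\[
-\sum_{i\in P}\sum_{l\ne i}m_im_l\delta_{il}^3\bigl(w_i(z_i-z_l)-z_i(w_i-w_l)\bigr)
=-\sum_{i\in P}\sum_{l\ne i}m_im_l\delta_{il}^3\,(z_iw_l-w_iz_l).
\]
The expression $z_iw_l-w_iz_l$ is antisymmetric in $(i,l)$ and $\delta_{il}^3$ is symmetric. Hence the $l\in P$ contribution (the pair $12$ counted both ways) cancels. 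Only $l=k\in T$ survives, giving $-\sum_{i\in P,k\in T}m_im_k\delta_{ik}^3(z_iw_k-w_iz_k)=-T_P$.

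Thus $T_P=-\sum_i m_i(w_iE^z_i-z_iE^w_i)$. Since $E^z_i=E^w_i=0$ on $A_m$, the torque $T_P$ vanishes there. The only care needed is sign bookkeeping in the torque identity. Everything is elementary; I expect no real obstacle beyond checking that the cross terms $w_iz_i$ cancel.
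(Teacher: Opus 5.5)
Your proposal is correct and follows the paper's argument: expand $E^z_j,E^w_j$, let the internal force terms cancel in pairs within $P$ and within $T$ (symmetry of $\delta_{kl}$ against antisymmetry of $z_k-z_l$, $w_k-w_l$, and of $z_iw_l-w_iz_l$ in the torque), and keep only the crossed terms. Your sign bookkeeping for the torque, $w_i(z_i-z_l)-z_i(w_i-w_l)=z_iw_l-w_iz_l$, is also right.
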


\begin{proof}
{The internal force terms cancel in pairs within $P$ and within $T$.
The same cancellation in the weighted coordinate equations gives the torque identity.}
\end{proof}

\begin{lemma}\label{lem:d1-orders}
Let $\gamma$ be a Diagram~1 germ, normalized as above.
\begin{romanlist}
  \item $\ord\delta_{12}=0$ and $\ord w_{12}=\lambda>0$; the least order of the components of
  $\mathcal Z$ is $\ord z_1=\ord z_2=\ord Z_{12}=-\lambda$.
  \item $\mu>0$, $\ord\delta_{kl}=\ord Z_{kl}=-\mu/2$, and $\lambda>\mu/2$.
  \item $\ord z_{ik}=-\lambda$, $\ord w_{ik}<\lambda$ and $\ord w_i\ge\lambda$; hence
  $\ord W_T=\ord W_P\ge\lambda$.
  \item For $k\in T$ let $o_k$ be the coefficient of $t^0$ in $w_k-W_T$.  At most one $o_k$
  vanishes.  If $o_k\ne0$, then $\ord w_{ik}=0$ and $\ord\delta_{ik}=\lambda/2$.  If $o_k=0$, then
  $f=\ord(w_k-W_T)$ satisfies $0<f<\lambda$, and $\ord w_{ik}=f$, $\ord\delta_{ik}=(\lambda-f)/2$.
\end{romanlist}
\end{lemma}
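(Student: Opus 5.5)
We read off the diagram directly, as in Lemma~\ref{lem:germ}, and use the syzygies of Lemma~\ref{lem:d1-syzygies} together with Rules~1e, 2e and Estimates~1--2 of \cite[\S3]{AK}. Throughout, the normalization fixes $\ord w_{kl}=0$ for $k,l\in T$, and all statements are about orders of Puiseux series.

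\emph{Item (i).} The $z$-diagram has circles at $1,2$ and the stroke $12$, and nothing else. So $z_1$, $z_2$ and $Z_{12}=z_{12}\delta_{12}^3$ all have the least order in $\mathcal Z$. Since $\ord z_1=\ord z_2$ is minimal and the circles are not cancelled, this common order is $-\lambda=\ord z_{12}$, which forces $\ord\delta_{12}=0$. From $\delta_{12}^2z_{12}w_{12}=1$ we get $\ord w_{12}=\lambda$. It remains to show $\lambda>0$: $\mathcal Z$ is unbounded along a singular germ (or we use the renormalization $Z=W=\epsilon^{-2}$ of Lemma~\ref{lem:germ}), so its minimal order is negative.

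\emph{Item (ii).} The $w$-triangle strokes are maximal in $\mathcal W$ and are equal in order by Rule~2e, so $\ord W_{kl}=\ord\delta_{kl}^3$ is the least $w$-order. Combining this with $\ord w_{kl}=0$ and $\delta_{kl}^2z_{kl}w_{kl}=1$ gives $\ord\delta_{kl}=-\mu/2$ and $\ord Z_{kl}=\mu-3\mu/2=-\mu/2$. Because $Z_{kl}$ is not a $z$-stroke, we have $-\mu/2>-\lambda$, i.e.\ $\lambda>\mu/2$. Because the triangle $w$-strokes dominate the $w$-circles and $W$ is unbounded, $\ord\delta_{kl}^3<0$, which gives $\mu>0$.

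\emph{Item (iii).} Since $\ord z_k>-\lambda$ (no circle at $k$) while $\ord z_i=-\lambda$, we get $\ord z_{ik}=-\lambda$. Next, $\ord W_{ik}>\ord W_{kl}$ (no $w$-stroke $ik$) and $\delta_{ik}^2=(z_{ik}w_{ik})^{-1}$ imply $\tfrac32\lambda-\tfrac12\ord w_{ik}>-\tfrac32\mu$ at first, but we need a sharper bound. For $\ord w_{ik}<\lambda$ we will use Estimate~1/2 (the pair, being a $z$-cluster, has $w$-extent smaller than its distance to the triangle), or argue by contradiction: if $\ord w_{ik}\ge\lambda$ then $\ord\delta_{ik}\le0$, and the crossed term $m_k\delta_{ik}^3z_{ik}$ in $E^z_i$ would have order $\le-\lambda$ with a non-cancelling sign pattern, producing a $z$-stroke $ik$, which the diagram excludes. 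For $\ord w_i\ge\lambda$, the first syzygy applies: the $w$-equation of the pair gives $w_i=\sum m_l\delta_{il}^3(w_i-w_l)$, and each term is bounded by the absence of $w$-circles and strokes at $i$ relative to order $\lambda$; this uses Rule~1e, since a body without circle has coordinate dominated by its forces. The centre-of-mass identity $M_PW_P+M_TW_T=0$ then gives $\ord W_T=\ord W_P\ge\lambda$.

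\emph{Item (iv).} This is the main obstacle. Since $\ord W_T\ge\lambda>0$ and $\ord w_i\ge\lambda$, we have $w_{ik}=w_k-w_i=(w_k-W_T)+O(t^\lambda)$, so $\ord w_{ik}=\ord(w_k-W_T)$ whenever the latter is $<\lambda$. Since the triangle $w$-separations have order $0$ and are nonzero at leading order, at most one of the three $o_k$ can vanish; two vanishing would give $\ord w_{kl}>0$. When $o_k\ne0$ we get $\ord w_{ik}=0$, and the constraint yields $\ord\delta_{ik}=\lambda/2$. When $o_k=0$, we must show $f<\lambda$. This is exactly where the torque $T_P=0$ enters: if $w_k-W_T=O(t^\lambda)$, the torque sum $\sum m_im_k\delta_{ik}^3(z_iw_k-w_iz_k)$ is dominated by the terms from the two other triangle bodies. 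Their $\delta_{ik}$ have order $\lambda/2$ and $w_k-W_T\approx o_k$, giving a leading term $\propto\sum_{k:o_k\ne0}m_kD_{ik}^3o_k\cdot\sum_i m_iA_i$-type contributions. By $\sum m_ko_k=0$ these reduce to a nonzero multiple of $o_{k'}$, a contradiction. This is the ``central body'' control referred to before Lemma~\ref{lem:d1-central}. The constraint then gives $\ord\delta_{ik}=(\lambda-f)/2$, and $f>0$ holds by definition of $o_k=0$.
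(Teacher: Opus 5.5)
\textbf{There is a genuine gap in (iv), and a smaller one in (i)--(ii).} Apart from these points, your outline of (ii) and (iii) matches the paper.

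\textbf{Item (iv).} You call $f<\lambda$ the main obstacle and invoke the torque for it. That step is handled incorrectly, and no torque is needed. You already proved $\ord w_{ik}<\lambda$ in (iii). If $o_k=0$ and $f\ge\lambda$, then $w_{ik}=(w_k-W_T)+(W_T-w_i)$ with $\ord(W_T-w_i)\ge\lambda$ would give $\ord w_{ik}\ge\lambda$, a contradiction. That one line is the paper's argument.

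Your torque sketch would not go through, for three reasons.
\begin{itemize}
\item For the body with $o_k=0$ one has $\ord w_{ik}>0$, hence $\ord\delta_{ik}=\frac12(\lambda-\ord w_{ik})<\frac12\lambda$. So the torque terms containing $\delta_{ik}^3$ are the dominant ones, not the negligible ones. For $0<f<\lambda$ they have order $\frac12(\lambda-f)$, against $\frac12\lambda$ for the other two triangle bodies.
\item The factor $\sum_i m_iA_i$ in your proposed leading term vanishes identically, since $m_1A_1+m_2A_2=0$ in the chart. The asserted reduction to ``a nonzero multiple of $o_{k'}$'' has no computation behind it.
\item A torque contradiction drawn from $o_k=0$ alone would prove $o_k\ne0$. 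That is the separate Lemma~\ref{lem:d1-central}. There the central body alone supplies the leading coefficient $-m_3(m_1d_1+m_2d_2)$, and the competing terms are controlled only in the modified chart of Lemma~\ref{lem:d1-torque}. The present statement deliberately still allows $o_k=0$.
\end{itemize}

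\textbf{Items (i) and (ii): the signs of $\lambda$ and $\mu$.} These orders are measured after the rotation $(z,w)\mapsto(cz,c^{-1}w)$ that gives the triangle $w$-separations order $0$. That rotation shifts every $z$-order by $\ord c$ and every $w$-order by $-\ord c$. So unboundedness of $\mathcal Z$ or $\mathcal W$ in the frame $Z=W=\epsilon^{-2}$ says nothing about the sign of their least orders here. Only same-colour comparisons and products like $z_{kl}w_{kl}$ are frame-independent, so ``$\mathcal Z$ is unbounded'' and ``$W$ is unbounded'' cannot be used.

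Your phrase about the strokes dominating points the right way. Since no body is $w$-circled, $\ord w_k>\ord W_{kl}=3\ord\delta_{kl}$ for every $k$. Hence $0=\ord w_{kl}\ge\min(\ord w_k,\ord w_l)>3\ord\delta_{kl}$, which gives $\mu>0$. The paper instead gets $r_{kl}\prec1$ from Rule~1e and Estimate~2. Then $\lambda>\mu/2>0$ follows from your own (ii), or directly from $w_{12}\approx\epsilon^2\prec\epsilon\preccurlyeq w_{34}$ as in the paper.

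\textbf{Two justifications to tighten.}
\begin{itemize}
\item In (i), ``the circles are not cancelled'' needs a reason. The mass-centre identity forces the leading coefficients $c_1,c_2$ of $z_1,z_2$ to satisfy $m_1c_1+m_2c_2=0$. Hence $\ord z_{12}=\ord z_1$, and $\ord\delta_{12}=0$ follows. This is a valid, more elementary alternative to Rule~1e plus Estimate~2.
\item In (iii), $\ord w_i\ge\lambda$ does not come from Rule~1e or from the absence of strokes at $i$. It comes from the order count in the $w$-equation of body $i$. The pair term has order $\ord w_{12}=\lambda$, and each crossed term has order $\frac32\lambda-\frac12\ord w_{ik}>\lambda$ by the bound $\ord w_{ik}<\lambda$.
\end{itemize}
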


\begin{proof}
(i) The isolated component $\{1,2\}$ of the $z$-diagram has circled vertices, so it has a
maximal $z$-stroke by Rule~1e \cite[p.~546]{AK}, which can only be $12$.  Estimate~2
\cite[p.~548]{AK} gives $r_{12}\approx1$, so $\ord\delta_{12}=0$ and $\ord w_{12}=-\ord
z_{12}=\lambda$.  Estimate~1 \cite[p.~548]{AK} with $z$ and $w$ exchanged gives
$w_{12}\approx\epsilon^2$, and Estimate~2 with $z$ and $w$ exchanged gives $w_{34}\succcurlyeq\epsilon$
for the $w$-stroke $34$.  Hence $w_{12}\prec w_{34}$; a comparison within one colour is not
affected by the normalizations, so $\lambda>\ord w_{34}=0$.  Finally $1,2$ are $z$-circled and
$12$ is a $z$-stroke, and $\ord Z_{12}=\ord z_{12}+3\ord\delta_{12}=-\lambda$.

(ii) Bodies $1,2$ carry no $w$-stroke, so $T$ is an isolated component of the $w$-diagram
without $w$-circles.  By Rule~1e it has no maximal $w$-stroke.  Estimate~2 with $z$ and $w$
exchanged gives $r_{kl}\preccurlyeq1$, with $r_{kl}\approx1$ only for a maximal $w$-edge, so
$r_{kl}\prec1$; thus $\mu=\ord(z_{kl}w_{kl})=2\ord r_{kl}>0$, $\ord\delta_{kl}=
-\mu/2$ and $\ord Z_{kl}=\mu-\frac32\mu=-\frac12\mu$.  Diagram~1 has no $z$-stroke inside $T$, so
$\ord Z_{kl}>-\lambda$, that is, $\lambda>\mu/2$.

(iii) The bodies of $T$ are not $z$-circled, so $\ord z_k>-\lambda=\ord z_i$ and $\ord
z_{ik}=-\lambda$.  Since $Z_{ik}^2=(z_{ik}w_{ik}^3)^{-1}$, $\ord Z_{ik}=\frac12(\lambda-3\ord
w_{ik})$, and the absence of a $z$-stroke $ik$ gives $\ord Z_{ik}>-\lambda$, i.e.\ $\ord
w_{ik}<\lambda$.  By Estimate~1 ($z$ and $w$ exchanged) $w_{2k}\succ\epsilon^2\approx w_{12}$,
because $12$ is the only maximal $z$-edge, and $z_{2k}\approx z_{12}$.  So
\cite[Prop.~1, p.~559]{AK} with $z$ and $w$ exchanged, applied to bodies $1,2$ as in the proof of
\cite[Prop.~4, p.~564]{AK}, gives $w_1,w_2\preccurlyeq w_{12}$, i.e.\ $\ord w_i\ge\lambda$.  This
also follows from Lemma~\ref{lem:d1-syzygies}: every term of $M_PW_P=\sum m_im_k\delta_{ik}^3
(w_i-w_k)$ has order $\frac12(3\lambda-\ord w_{ik})>\lambda$, and $w_i-W_P$ has order
$\ord w_{12}=\lambda$.

(iv) Since $\sum_km_ko_k=0$, and the $o_k$ do not all vanish because $\ord w_{kl}=0$, at most one
$o_k$ vanishes.  Now $w_k-w_i=(w_k-W_T)+(W_T-w_i)$ with $\ord(W_T-w_i)\ge\lambda>0$ by (iii).  If
$o_k\ne0$ this gives $\ord w_{ik}=0$.  If $o_k=0$ then $f>0$; $f\ge\lambda$ would give $\ord
w_{ik}\ge\lambda$, against (iii), so $f<\lambda$ and $\ord w_{ik}=f$.  In both cases
$\ord\delta_{ik}=-\frac12(\ord z_{ik}+\ord w_{ik})$.
\end{proof}

{\begin{lemma}\label{lem:d1-torque}
Let $\lambda,\mu,f>0$ with $f<\lambda$ and let $\rho,\sigma$ be real.  Modify
\eqref{eq:d1-chart} by $(\eta_3,\eta_4,\eta_5)=(t^f\omega,\ m_5\vartheta,\
-m_3m_5^{-1}t^f\omega-m_4\vartheta)$ and $\delta_{i3}=t^{(\lambda-f)/2}d_i$ $(i=1,2)$, all other
entries unchanged.  Then
\[
  T_P=t^{(\lambda-f)/2}\Big(m_3\sum_{i=1,2}m_id_i\,\big(d_i^2A_i\omega\big)+\sum_{g\in G}c_g\,t^g\Big),
\]
where the $c_g$ are polynomials in the chart coordinates and $G$ consists of the thirteen
exponents
\[
\begin{gathered}
  \tfrac12f,\ \tfrac32f,\ \sigma-f,\ \sigma+\tfrac12f,\ 2\lambda+\mu-f,\
  2\lambda+\mu+\tfrac12f,\ \lambda+\mu+\rho,\ \lambda+\mu+\sigma-f,\ 2\lambda+\mu+\rho-f,\\
  \lambda+\mu+\rho+\tfrac12f,\ \lambda+\mu+\sigma+\tfrac12f,\ 2\lambda+\mu+\rho+\tfrac12f,\
  \lambda+\mu+\rho+\tfrac32f,
\end{gathered}
\]
each a combination of $\lambda,\ \mu,\ \sigma-\lambda,\ \rho+\lambda+\mu,\ f,\ \lambda-f$ with
nonnegative rational coefficients, at least one of them positive.  Moreover, if
$d_i^2A_i\omega=-1$ for $i=1,2$, then $(m_1d_1)^2-(m_2d_2)^2=-(m_1^3+m_2^3)/(m_1A_1\omega)$.
\end{lemma}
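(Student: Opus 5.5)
The plan is to expand $T_P=\sum_{i\in P,k\in T}m_im_k\delta_{ik}^3(z_iw_k-w_iz_k)$ directly in the modified chart. I would first rewrite each summand as $z_iw_k-w_iz_k=z_i(w_k-w_i)-w_i(z_k-z_i)$. With the notation $a_{ik},b_{ik}$ of the proof of Lemma~\ref{lem:d1-chart}, this is $z_i w_{ik}+w_i(z_i-z_k)$. Here $w_k-w_i=b_{ik}$ up to sign, $z_i-z_k=t^{-\lambda}a_{ik}$, $z_i=t^{\mu+\rho}\hat Z+t^{-\lambda}A_i$ and $w_i=t^\sigma\hat W+t^\lambda B_i$. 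The term $w_i(z_i-z_k)$ gives exponents $\sigma-\lambda$ and $0$ times $a_{ik}$, and $z_iw_{ik}$ gives $t^{-\lambda}A_ib_{ik}$ plus higher terms. Each product is then multiplied by $m_im_k\delta_{ik}^3$. For $k=4,5$ this factor is $t^{3\lambda/2}D_{ik}^3$, and for $k=3$ it is $t^{3(\lambda-f)/2}d_i^3$.

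Next I would isolate the leading contribution. For $k=3$ the dominant piece is $m_im_3t^{3(\lambda-f)/2}d_i^3\,t^{-\lambda}A_i\,\eta_3$ with $\eta_3=t^f\omega$. This yields $t^{(\lambda-f)/2}m_3m_id_i(d_i^2A_i\omega)$, which explains the displayed leading coefficient. For $k=4,5$ the analogous piece is $t^{\lambda/2}m_im_kD_{ik}^3A_i\eta_k$, with $\eta_k$ of order $0$ (involving $\vartheta$) or $t^f$. Relative to the prefactor $t^{(\lambda-f)/2}$, this gives the exponents $\tfrac12f$ and $\tfrac32f$. The remaining cross terms come from $\hat Z,\hat W,B_i$ and $\zeta_k$, through $b_{ik}=\eta_k-t^\lambda B_i-(1+h)t^\sigma\hat W$ and $a_{ik}$. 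I would check that the pure $t^\lambda B_i$ contributions cancel, because $\sum_i m_iA_i=\sum_i m_iB_i=0$ and $A_iB_i$ is antisymmetric in the rotational combination. The surviving monomials, after dividing by $t^{(\lambda-f)/2}$, should be exactly the thirteen listed exponents, for example $\sigma-f$, $\lambda+\mu+\rho$, $2\lambda+\mu-f$ from $t^{\lambda+\mu}\zeta_k$, and so on. The nonnegativity claim is then checked one exponent at a time, using $\sigma=\lambda+(\sigma-\lambda)$ and $\lambda+\mu+\rho$ as a generator, while $-f$ is absorbed by writing $\lambda-f$.

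For the final identity I would substitute $d_i^2=-1/(A_i\omega)$, which gives $(m_id_i)^2=-m_i^2/(A_i\omega)$. Since $A_1=m_2\pi$ and $A_2=-m_1\pi$, we have $A_2=-(m_1/m_2)A_1$. Hence $(m_1d_1)^2-(m_2d_2)^2=-\frac{1}{A_1\omega}\bigl(m_1^2+m_2^3/m_1\bigr)=-(m_1^3+m_2^3)/(m_1A_1\omega)$.

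The main obstacle is the bookkeeping of the many cross terms. The hard part is verifying that the $B_i$ contributions at relative order $t^{\lambda}$-type exponents cancel or lie in $G$, and that no exponent outside the thirteen appears. I would organize the expansion by the factor types $(\hat Z,A_i)\times(\eta_k,B_i,\hat W)$ and $(\hat W,B_i)\times(A_i,\zeta_k,\hat Z)$, separately for $k=3$ and $k\in\{4,5\}$.
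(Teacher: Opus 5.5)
Your plan is the paper's: expand each $z_iw_k-w_iz_k$ in the modified chart, multiply by $t^{3(\lambda-f)/2}d_i^3$ for $k=3$ and by $t^{3\lambda/2}D_{ik}^3$ for $k=4,5$, and read off the leading coefficient $d_i^3A_i\omega$ and the other exponents. Your nonnegativity bookkeeping and your derivation of the final identity from $d_i^2=-1/(A_i\omega)$ and $A_2=-(m_1/m_2)A_1$ agree with the paper.

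The one step to correct is your reason for the cancellation of the $A_iB_i$ terms, which is wrong as stated. The mass-centre relations $\sum_im_iA_i=\sum_im_iB_i=0$ cannot produce it. The weights $m_im_k\delta_{ik}^3$ depend on $i$, and $A_iB_i=m_{i'}^2\pi\varpi$ has the same sign for $i=1,2$, so $\sum_im_im_k\delta_{ik}^3A_iB_i$ has no reason to vanish. What actually happens is a cancellation inside each single $(i,k)$ summand. The term $-A_iB_i$ from $z_i(w_k-w_i)=z_ib_{ik}$ cancels the term $+A_iB_i$ from $w_i(z_i-z_k)=w_i\,t^{-\lambda}a_{ik}$. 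Likewise $-(1+h)t^{\mu+\rho+\sigma}\hat Z\hat W$ cancels $+(1+h)t^{\mu+\rho+\sigma}\hat W\hat Z$; you did not mention this pair.

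Both pairs are artefacts of splitting the determinant as $z_i(w_k-w_i)+w_i(z_i-z_k)$. The paper expands $z_iw_k-w_iz_k$ directly and gets only six monomials:
$t^{-\lambda}A_i\eta_k$, $-ht^{\sigma-\lambda}A_i\hat W$, $t^{\mu+\rho}\hat Z\eta_k$, $-t^{\mu+\sigma}\hat W\zeta_k$, $ht^{\lambda+\mu+\rho}B_i\hat Z$ and $-t^{\lambda+\mu}B_i\zeta_k$. Your split instead produces coefficients $-(1+h)+1$ and $-1+(1+h)$ on the $A_i\hat W$ and $B_i\hat Z$ terms, which reduce to these.

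These cancellations are needed for the lemma as stated. Without them the exponents $\lambda-f$ and $\lambda+\tfrac12f$ (from $A_iB_i$) and $\lambda+\mu+\rho+\sigma-f$ and $\lambda+\mu+\rho+\sigma+\tfrac12f$ (from $\hat Z\hat W$) would appear, and none of them is in $G$.
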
}
{\begin{proof}
Put $h=M_P/M_T$.  The determinant in each summand of the torque is
\begin{align*}
 z_iw_k-w_iz_k
 ={}&t^{-\lambda}A_i\eta_k-h t^{\sigma-\lambda}A_i\hat W
       +t^{\mu+\rho}\hat Z\eta_k-t^{\mu+\sigma}\hat W\zeta_k\\
   &+h t^{\lambda+\mu+\rho}B_i\hat Z
       -t^{\lambda+\mu}B_i\zeta_k.
\end{align*}
For $k=3$, multiply this expression by
$t^{3(\lambda-f)/2}d_i^3$ and use $\eta_3=t^f\omega$.
After removing the common factor $t^{(\lambda-f)/2}$, the first term
is $d_i^3A_i\omega$, and the other five exponents are
\[
 \sigma-f,\quad\lambda+\mu+\rho,\quad
 \lambda+\mu+\sigma-f,\quad2\lambda+\mu+\rho-f,\quad2\lambda+\mu-f.
\]
For $k=4,5$, multiply instead by $t^{3\lambda/2}D_{ik}^3$;
the constant and $t^f$ parts of $\eta_k$ give exactly the remaining
eight exponents in $G$.  This proves the displayed expansion.
Each exponent has the asserted nonnegative representation: the only
subtractions needing attention are
$\sigma-f=(\sigma-\lambda)+(\lambda-f)$,
$2\lambda+\mu-f=\lambda+\mu+(\lambda-f)$ and
$2\lambda+\mu+\rho-f=(\rho+\lambda+\mu)+(\lambda-f)$;
the remaining cases follow by adding nonnegative multiples of
$\lambda,\mu,f$ to these or to $\rho+\lambda+\mu$.
Finally $A_2=-m_1A_1/m_2$, so the distance constraints give directly
\[
 (m_1d_1)^2-(m_2d_2)^2
 =-\frac{m_1^2}{A_1\omega}+\frac{m_2^2}{A_2\omega}
 =-\frac{m_1^3+m_2^3}{m_1A_1\omega}.
\]
\end{proof}

}\begin{lemma}\label{lem:d1-central}
Let $m\in\Rp^5$.  Along every Diagram~1 germ all three leading triangle $w$-offsets $o_k$ are
nonzero.
\end{lemma}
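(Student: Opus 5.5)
Argue by contradiction.  Suppose some $o_k$ vanishes.  By Lemma~\ref{lem:d1-orders}(iv) at most one does, so after renumbering inside $T$ let $o_3=0$ and $o_4,o_5\ne0$.  Put $f=\ord(w_3-W_T)$, so $0<f<\lambda$, $\ord w_{i3}=f$ and $\ord\delta_{i3}=(\lambda-f)/2$.  The germ is then represented in the modified chart of Lemma~\ref{lem:d1-torque}: $\eta_3=t^f\omega$, $\eta_4=m_5\vartheta$, $\eta_5=-m_3m_5^{-1}t^f\omega-m_4\vartheta$, $\delta_{i3}=t^{(\lambda-f)/2}d_i$.  By construction $\omega$, $\vartheta$ and the $d_i$ have nonzero finite limits.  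Here $\vartheta\ne0$ because $o_4\ne0$, and $\omega$ is the leading coefficient of $w_3-W_T$.

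Next I will check that the exponents of Lemma~\ref{lem:d1-torque} are positive along the germ.  The orders in Lemma~\ref{lem:d1-orders} give $\lambda>0$, $\mu>0$ and $\lambda-f>0$.  They also give $\sigma=\ord W_P\ge\lambda$.  To obtain $\sigma>\lambda$ strictly, I will use the first identity of Lemma~\ref{lem:d1-syzygies}: each crossed term has order $>\lambda$ (as in the proof of (iii)), which forces $\ord W_P>\lambda$.  From $\ord Z_P=\mu+\rho$, I will get $\rho+\lambda+\mu>0$ from the second identity of Lemma~\ref{lem:d1-syzygies}.  The reason is that $M_TZ_T=-M_PZ_P$ equals a sum of crossed $z$-forces, whose orders exceed $-\lambda$ by positive combinations of these forms.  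If $Z_P\equiv0$, the corresponding terms are simply absent.  Then every $g\in G$ is positive, and Lemma~\ref{lem:d1-torque} applies.

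Since $T_P=0$ on $A_m$ by Lemma~\ref{lem:d1-syzygies}, dividing by $t^{(\lambda-f)/2}$ and letting $t\to0$ gives the leading relation
\[
 \sum_{i=1,2}m_id_i\bigl(d_i^2A_i\omega\bigr)=0 .
\]
The distance constraint between $i$ and $3$, at leading order, reads $d_i^2A_i\omega=-1$.  Indeed, $z_i-z_3=t^{-\lambda}(A_i+o(1))$ and $w_i-w_3=-(t^f\omega+o(t^f))$, since $\ord(w_i-W_T)\ge\lambda>f$.  The torque relation therefore becomes $m_1d_1+m_2d_2=0$, that is $m_1d_1=-m_2d_2$, so $(m_1d_1)^2-(m_2d_2)^2=0$.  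But the final assertion of Lemma~\ref{lem:d1-torque} gives
\[
 (m_1d_1)^2-(m_2d_2)^2=-\frac{m_1^3+m_2^3}{m_1A_1\omega}\neq0,
\]
because the masses are positive and $A_1\omega\ne0$.  This contradiction proves $o_3\ne0$.

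The main obstacle is the second paragraph.  Lemma~\ref{lem:d1-torque} is stated for arbitrary real $\rho,\sigma$, but the positivity of $\sigma-\lambda$ and $\rho+\lambda+\mu$ must be proved for the actual germ.  This has to be done without yet knowing that the germ lies in the cone $C$, which is only established afterwards in Proposition~\ref{prop:d1-cover}.  I would first derive these two inequalities from the mass-centre syzygies, using the orders of $\delta_{ik}$ from Lemma~\ref{lem:d1-orders}(iv).  For the body $3$, whose crossed distances are closer, I would check that $\ord\delta_{i3}^3w_{i3}=\tfrac32(\lambda-f)+f$ still exceeds $\lambda$.  This holds precisely because $f<\lambda$.
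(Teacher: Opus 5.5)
Your proposal is correct and follows the paper's proof essentially step for step. Both arguments set up the contradiction via Lemma~\ref{lem:d1-orders}(iv) and derive $\sigma-\lambda\ge\frac12(\lambda-f)>0$ and $\rho+\lambda+\mu\ge\frac32(\lambda-f)>0$ from the two mass-centre syzygies. Both then pass to the modified chart of Lemma~\ref{lem:d1-torque}, use $d_i^2A_i\omega=-1$ to reduce the leading torque coefficient to $-m_3(m_1d_1+m_2d_2)$, and contradict $T_P=0$ via $(m_1d_1)^2-(m_2d_2)^2=-(m_1^3+m_2^3)/(m_1A_1\omega)\ne0$.
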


\begin{proof}
Suppose not.  By Lemma~\ref{lem:d1-orders}(iv), after renumbering $T$, $o_3=0\ne o_4,o_5$,
$0<f<\lambda$, $\ord\delta_{i3}=\frac12(\lambda-f)$ and $\ord\delta_{ik}=\frac12\lambda$ for $k=4,5$.
In the first two identities of Lemma~\ref{lem:d1-syzygies} the crossed terms have orders
$\frac12(3\lambda-\ord w_{ik})$ and $\frac12(\lambda-3\ord w_{ik})$, so
$\sigma\ge\frac12(3\lambda-f)$ and $\mu+\rho\ge\frac12(\lambda-3f)$, that is,
$\sigma-\lambda\ge\frac12(\lambda-f)>0$ and $\rho+\lambda+\mu\ge\frac32(\lambda-f)>0$.
Hence $\gamma$ is described by the chart of Lemma~\ref{lem:d1-torque}, with chart coordinates of
order $\ge0$, of order $0$ for $d_i,A_i,\omega$ (so these are nonzero at $t=0$), and every
exponent in $G$ is positive.
As $\ord(Z_T-Z_P)=\mu+\rho>-\lambda$ and $\sigma>\lambda>f$, we have $z_3-z_i=-t^{-\lambda}A_i(1+o(1))$
and $w_3-w_i=t^f\omega(1+o(1))$, so the distance constraints $\delta_{i3}^2(z_3-z_i)(w_3-w_i)=1$ give
$d_i^2A_i\omega=-1$ at $t=0$.
By Lemma~\ref{lem:d1-torque}, $t^{-(\lambda-f)/2}T_P$ tends along $\gamma$ to
$-m_3(m_1d_1+m_2d_2)$, evaluated at $t=0$, and this is nonzero because
$(m_1d_1)^2-(m_2d_2)^2=-(m_1^3+m_2^3)/(m_1A_1\omega)\ne0$.  But $T_P=0$ along $\gamma$ by
Lemma~\ref{lem:d1-syzygies}.
\end{proof}

\begin{proposition}[Representation in the chart]\label{prop:d1-cover}
Let $m\in\Rp^5$ and let $\gamma$ be a Diagram~1 germ, normalized as above.
\begin{romanlist}
  \item $\lambda>\mu/2>0$.
  \item Every leading triangle $w$-offset $o_k$ is nonzero; hence $\ord w_{ik}=0$ and
  $\ord\delta_{ik}=\lambda/2$ for $i\in P$, $k\in T$.
  \item $\sigma\ge\frac32\lambda$ and $\mu+\rho\ge\frac12\lambda$.  In particular
  $(\lambda,\mu,\rho,\sigma)\in C$, with $\sigma-\lambda\ge\frac12\lambda$,
  $\rho+\frac32\mu\ge\frac12(\lambda+\mu)$ and $\rho+\lambda+\mu\ge\frac32\lambda$.
  \item Along $\gamma$ every chart coordinate of \eqref{eq:d1-chart} converges as $t\to0$; the
  limits of $\pi$, $\varpi$, of the $D_{jl}$, of the $\eta_k$ and of the differences
  $\zeta_k-\zeta_l$, $\eta_k-\eta_l$ $(k\ne l)$ are nonzero; and the limit point lies in the
  boundary fibre $\mathcal B_1$.
\end{romanlist}
\end{proposition}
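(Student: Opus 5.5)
Parts (i) and (ii) are essentially assembled from the lemmas just proved. Part (i) is Lemma~\ref{lem:d1-orders}(i)--(ii), which gives $\lambda>0$, $\mu>0$ and $\lambda>\mu/2$. For (ii), Lemma~\ref{lem:d1-central} shows that every $o_k$ is nonzero. The first alternative of Lemma~\ref{lem:d1-orders}(iv) then applies to each $k\in T$, giving $\ord w_{ik}=0$ and $\ord\delta_{ik}=\lambda/2$.

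For (iii), I would repeat the order count from the proof of Lemma~\ref{lem:d1-central}, now with $f=0$ for all $k$. In the first identity of Lemma~\ref{lem:d1-syzygies}, $m_1E^w_1+m_2E^w_2=0$ gives
\[
M_PW_P=\sum m_im_k\delta_{ik}^3(w_i-w_k).
\]
Each summand has order $3\lambda/2+0$, so $\sigma=\ord W_P\ge\tfrac32\lambda$. Likewise, $\sum_km_kE^z_k=0$ expresses $M_TZ_T$ as a sum of terms of order $3\lambda/2-\lambda=\lambda/2$. Since $M_TZ_T=-M_PZ_P$, this gives $\mu+\rho\ge\lambda/2$. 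The three displayed inequalities follow by arithmetic using $\lambda,\mu>0$:
\[
\rho+\tfrac32\mu\ge\tfrac12\lambda+\tfrac12\mu,\qquad \rho+\lambda+\mu\ge\tfrac32\lambda,
\]
and all five forms of $C$ are positive.

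For (iv), I would read off each chart coordinate from \eqref{eq:d1-chart} and check that it has nonnegative order. By definition, $\hat Z=t^{-\mu-\rho}Z_P$ and $\hat W=t^{-\sigma}W_P$ have order $0$ (or vanish). Next, $z_i-Z_P=t^{-\lambda}A_i$ with $A_i$ proportional to $z_{12}$ up to mass factors, so $\pi=t^{\lambda}z_{12}/M_P$ up to sign has a nonzero limit; the same holds for $\varpi$ via $\ord w_{12}=\lambda$. Then $\zeta_k=t^{-\mu}(z_k-Z_T)$. The differences $\zeta_k-\zeta_l=t^{-\mu}z_{lk}$ have order $0$ by Rule~2e, and the individual $\zeta_k$ are mass-weighted combinations of these differences, hence bounded. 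Also $\eta_k=w_k-W_T$ has limit $o_k\ne0$, and $\eta_k-\eta_l$ has order $\ord w_{kl}=0$. The limits of the $D_{jl}$ are nonzero by (i), (ii) and Lemma~\ref{lem:d1-orders}(ii).

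Finally, since the parameters lie in $C$, Lemma~\ref{lem:d1-chart} shows that every non-constant exponent $e$ is strictly positive. The weighted equations, which vanish identically along $\gamma$, therefore tend to their constant coefficients $c_0$. These $c_0$ do not involve $\hat Z,\hat W$, so the limit point lies in $\mathcal B_1$.

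The step needing the most care is the convergence of $\zeta_k$ and of $\hat Z$ when $\rho$ or $\sigma$ is infinite. In that case the corresponding terms are simply absent, consistent with the convention stated with the chart. I also need $\sum_k m_k\zeta_k=0$ to pin down the individual $\zeta_k$ from their differences; this follows from the definition of $Z_T$.
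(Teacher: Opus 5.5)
Your proposal is correct and follows the paper's proof essentially step by step: (i)--(ii) come from Lemmas~\ref{lem:d1-orders} and~\ref{lem:d1-central}, (iii) from the two force-sum identities of Lemma~\ref{lem:d1-syzygies} together with $\ord w_{ik}=0$, and (iv) from checking that every chart coordinate has nonnegative order and then letting the positive-exponent terms of Lemma~\ref{lem:d1-chart} vanish in the limit. One citation to correct: the nonzero limit of $D_{12}=\delta_{12}$ comes from Lemma~\ref{lem:d1-orders}(i), not from part (i) of the proposition.
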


\begin{proof}
(i) is Lemma~\ref{lem:d1-orders}(i),(ii), and (ii) follows from Lemmas~\ref{lem:d1-central}
and~\ref{lem:d1-orders}(iv).  (iii) On $\gamma$, Lemma~\ref{lem:d1-syzygies} gives
$M_PW_P=\sum m_im_k\delta_{ik}^3(w_i-w_k)$ and $M_TZ_T=\sum m_im_k\delta_{ik}^3(z_k-z_i)$.  By (ii)
and Lemma~\ref{lem:d1-orders}(iii) the terms of the first sum have order $\frac32\lambda$ and
those of the second $\frac32\lambda-\lambda=\frac12\lambda$; as $\ord Z_T=\ord Z_P=\mu+\rho$ the
two bounds follow.  {Substitution gives the three remaining inequalities.}
(iv) By the definitions of $\lambda,\mu,\rho,\sigma$, by Lemma~\ref{lem:d1-orders} and by (ii),
every chart coordinate is a Puiseux series of order $\ge0$, of order exactly $0$ for the listed
ones; so all converge.  By Lemma~\ref{lem:d1-chart} and (iii), each weighted equation equals
$c_0+\sum_{e>0}c_et^e$ along $\gamma$ with bounded $c_e$; it vanishes identically, so $c_0$
vanishes at the limit point.
\end{proof}

\begin{remark}\label{rem:d1-ak}
{Lemma~\ref{lem:d1-orders}} and Proposition~\ref{prop:d1-cover} give the clustering scheme
$z:1\ldots345\ldots2$, $w:3\ldots12\ldots5\ldots4$, which
\cite[\S8.1, p.~572]{AK} derives from Rule~2e, Estimate~2 and \cite[Prop.~4]{AK}.
Lemma~\ref{lem:d1-central} is the torque argument of the proof of \cite[Prop.~4]{AK}, carried out
in the chart with every competing term controlled.
\end{remark}

\subsection{The limiting triangle and its masses}

The chart reduces the limiting equations to a triangle with zero multiplier.  We classify
its solutions, identify the mass relation, and compute the rank transverse to the scaling
and rotation orbit.  This rank calculation will control the potential by the squared forces.

{\begin{lemma}\label{lem:d1-shape}
Let $m_3,m_4,m_5>0$ and let $(\zeta,\eta,D_{34},D_{35},D_{45})$, with
$\sum_km_k\zeta_k=\sum_km_k\eta_k=0$, solve the triangle system with zero multiplier.  Put
$\alpha_{kl}=\zeta_l-\zeta_k$ and $\beta_{kl}=\eta_l-\eta_k$ for $k<l$ in $T$.  Then
\begin{alphalist}
  \item $\beta_{kl}=\Lambda^{-1}\alpha_{kl}$ for one $\Lambda\in\C^*$, and $(D_{kl}\alpha_{kl})^2=
  \Lambda$ for all $k<l$;
  \item $(D_{34}\alpha_{34},D_{35}\alpha_{35},D_{45}\alpha_{45})=s\,(-1,1,-1)$ with $s^2=\Lambda$;
  the other six sign patterns admit no solution;
  \item $m_5\alpha_{34}^2=m_3\alpha_{45}^2=m_4\alpha_{35}^2$ and $Q(m_3,m_4,m_5)=0$, where
  \[
    Q=2m_3m_4m_5(m_3+m_4+m_5)-m_3^2m_4^2-m_4^2m_5^2-m_5^2m_3^2 .
  \]
\end{alphalist}
\end{lemma}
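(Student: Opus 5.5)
The plan is to treat the zero-multiplier triangle system as a linear system for the weighted separations $u_{kl}:=D_{kl}^3\alpha_{kl}$ and $v_{kl}:=D_{kl}^3\beta_{kl}$, and then to feed the result into the distance constraints $\chi_{kl}=0$ together with the closure relations $\alpha_{35}=\alpha_{34}+\alpha_{45}$ and $\beta_{35}=\beta_{34}+\beta_{45}$.

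\textbf{Step 1: parts (a) and (b).} With $\zeta_k-\zeta_l=-\alpha_{kl}$ for $k<l$, the equations $\phi=0$ read
\[
 m_4u_{34}+m_5u_{35}=0,\qquad m_3u_{34}=m_5u_{45},\qquad m_3u_{35}+m_4u_{45}=0 .
\]
They have rank two (one dependency, since $\sum_k m_k\phi_k=0$). Hence $(u_{34},u_{35},u_{45})=c\,(m_5,-m_4,m_3)$ for some $c$, and likewise $(v_{34},v_{35},v_{45})=c'\,(m_5,-m_4,m_3)$. The constraint $\chi_{kl}=0$ gives $D_{kl}^2\alpha_{kl}\beta_{kl}=1$, so no $\alpha_{kl},\beta_{kl},D_{kl}$ vanishes and $c,c'\ne0$. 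Dividing, $\beta_{kl}/\alpha_{kl}=c'/c$ for all three pairs, so $\beta_{kl}=\Lambda^{-1}\alpha_{kl}$ with $\Lambda=c/c'$. Then $\chi_{kl}=0$ becomes $(D_{kl}\alpha_{kl})^2=\Lambda$, which is (a). Fix $s$ with $s^2=\Lambda$ and write $D_{kl}\alpha_{kl}=\varepsilon_{kl}s$ with $\varepsilon_{kl}=\pm1$.

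\textbf{Step 2: reduction to a mass relation.} Next compute
\[
 u_{kl}=D_{kl}^3\alpha_{kl}=\frac{(D_{kl}\alpha_{kl})^3}{\alpha_{kl}^2}=\frac{\varepsilon_{kl}s^3}{\alpha_{kl}^2}.
\]
Comparing with $u=c\,(m_5,-m_4,m_3)$ and putting $K=s^3/c$ gives
\[
 m_5\alpha_{34}^2=\varepsilon_{34}K,\qquad m_4\alpha_{35}^2=-\varepsilon_{35}K,\qquad m_3\alpha_{45}^2=\varepsilon_{45}K .
\]
Set $\sigma=(\varepsilon_{34},-\varepsilon_{35},\varepsilon_{45})$, $a=1/m_5$, $b=1/m_4$, $e=1/m_3$, $x=\alpha_{34}$, $y=\alpha_{35}$, $w=\alpha_{45}$. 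The closure $y=x+w$ with prescribed squares $x^2=\sigma_1aK$, $y^2=\sigma_2bK$, $w^2=\sigma_3eK$ holds for some choice of square roots if and only if
\[
 (\sigma_2b-\sigma_1a-\sigma_3e)^2=4\sigma_1\sigma_3\,ae .
\]
To see this, square $y-x=w$ once to get $2xw=y^2-x^2-w^2$, and square again.

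\textbf{Step 3: the sign patterns.} Replacing $s$ by $-s$ flips all three $\varepsilon_{kl}$, so it suffices to examine the four classes of $\sigma$ up to a global sign, and here positivity of the masses enters.
\begin{itemize}
 \item If $\sigma_1\sigma_3=-1$, the left side of the relation is the square of a nonzero real multiple of a real number and so is nonnegative, while the right side is negative. No solution exists.
 \item If $\sigma_1=\sigma_3=-\sigma_2$, the relation becomes $(a+b+e)^2=4ae$. But $(a+b+e)^2>(a+e)^2\ge4ae$, so again no solution exists.
 \item Only $\sigma_1=\sigma_2=\sigma_3$ remains, that is, $\varepsilon=\pm(-1,1,-1)$ up to the choice of $s$. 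This gives (b), and $m_5\alpha_{34}^2=m_4\alpha_{35}^2=m_3\alpha_{45}^2$ follows immediately.
\end{itemize}
The relation in this last case is $(b-a-e)^2=4ae$. Multiplying by $m_3^2m_4^2m_5^2$ gives $(m_3m_5-m_3m_4-m_4m_5)^2=4m_3m_4^2m_5$. Expanding should yield exactly $-Q=0$, which is (c).

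The main obstacle is Step 3. The $\alpha_{kl}$ are complex, so the signs of the $\alpha_{kl}^2$ carry no information by themselves. The exclusion must come entirely from the resulting scalar relation on the positive real masses, and the bookkeeping of $\varepsilon$ against the orientation $\alpha_{kl}=\zeta_l-\zeta_k$ must be tracked carefully to get the stated pattern $(-1,1,-1)$ rather than its negative.
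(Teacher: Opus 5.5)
Your proof is correct, and it reaches the conclusions by a different route from the paper's.

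\textbf{Part (a).} The paper forms the symmetric matrix $B$ with $B\mathbf 1=B\zeta=B\eta=0$. Since $B\ne0$, the vectors $\mathbf 1,\zeta,\eta$ are dependent, which gives $\eta=\Lambda^{-1}\zeta$. You instead solve the $\phi$- and $\psi$-equations as a rank-two linear system in the weighted separations $D_{kl}^3\alpha_{kl}$ and $D_{kl}^3\beta_{kl}$, both proportional to $(m_5,-m_4,m_3)$. This avoids the mass-centre normalization and exhibits the force direction explicitly.

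\textbf{Parts (b) and (c).} The paper sets $u=\alpha_{45}/\alpha_{34}$. It shows $u$ is real because $u^2$ and $(1+u)^2$ are both nonzero reals. It then reads off $\varepsilon_{45}=\varepsilon_{34}=-\varepsilon_{35}$ from $u^2>0$ and $(1+u)^2>0$. Finally it obtains $Q=0$ from the closure $\alpha_{35}=\alpha_{34}+\alpha_{45}$, written in terms of $x_k=m_k^{-1/2}$, together with the factorization of $Q$ into four linear factors in the $x_k$.

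You instead eliminate the square roots by squaring the closure twice, which gives the single real relation $(\sigma_2b-\sigma_1a-\sigma_3e)^2=4\sigma_1\sigma_3ae$. Here you divide by $K^2$, which is allowed since $K=s^3/c\ne0$. You then exclude the three wrong sign classes by elementary inequalities and get $-Q=0$ by expansion; the expansion does check out.

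\textbf{Comparison.} Your argument is shorter and entirely scalar. The paper's argument yields slightly more, and that extra information is used afterwards:
\begin{itemize}
\item the separations are real up to one common complex factor;
\item one of $m_3^{-1/2},m_4^{-1/2},m_5^{-1/2}$ is the sum of the other two, i.e.\ relation (23).
\end{itemize}
Proposition~\ref{prop:d1-first} uses the second fact to renumber $T$ and identify the orbit $O$. From your relation it follows in one more line: $b-a-e=\pm2\sqrt{ae}$ gives $m_4^{-1/2}=\bigl|m_5^{-1/2}\pm m_3^{-1/2}\bigr|$.
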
}
{\begin{proof}
Let $M=\operatorname{diag}(m_3,m_4,m_5)$ and let $B$ be the symmetric matrix
\[
 B_{kl}=-m_km_lD_{kl}^3\quad(k\ne l),\qquad
 B_{kk}=\sum_{l\ne k}m_km_lD_{kl}^3.
\]
Then $B\mathbf1=0$, $B\zeta=M\phi=0$ and $B\eta=M\psi=0$.
All the $D_{kl}$ are nonzero by the distance constraints, so $B\ne0$.
The mass-centred vectors $\zeta,\eta$ cannot be independent.
If they were, $\mathbf1,\zeta,\eta$ would form a basis of $\C^3$, forcing $B$ to vanish.
Thus $\eta=\Lambda^{-1}\zeta$ with $\Lambda\ne0$, and the distance constraints give (a).

Choose $s^2=\Lambda$ and write $D_{kl}\alpha_{kl}=s\varepsilon_{kl}$,
where $\varepsilon_{kl}\in\{1,-1\}$.  Put
$u=\alpha_{45}/\alpha_{34}$ and $v=\alpha_{35}/\alpha_{34}=1+u$.
The equations $\phi_4=\phi_5=0$ give
\[
 u^2=\frac{m_5}{m_3}\frac{\varepsilon_{45}}{\varepsilon_{34}},
 \qquad
 v^2=-\frac{m_5}{m_4}\frac{\varepsilon_{35}}{\varepsilon_{34}}.
\]
Both squares are nonzero real numbers.  Thus $u$ is real or purely imaginary.
The latter possibility is excluded because $(1+u)^2$ would then have nonzero
imaginary part.  Hence $u,v$ are nonzero real numbers.  Positivity of the masses now implies
$\varepsilon_{45}=\varepsilon_{34}=-\varepsilon_{35}$.  Changing the sign
of $s$ if necessary gives (b), and the two displayed equations give the
equalities in (c).

With $x_k=m_k^{-1/2}>0$, the equalities just obtained and
$\alpha_{35}=\alpha_{34}+\alpha_{45}$ imply that one of $x_3,x_4,x_5$
is the sum of the other two.  The factorization
\[
 Q=(m_3m_4m_5)^2(x_3+x_4+x_5)(-x_3+x_4+x_5)
       (x_3-x_4+x_5)(x_3+x_4-x_5),
\]
therefore gives $Q=0$.  For positive masses, the same factorization shows that
$Q=0$ is equivalent to relation (23).
\end{proof}

}The torus $\mathbb T=\{(\theta_1,\theta_2,\theta_3)\in(\C^*)^3:\theta_3^2=\theta_1\theta_2\}$ acts by
$(\zeta,\eta,D)\mapsto(\theta_1\zeta,\theta_2\eta,\theta_3^{-1}D)$.  It multiplies $\phi$ by
$\theta_1\theta_3^{-3}$, $\psi$ by $\theta_2\theta_3^{-3}$ and $I_T$ by $\theta_1\theta_2$, and it
fixes $\chi$.

{\begin{lemma}\label{lem:d1-mb}
Let $b,c>0$, $(m_3,m_4,m_5)=(b^2c^2,\,b^2(b+c)^2,\,c^2(b+c)^2)$ and $\kappa>0$.  Let $y^*$ be the
point with $\eta=\zeta$, $\sum_km_k\zeta_k=0$, $\zeta_4-\zeta_3=\kappa m_5^{-1/2}$,
$\zeta_5-\zeta_3=-\kappa m_4^{-1/2}$, $D_{34}=m_5^{1/2}/\kappa$, $D_{35}=m_4^{1/2}/\kappa$ and
$D_{45}=-m_3^{1/2}/\kappa$.  In the coordinates $y=(\zeta_3,\zeta_4,\eta_3,\eta_4,D_{34},D_{35},
D_{45})$ let $F=(\phi_3,\phi_4,\psi_3,\psi_4,\chi_{34},\chi_{35},\chi_{45})$.  Then $F(y^*)=0$; the
Jacobian matrix $F'(y^*)$ has rank $5$, and its kernel is spanned by the generators
$(\zeta_3,\zeta_4,0,0,-\frac12D)$ and $(0,0,\eta_3,\eta_4,-\frac12D)$ of the $\mathbb T$-action; and
\[
  I_T(y^*)=\frac{\kappa^2}{m_3+m_4+m_5}\Big(\frac{m_3m_4}{m_5}+\frac{m_3m_5}{m_4}
  +\frac{m_4m_5}{m_3}\Big)>0 .
\]
\end{lemma}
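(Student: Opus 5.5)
The plan has three parts, carried out in order: verify $F(y^*)=0$, compute the Jacobian rank and its kernel, and evaluate $I_T(y^*)$.

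\textbf{Solution.} With the given masses we have $m_3^{1/2}=bc$, $m_4^{1/2}=b(b+c)$ and $m_5^{1/2}=c(b+c)$. Hence $x_k=m_k^{-1/2}$ satisfies $x_3=x_4+x_5$, so relation (23) holds and $Q=0$ by Lemma~\ref{lem:d1-shape}. Because $\eta=\zeta$, we have $\Lambda=1$ and $\beta_{kl}=\alpha_{kl}$. The prescribed values give
\[
D_{34}\alpha_{34}=\sqrt{m_5}\,m_5^{-1/2}=1,\qquad D_{35}\alpha_{35}=-1,\qquad \alpha_{45}=\kappa\bigl(-m_4^{-1/2}-m_5^{-1/2}\bigr),
\]
so that $D_{45}\alpha_{45}=m_3^{1/2}\bigl(m_4^{-1/2}+m_5^{-1/2}\bigr)=1$. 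The last equality uses $x_3=x_4+x_5$ in the form $x_4^{-1}\cdot\ldots$; concretely,
\[
bc\Bigl(\tfrac{1}{b(b+c)}+\tfrac{1}{c(b+c)}\Bigr)=1.
\]
Consequently $\chi_{kl}=(D_{kl}\alpha_{kl})^2-1=0$. The sign pattern differs from that of Lemma~\ref{lem:d1-shape}(b) only by the orientation convention, and I will check it directly. The equation $\phi_3=0$ reads
\[
m_4D_{34}^3(-\alpha_{34})+m_5D_{35}^3(-\alpha_{35})=-m_4D_{34}^2+m_5D_{35}^2=-m_4m_5/\kappa^2+m_5m_4/\kappa^2=0,
\]
and $\phi_4=0$ follows in the same way. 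The equations $\psi=\phi$ then hold automatically.

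\textbf{Rank.} The two $\mathbb T$-generators lie in the kernel, since $F$ is equivariant and $F(y^*)=0$. They are independent because their $\zeta$ and $\eta$ blocks are supported separately and $\zeta\neq0$. The rank is therefore at most $5$, and it remains to prove that it equals $5$. I will change variables to $u=\zeta$, $v=\eta$ (two coordinates each, after mass-centering) and $\log D$. The block $\partial(\phi,\psi)/\partial(\zeta,\eta)$ is block diagonal, with each block equal to the $2\times2$ reduction $M^{-1}B$ restricted to mass-centred vectors; $B$ is evaluated with the $D^3$ values at $y^*$. Its rank is $1$, since its kernel contains $\zeta$. Differentiating $\chi$ gives $2\,d\log D_{kl}+d\alpha_{kl}/\alpha_{kl}+d\beta_{kl}/\beta_{kl}$, which can always be solved for $d\log D$. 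So I will eliminate $D$ through $\chi$, substitute into $\phi,\psi$, and obtain a $4\times4$ system in $(\zeta,\eta)$. I claim this system has rank $2$, with kernel spanned by the two scalings of $\zeta$ and of $\eta$ separately. Writing the reduced system as a matrix $\begin{pmatrix}P&R\\R&P\end{pmatrix}$ (by the $\zeta\leftrightarrow\eta$ symmetry at $y^*$), the rank is $\operatorname{rank}(P+R)+\operatorname{rank}(P-R)$. The direction $\zeta=\eta$ corresponds to the standard triangle central configuration with the multiplier zero, where nondegeneracy of Lagrange-type shapes up to similarity should give rank $1$. The direction $\zeta=-\eta$ is a "rotation-type" direction, which should also give rank $1$. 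Together these give rank $2$ on the $4$-dimensional space, hence rank $5$ overall. This explicit $2\times2$ determinant computation in $b,c,\kappa$ is the main obstacle. If the structural argument is awkward, I will simply compute the $7\times7$ Jacobian symbolically in $b,c$ and exhibit a nonvanishing $5\times5$ minor, a polynomial in $b,c$ that is positive for $b,c>0$.

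\textbf{Inertia.} By the mass-centre condition, $I_T=\sum m_k\zeta_k^2=M^{-1}\sum_{k<l}m_km_l\alpha_{kl}^2$. Substituting $\alpha_{34}^2=\kappa^2/m_5$, $\alpha_{35}^2=\kappa^2/m_4$ and $\alpha_{45}^2=\kappa^2/m_3$ (the last from $D_{45}\alpha_{45}=1$) gives the displayed formula, which is clearly positive.
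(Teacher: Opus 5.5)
Your verification of $F(y^*)=0$, the upper bound $\operatorname{rank}F'(y^*)\le5$ from the two torus generators, and the inertia computation are all correct. Your reduction is also the right one: solve the $\chi$ rows for $dD$, leaving a $4\times4$ system on $H^2$ of the form $\begin{pmatrix}P&R\\R&P\end{pmatrix}$.

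\textbf{The gap is the lower bound.} The nontrivial content of the statement is that this reduced system has rank exactly $2$. You only assert that $P+R$ and $P-R$ ``should'' each have rank one. The support offered is heuristic: nondegeneracy of ``Lagrange-type shapes'', a ``rotation-type'' direction, and a deferred, unexhibited $5\times5$ minor. None of these is an argument. Moreover, $y^*$ is not a Lagrange shape: with $\eta=\zeta$ real, it is a collinear configuration with signed inverse distances and zero multiplier. No standard nondegeneracy result applies to it.

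\textbf{How to close it within your framework.} The missing step is a short computation. At $y^*$ we have $\alpha_{kl}=\beta_{kl}$, so differentiating $\chi_{kl}=0$ gives
\[
dD_{kl}=-\frac{D_{kl}}{2\alpha_{kl}}\bigl(d\alpha_{kl}+d\beta_{kl}\bigr).
\]
Substitute this into
\[
d\phi_k=\sum_{l\ne k}m_l\bigl(3D_{kl}^2\,dD_{kl}(\zeta_k-\zeta_l)+D_{kl}^3(d\zeta_k-d\zeta_l)\bigr)
\]
and use $(\zeta_k-\zeta_l)/\alpha_{kl}=-1$. With $L=M^{-1}B$, this yields
\[
d\phi=-\tfrac12\bigl(L\,d\zeta+3L\,d\eta\bigr),\qquad d\psi=-\tfrac12\bigl(3L\,d\zeta+L\,d\eta\bigr).
\]
Hence $P=-\tfrac12L$ and $R=-\tfrac32L$, so $P+R=-2L$ and $P-R=L$.

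It remains to see that $L$ has rank exactly one on $H$ with kernel $\C q$:
\begin{itemize}
\item $B\mathbf 1=Bq=0$, with $\mathbf 1$ and $q$ independent.
\item $B\neq0$, since its off-diagonal entries $-m_km_lD_{kl}^3$ are nonzero.
\item $\mathbf 1\notin H$ while $q\in H$.
\end{itemize}
Because $\sum_km_k\phi_k=0$, the two components $\phi_3,\phi_4$ determine $\phi$, so this rank is visible in your coordinates. The reduced system therefore has rank $2$ with kernel $\C(q,0)\oplus\C(0,q)$, and $\operatorname{rank}F'(y^*)=3+2=5$.

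No determinant in $b,c,\kappa$ is needed. The only inputs are $\operatorname{rank}L|_H=1$ and the invertibility of the scalar matrix $\begin{pmatrix}1&3\\3&1\end{pmatrix}$, which is how the paper argues.
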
}
{\begin{proof}
Write $q=\zeta=\eta$ at $y^*$.  The mass relation is
$m_3^{-1/2}=m_4^{-1/2}+m_5^{-1/2}$, and hence
$q_5-q_4=-\kappa m_3^{-1/2}$.  All three distance constraints hold and
$D_{34}(q_4-q_3)=1$, $D_{35}(q_5-q_3)=-1$,
$D_{45}(q_5-q_4)=1$.
Substitution in the two force identities used in the proof of
Lemma~\ref{lem:d1-shape} gives $\phi_4=\phi_5=0$; the mass-centre identity
gives $\phi_3=0$, and $\psi=\phi$.  Thus $F(y^*)=0$.

Use the matrix $B$ from that proof, put $M=\operatorname{diag}(m_3,m_4,m_5)$,
and set $L=M^{-1}B$.
Since $B\mathbf1=Bq=0$, the vectors $\mathbf1,q$ are independent and
$B\ne0$, we have $\operatorname{rank}B=1$ and
$\ker B=\operatorname{span}\{\mathbf1,q\}$.
On the mass-centred plane
$H=\{x:\sum_km_kx_k=0\}$, the map $L$ has rank $1$ and kernel $\C q$.

The three constraint differentials are independent, since
$\partial\chi_{kl}/\partial D_{kl}=2/D_{kl}\ne0$ and the other two
constraint rows have zero entries in this column.  Their zero set is therefore
a smooth four-dimensional manifold, with local coordinates $(\zeta,\eta)\in H^2$.
On its tangent space at $y^*$,
\[
 dD_{kl}=-\frac{D_{kl}}{2(q_l-q_k)}
   \bigl(d\zeta_l-d\zeta_k+d\eta_l-d\eta_k\bigr),
\]
and differentiation of the forces gives
\[
 \begin{pmatrix}d\phi\\d\psi\end{pmatrix}
 =-\frac12
 \begin{pmatrix}L&3L\\3L&L\end{pmatrix}
 \begin{pmatrix}d\zeta\\d\eta\end{pmatrix}.
\]
The scalar $2\times2$ matrix here is invertible, so this map has rank $2$
and kernel $\C(q,0)\oplus\C(0,q)$.  Since either two components determine
each mass-centred force vector, the full Jacobian $F'(y^*)$ has rank
$3+2=5$.  The formula for $dD$ identifies its two kernel generators with
the stated infinitesimal torus actions.

The elementary mass-centre identity
\[
 \sum_km_kq_k^2=\frac1{m_3+m_4+m_5}
       \sum_{k<l}m_km_l(q_l-q_k)^2
\]
gives the displayed value of $I_T(y^*)$, which is positive because $q$ is
a nonzero real vector.  Conversely every positive solution of the mass
relation has the stated $(b,c)$ parametrization: take
$b/c=\sqrt{m_4/m_5}$ and $bc=\sqrt{m_3}$.
\end{proof}
By equivariance, the same rank and kernel statements hold at every point of
$O=\mathbb T\cdot y^*$, and $I_T\ne0$ on $O$.

}\begin{proposition}[The limiting mass condition]\label{prop:d1-first}
Let $m\in\Rp^5$ and let $\gamma$ be a Diagram~1 germ.  Then $Q(m_3,m_4,m_5)=0$, that is, the
triangle masses satisfy relation (23) of \cite[\S8.1]{AK}.  After renumbering $T$ so that
$m_3^{-1/2}=m_4^{-1/2}+m_5^{-1/2}$, the limit $y_0$ of
$y(t)=(\zeta_3,\zeta_4,\eta_3,\eta_4,D_{34},D_{35},D_{45})(t)$ lies on the orbit $O$ of
Lemma~\ref{lem:d1-mb}.
\end{proposition}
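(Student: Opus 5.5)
By Proposition~\ref{prop:d1-cover}(iv), the chart coordinates converge along $\gamma$ and the limit point lies in the boundary fibre $\mathcal B_1$. In particular the limit $y_0$ of $y(t)$, completed by $\zeta_5,\eta_5$ through the mass-centre relations, is a solution of the triangle system with zero multiplier $\phi=\psi=0$, $\chi=0$. The same proposition says the limits of the $D_{kl}$ and of the differences $\zeta_k-\zeta_l$, $\eta_k-\eta_l$ are nonzero, so the solution is genuine (no degenerate collisions within $T$). I would therefore apply Lemma~\ref{lem:d1-shape} directly to this limit.

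First, part (c) of Lemma~\ref{lem:d1-shape} gives $Q(m_3,m_4,m_5)=0$ immediately. Its proof shows that, for positive masses, this is equivalent to one of $m_k^{-1/2}$ being the sum of the other two, i.e.\ relation (23). After renumbering $T$ we may assume $m_3^{-1/2}=m_4^{-1/2}+m_5^{-1/2}$. I then choose $b,c>0$ with $b/c=\sqrt{m_4/m_5}$ and $bc=\sqrt{m_3}$, as at the end of the proof of Lemma~\ref{lem:d1-mb}, so that the point $y^*$ of that lemma is defined for any $\kappa>0$.

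Second, I would show that $y_0\in O=\mathbb T\cdot y^*$. By Lemma~\ref{lem:d1-shape}(a), $\eta=\Lambda^{-1}\zeta$. Part (c) gives $\alpha_{34}^2=\alpha_{34}^2$-proportional data: $m_5\alpha_{34}^2=m_4\alpha_{35}^2=m_3\alpha_{45}^2$. Combined with $\alpha_{35}=\alpha_{34}+\alpha_{45}$ and the renumbering, this fixes the ratios $\alpha_{34}:\alpha_{35}:\alpha_{45}=m_5^{-1/2}:(-m_4^{-1/2})\cdot\epsilon:\ldots$ up to one common factor $\alpha\in\C^*$. I would check the signs against those at $y^*$, namely $\zeta_4-\zeta_3=\kappa m_5^{-1/2}$, $\zeta_5-\zeta_3=-\kappa m_4^{-1/2}$, $\zeta_5-\zeta_4=-\kappa m_3^{-1/2}$; this is the unique additive compatibility under the chosen renumbering. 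Part (b) then fixes $D_{kl}=s\varepsilon_{kl}/\alpha_{kl}$ with the sign pattern $(-1,1,-1)$, which matches $y^*$ up to the common scalar $s$ and the sign of $\alpha$.

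Third, I would produce the torus element explicitly: $\theta_1=\alpha/\kappa$, $\theta_2=\theta_1\Lambda^{-1}$, and $\theta_3$ a square root of $\theta_1\theta_2$ chosen so that $\theta_3^{-1}D^*=D_0$. The constraint $(D_{kl}\alpha_{kl})^2=\Lambda$ guarantees that $\theta_3^2=\theta_1\theta_2$ is consistent with this choice, and the mass-centring is preserved by scaling. Since $y^*$ has $\eta=\zeta$, the point $\mathbb T$-related to it has exactly $\eta=\Lambda^{-1}\zeta$ and the computed $D$'s. The main obstacle I expect is the bookkeeping of signs and of which body is singled out by the renumbering. A sign mismatch between $s$ and the orientation of $\alpha$ would have to be absorbed by $\theta_3\mapsto-\theta_3$ (allowed, since only $\theta_3^2$ is constrained) or by $\alpha\mapsto-\alpha$. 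I would verify that one of these always works, using the sign rigidity of Lemma~\ref{lem:d1-shape}(b).
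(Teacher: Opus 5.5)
Your proposal is correct and is essentially the paper's argument: the limit lies in $\mathcal B_1$, Lemma~\ref{lem:d1-shape} gives $Q=0$ and reduces the limiting triangle to the data $(\kappa,s)$, and $\mathbb T$ acts transitively on these data (your explicit element works with $\theta_3=-\theta_1/s$, since $y^*$ has $s=-1$). The garbled ratio step should simply be written out: with $x_k=m_k^{-1/2}$, $\alpha_{34}=\kappa x_5$, $\alpha_{45}=\pm\kappa x_3$ and $\alpha_{35}=\pm\kappa x_4$, the relations $\alpha_{35}=\alpha_{34}+\alpha_{45}$ and $x_3=x_4+x_5$ force both minus signs, which is exactly the pattern of $y^*$.
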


\begin{proof}
By Proposition~\ref{prop:d1-cover}(iv) the limit lies in $\mathcal B_1$, so its triangle part
solves the triangle system with zero multiplier.  In the notation of Lemma~\ref{lem:d1-shape},
$\phi_4=m_3\alpha_{34}D_{34}^3-m_5\alpha_{45}D_{45}^3$ and
$\phi_5=m_4\alpha_{45}D_{45}^3+m_3\alpha_{35}D_{35}^3$, and $m_3\phi_3=-m_4\phi_4-m_5\phi_5$;
likewise for $\psi$.  So $\phi=0$ and $\psi=0$ are relations (22) and (21) of \cite[\S8.1]{AK} in
cross-multiplied form, and Lemma~\ref{lem:d1-shape} gives $Q=0$.
After the renumbering, Lemma~\ref{lem:d1-shape}(c) and $\alpha_{35}=\alpha_{34}+\alpha_{45}$
leave only $(\alpha_{34},\alpha_{45},\alpha_{35})=\kappa(x_5,-x_3,-x_4)$ with $\kappa\in\C^*$, and
Lemma~\ref{lem:d1-shape}(a),(b) then determine $\eta$ and $D$ from $(\kappa,s)$.  The torus
$\mathbb T$ acts transitively on these pairs $(\kappa,s)$, and $y^*$ is the solution with
$\kappa>0$, $s=-1$.  Hence the solutions form the single orbit $O$, and $y_0\in O$.
\end{proof}

\subsection{Excluding Diagram~1}

We first express the triangle potential and its differential in terms of the forces.
Combined with the preceding rank calculation, these identities yield the quadratic
estimate needed to compare orders along a germ.

{\begin{lemma}\label{lem:d1-virial}
For arbitrary $\zeta,\eta\in\C^3$ and $D_{34},D_{35},D_{45}\in\C$,
\begin{gather*}
  \sum_{k\in T}m_k\zeta_k\psi_k=U_T+\sum_{k<l}m_km_lD_{kl}\chi_{kl},\\
  2\,dU_T+\sum_{k\in T}m_k\big(\psi_k\,d\zeta_k+\phi_k\,d\eta_k\big)
  =\sum_{k<l}m_km_l\big(D_{kl}\,d\chi_{kl}-2\chi_{kl}\,dD_{kl}\big).
\end{gather*}
\end{lemma}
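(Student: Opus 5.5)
Both identities are formal, so I would prove them by direct computation, pair by pair, with no use of the equations \eqref{eq:system}. The key step is to rewrite each weighted sum over bodies of $T$ as a sum over the three pairs $k<l$. Write $\alpha_{kl}=\zeta_l-\zeta_k$ and $\beta_{kl}=\eta_l-\eta_k$ as in Lemma~\ref{lem:d1-shape}. By the definition of $\chi_{kl}$,
\[
 D_{kl}^2\alpha_{kl}\beta_{kl}=\chi_{kl}+1 .
\]
This relation converts every cubic force coefficient into the potential variables.

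For the first identity, I would substitute the definition of $\psi_k$ into $\sum_k m_k\zeta_k\psi_k$. This gives $\sum_{k\ne l}m_km_lD_{kl}^3\zeta_k(\eta_k-\eta_l)$. The coefficient $m_km_lD_{kl}^3$ is symmetric in $k,l$, so grouping the ordered pairs $(k,l)$ and $(l,k)$ yields
\[
 \sum_{k<l}m_km_lD_{kl}^3(\zeta_k-\zeta_l)(\eta_k-\eta_l)
 =\sum_{k<l}m_km_lD_{kl}^3\alpha_{kl}\beta_{kl}.
\]
Now $D^3\alpha\beta=D(\chi+1)$. Summing $m_km_lD_{kl}$ gives $U_T$, and the remainder is $\sum_{k<l}m_km_lD_{kl}\chi_{kl}$.

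For the second identity, the same antisymmetrization gives
\[
 \sum_km_k\psi_k\,d\zeta_k=\sum_{k<l}m_km_lD_{kl}^3\beta_{kl}\,d\alpha_{kl},\qquad
 \sum_km_k\phi_k\,d\eta_k=\sum_{k<l}m_km_lD_{kl}^3\alpha_{kl}\,d\beta_{kl}.
\]
Together these give $\sum_{k<l}m_km_lD_{kl}^3\,d(\alpha_{kl}\beta_{kl})$. Next I would differentiate $\alpha\beta=(\chi+1)D^{-2}$, which gives
\[
 D^3\,d(\alpha\beta)=D\,d\chi-2(\chi+1)\,dD .
\]
Finally, $2\,dU_T=\sum_{k<l}2m_km_l\,dD_{kl}$ cancels the term $-2\,dD$, which leaves $\sum_{k<l}m_km_l\bigl(D_{kl}\,d\chi_{kl}-2\chi_{kl}\,dD_{kl}\bigr)$.

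There is no real obstacle here. The only points needing care are the sign bookkeeping in the antisymmetrization, where $(\zeta_k-\zeta_l)(\eta_k-\eta_l)=\alpha_{kl}\beta_{kl}$ and the two sign changes cancel, and the fact that differentiating $\alpha\beta=(\chi+1)D^{-2}$ is legitimate. The latter holds as an identity of differential forms on the open set $D_{kl}\ne0$. Alternatively, one can expand $d(D^2\alpha\beta)=d\chi$ directly, avoiding division, and the resulting polynomial identity then holds everywhere by continuity.
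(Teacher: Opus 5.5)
Your proof is correct and is essentially the paper's argument: pair the $k$ and $l$ terms to get $m_km_lD_{kl}^3\alpha_{kl}\beta_{kl}=m_km_lD_{kl}(1+\chi_{kl})$ for the first identity, and use the single-edge identity $D\,d\chi-2\chi\,dD=2\,dD+D^3(\beta\,d\alpha+\alpha\,d\beta)$ for the second. The paper obtains that identity by expanding $d(D^2\alpha\beta)$ directly, which is your division-free alternative, so the restriction to $D_{kl}\ne0$ is never needed.
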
}
{\begin{proof}
For a single edge put $\alpha=\zeta_l-\zeta_k$,
$\beta=\eta_l-\eta_k$, $D=D_{kl}$ and $\chi=D^2\alpha\beta-1$.
Pairing the $k$ and $l$ terms in the force sum gives
$m_km_lD^3\alpha\beta=m_km_lD(1+\chi)$, which proves the first identity
after summing over the three edges.  For the second use the exact
single-edge identity
\[
 D\,d\chi-2\chi\,dD
       =2\,dD+D^3(\beta\,d\alpha+\alpha\,d\beta).
\]
Multiplication by $m_km_l$ and summation gives the result.
\end{proof}

}{On the constraint manifold $\{\chi=0\}$, these identities reduce to
$U_T=\sum_km_k\zeta_k\psi_k$ and
$dU_T=-\frac12\sum_km_k(\psi_k\,d\zeta_k+\phi_k\,d\eta_k)$.
Thus both the potential and its differential vanish on $O$.  Together with the rank
calculation in Lemma~\ref{lem:d1-mb}, this gives the quadratic estimate below.}

\begin{lemma}\label{lem:d1-estimate}
Every $y_0\in O$ has a neighbourhood $V\subset\C^7$ with a constant $c_V$ such that
$|U_T(y)|\le c_V\,|(\phi_3,\phi_4,\psi_3,\psi_4)(y)|^2$ for every $y\in V$ with $\chi(y)=0$.
\end{lemma}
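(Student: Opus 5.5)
We will use the constraint manifold $N=\{\chi=0\}$ near $y_0$, which by the proof of Lemma~\ref{lem:d1-mb} is a smooth complex manifold of dimension four with local coordinates $(\zeta,\eta)\in H^2$ (the $D_{kl}$ being determined holomorphically). Thus $\zeta_3,\zeta_4,\eta_3,\eta_4$ serve as coordinates on $N$. Put $\Phi=(\phi_3,\phi_4,\psi_3,\psi_4)|_N$; on $N$ the full mass-centred force vectors are linear in $\Phi$, since $\sum_km_k\phi_k=\sum_km_k\psi_k=0$. By Lemma~\ref{lem:d1-mb} and equivariance, $d\Phi$ has rank $2$ at every point of $O$ (the rank $5$ of $F'$ minus the three independent constraint rows), and its kernel is the tangent space of $O$, which has dimension $2$. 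Moreover $O\cap N$ is locally exactly $\Phi^{-1}(0)\cap N$ near $y_0$, since Proposition~\ref{prop:d1-first} shows that every zero of the triangle system with zero multiplier lies on $O$. Hence $O$ is a smooth two-dimensional submanifold of $N$, and $\Phi$ is a submersion onto a two-dimensional image transverse to it; that is, $O$ is a nondegenerate (Morse--Bott type) zero set of $\Phi$.

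Next, we will choose holomorphic coordinates $(u,v)\in\C^2\times\C^2$ on $N$ near $y_0$ with $O=\{v=0\}$. One option is to use the torus parameters for $u$ and a transverse slice for $v$. Because $d\Phi$ restricted to the normal directions is injective along $O$, the estimate $|v|\le c\,|\Phi(y)|$ holds on a neighbourhood, after shrinking it. Concretely, $\Phi(u,v)=A(u)v+O(|v|^2)$ with $A(u)$ injective and uniformly bounded below on a compact neighbourhood; this gives $|\Phi|\ge\frac12\|A^{-1}\|^{-1}|v|$ for $|v|$ small.

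Then we will show that $U_T$ vanishes to second order along $O$. On $N$, Lemma~\ref{lem:d1-virial} gives $U_T=\sum_km_k\zeta_k\psi_k$ and $dU_T=-\frac12\sum_km_k(\psi_k\,d\zeta_k+\phi_k\,d\eta_k)$. Both vanish on $O$, where $\phi=\psi=0$. Taylor's theorem in $v$ with holomorphic dependence on $u$ then gives $|U_T(u,v)|\le c'|v|^2$ uniformly near $y_0$: since $U_T(u,0)=0$ and $\partial_vU_T(u,0)=0$, the remainder is bounded by the supremum of the second derivatives on a compact polydisc. Alternatively, one can avoid coordinates entirely. The identity $U_T=\sum_km_k\zeta_k\psi_k$ gives only the linear bound $|U_T|\le C|\Phi|$. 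For the quadratic bound, integrate $dU_T$ along the segment $s\mapsto(u,sv)$, which yields $|U_T|\le\sup|dU_T|\,|v|\le C|\Phi|\,|v|$ because $\phi,\psi=O(|v|)$ on the segment. Combining this with $|v|\le c|\Phi|$ gives the claim with $c_V=Cc$.

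The main obstacle is the first step: identifying $\Phi^{-1}(0)\cap N$ locally with $O$ and getting the uniform lower bound $|\Phi|\gtrsim|v|$. This uses that the zero set is exactly the smooth orbit (Proposition~\ref{prop:d1-first}) together with the constant rank from Lemma~\ref{lem:d1-mb}. The constant rank theorem makes $\Phi$ locally equivalent to a linear projection with kernel along $O$, and we will invoke it there.
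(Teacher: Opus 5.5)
Your proposal is correct and is essentially the paper's proof: the rank and kernel computation of Lemma~\ref{lem:d1-mb} gives $\operatorname{dist}(y,O)\le c\,|(\phi_3,\phi_4,\psi_3,\psi_4)(y)|$ on $\{\chi=0\}$ near $y_0$, and Lemma~\ref{lem:d1-virial} makes $U_T$ and $dU_T$ vanish on $O$, so Taylor's formula gives $|U_T|\le c'\operatorname{dist}(y,O)^2$. Drop the appeal to the constant rank theorem: it does not apply, because $d\Phi$ has rank $2$ only along $O$ and generically rank $4$ at nearby points. The expansion $\Phi(u,v)=A(u)v+O(|v|^2)$ with $A(u)$ uniformly injective, which you already wrote down, is the correct justification of $|v|\le c\,|\Phi|$; the paper obtains the same bound from the implicit function theorem applied to five components of $F$ with independent differentials.
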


\begin{proof}
{By Lemma~\ref{lem:d1-mb} and equivariance, five components $F_5$ of $F$
have independent differentials at $y_0$.  Their common zero set $F_5^{-1}(0)$ is therefore
a $2$-dimensional manifold near $y_0$.  The orbit map $\theta\mapsto\theta y_0$ immerses
a neighbourhood of $1\in\mathbb T$ onto a $2$-dimensional piece of $O$ contained in this
manifold.  Shrinking $V$, we obtain $F_5^{-1}(0)\cap V\subset O$.
The implicit function theorem then gives
$\operatorname{dist}(y,O)\le c\,|F_5(y)|\le c\,|(\phi_3,\phi_4,\psi_3,\psi_4)(y)|$
for $y\in V$ with $\chi(y)=0$.

Near $y_0$, the constraint manifold $\{\chi=0\}$ is the graph of $D$ over
$(\zeta_3,\zeta_4,\eta_3,\eta_4)$, because
$\partial\chi_{kl}/\partial D_{kl}=2D_{kl}^{-1}(\chi_{kl}+1)\ne0$.
In these local coordinates, both $U_T$ and its differential vanish on $O$
(Lemma~\ref{lem:d1-virial}).  Taylor's formula at a nearest point of $O$ gives
$|U_T(y)|\le c'\operatorname{dist}(y,O)^2$.  Combining this with the preceding distance
estimate proves the assertion.}
\end{proof}

\begin{theorem}\label{thm:d1}
Let $m\in\Rp^5$.  No curve in $A_m$ has a Diagram~1 germ (Definition~\ref{def:germ}).  In
particular there is no Diagram~1 germ with an unbounded four-product, which is the case $r=1$ of
Theorem~\ref{thm:exclusion}\,(a).
\end{theorem}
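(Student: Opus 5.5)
Suppose a Diagram~1 germ $\gamma$ exists at some $m\in\Rp^5$. By Proposition~\ref{prop:d1-cover} it lies in the chart \eqref{eq:d1-chart} with exponents in the cone $C$. By Proposition~\ref{prop:d1-first}, after renumbering $T$, the triangle part $y(t)$ tends to a point $y_0$ of the orbit $O$. I will compare two computations of the order of $U_T(y(t))$ along $\gamma$. The virial identity (first line of Lemma~\ref{lem:d1-virial}) will give an exact lower-order leading term. The quadratic estimate of Lemma~\ref{lem:d1-estimate} will force higher-order vanishing. The two are incompatible.

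\emph{First computation.} Along $\gamma$ the constraints $\chi_{kl}=0$ hold exactly, since $\chi_{kl}$ is the weighted distance constraint within $T$. Lemma~\ref{lem:d1-virial} therefore gives $U_T=\sum_km_k\zeta_k\psi_k$. From Lemma~\ref{lem:d1-chart}, $\psi_k=t^{3\mu/2}(w_k-X^w_k)$ exactly on $\gamma$, and the exponents appearing are $\tfrac32\mu$, $\tfrac32(\lambda+\mu)$, $\tfrac52\lambda+\tfrac32\mu$, $\sigma+\tfrac32\mu$ and $\tfrac32(\lambda+\mu)+\sigma$. The term of exponent $\tfrac32\mu$ is $t^{3\mu/2}\eta_k$, coming from $w_k$. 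The $\hat W$ contribution has exponent $\sigma+\tfrac32\mu$, which is strictly larger because $\sigma>\lambda>0$. The crossed force term has exponent $\tfrac32(\lambda+\mu)$, also strictly larger. Hence
\[
 U_T=t^{3\mu/2}\Bigl(\sum_km_k\zeta_k\eta_k+o(1)\Bigr)=t^{3\mu/2}\bigl(I_T(y_0)+o(1)\bigr).
\]
Since $I_T(y_0)\ne0$ on $O$ by Lemma~\ref{lem:d1-mb} and equivariance, this yields $\ord U_T=\tfrac32\mu$.

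\emph{Second computation.} The same formulas give $\phi_k=t^{\mu/2}(z_k-X^z_k)$. Its exponents are $\tfrac32\mu$, $\tfrac12(\lambda+\mu)$, $\tfrac32(\lambda+\mu)$, $\tfrac32\mu+\rho$ and $\tfrac32(\lambda+\mu)+\rho$. I need each of these to be at least $\nu$ with $2\nu>\tfrac32\mu$. Proposition~\ref{prop:d1-cover}(iii) gives $\rho+\tfrac32\mu\ge\tfrac12(\lambda+\mu)$, and $\lambda>\mu/2$ gives $\tfrac12(\lambda+\mu)>\tfrac34\mu$. So $\ord\phi_k\ge\min(\tfrac32\mu,\tfrac12(\lambda+\mu))>\tfrac34\mu$. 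Likewise $\ord\psi_k\ge\tfrac32\mu>\tfrac34\mu$. Because $y(t)\to y_0$ and $\chi=0$ along $\gamma$, Lemma~\ref{lem:d1-estimate} applies for small $t$ and gives
\[
 \ord U_T\ge2\min_k\bigl(\ord\phi_k,\ord\psi_k\bigr)>\tfrac32\mu .
\]
This contradicts the first computation. Hence no Diagram~1 germ exists, and in particular none with an unbounded four-product.

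\emph{Main obstacle.} The delicate step is checking that no exponent in $\phi$ can drop to $\tfrac34\mu$ or below. The critical term is $\tfrac12(\lambda+\mu)$, which comes from the crossed force $X^z$. It is controlled precisely by the inequality $\lambda>\mu/2$ from Lemma~\ref{lem:d1-orders}(ii), which encodes the absence of a $z$-stroke in $T$. The $\rho$-terms are controlled by the lower bound on $\mu+\rho$, which was obtained through the torque argument of Lemma~\ref{lem:d1-central}. I will also confirm two points in the first computation. First, no cancellation occurs in $\sum m_k\zeta_k\eta_k$ at the limit, which holds because $I_T\ne0$ on all of $O$. Second, the $\hat Z$ terms never enter $\psi$.
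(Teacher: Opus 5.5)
Your proof is correct and follows the paper's argument essentially step for step. The virial identity with $\chi=0$ gives $U_T=t^{3\mu/2}\bigl(I_T(y_0)+o(1)\bigr)$ with $I_T(y_0)\ne0$, while Lemma~\ref{lem:d1-estimate}, together with Proposition~\ref{prop:d1-cover}(iii) and $\lambda>\mu/2$, forces $\ord U_T\ge\min(3\mu,\lambda+\mu)>\tfrac32\mu$. The only difference is cosmetic: you dispose of the $\hat W$ term through its exponent $\sigma+\tfrac32\mu$, whereas the paper notes that it drops out exactly because $\sum_km_k\zeta_k=0$; both are valid.
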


\begin{proof}
{Suppose that $\gamma$ is a Diagram~1 germ.  Normalize it as above and
renumber $T$ as in Proposition~\ref{prop:d1-first}.  Along $\gamma$, the triangle distance constraints give $\chi(y(t))=0$
identically, and $y(t)\to y_0\in O$.  We compare the order of the triangle potential obtained
from the force estimate with the order given by the virial identity.

Since $E^z_k=E^w_k=0$ along $\gamma$, Lemma~\ref{lem:d1-chart} gives
$\phi_k=t^{\mu/2}(z_k-X^z_k)$ and $\psi_k=t^{3\mu/2}(w_k-X^w_k)$, with the exponents listed
there and bounded coefficients.  By Proposition~\ref{prop:d1-cover}(iii),
$\frac32\mu+\rho\ge\frac12(\lambda+\mu)$.  Hence $|\phi(y(t))|=O(|t|^r)$ with
$r=\min(\frac32\mu,\frac12(\lambda+\mu))>0$, and $|\psi(y(t))|=O(|t|^{3\mu/2})$.
Lemma~\ref{lem:d1-estimate} therefore gives
\[
 |U_T(y(t))|=O(|t|^{2r}),\qquad
 2r=\min(3\mu,\lambda+\mu).
\]

On the other hand, Lemma~\ref{lem:d1-virial} and $\chi=0$ give
$U_T(y(t))=\sum_km_k\zeta_k\psi_k=t^{3\mu/2}\sum_km_k\zeta_k(w_k-X^w_k)$.
Because $w_k=W_T+\eta_k$ and $\sum_km_k\zeta_k=0$, we have
$\sum_km_k\zeta_kw_k=I_T(y(t))$.  Moreover,
$\ord X^w_k\ge\frac32\lambda$ by Proposition~\ref{prop:d1-cover}(ii).  Thus
\[
 U_T(y(t))=t^{3\mu/2}\bigl(I_T(y_0)+o(1)\bigr),\qquad I_T(y_0)\ne0.
\]
Comparison with the quadratic estimate gives
\[
 \frac32\mu\ge\min(3\mu,\lambda+\mu)
 \quad\Longrightarrow\quad
 \lambda+\mu\le\frac32\mu
 \quad\Longrightarrow\quad
 \lambda\le\frac12\mu,
\]
whereas Proposition~\ref{prop:d1-cover}(i) gives $\lambda>\mu/2$.}
\end{proof}

\begin{remark}\label{rem:d1-wall}
{In the chart family \eqref{eq:d1-chart}, the outer bodies are $z$-circled
exactly when $\lambda\ge\mu/2$. At $\lambda=\mu/2$, a rotation gives a Diagram~4 chart;
for $\lambda<\mu/2$, the diagram is Diagram~5. These two cases are treated in
Section~\ref{sec:d45}.}
\end{remark}

\section{Diagram 2}\label{sec:d2}

We exclude Diagram~2 by resolving its pair and triangle in a blow-up chart.
The torque identity restricts the boundary points to explicit mass loci; local
analytic reduction then turns continuation from the boundary into scalar Taylor
conditions. The coefficients of these reduced equations yield the exclusion.

{Diagram~2 in \cite[Fig.~11]{AK} consists of the $z$-segment $12$ and the $w$-triangle $345$.
Every vertex carries a circle of the same colour as its edges \cite[\S8.2, p.~572]{AK}.
Since \eqref{eq:system} is invariant under exchanging $z$ and $w$ and renumbering the bodies,
we may assume that a Diagram~2 germ has this form.
Throughout this section, $A,A'$ denote the bodies in the pair, $\{A,A'\}=\{1,2\}$, and $B,B'$
denote bodies in the triangle $\{3,4,5\}$.  We write $M=m_1+m_2$, $M_T=m_3+m_4+m_5$ and
$m'=(m_3,m_4,m_5)$.  We reserve $t$ for a chart coordinate and use $s$ for the germ parameter
called $t$ in Lemma~\ref{lem:germ}; thus $\ord$ denotes order in $s$.
The notation $x,y,u$ agrees with \cite[\S8.2]{AK}.}

{Proposition~\ref{prop:d2-cover} places every Diagram~2 germ at a finite point of the
boundary fibre $t=0$. The torque identity gives $\Phi=0$ there and recovers the
mass loci $L_j$, $K_j$, $P_j$ of \cite[\S8.2]{AK}
(Proposition~\ref{prop:d2-first}). At a boundary limit with $\Phi=0$, if twenty
selected equations define an analytic graph over $t$, the remaining equations reduce to
\[
 tq(t)=0,\qquad q(0)=\Phi,\qquad q'(0)=c_\sigma^2Q_T(\tilde v).
\]
The quadratic form $Q_T$ is determined by the triangle
(Lemmas~\ref{lem:d2-isolation} and~\ref{lem:d2-second}). At fold points we use
one- or two-parameter reductions. Section~\ref{sec:d2-cases} applies these reductions
to each mass locus. At a regular boundary point, $q'(0)\ne0$ gives isolation directly.}

\subsection{\texorpdfstring{The blow-up chart and its boundary}{The blow-up chart and its boundary}}\label{sec:d2-chart}

{We use coordinates $(t,Z,W,D)\in\C^{21}$, where $Z=(Z_1,\dots,Z_5)$,
$W=(W_1,\dots,W_5)$ and $D=(D_{kl})_{k<l}$.  Their relation to the coordinates of
\eqref{eq:system} is}
\begin{equation}\label{eq:d2-chart}
\begin{gathered}
  z_A=t^{-1}Z_A,\quad w_A=tW_A,\quad z_B=tZ_B,\quad w_B=t^{-1}W_B,\\
  \delta_{12}=D_{12},\quad \delta_{BB'}=D_{BB'},\quad \delta_{AB}=tD_{AB}.
\end{gathered}
\end{equation}
{Multiplying the coordinate equations of \eqref{eq:system} for
$z_A,w_A,z_B,w_B$ by $t,t^{-1},t^{-1},t$, respectively, gives the polynomial rows}
\begin{equation}\label{eq:d2-rows}
\begin{aligned}
  F^Z_A&=Z_A-m_{A'}D_{12}^3(Z_A-Z_{A'})-t^3\textstyle\sum_Bm_BD_{AB}^3(Z_A-t^2Z_B),\\
  F^W_A&=W_A-m_{A'}D_{12}^3(W_A-W_{A'})+t\textstyle\sum_Bm_BD_{AB}^3(W_B-t^2W_A),\\
  F^Z_B&=Z_B-\textstyle\sum_{B'\ne B}m_{B'}D_{BB'}^3(Z_B-Z_{B'})
         +t\textstyle\sum_Am_AD_{AB}^3(Z_A-t^2Z_B),\\
  F^W_B&=W_B-\textstyle\sum_{B'\ne B}m_{B'}D_{BB'}^3(W_B-W_{B'})
         -t^3\textstyle\sum_Am_AD_{AB}^3(W_B-t^2W_A),
\end{aligned}
\end{equation}
{The distance equations become $T_{12}=D_{12}^2(Z_2-Z_1)(W_2-W_1)-1$,
$T_{BB'}=D_{BB'}^2(Z_{B'}-Z_B)(W_{B'}-W_B)-1$ ($B<B'$) and
$T_{AB}=D_{AB}^2(t^2Z_B-Z_A)(W_B-t^2W_A)-1$.
We replace the gauge in \eqref{eq:system} by $G_1=Z_2-Z_1-1$ and $G_2=W_5-1$.
The common zero set of these $22$ polynomials is denoted by $\mathcal X_m\subset\C^{21}$,
and its boundary fibre by $\mathcal B_m=\mathcal X_m\cap\{t=0\}$.
A point of $\mathcal B_m$ specifies the limiting pair, triangle and crossed inverse
distances in this chart.}

\begin{definition}[Shape factors]\label{def:d2-shapes}
For $\varsigma\in\{\pm1\}^3$ let $N_\varsigma(w_3,w_4,w_5)$ be the numerator of
$w_3/(\varsigma_3(w_5-w_4)^2)+w_4/(\varsigma_4(w_3-w_5)^2)+w_5/(\varsigma_5(w_4-w_3)^2)$.  The
quintics $S_1,S_2,S_3$ are $N_\varsigma$ for $\varsigma=(1,-1,-1),(1,-1,1),(1,1,-1)$, and
$N_{(1,1,1)}$ is a constant multiple of $S_4S_5$, where
\begin{gather*}
  S_4=w_3^3+w_4^3+w_5^3-w_3^2w_4-w_3w_4^2-w_3^2w_5-w_3w_5^2-w_4^2w_5-w_4w_5^2+3w_3w_4w_5,\\
  S_5=w_3^2+w_4^2+w_5^2-w_3w_4-w_3w_5-w_4w_5 .
\end{gather*}
The quartics $S_6,S_7,S_8$ are the factors of
$\operatorname{Res}_p\bigl((w_4-w_3)^3-p^3(w_5-w_4)^3,\ (w_4-w_3)^3-(-1-p)^3(w_3-w_5)^3\bigr)$
(which is $S_5^3S_6S_7S_8$ up to a constant) whose coefficients of $w_3^3w_4$ are $-2$, $1$ and
$-5$.  These are the factors of the complex three-body central configuration locus of
\cite[Appendix, pp.~585--586]{AK}.
\end{definition}

\begin{lemma}[computer-assisted]\label{lem:d2-fibre}
Let $m\in\Rp^5$.  A point $p=(0,Z,W,D)$ is a boundary point if and only if
\begin{alphalist}
  \item $Z_1=-m_2/M$, $Z_2=m_1/M$, $D_{12}^3=1/M$, $W_1=-m_2D_{12}$ and $W_2=m_1D_{12}$;
  \item $F^Z_B=F^W_B=T_{BB'}=0$ at $t=0$ (system \eqref{eq:system} for the bodies $3,4,5$,
  without its gauge) and $W_5=1$;
  \item $D_{AB}^2Z_AW_B=-1$ for all $A,B$.
\end{alphalist}
At a boundary point $\sum_Bm_BZ_B=\sum_Bm_BW_B=0$, every $W_B\ne0$, the $W_B$ are pairwise distinct, and
the \emph{triangle} $(Z_B,W_B,D_{BB'})$ is of one of two kinds.
\begin{romanlist}
  \item \emph{Lagrange:} $D_{BB'}=M_T^{-1/3}\omega^{a_{BB'}}$, $\omega=e^{2\pi i/3}$,
  $a_{BB'}\in\{0,1,2\}$ ($27$ classes), and $\kappa=(W_4-W_3)/(W_5-W_4)$ is a root of
  $\omega^{a_{45}}\kappa^2+(\omega^{a_{34}}+\omega^{a_{45}}-\omega^{a_{35}})\kappa+\omega^{a_{34}}$.
  The root is double exactly for the six classes with $a_{34},a_{35},a_{45}$ distinct; these are
  the \emph{fold} triangles.  $(W_3,W_4,W_5)$ is a zero of $S_5$ if $a_{34}=a_{35}=a_{45}$ or the
  $a_{BB'}$ are distinct, of $S_6$ if $a_{34}=a_{35}\ne a_{45}$, of $S_7$ if
  $a_{34}=a_{45}\ne a_{35}$, and of $S_8$ if $a_{35}=a_{45}\ne a_{34}$.
  \item \emph{Euler:} $Z_B=c^{-2}W_B$ for some $c\in\C^*$, $D_{BB'}=e_{BB'}c/(W_{B'}-W_B)$ for
  $B<B'$ with $e_{BB'}=\pm1$, and $(W_3,W_4,W_5)$ is a zero of $S_1$, $S_2$, $S_3$ or $S_4S_5$
  according to the class of $(e_{34},e_{35},e_{45})$ modulo a global sign.  An Euler triangle on
  $S_5$ is a Lagrange fold triangle.
\end{romanlist}
\end{lemma}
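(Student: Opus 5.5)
Set $t=0$ in the $22$ polynomials defining $\mathcal X_m$ and read off the resulting system block by block. At $t=0$ every crossed term in \eqref{eq:d2-rows} disappears: $F^Z_A$ carries $t^3$, $F^W_A$ carries $t$, $F^Z_B$ carries $t$ and $F^W_B$ carries $t^3$. The constraint $T_{AB}$ becomes $D_{AB}^2(-Z_A)W_B-1$. So the boundary fibre splits into three independent pieces: the pair subsystem, the triangle subsystem with the gauge $W_5=1$, and the crossed constraints (c).

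\emph{Pair block.} The rows $F^Z_A=F^W_A=0$ are the two-body normalized equations with multiplier $1$. Summing $m_AF^Z_A$ gives $m_1Z_1+m_2Z_2=0$. Together with $G_1$ this fixes $Z_1=-m_2/M$ and $Z_2=m_1/M$. Then $F^Z_1=0$ reads $Z_1(1-MD_{12}^3)=0$, so $D_{12}^3=1/M$. Likewise $m_1W_1+m_2W_2=0$, and $T_{12}$ gives $W_2-W_1=D_{12}^{-2}=MD_{12}$, hence $W_1=-m_2D_{12}$ and $W_2=m_1D_{12}$.

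\emph{Triangle block.} The triangle rows at $t=0$ are exactly system \eqref{eq:system} for bodies $3,4,5$, without a gauge. Summing with masses gives $\sum_B m_BZ_B=\sum_Bm_BW_B=0$. The constraints $T_{BB'}$ make the $W_B$ pairwise distinct. Condition (c) forces every $W_B\ne0$, since $Z_A\ne0$. The converse direction of the equivalence is immediate, because (a)--(c) reproduce all $22$ equations at $t=0$.

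For the classification I would follow the argument of Lemma~\ref{lem:d1-shape}, but with multiplier one. Introduce the Laplacian-type matrix $B$ built from $m_km_lD_{kl}^3$. Then $MZ=BZ$ and $MW=BW$ on the mass-centred plane $H$. The operator $L=M^{-1}B$ on the two-dimensional space $H$ has the eigenvalue $1$ on both $Z$ and $W$. Two cases follow.
\begin{itemize}
\item If $Z,W$ are independent in $H$, then $L=\mathrm{id}$ on $H$. This forces $m_lD_{kl}^3$ to equal a common value, i.e.\ $D_{kl}^3=M_T^{-1}$, which gives the Lagrange case with cube roots $\omega^{a_{BB'}}$. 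The ratio $\kappa$ is then determined by the distance constraints. Eliminating $Z$ via $D_{BB'}^2Z_{BB'}W_{BB'}=1$ and $\sum m_BZ_B=0$ gives the stated quadratic in $\kappa$.
\item If $Z,W$ are dependent, then $Z=c^{-2}W$. The constraints give $D_{BB'}=e_{BB'}c/(W_{B'}-W_B)$. Substituting into $W=LW$ yields, after clearing denominators, the numerator $N_\varsigma$, hence the listed factors.
\end{itemize}

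The double-root criterion comes from the discriminant $(\omega^{a_{34}}+\omega^{a_{45}}-\omega^{a_{35}})^2-4\omega^{a_{34}+a_{45}}$. I would check all $27$ exponent classes symbolically and confirm that it vanishes exactly when the $a$'s are distinct. The factor assignments to $S_5,\dots,S_8$ require eliminating $p=\kappa$ and the masses. This is the resultant in Definition~\ref{def:d2-shapes}, and I would certify its factorization by exact polynomial arithmetic. The same computation would show that an Euler triangle lying on $S_5$ coincides with a fold Lagrange triangle.

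I expect the main obstacle to be this bookkeeping over the $27$ Lagrange classes and $8$ sign patterns, together with matching each class to the correct quartic factor. That step is naturally computer-assisted: I would verify exact polynomial identities and factorizations, and list them among the computational statements of Appendix~\ref{app:verification}.
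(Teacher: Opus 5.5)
Your proposal is correct and follows the paper's proof. It uses the same splitting of the $t=0$ system into pair, triangle and crossed blocks, and the same dichotomy on whether $(1)_B,(Z_B),(W_B)$ are independent, which gives either $D_{BB'}^3=1/M_T$ (by comparing with the projection onto the mass-centred plane) or $Z=c^{-2}W$. The shape factors are then identified by computer, as in the paper. There are two differences in detail: the paper settles the discriminant by a short hand argument rather than enumerating the $27$ classes, and it assigns each Lagrange class to a factor by reducing $S_j$, written in $\kappa$, modulo that class's quadratic. A small correction: the variable $p$ eliminated in Definition~\ref{def:d2-shapes} is $(Z_5-Z_4)/(Z_4-Z_3)$, which satisfies only $p^3=\kappa^3$, and no masses enter $S_5,\dots,S_8$. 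So factorizing that resultant does not by itself sort the classes; you still need the per-class check you mention.
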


\begin{proof}
{At $t=0$, the crossed terms in \eqref{eq:d2-rows} vanish and
$T_{AB}=-D_{AB}^2Z_AW_B-1$, giving (b) and (c).  Together with $G_1$, the remaining pair
equations are $Z_A=m_{A'}D_{12}^3(Z_A-Z_{A'})$,
$W_A=m_{A'}D_{12}^3(W_A-W_{A'})$ and $T_{12}=0$, whose solutions are those in (a).
Summing $m_BF^Z_B$ and $m_BF^W_B$ gives the two centre-of-mass relations.
The distance equations also ensure that the triangle positions are nonzero and distinct:
if $W_B=0$, then $T_{1B}=-1$, and if $W_B=W_{B'}$, then $T_{BB'}=-1$.}

{Suppose first that $(1)_B,(Z_B)_B,(W_B)_B$ are linearly independent.
We claim that $D_{BB'}^3=1/M_T$ for all three edges.  The linear map
$X\mapsto(\sum_{B'\ne B}m_{B'}D_{BB'}^3(X_B-X_{B'}))_B$ on $\C^3$ fixes $(Z_B)$ and $(W_B)$
by (b), and annihilates $(1)_B$.  By the centre-of-mass relations,
$X\mapsto X-M_T^{-1}(\sum_Bm_BX_B)(1)_B$ has the same action on this basis.
The two maps therefore agree.  Comparing their entries at $(B,B')$, $B\ne B'$, gives
$m_{B'}D_{BB'}^3=m_{B'}/M_T$.
The coordinate equations now reduce to the centre-of-mass relations, and the distance
equations give $Z_{B'}-Z_B=M_T^{2/3}\omega^{a_{BB'}}/(W_{B'}-W_B)$.
Substituting these differences into $Z_5-Z_3=(Z_4-Z_3)+(Z_5-Z_4)$ gives the quadratic for
$\kappa$.}

{If the three vectors are dependent, then $Z=\alpha W+\beta$.
The centre-of-mass relation gives $\beta=0$, and the distance equations give $\alpha\ne0$.
Writing $\alpha=c^{-2}$ yields the asserted formula for $D_{BB'}$.
To identify the double roots, write $c_{BB'}=\omega^{a_{BB'}}$.
When two of the $c_{BB'}$ agree, the discriminant is $b(b-4a)\ne0$, with $|a|=|b|=1$,
where $a$ is the repeated value and $b$ the remaining one.
When all three are distinct, $c_{34}+c_{45}=-c_{35}$ and $c_{34}c_{45}=c_{35}^2$, so the
discriminant vanishes.  The identification of the corresponding shape factors is
computer-assisted.}
\end{proof}

{This identification uses exact arithmetic over $\mathbb Q(\omega)$.
For each of the $27$ Lagrange classes, the corresponding factor $S_j$, expressed as a
polynomial in $\kappa$, is reduced modulo the quadratic for $\kappa$; for fold classes it is
evaluated at the double root.  For each of the eight sign vectors $(e_{34},e_{35},e_{45})$,
the corresponding $N_\varsigma$ is identified with one of $\pm S_1$, $\pm S_2$, $\pm S_3$,
$\pm S_4S_5$.  Finally, for Euler triangles on $S_5$, the three quantities $D_{BB'}^3$ are
checked to coincide modulo $S_5$ (Appendix~\ref{app:verification}).}

\begin{definition}[Mass polynomials]\label{def:d2-mass}
For positions $W_3,W_4,W_5$ put $f_B=m_B^2/W_B$,
$\Lambda=f_3^2+f_4^2+f_5^2-2f_3f_4-2f_3f_5-2f_4f_5$ and
$\Theta=3(f_3^2+f_4^2+f_5^2)+2f_3f_5+2f_4f_5-6f_3f_4$.  Equations (25) and (28) of
\cite[\S8.2]{AK} are
\begin{equation*}
  \Lambda=0,\qquad
  \Lambda\,y^2+2(f_3-f_4+f_5)(f_3-f_4-f_5)\,y+\tfrac12(\Theta-\Lambda)=0 .
\end{equation*}
For a rational expression $H$ in $W_3,W_4,W_5$ whose denominator is a monomial, let $H^\flat$ be
the polynomial in $W_4$ obtained by clearing the position denominators, substituting $W_5=1$ and
$W_3=-(m_4W_4+m_5)/m_3$ (centre of mass), and clearing the resulting power of $m_3$.  Define
$L_j(m')$ and $P_j(y)$ by
\[
  \operatorname{Res}_{W_4}(S_j^\flat,\Lambda^\flat)=c_j\,L_j,\qquad
  \operatorname{Res}_{W_4}\bigl(S_j^\flat,(28)^\flat\bigr)=m_3^{2\deg S_j}P_j(y),
\]
where $c_j$ has no zero on $\Rp^3$; for $j=5$ the first resultant is
$m_3^4M_T^2L_5L_5'$.  Write $P_j(y)=E_j(y^2)+yO_j(y^2)$ and $K_j=P_j(0)=E_j(0)$.
\end{definition}

{These definitions recover the factors $L_1,\dots,L_8,L_5'$ and the
polynomials $P_1,\dots,P_8$ in \cite[pp.~573--575]{AK} from the shape factors in its appendix.
They agree with the coefficients and symmetries displayed in \cite{AK}.
In particular, $2L_5=(m_3-m_4)^2+(m_4-m_5)^2+(m_5-m_3)^2$, while $L_5'$ has degree $12$
and $91$ positive coefficients.
Permuting $(m_3,m_4,m_5)$ permutes $\{L_1,L_2,L_3\}$ and $\{L_6,L_7,L_8\}$ up to nonzero
constants.  Exchanging bodies $3$ and $4$ sends $S_1$ to $-S_2$, exchanges
$P_1\leftrightarrow P_2$ and $P_6\leftrightarrow P_7$, and fixes $P_3,P_4,P_5,P_8$.}

{The verification is exact over $\mathbb Q$, treating the masses and
$y$ as indeterminates.  The resultants defining $L_j$ and $L_5'$ are computed and factored.
Those defining $P_j$ are determined by evaluations on unisolvent grids modulo three primes,
with a priori bounds on the degrees and coefficients.  Expansion and substitution then
verify the decompositions $P_j=E_j+yO_j$, the leading coefficients and the symmetries
(Appendix~\ref{app:verification}).}
\subsection{\texorpdfstring{Limits of Diagram~2 germs in the chart}{Limits of Diagram 2 germs in the chart}}\label{sec:d2-cover}

We now relate the polynomial chart to the singular sequences of Section~\ref{sec:ak}.
The following order estimates give a finite boundary limit for every Diagram~2 germ,
so the local analysis of $\mathcal B_m$ covers all such germs.

\begin{proposition}[Existence of a boundary limit]\label{prop:d2-cover}
Let $m\in\Rp^5$ and let $\gamma$ be a Diagram~2 germ on a curve in $A_m$.  Along $\gamma$ put
$t=(z_{12}w_5)^{-1/2}$ \textup{(}either branch\textup{)} and $\lambda=(tz_{12})^{-1}$, and let
$(Z,W,D)$ be given by \eqref{eq:d2-chart} applied to the rotated configuration
$(\lambda z,\lambda^{-1}w,\delta)$.  Then $t,Z,W,D$ are Puiseux series in $s$ with $\ord t>0$ and
nonnegative orders, $(t,Z,W,D)\in\mathcal X_m$ for $s\ne0$, and the limit $p$ as $s\to0$ is a boundary point at
which $Z_1,Z_2$, $W_3,W_4,W_5$ and all $D_{kl}$ are nonzero.
\end{proposition}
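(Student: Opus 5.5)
We follow the pattern of Lemma~\ref{lem:d1-orders} and Proposition~\ref{prop:d1-cover}. The plan has three parts: read off the orders of every coordinate from the rules and estimates of \cite[\S3]{AK}; check that the chart substitution \eqref{eq:d2-chart} with the proposed $t$ and $\lambda$ gives series of nonnegative order; then let $s\to0$ in the polynomial rows \eqref{eq:d2-rows}.

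\textbf{Orders.} Normalize the germ as in Lemma~\ref{lem:germ} with $Z=W=\epsilon^{-2}$. In Diagram~2 the pair $12$ is an isolated $z$-component with circled vertices, so Rule~1e makes $12$ a maximal $z$-stroke. Estimate~2 then gives $r_{12}\approx1$, and Estimate~1 gives $z_{12}\approx\epsilon^{-2}$ and $w_{12}\approx\epsilon^2$. Symmetrically, the $w$-triangle $345$ is isolated with circled vertices. Using Rule~2e to equalize its edges, each edge is a maximal $w$-stroke, so $r_{BB'}\approx1$, $w_{BB'}\approx\epsilon^{-2}$ and $z_{BB'}\approx\epsilon^2$.

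The circles give $z_A\approx\epsilon^{-2}$ and $w_B\approx\epsilon^{-2}$. Bodies $3,4,5$ carry no $z$-circle, so $z_B\prec\epsilon^{-2}$. Since the mass-centre identity gives $\sum m_Bz_B=-(m_1z_1+m_2z_2)$, I would use the pair equations, in the style of Lemma~\ref{lem:d1-syzygies}, to get $z_B\preccurlyeq\epsilon^2$ and $w_A\preccurlyeq\epsilon^2$. Proposition~1 of \cite{AK} (Fact~\ref{fact:prop1}) localizes the origin in the pair for $z$ and in the triangle for $w$, which gives $m_1z_1+m_2z_2$ of small order. Then $z_{AB}\approx\epsilon^{-2}$ and $w_{AB}\approx\epsilon^{-2}$, so $r_{AB}\approx\epsilon^{-2}$ and $\delta_{AB}\approx\epsilon^2$.

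\textbf{Chart.} Set $t=(z_{12}w_5)^{-1/2}$. Its order is $\frac12(2+2)\ord\epsilon>0$ in $\epsilon$-units, so $t\approx\epsilon^2$. The rotation $\lambda=(tz_{12})^{-1}$ has order $0$ after the Section~\ref{sec:ak} normalization. By construction, after the rotation we have $Z_2-Z_1=tz_{12}\lambda=1$ and $W_5=t\lambda^{-1}w_5=1$, because $t^2z_{12}w_5=1$; so the gauges $G_1,G_2$ hold. Each chart coordinate is then a product of a coordinate with a power of $t$ of exactly compensating order. For example, $Z_A=tz_A\approx1$, $W_A=t^{-1}w_A\preccurlyeq1$, $Z_B=t^{-1}z_B\preccurlyeq1$, $W_B=tw_B\approx1$, $D_{12},D_{BB'}\approx1$ and $D_{AB}=t^{-1}\delta_{AB}\approx1$. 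All orders are therefore nonnegative, and the limits of $Z_A$, $W_B$ and every $D_{kl}$ are nonzero. The rows in \eqref{eq:d2-rows} are the original equations multiplied by powers of $t$, so $(t,Z,W,D)\in\mathcal X_m$ for $s\ne0$. By continuity the limit lies in $\{t=0\}\cap\mathcal X_m=\mathcal B_m$, and Lemma~\ref{lem:d2-fibre} then adds nothing new about the nonvanishing.

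\textbf{Main obstacle.} The hard step is the bound $z_B\preccurlyeq\epsilon^2$, i.e.\ that $Z_B$ stays bounded, together with $w_A\preccurlyeq\epsilon^2$. A priori the centre of mass of the triangle in $z$ could lie between $\epsilon^2$ and $\epsilon^{-2}$. I would first try the identity $\sum_Bm_BE^z_B=0$, which gives $M_TZ_T=\sum_{A,B}m_Am_B\delta_{AB}^3z_{BA}$. Each term has order $3\cdot2-2=4\ge2$ in $\epsilon$-units, so $Z_T\preccurlyeq\epsilon^4$, and $z_B=Z_T+O(z_{BB'})\preccurlyeq\epsilon^2$. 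The pair identity for $w$ works the same way. This avoids the delicate central-body case that Diagram~1 required, since here all crossed inverse distances have the same order. That uniformity, however, rests on the orders of $z_{AB}$ and $w_{AB}$, which in turn need the localization argument; I would check that step carefully.
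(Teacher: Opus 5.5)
Your proposal is correct and follows the paper's proof: you read off the orders from Rules~1a, 1e, 2e and Estimates~1--2, check that every chart coordinate has nonnegative order (with $Z_A$, $W_B$ and all $D_{kl}$ of order exactly $0$), and pass to the limit in the polynomial rows. Three small remarks. First, the crossed orders $z_{AB},w_{AB}\approx\epsilon^{-2}$ follow from the circle pattern alone, so Proposition~1 of \cite{AK} is not needed and the worry at the end of your proposal is unfounded. Second, the paper gets $w_A,z_B\preccurlyeq\epsilon^2$ directly from the single-body equation (pair or triangle term of order $2$, crossed terms of order $4$) rather than through the cluster centre; your route via the cluster centre is equivalent. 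Third, you should state explicitly that $t$ and the rotated chart point are unchanged by a preliminary rotation $(z,w)\mapsto(cz,c^{-1}w)$, since that is what justifies computing the orders in the normalized frame.
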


\begin{proof}
{The equations in \eqref{eq:system}, apart from the gauge, are invariant
under $(z,w)\mapsto(\lambda z,\lambda^{-1}w)$.  The chart equations are obtained from them by
multiplying by powers of $t$, and the chosen normalization gives $G_1=\lambda tz_{12}-1=0$
and $G_2=\lambda^{-1}tw_5-1=t^2z_{12}w_5-1=0$.
Thus the chart point belongs to $\mathcal X_m$.

To prove convergence, we estimate the order of each coordinate.  A rotation of $\gamma$
before this construction changes neither $t$ nor the resulting chart point.
We may therefore use the normalization of Lemma~\ref{lem:germ} and \cite[p.~547]{AK}, in
which the maximal components of $\mathcal Z$ and $\mathcal W$ have the same order.
We measure orders in units for which this common order is $-2$.
Fact~\ref{fact:rules} then gives the following estimates.}
\begin{enumerate}
  \item The pair is an isolated component of the $z$-diagram with $z$-circles, so its $z$-stroke
  is maximal (Rule~1e): $\ord z_{12}=-2$; by Estimate~2, $r_{12}\approx1$, so $\ord w_{12}=2$ and
  $\ord\delta_{12}=0$.
  \item The triangle is an isolated $w$-component with $w$-circles; it has a maximal $w$-stroke
  (Rule~1e), and by Rule~2e all three are maximal: $\ord w_{BB'}=-2$, $\ord z_{BB'}=2$,
  $\ord\delta_{BB'}=0$.
  \item The circles give $\ord z_A=\ord w_B=-2$.  The bodies $3,4,5$ carry no $z$-stroke and
  $1,2$ no $w$-stroke, hence no circle of that colour (Rule~1a): $\ord z_B>-2$, $\ord w_A>-2$.
  \item Hence $\ord z_{AB}=\ord w_{AB}=-2$ and $\ord\delta_{AB}=2$.
  \item In $w_A=\sum_{l\ne A}m_l\delta_{Al}^3(w_A-w_l)$ the pair term has order $2$ and the
  crossed terms order $4$, so $\ord w_A\ge2$; likewise $\ord z_B\ge2$.
  \item So $\ord t=2$ and $\ord\lambda=0$; $Z_A=\lambda tz_A$, $W_B=\lambda^{-1}tw_B$, $D_{12}$,
  $D_{BB'}$ and $D_{AB}=\delta_{AB}/t$ have order exactly $0$, and $W_A=\lambda^{-1}w_A/t$,
  $Z_B=\lambda z_B/t$ have order $\ge0$.
\end{enumerate}
{All chart coordinates consequently converge, with $t\to0$.
Those of order exactly $0$ have nonzero limits.
Since $\mathcal X_m$ is closed, the limit belongs to it and satisfies $t=0$.}
\end{proof}

\subsection{\texorpdfstring{The torque identity and the mass conditions}{The torque identity and the mass conditions}}\label{sec:d2-first}

The pair torque gives a scalar equation that must vanish at every boundary limit.
We first derive this equation, then eliminate the triangle coordinates to obtain
conditions on the masses.

\begin{lemma}[Syzygies]\label{lem:d2-syzygy}
Identically in $(t,Z,W,D)$,
\begin{gather*}
  \textstyle\sum_{k=1}^5m_k(W_kF^Z_k-Z_kF^W_k)=0,\qquad
  \sum_Am_A(W_AF^Z_A-Z_AF^W_A)=t\,Q,\\
  Q=\textstyle\sum_{A,B}m_Am_BD_{AB}^3(t^4W_AZ_B-Z_AW_B).
\end{gather*}
At $t=0$ the only rows with a nonzero $t$-derivative are $F^W_A$ and $F^Z_B$, with
$\partial_tF^W_A=\sum_Bm_BD_{AB}^3W_B$ and $\partial_tF^Z_B=\sum_Am_AD_{AB}^3Z_A$.  At a boundary point,
$Q=\Phi:=\sum_Bm_B\varphi_B$ with $\varphi_B=m_1D_{1B}+m_2D_{2B}$, and
$\partial_tF^Z_B=-\varphi_B/W_B$.
\end{lemma}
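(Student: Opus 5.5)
The plan is to verify each identity by direct expansion of the rows \eqref{eq:d2-rows}, using the same pairwise cancellation as in Lemma~\ref{lem:d1-syzygies}.

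\textbf{The full syzygy.} In $\sum_km_k(W_kF^Z_k-Z_kF^W_k)$ the linear terms $W_kZ_k-Z_kW_k$ cancel. The internal pair terms give
\[
-m_Am_{A'}D_{12}^3\bigl(W_A(Z_A-Z_{A'})-Z_A(W_A-W_{A'})\bigr)=-m_Am_{A'}D_{12}^3(Z_AW_{A'}-W_AZ_{A'}),
\]
which is antisymmetric in $A,A'$. It therefore cancels when summed over the ordered pair, and the triangle internal terms cancel in the same way.

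For the crossed terms, fix $A,B$ and collect the contributions of both rows. From the $A$ rows we get
\[
m_Am_BD_{AB}^3\bigl[-t^3W_A(Z_A-t^2Z_B)-tZ_A(W_B-t^2W_A)\bigr]=m_Am_BD_{AB}^3\bigl[t^5W_AZ_B-tZ_AW_B\bigr].
\]
From the $B$ rows we get
\[
m_Am_BD_{AB}^3\bigl[tW_B(Z_A-t^2Z_B)+t^3Z_B(W_B-t^2W_A)\bigr]=m_Am_BD_{AB}^3\bigl[tZ_AW_B-t^5W_AZ_B\bigr].
\]
These two contributions cancel, which proves the first identity. The pair part alone is the first bracket summed over $A,B$, namely $t\sum m_Am_BD_{AB}^3(t^4W_AZ_B-Z_AW_B)=tQ$.

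\textbf{The $t$-derivatives.} These are read off from \eqref{eq:d2-rows}. The crossed terms in $F^Z_A$ and $F^W_B$ carry $t^3$, so their derivatives vanish at $t=0$. The rows $F^W_A$ and $F^Z_B$ have linear coefficients $\sum_Bm_BD_{AB}^3W_B$ and $\sum_Am_AD_{AB}^3Z_A$. The distance rows $T_{AB}$ depend on $t$ only through $t^2$, and the remaining equations do not depend on $t$.

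\textbf{Boundary values.} At a boundary point, Lemma~\ref{lem:d2-fibre}(c) gives $D_{AB}^2Z_AW_B=-1$, so $D_{AB}^3Z_AW_B=-D_{AB}$. Hence, at $t=0$,
\[
Q=-\sum m_Am_BD_{AB}^3Z_AW_B=\sum_{A,B}m_Am_BD_{AB}=\sum_Bm_B\varphi_B=\Phi.
\]
Similarly, since $W_B\neq0$,
\[
\partial_tF^Z_B=\sum_Am_AD_{AB}^3Z_A=-\sum_Am_AD_{AB}/W_B=-\varphi_B/W_B.
\]

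The only real hazard is sign bookkeeping in the crossed terms, in particular confirming that the $t^5$ terms appear with opposite signs so that the total vanishes exactly, as the expansions above show.
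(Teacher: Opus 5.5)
Your proposal is correct and follows the paper's proof. The pair part is the crossed bracket $t\sum_{A,B} m_Am_BD_{AB}^3(t^4W_AZ_B-Z_AW_B)$, the $t$-derivatives are read off from \eqref{eq:d2-rows}, and the boundary values come from $D_{AB}^2Z_AW_B=-1$ in Lemma~\ref{lem:d2-fibre}(c), exactly as you do. The only difference is the full syzygy: instead of cancelling the crossed $A$-row and $B$-row contributions explicitly, the paper notes that $W_kF^Z_k=w_kF^z_k$ and $Z_kF^W_k=z_kF^w_k$. The chart torque is therefore literally the torque of \eqref{eq:system}, which vanishes by antisymmetry.
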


\begin{proof}
Since $W_AF^Z_A=w_AF^z_A$, $Z_AF^W_A=z_AF^w_A$, $W_BF^Z_B=w_BF^z_B$ and $Z_BF^W_B=z_BF^w_B$ for
the coordinate equations $F^z_k,F^w_k$ of \eqref{eq:system}, the first identity is the torque
identity $\sum_km_k(w_kF^z_k-z_kF^w_k)=0$, which holds by antisymmetry.  In the second sum the
terms with $D_{12}$ cancel by antisymmetry and the crossed terms give $tQ$.  The derivatives are
read off from \eqref{eq:d2-rows}, and Lemma~\ref{lem:d2-fibre}(c) gives
$D_{AB}^3Z_AW_B=-D_{AB}$ and $D_{AB}^3Z_A=-D_{AB}/W_B$.
\end{proof}

{To express the torque condition in terms of the triangle, we first describe
the crossed inverse distances and their signs.}  At a boundary point fix $v_B$ with $v_B^2=1/W_B$.  By Lemma~\ref{lem:d2-fibre}(a),(c),
$D_{1B}^2=M/(m_2W_B)$ and $D_{2B}^2=-M/(m_1W_B)$, so $D_{1B}=s_B\sqrt{M/m_2}\,v_B$ and
$D_{2B}=is'_B\sqrt{M/m_1}\,v_B$ with $s_B,s'_B\in\{\pm1\}$ (positive square roots); these are the
$64$ crossed-sign assignments.  Put $\sigma_B=s_Bs'_B$,
$c_\pm=\sqrt M\,(m_1/\sqrt{m_2}\pm i\,m_2/\sqrt{m_1})$ and $x=i(m_2/m_1)^{3/2}$.  Then
$c_+c_-=M(m_1^3+m_2^3)/(m_1m_2)>0$, $\varphi_B=c_{\sigma_B}s_Bv_B$, and
$m_2D_{2B}/(m_1D_{1B})=\sigma_Bx$ with $x^2=-m_2^3/m_1^3$.  There are two relative-sign cases:
\[
 \sigma_3=\sigma_4=\sigma_5=\sigma,\qquad
 \sigma_k=-\sigma,\quad \sigma_B=\sigma\quad(B\ne k).
\]
We refer to these as the equal-sign case and the opposite-sign case, respectively;
they comprise two and six of the eight sign patterns. In the opposite-sign case,
$k$ is the body with the opposite sign.  In both cases put $\tilde v_B=\varphi_B/c_\sigma$, so that $\tilde v_B=s_Bv_B$ if
$\sigma_B=\sigma$ and $\tilde v_k=\rho\,s_kv_k$ for the body with the opposite sign $k$, where
$\rho=c_{-\sigma}/c_\sigma$ satisfies $|\rho|=1$, $\rho\ne\pm1$.  Thus
$\Phi=c_\sigma\sum_Bm_B\tilde v_B$.
\begin{proposition}[Necessary mass conditions]\label{prop:d2-first}
Let $m\in\Rp^5$ and let $p$ be the boundary limit of a Diagram~2 germ
\textup{(}Proposition~\ref{prop:d2-cover}\textup{)}.  Then $\Phi(p)=0$, and:
\begin{alphalist}
  \item in the equal-sign case, its triangle satisfies \textup{(25)}, $\Lambda=0$, and
  $L_j(m')=0$ for every $j\ne5$ with $S_j(W_3,W_4,W_5)=0$ \textup{(}$L_5L_5'=0$ if $j=5$\textup{)};
  \item in the opposite-sign case with $k=5$ and common sign $\sigma=\sigma_3=\sigma_4$, its triangle satisfies
  \textup{(28)} with $y=\sigma(x+x^{-1})/2$, and $u:=y^2=-(T-1)^2/(4T)$, $T=(m_1/m_2)^3$.  Hence
  $P_j(y)=0$ for every $j$ with $S_j(W_3,W_4,W_5)=0$: if $m_1=m_2$ then $K_j(m')=0$, and if
  $m_1\ne m_2$ then $u<0$ and $E_j(u)=O_j(u)=0$.
\end{alphalist}
\end{proposition}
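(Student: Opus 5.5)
The first assertion comes straight from the second syzygy of Lemma~\ref{lem:d2-syzygy}. Along the germ, at every parameter value $s\ne0$, the chart point lies in $\mathcal X_m$ (Proposition~\ref{prop:d2-cover}). So all rows $F^Z_A,F^W_A$ vanish, and $\sum_Am_A(W_AF^Z_A-Z_AF^W_A)=tQ$ vanishes identically in $s$. Since $t$ is a nonzero Puiseux series with $\ord t>0$, $Q\equiv0$ along the germ. Passing to the limit gives $Q(p)=0$, and Lemma~\ref{lem:d2-syzygy} identifies $Q(p)=\Phi(p)$.

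\emph{Case (a).} In the equal-sign case, $\Phi=c_\sigma\sum_Bm_B\tilde v_B$ with $\tilde v_B=s_Bv_B$ and $c_\sigma\ne0$. Put $a_B=m_Bs_Bv_B$, so that $a_B^2=m_B^2/W_B=f_B$ and the condition reads
\[
 a_3+a_4+a_5=0.
\]
I would rationalize by taking the product of $a_3\pm a_4\pm a_5$ over the four sign choices. This product is the symmetric polynomial
\[
 f_3^2+f_4^2+f_5^2-2f_3f_4-2f_3f_5-2f_4f_5=\Lambda,
\]
so $\Lambda=0$, which is (25). By Lemma~\ref{lem:d2-fibre}, the triangle positions then satisfy $S_j(W_3,W_4,W_5)=0$ for the appropriate $j$, together with $W_5=1$ and the centre-of-mass relation $\sum_Bm_BW_B=0$. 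Hence $W_4$ is a common root of $S_j^\flat$ and $\Lambda^\flat$. One point needs checking here: clearing the monomial position denominators is harmless, because every $W_B\ne0$ at a boundary point. Granting that, the resultant vanishes, so $c_jL_j=0$, and since $c_j$ has no zero on $\Rp^3$ we get $L_j(m')=0$. For $j=5$ the same argument gives $m_3^4M_T^2L_5L_5'=0$, hence $L_5L_5'=0$.

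\emph{Case (b).} In the opposite-sign case with $k=5$, the torque condition becomes
\[
 a_3+a_4+\rho a_5=0,\qquad \rho=\frac{c_{-\sigma}}{c_\sigma}.
\]
Factoring $m_1/\sqrt{m_2}$ out of $c_\pm$ gives $c_\pm\propto1\pm x$, so $\rho=(1-\sigma x)/(1+\sigma x)$. I would eliminate the sign ambiguities in two steps. First square to get $2a_3a_4=\rho^2f_5-f_3-f_4$, then square again to get $4f_3f_4=(\rho^2f_5-f_3-f_4)^2$. Next I substitute $\rho$ in terms of $x$, multiply through by a suitable power of $(1+\sigma x)$, and use $x^2=-m_2^3/m_1^3$. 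The expectation is that the resulting relation is exactly (28) with $y=\sigma(x+x^{-1})/2$, possibly after multiplying by the conjugate relation to remove the leftover odd part.

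This exact algebraic identification is the step I expect to be the main obstacle. It is a finite polynomial identity in $f_3,f_4,f_5,x$, and I would verify it symbolically (of the kind recorded in Appendix~\ref{app:verification}) rather than by hand.

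The formula for $u$ is immediate:
\[
 y^2=\tfrac14\bigl(x^2+2+x^{-2}\bigr)=\tfrac14\bigl(-T^{-1}+2-T\bigr)=-\frac{(T-1)^2}{4T}.
\]
The same resultant argument as in (a), now applied to $S_j^\flat$ and $(28)^\flat$, yields $m_3^{2\deg S_j}P_j(y)=0$, hence $P_j(y)=0$.

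It remains to split according to the masses.
\begin{itemize}
 \item If $m_1=m_2$, then $T=1$, so $y=0$ and $K_j=P_j(0)=0$.
 \item If $m_1\ne m_2$, then $u<0$. Since $x$ is purely imaginary, $y=\sigma(x+x^{-1})/2$ is purely imaginary and nonzero. The polynomials $E_j,O_j$ have real coefficients at real masses and $u$ is real, so $E_j(u)$ and $yO_j(u)$ are the real and imaginary parts of $P_j(y)$. Both must vanish, giving $E_j(u)=O_j(u)=0$.
\end{itemize}
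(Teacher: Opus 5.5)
Your argument is correct and follows the paper's proof essentially step for step. The paper obtains $\Phi(p)=0$ from $Q\equiv0$ along the germ, gets $\Lambda$ as the product over the four sign choices, and uses the product over the four sign changes of $r_3,r_4$, which is your double squaring; the resultant and real/imaginary-part steps are the same as yours.

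The identification you flag as the main obstacle is immediate. It needs neither $x^2=-m_2^3/m_1^3$ nor a conjugate relation, and a conjugate relation would in any case only give (28) up to $y\mapsto-y$, which is weaker than what is claimed. Since $(1\pm\sigma x)^2=2\sigma x\,(y\pm1)$, you have $\rho^2=(y-1)/(y+1)$, with $y+1\neq0$ because $\sigma x$ is purely imaginary. Then $(y+1)^2\bigl[(\rho^2f_5-f_3-f_4)^2-4f_3f_4\bigr]$ expands to exactly the left-hand side of (28).
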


\begin{proof}
{All equations vanish along the germ, and $t\not\equiv0$.
Lemma~\ref{lem:d2-syzygy} therefore gives $Q\equiv0$ along it, and continuity implies
$\Phi(p)=Q(p)=0$, including at fold points.}

{In case (a), $\Phi=c_\sigma\sum_Bm_Bs_Bv_B$ with $c_\sigma\ne0$.
The product of $m_3v_3\pm m_4v_4\pm m_5v_5$ over the four sign choices equals $\Lambda$,
because $(m_Bv_B)^2=f_B$.  Hence the vanishing of the torque gives (25).
In case (b), put $r_B=m_1m_BD_{1B}$, so that $r_B^2=(m_1^2M/m_2)f_B$, and $x'=\sigma x$.
The torque becomes $\Phi=(r_3+r_4)(1+x')+r_5(1-x')$, as in (26) of \cite[\S8.2]{AK}.
Its product over the four sign changes of $r_3,r_4$ is $4x'^2(m_1^2M/m_2)^2$ times the
left-hand side of (28), with $y=(x'+1/x')/2$.}

{Thus $W_4$ is a common root of $S_j^\flat$ and $\Lambda^\flat$ in
case (a), or of the shape polynomial and $(28)^\flat$ in case (b).
The resultants in Definition~\ref{def:d2-mass} vanish, and their nonvanishing prefactors
$c_j\ne0$, $m_3\ne0$ give the asserted mass conditions.
Finally, $y=iv$ with $v=\sigma\bigl((m_2/m_1)^{3/2}-(m_1/m_2)^{3/2}\bigr)/2\in\R$,
which vanishes if and only if $m_1=m_2$.
Since $E_j,O_j$ have real coefficients, separating the real and imaginary parts of
$P_j(iv)=E_j(-v^2)+ivO_j(-v^2)=0$ proves the last assertion.}
\end{proof}

\subsection{\texorpdfstring{Local exclusion of boundary limits}{Local exclusion of boundary limits}}\label{sec:d2-isolation}

We turn the torque equation into a local criterion for excluding a boundary point.
An invertible coordinate Jacobian reduces the problem to one analytic function;
a corank-two reduction instead yields three equations in two parameters.

Let $F_*$ denote the twenty rows other than $F^Z_2$ and $F^Z_5$.  A boundary point $p$ is
\emph{excluded} if no holomorphic curve germ $c\colon(\C,0)\to(\mathcal X_m,p)$ has
$t\circ c\not\equiv0$; an isolated point of $\mathcal X_m$ is excluded.

\begin{lemma}[Local analytic lemma]\label{lem:d2-isolation}
Let $m\in\Rp^5$ and $p\in\mathcal B_m$.
\begin{alphalist}
  \item If $p$ is excluded, it is not the boundary limit of a Diagram~2 germ.
  \item Suppose $dF_*(p)$ has rank $20$, so that near $p$ the set
  $\{F_*=0\}$ is a smooth curve $C$.  Then near $p$ the set $\mathcal X_m$ consists of the
  points of $C$ at which $tQ=0$, and $p$ is excluded if $Q|_C\not\equiv0$.  If moreover
  $\partial F_*/\partial(Z,W,D)(p)$ is invertible, then $C$ is the graph of an
  analytic map $t\mapsto\phi(t)$, and $p$ is an isolated point of $\mathcal X_m$ as soon as
  $q(t)=Q(t,\phi(t))\not\equiv0$, in particular if $q(0)=\Phi(p)\ne0$ or $q'(0)\ne0$.
  \item Suppose some twenty of the twenty-two polynomials have a differential of rank $20$ at
  $p$, so that near $p$ their common zeros form a smooth curve $C'$.  If one of the other two
  polynomials does not vanish identically on $C'$, then $p$ is an isolated point of $\mathcal X_m$.
\end{alphalist}
\end{lemma}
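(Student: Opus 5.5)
We prove the three parts in order; all of them are local analytic statements near $p$, and the only input from earlier results is the chart construction of Proposition~\ref{prop:d2-cover} and the syzygy of Lemma~\ref{lem:d2-syzygy}.

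For (a), let $\gamma$ be a Diagram~2 germ with boundary limit $p$. Proposition~\ref{prop:d2-cover} gives chart coordinates $(t,Z,W,D)$ that are Puiseux series in $s$, lie in $\mathcal X_m$ for $s\ne0$, and converge to $p$. After the reparametrization $s=\tau^N$ these series become convergent power series in $\tau$. This yields a holomorphic germ $c\colon(\C,0)\to(\mathcal X_m,p)$ with $t\circ c\not\equiv0$, since $\ord t>0$ is finite. Hence $p$ is not excluded, which is the contrapositive of (a).

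For (b), the rank hypothesis and the implicit function theorem show that $\{F_*=0\}$ is, near $p$, a smooth one-dimensional analytic curve $C$. The two remaining rows are $F^Z_2$ and $F^Z_5$, and we want to show that on $C$ their vanishing is equivalent to $tQ=0$.
\begin{itemize}
\item Write the two syzygies of Lemma~\ref{lem:d2-syzygy} on $C$. All rows of $F_*$ vanish there, so the identities reduce to
\[
m_2W_2F^Z_2+m_5W_5F^Z_5=0,\qquad m_2W_2F^Z_2=tQ.
\]
\item Near $p$ we have $W_5\ne0$ (indeed $G_2$ gives $W_5=1$) and $W_2\ne0$ (by Lemma~\ref{lem:d2-fibre}(a), $W_2=m_1D_{12}\ne0$).
\item Therefore on $C$ the two conditions $F^Z_2=F^Z_5=0$ are equivalent to $tQ=0$.
\end{itemize}
So $\mathcal X_m\cap(\text{nbhd})=C\cap\{tQ=0\}$.

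Now let $c$ be a holomorphic germ into $\mathcal X_m$ with $t\circ c\not\equiv0$. Since $C$ is a smooth curve, $c$ parametrizes $C$ near $p$ (up to reparametrization). On that germ $t\not\equiv0$ while $tQ\equiv0$, so $Q\equiv0$ on $C$. Contrapositively, $Q|_C\not\equiv0$ excludes $p$.

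If moreover the $(Z,W,D)$-block of the Jacobian is invertible, then $C$ is a graph $t\mapsto\phi(t)$. The function $tq(t)$ then has an isolated zero at $t=0$ unless $q\equiv0$, so $p$ is isolated whenever $q\not\equiv0$. By Lemma~\ref{lem:d2-syzygy}, $q(0)=Q(p)=\Phi(p)$; so either $q(0)\ne0$ or $q'(0)\ne0$ suffices.

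For (c), let $C'$ be the smooth curve cut out near $p$ by the chosen twenty polynomials, and let $g$ be one of the other two with $g|_{C'}\not\equiv0$. Parametrize $C'$ holomorphically by a uniformizer $\tau$. Then $g$ is a nonzero holomorphic function of $\tau$, so its zeros are isolated. Since $\mathcal X_m\subset C'\cap\{g=0\}$ near $p$, the point $p$ is isolated.

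The only delicate point is in (b): one must identify the remaining equations with $tQ=0$ on $C$, which relies on $W_2$ and $W_5$ being nonzero near $p$. One must also use that a germ with $t\not\equiv0$ on a smooth curve forces $Q\equiv0$ on the whole branch $C$. Both steps are short.
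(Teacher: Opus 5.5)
Your proposal is correct and follows the paper's proof essentially step by step: (a) via the reparametrized Puiseux germ from Proposition~\ref{prop:d2-cover}, (b) via the two syzygies of Lemma~\ref{lem:d2-syzygy}, which reduce the omitted rows on $C$ to $m_2W_2F^Z_2=tQ=-m_5F^Z_5$ with $W_2(p)\ne0$, and (c) via the isolated zeros of a nonzero analytic function on a smooth curve. The only difference is cosmetic: in (b) you conclude $Q\equiv0$ on $C$ from a nonconstant germ, implicitly using that its image fills a neighbourhood of $p$ in $C$, whereas the paper argues that the isolated zeros of $Q|_C$ force the germ to be constant; the two arguments are equivalent.
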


\begin{proof}
(a) By Proposition~\ref{prop:d2-cover}, a Diagram~2 germ with limit $p$ gives, after the
substitution $s=\tau^N$ that clears the denominators of the Puiseux exponents, a holomorphic germ
$\tau\mapsto(t,Z,W,D)$ in $\mathcal X_m$ at $p$ with $t\not\equiv0$.
(b) On $C$ all rows except $F^Z_2,F^Z_5$ vanish and $W_5=1$, so Lemma~\ref{lem:d2-syzygy} gives
$m_2W_2F^Z_2=tQ=-m_5F^Z_5$ on $C$; and $W_2(p)=m_1D_{12}(p)\ne0$.  This describes $\mathcal X_m$
near $p$.  A germ $c$ in $\mathcal X_m$ at $p$ lies in $C$, and if $t\circ c\not\equiv0$ then
$Q\circ c\equiv0$; when
$Q|_C\not\equiv0$ its zeros on the curve $C$ are isolated, so $c$ is constant, which contradicts
$t\circ c\not\equiv0$.  In the graph case $\mathcal X_m$ is, near $p$, the set of points $(t,\phi(t))$ with
$tq(t)=0$, which is $\{p\}$ when $q\not\equiv0$.
(c) Near $p$, $\mathcal X_m$ lies in $C'$ and in the zero set of the other polynomial, whose restriction
to the smooth curve $C'$ is analytic and not identically zero, hence has an isolated zero at $p$.
\end{proof}

{At the boundary points where the Jacobian of the twenty-two equations has rank
$19$, the implicit function theorem leaves two local parameters.  The following lemma
reduces exclusion to the Taylor coefficients of the three remaining equations.
Its proof follows the possible tangent directions of a formal arc by a Newton--Puiseux
argument after Lyapunov--Schmidt reduction.}

\begin{lemma}[Corank-two isolation]\label{lem:d2-corank-two}
Let $m\in\Rp^5$ and $p\in\mathcal B_m$.  Let $F_R$ be nineteen of the twenty-two chart polynomials,
$F_{R'}$ the other three, and $y\ne t$ a chart coordinate such that the derivative of
$(F_R,t,y)\colon\C^{21}\to\C^{21}$ at $p$ is invertible.  By the implicit function theorem,
near $p$ the set $\{F_R=0\}$ is the image of an injective analytic map $\phi$, defined near
$0\in\C^2$, with $\phi(0)=p$ and $(y,t)\circ\phi(a,t)=(y(p)+a,t)$, and near $p$
\[
  \mathcal X_m=\phi\bigl(g^{-1}(0)\bigr),\qquad g=F_{R'}\circ\phi\colon(\C^2,0)\to(\C^3,0).
\]
Let $g^{(j)}$ be the homogeneous part of degree $j$ of $g$, and for $\xi\in\C^2$ let
$\Pi_\xi=Dg^{(2)}(\xi)$, a $3\times2$ matrix.  Suppose:
\begin{alphalist}
  \item $g^{(1)}=0$; equivalently, the Jacobian at $p$ of all twenty-two polynomials has
  rank~$19$;
  \item the components of $g^{(2)}$ have only finitely many common zeros $[\xi_1],\dots,[\xi_r]$
  in $\mathbb P^1$, and $\Pi_{\xi_j}\ne0$ for every $j$;
  \item for every $j$ there is $N\ge3$ such that the vector $r_N(\xi_j)$ defined below exists
  and does not lie in $\Pi_{\xi_j}\C^2$.
\end{alphalist}
Then no nonconstant formal arc $x\in\C[[\tau]]^2$ with $x(0)=0$ satisfies $g(x)=0$.  Hence $p$
is an isolated point of $\mathcal X_m$; in particular $p$ is excluded.  The same holds if
\textup{(a)--(c)} hold for $Kg(L\,\cdot)$, $K\in GL_3(\C)$, $L\in GL_2(\C)$, in place
of $g$.

The vectors $r_N(\xi)$ are defined for a common zero $\xi$ of $g^{(2)}$ and a fixed
$e\in\C^2\setminus\C\xi$.  As $\Pi_\xi\xi=2g^{(2)}(\xi)=0$ and $\Pi_\xi\ne0$, the vector
$\varpi=\Pi_\xi e$ spans the line $\Pi_\xi\C^2$.  Put $\beta_1=0$.  For $n\ge1$, if $\beta_n$ is
defined, let $r_{n+2}(\xi)$ be the coefficient of $\lambda^{n+2}$ in
$g(\lambda\xi+\beta_n(\lambda)e)$; if moreover $r_{n+2}(\xi)=-b_{n+1}\varpi$ with $b_{n+1}\in\C$,
put $\beta_{n+1}(\lambda)=\beta_n(\lambda)+b_{n+1}\lambda^{n+1}$.  Thus
\[
  r_3(\xi)=g^{(3)}(\xi),\qquad
  r_4(\xi)=g^{(4)}(\xi)+b_2\,Dg^{(3)}(\xi)e+b_2^2\,g^{(2)}(e).
\]
Condition (c) with $N=3$ says that the $3\times3$ matrix $\bigl(\Pi_{\xi_j}\,|\,r_3(\xi_j)\bigr)$
has rank two; with $N=4$ it says that $\bigl(\Pi_{\xi_j}\,|\,r_3(\xi_j)\bigr)$ has rank one and
$\bigl(\Pi_{\xi_j}\,|\,r_4(\xi_j)\bigr)$ has rank two.
\end{lemma}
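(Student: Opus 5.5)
The proof has three stages: reduce to formal arcs in the two-dimensional parameter space, read off the tangent direction of such an arc from $g^{(2)}$, and then run an order-by-order comparison against the recursion defining $r_N$.

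\textbf{Reduction to arcs.} I would first justify the parametrization. Invertibility of the derivative of $(F_R,t,y)$ at $p$ lets the implicit function theorem solve $F_R=0$ with $(a,t)$ as local coordinates. This gives the injective map $\phi$, and near $p$ we get $\mathcal X_m=\phi(g^{-1}(0))$.

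Next, the equivalence in (a). The differential of the full system restricted to the tangent plane of $\{F_R=0\}$ is $dF_{R'}\circ d\phi(0)=Dg(0)$. So the total rank is $19+\operatorname{rank}g^{(1)}$, and rank $19$ means $g^{(1)}=0$.

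For the reduction itself: if $p$ were not isolated, $g^{-1}(0)$ would be an analytic germ of positive dimension at $0$. Curve selection then gives a nonconstant holomorphic arc in it, and hence a nonconstant formal arc $x(\tau)$. It therefore suffices to rule out formal arcs. The replacement $Kg(L\,\cdot)$ changes neither the zero set up to the linear isomorphism $L$ nor the homogeneous structure, so it reduces to the same statement.

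\textbf{Tangent direction.} Write $x(\tau)=\tau^k\xi+O(\tau^{k+1})$ with $\xi\neq0$. Reparametrizing does not affect the argument, since orders only rescale. The lowest-order term of $g(x)$ is $\tau^{2k}g^{(2)}(\xi)$, so $[\xi]$ is one of the $[\xi_j]$.

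Decompose $x=\lambda\xi+\mu e$ in the basis $(\xi,e)$, where $\lambda(\tau)$ has order $k$ and $\mu$ has order greater than $k$. Reparametrize so that $\lambda=\tau^k$; after passing to a Puiseux/formal reparametrization we may take $\lambda$ itself as the parameter. Then $\mu=\beta(\lambda)=\sum_{n\ge2}b_n\lambda^n$, a formal series of order at least $2$ in $\lambda$. This is where the Newton--Puiseux argument enters: $\mu$ may a priori be a Puiseux series in $\lambda$, so I would allow fractional exponents at first and show that the first fractional exponent cannot occur.

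\textbf{Order-by-order comparison (main obstacle).} Expand
\[
g(\lambda\xi+\beta(\lambda)e)=\sum_j g^{(j)}(\lambda\xi+\beta e).
\]
Suppose $\beta=\beta_n+b\lambda^{\nu}+\cdots$ with $\nu>n$, possibly fractional. The new contributions at lowest order are:
\begin{itemize}
\item $\lambda^{\nu+1}\,b\,\Pi_\xi e=\lambda^{\nu+1}b\varpi$, coming from $Dg^{(2)}(\lambda\xi)\cdot b\lambda^\nu e$;
\item the term $g^{(2)}(b\lambda^\nu e)$, which is of order $2\nu>\nu+1$;
\item the cubic cross terms, which are of order $\nu+2$ or higher.
\end{itemize}
Meanwhile the terms from $\beta_n$ alone produce $r_{n+2}\lambda^{n+2}$ as the first coefficient not already cancelled.

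Now compare exponents. If $\nu+1<n+2$, the term $b\varpi\lambda^{\nu+1}$ is the lone lowest term, and it is nonzero since $\varpi\neq0$ by (b). Hence $\nu=n+1$ and there is no fractional exponent. At that order, vanishing forces $r_{n+2}=-b_{n+1}\varpi$, which is exactly the recursion defining $\beta_{n+1}$.

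Inductively, then, $\beta$ agrees with $\beta_n$ to the required order for every $n$ up to the $N$ of (c). At step $N$ we would need $r_N(\xi)\in\C\varpi=\Pi_\xi\C^2$, which contradicts (c). The other possibility is $\nu+1>N$, but then the order-$N$ coefficient is $r_N\neq0$ alone, again a contradiction.

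The delicate bookkeeping is to check that no other term reaches order $\le n+2$. The term $g^{(2)}(e)b^2\lambda^{2\nu}$ only contributes when $2\nu\le n+2$, and this is accounted for by the explicit $b_2^2g^{(2)}(e)$ in $r_4$. The remaining terms have order at least $\nu+2$. I would make this precise by working with the weighted order in $\lambda$.

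Finally, the reformulations of (c) for $N=3,4$ follow because $\Pi_\xi$ has rank exactly one. Indeed, $\Pi_\xi\xi=0$, and $\Pi_\xi\neq0$ by (b).
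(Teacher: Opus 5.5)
Your proposal is correct and follows the paper's proof essentially step for step. Both arguments reduce to a nonconstant formal arc by curve selection and use (a) to place its tangent line among the $[\xi_j]$. Both then reparametrize by the $\xi$-component $\lambda$, so that $x=\lambda\xi+\theta e$ with $\theta\in\C[[\lambda^{1/k}]]$, and run an induction in which $b\varpi\lambda^{\nu+1}$ would be a lone nonzero lowest term unless the next exponent of $\theta-\beta_n$ is at least $n+1$.

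The differences are minor. You use complex-analytic rather than real semialgebraic curve selection, and you spell out the rank argument behind the equivalence in (a), which the paper only asserts. You should also say $\nu\ge n+1$ rather than $\nu=n+1$, with $b_{n+1}$ the possibly zero coefficient of $\lambda^{n+1}$; the paper's bookkeeping via $o_n=\ord(\theta-\beta_n)$ handles this case uniformly.
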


\begin{proof}
The points of $\{F_R=0\}$ near $p$ are the points $\phi(a,t)$, and $\phi(a,t)\in\mathcal X_m$ if
and only if $g(a,t)=0$.  If $p$ were not isolated in $\mathcal X_m$, the curve selection lemma
\cite[Thm.~2.5.5]{BCR}, applied to the real algebraic set $\mathcal X_m\subset\C^{21}=\R^{42}$,
would give a nonconstant semialgebraic arc $\gamma\colon[0,\varepsilon)\to\mathcal X_m$ with
$\gamma(0)=p$.  Its
coordinates are convergent Puiseux series, so $x=(y\circ\gamma-y(p),t\circ\gamma)$ is, after a
substitution $s=\tau^{N'}$, a nonconstant arc in $\C[[\tau]]^2$ with $g(x)=0$.  So the first
assertion implies the second.  The zero set of $Kg(L\,\cdot)$ is $L^{-1}g^{-1}(0)$, which gives
the last assertion.

Let $x$ be a nonconstant formal arc with $x(0)=0$ and $g(x)=0$, and write
$x=\tau^k\xi'+O(\tau^{k+1})$ with $\xi'\ne0$.  By (a),
$g(x)=\tau^{2k}g^{(2)}(\xi')+O(\tau^{2k+1})$, so $\xi'\in\C^*\xi$ for one of the $\xi=\xi_j$
of (b).  Write $x=u\xi+\tilde\theta e$; then $\ord u=k<\ord\tilde\theta$.  Take $\lambda=u(\tau)$ as
the new parameter; then $\tau$ is a power series in $\lambda^{1/k}$ with nonzero linear term,
and $x=\lambda\xi+\theta e$ with $\theta\in\C[[\lambda^{1/k}]]$ and $\ord\theta>1$ ($\theta=0$ is
allowed).

We show by induction on $n\ge1$ that $\beta_n$ is defined, that
$g(\lambda\xi+\beta_ne)=O(\lambda^{n+2})$ and that $o_n=\ord(\theta-\beta_n)>n$.  For $n=N-2$
this contradicts (c).  For $n=1$ the claim holds by (a), $g^{(2)}(\xi)=0$ and $\ord\theta>1$.
Taylor's formula in the direction $e$ gives
\[
  0=g(\lambda\xi+\theta e)
   =g(\lambda\xi+\beta_ne)+(\theta-\beta_n)\bigl(\lambda\varpi+O(\lambda^2)\bigr)
    +O(\lambda^{2o_n}),
\]
because $Dg(\lambda\xi+\beta_ne)e=\lambda\Pi_\xi e+\beta_n\Pi_ee+O(\lambda^2)
=\lambda\varpi+O(\lambda^2)$ by (a) and $\ord\beta_n\ge2$.  If $o_n<n+1$, the term
$(\theta-\beta_n)\lambda\varpi$ has order $o_n+1<n+2$, every other term has larger order
($2o_n>o_n+1$), and its leading coefficient is a nonzero multiple of $\varpi\ne0$; this is
impossible.  Hence $o_n\ge n+1$, and the coefficient of $\lambda^{n+2}$ gives
$r_{n+2}(\xi)+b\varpi=0$, where $b$ is the coefficient of $\lambda^{n+1}$ in $\theta$.  So
$\beta_{n+1}$ is defined, $b_{n+1}=b$ since $\varpi\ne0$, and $o_{n+1}>n+1$.  The same expansion
with $\beta_{n+1}$ in place of $\theta$ gives
$g(\lambda\xi+\beta_{n+1}e)=(r_{n+2}(\xi)+b_{n+1}\varpi)\lambda^{n+2}+O(\lambda^{n+3})
=O(\lambda^{n+3})$.
\end{proof}

\begin{remark}\label{rem:d2-corank-two}
The $\xi_j$ are the points where the strict transform of $g^{-1}(0)$ could meet the exceptional
curve of the blow-up of $0\in\C^2$, and the induction is the Newton--Puiseux algorithm in the
blow-up chart at $\xi_j$.
\end{remark}

{We next compute the first derivative of the reduced torque equation
using the triangle block and the quadratic form $Q_T$.
For a boundary point $p$, let $J_T(p)$ be the
Jacobian at $p$ of the nine equations
$F^Z_3,F^W_3,F^Z_4,F^W_4,F^W_5,T_{34},T_{35},T_{45},G_2$ with respect to the nine triangle
coordinates $Z_B,W_B,D_{BB'}$.  If $J_T(p)$ is invertible and $\xi\in\C^3$, let
$(\zeta,\eta,\vartheta)$ be the solution of $J_T(p)(\zeta,\eta,\vartheta)=r(\xi)$.
Here $r(\xi)$ has entries $\xi_3/W_3$ and $\xi_4/W_4$ in the rows $F^Z_3$, $F^Z_4$ and $0$
elsewhere.  Define}
\[
  Q_T(\xi)=-\tfrac12\textstyle\sum_Bm_B\,\xi_B\,\eta_B/W_B .
\]
{This quadratic form $Q_T$ depends only on $m'$ and the triangle of $p$.}

\begin{lemma}[Second-order coefficient]\label{lem:d2-second}
Let $p$ be a boundary point with $\Phi(p)=0$ and $J_T(p)$ invertible.  Then
$\partial F_*/\partial(Z,W,D)(p)$ is invertible and, in the notation of
Lemma~\ref{lem:d2-isolation} and of the crossed data,
\[
  q(0)=0,\qquad q'(0)=c_\sigma^2\,Q_T(\tilde v).
\]
\end{lemma}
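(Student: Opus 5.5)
The plan is to use the block structure of $\partial F_*/\partial(Z,W,D)(p)$ at $t=0$, and then to differentiate the identity $q(t)=Q(t,\phi(t))$ once, with the crossed-sign data of Section~\ref{sec:d2-first}.

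\emph{Invertibility.} At $t=0$ the rows of \eqref{eq:d2-rows} decouple (Lemma~\ref{lem:d2-fibre}). The pair rows $F^Z_1$, $F^W_1$, $F^W_2$, $T_{12}$, $G_1$ involve only $Z_A,W_A,D_{12}$. The triangle rows are the nine rows defining $J_T(p)$. The six crossed constraints $T_{AB}=-D_{AB}^2Z_AW_B-1$ have derivative $-2D_{AB}Z_AW_B\ne0$ in $D_{AB}$.

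So the matrix is block lower-triangular in the order (pair, triangle, crossed). Its pair block is the Jacobian of the two-body system in (a) of Lemma~\ref{lem:d2-fibre}. This system has the unique solution listed there, and a direct $5\times5$ determinant computation shows it is nondegenerate. We drop $F^Z_2$; this is harmless because $\sum_Am_AF^Z_A$ reduces to the pair centre-of-mass relation. The triangle block is $J_T(p)$, which is invertible by hypothesis. The crossed block is diagonal with nonzero entries. Hence the whole matrix is invertible, and the implicit function theorem gives the graph $\phi$. At $t=0$ we have $q(0)=Q(p)=\Phi(p)=0$ by Lemma~\ref{lem:d2-syzygy}.

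\emph{Derivative.} Next we compute $q'(0)=\partial_tQ+\partial_{(Z,W,D)}Q\cdot\phi'(0)$, where $\phi'(0)=-(\partial F_*/\partial(Z,W,D))^{-1}\partial_tF_*$. By Lemma~\ref{lem:d2-syzygy}, the only nonzero $t$-derivatives come from $F^W_A$ and from $F^Z_B$, and $\partial_tF^Z_B=-\varphi_B/W_B$. Also $\partial_tQ=0$ at $t=0$, since $t$ enters $Q$ only through $t^4$.

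The perturbation splits into two parts. The pair part, driven by $\partial_tF^W_A$, moves only $W_A$. It cannot affect $Q$ at $t=0$, because $W_A$ enters $Q$ only with the factor $t^4$. It could affect the crossed $D_{AB}$ only via $T_{AB}$, and $T_{AB}$ at $t=0$ does not contain $W_A$.

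The triangle part solves $J_T\,(\dot Z,\dot W,\dot D)=(\varphi_B/W_B)$ in the rows $F^Z_3$, $F^Z_4$. The row $F^Z_5$ is omitted, which is consistent with the centre of mass. Since $\varphi_B=c_\sigma\tilde v_B$, linearity gives $(\dot Z_B,\dot W_B)=c_\sigma(\zeta,\eta)(\tilde v)$, up to a sign convention to be fixed carefully.

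The crossed distances then respond through $T_{AB}$: we get $\dot D_{AB}=-D_{AB}\,\dot W_B/(2W_B)$, plus a $\dot Z_A$ term that vanishes. Differentiating $-Q=\sum m_Am_BD_{AB}^3Z_AW_B$ in $W_B$ and $D_{AB}$, and using $D_{AB}^2Z_AW_B=-1$, gives
\[
 \dot Q=-\tfrac12\sum_Bm_B\varphi_B\,\dot W_B/W_B=-\tfrac12c_\sigma^2\sum_Bm_B\tilde v_B\eta_B/W_B=c_\sigma^2Q_T(\tilde v).
\]
(The $D_{AB}$-derivative gives $3\cdot(-\tfrac12)$ and the $W_B$-derivative gives $1$; together with the minus sign in $Q$ these net to the factor $-\tfrac12\sum_A m_Am_BD_{AB}=-\tfrac12\varphi_B$.)

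\emph{Main obstacle.} The hardest step is this sign and factor bookkeeping. We must confirm that $\eta$ in the definition of $Q_T$ is exactly $\dot W/c_\sigma$ with the stated sign of $r(\xi)$. We must also make sure that no contribution leaks through $G_1$ or the pair block into the $D_{AB}$. I would verify it by writing the crossed contribution edge by edge.
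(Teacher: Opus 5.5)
Your block lower-triangular structure, the triangle response $W_B'(0)=c_\sigma\eta_B(\tilde v)$, the crossed response $\dot D_{AB}=-\tfrac12D_{AB}(\dot Z_A/Z_A+\dot W_B/W_B)$ and the final factor $-\tfrac12\sum_Bm_B\varphi_B\dot W_B/W_B$ all agree with the paper. The gap is the sentence ``the pair part, driven by $\partial_tF^W_A$, moves only $W_A$''. Your claim that the $\dot Z_A$ term vanishes rests on it, and it is false in general.

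Write $h=W_2-W_1$. At the boundary point the difference row is $F^W_2-F^W_1=(1-MD_{12}^3)h$. Its coefficient on $\dot h$ is zero because $MD_{12}^3=1$, and its only nonzero coefficient is $-3MD_{12}^2h$, on $\dot D_{12}$. So an unequal forcing is absorbed by $D_{12}$:
\[
 \dot D_{12}=\frac{\partial_tF^W_2-\partial_tF^W_1}{3MD_{12}^2h}.
\]
Then $T_{12}$ moves $h$, and $F^Z_1$ together with the gauge gives $\dot Z_1=\dot Z_2=-3m_2D_{12}^2\dot D_{12}$. These $\dot Z_A$ enter every $\dot D_{AB}$ through $-\tfrac12D_{AB}\dot Z_A/Z_A$. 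They add $-\tfrac12\dot Z_1\sum_Am_AZ_A^{-1}\sum_Bm_BD_{AB}$ to $q'(0)$, which is not zero in general. Your argument uses $\Phi(p)=0$ only to get $q(0)=0$. It would therefore give the same formula for $q'(0)$ at every boundary point with $J_T(p)$ invertible, and that formula fails in general when $\Phi(p)\ne0$.

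The missing step is exactly where the hypothesis enters. At the boundary, $D_{AB}^2Z_AW_B=-1$ gives $\partial_tF^W_A=\sum_Bm_BD_{AB}^3W_B=-Z_A^{-1}\sum_Bm_BD_{AB}$. With $Z_1=-m_2/M$ and $Z_2=m_1/M$ this yields
\[
 \partial_tF^W_1-\partial_tF^W_2=\frac{M}{m_1m_2}\sum_Bm_B\bigl(m_1D_{1B}+m_2D_{2B}\bigr)=\frac{M\Phi}{m_1m_2}=0 .
\]
Now run the kernel chase of the pair block with this forcing: gauge, $W$-difference row, $T_{12}$, then $F^Z_1$. It gives $D_{12}'(0)=0$, $h'(0)=0$ and $Z_1'(0)=Z_2'(0)=0$. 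Only then is your formula for $\dot Q$ justified.

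The same chase, without forcing and with the weighted $W$-sum added, also proves the invertibility of the pair block. You only asserted that invertibility via an unperformed determinant computation.
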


\begin{proof}
{At $t=0$ the pair rows involve only $Z_A,W_A,D_{12}$, the triangle rows only the triangle
coordinates, and $T_{AB}$ involves no crossed coordinate other than $D_{AB}$, with
$\partial T_{AB}/\partial D_{AB}=2/D_{AB}$; hence the Jacobian is block lower triangular.
To check its pair block directly, write
\[
 d=D_{12},\qquad u=Z_2-Z_1,\qquad h=W_2-W_1.
\]
At the boundary point $u=1$, $Md^3=1$ and $h=Md$. The five selected pair rows are
$F^Z_1,F^W_1,F^W_2,T_{12},G_1$, and at $t=0$ they satisfy
\[
\begin{gathered}
 F^Z_1=Z_1+m_2d^3u,\qquad F^W_2-F^W_1=(1-Md^3)h,\\
 m_1F^W_1+m_2F^W_2=m_1W_1+m_2W_2,\qquad T_{12}=d^2uh-1.
\end{gathered}
\]
A vector in their differential kernel first has $\dot u=0$ from the gauge, then
$\dot d=0$ from the difference of the two $W$-rows, whose $\dot d$ coefficient is
$-3Md^2h\ne0$. The distance constraint gives $\dot h=0$, the weighted $W$-sum gives
$\dot W_1=\dot W_2=0$, and the $Z$-row and gauge give
$\dot Z_1=\dot Z_2=0$. Thus the pair block is invertible.
Differentiate $F_*(t,\phi(t))=0$ at $t=0$.
By Lemma~\ref{lem:d2-syzygy}, the pair $Z$-row, distance constraint and gauge have no explicit
$t$-forcing, whereas
\[
 \partial_tF^W_A=-Z_A^{-1}\sum_Bm_BD_{AB},\qquad
 \partial_tF^W_1-\partial_tF^W_2=\frac{M\Phi}{m_1m_2}=0.
\]
The same difference, distance constraint and $Z$-row calculation therefore gives
$D_{12}'(0)=Z_1'(0)=Z_2'(0)=0$.

}The triangle rows give
$J_T(p)(Z',W',D')_T(0)=r(\varphi)$, hence $W_B'(0)=c_\sigma\eta_B(\tilde v)$.  On $C$ the distance constraints
$T_{AB}=0$ give $Q=\sum_{A,B}m_Am_BD_{AB}+O(t^2)$ and
$D_{AB}'=-\frac12D_{AB}(Z_A'/Z_A+W_B'/W_B)$.  Therefore
$q'(0)=\sum_{A,B}m_Am_BD'_{AB}(0)=-\frac12\sum_Bm_B\varphi_BW_B'(0)/W_B=c_\sigma^2Q_T(\tilde v)$.
\end{proof}

\begin{definition}\label{def:d2-psi}
For a boundary point $p$ with $J_T(p)$ invertible, put $\Psi_T(p)=Q_T(\tilde v)$ in the equal-sign case and
$\Psi_1(p)=Q_T(\tilde v)$ in the opposite-sign case.  The boundary point is \emph{isolated at second order}
if $\Phi(p)=0$, $J_T(p)$ is invertible and $Q_T(\tilde v)\ne0$; by
Lemmas~\ref{lem:d2-isolation} and~\ref{lem:d2-second} it is then an isolated point of $\mathcal X_m$, and
the omitted rows $F^Z_2,F^Z_5$ vanish to order exactly two along the curve $C$.
\end{definition}

At a boundary point with $\Phi(p)=0$ the vector $\tilde v$ is determined by the triangle and the signs
$s_B$ (for the body with the opposite sign $k$, $m_k\tilde v_k=-\sum_{B\ne k}m_B\tilde v_B$), so $\Psi_T$ and
$\Psi_1$ do not involve $m_1,m_2$.  For an opposite-sign boundary point with $k=5$ the condition
$\Phi=0$ requires $|m_3\tilde v_3+m_4\tilde v_4|=m_5|v_5|$, i.e.\ the vanishing of the
function $G=|m_3\tilde v_3+m_4\tilde v_4|^2-m_5^2$ (recall $W_5=1$), which again
depends on the triangle only; then $\tilde v_5=\rho s_5v_5$ fixes $\rho$ up to sign, hence
$m_1/m_2$ up to exchanging $m_1$ and $m_2$.

\subsection{\texorpdfstring{Exclusion on the mass loci}{Exclusion on the mass loci}}\label{sec:d2-cases}

{We apply the local criteria to the mass loci determined by $\Phi=0$.
The relative signs of the crossed inverse distances separate the cases; when one
relative sign differs, we also distinguish $m_1=m_2$ from $m_1\ne m_2$.
Table~\ref{tab:d2-cases} displays the corresponding scalar obstructions and
computational methods. The computer-assisted statements below use exact arithmetic,
rational intervals, and IEEE ball arithmetic with proved rounding bounds
(Appendix~\ref{app:verification}).}

\begin{table}[ht]
\centering
\footnotesize
\setlength{\tabcolsep}{3pt}
\begin{tabular}{p{2.5cm}p{2.5cm}p{3.3cm}p{3.4cm}}
\hline
Boundary points & Mass locus & Method & Computation \\
\hline
$\Phi\ne0$ (any) & all $m$ & {$Q(p)=\Phi(p)\ne0$, $tQ=0$} & exact identities \\
equal-sign, $S_4$, or $S_5$ with $L_5'=0$ & $L_4=0$, $L_5'=0$ & no point in $\Rp^3$ & exact \\
equal-sign, $S_5$ & $L_5=0$: $m_3=m_4=m_5$ & order three; corank-two blow-up & exact jets \\
equal-sign, $S_1$--$S_3$, $S_6$--$S_8$ & rays of $L_1$--$L_3$, $L_6$--$L_8$ & second order,
{$\Psi_T\ne0$} & Bernstein subdivision, Krawczyk method; interval arithmetic \\
\hline
opposite-sign, $m_1=m_2$, $S_3,S_6$--$S_8$ & $K_3,K_6,K_7,K_8=0$ & no point in $\Rp^3$ &
Bernstein coefficients \\
opposite-sign, $m_1=m_2$, $S_1,S_2$ & $K_1=0$, $K_2=0$ & second order & exact; interval cover \\
opposite-sign, $m_1=m_2$, $S_4$ & {$K_4=0$: cones over four arcs} & second order & exact \\
opposite-sign, $m_1=m_2$, $S_5$ & $K_5=0$: $\Rp(1,1,\rho_*)$ & orders one to four &
exact; interval arithmetic \\
\hline
opposite-sign, $m_1\ne m_2$, not fold & $E_j(u)=O_j(u)=0$ & second order, $\Psi_1\ne0$: ball
covers, exact lemmas & ball and interval arithmetic; exact \\
opposite-sign, $m_1\ne m_2$, fold & $E_5(u)=O_5(u)=0$ & corank one; $Q|_C$ has a simple zero &
exact \\
\hline
\end{tabular}
\caption{The Diagram~2 cases.  A boundary point with $\Phi=0$ lies on $S_j$ for the $j$ given in the
first column (Lemma~\ref{lem:d2-fibre}); opposite-sign boundary points are taken with $k=5$.}
\label{tab:d2-cases}
\end{table}

\begin{lemma}[computer-assisted]\label{lem:d2-empty}
\textup{(a)} The polynomials $L_4$, $L_5'$, $K_3$, $K_6$, $K_7$ and $K_8$ have no zero in $\Rp^3$.
{More precisely, at positive masses every normalized $W$-shape on $S_4$ is real
\textup{(}the centre-of-mass section of $S_4$ is a cubic with positive discriminant\textup{)}
and satisfies $\Lambda\ne0$. Here the normalized shape is real in the sense
$(W_3,W_4,W_5)\in\R^3$.}
\textup{(b)} If $m_3=m_4=m_5$, no $S_j$ with $j\ne5$ has a root satisfying \textup{(25)}.
\end{lemma}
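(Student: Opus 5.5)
The plan is to treat the six polynomials of part~(a) one at a time as explicit integer polynomials in $m'=(m_3,m_4,m_5)$ and certify that none vanishes on $\Rp^3$. Each of $L_4$, $L_5'$, $K_3$, $K_6$, $K_7$, $K_8$ is produced by the exact resultant computations of Definition~\ref{def:d2-mass}, so it is available in expanded form. Every one is homogeneous, because the shape polynomials and $\Lambda$, $(28)$ are homogeneous after the centre-of-mass substitution. It therefore suffices to work on the simplex $\{m_3+m_4+m_5=1,\ m_B\ge0\}$, or on the dehomogenization $m_5=1$ with $m_3,m_4>0$.

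For $L_5'$ the statement is immediate: it has only positive coefficients (degree~$12$, $91$ positive coefficients, as recorded after Definition~\ref{def:d2-mass}), so it is positive on $\Rp^3$.

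For the others, the first step is to check the coefficients directly. I would then pass to the Bernstein basis on the simplex and verify that all Bernstein coefficients are of one strict sign. This is the method indicated in Table~\ref{tab:d2-cases} for $K_3,K_6,K_7,K_8$. Where some coefficient fails the sign test, I would subdivide the simplex by de~Casteljau splitting and repeat the test on each piece. Vertices or edges where the polynomial vanishes need separate treatment, by factoring out the corresponding monomial or checking the boundary restriction exactly. This is legitimate because only the open orthant matters.

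For $L_4$ I would use the geometric argument stated in part~(a) instead of a coefficient check. Substitute $W_5=1$ and $W_3=-(m_4W_4+m_5)/m_3$ into $S_4$, which gives a cubic in $W_4$ with coefficients in $m'$. I would then show that its discriminant is a polynomial in $m'$ that is positive on $\Rp^3$, again by a positive-coefficient or Bernstein certificate. Consequently all three roots are real, so the normalized $W$-shapes $(W_3,W_4,W_5)$ are real.

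At a real shape the quantities $f_B=m_B^2/W_B$ are real and nonzero. We have $\Lambda=-(\sqrt{f_3}+\sqrt{f_4}+\sqrt{f_5})\cdots$ (product over signs), and $\Lambda<0$ whenever the $f_B$ are positive and satisfy strict triangle inequalities. To handle $\Lambda$ in general, I would order the three real roots and use the fact that $\sum_Bm_BW_B=0$ forces mixed signs among the $W_B$. If exactly one $f_B$ has a different sign from the other two, then $\Lambda=(f_a-f_b)^2-2f_c(f_a+f_b)+f_c^2$ is a sum of nonnegative terms with at least one positive. This argument needs care on each root branch. If a clean sign argument fails, the fallback is to bound $L_4$ itself by subdivision as above.

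For part~(b), set $m_3=m_4=m_5=1$. Then each $S_j^\flat$ with $j\neq5$ and $\Lambda^\flat$ become univariate integer polynomials in $W_4$. I would compute $\gcd(S_j^\flat,\Lambda^\flat)$ exactly over $\mathbb Q$, or equivalently the resultant, and check that it is constant for $j=1,2,3,4,6,7,8$.

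The main obstacle is the Bernstein certification of $K_3,K_6,K_7,K_8$ and possibly $L_4$. These have high degree and large coefficients, so the subdivision may need many levels near places where the polynomials come close to zero, for instance near the equal-mass ray. I would first try degree elevation together with an adaptive subdivision driven by the minimal Bernstein coefficient, all in exact rational arithmetic.
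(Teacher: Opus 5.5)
Your plan is essentially the paper's own verification. It uses positive coefficients for $L_5'$ and Bernstein certificates for $K_3,K_6,K_7,K_8$; the paper needs only nonnegative coefficients, on three projective charts per polynomial, each split into two sectors by blowing up a corner of the cone, which plays the role of the separate treatment of vanishing vertices you anticipate. For $L_4$ it uses realness of the normalized $S_4$ shapes followed by the mixed-sign identity $\Lambda=(f_a-f_b)^2+f_c^2-2f_c(f_a+f_b)>0$ (the paper gets realness from a rational parametrization of $S_4$ rather than a discriminant certificate), and for (b) exact coprimality of $S_j^\flat$ and $\Lambda^\flat$ at $m_3=m_4=m_5$.

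One detail needs tidying in the $L_4$ step: the $f_B$ are not finite at every root. $W_4=0$ is a root when $m_3=m_5$, and $W_3=0$ is a root when $m_4=m_5$; at equal masses both finite roots are of this type, and the cubic drops to a quadratic when $m_3=m_4$. At such a root, however, $\Lambda^\flat$ reduces to the single nonzero term $m_4^4W_3^2$ or $m_3^4W_4^2$, so the resultant still does not vanish, and no further case analysis on the root branches is needed.
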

{The verification uses exact arithmetic over $\mathbb Q$.
For $L_4$, a rational parametrization of $S_4$ gives the obstruction:
at positive masses, a real normalized $W$-shape has positions $W_B$ of different signs.
Then $\Lambda$ is a sum of nonnegative terms, at least one of which is positive.
The coefficients of $L_5'$ are positive.  For $K_3$, $K_6$, $K_7$, $K_8$, nonnegative
Bernstein coefficients are checked on twelve projective charts, three for each
polynomial.  Each chart is split into the two sectors obtained by blowing up a corner
of the cone.  For (b), the verification shows that $S_j^\flat$ and $\Lambda^\flat$ are
coprime when $m_3=m_4=m_5$ and $j\ne5$ (Appendix~\ref{app:verification}).}

\begin{lemma}[computer-assisted]\label{lem:d2-L5}
Let $m\in\Rp^5$ with $m_3=m_4=m_5$.  Every boundary point at $m$ whose triangle lies on $S_5$ \textup{(}an
all-equal Lagrange class or a fold triangle\textup{)} and which satisfies $\Phi=0$ or is a fold
boundary point is an isolated point of $\mathcal X_m$.  At each such $m$ these are $72$ boundary points of the all-equal
classes and $1152$ fold points, $1224$ in all.
\end{lemma}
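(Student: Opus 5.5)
The plan is to reduce the statement to a finite list of boundary points and apply the local criteria of Lemmas~\ref{lem:d2-isolation} and~\ref{lem:d2-corank-two} to each. With $m_3=m_4=m_5=:\mu$, the triangle part of a boundary point is independent of $m_1,m_2$ up to the common scaling $\mu$. The first step is to enumerate the triangles on $S_5$ from Lemma~\ref{lem:d2-fibre}(i): the three all-equal Lagrange classes and the six fold classes, with $W_5=1$ and the centre-of-mass condition. Each class gives finitely many normalized $W$-shapes, and $Z_B$ and $D_{BB'}$ are then determined. I next adjoin the pair data from Lemma~\ref{lem:d2-fibre}(a) and the $64$ crossed-sign assignments $(s_B,s'_B)$.

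For the all-equal classes I impose $\Phi=0$. In the equal-sign case this is $\sum_B s_Bv_B=0$, which on the equilateral shape singles out specific sign vectors. In the opposite-sign case, $\Phi=0$ fixes $\rho$ up to sign and hence $m_1/m_2$. I expect the counts $72$ and $1152$ to emerge from multiplying triangle classes, $W$-shapes, the choice of $D_{12}$ among cube roots of $1/M$, and sign patterns. I would verify these counts by exact enumeration over $\mathbb Q(\omega,\sqrt{\cdot})$, treating $m_1,m_2$ symbolically where possible.

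At every such point I would first compute the rank of the full $22$-row Jacobian exactly. For points where the selected twenty rows have rank $20$, Lemma~\ref{lem:d2-isolation}(b) or (c) applies: I show that $Q|_C$, or one omitted row, is not identically zero by computing Taylor coefficients up to order three. The table indicates that order three suffices for the all-equal classes, since there $J_T$ is presumably singular, so Lemma~\ref{lem:d2-second} cannot be used directly. At a fold point, $J_T$ degenerates because the $\kappa$-root is double, and I expect the total rank to be $19$. For these points I apply Lemma~\ref{lem:d2-corank-two}. I would choose nineteen rows and a coordinate $y$, such as $W_4$, that make $(F_R,t,y)$ invertible, then compute the jets $g^{(2)},g^{(3)},g^{(4)}$ by implicit differentiation in exact arithmetic. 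After that I would check conditions (b) and (c), exploiting the $\mathbb T$-type and permutation symmetries of the triangle to reduce the $1152$ points to a few orbits.

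The main obstacle is the dependence on the free parameter $m_1/m_2$ and on $\mu$ at the fold points. Condition (c) must hold for every mass ratio, not just at sample values. To handle this, I would express the jet coefficients as rational functions of $(m_1,m_2,\mu)$, reduced by homogeneity to one variable, and show that the relevant $3\times3$ determinants of $(\Pi_\xi\,|\,r_3)$, or of $(\Pi_\xi\,|\,r_4)$ when the rank-one case occurs, have no positive zero. If $r_3$ fails at isolated ratios, I would push to $N=4$ there.
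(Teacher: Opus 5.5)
Your overall architecture matches the paper's. You enumerate the $72=3\cdot2\cdot3\cdot4$ all-equal points (classes, normalized shapes, cube roots $D_{12}$, equal-sign assignments with $\Phi=0$) and the $1152=6\cdot3\cdot64$ fold points. You compute the rank of all $22$ rows, use Lemma~\ref{lem:d2-isolation}(b),(c) at rank $20$ and Lemma~\ref{lem:d2-corank-two} at rank $19$, and reduce by symmetries.

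The gap is in the step that carries the real content: uniformity in the masses. Common scaling to $m_3=m_4=m_5=1$ removes only one parameter. A two-parameter family $(m_1,m_2)$ remains, so a one-variable check for positive zeros cannot suffice. The jets are also not rational in the masses, because the boundary coordinates involve $\sqrt{M/m_1}$, $\sqrt{M/m_2}$ and $M^{1/3}$. The paper rationalizes by $m_1=4M\nu^2/(1+\nu^2)^2$, $m_2=M(1-\nu^2)^2/(1+\nu^2)^2$, $M=b^3$, and computes over $\mathbb Q(\omega,i,\delta_*)(\nu,b)$. It justifies specialization to every $0<\nu<1$, $b>0$ by invertibility of the selected Jacobian on that whole range.

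The paper then needs genuinely two-parameter nonvanishing arguments. At the all-equal points the decisive coefficient is $\kappa_3=\omega^jM^{5/3}R_1(1-i\chi)+3\omega^k\delta_*MR_2(1-i\chi^5)$, whose two terms carry different powers of $M$. They never cancel: $R_1,R_2>0$, and $(1-i\chi)/(1-i\chi^5)$ has argument of modulus at most $\pi/4$ (from $(1-\chi)(1-\chi^5)\ge0$). Their quotient is $\omega^{j-k}$ times such a number and so is never $-1$. At the rank-$19$ fold representative, the rescaling $\tilde t=Mt$ makes the quadratic part of the reduced map independent of $M$. It turns the minors into $M^{-3}\mu_3(\nu)$ and $M^{-5/3}\mu_4(\nu)$, leaving $\Upsilon(\nu)\ne0$ for real $\nu$, which follows from $|3(\nu-i)^4|>|(\nu+i)^4|$. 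Your plan supplies neither mechanism.

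Several of your structural predictions would also misdirect the computation.
\begin{itemize}
\item At the all-equal points $J_T(p)$ is invertible, so Lemma~\ref{lem:d2-second} does apply. It gives $q(0)=q'(0)=0$ because $Q_T(\tilde v)=0$ there. Isolation comes from $q''(0)=-2\kappa_3$, read off from $F^Z_5=\kappa_3t^3+O(t^4)$ along $C$.
\item All these points are equal-sign. An opposite-sign point with $\Phi=0$ on this triangle would force $\rho\in\{\pm1\}$, which is impossible for positive masses. So $\Phi=0$ never fixes $m_1/m_2$ here.
\item Only $72$ of the $1152$ fold points have total rank $19$; the other $1080$ have rank $20$. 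On their curves $t$ is not a local parameter. The paper uses $h=W_4-W_4(p)$ with twenty rows ($F_*$ at $864$ points, all rows except $F^Z_2,F^Z_3$ at the other $216$) and a nonzero $h^3$ coefficient of an omitted row, i.e.\ Lemma~\ref{lem:d2-isolation}(c).
\item On the boundary line $\xi_\partial$ of the rank-$19$ orbit, $r_3(\xi_\partial)\in\C\varpi$ holds identically in $\nu$. So $N=4$ is needed throughout that family, not only at isolated ratios.
\end{itemize}
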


{We first normalize the masses.  Common mass scaling gives an exact
isomorphism of the gauged chart, so we may take $m_3=m_4=m_5=1$ and hence
$\delta_*:=M_T^{-1/3}=3^{-1/3}$.
Every positive pair $(m_1,m_2)$ has a unique parametrization by
$(\nu,M)\in(0,1)\times\Rp$ of the form}
\[
  m_1=\frac{4M\nu^2}{(1+\nu^2)^2},\qquad m_2=\frac{M(1-\nu^2)^2}{(1+\nu^2)^2},
\]
{Indeed, $M=m_1+m_2$ and $\nu=1/(\chi+\sqrt{1+\chi^2})$, where
$\chi=\sqrt{m_2/m_1}=(1-\nu^2)/(2\nu)$.}

{Consider first the $72$ boundary points in the all-equal classes.
They satisfy $\Phi=0$, have invertible $J_T(p)$, and satisfy $q(0)=q'(0)=0$.
The next coefficient is obtained from $F^Z_5=\kappa_3t^3+O(t^4)$ along $C$, which gives
$q''(0)=-2\kappa_3$.
For the nine boundary points of one sign class with $D_{BB'}=\omega^k\delta_*$,
$(W_3,W_4,W_5)=(\omega,\omega^2,1)$ and $D_{12}=\omega^jM^{-1/3}$, the exact computation gives}
\[
  \kappa_3=\omega^jM^{5/3}R_1(1-i\chi)+3\omega^k\delta_*MR_2(1-i\chi^5),\qquad
  R_1=\frac{18\nu^2(1-\nu^2)}{(1+\nu^2)^3},\quad R_2=\frac{6\nu^2(1+\nu^2)}{(1-\nu^2)^3}.
\]
{To obtain these nine coefficient identities from the case $j=k=0$, first regard
$\nu,b$ as algebraically independent indeterminates, set $M=b^3$, and work over
$\mathbb Q(\omega,i,\delta_*)(\nu,b)$, where $\delta_*^3=1/3$.
The commuting automorphisms
\[
  \delta_*\mapsto\omega^k\delta_*,\qquad b\mapsto\omega^{-j}b
\]
fix $\nu$, $\omega$, $i$ and $b^3$, hence the masses and the chart polynomials.
They carry the representative boundary point and its formal implicit solution to those with
$D_{BB'}=\omega^k\delta_*$ and $D_{12}=\omega^jb^{-1}$.
In particular, $b^5\mapsto\omega^jb^5$, giving the displayed identity for $\kappa_3$.

We now specialize $0<\nu<1$ and $b=M^{1/3}>0$.
The boundary point coordinates are regular throughout this range, and the selected-coordinate
Jacobian is invertible: its triangle block is $J_T$, its pair block is invertible by
Lemma~\ref{lem:d2-second}, and its crossed block has diagonal entries $2/D_{AB}\ne0$.
At each Taylor order, the implicit equations determine the next coefficient by applying
the inverse of this same Jacobian to expressions in the preceding coefficients.
Thus the Taylor coefficients are regular at every such specialization.
Clearing the nonvanishing denominators in the formal identities and then specializing
proves the formula at all positive masses.
The remaining $63$ boundary points are obtained by exchanging bodies $3$ and $4$, conjugating,
and reversing $t$.

It remains to see that the two terms in the displayed coefficient cannot cancel.
We have $R_1,R_2>0$, and $(1-i\chi)/(1-i\chi^5)$ is a positive multiple of
$1+\chi^6+i(\chi^5-\chi)$.
Moreover, $|\chi^5-\chi|\le\chi+\chi^5\le1+\chi^6$, since
$(1+\chi^6)-(\chi+\chi^5)=(1-\chi)(1-\chi^5)\ge0$.
The quotient of the two terms of $\kappa_3$ is therefore $\omega^{j-k}$ times a number
whose argument has absolute value at most $\pi/4$, and cannot equal $-1$.
Thus $\kappa_3\ne0$, and Lemma~\ref{lem:d2-isolation}(b) gives isolatedness.}

{We treat the fold points of rank $20$ first.
Among the fold points, $1080$ have a Jacobian of rank $20$ for the
twenty-two polynomials.  At each such boundary point, twenty equations together with
$h=W_4-W_4(p)$ have an invertible derivative.
For $864$ boundary points these equations are $F_*$; for the remaining $216$ they are
all equations other than $F^Z_2$ and $F^Z_3$.
Along the resulting local curve, an omitted equation has a nonzero coefficient of
$h^3$.  Lemma~\ref{lem:d2-isolation}(c) therefore gives isolatedness for these boundary points.}

{The remaining $72$ fold points have rank $19$ and form one orbit
under the symmetries described below.
We choose a representative $p_*=p_*(\nu,M)$ whose pair coordinates are as in
Lemma~\ref{lem:d2-fibre}(a), with $D_{12}=M^{-1/3}$, and whose fold triangle is}
\[
  (W_3,W_4,W_5)=(\omega,\omega^2,1),\qquad Z_B=-\delta_*W_B,\qquad
  (D_{34},D_{35},D_{45})=\delta_*(1,\omega,\omega^2).
\]
{The six crossed inverse distances of this representative are
\[
\begin{aligned}
 (D_{13},D_{14},D_{15})&=-\frac{1+\nu^2}{1-\nu^2}(\omega,\omega^2,1),\\
 (D_{23},D_{24},D_{25})&=\frac{i(1+\nu^2)}{2\nu}(\omega,\omega^2,1).
\end{aligned}
\]
This sign choice fixes the coordinate $t$ relative to the Taylor coefficients below;
simultaneously reversing all crossed signs would reverse this coordinate.}
{For this representative, we use Lemma~\ref{lem:d2-corank-two} with
$F_{R'}=(F^W_2,F^W_4,F^W_5)$, with $F_R$ consisting of the other nineteen equations, and
with $y=Z_3$.
The two kernel coordinates are the chart parameter $t$ and $a=Z_3-Z_3(p_*)$.
At $t=0$, the derivative of $(F_R,t,Z_3)$ is block lower triangular, as in the proof of
Lemma~\ref{lem:d2-second}.
Its diagonal blocks are $(1)$ for $dt$; the triangle block}
(rows $F^Z_3,F^W_3,F^Z_4,F^Z_5,T_{34},T_{35},T_{45},G_2,dZ_3$, columns $Z_B,W_B,D_{BB'}$), with
determinant $18\delta_*$; the pair block (rows $F^Z_1,F^W_1,F^Z_2,T_{12},G_1$, columns
$Z_A,W_A,D_{12}$), with determinant $-3M^{-1/3}$; and the diagonal crossed block $(2/D_{AB})$,
with determinant $-512i\,\nu^3(1-\nu^2)^3/(1+\nu^2)^6$.
{All these determinants are nonzero, so the required invertibility
holds for every $(\nu,M)$.
The differentials of the three omitted equations vanish on the two-dimensional kernel
of $dF_R(p_*)$, establishing (a).
We write the reduced equations as $g=(\psi_2,\psi_4,\psi_5)$, where
$\psi_k=F^W_k\circ\phi$.}

At $M=1$, exact arithmetic over $\mathbb Q(\nu)(\sqrt3,i,\delta_*)$ gives the
jet of $g$ through degree four. Substitution of the corresponding jet of $\phi$
in the nineteen rows of $F_R$ gives $O((|a|+|t|)^5)$.  In the quadratic part, $\psi_2^{(2)}=0$, there is no $a^2$ term,
and the $3\times3$ coefficient matrix has rank one.  So $g^{(2)}=\psi_4^{(2)}\,n$ for a fixed
vector $n=(0,1,n_5)$, where $\psi_4^{(2)}=t(\gamma_{11}a+\gamma_{02}t)$ and
\[
  \gamma_{11}=\frac{9i\delta_*(\nu-i)(\nu+i)^3\,\Upsilon(\nu)}{4\nu(\nu^2-1)(\nu^2+1)^2},\qquad
  \Upsilon(\nu)=\nu^4-2i\nu^3-6\nu^2+2i\nu+1=\tfrac14\bigl(3(\nu-i)^4+(\nu+i)^4\bigr).
\]
For real $\nu$, $|3(\nu-i)^4|=3(\nu^2+1)^2>|(\nu+i)^4|$, so $\Upsilon(\nu)\ne0$ and
$\gamma_{11}\ne0$ for $0<\nu<1$.  Hence the common zeros of $g^{(2)}$ are two lines: the
\emph{boundary line} $\xi_\partial=(1,0)$, tangent to $\{t=0\}$, and the line transverse to $\{t=0\}$
$\xi_s=(-\gamma_{02},\gamma_{11})$.  Moreover $\Pi_\xi=n\,d\psi_4^{(2)}(\xi)$, with
$d\psi_4^{(2)}(\xi_\partial)=(0,\gamma_{11})$ and
$d\psi_4^{(2)}(\xi_s)=\gamma_{11}(\gamma_{11},\gamma_{02})$, both nonzero.  So (b) holds.
On the transverse line take $N=3$: the matrix $(\Pi_{\xi_s}\,|\,r_3(\xi_s))$ has the $2\times2$
minor
\[
  \mu_3=-\frac{3^{10}(\nu-i)^4(\nu+i)^{16}\,\Upsilon(\nu)^5}
             {2^{12}\delta_*\,\nu^6(\nu^2-1)^6(\nu^2+1)^8}\ne0 .
\]
On the boundary line take $e=(0,1)$, so that $\varpi=\gamma_{11}n$, and $N=4$.  Identically in
$\nu$, $r_3(\xi_\partial)=g^{(3)}(1,0)=-b_2\varpi$, and the matrix
$(\Pi_{\xi_\partial}\,|\,r_4(\xi_\partial))$ has the minor
\[
  \mu_4=-\frac{27i\delta_*(\nu-i)^3(\nu+i)^5\,\Upsilon(\nu)}{64\,\nu^3(\nu^2-1)^3}\ne0 .
\]
So (c) holds.  Every coefficient used is regular for $0<\nu<1$: the jets are polynomials in the
coordinates of $p_*$, the masses and the entries of the inverses of the three diagonal blocks,
and the only further division is by $\gamma_{11}$.  Hence the identities above hold at every
$\nu\in(0,1)$.

For general $M$, $p_*(\nu,M)$ is obtained from $p_*(\nu,1)$ by multiplying $W_1,W_2$ by $M^{2/3}$
and $D_{12}$ by $M^{-1/3}$.  Exact Laurent arithmetic in $M^{1/3}$ gives the reduced map
$\tilde g(a,\tilde t)=\bigl(M^{-2/3}\psi_2,\psi_4,\psi_5\bigr)(a,\tilde t/M)$;
the selected rows vanish along both lines through the orders used.  The quadratic part of $\tilde g$ does not depend on $M$, the compatibility
$r_3(\xi_\partial)\in\C\varpi$ persists, and the two minors become $M^{-3}\mu_3(\nu)$ and
$M^{-5/3}\mu_4(\nu)$.  By Lemma~\ref{lem:d2-corank-two}, applied to $\tilde g$, $p_*(\nu,M)$ is
an isolated point of $\mathcal X_m$ for every $0<\nu<1$ and $M>0$.

{To transfer isolatedness from this representative to the other boundary points,
we use the symmetries of the chart equations.
Because the twenty-two polynomials have real coefficients and $m_3=m_4$, they are
preserved by two commuting involutions: the biholomorphic reversal
$(t,D_{AB})\mapsto(-t,-D_{AB})$ and the antiholomorphic map obtained by conjugating and
exchanging bodies $3$ and $4$.
Both fix $(\nu,M)$ and together carry $p_*$ to the other three sign choices in its orbit.
Multiplying $t$, $Z_B$, $W_A$, $D_{12}$ and $D_{BB'}$ by $\omega^j$ multiplies each
polynomial by a cube root of unity and preserves the gauges.
A permutation of bodies $3,4,5$, followed by the chart-torus action restoring the
gauges, gives a local biholomorphism of $\mathcal X_m$.
The implicit function theorem applies because the transversality determinant is
$2(Z_2-Z_1)W_5=2$.
These $3\cdot6$ maps produce eighteen boundary points for each of the four sign choices.
The resulting $72$ boundary points form precisely one orbit under these discrete symmetries and
are disjoint from the $1080$ boundary points already treated.
Every map used here preserves isolatedness.}

The computations in the proof of Lemma~\ref{lem:d2-L5}, namely the coefficients $\kappa_3$, the
ranks and the coefficients of $h^3$ at the $1080$ fold points, the determinants, jets and minors
at $p_*$, and the orbit of $p_*$, are exact, over number fields containing $\omega$, $i$ and
$\delta_*$ and over rational function fields over them in $\nu$ and $M^{1/3}$
(Appendix~\ref{app:verification}).

\begin{lemma}[computer-assisted]\label{lem:d2-rays}
\textup{(a)} The zero set of $L_8$ in $\Rp^3$ is $\Rp(p_*,q_*,1)\cup\Rp(q_*,p_*,1)$ with
$p_*=0.67987758\ldots$ and $q_*=0.82343785\ldots$.
\textup{(b)} The zero set of $L_1$ in $\Rp^3$ is $\Rp(a_*,b_*,1)\cup\Rp(a_*,1,b_*)$ with
$a_*=1.02227400\ldots$ and $b_*=0.90559635\ldots$; at $(a_*,b_*,1)$ the zeros of $S_1$ that
satisfy \textup{(25)} form exactly one pair of complex conjugate non-real shapes.
\textup{(c)} Consequently the zero sets of $L_1L_2L_3$ and of $L_6L_7L_8$ in $\Rp^3$ are the rays
through the six permutations of $(a_*,b_*,1)$ and of $(1,p_*,q_*)$, respectively.
\end{lemma}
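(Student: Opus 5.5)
We will prove Lemma~\ref{lem:d2-rays} by real algebraic geometry on the projectivized positive orthant, as a computer-assisted statement with exact arithmetic over $\mathbb Q$ together with rigorous root isolation. Since $L_1$ and $L_8$ are homogeneous in $m'=(m_3,m_4,m_5)$, their zero sets in $\Rp^3$ are cones, so it suffices to study the dehomogenized polynomials $\ell_8(a,b)=L_8(a,b,1)$ and $\ell_1(a,b)=L_1(a,b,1)$ on $(0,\infty)^2$. For boundedness we will use the other affine charts $m_3=1$ and $m_4=1$ as well, as in Lemma~\ref{lem:d2-empty}.

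For (a) we will show that the positive real zero set of $\ell_8$ consists of isolated points, namely singular points of the curve $\ell_8=0$ at which $\ell_8$ attains a local minimum value $0$ on $(0,\infty)^2$. Zero loci in the positive orthant that are rays, and not curves, strongly suggest that $L_8\ge0$ there, with only double zeros, much like $L_5$ as a sum of squares. So the plan is:
\begin{enumerate}
\item Compute the critical system $\ell_8=\partial_a\ell_8=\partial_b\ell_8=0$ by resultants and a Gr\"obner basis over $\mathbb Q$.
\item Isolate its real positive solutions by univariate real root isolation, with interval refinement giving $p_*,q_*$ to eight digits. The symmetric pair $(p_*,q_*)$, $(q_*,p_*)$ comes from the $m_3\leftrightarrow m_4$ symmetry of $L_8$ (recorded after Definition~\ref{def:d2-mass}: exchanging bodies $3,4$ fixes $S_8$ up to sign).
\item Certify $\ell_8>0$ on the positive quadrant minus small boxes around the two critical points, using Bernstein coefficients on subdivided projective charts. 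Blowing up the corners of the cone will handle the behaviour at the boundary and at infinity.
\item Inside each box, certify via a Krawczyk test on the gradient system that the critical point is unique. Then verify positive definiteness of the Hessian by interval evaluation, so $\ell_8$ vanishes only at the center.
\end{enumerate}
The same procedure applies to $\ell_1$ for (b). The second ray $(a_*,1,b_*)$ arises from the $m_4\leftrightarrow m_5$ symmetry that permutes $L_1$ among $L_1,L_2,L_3$ up to constants.

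For the additional claim in (b), we will specialize $m'=(a_*,b_*,1)$ and compute $\gcd(S_1^\flat,\Lambda^\flat)$ in $W_4$. Because $a_*,b_*$ are only known as algebraic numbers, we will do this symbolically. We work over the residue field defined by the minimal polynomials (from the Gr\"obner basis) and compute the subresultant sequence. Its first nonvanishing subresultant should have degree $2$, meaning a quadratic common factor. We then check that this quadratic has negative discriminant. Since the coefficients are real, its two roots are then complex conjugate and non-real. Via the centre-of-mass substitution these roots give exactly one conjugate pair of normalized shapes.

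Part (c) follows formally. Permuting $(m_3,m_4,m_5)$ permutes $\{L_1,L_2,L_3\}$ and $\{L_6,L_7,L_8\}$ up to nonzero constants. Hence the union of the positive zero sets is the orbit of the rays found in (a) and (b) under $S_3$. This yields the six permutations of $(a_*,b_*,1)$, and, after rescaling $(p_*,q_*,1)$, the six permutations of $(1,p_*,q_*)$.

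The main obstacle is step 3 near the double zeros. Bernstein positivity fails on any box containing a zero, so the exclusion boxes must be matched precisely with the Hessian certificate. The high degree of $L_8$ and $L_1$ also makes the subdivision expensive near the corners of the cone. I would first try exact Bernstein subdivision with adaptive depth, and resort to a sum-of-squares or Taylor-model certificate only if the subdivision stalls.
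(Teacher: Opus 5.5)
Your plan is correct and uses the same certification toolkit as the paper. Both use exact Bernstein subdivision on projective charts to exclude zeros, Krawczyk enclosures, a quadratic subresultant with negative discriminant for the shapes in (b), and the $S_3$ action on $(m_3,m_4,m_5)$ for (c). The difference is in how the zeros are located.

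\textbf{Your route.} You find the zeros exactly, as positive real points of the critical ideal $(\ell,\partial_a\ell,\partial_b\ell)$, and certify each box by strict convexity. This correctly uses the fact that an isolated zero of a real polynomial in two variables is a local extremum, hence a critical point. So the zeros are not simple, and one must pass to a square system such as the gradient.

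\textbf{The paper's route.} Bernstein subdivision localizes the candidate zeros, and Krawczyk encloses them. In (b) the paper eliminates the shape variable by subresultants of $S_1^\flat,\Lambda^\flat$ in $W_4$, locates the mass zeros from signs of these resultants, and checks the discriminant of the quadratic subresultant on the enclosures rather than over $\mathbb{Q}(a_*,b_*)$.

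\textbf{Trade-off.} Your route gives exact algebraic descriptions of $(p_*,q_*)$ and $(a_*,b_*)$ and an exact gcd. However, $L_1$ and $L_8$ are resultants of high degree in the masses, so the critical ideal is large. You must also pass to its squarefree part and to the prime component through $(a_*,b_*)$ before doing number-field subresultants. The paper's elimination through the subresultant chain in $W_4$ avoids that ideal altogether.

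Two small points. First, your Hessian test presupposes that the double zero is nondegenerate, i.e.\ that the vanishing of $\Lambda$ on the conjugate pair of shapes is transversal in the masses. The computation must confirm this, just as a Krawczyk enclosure would. Second, obtaining the second ray of $L_1$ by symmetry requires $L_1$ to be fixed (not merely permuted) by $m_4\leftrightarrow m_5$. This holds because $S_1$ is symmetric in $w_4,w_5$.
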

{For (a), exact Bernstein subdivision of the mass simplex locates the
possible zeros, and the Krawczyk method in rational interval arithmetic encloses them.
For (b), subresultants eliminate the shape variable.
Bernstein subdivision and the signs of resultants locate the mass zeros, which are
again enclosed by the Krawczyk method.  The quadratic subresultant has negative
discriminant in these enclosures, giving the conjugate non-real shapes
(Appendix~\ref{app:verification}).
Part (c) follows from (a), (b) and the permutation symmetry following
Definition~\ref{def:d2-mass}.}

\begin{lemma}[computer-assisted]\label{lem:d2-Lray}
Let $m\in\Rp^5$ with $m'$ on one of the rays of Lemma~\ref{lem:d2-rays}(c), and let $p$ be a
equal-sign boundary point at $m$ with $\Phi(p)=0$.  Then $J_T(p)$ is invertible and $\Psi_T(p)\ne0$, so $p$
is isolated at second order.
\end{lemma}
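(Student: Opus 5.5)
The plan is to make the claim a finite certified computation over compact parameter sets, after reducing by the available symmetries. By Lemma~\ref{lem:d2-rays}(c) the rays in question are the six permutations of $(a_*,b_*,1)$ and of $(1,p_*,q_*)$. By the homogeneity of the chart under common mass scaling, as used in the proof of Lemma~\ref{lem:d2-L5}, we may normalize $m'$ to the representative on each ray. Relabelling bodies $3,4,5$ permutes the $S_j$ and the $L_j$ compatibly, as recorded after Definition~\ref{def:d2-mass}. It therefore suffices to treat two configurations: $m'=(a_*,b_*,1)$ with triangle on $S_1$, and $m'=(p_*,q_*,1)$ with triangle on $S_8$. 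The other shape factors contribute no equal-sign boundary point there. By Proposition~\ref{prop:d2-first}(a), such a point must satisfy $\Lambda=0$ together with $S_j=0$, and $L_j\neq0$ at these masses for the remaining $j$. We would check this last fact by evaluating the relevant $L_j$ on the Krawczyk enclosures.

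Next we enumerate the boundary points. For $L_1$, Lemma~\ref{lem:d2-rays}(b) states that the normalized shapes satisfying (25) form one conjugate pair of non-real shapes. For $L_8$ we expect a similar finite set and would confirm it with the same subresultant computation. For each shape, Lemma~\ref{lem:d2-fibre} (Euler or Lagrange type) fixes $Z_B$ and $D_{BB'}$ up to the finite sign choices. The equal-sign condition $\Phi=0$ then fixes the signs $s_B$ through $\sum_Bm_Bs_Bv_B=0$, and $\tilde v$ is determined up to a global sign. Since $\Psi_T=Q_T(\tilde v)$ is quadratic, it does not depend on $c_\sigma$, the global sign, or $m_1,m_2$, as noted after Definition~\ref{def:d2-psi}. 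The whole verification is therefore independent of the pair masses, and we are left with finitely many triangle data per ray. Complex conjugation, via the real coefficients of the system, identifies the two members of a conjugate pair, so only one representative per pair needs checking.

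For each representative we proceed in three steps. First, we enclose the shape $(W_3,W_4,W_5)$, the masses, $c$ or the Lagrange data, and the $D_{BB'}$ in small complex boxes obtained from Krawczyk-verified enclosures of the mass zeros and of the shape roots. Second, we evaluate the $9\times9$ matrix $J_T(p)$ in interval or ball arithmetic. We prove invertibility by showing that an approximate inverse $R$ satisfies $\|I-RJ_T\|<1$. Third, we solve $J_T(\zeta,\eta,\vartheta)=r(\tilde v)$ rigorously with the same preconditioner, form $Q_T(\tilde v)=-\frac12\sum_Bm_B\tilde v_B\eta_B/W_B$, and check that the resulting complex interval excludes $0$. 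Invertibility of $J_T$, together with $\Phi=0$ and $Q_T(\tilde v)\neq0$, gives isolation at second order by Definition~\ref{def:d2-psi}, that is, by Lemmas~\ref{lem:d2-isolation} and~\ref{lem:d2-second}.

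I expect the main obstacle to be keeping the enclosures tight enough. The masses are known only as Krawczyk enclosures, and the shapes are roots of quintics or quartics whose coefficients depend on those masses. Errors therefore propagate through the shape roots and through the $9\times9$ linear solve. If $\Psi_T$ happens to be small, naive interval evaluation could fail to exclude zero. In that case I would first refine the mass and shape enclosures by further Krawczyk iterations in ball arithmetic at higher precision. Failing that, I would compute $\Psi_T$ symbolically as a rational function of the shape on the algebraic curve $S_j=\Lambda=0$ and show it is nonzero by a resultant with $L_j$.
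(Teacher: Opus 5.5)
Your proposal is correct and follows essentially the paper's route. Both arguments enumerate the finitely many triangles and crossed sign vectors allowed by Proposition~\ref{prop:d2-first}(a) at normalized ray representatives, certify invertibility of $J_T$ and a lower bound on $|\Psi_T|$ with Krawczyk-enclosed shapes and roots, and extend the result to whole rays and all pairs $(m_1,m_2)$ by mass scaling and the independence of $\Psi_T$ from the pair masses.

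The only difference is that the paper computes at every permutation of $(a_*,b_*,1)$ and $(1,p_*,q_*)$, whereas you reduce to one ray per family by relabelling bodies $3,4,5$. If you keep that reduction, add a short argument that it is legitimate, because $J_T$ and $\Psi_T$ single out body~$5$ (the omitted row $F^Z_5$ and the gauge $W_5=1$). First, via the torque syzygy and $W_5\neq0$, invertibility of $J_T$ is equivalent to the nine ungauged triangle rows having rank $8$. Second, via Lemma~\ref{lem:d2-second}, $\Psi_T\neq0$ is equivalent to $Q$ having a simple zero at $p$ on the local curve. Both properties survive relabelling and restoring the gauges.
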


{The enumeration includes all $48$ Lagrange triangles at each
permutation of $(1,p_*,q_*)$ and the six Euler triangles on the conjugate shape pair at
each permutation of $(a_*,b_*,1)$.
At these mass vectors, only the factor defining the ray vanishes among
$L_1,\dots,L_8,L_5'$.  Proposition~\ref{prop:d2-first}(a) therefore excludes any other
triangle from carrying an equal-sign boundary point with $\Phi=0$.
For $36+36$ triangles, there is a sign vector satisfying $\sum_Bm_Bs_Bv_B=0$, unique up
to a simultaneous sign change.  Their $J_T$ are invertible, and complex interval
arithmetic gives $|\Psi_T|>0.36$ ($L_6$--$L_8$) and $|\Psi_T|>0.3$ ($L_1$--$L_3$) at the
normalized representatives.

Common mass scaling preserves isolatedness and $\Psi_T\ne0$, extending these
conclusions to the entire rays. Moreover, $\Psi_T$ depends on $m'$ and the triangle,
so the same conclusion holds for every positive pair $(m_1,m_2)$.
The Krawczyk method encloses the shapes and the square and cube roots; the solutions of
the linear systems are also enclosed rigorously (Appendix~\ref{app:verification}).}

\begin{lemma}[computer-assisted]\label{lem:d2-K}
Let $m\in\Rp^5$ with $m_1=m_2$, let $p$ be an opposite-sign boundary point at $m$ with $k=5$ and
$\Phi(p)=0$, and let $S_j$ carry its triangle, so that $K_j(m')=0$.  Then $p$ is an isolated
point of $\mathcal X_m$.  More precisely:
\begin{alphalist}
  \item $j\notin\{3,6,7,8\}$, by Lemma~\ref{lem:d2-empty};
  \item if $j\in\{1,2\}$: at every zero of $K_1$ in $\Rp^3$ the $S_1$ shapes compatible with
  \textup{(28)} at $y=0$ reduce to one simple common root, which is real and lies on one of two
  explicit real branches; $J_T(p)$ is invertible and $q'(0)\ne0$.  The case $j=2$ is the image of
  $j=1$ under the exchange of bodies $3$ and $4$;
  \item {if $j=4$: $m'$ lies in one of the positive cones over the four arcs
  $k\mapsto\mathrm{II}(k),\mathrm{III}(k)$, $5^{-1/4}<k<1$, and their images under
  $m_3\leftrightarrow m_4$.  Thus $m'=c\,\mathrm{II}(k)$ or $c\,\mathrm{III}(k)$, or an
  exchanged vector, for some $c>0$.  Here}
  $\alpha=4k(1+k^4)$, $\beta=(1-k^2)(1+6k^2+k^4)$, $\gamma=(1+k^2)(6k^2-1-k^4)$,
  $\mathrm{II}=((\gamma^3-\beta^3)\alpha,(\alpha^3-\gamma^3)\beta,(\alpha^3-\beta^3)\gamma)$ and
  $\mathrm{III}=((\gamma^3-\beta^3)\alpha,(\gamma^3+\alpha^3)\beta,(\alpha^3+\beta^3)\gamma)$;
  $J_T(p)$ is invertible and $q'(0)\ne0$;
  \item if $j=5$: $m'\in\Rp(1,1,\rho_*)$, where $\rho_*\in(1.5736409,1.573641)$ is the unique
  positive root of $\rho^8+2\rho^7+3\rho^6-4\rho^5-5\rho^4-6\rho^3-3\rho^2-12\rho-12$, and every
  boundary point at $m$ with a Lagrange triangle is isolated, for all $27$ classes and all $64$
  crossed-sign assignments.
\end{alphalist}
\end{lemma}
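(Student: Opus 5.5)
The plan is to reduce everything to the setting $m_1=m_2$. There $x=i$, so $y=\sigma(x+x^{-1})/2=0$. Proposition~\ref{prop:d2-first}(b) then says that the boundary triangle lies on some $S_j$ with $K_j(m')=0$. The argument splits according to $j$ and, in each case, feeds the local criteria of Lemmas~\ref{lem:d2-isolation}, \ref{lem:d2-second} and~\ref{lem:d2-corank-two}.

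\textbf{Cases (a) and (b).} Case (a) is immediate from Lemma~\ref{lem:d2-empty}(a): $K_3,K_6,K_7,K_8$ have no positive zeros.

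For (b) I would proceed as follows.
\begin{enumerate}
\item Parametrize the positive zero set of $K_1$. I would use exact elimination: subresultants of $S_1^\flat$ and $(28)^\flat|_{y=0}$ in $W_4$. The aim is to show that on $\{K_1=0\}\cap\Rp^3$ the two polynomials share exactly one simple root $W_4$, given by the linear subresultant. That root is then rational in $m'$, hence real.
\item Locate the real curve $\{K_1=0\}$ in the projectivized mass simplex by Bernstein subdivision, which should yield two explicit real branches.
\item On those branches, certify that $J_T(p)$ is invertible and that $Q_T(\tilde v)\ne0$, using interval arithmetic on a cover of the branches.
\item Lemma~\ref{lem:d2-second} then gives $q'(0)=c_\sigma^2Q_T(\tilde v)\ne0$, and Lemma~\ref{lem:d2-isolation}(b) gives isolation.
\end{enumerate}
Case $j=2$ follows by the $3\leftrightarrow4$ symmetry recorded after Definition~\ref{def:d2-mass}.

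\textbf{Case (c).} For $j=4$ I would use the rational parametrization of $S_4$ already used for $L_4$, written with parameter $k$. Substituting it into $(28)$ at $y=0$ and imposing the centre of mass together with $\Phi=0$ should give masses proportional to $\mathrm{II}(k)$ or $\mathrm{III}(k)$. Positivity of all three components then cuts the range down to $5^{-1/4}<k<1$; the endpoint comes from a sign change of $\gamma$ or of the cubic differences. Along these arcs I would compute $J_T$ and $Q_T(\tilde v)$ exactly as rational functions of $k$, and show that their numerators have no root in the interval, for instance by Sturm sequences.

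\textbf{Case (d).} For $j=5$ I would first compute $K_5$ and show that its positive zero set is the single ray $\Rp(1,1,\rho_*)$, with $\rho_*$ isolated by interval arithmetic. Then I would treat each of the $27$ Lagrange classes and $64$ sign assignments.
\begin{itemize}
\item If $\Phi\ne0$, the point is excluded directly.
\item Otherwise, for non-fold classes with $Q_T(\tilde v)\ne0$, use second order.
\item For the remaining degenerate points, including the fold triangles on $S_5$, imitate the proof of Lemma~\ref{lem:d2-L5}:
  \begin{itemize}
  \item compute the rank of the full Jacobian;
  \item at rank $20$, use Lemma~\ref{lem:d2-isolation}(c) with a nonzero low-order coefficient;
  \item at rank $19$, verify (a)--(c) of Lemma~\ref{lem:d2-corank-two} from jets up to order four, computed exactly over $\mathbb Q(\rho_*,\omega,i)$ or enclosed by intervals.
  \end{itemize}
\end{itemize}

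\textbf{Main obstacle.} I expect case (d) to be hardest. The mass ratio is only algebraic, and $m_1=m_2$ adds extra symmetry, so some points should degenerate beyond second order and need the order-three and order-four coefficients of the corank-two reduction. I would first try to reduce the $27\times64$ points to a few orbit representatives using the symmetries $t\mapsto-t$, cube roots of unity, and conjugation combined with $3\leftrightarrow4$, since $m_3=m_4$ here. Only the representatives would then be certified.
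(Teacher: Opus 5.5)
Your proposal follows the paper's proof essentially step for step. It obtains $y=0$ from $x=i$, uses Lemma~\ref{lem:d2-empty} for (a), a linear subresultant and a certified cover of the real branches with $q'(0)=c_\sigma^2Q_T(\tilde v)\ne0$ for (b), and exact computation along the rational $S_4$ arcs for (c), where the endpoint $5^{-1/4}$ is where $\gamma=\beta$. For (d) it determines the ray by resultants and a Sturm count, then enumerates symmetry-reduced points, settling each by $\Phi\ne0$, by second order, or by an augmented system in $h=W_4-W_4(p)$ with a nonzero coefficient of order three or four.

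Two ingredients are left implicit. First, the paper certifies by Bernstein coefficients that $\{K_1=0\}\cap\Rp^3$ is smooth, and uses this to ensure the common root is simple so that the linear subresultant determines it. Second, the fold computations in (d) must hold uniformly in the common pair mass $m_1=m_2$, which is not fixed by $m'\in\Rp(1,1,\rho_*)$. The paper obtains this uniformity by an exact grading, as with the $M$-scaling in Lemma~\ref{lem:d2-L5}; a computation over $\mathbb Q(\rho_*,\omega,i)$ alone would not give it.
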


{For (b) and (c), $60$ of the $64$ crossed-sign assignments satisfy
$\Phi\ne0$.  The other four form one orbit under conjugation and $t\mapsto-t$ and have
$q'(0)\ne0$.
In (d), the two-equal classes have $\Phi\ne0$.
Among the all-equal classes, $60$ assignments have $\Phi\ne0$ and $4$ have $q'(0)\ne0$.
For the fold classes, an augmented system using $h=W_4-W_4(p)$ gives a nonzero coefficient
of order three for $60$ assignments and of order four for the remaining $4$.
The exact grading makes these computations uniform in the common pair mass $m_1=m_2$.}

{In (b), Bernstein coefficients establish smoothness of the zero set
of $K_1$ in $\Rp^3$.  A linear subresultant over $\mathbb Q$ then determines the common
shape root.  A rational parametrization of $S_1$, together with Sturm sequences,
identifies its two real branches.  An exact reduction and an interval cover of the
branch parameter give $q'(0)\ne0$.

For (c), exact computation on the rational parametrizations of the four projective mass
arcs classifies the real normalized $W$-shapes.  Lemma~\ref{lem:d2-empty}(a) shows that
these include every $S_4$ shape at positive masses.
Common mass scaling, followed by restoration of the chart gauges, is a local analytic
isomorphism.  Isolatedness therefore extends from the arcs to their positive cones.

For (d), exact resultants and a Sturm count determine the zero set of $K_5$ in $\Rp^3$.
Complex rational interval arithmetic classifies the boundary points, using one triangle from
each orbit under conjugation and exchange of bodies $3$ and $4$
(Appendix~\ref{app:verification}).
Renumbering the bodies as at the start of this section gives the same conclusions for
every choice of the pair and of the body with the opposite sign.}

\begin{lemma}[computer-assisted]\label{lem:d2-P}
Let $m\in\Rp^5$ with $m_1\ne m_2$, and let $p$ be an opposite-sign boundary point at $m$ with $k=5$,
$\Phi(p)=0$, whose triangle is not a fold triangle.  Then $J_T(p)$ is invertible and
$\Psi_1(p)\ne0$, so $p$ is isolated at second order.
\end{lemma}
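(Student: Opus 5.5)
The plan follows the template of Lemma~\ref{lem:d2-Lray}. By Lemma~\ref{lem:d2-second} and Definition~\ref{def:d2-psi}, it suffices to show two things at every such boundary point: $J_T(p)$ is invertible and $\Psi_1(p)=Q_T(\tilde v)\ne0$. Isolatedness at second order then follows from Lemma~\ref{lem:d2-isolation}(b).

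\textbf{Reduction to triangle data.} First, Proposition~\ref{prop:d2-first}(b) puts $m'$ on the locus $E_j(u)=O_j(u)=0$ with $u=-(T-1)^2/(4T)<0$ and $T=(m_1/m_2)^3\ne1$. Here $S_j$ is the shape factor carrying the triangle, and fold triangles are excluded, so $j$ ranges over the non-fold Lagrange classes and the Euler classes. The pair masses enter only through $u$, equivalently through $\rho=c_{-\sigma}/c_\sigma$ up to sign. Moreover, $\Psi_1$ and the condition $G=0$ depend only on $m'$, the triangle and the signs $s_B$. I would therefore eliminate $(m_1,m_2)$ and parametrize the problem by the projective mass $m'$ together with a triangle on $S_j$ satisfying $G=0$. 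Common mass scaling preserves isolatedness, as used in Lemma~\ref{lem:d2-Lray}, so I normalize $W_5=1$ and one mass.

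\textbf{Transversality and the main estimate.} The condition $G=0$ cuts out a real hypersurface in the two-dimensional real mass simplex, combined with finitely many shapes. The resulting boundary points thus form real one-parameter families, namely curves in the mass simplex for each $j$ and each sign pattern. On each family I would verify two nonvanishing statements: $\det J_T\ne0$, and $Q_T(\tilde v)\ne0$, where $\tilde v_5=-(m_3\tilde v_3+m_4\tilde v_4)/m_5$. For Lagrange triangles, $J_T$ has an explicit structure via $D_{BB'}^3=M_T^{-1}$, and the non-fold hypothesis should give invertibility directly: the double root of the $\kappa$-quadratic is exactly where the triangle block degenerates. Thus the Lagrange case might be handled by an exact lemma. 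For Euler triangles, I would use the relation $Z_B=c^{-2}W_B$ to reduce $J_T$ to a $3\times3$ determinant in the shape.

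\textbf{Main obstacle.} The hardest step is proving $\Psi_1\ne0$ uniformly along the noncompact real families, including near the ends where a mass tends to $0$ or $T\to0,\infty$. I would first cover a compact part of each family by ball arithmetic. On each ball I would enclose the shape by a Krawczyk step and solve $J_T(\zeta,\eta,\vartheta)=r(\tilde v)$ with rigorous bounds, which gives the enclosure of $Q_T$. Near the ends I would pass to blown-up coordinates and prove exact asymptotic lemmas showing that the leading term of $Q_T$ is nonzero. The remaining symmetric cases follow by conjugation, by the exchange $3\leftrightarrow4$, and by renumbering the body with the opposite sign. All of these computations would be recorded as computational statements in Appendix~\ref{app:verification}.
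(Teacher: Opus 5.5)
Your plan is essentially the paper's. Because $G$ and $\Psi_1$ depend only on $m'$, the triangle and the signs, the paper covers the closed simplex $m_3+m_4+m_5=1$ by boxes on which either $G$ has a strict sign or ball arithmetic (with Krawczyk or Pellet enclosures of the shapes) keeps $\Psi_1$ away from zero; on the edges $m_3=0$, $m_4=0$, where $\Psi_1$ vanishes, it encloses a transversal derivative instead, and it uses exact or blown-up arguments at a few corners and at the degenerate Euler points of $S_4$. For $S_5$ it uses exact algebra, $\Psi_1=-\tfrac12 m_4gX^2$, whose only interior zero $m_3=m_4=m_5$ would force $\rho=-1$; this is precisely the $T\to0,\infty$ end you anticipated.
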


{The verification covers the mass simplex $m_3+m_4+m_5=1$.
For $S_1,S_3,S_4$, it includes every Euler shape, enclosed by the Krawczyk method or
Pellet discs.  For $S_5,S_6,S_8$, it includes every Lagrange shape up to conjugation and
the cube-root action.  Both relative signs $s_3s_4$ are considered.
On each box, either $G$ has a strict sign or ball arithmetic encloses $\Psi_1$ away from
$0$.  On boxes meeting $m_3=0$ or $m_4=0$, where $\Psi_1$ vanishes, the corresponding
transversal derivative is instead enclosed away from zero.}

{The remaining boundary and degeneracy cases require separate arguments.
At the vertices, $G\ne0$.
At the ends $(m_3,m_4)=(0,\frac12)$ for $S_1$ and $S_4$, and $(\frac12,0)$ for $S_4$, a
majority body becomes massless.  An exact edge identity gives interval margins
$0.787$, $0.588$, $0.551$.
At the corner $(\frac12,\frac12,0)$ for $S_3$, an exact weighted blow-up gives weight
two in $m_5$ to the Euler shape parameter.  The resulting determinant and response
numerator are $m_5^{13}$ and $m_5^6$ times factors whose moduli are bounded below by
$6.96\cdot10^8$ and $1.63\cdot10^6$, respectively, in interval arithmetic.
For the six real points of $S_4$ where the Euler boundary point degenerates, and for the corner
$(\frac12,\frac12,0)$ of $S_4$, an exact formula for $\Psi_1$ on the Euler shapes cancels
the degenerating factor.
The remaining factor $S_5$ is treated by exact algebra (Appendix~\ref{app:verification}).}

{On $S_5$, let $g$ be the $W_4$-component of the solution of the
triangle linear system with row $F^Z_5$ omitted and $dW_5=0$, when the right-hand side is
one in row $F^Z_4$ and zero elsewhere.  Put $X=\tilde v_4/W_4-\tilde v_3/W_3$.
Then $\Psi_1=-\frac12m_4gX^2$.
The numerator of $g$ is a nonzero constant times
$m_3(m_3^2+m_3m_4+m_4^2)^2(P_0+\omega P_1)$, where the real polynomials $P_0,P_1$ have
only one common zero in the simplex, at $m_3=m_4=\frac13$.
Also, $X\ne0$ except at this point.
There $m_3=m_4=m_5$, and a boundary point with $G=0$ would have $\rho=-1$, which is impossible.
The cases $S_2$ and $S_7$ follow from $S_1$ and $S_6$ by exchanging bodies $3$ and $4$.}

\begin{lemma}[computer-assisted]\label{lem:d2-fold}
Let $m\in\Rp^5$ and let $p$ be an opposite-sign boundary point at $m$ with $k=5$, $\Phi(p)=0$ and a
fold triangle.  Then $dF_*(p)$ has rank $20$ and $Q|_C$ has a simple zero at $p$; in
particular $p$ is excluded.
\end{lemma}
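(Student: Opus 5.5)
The plan is to apply Lemma~\ref{lem:d2-isolation}(b) in its non-graph form: at a fold boundary point the triangle Jacobian $J_T(p)$ is singular, so $\partial F_*/\partial(Z,W,D)(p)$ is no longer invertible. We therefore cannot use the graph over $t$. Instead we show that the full differential $dF_*(p)$, including the $t$-column, still has rank $20$, and then analyse $Q$ on the smooth curve $C=\{F_*=0\}$.

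\textbf{Step 1: the rank of $dF_*(p)$.} At $t=0$ the differential is block lower triangular, as in the proof of Lemma~\ref{lem:d2-second}. The pair block is invertible by that proof, and the crossed block is diagonal with entries $2/D_{AB}\ne0$. So the rank question reduces to the triangle block together with the $t$-column. Lemma~\ref{lem:d2-fibre}(i) says the fold triangle is a double root of the quadratic in $\kappa$, so $J_T(p)$ should have corank exactly one. Its kernel is the tangent direction of the fold, and its cokernel is spanned by a covector $\ell$. Rank $20$ then amounts to the transversality condition
\[
  \ell\bigl(\partial_tF^Z_B\bigr)=-\ell\bigl((\varphi_B/W_B)_B\bigr)\ne0 ,
\]
using the formula $\partial_tF^Z_B=-\varphi_B/W_B$ of Lemma~\ref{lem:d2-syzygy}. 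Here $\varphi_B=c_\sigma\tilde v_B$, with $\tilde v$ fixed by the opposite-sign data and $\Phi=0$. The fold triangles form the explicit Lagrange classes with distinct $a_{BB'}$ on $S_5$, so $\ell$ can be computed exactly over $\mathbb Q(\omega)$. The pairing is then an explicit expression in $\tilde v_3,\tilde v_4,\tilde v_5$ and $m'$, and I would verify symbolically that it does not vanish once $\Phi=0$ holds with $\rho\ne\pm1$.

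\textbf{Step 2: the zero of $Q|_C$.} With rank $20$, $C$ is a smooth curve through $p$. Because $\ell(\partial_tF)\ne0$, the tangent vector of $C$ has no $t$-component, so $C$ is tangent to the boundary fibre along the fold kernel direction. Parametrize $C$ by $h=W_4-W_4(p)$, the parameter used for the fold classes in Lemma~\ref{lem:d2-L5}. Then $t$ vanishes to order two, $t=\tau_2h^2+O(h^3)$, as the usual fold normal form suggests. We have $Q(p)=\Phi(p)=0$, and we must show $\frac{d}{dh}Q|_C(0)\ne0$. By the distance constraints, $Q=\sum m_Am_BD_{AB}+O(t^2)$ on $C$. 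Hence to first order in $h$ this derivative is
\[
  \frac{d}{dh}\Phi=-\tfrac12\sum_Bm_B\varphi_B\,\dot W_B/W_B ,
\]
where $\dot W$ is the fold kernel vector normalized by $\dot W_4=1$ (and $\dot W_5=0$ from the gauge). Its vanishing would be one further polynomial condition on the triangle and $\tilde v$. Proposition~\ref{prop:d2-first}(b) puts $m'$ on $E_5(u)=O_5(u)=0$ with $u<0$. I would eliminate there, using exact resultants, to show this extra condition has no common solution, so that the zero is simple. Lemma~\ref{lem:d2-isolation}(b) then gives that $p$ is excluded, since $Q|_C\not\equiv0$.

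\textbf{Main obstacle.} The hardest part is Step 2's nonvanishing uniformly in the masses. Both $\ell$ and the kernel vector depend on the fold class (six classes, up to the cube-root and conjugation symmetries), on the signs $s_B$, and on the ratio $m_1/m_2$, which enters through $\rho$. I would first reduce by the symmetries used in Lemma~\ref{lem:d2-L5} to a single representative fold triangle $(W_3,W_4,W_5)=(\omega,\omega^2,1)$. I would then express both the transversality pairing and $\frac{d}{dh}\Phi$ as explicit functions of $(\tilde v_B)$ and $\rho$, hoping each factors as $c_\sigma$ times a nonzero constant times something like $\sum_Bm_B\omega^{j_B}\tilde v_B$. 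In that case the requirement that this factor not vanish jointly with $\Phi=0$ comes down to $\rho\ne\pm1$ and $|\rho|=1$. If that factorization fails, the fallback is exact elimination against $E_5,O_5$.
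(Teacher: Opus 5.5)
Your outline matches the paper's argument. It uses corank one of $J_T(p)$, the left null vector $\ell$ paired with the $t$-column (entries $-\varphi_B/W_B$ in rows $F^Z_3,F^Z_4$), and the curve $C$ parametrized by $h=W_4-W_4(p)$ with $dt/dh=0$. It also uses $dQ/dh=-\frac12\sum_Bm_B\varphi_B\dot W_B/W_B$, which follows from the crossed distance rows once $\dot t=0$ and $\dot Z_A=0$.

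\textbf{The gap.} The step that carries the content is the nonvanishing of these two scalars, and the reduction you propose for it fails. Unlike Lemma~\ref{lem:d2-L5}, this lemma is for arbitrary $m'$. A fold triangle is cut out by $S_5=0$, $W_5=1$ and $\sum_Bm_BW_B=0$, so its positions depend on $(m_3,m_4,m_5)$. The triangle $(\omega,\omega^2,1)$ satisfies the centre-of-mass relation only when $m_3=m_4=m_5$. That is exactly the mass point at which no opposite-sign fold point has $\Phi=0$, so a computation at that representative proves nothing.

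Two further points follow. First, $\ell$ and the kernel vector must be computed with $m_3,m_4$ as parameters; the paper works over $\mathbb Q(m_3,m_4)(\omega)$, not $\mathbb Q(\omega)$. Second, corank exactly one must be checked uniformly in the masses. The double root of the $\kappa$-quadratic explains why $J_T(p)$ is singular, but not why its corank is one. The paper settles this with an order-eight minor that is nonzero on the whole simplex.

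\textbf{How to close it.} The missing idea is already contained in your formula. The kernel vector has $\dot W_5=0$ and $\sum_Bm_B\dot W_B=0$, because the $m_B$-weighted sum of the rows $F^W_B$ of $J_T$ equals $\sum_Bm_BW_B$ at $t=0$. Hence $\dot W_3=-m_4/m_3$ and
\[
  \frac{dQ}{dh}(0)=-\tfrac12\,c_\sigma m_4X,\qquad X=\frac{\tilde v_4}{W_4}-\frac{\tilde v_3}{W_3}.
\]
The paper's exact computation gives the transversality pairing in the same form, $\ell(\partial_tF)=c_\sigma X$. So both assertions reduce to $X\ne0$, and no elimination against $E_5,O_5$ is needed.

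This is elementary. Since $\tilde v_B^2=1/W_B$ for $B=3,4$, one has $X=\tilde v_4^3-\tilde v_3^3$. Thus $X=0$ forces $W_3^3=W_4^3$. Together with $S_5=0$, $W_5=1$ and a positive centre of mass, this leaves only $m_3=m_4=m_5$ with $\{W_3,W_4\}=\{\omega,\omega^2\}$. There, $X=0$ and $\Phi=0$ would force $\rho=\pm1$, which is impossible. Your expectation that everything comes down to $\rho\ne\pm1$ is therefore correct, but only at this single exceptional mass point.
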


{The triangle Jacobian $J_T(p)$ has corank exactly one.
Its kernel vector and left null vector $\ell$ are computed explicitly.
The $t$-column has entries $-\varphi_B/W_B$ in the triangle $Z$-rows.
Applying $\ell$ to this column gives $c_\sigma X$, with $X$ as in
Lemma~\ref{lem:d2-P}.
Here $X\ne0$ at every boundary point under consideration: the exceptional point $m_3=m_4=m_5$
has no opposite-sign boundary point with $\Phi=0$ at positive masses.
Thus the additional parameter resolves the corank, and the curve $C$ exists with local
parameter $h=W_4-W_4(p)$.
At $p$, it satisfies $dt/dh=0$ and $dQ/dh=-\frac12c_\sigma m_4X\ne0$, which gives the
required simple zero.

These statements are verified exactly over $\mathbb Q(m_3,m_4)(\omega)$, treating
$m_3,m_4$ as indeterminates.
The computation gives the kernel and left null vectors, an order-eight minor of
$J_T(p)$ that is nonzero throughout the simplex, and the zero set of $X$
(Appendix~\ref{app:verification}).}

\begin{theorem}\label{thm:d2}
{For $m\in\Rp^5$, no curve in $A_m$ has a Diagram~2 germ.
In particular, Theorem~\ref{thm:exclusion}\textup{(a)} holds for $r=2$.}
\end{theorem}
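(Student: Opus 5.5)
The argument is an assembly of the preceding lemmas. Suppose $\gamma$ is a Diagram~2 germ on a curve in $A_m$, with $m\in\Rp^5$. After renumbering and possibly exchanging $z$ and $w$, we put it in the standard form of this section. Proposition~\ref{prop:d2-cover} then gives a boundary limit $p\in\mathcal B_m$. At $p$ the coordinates $Z_1,Z_2,W_3,W_4,W_5$ and all $D_{kl}$ are nonzero. By Lemma~\ref{lem:d2-isolation}(a) it suffices to show that every such $p$ is excluded. Proposition~\ref{prop:d2-first} gives $\Phi(p)=0$, so we need only treat boundary points with $\Phi=0$. The case $\Phi\ne0$ would in any event be excluded directly by Lemma~\ref{lem:d2-isolation}(b), since $q(0)=\Phi(p)$. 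By Lemma~\ref{lem:d2-fibre}, the triangle of $p$ lies on some shape factor $S_j$. We split according to the two relative-sign cases of the crossed data, relabelling so that $k=5$ in the opposite-sign case, and follow the rows of Table~\ref{tab:d2-cases}.

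\emph{Equal-sign case.} Proposition~\ref{prop:d2-first}(a) gives $\Lambda=0$ and $L_j(m')=0$, or $L_5L_5'=0$ when $j=5$. Lemma~\ref{lem:d2-empty}(a) rules out $j=4$ and the factor $L_5'$. If $L_5=0$, then $m_3=m_4=m_5$. By Lemma~\ref{lem:d2-empty}(b) the triangle must then lie on $S_5$, and Lemma~\ref{lem:d2-L5} shows that $p$ is isolated. In the remaining cases $j\in\{1,2,3,6,7,8\}$, Lemma~\ref{lem:d2-rays}(c) places $m'$ on one of the twelve rays. Lemma~\ref{lem:d2-Lray} then shows that $p$ is isolated at second order, and Definition~\ref{def:d2-psi} together with Lemmas~\ref{lem:d2-isolation} and~\ref{lem:d2-second} makes it an isolated point of $\mathcal X_m$.

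\emph{Opposite-sign case.} If $m_1=m_2$, Proposition~\ref{prop:d2-first}(b) gives $K_j(m')=0$, and Lemma~\ref{lem:d2-K}(a)--(d) covers every $j$. Fold triangles lie on $S_5$ and are handled in part (d). If $m_1\ne m_2$, a non-fold triangle is isolated at second order by Lemma~\ref{lem:d2-P}. A fold triangle is excluded by Lemma~\ref{lem:d2-fold}: there $Q|_C$ has a simple zero, so Lemma~\ref{lem:d2-isolation}(b) applies.

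Every boundary point that could serve as a limit is therefore excluded. This contradicts Proposition~\ref{prop:d2-cover} and Lemma~\ref{lem:d2-isolation}(a), so no Diagram~2 germ exists. Since isolatedness is invariant under the relabelling symmetries, the conclusion holds for every numbering.

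The main obstacle is bookkeeping. We must check that the case split is exhaustive: every pair consisting of a sign pattern and a shape factor $S_j$ must be covered, fold triangles must be assigned correctly to $S_5$, and the reductions to $k=5$ and to normalized masses (common scaling) must be legitimate. Once the case table is confirmed complete, the rest is direct citation.
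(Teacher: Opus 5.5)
Your proposal is correct and follows the paper's proof essentially step for step. Proposition~\ref{prop:d2-cover} supplies the boundary limit, Proposition~\ref{prop:d2-first} gives $\Phi(p)=0$ and the mass conditions, and the reduction to $k=5$ rests in the paper on the chart-independence of the crossed-sign pattern, which is the limit of $m_2\delta_{2B}/(m_1\delta_{1B})$. The equal-sign and opposite-sign cases are then closed by exactly the lemmas you cite.

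One minor correction, which your argument does not depend on. Your aside that $\Phi\ne0$ is excluded by Lemma~\ref{lem:d2-isolation}(b) is not right as stated. The identity $q(0)=\Phi(p)$ presupposes that $\partial F_*/\partial(Z,W,D)(p)$ is invertible, and this fails at fold triangles. The reason that needs no hypothesis is that $tQ=0$ on $\mathcal X_m$ while $Q(p)=\Phi(p)\ne0$; this is how Proposition~\ref{prop:d2-first} already disposes of that case.
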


\begin{proof}
{Suppose that $\gamma$ is a Diagram~2 germ on a curve in $A_m$, and
let $p$ be its boundary limit given by Proposition~\ref{prop:d2-cover}.
For each $B$, the quantity $\sigma_Bx$ is the limit of
$m_2\delta_{2B}/(m_1\delta_{1B})$ along $\gamma$.
This limit is independent of the chart, so the crossed-sign pattern of $p$ is
determined by $\gamma$.
Renumbering bodies $3,4,5$ before applying Proposition~\ref{prop:d2-cover} permutes the
pattern. We may therefore take $k=5$ in the opposite-sign case.

By Lemma~\ref{lem:d2-isolation}(a), it suffices to exclude $p$.
Proposition~\ref{prop:d2-first} gives $\Phi(p)=0$.
Choose a shape factor $S_j$ vanishing on the triangle of $p$, as in
Lemma~\ref{lem:d2-fibre}.}

{Suppose first that the relative signs are equal.
Proposition~\ref{prop:d2-first}(a) gives $L_j(m')=0$, or $L_5L_5'=0$ if $j=5$.
Lemma~\ref{lem:d2-empty} rules out $j=4$ and $L_5'=0$.
If $L_5=0$, then $m_3=m_4=m_5$, and Lemma~\ref{lem:d2-empty}(b) gives $j=5$;
Lemma~\ref{lem:d2-L5} excludes these boundary points.
In all other cases, $j\in\{1,2,3,6,7,8\}$ and $m'$ lies on one of the rays in
Lemma~\ref{lem:d2-rays}(c), so Lemma~\ref{lem:d2-Lray} applies.}

{For an opposite-sign boundary point with $m_1=m_2$, exclusion follows from
Lemma~\ref{lem:d2-K}.
When $m_1\ne m_2$, Lemma~\ref{lem:d2-fold} applies to fold triangles and
Lemma~\ref{lem:d2-P} to all other triangles.}

{Thus every possible $p$ is excluded; for those isolated at second
order this follows from Definition~\ref{def:d2-psi}.
This contradicts Lemma~\ref{lem:d2-isolation}(a).}
\end{proof}

\section{Diagram 3}\label{sec:d3}

We classify Diagram~3 germs by reducing the chart equations to two analytic
functions. Their Taylor coefficients determine the masses and the possible germs.
The surviving analytic families satisfy an inverse-distance identity on the whole
algebraic curve, which excludes every curve containing a positive point.

{Diagram~3 of \cite[Fig.~11]{AK} occurs at infinity on Roberts' continuum
\cite[\S7.2]{AK}. We use the numbering of \cite[\S5.2 and \S8.3]{AK}, in which the clusters are
$z:12\ldots34$, $w:14\ldots23$. The diagram is the quadrilateral $1234$, with $z$-edges $14$, $23$ and
$w$-edges $12$, $34$; neither the diagonals nor body $5$ carry a stroke. Exchanging $z$ and $w$
has the same effect as exchanging bodies $2$ and $4$. Thus, after renumbering, every Diagram-3 germ
has the form considered below.}

\subsection{The crossed-cluster chart}
{Apart from the gauge, the equations in \eqref{eq:system} are invariant under
$(z,w,\delta)\mapsto(\mu z,\mu^{-1}w,\delta)$, $\mu\in\C^*$. We therefore impose
$z_{13}=w_{13}$, following \cite[p.~543]{AK} for Roberts' continuum. To resolve the two pairs
of clusters, introduce a parameter $t$ and coordinates $A,B,x,y,\hat z_5,C,D,u,v,\hat w_5$ and $D_{kl}$
by the following substitutions:}
\begin{equation*}
\begin{gathered}
z_1=\tfrac{A}{t},\quad z_2=\tfrac{A}{t}+tx,\quad z_3=\tfrac{B}{t},\quad z_4=\tfrac{B}{t}+ty,
\quad z_5=t^2\hat z_5,\\
w_1=\tfrac{C}{t},\quad w_2=\tfrac{D}{t},\quad w_3=\tfrac{D}{t}+tv,\quad w_4=\tfrac{C}{t}+tu,
\quad w_5=t^2\hat w_5,\\
\delta_{kl}=D_{kl}\ \ (kl=12,14,23,34),\qquad
\delta_{kl}=t\,D_{kl}\ \ (kl=13,24,15,25,35,45).
\end{gathered}
\end{equation*}
{Multiply the $z$- and $w$-equations for bodies $1,\dots,4$, together with the gauge
equation, by $t$, and those for body $5$ by $t^{-2}$. The resulting $21$ equations are polynomial:
$F=F^0+t^2F^2+\dots+t^6F^6=0$.
Setting $t=0$ gives the \emph{boundary equations} $F^0=0$, whose left-hand sides are}
\begin{equation}\label{eq:d3-boundary}
\begin{gathered}
R^z_1=A-m_4D_{14}^3(A-B),\qquad R^z_4=B-m_1D_{14}^3(B-A),\\
R^z_2=A-m_3D_{23}^3(A-B),\qquad R^z_3=B-m_2D_{23}^3(B-A),\\
R^w_1=C-m_2D_{12}^3(C-D),\qquad R^w_2=D-m_1D_{12}^3(D-C),\\
R^w_3=D-m_4D_{34}^3(D-C),\qquad R^w_4=C-m_3D_{34}^3(C-D),\\
\hat z_5+(m_1D_{15}^3+m_2D_{25}^3)A+(m_3D_{35}^3+m_4D_{45}^3)B,\\
\hat w_5+(m_1D_{15}^3+m_4D_{45}^3)C+(m_2D_{25}^3+m_3D_{35}^3)D,\\
D_{12}^2x(D-C)-1,\quad D_{34}^2y(C-D)-1,\quad D_{14}^2u(B-A)-1,\\
D_{23}^2v(B-A)-1,\quad D_{13}^2(B-A)(D-C)-1,\quad D_{24}^2(B-A)(C-D)-1,\\
D_{15}^2AC-1,\quad D_{25}^2AD-1,\quad D_{35}^2BD-1,\quad D_{45}^2BC-1,\quad (B-A)-(D-C);
\end{gathered}
\end{equation}
{Here $R^z_k$, $R^w_k$ are the boundary equations for body $k$.
We restrict to the slice $B=A+1$. Proposition~\ref{prop:d3-cover} justifies this choice
by showing that every germ can be represented with $t=1/z_{13}$.}

\subsection{Representation of germs in the chart}
{We first verify that this chart contains every germ with the prescribed diagram.
We write $s$ for the parameter of a germ.}

\begin{proposition}[Representation of germs]\label{prop:d3-cover}
Let $\gamma$ be a Diagram-3 germ, renormalized as in Lemma~\ref{lem:germ}.
\begin{romanlist}
  \item Bodies $1,\dots,4$ are circled in both colours, body $5$ in neither.
  \item The sides are maximal edges: $r_{12},r_{14},r_{23},r_{34}\approx1$,
  $z_{14},z_{23},w_{12},w_{34}\approx\epsilon^{-2}$ and $w_{14},w_{23},z_{12},z_{34}\approx\epsilon^2$.
  \item $z_{13},w_{13},z_{24},w_{24},z_{k5},w_{k5}\approx\epsilon^{-2}$ for $k\le4$, and
  $z_5,w_5\preccurlyeq\epsilon^4$.
  \item Renormalize instead by $\lambda=(w_{13}/z_{13})^{1/2}$, so that $z_{13}=w_{13}$, and put
  $t=1/z_{13}\approx\epsilon^2$.  Then $B-A=1$ and every chart coordinate converges as $s\to0$; the
  limit $X_0$ satisfies \eqref{eq:d3-boundary}, and $A,B,C,D,x,y,u,v$ and all $D_{kl}$ are
  nonzero at $X_0$.
\end{romanlist}
\end{proposition}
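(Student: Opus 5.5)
I will follow the pattern of Proposition~\ref{prop:d2-cover}: read off the orders of all coordinates from the diagram using Fact~\ref{fact:rules}, then check that the chart coordinates have nonnegative orders, with order exactly zero for those listed in (iv). Work in the normalization of Lemma~\ref{lem:germ}, in which the maximal components of $\mathcal Z$ and $\mathcal W$ both have order $-2$ in units of $\epsilon$.

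For (i), Rule~1a says a body carries a circle of a given colour only if it carries a stroke of that colour. Body~$5$ carries no stroke, so it has no circle. For bodies $1,\dots,4$, the quadrilateral diagram of \cite[Fig.~11]{AK} has circles at all four vertices in both colours \cite[\S8.3]{AK}. If this is not explicit, I would derive it from Rule~1b/1c: a vertex with a single stroke of a colour must carry a circle of that colour.

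For (ii), each colour component ($z$-edges $14$, $23$; $w$-edges $12$, $34$) is an isolated stroke with circled vertices. Rule~1e makes it maximal, and Estimate~2 gives $r\approx1$, hence $z_{14},z_{23}\approx\epsilon^{-2}$ and $w_{14},w_{23}\approx\epsilon^2$. The same holds for $w_{12},w_{34}$ and $z_{12},z_{34}$. Estimate~1 confirms the order $\epsilon^2$ for the short separations.

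For (iii), the circles give $z_k\approx\epsilon^{-2}$ for $k\le4$. Since $z_1\approx z_2$ and $z_3\approx z_4$ to leading order while $z_{14}\approx\epsilon^{-2}$, the differences $z_{13}$ and $z_{24}$ are of order $\epsilon^{-2}$. Body~$5$ is uncircled, so $z_5\prec\epsilon^{-2}$ and $z_{k5}\approx\epsilon^{-2}$; the $w$-colour is symmetric. For the bound $z_5\preccurlyeq\epsilon^4$, write the equation of body~$5$:
\[
z_5=\sum_km_k\delta_{k5}^3z_{5k}.
\]
Here $\delta_{k5}^3\approx\epsilon^{6}$ because $r_{k5}\approx\epsilon^{-2}$, and $z_{5k}\approx\epsilon^{-2}$. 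Each term is therefore of order $\epsilon^{4}$, which gives the claim; the argument for $w_5$ is identical. The same computation gives $r_{13},r_{24},r_{k5}\approx\epsilon^{-2}$, i.e.\ $\ord\delta=2$, and this fixes the weights $\delta_{kl}=tD_{kl}$ in the chart.

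For (iv), rotate by $\lambda=(w_{13}/z_{13})^{1/2}$. Since $z_{13}$ and $w_{13}$ have the same order, $\ord\lambda=0$, and the rotation preserves every order and every product. Set $t=1/z_{13}$, so $\ord t=2$ and $B-A=tz_{13}=1$. Then the following coordinates all have order $0$ and are nonzero at $X_0$:
\begin{itemize}
\item $A=tz_1$, $B=tz_3$, $C=tw_1$, $D=tw_2$, by the circles;
\item $x=z_{12}/t$, $y=z_{34}/t$, $u=w_{14}/t$, $v=w_{23}/t$, by (ii);
\item $D_{12},D_{14},D_{23},D_{34}$, by (ii), and $D_{kl}=\delta_{kl}/t$ for the long pairs, by (iii).
\end{itemize}
In addition, $\hat z_5=z_5/t^2$ and $\hat w_5=w_5/t^2$ have order $\ge0$ by (iii). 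All coordinates therefore converge. The chart equations are the original ones multiplied by powers of $t$, so they vanish along the germ. Since $F=F^0+O(t^2)$ and $t\to0$, the limit satisfies $F^0=0$, which is \eqref{eq:d3-boundary}.

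The main obstacle I expect is (i), specifically extracting the circle pattern cleanly from Rules 1a--1e; the rest is bookkeeping. A second delicate point is checking that $w_2-w_3$ really has order $\epsilon^2$. This holds because $23$ is a $z$-edge, so $w_{23}$ is short by Estimate~2, which is what makes the shared $D/t$ in $w_2$ and $w_3$ legitimate.
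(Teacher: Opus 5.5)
Your proposal is correct and follows the paper's proof essentially step for step. The steps are: circles read off from the stroke rules; maximality of the four sides via Rule~1e and Estimate~2; the long separations via $z_{13}=z_{14}-z_{34}$, $w_{13}=w_{12}+w_{23}$ and their analogues; the bound $z_5\preccurlyeq\epsilon^4$ from the equation of body~$5$ with $\delta_{k5}\approx\epsilon^2$; and order bookkeeping after the order-zero rotation by $\lambda$. The only slips are cosmetic. The circling of $1,\dots,4$ (each side is the only stroke of its colour at its endpoints) is the ``something at each end of a stroke'' rule, which the paper cites as Rule~1a, not Rules~1b/1c. Your body-$5$ equation also has the opposite sign ($z_5-z_k=-z_{5k}$), which is harmless for the order estimate.
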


\begin{proof}
{Lemma~\ref{lem:germ} allows us to apply the order relations of \cite[\S3]{AK}
along $\gamma$, and hence the rules in Fact~\ref{fact:rules}.

For (i), each side is the only stroke of its colour incident to either endpoint.
Rule~1a \cite[pp.~545--546]{AK} therefore circles $1,\dots,4$ in both colours.
Since every circle requires a stroke of the same colour, $5$ has no circle.

For (ii), the isolated component $\{1,4\}$ of the $z$-diagram has $z$-circles.
Rule~1e \cite[p.~546]{AK} makes its stroke $14$ maximal. The same argument applies to $23$,
and to $12$, $34$ in the $w$-diagram. The right equality case of Estimate~2
\cite[p.~548]{AK} gives the asserted orders.

For (iii), (ii) gives $z_{13}=z_{14}-z_{34}$, $w_{13}=w_{12}+w_{23}$, $z_{24}=z_{23}+z_{34}$ and
$w_{24}=w_{14}-w_{12}$, all of order $\approx\epsilon^{-2}$.
Part (i) also gives $z_k\approx Z=\epsilon^{-2}\succ z_5$ for $k\le4$, and the corresponding
relations for $w$. In the equation
$z_5(1-\sum_{k\le4}m_k\delta_{k5}^3)=-\sum_{k\le4}m_k\delta_{k5}^3z_k$ for body $5$,
we have $\delta_{k5}\approx\epsilon^2$. It follows that $z_5\preccurlyeq\epsilon^4$.
The same argument applies to $w_5$.

For (iv), (iii) shows that $\lambda$ differs from the normalization factor in
Lemma~\ref{lem:germ} by a Puiseux series of order $0$. Thus the orders in (i)--(iii) remain
unchanged. We now have $B-A=tz_{13}=1$, and the coordinates $A=z_1/z_{13}$, $B=z_3/z_{13}$,
$C=w_1/w_{13}$, $D=w_2/w_{13}$, $x=z_{12}z_{13}$, $y=z_{34}z_{13}$, $u=w_{14}w_{13}$,
$v=w_{23}w_{13}$ and $D_{kl}$ ($=\delta_{kl}$ for the sides, $\delta_{kl}z_{13}$ otherwise) are
$\approx1$. The remaining coordinates satisfy $\hat z_5=z_5z_{13}^2$,
$\hat w_5=w_5w_{13}^2\preccurlyeq1$. Every chart coordinate therefore has a finite limit,
and the limiting point satisfies $F=0$.}
\end{proof}

By Proposition~\ref{prop:d3-cover}(ii) and~(iii), every Diagram-3 germ has an unbounded
four-product: $p_{12}=r_{12}^2r_{34}^2r_{35}^2r_{45}^2\approx\epsilon^{-8}$.

\subsection{The first mass relation}

A weighted sum of the boundary equations gives the mass relation
$m_1m_3=m_2m_4$. This relation will also make the two omitted boundary equations
vanish in the analytic reduction.

\begin{proposition}[First condition]\label{prop:d3-first}
At every point of $\{t=0\}$ satisfying \eqref{eq:d3-boundary}, $m_1m_3=m_2m_4$.  Hence every
Diagram-3 germ satisfies (29) of \cite[\S8.3]{AK}.
\end{proposition}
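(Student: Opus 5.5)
The plan is to extract the relation from the four $z$-boundary equations alone, comparing the two equations that share the coordinate $A$ and the two that share $B$. Everything happens at a single point of $\{t=0\}$ satisfying \eqref{eq:d3-boundary}; no asymptotic or germ information is needed.

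First, the relevant quantities are nonzero at such a point. The constraint $D_{13}^2(B-A)(D-C)-1=0$ forces $B-A\ne0$. The constraint $D_{14}^2u(B-A)-1=0$ forces $D_{14}\ne0$, and $D_{23}^2v(B-A)-1=0$ forces $D_{23}\ne0$.

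Next, subtract $R^z_2$ from $R^z_1$. The common term $A$ cancels and leaves
\[
 \bigl(m_4D_{14}^3-m_3D_{23}^3\bigr)(A-B)=0 .
\]
Likewise $R^z_4-R^z_3=0$ gives
\[
 \bigl(m_1D_{14}^3-m_2D_{23}^3\bigr)(B-A)=0 .
\]
Dividing by $B-A\ne0$ gives the two identities
\[
 m_4D_{14}^3=m_3D_{23}^3,\qquad m_1D_{14}^3=m_2D_{23}^3 .
\]

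Since $D_{14},D_{23}\ne0$, take the quotient of these identities. This eliminates $D_{14}^3/D_{23}^3$ and gives $m_4/m_1=m_3/m_2$, that is, $m_1m_3=m_2m_4$.

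As a consistency check, the $w$-equations $R^w_1,\dots,R^w_4$ give the same relation with $D_{12},D_{34}$ in place of $D_{14},D_{23}$, using $D-C=B-A\ne0$.

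For the second sentence of the statement, apply this to the limit point $X_0$ of Proposition~\ref{prop:d3-cover}(iv). That proposition shows $X_0$ lies in $\{t=0\}$ and satisfies \eqref{eq:d3-boundary}, so the relation holds for every Diagram-3 germ. This is relation (29) of \cite[\S8.3]{AK}.

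There is no real obstacle. The only point needing care is the nonvanishing of $B-A$, $D_{14}$ and $D_{23}$, which the distance constraints provide directly.
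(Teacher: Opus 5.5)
Your proof is correct, but it eliminates in the opposite direction from the paper. The paper uses the mass-weighted sums $m_1R^z_1+m_4R^z_4=m_1A+m_4B$ and $m_2R^z_2+m_3R^z_3=m_2A+m_3B$, in which $D_{14}^3$ and $D_{23}^3$ cancel identically. Combining them gives the polynomial identity
\[
 (B-A)(m_1m_3-m_2m_4)=(m_1+m_4)(m_2R^z_2+m_3R^z_3)-(m_2+m_3)(m_1R^z_1+m_4R^z_4),
\]
so only $B\ne A$ is needed. In words: the $z$-binaries $\{1,4\}$ and $\{2,3\}$ both have their centre of mass at the origin and occupy the same two positions $A,B$, so they have the same mass ratio. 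This is the Rule~1c argument of \cite[\S5.2]{AK}.

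You instead subtract the rows of bodies that share a leading position ($1,2$ at $A$; $3,4$ at $B$). After factoring out $B-A$, this removes $A,B$ and leaves a homogeneous $2\times2$ linear system in $(D_{14}^3,D_{23}^3)$. That needs the extra input that one of $D_{14},D_{23}$ is nonzero, which you correctly take from the side distance constraints.

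What each route buys:
\begin{itemize}
\item The paper's identity holds on the whole chart, not only on the boundary fibre. Its $D$-free combinations reappear in $\ell_zF=\frac1{m_1}(m_1F_{z_1}+m_4F_{z_4})-\frac1{m_2}(m_2F_{z_2}+m_3F_{z_3})$, which spans part of the left kernel of $J_0$.
\item Your route also gives the ratio $D_{14}^3/D_{23}^3=m_2/m_1$ at every boundary point.
\end{itemize}
Your observation that $T_{13}=0$ alone forces $B\ne A$ is also a bit more direct than the paper's detour through the gauge.
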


\begin{proof}
Since $m_1R^z_1+m_4R^z_4=m_1A+m_4B$ and $m_2R^z_2+m_3R^z_3=m_2A+m_3B$ identically,
\begin{equation}\label{eq:d3-syzygy}
(B-A)(m_1m_3-m_2m_4)=(m_1+m_4)(m_2R^z_2+m_3R^z_3)-(m_2+m_3)(m_1R^z_1+m_4R^z_4),
\end{equation}
and likewise
$(D-C)(m_1m_3-m_2m_4)=(m_1+m_2)(m_3R^w_3+m_4R^w_4)-(m_3+m_4)(m_1R^w_1+m_2R^w_2)$.  With
$T_{13}=D_{13}^2(B-A)(D-C)-1$ (the distance equation of $13$) and $g=(B-A)-(D-C)$ (the gauge),
$D_{13}^2(B-A)^2-1=T_{13}+D_{13}^2(B-A)g$.  At a boundary point $T_{13}=g=0$, so
$D_{13}^2(B-A)^2=1$, $B-A\ne0$, and \eqref{eq:d3-syzygy} gives $m_1m_3=m_2m_4$.  The second
statement follows from Proposition~\ref{prop:d3-cover}(iv).
\end{proof}

{These identities express in chart coordinates the argument of \cite[\S5.2]{AK}:
Rule~1c makes $m_1z_1+m_4z_4$ and $m_2z_2+m_3z_3$ non-maximal, whereas
$z_1\sim z_2$ and $z_3\sim z_4$.}

\subsection{Boundary solutions and analytic continuation}
{Assume henceforth that $m\in\Rp^5$ satisfies (29), and set
$M=m_1+m_2+m_3+m_4$. After eliminating $B=A+1$, denote the remaining $19$ chart coordinates
by $X$. We will solve nineteen independent equations by the implicit function theorem,
leaving two equations that must vanish along any germ.

Let $F_{z_k},F_{w_k}$ denote the rescaled equations for body $k$.
Define $\Lambda$ to be the $19\times21$ matrix selecting the combinations
$m_1F_{z_1}+m_4F_{z_4}$, $F_{z_4}-F_{z_1}$, $F_{z_3}-F_{z_2}$,
$m_1F_{w_1}+m_2F_{w_2}$, $F_{w_2}-F_{w_1}$, $F_{w_3}-F_{w_4}$,
together with the other $13$ equations. The two remaining linear combinations are
$\ell_zF=F_{z_1}-F_{z_2}-\frac{m_3}{m_2}F_{z_3}+\frac{m_4}{m_1}F_{z_4}$ and
$\ell_wF=F_{w_1}+\frac{m_2}{m_1}F_{w_2}-\frac{m_2}{m_1}F_{w_3}-F_{w_4}$.}

{\begin{lemma}\label{lem:d3-fibre}
\begin{alphalist}
  \item On the slice, \eqref{eq:d3-boundary} has exactly $3^4\cdot2^6=5184$ boundary points.
  They are obtained as follows: $D_{12},D_{34},D_{14},D_{23}$ are arbitrary cube roots of $\frac1{m_1+m_2}$,
  $\frac1{m_3+m_4}$, $\frac1{m_1+m_4}$, $\frac1{m_2+m_3}$; $D_{13},D_{24},D_{15},D_{25},D_{35},
  D_{45}$ are arbitrary square roots of $1$, $-1$, $\frac M{m_3}$, $-\frac M{m_4}$, $\frac M{m_1}$,
  $-\frac M{m_2}$; $A=-\frac{m_4}{m_1+m_4}$, $C=-\frac{m_2}{m_1+m_2}$, $D=\frac{m_1}{m_1+m_2}$,
  $x=(m_1+m_2)D_{12}$, $y=-(m_3+m_4)D_{34}$, $u=(m_1+m_4)D_{14}$, $v=(m_2+m_3)D_{23}$, and
  $\hat z_5,\hat w_5$ are determined by the equations for body $5$. These boundary solutions
  are independent of the mass $m_5$.
\item At every boundary point the matrix $\partial_X(\Lambda F^0)$ is invertible.

\end{alphalist}
\end{lemma}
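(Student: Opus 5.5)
For (a) I would solve the boundary system \eqref{eq:d3-boundary} block by block on the slice $B=A+1$, using the gauge $(B-A)-(D-C)=0$ to get $D-C=1$. The key observation is that the eight equations $R^z_k,R^w_k$ decouple into four two-body problems, one for each side of the quadrilateral.

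The $z$-side $14$ is handled by $m_1R^z_1+m_4R^z_4=m_1A+m_4B$. This gives $A=-m_4/(m_1+m_4)$, hence $B=m_1/(m_1+m_4)$. Then $R^z_1=0$ reads $A+m_4D_{14}^3=0$, so $D_{14}^3=1/(m_1+m_4)$, and $R^z_4$ follows automatically. For the $z$-side $23$, the pair $R^z_2,R^z_3$ with the same $A,B$ gives $A+m_3D_{23}^3=0$ and $B-m_2D_{23}^3=0$. These are consistent exactly when $m_1m_3=m_2m_4$, which is Proposition~\ref{prop:d3-first} and is assumed, and then $D_{23}^3=1/(m_2+m_3)$. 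Symmetrically, $m_1R^w_1+m_2R^w_2$ with $D-C=1$ gives $C=-m_2/(m_1+m_2)$, $D=m_1/(m_1+m_2)$ and $D_{12}^3=1/(m_1+m_2)$. The pair $R^w_3,R^w_4$ gives $D_{34}^3=1/(m_3+m_4)$, again using (29).

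Next come the distance equations, which are linear in $x,y,u,v$ once the $D$'s are fixed. For example $x=1/(D_{12}^2(D-C))=1/D_{12}^2=(m_1+m_2)D_{12}$, and similarly $y=-(m_3+m_4)D_{34}$, $u=(m_1+m_4)D_{14}$, $v=(m_2+m_3)D_{23}$. The equations for the pairs $13$, $24$, $k5$ give $D_{13}^2=1$ and $D_{24}^2=-1$. They also give $D_{15}^2=1/(AC)$, which I would evaluate with (29) as $M/m_3$; I would check the other three values $-M/m_4$, $M/m_1$, $-M/m_2$ by the same rewriting, e.g.\ $(m_1+m_4)(m_1+m_2)/(m_2m_4)=M/m_3$. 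Finally, the body-$5$ equations are linear in $\hat z_5$ and $\hat w_5$, with coefficient $1$, so they determine these two coordinates. Counting the free cube roots ($3^4$) and square roots ($2^6$) gives $5184$ points, and $m_5$ enters nowhere.

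For (b) I would order the $19$ coordinates and equations so that $\partial_X(\Lambda F^0)$ is block triangular. The order would be: the $z$-pair block $(A,D_{14},D_{23})$ with rows $m_1F_{z_1}+m_4F_{z_4}$, $F_{z_4}-F_{z_1}$, $F_{z_3}-F_{z_2}$; then the $w$-pair block $(C,D,D_{12},D_{34})$ with its three combinations and the gauge; then the crossed distances; then $x,y,u,v$; and finally $\hat z_5,\hat w_5$.

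\begin{itemize}
\item In the first block, $\partial_A(m_1A+m_4B)=m_1+m_4$. The difference rows have derivatives $3(m_1+m_4)D_{14}^2(B-A)$ in $D_{14}$ and $3(m_2+m_3)D_{23}^2(B-A)$ in $D_{23}$, with the off-diagonal terms lower triangular, so all pivots are nonzero.
\item The $w$-block works the same way, with the gauge supplying the pivot for $D-C$.
\item Each crossed equation $D_{kl}^2\cdot(\ldots)-1$ has pivot $2/D_{kl}\neq0$ in its own $D_{kl}$.
\item Each of $x,y,u,v$ enters only its own distance equation, with nonzero coefficient.
\item $\hat z_5,\hat w_5$ have pivot $1$.
\end{itemize}

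The main obstacle is bookkeeping: confirming that the rows selected by $\Lambda$ really leave no dependence among the $z$-equations beyond the omitted combination $\ell_z$. The $z$-rows span four equations in $A$, $D_{14}$, $D_{23}$. The combination $\ell_z$, which uses $m_3/m_2$ and $m_4/m_1$, is exactly the one that vanishes identically under (29), so the three retained rows must have full rank; the pivot computation above checks this directly.
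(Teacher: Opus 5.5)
Your proposal is correct and follows essentially the paper's route: the paper also solves the boundary system through the weighted sums and differences selected by $\Lambda$, with (29) making the omitted rows vanish. It proves (b) by the same elimination order, phrased as a trivial-kernel argument rather than as your block-triangular pivots, and on the slice $B-A\equiv1$ your $z$-block in fact has no off-diagonal entries at all.
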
}

{\begin{proof}
On the slice $B=A+1$, the three selected combinations of the $z$-equations are
\[
 m_1R^z_1+m_4R^z_4=(m_1+m_4)A+m_4,\qquad
 R^z_4-R^z_1=1-(m_1+m_4)D_{14}^3,
\]
\[
 R^z_3-R^z_2=1-(m_2+m_3)D_{23}^3.
\]
The gauge gives $D-C=1$, while the weighted combination of the $w$-equations gives
$m_1C+m_2D=0$. The other two selected combinations are
\[
 R^w_2-R^w_1=(D-C)\bigl(1-(m_1+m_2)D_{12}^3\bigr),\qquad
 R^w_3-R^w_4=(D-C)\bigl(1-(m_3+m_4)D_{34}^3\bigr).
\]
These equations determine $A,C,D$ and the four cubes listed in (a).
The four distance equations for the sides then determine $x,y,u,v$.
The other six distance equations determine the six squares, and the equations for body $5$
determine $\hat z_5,\hat w_5$.
The relation $m_1m_3=m_2m_4$ ensures that the omitted boundary coordinate rows
also vanish. Since all roots are nonzero and simple, each choice
of the four cube roots and six square roots gives exactly one boundary point.
This accounts for all $3^4 2^6$ boundary points.

The same elimination proves (b). Let a vector belong to the kernel of the differential.
The linear equations force its variations of $A,C,D$ to vanish.
The four cube equations then force the variations of the side coordinates $D_{kl}$ to vanish,
because their coefficients are nonzero multiples of $D_{kl}^2$.
The four side distance equations next force the variations of $x,y,u,v$ to vanish.
Each remaining distance equation does the same for its own $D_{kl}$, since its
derivative in that coordinate is $2/D_{kl}\ne0$.
Finally, the two equations for body $5$ force the variations of $\hat z_5,\hat w_5$ to vanish.
The square matrix $\partial_X(\Lambda F^0)$ therefore has trivial kernel.
\end{proof}

}
{For all $X$, we have $\ell_zF^0=-B(m_1m_3-m_2m_4)/(m_1m_2)$ and
$\ell_wF^0=-(D-C)D_{34}^3(m_1m_3-m_2m_4)/m_1$. Thus $\ell_z,\ell_w$ annihilate
$J_0=\partial_XF^0(X_0)$. Since $\Lambda J_0$ is invertible, $J_0$ has rank $19$,
$\ell_z,\ell_w$ span its left kernel, and the $21$ rows of $\Lambda,\ell_z,\ell_w$ are independent.}

\begin{lemma}\label{lem:d3-graph}
Let $X_0$ be a boundary point.  There is a unique analytic $t\mapsto X(t)$ near $0$ with $X(0)=X_0$ and
$\Lambda F(t,X(t))=0$; put $G(t)=(\ell_zF(t,X(t)),\ell_wF(t,X(t)))=\sum_{k\ge1}G_kt^k$.  If the
chart coordinates of a Diagram-3 germ converge to $X_0$, then along the germ they are $X(t)$, and
$G\equiv0$.  Every analytic $Y(t)$ with $Y(0)=X_0$ and $F(t,Y(t))\equiv0$ equals $X(t)$.
\end{lemma}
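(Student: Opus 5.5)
The existence and uniqueness of $X(t)$ is an application of the analytic implicit function theorem to the $19$ equations $\Lambda F(t,X)=0$ in the $19$ unknowns $X$, with $t$ as parameter. At $t=0$ these equations vanish at $X_0$, since $X_0$ satisfies \eqref{eq:d3-boundary}. Their Jacobian $\partial_X(\Lambda F^0)(X_0)$ is invertible by Lemma~\ref{lem:d3-fibre}(b). Because $F$ is polynomial in $(t,X)$, the theorem produces a unique analytic germ $X(t)$ with $X(0)=X_0$. It also shows that, near $(0,X_0)$, the zero set of $\Lambda F$ is exactly the graph of this germ. Writing $G(t)=(\ell_zF,\ell_wF)(t,X(t))$, we get $G(0)=(\ell_zF^0,\ell_wF^0)(X_0)=0$, since both functionals vanish at a boundary point by the identities preceding the lemma. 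Hence the expansion of $G$ starts at $k\ge1$.

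For a Diagram-3 germ with chart coordinates converging to $X_0$, Proposition~\ref{prop:d3-cover}(iv) gives the following. The coordinates are Puiseux series in $s$, $t=1/z_{13}$ has positive order, and the full system $F(t(s),X(s))=0$ holds identically. After the substitution $s=\tau^N$, the map $\tau\mapsto(t(\tau),X(\tau))$ is continuous at $0$ with value $(0,X_0)$. So for small $\tau$ it lies in the neighbourhood where $\{\Lambda F=0\}$ is the graph of $X(\cdot)$, and therefore $X(\tau)=X(t(\tau))$. The remaining two equations $\ell_zF$ and $\ell_wF$ also vanish along the germ, so $G(t(\tau))\equiv0$. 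Since $t(\tau)\not\equiv0$ is nonconstant with $t(0)=0$, its image covers a punctured neighbourhood of $0$. By the identity theorem, $G\equiv0$.

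The last assertion follows by the same local-graph argument. If $Y$ is analytic with $Y(0)=X_0$ and $F(t,Y(t))\equiv0$, then $\Lambda F(t,Y(t))\equiv0$, and uniqueness in the implicit function theorem gives $Y=X$.

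The only delicate point is to check that the convergence in Proposition~\ref{prop:d3-cover}(iv) is joint convergence of $(t,X)$ to $(0,X_0)$ in the same slice coordinates ($B=A+1$ eliminated) used here. This identifies the germ with a curve inside the implicit-function neighbourhood. Once that is checked, no further estimates are needed.
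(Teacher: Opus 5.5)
Your proposal is correct and follows the paper's proof essentially step for step: you invert $\partial_X(\Lambda F^0)$ via Lemma~\ref{lem:d3-fibre}(b), reparametrize $s=\tau^N$ so the germ is holomorphic, and use uniqueness of the implicit graph to get $X(\tau)=X(t(\tau))$. You then conclude $G\equiv0$ because the image of the nonconstant $t(\tau)$ contains a neighbourhood of $0$, and the last assertion follows from the same uniqueness.
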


\begin{proof}
{The analytic implicit function theorem applies by Lemma~\ref{lem:d3-fibre}(b)
and gives the unique analytic graph. After a reparametrization $s\mapsto s^N$,
the chart coordinates $Y(s)$ and $t(s)=1/z_{13}(s)$ of the germ are convergent power series
(Lemma~\ref{lem:germ} and Proposition~\ref{prop:d3-cover}), with $Y(0)=X_0$, $t(0)=0$ and
$t\not\equiv0$. The identity $\Lambda F(t(s),Y(s))=0$ and uniqueness imply
$Y(s)=X(t(s))$. Consequently, $F(t,X(t))$ vanishes on the image of $t(s)$,
which contains a neighbourhood of $0$. Thus $F(t,X(t))\equiv0$ and $G\equiv0$.
The last assertion follows from the same uniqueness.}
\end{proof}

{Lemma~\ref{lem:d3-graph} reduces the existence of a germ to $G_k=0$ for all $k$.
Since $F^1=0$ and $\Lambda J_0$ is invertible, $X_1=0$ and
\[
 X(t)=X_0+X_2t^2+X_3t^3+\cdots.
\]
Put $\ell=(\ell_z,\ell_w)$. The identity $\ell J_0=0$ removes $X_k$ from the
coefficient $G_k$, giving, with all derivatives evaluated at $X_0$,
\[
\begin{aligned}
 G_2&=\ell F^2,\qquad G_3=\ell F^3,\\
 G_4&=\ell\bigl(F^4+\partial F^2\cdot X_2
                  +\tfrac12\partial^2F^0[X_2,X_2]\bigr),\\
 G_5&=\ell\bigl(F^5+\partial F^3\cdot X_2+\partial F^2\cdot X_3
                  +\partial^2F^0[X_2,X_3]\bigr).
\end{aligned}
\]
Thus each successive obstruction is determined by the coefficients already obtained.}

\subsection{Orders three and four}

The third-order coefficient gives two polynomial mass equations. The fourth-order
coefficient then imposes equality of two masses, reducing the candidates to four
explicit one-parameter families up to scaling.

\begin{lemma}[computer-assisted]\label{lem:d3-order3}
At every boundary point $G_2=0$ and
$G_3=\bigl(\frac{m_1+m_2}{m_1^2m_2}\Omega,\frac{m_1+m_4}{m_1^2m_4}\Omega\bigr)$, where
$\Omega=\Omega_1+\Omega_2$,
\[
\begin{aligned}
\Omega_1&=m_1m_2m_4D_{13}+m_5m_1^2D_{15}+m_5m_2m_4D_{35},\\
\Omega_2&=m_1m_2m_4D_{24}+m_5m_1m_2D_{25}+m_5m_1m_4D_{45}.
\end{aligned}
\]
\end{lemma}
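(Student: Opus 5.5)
Since $X_1=0$ and $\ell J_0=0$, the coefficients reduce to $G_2=\ell F^2(X_0)$ and $G_3=\ell F^3(X_0)$, both evaluated at the boundary point. The plan is to write down $F^2$ and $F^3$ explicitly from the chart substitution, apply $\ell_z,\ell_w$, and simplify with the boundary relations of Lemma~\ref{lem:d3-fibre}(a) and with (29).

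\emph{Locating the terms.} In the rescaled $z$-equation of body $k\le4$, which is multiplied by $t$, the internal side terms already sit in $F^0$. The side corrections involving $tx$ and $ty$ carry an extra factor $t$ after multiplication by $t$, so they land in $F^2$. The crossed diagonal terms (pairs $13$, $24$) have $\delta^3=t^3D^3$ and coordinate differences of size $t^{-1}$; after multiplication by $t$ they contribute $t^3D_{13}^3(B-A)$ plus higher corrections. The terms from body $5$ have $\delta_{k5}^3=t^3D_{k5}^3$ and $z_k-z_5=A/t+\dots$, which also gives order $t^3$. The same bookkeeping applies to the $w$-equations.

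\emph{Order two.} At order $t^2$ the only contributions are the linear side-correction terms. For example, in $F_{z_2}$ we get $-x(1-\dots)$ and corrections of the form $m_1D_{12}^3$-type pieces of the $x$-displacement. A direct substitution shows that $\ell_z$ and $\ell_w$ annihilate these contributions after using $D_{14}^3=1/(m_1+m_4)$, $D_{23}^3=1/(m_2+m_3)$ and $m_1m_3=m_2m_4$. I expect this step to reduce to linear identities in $x,y,u,v$ with those fixed values.

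\emph{Order three.} Here I collect the coefficient of $t^3$ in each $F_{z_k}$:
\[
 -\sum_{l\in\{\text{diag},5\}} m_l D_{kl}^3\,(\text{leading difference}).
\]
I then rewrite each cube using the distance relations: $D_{13}^2(B-A)^2=1$ gives $D_{13}^3(B-A)=D_{13}/(B-A)=D_{13}$, and $D_{15}^2AC=1$ gives $D_{15}^3A=D_{15}/C$. After this substitution, $G_3$ is linear in the six square-root quantities $D_{13},D_{24},D_{k5}$. Substituting $A$, $C$ and $D$ and then applying (29) to clear factors should collapse both components to multiples of the single form $\Omega$, with the prefactors stated.

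\emph{Main obstacle.} The delicate part is the bookkeeping of these prefactors: checking that the $\ell_z$ and $\ell_w$ components become proportional to the same $\Omega$, with coefficients $(m_1+m_2)/(m_1^2m_2)$ and $(m_1+m_4)/(m_1^2m_4)$. I would do this symbolically, treating the masses subject to $m_3=m_2m_4/m_1$ and the $D$'s as indeterminates that satisfy their square and cube relations. The result is then an exact polynomial identity modulo the ideal of boundary relations, which can be checked by reduction, as recorded in Appendix~\ref{app:verification}.
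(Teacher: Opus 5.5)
Your proposal is correct and follows the paper's route. Because $X_1=0$ and $\ell J_0=0$, one has $G_2=\ell F^2(X_0)$ and $G_3=\ell F^3(X_0)$, and the paper evaluates exactly these symbolically over $\mathbb Q(m_1,m_2,m_4,m_5)$ with $m_3=m_2m_4/m_1$, reducing modulo the cube and square relations for the $D_{kl}$, as you propose.

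One correction to your order-two step: in the $\ell_z$ component the $D_{14}$ and $D_{23}$ terms cancel identically, and the cancellation actually uses $D_{12}^3=1/(m_1+m_2)$, $D_{34}^3=1/(m_3+m_4)$ and (29). The values $D_{14}^3$ and $D_{23}^3$ are needed only in the $\ell_w$ component. At order three, also keep track of $D_{24}^3=-D_{24}$, which comes from $D_{24}^2=-1$.
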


{This is an exact symbolic computation over $\mathbb Q(m_1,m_2,m_4,m_5)$,
with $m_3=m_2m_4/m_1$. It evaluates $\ell F^2$ and $\ell F^3$ at the boundary points and reduces the
results modulo the cubic and quadratic equations for $D_{kl}$.
The verification is described in Appendix~\ref{app:verification}.}

\begin{corollary}\label{cor:d3-order3}
If $G_3=0$ at some boundary point, then $N_1(m)=N_2(m)=0$, where $Q=m_5^2M$ and
\begin{align*}
N_1&=\bigl((m_1m_2m_4)^3-Q(m_1^6+m_2^3m_4^3)\bigr)^2-4Q^2m_1^6m_2^3m_4^3,\\
N_2&=\bigl((m_1m_2m_4)^3-Qm_1^3(m_2^3+m_4^3)\bigr)^2-4Q^2m_1^6m_2^3m_4^3 .
\end{align*}
As $N_1-N_2+QS=0$ with
$S=(m_1^3-m_2^3)(m_1^3-m_4^3)\bigl(2(m_1m_2m_4)^3-Q(m_1^3+m_2^3)(m_1^3+m_4^3)\bigr)$, also
$S(m)=0$.
\end{corollary}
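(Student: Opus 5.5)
The plan is to read off the vanishing of $\Omega$ from Lemma~\ref{lem:d3-order3} and then remove the square-root sign ambiguities by taking a norm over the sign choices. The first component of $G_3$ is $\frac{m_1+m_2}{m_1^2m_2}\Omega$, and this prefactor is positive. So $G_3=0$ forces $\Omega=\Omega_1+\Omega_2=0$ at the boundary point in question.

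I will then use the explicit description in Lemma~\ref{lem:d3-fibre}(a). The inverse distances $D_{13},D_{15},D_{35}$ are real square roots of $1$, $M/m_3$ and $M/m_1$. The inverse distances $D_{24},D_{25},D_{45}$ are $i$ times real square roots of $1$, $M/m_4$ and $M/m_2$. Since all masses are positive, $\Omega_1$ is real and $\Omega_2$ is purely imaginary, so $\Omega=0$ splits into $\Omega_1=0$ and $\Omega_2=0$ separately.

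Each $\Omega_j$ has the form $a\pm b\pm c$ with prescribed $a^2,b^2,c^2$. For the sign pattern in which it vanishes, the product over all four sign patterns of $(a\pm b\pm c)$ also vanishes, and this product equals
\[
(a^2-b^2-c^2)^2-4b^2c^2 .
\]
For $\Omega_1$ we substitute $m_3=m_2m_4/m_1$ from (29). This gives
\[
a^2=(m_1m_2m_4)^2,\qquad
b^2=\frac{Qm_1^5}{m_2m_4},\qquad
c^2=\frac{Qm_2^2m_4^2}{m_1}.
\]
Multiplying by the positive factor $(m_1m_2m_4)^2$ turns the product exactly into $N_1$. The computation for $\Omega_2$, with the factor $i$ removed, is the same with the roles of $m_1^3$ and $m_2^3,m_4^3$ rearranged; it gives
\[
a^2=(m_1m_2m_4)^2,\qquad
b^2=\frac{Qm_1m_2^2}{m_4},\qquad
c^2=\frac{Qm_1m_4^2}{m_2},
\]
and after the same normalization this yields $N_2$. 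Hence $N_1(m)=N_2(m)=0$.

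The last step is the polynomial identity $N_1-N_2+QS=0$. I will verify it by direct expansion. Writing $P=(m_1m_2m_4)^3$, the difference $N_1-N_2$ is a difference of squares of $P-QX$ and $P-QY$, with
\[
X=m_1^6+m_2^3m_4^3,\qquad Y=m_1^3(m_2^3+m_4^3).
\]
The $4Q^2$ terms cancel, and
\[
X-Y=(m_1^3-m_2^3)(m_1^3-m_4^3),\qquad
X+Y=(m_1^3+m_2^3)(m_1^3+m_4^3).
\]
Therefore $N_1-N_2=-Q(X-Y)(2P-Q(X+Y))=-QS$. Since $Q=m_5^2M>0$, it follows that $S(m)=0$. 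I expect the only delicate point to be the bookkeeping of the reality splitting and the normalizing factors; everything else is routine algebra.
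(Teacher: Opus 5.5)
Your argument is the paper's: $G_3=0$ gives $\Omega=0$. Lemma~\ref{lem:d3-fibre} makes $\Omega_1$ real and $\Omega_2$ purely imaginary, so $\Omega_1=\Omega_2=0$. The four-sign product $(a^2-b^2-c^2)^2-4b^2c^2$, multiplied by $(m_1m_2m_4)^2$, then yields $N_1$ and $N_2$. Finally the difference-of-squares factorization gives $N_1-N_2=-QS$, and $Q>0$ gives $S(m)=0$.

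There is one arithmetic slip to fix. For $\Omega_2$, the squared moduli of the last two terms are $m_5^2m_1^2m_2^2M/m_4=Qm_1^2m_2^2/m_4$ and $Qm_1^2m_4^2/m_2$, not $Qm_1m_2^2/m_4$ and $Qm_1m_4^2/m_2$. With your values the bracket would contain $Qm_1^2(m_2^3+m_4^3)$ and the last term would be $4Q^2m_1^4m_2^3m_4^3$, which is not $N_2$. The corrected values give exactly $N_2$.
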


\begin{proof}
At positive masses $D_{13},D_{15},D_{35}$ are real and $D_{24},D_{25},D_{45}$ purely imaginary
(Lemma~\ref{lem:d3-fibre}), so $\Omega_1=\Omega_2=0$.  If
$\alpha+\beta+\gamma=0$ then $(\alpha^2-\beta^2-\gamma^2)^2-4\beta^2\gamma^2=\prod_\pm(\alpha+\beta
\pm\gamma)\prod_\pm(\alpha-\beta\pm\gamma)=0$.  Inserting the squares $(m_1m_2m_4)^2$,
$m_5^2m_1^4M/m_3$, $m_5^2m_2^2m_4^2M/m_1$ of the terms of $\Omega_1$, using
$m_3=m_2m_4/m_1$ and multiplying by $(m_1m_2m_4)^2$ gives $N_1=0$; the terms of
$\Omega_2$ are $i$ times reals with squares $(m_1m_2m_4)^2$, $m_5^2m_1^2m_2^2M/m_4$,
$m_5^2m_1^2m_4^2M/m_2$, which gives $N_2=0$.  Finally $Q>0$.
\end{proof}

\begin{lemma}[computer-assisted]\label{lem:d3-order4}
At every boundary point $G_4=-\frac{\Pi}{m_1^2}\bigl(m_4(m_1+m_2),m_2(m_1+m_4)\bigr)$ with
$\Pi=(m_1D_{12}-m_4D_{34})(m_1D_{14}-m_2D_{23})$.
\end{lemma}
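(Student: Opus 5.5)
The plan is to compute $G_4$ from the formula already recorded,
\[
 G_4=\ell\bigl(F^4+\partial F^2\cdot X_2+\tfrac12\partial^2F^0[X_2,X_2]\bigr),
\]
with all derivatives at the boundary point $X_0$. This is the same type of exact symbolic computation used for Lemma~\ref{lem:d3-order3}. We work over $\mathbb Q(m_1,m_2,m_4,m_5)$ with $m_3=m_2m_4/m_1$, and reduce modulo the cube and square relations of Lemma~\ref{lem:d3-fibre}(a). The first step is to solve for $X_2$. Since $F^1=0$, the $t^2$ coefficient of $\Lambda F(t,X(t))=0$ gives
\[
 \Lambda J_0X_2=-\Lambda F^2(X_0).
\]
The matrix $\Lambda J_0$ is invertible and has the block-triangular elimination order of the proof of Lemma~\ref{lem:d3-fibre}(b): first $A,C,D$, then the side $D_{kl}$, then $x,y,u,v$, then the remaining $D_{kl}$, then $\hat z_5,\hat w_5$. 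So $X_2$ can be written explicitly by back-substitution, each step dividing only by nonzero quantities such as $D_{kl}^2$, the masses and $B-A=1$.

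The second step uses the structure of $F^2$ and $F^4$. These are the $t^2$ and $t^4$ parts of the rescaled equations. They come from the cross-cluster force terms (pairs $13,24,k5$ with $\delta=tD$) and from the $t$-offsets $tx,ty,tu,tv$ inside clusters. The functionals $\ell_z,\ell_w$ involve only the body $1$--$4$ equations. Expanding, I expect the body-$5$ contributions and the diagonal contributions to cancel in pairs under $\ell$. One reason is that $G_2=0$ and $G_3\propto\Omega$ already show that $\ell$ projects onto a balanced torque-like combination. The surviving terms should be the intra-cluster second-order terms: for instance, $m_4D_{14}^3$ acting on $z_4-z_1$ produces $ty$ against $tx$. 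These, combined with the $X_2$ variations of $D_{12},D_{34},D_{14},D_{23}$ through $\partial^2F^0$, should factor after using $(m_1+m_2)D_{12}^3=1$ and its analogues. The factor $(m_1D_{12}-m_4D_{34})$ should come from the $w$-cluster and $(m_1D_{14}-m_2D_{23})$ from the $z$-cluster, giving a product $\Pi$. The prefactor vector $(m_4(m_1+m_2),m_2(m_1+m_4))/m_1^2$ mirrors the one in Lemma~\ref{lem:d3-order3}, with the roles of the two clusters swapped. I would verify the factorization by normal-form reduction, treating each cube root and square root as an algebraic symbol modulo its minimal relation, and confirm that the remainder is zero.

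The main obstacle is the bookkeeping of the second-order term $\tfrac12\partial^2F^0[X_2,X_2]$ together with $\partial F^2\cdot X_2$. The components of $X_2$ in the directions of the side $D_{kl}$ and of $x,y,u,v$ carry square and cube roots, and the cancellation to the product $\Pi$ holds only after reduction modulo (29) and all the root relations. I would first check that $\ell$ annihilates every term in which $X_2$ enters only through $J_0$, which is the identity $\ell J_0=0$. This leaves $\ell$ applied to explicit Hessian entries, namely mixed $D_{kl}$/position terms from the cube factors $D_{kl}^3(\cdot)$. As a check, I would evaluate the claimed formula numerically at random masses on (29) and at random choices of roots, and then certify the identity exactly, as recorded in Appendix~\ref{app:verification}.
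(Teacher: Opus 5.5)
Your plan is the paper's own proof: an exact symbolic computation over $\mathbb Q(m_1,m_2,m_4,m_5)$ with $m_3=m_2m_4/m_1$. It solves the order-two system $\Lambda J_0X_2=-\Lambda F^2$, evaluates $G_4=\ell\bigl(F^4+\partial F^2\cdot X_2+\tfrac12\partial^2F^0[X_2,X_2]\bigr)$, and reduces modulo the root relations of Lemma~\ref{lem:d3-fibre}(a), so the proposal is correct. Your heuristic remarks are not needed, and the body-$5$ and diagonal terms do not cancel in pairs but never enter at all: the body-$1$--$4$ rows have no $t^4$ part and involve only intra-cluster coordinates at orders $0$ and $2$.
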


{As in Lemma~\ref{lem:d3-order3}, the computation uses exact arithmetic and reduction
modulo the equations for $D_{kl}$. After solving for $X_2$, it evaluates the preceding formula
for $G_4$ (Appendix~\ref{app:verification}). At every boundary point,}
$(m_1D_{12})^3-(m_4D_{34})^3=m_1(m_1+m_4)(m_1-m_4)/(m_1+m_2)$ and
$(m_1D_{14})^3-(m_2D_{23})^3=m_1(m_1+m_2)(m_1-m_2)/(m_1+m_4)$, so $\Pi=0$ forces $m_1=m_4$ or
$m_1=m_2$; with (29), $m=(p,q,q,p,m_5)$ or $(p,p,q,q,m_5)$.

{\begin{lemma}\label{lem:d3-curves}
Let $m\in\Rp^5$ satisfy (29), $N_1(m)=N_2(m)=0$ and $m_1=m_2$ (resp.\
$m_1=m_4$).  Write $m_1=a^2$, $m_3=b^2$ (resp.\ $m_2=b^2$) with $a,b>0$, and
$c=(2(a^2+b^2))^{1/2}$.  Then
\[
m_5=\frac{a^3b^3}{c\,|a^3+b^3|}\qquad\text{or}\qquad m_5=\frac{a^3b^3}{c\,|a^3-b^3|}\quad(a\ne b).
\]
Conversely $N_1$, $N_2$ and $S$ vanish on these four families, called
$E^\pm_{12}$ (if $m_1=m_2$) and $E^\pm_{14}$ (if $m_1=m_4$).  They meet only at $a=b$, i.e.\ at
$m$ proportional to $(1,1,1,1,\frac14)$, which lies on $E^+_{12}\cap E^+_{14}$.
\end{lemma}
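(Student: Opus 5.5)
Both mass patterns can be handled together: the proof is a direct substitution into the factorizations already present in Corollary~\ref{cor:d3-order3}. I work with the vanishing of the two signed sums $\Omega_1,\Omega_2$ rather than with $N_1,N_2$, since $N_1=0$ (resp.\ $N_2=0$) says exactly that some signed sum of the three real (resp.\ imaginary) terms of $\Omega_1$ (resp.\ $\Omega_2$) vanishes.

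In the case $m_1=m_2=a^2$, relation (29) gives $m_3=m_4=b^2$ and $M=2(a^2+b^2)=c^2$. The three moduli in $\Omega_1$ are $m_1m_2m_4=a^4b^2$, $m_5m_1^2\sqrt{M/m_3}=m_5a^4c/b$ and $m_5m_2m_4\sqrt{M/m_1}=m_5ab^2c$. After dividing by $a$, a signed vanishing sum reads
\[
 \pm a^3b^2=m_5c\,(\pm a^3/b\pm b^2).
\]
Multiplying by $b$ gives $m_5=a^3b^3/(c|a^3\pm b^3|)$, after discarding the signs that make the right side zero or the forced value of $m_5$ nonpositive. The three moduli in $\Omega_2$ are $a^4b^2$, $m_5a^4c/b$ and $m_5a^2b^2c/a=m_5ab^2c$, so they are identical to those of $\Omega_1$. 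Hence $N_2=0$ imposes the same condition, and $N_1=N_2=0$ is equivalent to the displayed pair of formulas. By Corollary~\ref{cor:d3-order3}, $S=N_2-N_1$ up to the factor $Q$, so $S$ also vanishes on these families.

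The case $m_1=m_4=a^2$, $m_2=m_3=b^2$ is handled the same way. Here the roles of $\Omega_1$ and $\Omega_2$ are interchanged: the moduli are $a^4b^2$ together with $m_5a^4c/b$ and $m_5ab^2c$, with the pairing of terms permuted. This gives the same two formulas and produces the families $E^\pm_{14}$.

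For the converse, I substitute each formula into the product factorizations of $N_1$ and $N_2$ and check that one linear factor vanishes in each; this is routine.

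The intersection claim needs the most care. A point in $E_{12}\cap E_{14}$ has $m_2=m_1=m_4$ and hence $a=b$. There $a^3-b^3=0$, so only the $+$ branch survives, and it gives $m_5=a^6/(2a\cdot2a^3)=a^2/4$; this is the point proportional to $(1,1,1,1,\tfrac14)$, which lies on both $E^+$ families. It remains to show that two distinct families within the same pattern, say $E^+_{12}$ and $E^-_{12}$, cannot meet for $a\ne b$. This holds because $|a^3+b^3|\ne|a^3-b^3|$ when $a,b>0$. The main obstacle I anticipate is bookkeeping the sign choices consistently with positivity of $m_5$, so as to be sure that no spurious third branch survives.
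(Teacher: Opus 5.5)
Your argument is correct and is essentially the paper's: the paper factors $N_1=N_2=a^{12}\bigl(a^6b^6-m_5^2c^2(a^3+b^3)^2\bigr)\bigl(a^6b^6-m_5^2c^2(a^3-b^3)^2\bigr)$, which is your product of signed sums of the moduli with the linear factors paired. Your derivation of $S=0$ from $QS=N_2-N_1$ is an equally valid replacement for the paper's remark that one of the first two factors of $S$ vanishes; as a small wording fix, when $m_1=m_4$ the moduli of $\Omega_1$ are unchanged term by term and only the last two terms of $\Omega_2$ swap (rather than $\Omega_1$ and $\Omega_2$ exchanging roles), which is harmless since only the set of moduli matters.
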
}

{\begin{proof}
For either mass pattern, the equations $N_1=0$ and
$N_2=0$ coincide. Their common polynomial factors as
\[
 N_1=N_2
 =a^{12}\bigl(a^6b^6-m_5^2c^2(a^3+b^3)^2\bigr)
          \bigl(a^6b^6-m_5^2c^2(a^3-b^3)^2\bigr).
\]
Positivity of $a,b,c,m_5$ shows that the first factor gives exactly the plus
formula, and the second gives the minus formula, with $a\ne b$.
One of the first two factors of $S$ vanishes, so $S=0$ as well.
The two mass patterns coincide only when all four outer masses are equal.
In that case the minus formula is impossible, while the plus formula gives $m_5=a^2/4$.
For each mass pattern, the plus and minus formulas are distinct when $a,b>0$.
\end{proof}

\begin{lemma}\label{lem:d3-census}
Let $m$ lie on one of the four families.
\begin{alphalist}
  \item If $a\ne b$, exactly $108$ boundary points have $G_3=G_4=0$: two of the eight sign choices for
  $(D_{13},D_{15},D_{35})$ give $\Omega_1=0$, two of those for $(D_{24},D_{25},D_{45})$
  give $\Omega_2=0$, and $\Pi=0$ exactly when $D_{14}=D_{23}$ (on $E^\pm_{12}$) or
  $D_{12}=D_{34}$ (on $E^\pm_{14}$), for $27$ of the $81$ choices of cube roots.
  \item If $a=b$, exactly $180$ boundary points have $G_3=G_4=0$: those with $D_{15}=D_{35}=-2D_{13}$,
  $D_{25}=D_{45}=-2D_{24}$, and $D_{12}=D_{34}$ or $D_{14}=D_{23}$ ($45$ choices of cube roots).
\end{alphalist}
\end{lemma}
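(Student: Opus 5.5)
The census is a finite count: conditions are imposed sign by sign on the boundary parametrization of Lemma~\ref{lem:d3-fibre}(a), which has $2^6$ square-root choices and $3^4$ cube-root choices. By Lemmas~\ref{lem:d3-order3} and~\ref{lem:d3-order4}, $G_3=0$ is equivalent to $\Omega=0$, and $G_4=0$ to $\Pi=0$; both prefactors are nonzero at positive masses. As in the proof of Corollary~\ref{cor:d3-order3}, the terms of $\Omega_1$ are real and those of $\Omega_2$ purely imaginary, so $\Omega=0$ means $\Omega_1=\Omega_2=0$. These two conditions involve only the square roots $(D_{13},D_{15},D_{35})$ and $(D_{24},D_{25},D_{45})$, while $\Pi$ involves only the cube roots. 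The count therefore factorizes as (number of solutions of $\Omega_1=0$) $\times$ (number for $\Omega_2=0$) $\times$ (number of cube-root choices with $\Pi=0$).

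For the square roots, I write $\Omega_1=\alpha\epsilon_1+\beta\epsilon_2+\gamma\epsilon_3$, with $\alpha,\beta,\gamma>0$ the moduli computed in Corollary~\ref{cor:d3-order3} and signs $\epsilon_j=\pm1$. A vanishing signed sum of three positive numbers requires one modulus to equal the sum of the other two. The admissible sign vectors are then exactly one vector and its negative, provided no second relation among the moduli holds. I plan to substitute the explicit family $m_5=a^3b^3/(c|a^3\pm b^3|)$ into the moduli and identify which modulus is the sum. Only a degenerate case, one modulus zero or two additive relations at once, could produce more than two solutions. For $a\ne b$, I would check that two different relations would force $a=b$, giving exactly two solutions each for $\Omega_1$ and $\Omega_2$. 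For $a=b$, with $m=(1,1,1,1,\tfrac14)$ up to scale, direct evaluation gives the relation $D_{15}=D_{35}=-2D_{13}$, and similarly for $\Omega_2$. That again gives two sign choices each.

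For the cube roots, I use the cube identities stated after Lemma~\ref{lem:d3-order4}. On $E^\pm_{12}$ we have $m_1=m_2$, so $m_1D_{14}$ and $m_2D_{23}$ are cube roots of the same number, and the factor $m_1D_{14}-m_2D_{23}$ vanishes iff $D_{14}=D_{23}$. The other factor $m_1D_{12}-m_4D_{34}$ cannot vanish when $m_1\ne m_4$, since the cubes differ. This leaves $3$ choices for $(D_{14},D_{23})$ times $9$ free choices for the others, i.e.\ $27$. The case of $E^\pm_{14}$ is symmetric. When $a=b$ both factors can vanish, and inclusion--exclusion gives $27+27-9=45$. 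Multiplying yields $2\cdot2\cdot27=108$ for $a\ne b$ and $2\cdot2\cdot45=180$ for $a=b$.

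The main obstacle is the uniqueness claim for the sign solutions of $\Omega_1=0$ and $\Omega_2=0$ along the families. It requires showing that, on each family, the moduli satisfy exactly one triangle-degeneracy relation. I would verify this by substituting the factorization of $N_1$ from the proof of Lemma~\ref{lem:d3-curves}: each factor corresponds to one specific signed relation, and the two factors are simultaneously zero only when $a=b$.
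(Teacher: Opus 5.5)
Your proposal is correct and follows the paper's proof. The structure is the same: the count factorizes as $2\cdot2\cdot27$ (resp.\ $2\cdot2\cdot45$), and $\Pi$ is handled by the same cube identities with inclusion--exclusion at $a=b$.

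The one difference is in counting the sign triples. The paper substitutes the family's value of $m_5$ into $\Omega_1$ and argues directly on the signs of the two resulting coefficients of $a^3$ and $b^3$. You instead use that a vanishing signed sum of three positive moduli fixes the signs up to a global sign.

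The step you flag as the main obstacle is automatic. Two degeneracy relations holding at once would force one of the strictly positive moduli to vanish. So no check involving $a=b$ or the factors of $N_1$ is needed, and those two factors in fact never vanish simultaneously for $a,b>0$.
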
}

{\begin{proof}
Exchanging $2$ and $4$ together with $z$ and $w$ reduces the proof to
$E_{12}^{\epsilon}$, $\epsilon\in\{1,-1\}$.
Write $A_0=a^3$, $B_0=b^3$, $\nu=\operatorname{sgn}(A_0+\epsilon B_0)$,
and set $D_{13}=\sigma$, $D_{15}=s_1c/b$, $D_{35}=s_2c/a$.
After division by a nonzero real factor, $\Omega_1=0$ becomes
\[
 (\sigma\nu+s_1)A_0+(\epsilon\sigma\nu+s_2)B_0=0.
\]
For $\epsilon=1$, any nonzero coefficients have the same sign, so the equation
forces both coefficients to vanish. For $\epsilon=-1$, cancellation between two nonzero
coefficients would require $A_0=B_0$, which is excluded on the minus family.
There are therefore exactly two real sign triples. Applying the same argument
to $D_{24}/i,D_{25}/i,D_{45}/i$ gives exactly two imaginary sign triples.
If $a\ne b$, the first factor of $\Pi$ cannot vanish because the cubes of
$a^2D_{12}$ and $b^2D_{34}$ are $a^4/2$ and $b^4/2$.
Consequently $\Pi=0$ exactly when $D_{14}=D_{23}$, leaving $3^3=27$ cube-root
choices and hence $4\cdot27=108$ boundary points.

At $a=b$, the sign conditions reduce to
$D_{15}=D_{35}=-2D_{13}$ and $D_{25}=D_{45}=-2D_{24}$, again giving four choices.
Each equality $D_{12}=D_{34}$ or $D_{14}=D_{23}$ admits $27$ cube-root choices,
and their intersection admits $9$. Thus their union gives
$4(27+27-9)=180$ boundary points; the intersection gives $4\cdot9=36$.
\end{proof}

}\subsection{Orders five and six}

The fifth-order coefficient excludes the unequal-mass points of the four families.
At their intersection, the sixth-order coefficient leaves $36$ boundary points.
We construct the corresponding analytic germs explicitly and record the identity
that will exclude positive points on their algebraic curves.

\begin{lemma}[computer-assisted]\label{lem:d3-order5}
Let $m$ lie on one of the four families with $a\ne b$.  Then $G_5\neq0$ at each of the $108$
boundary points of Lemma~\ref{lem:d3-census}(a).  At $a=b$, $G_5=0$ at all $180$ boundary points of
Lemma~\ref{lem:d3-census}(b).
\end{lemma}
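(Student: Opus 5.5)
The plan is to compute $G_5$ by the same exact-symbolic procedure used for Lemmas~\ref{lem:d3-order3} and~\ref{lem:d3-order4}, now restricted to the four families. First I parametrize each family rationally enough for exact arithmetic. On $E^\pm_{12}$ take $m=(a^2,a^2,b^2,b^2,m_5)$ with $m_5=a^3b^3/(c\,|a^3\pm b^3|)$. The other pair $E^\pm_{14}$ follows by the symmetry exchanging bodies $2,4$ together with $z,w$, as in Lemma~\ref{lem:d3-census}. To remove the radical $c=(2(a^2+b^2))^{1/2}$, I set $b=\tau a$ and use scaling to put $a=1$. Then $c$ and the square roots $D_{15},D_{35},D_{25}/i,D_{45}/i$ lie in a quadratic extension of $\mathbb Q(\tau)$. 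The signs of $a^3\pm b^3$ are handled by splitting into $\tau<1$ and $\tau>1$.

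Next I use the $108$ surviving boundary points from Lemma~\ref{lem:d3-census}(a). These are fixed by two sign triples for $\Omega_1=0$, two for $\Omega_2=0$, and the cube-root condition $D_{14}=D_{23}$. The remaining cube roots are free, and I treat them as symbols reduced modulo their cubic equations.

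At each point I compute the following in order:
\begin{itemize}
\item $X_2=-(\Lambda J_0)^{-1}\Lambda F^2$;
\item $X_3$ from the order-three part of $\Lambda F$ (note $F^1=0$, so $F^3$ is the only forcing);
\item $G_5$ from the displayed formula
\[
G_5=\ell\bigl(F^5+\partial F^3\cdot X_2+\partial F^2\cdot X_3+\partial^2F^0[X_2,X_3]\bigr).
\]
\end{itemize}
The inverse of $\Lambda J_0$ exists by Lemma~\ref{lem:d3-fibre}(b), and its triangular elimination order gives explicit formulas. I expect $G_5$ to factor, like $G_3$ and $G_4$, as a fixed vector times a scalar $\Xi(\tau)$ built from the crossed inverse distances and the cube roots.

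The main obstacle is showing $\Xi\ne0$ for every $\tau\in(0,\infty)\setminus\{1\}$ and every admissible cube-root choice. I would first reduce $\Xi$ modulo the relations $D_{14}=D_{23}$, $\Omega_1=\Omega_2=0$ and the cube equations. The hope is to isolate a factor such as $(a^3-b^3)$ or $(a^2-b^2)$ times an expression that can be bounded away from zero by a modulus argument, comparing the arguments of the terms as in the $\kappa_3$ argument of Lemma~\ref{lem:d2-L5}. If that fails, I would take the norm over the cube-root extension. This gives a polynomial in $\tau$ with rational coefficients, and a Sturm sequence or Bernstein-coefficient check would show it has no positive root other than $\tau=1$; any compact range left over would be covered by interval arithmetic.

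The second assertion comes from the same symbolic computation specialized to $\tau=1$, that is, $m\propto(1,1,1,1,\tfrac14)$. There I substitute the $180$ boundary points of Lemma~\ref{lem:d3-census}(b), with $D_{15}=D_{35}=-2D_{13}$, $D_{25}=D_{45}=-2D_{24}$, and $D_{12}=D_{34}$ or $D_{14}=D_{23}$. I then check that both components of $G_5$ reduce to zero exactly. I expect this to follow from the factor $(a^3-b^3)$ or $(a-b)$ that appears in $\Xi$ on the family, or else from the extra symmetry of the equal outer masses that the $36$-point intersection reflects.
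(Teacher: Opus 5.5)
Your plan matches the paper's proof: an exact computation of the fifth-order jet on $E^\pm_{12}$, with $E^\pm_{14}$ obtained from the exchange of bodies $2,4$ together with $z\leftrightarrow w$, an exact check of vanishing at $a=b$, and a nonvanishing argument over all cube-root choices at $a\ne b$. In the paper the $z$-component of $G_5$ reduces to a nonzero factor times $(a-b)L_\epsilon(\xi_1,\xi_2)$, which is linear in $\xi_1=D_{12}/D_{14}$ and $\xi_2=D_{34}/D_{14}$. Its nine values are shown nonzero exactly as in your first option: the imaginary part forces the real cube roots, and a real inequality finishes. The product of these values, which up to sign and a monomial factor is $(a-b)^3(a+b)^3P_{66}(a,b)$, plays the role of your norm fallback and carries the conclusion over to $E^-_{12}$ via $b\mapsto -b$.
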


{The fifth-order calculation gives the following identities. On $E^+_{12}$,
where $D_{14}=D_{23}$, the $z$-component of $G_5$ is a nonzero factor times
$(a-b)L_+(\xi_1,\xi_2)$, where $\xi_1=D_{12}/D_{14}$,
$\xi_2=D_{34}/D_{14}$ and
\[
 L_\epsilon(\xi_1,\xi_2)
 =a^2F_{4,\epsilon}\xi_1-a^{-2}b^4F_{4,\epsilon}\xi_2
       -b^2(a^2-b^2)F_{2,\epsilon},
\]
\[
 F_{4,\epsilon}=a^4+\epsilon a^3b+a^2b^2+\epsilon ab^3+b^4,
 \qquad F_{2,\epsilon}=a^2+\epsilon ab+b^2,
 \qquad \epsilon\in\{1,-1\},
\]
with $\xi_1^3=(a^2+b^2)/(2a^2)$ and
$\xi_2^3=(a^2+b^2)/(2b^2)$.
The product of the nine values of $L_+$ is, up to sign and a nonzero monomial
factor, $(a-b)^3(a+b)^3P_{66}(a,b)$.
On $E^-_{12}$, the calculation gives $G_5=0\Rightarrow P_{66}(a,-b)=0$
at each of the $108$ boundary points. All factors removed in obtaining this implication
are regular and nonzero for $a,b>0$, $a\ne b$.
These identities are the computer-assisted part of the argument.
Their required nonvanishing follows from the elementary estimates below.

We treat both signs together. Interchanging $a,b$ and $\xi_1,\xi_2$ multiplies $L_\epsilon$
by the nonzero factor $-a^2/b^2$. We may therefore assume $a>b>0$ and write
\[
 r=b/a\in(0,1),\quad c_0=\left(\frac{1+r^2}{2}\right)^{1/3},\quad
 f_{4,\epsilon}=1+\epsilon r+r^2+\epsilon r^3+r^4,\quad
 f_{2,\epsilon}=1+\epsilon r+r^2.
\]
Writing $\xi_1=c_0\omega^j$ and $\xi_2=c_0r^{-2/3}\omega^k$,
$\omega=e^{2\pi i/3}$, gives
\[
 a^{-6}L_\epsilon=A_\epsilon\omega^j-B_\epsilon\omega^k-C_\epsilon,
 \quad A_\epsilon=c_0f_{4,\epsilon},\quad
 B_\epsilon=A_\epsilon r^{10/3},\quad
 C_\epsilon=r^2(1-r^2)f_{2,\epsilon}.
\]
All three coefficients are positive, with $A_\epsilon>B_\epsilon$. Since
\[
 \operatorname{Im}(A_\epsilon\omega^j-B_\epsilon\omega^k-C_\epsilon)
   =A_\epsilon\operatorname{Im}\omega^j-B_\epsilon\operatorname{Im}\omega^k,
 \qquad \operatorname{Im}\omega^j\in\{0,\sqrt3/2,-\sqrt3/2\},
\]
vanishing of the imaginary part forces $j=k=0$. We therefore compare the real
coefficients. For the plus sign, $c_0>r^2$ and
$f_{4,+}>f_{2,+}$. For the minus sign, we have
\[
 \frac{f_{4,-}}{f_{2,-}}=\frac{1+r^5}{1+r^3}>r^{4/3},\qquad c_0>r^{2/3},
\]
since
\[
 1+r^5-r^{4/3}-r^{13/3}
 =(1-r^{4/3})(1-r^{13/3})+r^5-r^{17/3}>0.
\]
In both cases, therefore, $c_0f_{4,\epsilon}>r^2f_{2,\epsilon}$, and
\[
 A_\epsilon-B_\epsilon
 =c_0f_{4,\epsilon}(1-r^{10/3})
 >r^2f_{2,\epsilon}(1-r^2)=C_\epsilon.
\]
This excludes the remaining real-root case, so all nine values of $L_\epsilon$ are nonzero.
The product identity for $L_+$ gives $P_{66}(a,b)\ne0$.
Replacing $b$ by $-b$ in the same identity gives $P_{66}(a,-b)\ne0$ for $a\ne b$.
The fifth-order identities thus exclude both families when $a\ne b$.

}
{Exchanging bodies $2$ and $4$ together with $z$ and
$w$ maps the chart at $m$ onto the chart at $(m_1,m_4,m_3,m_2,m_5)$, $E^\pm_{12}$ onto
$E^\pm_{14}$ and the $108$ boundary points onto the $108$ boundary points; the relabelling $(13)(24)$ exchanges $a$
and $b$, accounting for the minus families with $a<b$.
At $a=b$, the calculation gives zero for both components of $G_5$ at the boundary points of $E^+_{12}$.
The exchange then gives vanishing at all $180$ boundary points.}

\begin{lemma}[computer-assisted]\label{lem:d3-order6}
Let $m=(\frac12,\frac12,\frac12,\frac12,\frac18)$.  Of the $180$ boundary points of
Lemma~\ref{lem:d3-census}(b), the $144$ boundary points with $D_{12}\ne D_{34}$ or $D_{14}\ne D_{23}$ have
$G_6\neq0$.
\end{lemma}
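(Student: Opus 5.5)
The computation will follow the scheme already used for $G_3,G_4,G_5$. We work at the single mass vector $m=(\frac12,\frac12,\frac12,\frac12,\frac18)$, which is the intersection point of $E^+_{12}$ and $E^+_{14}$ after scaling $a=b$ (here $a^2=\frac12$, so $m_5=a^2/4=\frac18$). At this vector all boundary data of Lemma~\ref{lem:d3-fibre} are explicit algebraic numbers. The cube roots satisfy $D_{12}^3=D_{34}^3=D_{14}^3=D_{23}^3=1$, so every side inverse distance is a cube root of unity. The square roots $D_{13},D_{24},D_{15},D_{25},D_{35},D_{45}$ lie in $\mathbb Q(i,\sqrt2)$, and in the $180$ points of Lemma~\ref{lem:d3-census}(b) they obey $D_{15}=D_{35}=-2D_{13}$ and $D_{25}=D_{45}=-2D_{24}$. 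We therefore work over the number field $K=\mathbb Q(\omega,i,\sqrt2)$, where every computation is exact.

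First we solve for the jet of the analytic graph $X(t)$ of Lemma~\ref{lem:d3-graph} through order four. At each order the coefficient $X_k$ solves $\Lambda J_0X_k=-\Lambda(\text{known terms})$, and $\Lambda J_0$ is invertible by Lemma~\ref{lem:d3-fibre}(b). We already know $X_1=0$. Since $G_2,\dots,G_5$ vanish at these points by Lemmas~\ref{lem:d3-order3}, \ref{lem:d3-order4} and~\ref{lem:d3-order5}, the coefficients $X_2,X_3,X_4$ are well defined. Then
\[
G_6=\ell\bigl(F^6+\partial F^4X_2+\partial F^3X_3+\partial F^2X_4+\partial^2F^0[X_2,X_4]+\tfrac12\partial^2F^0[X_3,X_3]+\tfrac12\partial^2F^2[X_2,X_2]+\tfrac16\partial^3F^0[X_2,X_2,X_2]\bigr),
\]
with every derivative evaluated at $X_0$. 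The term $X_5$ drops out because $\ell J_0=0$. We evaluate this expression symbolically in the sign and root parameters and reduce modulo the relations $D_{kl}^3=1$ and $D_{kl}^2=\text{const}$.

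We do not treat all $180$ points by brute force. We use the symmetries to cut down the cases. The cube-root action, which multiplies $t$ and the appropriate coordinates by $\omega^j$ as in Diagram~2, relates some cube-root choices. The exchange of bodies $2$ and $4$ together with $z\leftrightarrow w$ swaps the conditions $D_{12}=D_{34}$ and $D_{14}=D_{23}$. Complex conjugation combined with $t\mapsto-t$ acts on the sign choices. With these reductions the $144$ points split into finitely many orbits. On each orbit representative we expect $G_6$ to factor as a nonzero constant times an expression in $\xi=D_{12}/D_{34}$ and $\eta=D_{14}/D_{23}$ of the form $(1-\xi)\cdot(\dots)+(1-\eta)\cdot(\dots)$. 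If that holds, $G_6$ vanishes exactly when $\xi=\eta=1$. This matches the count: the $36$ points with both equalities are the survivors.

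The main obstacle is proving nonvanishing when exactly one of the two equalities fails. In that case $G_6$ is a sum of several cube roots of unity with coefficients in $\mathbb Q(i,\sqrt2)$, and some terms could cancel. The first attempt is an exact reduction in $K$: compute $G_6$ as an element of $K$ and check that it is nonzero, which is decidable by expressing it in a $\mathbb Q$-basis. A fallback is the argument used for Lemma~\ref{lem:d3-order5}: separate real and imaginary parts and compare moduli. Both verifications belong to the computer-assisted part recorded in Appendix~\ref{app:verification}. Finally, we transfer the conclusion back through the symmetry orbits. Each symmetry map preserves the vanishing of $G_6$ up to a nonzero factor, so nonvanishing on the representatives gives $G_6\ne0$ at all $144$ points.
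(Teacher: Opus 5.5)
Your plan is essentially the paper's: its proof is likewise an exact computation of the order-six obstruction along the implicit graph $X(t)$ at each of the $144$ boundary points. The paper carries it out modulo primes $\mathfrak p$ of $\mathbb Z[\frac12,i,\omega]$ above $13$ and $37$, using $19$ rows whose minor is a unit mod $\mathfrak p$, so a nonzero residue certifies $G_6\ne0$ with no symmetry reduction or factorization ansatz (all boundary data lie in that ring, so $\sqrt2$ is not needed). Two small corrections: it is $X_6$ that $\ell J_0=0$ removes, while $X_5$ never enters because $X_1=0$; and a form $(1-\xi)(\cdots)+(1-\eta)(\cdots)$ would only show vanishing at $\xi=\eta=1$, so your direct exact evaluation, not that ansatz, is what proves nonvanishing at the $144$ points.
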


{The lemma is proved by computing the jets modulo two primes
(Appendix~\ref{app:verification}). All data lie in $R=\mathbb Z[\frac12,i,\omega]$,
where $\omega=e^{2\pi i/3}$. Choose a prime $\mathfrak p$ of $R$ above $p\in\{13,37\}$.
Since $p\equiv1\pmod{12}$, we have $R/\mathfrak p=\mathbb F_p$.

If a germ passed through a boundary point, Lemma~\ref{lem:d3-graph} would give $F(t,X(t))\equiv0$.
At each order, $X_k$ would then be uniquely determined by the order-$k$ equations,
using any $19$ rows with an invertible minor, and all $21$ residuals would vanish.
We use $19$ rows whose minor is a unit modulo $\mathfrak p$.
The calculation produces a nonzero order-$6$ residual at each of the $144$ boundary points.
Changing $\mathfrak p$ acts on the boundary points through $\mathrm{Gal}(\mathbb Q(i,\omega)/\mathbb Q)$,
with $\omega\mapsto\omega^2$ and $i\mapsto-i$. This action preserves the set of $144$ boundary points,
and the two primes give the same excluded set.}

{\begin{lemma}\label{lem:d3-twisted}
Let $m=(\frac12,\frac12,\frac12,\frac12,\frac18)$, $\alpha,\beta\in\{1,\omega,\omega^2\}$ and
$\varepsilon,\eta\in\{\pm1\}$.  Let $(P,Q,Y)$ be the analytic solution near $\tau=0$ of
\[
(1-P)(1+Q)=\alpha\tau^2,\qquad (1+P)(1-Q)=\beta\tau^2,\qquad 4PQY^2+1=0
\]
with $(P,Q,Y)(0)=(1,1,\eta i/2)$; the Jacobian there is $\pm16i$.  Then
\[
z=\tfrac1\tau(1,P,-1,-P,0),\qquad w=\tfrac1\tau(1,-Q,-1,Q,0),
\]
\[
\delta_{12}=\delta_{34}=\alpha,\ \ \delta_{14}=\delta_{23}=\beta,\ \
\delta_{13}=\tfrac{\varepsilon\tau}2,\ \ \delta_{15}=\delta_{35}=-\varepsilon\tau,\ \
\delta_{24}=\tau Y,\ \ \delta_{25}=\delta_{45}=-2\tau Y
\]
solves \eqref{eq:system} with the gauge $z_{13}=w_{13}$.  In the chart, with $t=-\tau/2$, it is an
analytic solution of $F=0$ through the boundary point with $D_{12}=D_{34}=\alpha$, $D_{14}=D_{23}=\beta$,
$D_{13}=-\varepsilon$ and $D_{24}=-\eta i$.  These $36$ boundary points are the boundary points of
Lemma~\ref{lem:d3-census}(b) with $D_{12}=D_{34}$ and $D_{14}=D_{23}$.  Along these germs
\begin{equation}\label{eq:d3-identities}
\delta_{15}+2\delta_{13}=\delta_{35}+2\delta_{13}=\delta_{25}+2\delta_{24}=\delta_{45}+2\delta_{24}=0 .
\end{equation}
\end{lemma}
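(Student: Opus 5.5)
This is a direct verification followed by a chart comparison; no asymptotic argument is needed. We proceed in four steps.

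\emph{Step 1: the auxiliary system.} At $\tau=0$, $(P,Q,Y)=(1,1,\eta i/2)$ satisfies all three equations: $(1-P)(1+Q)=0$, $(1+P)(1-Q)=0$ and $4Y^2+1=0$. In the variables $(P,Q,Y)$ the Jacobian is
\[
\begin{pmatrix}-(1+Q)&1-P&0\\1-Q&-(1+P)&0\\4QY^2&4PY^2&8PQY\end{pmatrix},
\]
with determinant $(1+Q)(1+P)\cdot 8PQY=4\cdot 8\cdot\eta i/2=\pm16i$ at the base point. Since the equations involve only $\tau^2$, the implicit function theorem gives an analytic even solution $P,Q,Y$.

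\emph{Step 2: the distance constraints.} We check $\delta_{kl}^2z_{kl}w_{kl}=1$ pair by pair.
\begin{itemize}
\item $z_{12}w_{12}=\tau^{-2}(P-1)(-Q-1)=\alpha$, and $\delta_{12}^2=\alpha^2$, so $\delta_{12}^2z_{12}w_{12}=\alpha^3=1$.
\item The pairs $34$, $14$ and $23$ are handled in the same way, giving $\alpha^3$ or $\beta^3$.
\item $z_{13}=w_{13}=-2/\tau$, so $z_{13}w_{13}=4/\tau^2$ and $\delta_{13}^2=\tau^2/4$; the gauge also holds.
\item $z_{24}w_{24}=(-2P/\tau)(2Q/\tau)=-4PQ/\tau^2$, and $\delta_{24}^2=\tau^2Y^2=-\tau^2/(4PQ)$.
\item For body $5$, $z_{k5}w_{k5}=z_kw_k$, which equals $1/\tau^2$ for $k=1,3$ and $-PQ/\tau^2$ for $k=2,4$. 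This matches $\delta_{15}^2=\tau^2$ and $\delta_{25}^2=4\tau^2Y^2=-\tau^2/(PQ)$.
\end{itemize}

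\emph{Step 3: the coordinate equations.} Body $5$ sits at the origin. Its equation becomes $\sum_k m_k\delta_{k5}^3z_k=0$, which holds because $\delta_{15}=\delta_{35}$, $\delta_{25}=\delta_{45}$, $z_3=-z_1$, $z_4=-z_2$ and the outer masses are equal; the $w$-equation is the same. The symmetry $k\mapsto k+2$ sends $(z,w)\mapsto(-z,-w)$ and preserves every $\delta$, so it suffices to check bodies $1$ and $2$. For body $1$ ($z$-component, after multiplying by $\tau$), with $m_k=\frac12$ and $m_5=\frac18$:
\[
1=\tfrac12\bigl[\alpha^3(1-P)+\beta^3(1+P)+\tfrac{\varepsilon^3\tau^3}{8}\cdot 2\bigr]+\tfrac18(-\varepsilon\tau)^3.
\]
Since $\alpha^3=\beta^3=1$, the bracket is $2+\varepsilon\tau^3/4$, and the total is $1+\varepsilon\tau^3/8-\varepsilon\tau^3/8=1$. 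The $w$-component (with $Q$ in place of $P$) works identically. Body $2$ reduces to the $P$-equation, using $\delta_{24}^3=\tau^3Y^3$ and $\delta_{25}^3=-8\tau^3Y^3$. The terms $\frac12\tau^3Y^3\cdot 2P$ and $\frac18\cdot(-8\tau^3Y^3)P$ cancel, leaving $P=\frac12[\alpha^3(P-1)+\beta^3(P+1)]=P$. The first two equations defining $(P,Q,Y)$ are not needed for this identity; they enter only through the distance constraints. This cancellation at $m_5=\frac18$ is exactly where the mass value is used, and it is the main point to get right.

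\emph{Step 4: chart identification, counting, and the identities.} With $t=-\tau/2$ we have $1/z_{13}=-\tau/2=t$, consistent with Proposition~\ref{prop:d3-cover}(iv). The side $D_{kl}=\delta_{kl}$ are $\alpha,\beta$, and $D_{13}=\delta_{13}/t=-\varepsilon$, $D_{24}=\tau Y/t=-2Y\to-\eta i$. The coordinates $A=tz_1=-\frac12$ and $D-C$ match Lemma~\ref{lem:d3-fibre}(a), and by Lemma~\ref{lem:d3-graph} the solution is analytic in $t$ through that boundary point. There are $9\cdot2\cdot2=36$ choices, and they coincide with the boundary points in Lemma~\ref{lem:d3-census}(b) having both equalities, since that intersection has $4\cdot9=36$ elements. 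The relations $D_{15}=D_{35}=-2D_{13}$ hold by construction. Finally, \eqref{eq:d3-identities} is immediate from the defining formulas, for example $\delta_{15}+2\delta_{13}=-\varepsilon\tau+\varepsilon\tau=0$.
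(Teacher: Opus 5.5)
Your verification is correct and follows the paper's proof essentially line by line: the Jacobian $16\eta i$, the ten distance constraints, the gauge, the body equations (side forces summing to the position, diagonal and body-$5$ forces cancelling at $m_5=\frac18$), the count, and the identities \eqref{eq:d3-identities}. The one step that is not properly justified is the analyticity of the chart curve at $t=0$. Lemma~\ref{lem:d3-graph} cannot supply it. Its uniqueness clauses apply only to chart curves already known to be analytic, or at least to converge to $X_0$. You never examine $x,y,u,v$, which are the only chart coordinates with apparent poles.

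The missing observation is the one the paper makes, and your Step~1 already contains the ingredient (evenness gives $P-1,\,Q-1=O(\tau^2)$). With $t=-\tau/2$ one has $x=-y=2(1-P)/\tau^2$ and $u=v=2(1-Q)/\tau^2$. The first two defining equations turn these into $x=2\alpha/(1+Q)$ and $u=2\beta/(1+P)$. Both are analytic, and their limits $x\to\alpha$, $y\to-\alpha$, $u,v\to\beta$ are exactly those required by Lemma~\ref{lem:d3-fibre}(a). The remaining coordinates are also explicit: $A=C=-\frac12$, $B=\frac12$, $D=Q/2$, $\hat z_5=\hat w_5=0$, $D_{15}=D_{35}=2\varepsilon$ and $D_{25}=D_{45}=4Y$. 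So every chart coordinate is explicitly analytic in $t$, $F=0$ extends to $t=0$ by continuity, and the limit is the stated boundary point. These formulas also give $D_{25}=D_{45}=-2D_{24}$, which you need alongside $D_{15}=D_{35}=-2D_{13}$ for membership in the set of Lemma~\ref{lem:d3-census}(b).
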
}

{\begin{proof}
The derivative of the three defining equations with respect to $(P,Q,Y)$ at
$(\tau,P,Q,Y)=(0,1,1,\eta i/2)$ is
\[
 \begin{pmatrix}-2&0&0\\0&-2&0\\-1&-1&4\eta i\end{pmatrix},
\]
with determinant $16\eta i\ne0$. The implicit function theorem gives the
claimed analytic solution. Since the equations depend on $\tau$ only through
$\tau^2$, uniqueness makes $P,Q,Y$ even, and $P-1,Q-1=O(\tau^2)$.

We verify the body equations first. For either coordinate $q=z$ or $q=w$,
the two adjacent side forces at outer body $k$ sum to $q_k$:
the cubes of their inverse distances are $1$, the adjacent masses are $1/2$,
and the adjacent bodies have opposite positions.
At body $1$ or $3$, the diagonal force and the force from body $5$ cancel,
since
\[
 \tfrac12(\varepsilon\tau/2)^3(2q_k)
       +\tfrac18(-\varepsilon\tau)^3q_k=0.
\]
At body $2$ or $4$ the same cancellation is
\[
 \tfrac12(\tau Y)^3(2q_k)+\tfrac18(-2\tau Y)^3q_k=0.
\]
Pairing opposite bodies also proves the equation for body $5$.
For the distance equations, the pairs $12,34$ reduce to
$\alpha^2(1-P)(1+Q)/\tau^2=1$, the pairs $14,23$ to
$\beta^2(1+P)(1-Q)/\tau^2=1$, and $24,25,45$ to $-4PQY^2=1$;
the remaining distance equations hold identically. The gauge follows from
$z_{13}=w_{13}=-2/\tau$.
Finally $t=-\tau/2$ makes the chart coordinates analytic, since the only
apparent quotients include $(P-1)/\tau^2$ and $(Q-1)/\tau^2$.
The displayed inverse distances give \eqref{eq:d3-identities} identically
and yield precisely the $9\cdot4=36$ boundary points stated in the lemma.
\end{proof}

}\subsection{Exclusion of curves containing a positive point}

We now pass from the local analytic classification to the original algebraic curve.
The inverse-distance identities hold along its germ and hence, by irreducibility,
on the whole curve. Their signs at a positive point give the required contradiction.

\begin{theorem}\label{thm:d3}
Let $m\in\Rp^5$, let $\Gamma\subset A_m$ be an irreducible curve, and let $\gamma$ be a Diagram-3
germ on $\Gamma$, with the bodies numbered as in Diagram~3.  Then $m_1=m_2=m_3=m_4=4m_5$, and
\eqref{eq:d3-identities} holds identically on $\Gamma$.  In particular $\Gamma$ contains no
positive point, and Theorem~\ref{thm:exclusion}(c) holds.
\end{theorem}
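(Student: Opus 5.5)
The plan is to chain the local results of this section into a classification of the boundary limit. Then the identities \eqref{eq:d3-identities} are propagated from the explicit germs to the whole curve by irreducibility.

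First I fix the germ. By Proposition~\ref{prop:d3-cover}(iv) its chart coordinates converge to a boundary point $X_0$ on the slice. By Lemma~\ref{lem:d3-graph} the germ then coincides with the analytic graph $X(t)$, and $G\equiv0$. In particular $G_3=G_4=G_5=G_6=0$ at $X_0$.
\begin{itemize}
\item Proposition~\ref{prop:d3-first} gives (29).
\item Corollary~\ref{cor:d3-order3} gives $N_1=N_2=0$.
\item Lemma~\ref{lem:d3-order4} with the cube identities following it forces $m_1=m_2$ or $m_1=m_4$.
\item Lemma~\ref{lem:d3-curves} places $m$ on one of $E^\pm_{12},E^\pm_{14}$.
\item Lemma~\ref{lem:d3-order5} excludes $a\ne b$, so $m$ is proportional to $(1,1,1,1,\tfrac14)$, that is, $m_1=m_2=m_3=m_4=4m_5$.
\end{itemize}

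Common mass scaling is a symmetry of the normalized system after rescaling coordinates. It multiplies $\delta$ by a constant, so \eqref{eq:d3-identities} is preserved. I may therefore take $m=(\tfrac12,\tfrac12,\tfrac12,\tfrac12,\tfrac18)$. Then Lemma~\ref{lem:d3-census}(b) and Lemma~\ref{lem:d3-order6} leave only the $36$ boundary points with $D_{12}=D_{34}$ and $D_{14}=D_{23}$.

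At each of these $36$ points, Lemma~\ref{lem:d3-twisted} provides an analytic solution of $F=0$ through $X_0$. Its parameter is $\tau=-2t$, so it is analytic in $t$. By the uniqueness clause of Lemma~\ref{lem:d3-graph}, it equals $X(t)$. Hence our germ is, after reparametrization, one of the twisted germs, and \eqref{eq:d3-identities} holds along $\gamma$.

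The identities translate correctly between the chart and the original coordinates. The chart uses the rotated configuration, and the $\delta_{kl}$ are rotation invariant. The renormalization by $\lambda$ also leaves the $\delta_{kl}$ unchanged.

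Next I pass to the whole curve. The four functions $\delta_{15}+2\delta_{13}$, and the other three, are regular on the irreducible curve $\Gamma$. Each vanishes on the analytic branch $\gamma$, which is an infinite subset of $\Gamma$. So each vanishes on the Zariski closure of that branch, which is $\Gamma$. Hence \eqref{eq:d3-identities} holds identically on $\Gamma$.

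At a positive point all $\delta_{kl}>0$, so $\delta_{15}+2\delta_{13}>0$. Such a point therefore cannot lie on $\Gamma$. This proves Theorem~\ref{thm:exclusion}(c).

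The renumbering and the $z\leftrightarrow w$ exchange used to put the germ in the standard form are covered by the finite-cover construction of Section~\ref{sec:ak}. That construction preserves positivity of points on the image curve.

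The main obstacle I expect is the uniqueness step. I must make sure the germ's chart parameter $t=1/z_{13}$, after ramification, is compatible with the analytic parameter of Lemma~\ref{lem:d3-twisted}. Lemma~\ref{lem:d3-graph} handles this: it shows the germ satisfies $Y(s)=X(t(s))$, so only the uniqueness of $X(t)$ in $t$ itself is needed. The twisted solution is analytic in $t$ and passes through the same $X_0$, so uniqueness applies directly.
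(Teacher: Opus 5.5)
Your proposal is correct and follows the paper's proof essentially step for step. The steps are: the chain (29), $N_1=N_2=0$, $\Pi=0$, the four families $E^\pm_{12},E^\pm_{14}$, exclusion of $a\ne b$ at order five, normalization by mass scaling, reduction to the $36$ boundary points at order six, identification with the explicit germs of Lemma~\ref{lem:d3-twisted} through the uniqueness clause of Lemma~\ref{lem:d3-graph}, and propagation of \eqref{eq:d3-identities} to $\Gamma$ by irreducibility. One ordering point: record (29) from Proposition~\ref{prop:d3-first} before invoking Lemma~\ref{lem:d3-graph}, since that lemma is stated for the boundary points of Lemma~\ref{lem:d3-fibre}, which presuppose (29).
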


\begin{proof}
{Exchanging $z$ and $w$ and renumbering if necessary puts $\gamma$ in the form
described at the start of this section. These operations preserve the conclusions.
Proposition~\ref{prop:d3-cover} gives a limiting chart point $X_0$ satisfying
\eqref{eq:d3-boundary} and $B-A=1$. Relation (29) holds by Proposition~\ref{prop:d3-first},
so Lemma~\ref{lem:d3-fibre} identifies $X_0$ as a boundary point. Lemma~\ref{lem:d3-graph} then gives
$G\equiv0$ at $X_0$.

The third-order condition, from Lemma~\ref{lem:d3-order3} and
Corollary~\ref{cor:d3-order3}, gives $N_1(m)=N_2(m)=0$.
The fourth-order condition in Lemma~\ref{lem:d3-order4} gives $m_1=m_4$ or $m_1=m_2$.
Thus $m$ belongs to one of the four families in Lemma~\ref{lem:d3-curves}.
Lemmas~\ref{lem:d3-census}(a) and~\ref{lem:d3-order5} exclude every point with $a\ne b$,
leaving $a=b$, or equivalently $m_1=m_2=m_3=m_4=4m_5$.

The covariance of \eqref{eq:system} under
$(m,z,w,\delta)\mapsto(\kappa^3m,\kappa z,\kappa w,\kappa^{-1}\delta)$, $\kappa>0$,
preserves both Diagram-3 germs and \eqref{eq:d3-identities}.
We may therefore normalize to $m=(\frac12,\frac12,\frac12,\frac12,\frac18)$.
Lemmas~\ref{lem:d3-census}(b) and~\ref{lem:d3-order6} now imply $D_{12}=D_{34}$ and
$D_{14}=D_{23}$ at $X_0$.
By Lemmas~\ref{lem:d3-twisted} and~\ref{lem:d3-graph}, the germ $\gamma$ coincides in the chart
with the analytic germ of Lemma~\ref{lem:d3-twisted} through $X_0$. Since renormalization leaves $\delta_{kl}$ unchanged,
\eqref{eq:d3-identities} holds along $\gamma$.

Each expression in \eqref{eq:d3-identities} is a regular function on the irreducible curve
$\Gamma$. It vanishes at infinitely many points of $\gamma$, and hence vanishes identically
on $\Gamma$. A positive point would have $\delta_{15}+2\delta_{13}>0$, a contradiction.}
\end{proof}

{Theorem~\ref{thm:d3} classifies all Diagram~3 germs: the mass vector is
proportional to $(4,4,4,4,1)$ and, up to the stated symmetries, the germ is one of the
$36$ analytic families in Lemma~\ref{lem:d3-twisted}. The modified Roberts continuum
provides an explicit member of this classification.}

\begin{remark}[The modified Roberts continuum]\label{rem:d3-roberts}
The family of Roberts~\cite{Roberts} has the following signed-distance variant in \cite{AK}.
Let $m=(\frac12,\frac12,\frac12,\frac12,\frac18)$, proportional to $(4,4,4,4,1)$
\cite[Rem.~8, p.~583]{AK}.  The curve
\[
z=(a,b,-a,-b,0),\quad w=(a,-b,-a,b,0),\quad b^2=a^2-1,
\]
\[
\delta_{12}=\delta_{23}=\delta_{34}=\delta_{14}=1,\ \ \delta_{13}=\tfrac1{2a},\ \
\delta_{24}=\tfrac{i}{2b},\ \ \delta_{15}=\delta_{35}=-\tfrac1a,\ \ \delta_{25}=\delta_{45}=-\tfrac ib
\]
is the modified Roberts continuum \cite[p.~543]{AK}.  It solves \eqref{eq:system} with the gauge
$z_{13}=w_{13}$, $U=I=1$ on it, and \eqref{eq:d3-identities} holds, so it has no positive point.
Since $w_{12}/z_{12}=(a+b)^2$ on it, the rotation by $\mu=a+b$ gives an irreducible curve
$\Gamma_R\subset A_m$.  For $0<a<1$ and $b\in i\R$, $|\mu|=1$ and the points are real, with
$\delta_{13}>0>\delta_{15}=\delta_{35}$.
{Before this rotation, the end $a\to\infty$ with $b\sim a$ is precisely the analytic
germ of Lemma~\ref{lem:d3-twisted} with
$(\alpha,\beta,\varepsilon,\eta)=(1,1,1,1)$, $\tau=1/a$ and $P=Q=b/a$.
In this balanced $z_{13}=w_{13}$ representative its strokes are
$z_{14}$, $z_{23}$, $w_{12}$, $w_{34}$, so the corresponding end of $\Gamma_R$ is a Diagram-3 germ}
(Roberts' continuum at infinity \cite[\S7.2]{AK}); the sides keep length $1$,
$r_{13},r_{24},r_{k5}\to\infty$, $p_{12}=a^2(1-a^2)$ is unbounded, and (29) holds.
{Thus $\Gamma_R$ gives a real arc with signed inverse distances,
$U=I=1$, and an unbounded four-product at its Diagram~3 end. The identity
$\delta_{15}+2\delta_{13}=0$ describes its sign constraint directly.}
\end{remark}

\section{The triangle in Diagrams 4 and 5}\label{sec:d45}

{Diagrams~4 and~5 contain the same isolated $zw$-triangle $345$ with uncircled vertices.
We first identify its limiting shape and mass relation. We then use the forces between
this triangle and the two remaining bodies to exclude Diagram~4 and to prove
$\Delta\to0$ in Diagram~5. These are the two local conclusions required by
Theorem~\ref{thm:exclusion}.}

\begin{proposition}[Relation (23)]\label{prop:d45-first}
Let $m\in\Rp^5$ and let $\gamma$ be a Diagram 4 or Diagram 5 germ, renormalized as in
Lemma~\ref{lem:germ} so that $\mathcal Z$ and $\mathcal W$ have the same least order $-2\nu<0$.
Let $z_c$ and $w_c$ be the centres of mass of $T=\{3,4,5\}$.  Then the limits
\[
  \zeta_k=\lim_{t\to0}t^{-\nu}(z_k-z_c),\qquad \eta_k=\lim_{t\to0}t^{-\nu}(w_k-w_c),\qquad
  D_{kl}=\lim_{t\to0}t^{\nu}\delta_{kl}\qquad(k,l\in T)
\]
exist and solve the zero-multiplier triangle system of Section~\ref{sec:d1}, and
$\sum_km_k\zeta_k=\sum_km_k\eta_k=0$.  Hence $(\zeta,\eta,D)$ is as in
Lemma~\ref{lem:d1-shape}, and $Q(m_3,m_4,m_5)=0$, that is, the masses satisfy (23).
\end{proposition}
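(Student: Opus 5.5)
The plan is to read the orders of the triangle quantities from the diagram and then extract the triangle block of \eqref{eq:system} at leading order. Diagrams~4 and~5 contain the isolated $zw$-triangle $345$ with strokes in both colours and no circles at $3,4,5$. First I would use Rule~2e (Fact~\ref{fact:rules}): the three $z$-strokes $34,35,45$ are maximal, so $Z_{kl}=z_{kl}\delta_{kl}^3$ has the least order $-2\nu$ for each edge, and likewise $W_{kl}$ has order $-2\nu$. Multiplying these gives $\ord(z_{kl}w_{kl}\delta_{kl}^6)=-4\nu$. The distance constraint $\delta_{kl}^2z_{kl}w_{kl}=1$ then gives $\ord\delta_{kl}^4=-4\nu$, so $\ord\delta_{kl}=-\nu$ and $\ord z_{kl}=\ord w_{kl}=-2\nu+3\nu=\nu$. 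Consequently $t^{-\nu}z_{kl}$, $t^{-\nu}w_{kl}$ and $t^\nu\delta_{kl}$ have order $0$ and converge to nonzero limits. Since $z_k-z_c=M_T^{-1}\sum_l m_l(z_k-z_l)$, the limits $\zeta_k,\eta_k$ exist. The relations $\sum_km_k\zeta_k=\sum_km_k\eta_k=0$ hold by construction, and the distance constraints pass to the limit as $\chi_{kl}=0$.

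Next I would take the $z$-equation of a triangle body $k$, subtract the mass-weighted average over $T$, and multiply by $t^{2\nu}$. This makes the internal triangle force $\sum_{l\in T}m_l\delta_{kl}^3(z_k-z_l)$ equal to $\phi_k(t)$, which converges to $\phi_k(\zeta,D)$. The remaining terms are $t^{2\nu}(z_k-z_c)$, which has order $3\nu>0$, and the crossed forces from bodies $1,2$ (net of their mass-centred average). I must show the crossed forces are $o(t^{-2\nu})$. Because $3,4,5$ carry no circles and no stroke to $1$ or $2$, every component $Z_{ik}$ with $i\in\{1,2\}$, $k\in T$ has order strictly greater than $-2\nu$. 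The crossed force on $k$ is a finite combination of the $Z_{ik}$, so it is $o(t^{-2\nu})$. Similarly $t^{2\nu}z_k$ tends to $0$, because uncircled bodies have $\ord z_k>-2\nu$. This yields $\phi=0$ in the limit, and the same argument in $w$ gives $\psi=0$. Hence $(\zeta,\eta,D)$ solves the zero-multiplier triangle system, and Lemma~\ref{lem:d1-shape} gives $Q(m_3,m_4,m_5)=0$, which is relation (23).

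The main obstacle is bookkeeping the normalization of Lemma~\ref{lem:germ}. The renormalization $(z,w)\mapsto(az,a^{-1}w)$ must make both least orders equal to $-2\nu$, and the resulting $\nu$ must be positive; the positivity comes from $Z\to\infty$ and Rule~2e giving edge orders $\nu$. I would also check that the circle and stroke statements transfer to strict order inequalities for the crossed components, as Lemma~\ref{lem:germ} asserts. If $W_{ik}$ could tie with $-2\nu$, a $w$-stroke would be drawn, which the diagram excludes, so the inequality is strict. A Puiseux ramification makes $t^\nu$ meaningful.
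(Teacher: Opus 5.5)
Your argument is correct and is essentially the paper's proof: the triangle orders $\ord z_{kl}=\ord w_{kl}=\nu$ and $\ord\delta_{kl}=-\nu$, then the $t^{2\nu}$-rescaled triangle equations, where the internal forces are the only terms of order $-2\nu$ because no stroke leaves $T$, giving $\phi=\psi=0$, $\chi=0$ and Lemma~\ref{lem:d1-shape}. The one difference is that you derive the triangle orders directly from $\ord Z_{kl}=\ord W_{kl}=-2\nu$ and $Z_{kl}W_{kl}=\delta_{kl}^4$ instead of citing Estimates~1--2 of \cite{AK}; this is fine and needs only the definition of a $zw$-edge (Lemma~\ref{lem:germ}), not Rule~2e, but drop the word ``maximal'', since in \cite{AK} a maximal stroke has $z_{kl}\approx\epsilon^{-2}$, whereas here $z_{kl}\approx\epsilon$.
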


\begin{proof}
{The sides of $T$ are $zw$-edges. Estimate~1 and the left equality case of Estimate~2,
as stated in Fact~\ref{fact:rules} and justified for germs by Lemma~\ref{lem:germ}, give
$\ord z_{kl}=\ord w_{kl}=\nu$ and $\ord\delta_{kl}=-\nu$ for $k,l\in T$.
Since $z_k-z_c$ and $w_k-w_c$ are linear combinations of these differences, the limits exist.
The choice of the centres of mass makes both sums $\sum_km_k\zeta_k$, $\sum_km_k\eta_k$ vanish.

For $k\in T$, consider the equation $z_k=\sum_{l\ne k}m_l\delta_{kl}^3(z_k-z_l)$.
The terms with $l\in T$ have order $-2\nu$. Every other term has larger order:
$z_k$ because $k$ is uncircled, and the terms with $l\notin T$ because $T$ is isolated
and no $z$-stroke leaves it. Multiplication by $t^{2\nu}$ followed by the limit $t\to0$
therefore gives $\phi_k=0$. The same argument gives $\psi_k=0$.
The distance equations within $T$ give $\chi_{kl}=0$.
Lemma~\ref{lem:d1-shape} applies, and for positive masses the relation $Q=0$ is equivalent to (23).}
\end{proof}

\subsection{Diagram 4}\label{sec:d4}

Diagram~4 consists of the isolated $zw$-triangle $345$, whose vertices are not circled, and the
isolated $z$-stroke $12$ with $z$-circles at $1$ and $2$; there are no other strokes or circles
\cite[\S7.4, p.~567, and Fig.~11]{AK}.
The common triangle gives relation~(23), as in \cite[\S8.4]{AK}.
We shall combine it with two force balances to exclude the diagram. We use the notation of
Section~\ref{sec:d1}: $P=\{1,2\}$, $T=\{3,4,5\}$, $i\in P$, $k,l\in T$, $M_P$, $M_T$, $Z_P$, $Z_T$,
$W_P$, $W_T$, the equations $E^z_j,E^w_j$, the triangle functions $\phi_k,\psi_k,\chi_{kl}$, the
torque $T_P$ of Lemma~\ref{lem:d1-syzygies} and the orbit $O$ of Lemma~\ref{lem:d1-mb}.
Renumbering inside $P$ or inside $T$ does not change the diagram.

\begin{theorem}\label{thm:d4}
Let $m\in\Rp^5$.  No curve in $A_m$ carries a Diagram~4 germ.
\end{theorem}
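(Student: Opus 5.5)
We plan to run the Diagram~1 argument again on the wall $\lambda=\mu/2$ of Remark~\ref{rem:d1-wall}. At this wall the order comparison of Theorem~\ref{thm:d1} is an equality, so the contradiction has to come from comparing leading coefficients rather than orders.

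\textbf{Step 1 (chart).} We first set up a chart. Renormalize as in Proposition~\ref{prop:d45-first}, so that every side of $T$ has $\ord z_{kl}=\ord w_{kl}=\nu$ and $\ord\delta_{kl}=-\nu$. For the pair, Rule~1e and Estimate~2 make $12$ a maximal $z$-stroke with $r_{12}\approx1$, so $\ord z_{12}=-2\nu$ and $\ord w_{12}=2\nu$. Next we repeat the argument of Lemma~\ref{lem:d1-orders} and of the torque Lemma~\ref{lem:d1-central} in this setting. From it we want:
\begin{itemize}
\item[(i)] $\ord z_{ik}=-2\nu$;
\item[(ii)] $\ord w_{ik}=\nu$, which gives $\ord\delta_{ik}=\nu/2$ (this follows if no triangle body sits anomalously close to the $w$-centre, which is what the torque argument rules out);
\item[(iii)] the centre-of-mass offsets $Z_P,W_P$ have large positive order, via Lemma~\ref{lem:d1-syzygies}.
\end{itemize}
This places every Diagram~4 germ in a weighted chart analogous to \eqref{eq:d1-chart}. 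By Proposition~\ref{prop:d45-first}, its triangle limit $y_0$ lies on the orbit $O$ of Lemma~\ref{lem:d1-mb}, and $Q(m_3,m_4,m_5)=0$.

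\textbf{Step 2 (first-order balance).} Next we expand the rescaled triangle equations. The term $\phi_k$ equals the crossed $z$-force $t^{\cdot}X^z_k$ from the distant pair, plus $z_k$, plus higher terms. The crossed force has a uniform part. It cancels against the motion of the triangle centre of mass through $Z_T=-M_PZ_P/M_T$. The next part is linear in $\zeta_k$ and of the tidal form $\sum_im_i\delta_{ik}^3(\zeta_k-\ldots)$. We match the lowest-order term of $\phi$ against $F'(y_0)\dot y$, using the rank-$5$ Jacobian of Lemma~\ref{lem:d1-mb}. The right-hand side must lie in the image of $F'(y_0)$. This is the ``force balance at order one''. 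Because the tidal term is $c\,\zeta$ with a scalar $c$ determined by the leading pair data $A_i,\eta$ and $d_i$, it should be absorbed by a torus direction. The balance then fixes the correction $\dot y$ modulo $\ker F'(y_0)$, and it also determines the scalar $c$ explicitly in terms of $m_1,m_2$ and the crossed signs.

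\textbf{Step 3 (third order; the main obstacle).} We then compare two expressions for $U_T(y(t))$ at the common order $t^{3\mu/2}$.
\begin{itemize}
\item[(a)] The virial identity of Lemma~\ref{lem:d1-virial}, with $\chi=0$, gives
\[
 U_T=\sum_km_k\zeta_k\psi_k=t^{3\mu/2}\bigl(I_T(y_0)+c'\,I_T(y_0)+o(1)\bigr),
\]
where $c'$ records the crossed $w$-forces. At this wall these forces now contribute at the same order instead of being negligible.
\item[(b)] A second-order Taylor expansion of $U_T$ at $y_0$ replaces the bound of Lemma~\ref{lem:d1-estimate} by an exact leading coefficient. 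Since $U_T$ and $dU_T$ vanish on $O$, this coefficient is $\tfrac12\,\mathrm{Hess}\,U_T[\dot y,\dot y]$, where $\dot y$ is the displacement found in Step~2. By the differential identity of Lemma~\ref{lem:d1-virial}, it equals $-\tfrac12\sum m_k(\dot\psi_k\dot\zeta_k+\dot\phi_k\dot\eta_k)$, which is again proportional to $c\,I_T(y_0)$.
\end{itemize}
Equating the two coefficients and dividing by $I_T(y_0)\ne0$ should give a scalar identity among $c$, $c'$ and the pair masses. We expect this identity to be impossible for positive $m_1,m_2$, for instance because it forces $m_1^3+m_2^3=0$ through the same combination that appears in Lemma~\ref{lem:d1-torque}.

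This coefficient bookkeeping is the step we expect to be hardest. Any terms in the kernel directions of $O$ must be tracked carefully, because they are invisible to the Jacobian yet can enter the Hessian. We would handle them by a Lyapunov--Schmidt projection onto $\ker F'(y_0)$, which is spanned by the two torus generators. Using the equivariance of $\phi$, $\psi$ and $I_T$ under $\mathbb T$, we would check that these terms contribute only through $I_T$.
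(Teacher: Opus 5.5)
Your Step~1 is essentially what the paper does (Lemmas~\ref{lem:d4-val}, \ref{lem:d4-central} and Proposition~\ref{prop:d4-cover}). Steps~2 and~3, however, rest on a wrong picture of the crossed forces and never reach a contradiction.

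In Diagram~4 one has $\ord w_{ik}=\ord w_{kl}$, because the outer bodies sit at the $w$-centre of the triangle up to higher order. At the limit this gives $D_{ik}^2A_i\eta_k=-1$, so the leading crossed $z$-force on vertex $k$ is $\sum_i m_iD_{ik}^3A_i=-\eta_k^{-1}\sum_i m_iD_{ik}$. This depends on $k$ through $\eta_k$ and the crossed signs. It is not a ``uniform part plus tidal term $c\zeta$''. Its solvability in Step~2 is its pairing with the cokernel functional $\delta\phi\mapsto\sum_k m_k\eta_k\,\delta\phi_k$ of the triangle Jacobian, namely $m_1S_1+m_2S_2=0$ with $S_i=\sum_k m_kD_{ik}$. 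This is just the torque, and it is equivalent to the paper's first balance $f_1=f_2$ (Lemma~\ref{lem:d4-obstr}(a)). It is a genuine constraint, not something absorbed by a torus direction. In fact a forcing $c\zeta$ would violate it, since it pairs to $cI_T\ne0$.

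In Step~3, the crossed $w$-forces enter $\psi$ only at order $\frac32(\lambda+\mu)=\frac94\mu$, so there is no $c'$ term at order $\frac32\mu$. The Hessian term is $\frac12\mathrm{Hess}\,U_T[\dot y,\dot y]=-\frac14\sum_k m_k\hat\phi_k\dot\eta_k$, since $\dot\psi=0$. This is a quadratic form in the leading forcing $\hat\phi$, not a multiple of $cI_T(y_0)$. Equating it with $I_T(y_0)$ gives at best a relation between the square of the forcing amplitude and $I_T(y_0)$. You have not shown that this relation is incompatible with $f_1=f_2$ at positive masses. Nothing in it produces $m_1^3+m_2^3$ either: that combination came from a triangle body at the $w$-centre, which Step~1 already excludes. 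So the decisive step is missing.

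The idea you are missing is that the second obstruction lives in the \emph{pair} equations.
\begin{itemize}
\item The boundary fibre $\mathcal B_4$ is smooth of dimension~$4$, and the cokernel of $dF_0$ contains the pair functional $v_P$.
\item The Lyapunov--Schmidt reduction (Lemma~\ref{lem:d4-ls}) gives $f_1=f_2$ at order one and nothing new at order two.
\item At order three, the $t^2B_i$ correction in the crossed distance equations gives $v_P(\Omega_3)=\frac32(A_1-A_2)\frac{B_1}{A_1}\Theta$. Hence $\Theta=A_2\Sigma_2-A_1\Sigma_1=0$, which says the leading $z$-forces on the two outer bodies are equal.
\item Writing $D_{1k}=s_{1k}g_k$ and $D_{2k}=is_{2k}g_k/\nu$, the two balances become $\nu^3X_1+iY_1=0$ and $\nu X_3=iY_3$.
\item Lemma~\ref{lem:d4-census} rules these out by a sign analysis together with strict convexity of $t\mapsto t^{5/3}$.
\end{itemize}
A triangle-only second-order comparison would need an equally explicit positivity argument, and your proposal does not supply one.
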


{To prove part~(a) of Theorem~\ref{thm:exclusion} for $r=4$, we proceed in two steps.

First, we show that the chart contains every germ. A torque identity rules out a triangle
body at the triangle's $w$-centre (Lemma~\ref{lem:d4-central}).
Every germ then has a limit in the boundary fibre $\mathcal B_4$,
which is smooth near that limit (Proposition~\ref{prop:d4-cover} and Lemma~\ref{lem:d4-fibre}).

Second, we derive two necessary balances of the leading forces.
The Lyapunov--Schmidt reduction in Lemma~\ref{lem:d4-ls} expresses them as successive
obstructions to the existence of a germ. The first obstruction requires equal leading
$w$-forces on the two outer bodies. The second vanishes identically, and the third requires
equal leading $z$-forces (Lemma~\ref{lem:d4-obstr}).
{Positivity makes these two balances incompatible}
(Lemma~\ref{lem:d4-census}).}

\subsubsection{The chart}\label{d4-chart}
{The invariance of \eqref{eq:system}, apart from the gauge, under
$(z,w,\delta)\mapsto(cz,c^{-1}w,\delta)$, $c\in\C^*$ allows us to use
$z_{34}=w_{34}$ in place of $z_{12}=w_{12}$. With parameter $t$, write}
\[
  (\zeta_3,\zeta_4,\zeta_5)=(m_5u,\ m_5v,\ -m_3u-m_4v),\qquad
  (\eta_3,\eta_4,\eta_5)=(m_5p,\ m_5q,\ -m_3p-m_4q),
\]
so that $\sum_km_k\zeta_k=\sum_km_k\eta_k=0$, and
\begin{equation}\label{eq:d4-chart}
\begin{gathered}
  z_i=t^{-4}A_i,\qquad w_i=t^{4}B_i\qquad(i\in P),\\
  z_k=t^{-1}\hat Z+t^{2}\zeta_k,\qquad w_k=t^{5}\hat W+t^{2}\eta_k\qquad(k\in T),\\
  \delta_{12}=D_{12},\qquad\delta_{ik}=t\,D_{ik},\qquad\delta_{kl}=t^{-2}D_{kl}.
\end{gathered}
\end{equation}
{The chart has twenty coordinates $x$: $A_1,A_2,B_1,B_2,\hat Z,\hat W,u,v,p,q$
and the ten $D_{jl}$. The substitution is invertible for $t\ne0$.
For convenience, set}
\[
\begin{gathered}
  \Sigma_i=\sum_km_kD_{ik}^3,\qquad f_i=\sum_km_kD_{ik}^3\eta_k,\\
  R^z_i=A_i-m_{i'}D_{12}^3(A_i-A_{i'}),\qquad R^w_i=B_i-m_{i'}D_{12}^3(B_i-B_{i'}),
\end{gathered}
\]
where $\{i,i'\}=P$.  Substituting \eqref{eq:d4-chart} gives
\begin{equation}\label{eq:d4-rows}
\begin{aligned}
  t^{4}E^z_i&=R^z_i-\textstyle\sum_km_kD_{ik}^3\,(t^3A_i-t^6\hat Z-t^9\zeta_k),\\
  t^{-4}E^w_i&=R^w_i+t\,f_i-\textstyle\sum_km_kD_{ik}^3\,(t^3B_i-t^4\hat W),\\
  t^{4}E^z_k&=-\phi_k+t^3\hat Z+t^6\zeta_k-\textstyle\sum_im_iD_{ik}^3\,(t^6\hat Z+t^9\zeta_k-t^3A_i),\\
  t^{4}E^w_k&=-\psi_k+t^6\eta_k+t^9\hat W-\textstyle\sum_im_iD_{ik}^3\,(t^9\eta_k+t^{12}\hat W
  -t^{11}B_i),
\end{aligned}
\end{equation}
{The distance equations have left-hand sides
$D_{12}^2(A_1-A_2)(B_1-B_2)-1$, the crossed expressions
$D_{ik}^2(t^3\hat Z+t^6\zeta_k-A_i)(\eta_k-t^2B_i+t^3\hat W)-1$, and $\chi_{kl}$.
The gauge becomes $t^{-2}(z_{34}-w_{34})=(\zeta_4-\zeta_3)-(\eta_4-\eta_3)$.
Together these give $21$ polynomials, denoted by $F(t,x)=\sum_{j\ge0}t^jF_j(x)$.
Their zero set at the boundary is the \emph{boundary fibre} $\mathcal B_4=\mathcal B_4(m)$,
the zero set of $F_0$. Equation~\eqref{eq:d4-rows} shows that $F_1$ vanishes except in the two
rows $t^{-4}E^w_i$, where it equals $f_i$. Likewise, $F_2$ vanishes except in the crossed
distance equations, where it equals $D_{ik}^2A_iB_i$.}

\subsubsection{Representation of germs in the chart}\label{d4-cover}
{Let $\gamma$ be a Diagram~4 germ at $m\in\Rp^5$.
Normalize it as in Lemma~\ref{lem:germ}, so that $Z\approx W\approx\epsilon^{-2}$.
After a base change of the branch parameter $\tau$, all coordinates are Laurent series
in $\tau$, with $\ord_\tau Z=\ord_\tau W=-4q$ for an integer $q\ge1$.
Set $t=\tau^q$ and measure orders in units of $\ord t$.
The maximal components of $\mathcal Z$ and $\mathcal W$ then have order $-4$,
and $\ord\epsilon=2$.}

\begin{lemma}\label{lem:d4-val}
Along $\gamma$, for $i\in P$ and $k\ne l$ in $T$:
\begin{romanlist}
  \item $\ord z_k>-4$ and $\ord z_{ik}=-4$;
  \item $\ord z_1=\ord z_2=\ord z_{12}=-4$, $\ord\delta_{12}=0$, $\ord w_{12}=4$,
  $\ord w_1,\ord w_2\ge4$, and hence $\ord W_T=\ord W_P\ge4$;
  \item $\ord z_{kl}=\ord w_{kl}=2$ and $\ord\delta_{kl}=-2$.
\end{romanlist}
\end{lemma}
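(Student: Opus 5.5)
The plan follows the proof of Lemma~\ref{lem:d1-orders} and Proposition~\ref{prop:d45-first}, now in units where the maximal components of $\mathcal Z$ and $\mathcal W$ have order $-4$ and $\ord\epsilon=2$. I would first establish (iii), then (ii), and derive (i) last from the circle structure.

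For (iii), the sides of the triangle $T$ are $zw$-edges. Estimate~1 and the left equality case of Estimate~2 (Fact~\ref{fact:rules}, valid for germs by Lemma~\ref{lem:germ}) give $z_{kl}\approx w_{kl}\approx\epsilon$. In the present units this means $\ord z_{kl}=\ord w_{kl}=2$. The constraint $\delta_{kl}^2z_{kl}w_{kl}=1$ then gives $\ord\delta_{kl}=-2$.

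For (ii), the pair $\{1,2\}$ is an isolated component of the $z$-diagram with circled vertices. Rule~1e makes $12$ a maximal $z$-stroke, and the circles give $\ord z_1=\ord z_2=\ord Z_{12}=-4$. Estimate~2 gives $r_{12}\approx1$, so $\ord\delta_{12}=0$. Then $\ord z_{12}=\ord Z_{12}-3\ord\delta_{12}=-4$ and $\ord w_{12}=4$. To obtain $\ord w_i\ge4$, I would use the first identity of Lemma~\ref{lem:d1-syzygies}:
\[
M_PW_P=\sum_{i,k} m_im_k\delta_{ik}^3(w_i-w_k).
\]
Since $w_i-W_P$ is a multiple of $w_{12}$, it suffices to show that every crossed term has order $>4$. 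Each crossed term has order $3\ord\delta_{ik}+\ord w_{ik}=\tfrac12(-3\ord z_{ik}-\ord w_{ik})=\tfrac12(12-\ord w_{ik})$ once $\ord z_{ik}=-4$ is known. There is no $z$-stroke $ik$, so $\ord Z_{ik}>-4$, where $\ord Z_{ik}=\tfrac12(\ord z_{ik}-3\ord w_{ik})=\tfrac12(-4-3\ord w_{ik})$. This gives $\ord w_{ik}<4/3<4$, so each crossed term has order $>4$. Alternatively, as in Lemma~\ref{lem:d1-orders}(iii), Proposition~1 of \cite{AK} with $z,w$ exchanged gives $w_i\preccurlyeq w_{12}$ directly, and I would cite whichever route is cleaner. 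Finally, the mass-centre identity $M_PW_P+M_TW_T=0$ yields $\ord W_T=\ord W_P\ge4$.

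For (i), bodies of $T$ carry no $z$-circle, so $\ord z_k>-4=\ord z_i$. This gives $\ord z_{ik}=-4$, which is the input used in (ii). The order of argument is therefore (iii), then (i), then (ii), with no circularity.

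I expect the main obstacle to be $\ord w_i\ge4$. The crossed-term estimate needs the bound $\ord w_{ik}<4$, which comes from the absence of the strokes $ik$. The subtlety is that these orders must be justified without yet knowing whether some $w_k$ sits near $W_T$. That difficulty is deferred to Lemma~\ref{lem:d4-central}. Here only upper bounds on $\ord w_{ik}$ are needed, so the argument should close.
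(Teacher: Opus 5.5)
Your proof is correct. It agrees with the paper on (i), (iii) and the first half of (ii), and differs only in how you obtain $\ord w_1,\ord w_2\ge4$.

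The paper first uses (iii) to see that $12$ is the only maximal $z$-edge. Estimate~1 with the colours exchanged then gives $w_{12}\prec w_{ik}$. Finally it applies \cite[Prop.~1]{AK}, as in the proof of \cite[Prop.~4]{AK}, to place the origin in the $w$-cluster of bodies $1,2$, which gives $w_i\preccurlyeq w_{12}$.

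You instead use the pair force balance $M_PW_P=\sum m_im_k\delta_{ik}^3(w_i-w_k)$ from Lemma~\ref{lem:d1-syzygies}. Your bound $\ord w_{ik}<4$ comes from the absence of $z$-strokes $ik$ rather than from Estimate~1. This is the alternative the paper itself records in Lemma~\ref{lem:d1-orders}(iii). Your route uses only the equations and the definition of strokes: it needs neither \cite[Prop.~1]{AK} nor part (iii). It also gives the strict bound $\ord W_P>4$, which anticipates the bound $\ord W_P\ge5$ in Proposition~\ref{prop:d4-cover}. The paper's route stays inside the Albouy--Kaloshin clustering formalism, which also yields the clustering scheme itself.

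One slip to fix: since $Z_{ik}^2=(z_{ik}w_{ik}^3)^{-1}$, the correct formula is $\ord Z_{ik}=\tfrac12(-\ord z_{ik}-3\ord w_{ik})=\tfrac12(4-3\ord w_{ik})$, not $\tfrac12(\ord z_{ik}-3\ord w_{ik})$. The absence of a $z$-stroke $ik$ then gives exactly $\ord w_{ik}<4$. Your intermediate claim $\ord w_{ik}<4/3$ is false in general: Proposition~\ref{prop:d4-cover} later shows $\ord w_{ik}=2$. Since you only use $\ord w_{ik}<4$, the estimate $\ord(\delta_{ik}^3w_{ik})=\tfrac12(12-\ord w_{ik})>4$ and the rest of the argument are unaffected. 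Also, $\ord Z_{12}=-4$ comes from $12$ being a $z$-stroke, not from the circles.
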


\begin{proof}
{Part (i) follows because the bodies in $T$ have no $z$-circles, whereas $1$ and $2$ do.

For (ii), the $z$-circles at bodies $1,2$ give $\ord z_1=\ord z_2=-4$.
The isolated component $\{1,2\}$ of the $z$-diagram has circled vertices, so Rule~1e
\cite[p.~546]{AK} makes its stroke $12$ maximal. The right equality case of Estimate~2
\cite[p.~548]{AK} gives $r_{12}\approx1$, $z_{12}\approx\epsilon^{-2}$ and
$w_{12}\approx\epsilon^2$. The triangle edges are not maximal: part (iii), which is independent
of (ii), gives $z_{kl}\approx\epsilon$ on them. Thus $12$ is the only maximal $z$-edge.
Estimate~1 \cite[p.~548]{AK}, with $z$ and $w$ exchanged, now gives $w_{12}\prec w_{ik}$
for $i\in P$, $k\in T$.

Apply \cite[Prop.~1, p.~559]{AK}, with $z$ and $w$ exchanged, to bodies $1$ and $2$,
as in the proof of \cite[Prop.~4, p.~564]{AK}. It places the origin in the $w$-cluster
formed by $w_1$ and $w_2$, whose size is $w_{12}$. Hence $w_1,w_2\preccurlyeq w_{12}$.
The relation $M_PW_P+M_TW_T=0$ gives the assertion about the two centres.

For (iii), the triangle edges are $zw$-edges. Estimate~1 and the left equality case
of Estimate~2 give $r_{kl}\approx z_{kl}\approx w_{kl}\approx\epsilon$.}
\end{proof}

{\begin{lemma}\label{lem:d4-torque}
Let $0<e<2$.  In \eqref{eq:d4-chart} replace $\hat Z$ by $t^{-3e/2}\hat Z$, $\hat W$ by
$t^{-e/2}\hat W$, $(\eta_3,\eta_4,\eta_5)$ by $(t^e\omega,\ m_5q,\ -m_3m_5^{-1}t^e\omega-m_4q)$ and
$D_{i3}$ by $t^{-e/2}d_i$ $(i\in P)$.  Then
\[
  T_P=t^{1-e/2}\Big(m_3\sum_{i\in P}m_id_i\,\big(d_i^2A_i\omega\big)+\sum_{g\in G}c_g\,t^g\Big),
\]
where the $c_g$ are polynomials in the chart coordinates and
$G=\{\tfrac12e,\ \tfrac32e,\ 3,\ 3-\tfrac32e,\ 5-\tfrac52e,\ 5-e,\ 8-e,\ 8+\tfrac12e\}$; all
elements of $G$ are positive.
\end{lemma}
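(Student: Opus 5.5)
This is the Diagram~4 analogue of Lemma~\ref{lem:d1-torque}, and I will prove it by direct substitution into the torque $T_P=\sum_{i\in P,k\in T}m_im_k\delta_{ik}^3(z_iw_k-w_iz_k)$ of Lemma~\ref{lem:d1-syzygies}. Write $h=M_P/M_T$. The modified chart is \eqref{eq:d4-chart} with $\hat Z\mapsto t^{-3e/2}\hat Z$, $\hat W\mapsto t^{-e/2}\hat W$, the new $\eta$, and $D_{i3}\mapsto t^{-e/2}d_i$. In that chart the body coordinates are
\[
 z_i=t^{-4}A_i,\qquad w_i=t^4B_i,\qquad z_k=t^{-1-3e/2}\hat Z+t^2\zeta_k,\qquad w_k=t^{5-e/2}\hat W+t^2\eta_k .
\]
I first expand the determinant $z_iw_k-w_iz_k$ as a sum of four monomials:
\[
 z_iw_k-w_iz_k=t^{1-e/2}A_i\hat W+t^{-2}A_i\eta_k-t^{3-3e/2}B_i\hat Z-t^{6}B_i\zeta_k .
\]
Here $\hat Z,\hat W$ are the rescaled centre coordinates, and I would not impose the mass-centre shift from the Diagram~1 chart unless \eqref{eq:d4-chart} requires it.

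Next I split according to $k$. For $k=3$, the prefactor is $\delta_{i3}^3=t^{3-3e/2}d_i^3$ and $\eta_3=t^e\omega$. The main term $A_i\eta_3$ then contributes $t^{3-3e/2}\cdot t^{-2}\cdot t^{e}=t^{1-e/2}$ times $m_im_3d_i^3A_i\omega$, which is exactly the displayed leading coefficient $m_3\sum_i m_id_i(d_i^2A_i\omega)$. I factor out $t^{1-e/2}$ and record the relative exponents of the other three monomials. For $k=4,5$, the prefactor is $t^3D_{ik}^3$, and $\eta_k$ has a constant part together with a $t^e$ part (only in $\eta_5$). Multiplying and dividing by $t^{1-e/2}$ gives the remaining relative exponents. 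I expect the collected list to be
\[
 \tfrac12e\ (\text{from }A_i\eta_{4,5}\text{ at order }t^{1}),\quad \tfrac32e\ (\text{the }t^e\text{ part of }\eta_5),\quad 3,\quad 3-\tfrac32e,\quad 5-\tfrac52e,\quad 5-e,\quad 8-e,\quad 8+\tfrac12e .
\]
This should match $G$ once the cancellation of repeated entries is checked. The coefficients $c_g$ are visibly polynomials in $A_i,B_i,\hat Z,\hat W,u,v,q,\omega,d_i,D_{ik}$.

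The main obstacle is bookkeeping: each of the eight exponents has to be matched to a specific monomial, and any monomial whose relative exponent could be nonpositive must be shown to be absent. The delicate candidates are the $A_i\hat W$ terms, where $k=3$ gives relative exponent $3-3e/2+1-e/2-(1-e/2)=3-\tfrac32e$ and $k=4,5$ give $3+1-e/2-(1-e/2)=3$, and the $B_i\hat Z$ terms, which give $5-\tfrac52e$ and $5-e$. The $B_i\zeta_k$ terms give $8-e$ and $8+\tfrac12e$. I will carefully recompute each product $t^{\text{prefactor}}\cdot t^{\text{monomial}}$ against the list to make sure nothing of relative order $\le0$ survives.

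Finally, positivity for $0<e<2$ is immediate from this list. The only elements that could fail are $3-\tfrac32e$, $5-\tfrac52e$ and $5-e$, and these are positive precisely when $e<2$. The remaining elements are positive for every $e>0$.
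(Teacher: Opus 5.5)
Your proposal is correct and follows the paper's proof: you expand $z_iw_k-w_iz_k$ into the same four monomials, multiply by $t^{3-3e/2}d_i^3$ for $k=3$ and by $t^3D_{ik}^3$ for $k=4,5$, and read off the relative exponents $0,\,3-\tfrac32e,\,5-\tfrac52e,\,8-e$ and $\tfrac12e,\,\tfrac32e,\,3,\,5-e,\,8+\tfrac12e$, which are exactly the leading term and $G$. Two small slips: $5-e$ and $8-e$ are positive for $e<5$ and $e<8$ respectively, not ``precisely when $e<2$'' or ``for every $e>0$''; and no cancellation of repeated entries is needed. All elements of $G$ are positive on $0<e<2$, so the conclusion stands.
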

\begin{proof}
After the stated substitutions,
\[
 z_iw_k-w_iz_k
 =t^{-2}A_i\eta_k+t^{1-e/2}A_i\hat W
       -t^{3-3e/2}B_i\hat Z-t^6B_i\zeta_k.
\]
For $k=3$, we have $\eta_3=t^e\omega$ and
$\delta_{i3}^3=t^{3-3e/2}d_i^3$.  The four terms, divided by
$t^{1-e/2}$, have exponents
$0$, $3-3e/2$, $5-5e/2$ and $8-e$, and the first coefficient is
$d_i^3A_i\omega$.
For $k=4,5$, multiply by $t^3D_{ik}^3$ instead.  The constant and
$t^e$ parts of $\eta_k$ give the exponents $e/2$ and $3e/2$;
the last three terms give $3$, $5-e$ and $8+e/2$.
These exponents give the eight elements of $G$, all of which are positive
because $0<e<2$.
\end{proof}

}\begin{lemma}\label{lem:d4-central}
For $k\in T$ let $o_k$ be the coefficient of $t^2$ in $w_k-W_T$ along $\gamma$.  Then
$o_3,o_4,o_5\ne0$.
\end{lemma}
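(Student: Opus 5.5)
The argument mirrors Lemma~\ref{lem:d1-central}: assume some $o_k$ vanishes, show that the torque $T_P$ then has a nonzero leading term, and contradict $T_P=0$ on $A_m$ (Lemma~\ref{lem:d1-syzygies}).

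\textbf{Only one offset can vanish.} Since $\sum_km_k(w_k-W_T)=0$, we have $\sum_km_ko_k=0$. By Lemma~\ref{lem:d4-val}(iii), $\ord w_{kl}=2$, so the $o_k$ do not all vanish. Hence at most one of them is zero. Suppose, for contradiction, that after renumbering $T$ we have $o_3=0\ne o_4,o_5$.

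\textbf{Orders in the degenerate case.} Put $f=\ord(w_3-W_T)>2$ and write $f=2+e$. By Lemma~\ref{lem:d4-val}(ii), $\ord(W_T-w_i)\ge4$. We need $e<2$, i.e.\ $f<4$. To get this, compare with the absence of a $z$-stroke $i3$. Since $\ord z_{i3}=-4$, we have $\ord Z_{i3}=\tfrac12(4-3\ord w_{i3})$, and the requirement $\ord Z_{i3}>-4$ forces $\ord w_{i3}<4$. Therefore $\ord w_{i3}=f$ with $2<f<4$, so $0<e<2$.

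It follows that $\ord\delta_{i3}=-\tfrac12(-4+f)=1-\tfrac e2$, which matches the rescaling $D_{i3}=t^{-e/2}d_i$ in Lemma~\ref{lem:d4-torque}. For $k=4,5$ we have $\ord\delta_{ik}=1$.

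\textbf{Bounds on the centres.} Next I bound $\hat Z$ and $\hat W$ through the first two syzygies of Lemma~\ref{lem:d1-syzygies}, exactly as in Lemma~\ref{lem:d1-central}.

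For the $w$-centre, $M_PW_P=\sum m_im_k\delta_{ik}^3(w_i-w_k)$. Its terms have orders at least $3-\tfrac{3e}{2}+2+e$ and $3+2$, so $\ord W_T\ge5-\tfrac e2$. This is compatible with the replacement $\hat W\to t^{-e/2}\hat W$ at $t^5$.

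For the $z$-centre, $M_TZ_T=\sum m_im_k\delta_{ik}^3(z_k-z_i)$ has orders at least $3-\tfrac{3e}{2}-4$ and $-1$, giving $\ord Z_T\ge-1-\tfrac{3e}{2}$. This matches $\hat Z\to t^{-3e/2}\hat Z$ at $t^{-1}$.

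Consequently the germ is described by the modified chart of Lemma~\ref{lem:d4-torque}, with all coordinates of order at least $0$. The coordinates $A_i$, $d_i$ and $\omega$ (the leading coefficient of $w_3-W_T$, at order $t^{2+e}$) have order exactly $0$.

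\textbf{The distance constraint at $t=0$.} We have $z_3-z_i=-t^{-4}A_i(1+o(1))$ and $w_3-w_i=t^{2+e}\omega(1+o(1))$. The constraint $\delta_{i3}^2z_{i3}w_{i3}=1$ then gives $d_i^2A_i\omega=-1$ in the limit.

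\textbf{The torque contradiction.} By Lemma~\ref{lem:d4-torque}, every exponent in $G$ is positive. Hence
\[
t^{-(1-e/2)}T_P\to -m_3(m_1d_1+m_2d_2).
\]
From the constraints, $(m_1d_1)^2-(m_2d_2)^2=-(m_1^3+m_2^3)/(m_1A_1\omega)$, using $m_1A_1+m_2A_2=0$. The pair sits at the origin's cluster, so $A_2=-m_1A_1/m_2$, as in the pair equations. This quantity is nonzero, so $m_1d_1+m_2d_2\ne0$. That contradicts $T_P\equiv0$.

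\textbf{Main obstacle.} The delicate step is the order bookkeeping. One must confirm that $o_3=0$ really forces $0<e<2$; the upper bound is the one that needs the no-$z$-stroke argument. One must also confirm that the centre bounds put every competing term at strictly positive relative order. The identity $m_1A_1+m_2A_2=0$ at leading order must be justified too: it comes from $M_PZ_P+M_TZ_T=0$ together with $\ord Z_T>-4$.
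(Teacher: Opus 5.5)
Your proposal is correct and follows the paper's proof essentially step for step. You exclude $e\ge2$ (including the case $w_3\equiv W_T$) by the absence of a $z$-stroke $i3$, bound the two centres by the first two syzygies of Lemma~\ref{lem:d1-syzygies}, obtain $d_i^2A_i\omega=-1$ and $m_1A_1+m_2A_2\to0$ at $t=0$, and contradict $T_P=0$ through the nonzero limit $-m_3(m_1d_1+m_2d_2)$ given by Lemma~\ref{lem:d4-torque}. The only cosmetic difference is how you show that at most one $o_k$ vanishes: you use $\sum_km_ko_k=0$, whereas the paper notes directly that the $o_k$ are pairwise distinct, since $o_l-o_k$ is the leading coefficient of $w_{kl}$.
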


\begin{proof}
{Lemma~\ref{lem:d4-val}(iii) identifies $o_l-o_k$ as the leading coefficient
of $w_{kl}$. Thus at most one}
{$o_k$ vanishes.  Renumbering $T$, suppose that $o_3=0$.  Write
$\ord(w_3-W_T)=2+e$ with $e>0$ when $w_3-W_T\not\equiv0$, and put $e=+\infty$ otherwise.}
By Lemma~\ref{lem:d4-val}(ii), $\ord(W_T-w_i)\ge4$.

{There are two cases. If $e\ge2$, then $\ord w_{i3}\ge4$, so
$\ord\delta_{i3}=-\frac12(\ord z_{i3}+\ord w_{i3})\le0$ and
$\ord Z_{i3}=\ord z_{i3}+3\ord\delta_{i3}\le-4$. This would make $Z_{i3}$ a maximal component
of $\mathcal Z$, producing a $z$-stroke between $i$ and $3$, contrary to Diagram~4.

Suppose instead that $0<e<2$. We have $\ord w_{i3}=2+e$ and $\ord\delta_{i3}=1-\frac e2$,
whereas $\ord w_{ik}=2$ and $\ord\delta_{ik}=1$ for $k=4,5$.
The first two identities of Lemma~\ref{lem:d1-syzygies} express $M_PW_P$ and $M_TZ_T$
as sums of crossed terms $m_im_k\delta_{ik}^3(w_i-w_k)$ and $m_im_k\delta_{ik}^3(z_k-z_i)$.
These have orders at least $5-\frac e2$ and $-1-\frac32e$, respectively.
Together with $M_PW_P+M_TW_T=0$, this gives $\ord W_T\ge5-\frac e2$ and
$\ord Z_T\ge-1-\frac32e$.

The modified chart in Lemma~\ref{lem:d4-torque} therefore represents $\gamma$ with all coordinates
of nonnegative order, and with $A_i,\omega,d_i$ of order $0$.
At $t=0$, the distance equations $\delta_{i3}^2z_{i3}w_{i3}=1$ give $d_i^2A_i\omega=-1$,
while $m_1A_1+m_2A_2=t^4(m_1z_1+m_2z_2)=-t^4M_TZ_T$ vanishes.
Lemma~\ref{lem:d4-torque} now shows that $t^{e/2-1}T_P$ tends to $-m_3(m_1d_1+m_2d_2)$.
This limit is nonzero, since}
\[
  (m_1d_1)^2-(m_2d_2)^2=-\frac{m_1^2}{A_1\omega}+\frac{m_2^2}{A_2\omega}
  =-\frac{m_1^3+m_2^3}{m_1A_1\omega}\ne0 .
\]
{This contradicts $T_P=0$ on $A_m$, proved in Lemma~\ref{lem:d1-syzygies}.}
\end{proof}

\begin{proposition}[Coverage]\label{prop:d4-cover}
Let $m\in\Rp^5$ and let $\gamma$ be a Diagram~4 germ.  Then $m$ satisfies (23), and
$\ord Z_T\ge-1$, $\ord W_T\ge5$ along $\gamma$.  Renormalize by $(z,w)\mapsto(cz,c^{-1}w)$ with
$c=(w_{34}/z_{34})^{1/2}$, so that $z_{34}=w_{34}$.  After a base change of $\tau$, every chart
coordinate of \eqref{eq:d4-chart} is a power series in $\tau$; the value $y$ of $x$ at $\tau=0$
lies in $\mathcal B_4(m)$, and $\eta_k(y)\ne0$ for $k\in T$.
\end{proposition}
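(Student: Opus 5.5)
The proof follows the pattern of Proposition~\ref{prop:d1-cover}, with the orders now fixed by Lemma~\ref{lem:d4-val} and the offset nonvanishing supplied by Lemma~\ref{lem:d4-central}.

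\textbf{Step 1: the mass relation.} Relation (23) is immediate from Proposition~\ref{prop:d45-first}, since Diagram~4 contains the isolated $zw$-triangle $345$ with uncircled vertices.

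\textbf{Step 2: crossed distances.} By Lemma~\ref{lem:d4-central}, every $o_k\ne0$. Together with $\ord(W_T-w_i)\ge4$ from Lemma~\ref{lem:d4-val}(ii), this gives $\ord w_{ik}=2$ for all $i\in P$, $k\in T$. Since $\ord z_{ik}=-4$ by Lemma~\ref{lem:d4-val}(i), it follows that $\ord\delta_{ik}=1$.

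\textbf{Step 3: bounds on the centres.} Insert these orders into the first two identities of Lemma~\ref{lem:d1-syzygies}.
\begin{itemize}
\item The crossed terms of $M_PW_P$ have order $3+(\ge2)$, since $\ord(w_i-w_k)=2$. Hence $\ord W_P\ge5$, and $M_PW_P+M_TW_T=0$ gives $\ord W_T\ge5$.
\item The crossed terms of $M_TZ_T$ have order $3-4=-1$, so $\ord Z_T\ge-1$.
\end{itemize}

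\textbf{Step 4: the renormalization.} By Lemma~\ref{lem:d4-val}(iii), $z_{34}$ and $w_{34}$ have the same order $2$. Hence $c=(w_{34}/z_{34})^{1/2}$ has order $0$ (after a base change of $\tau$ to extract the square root). Multiplying by $c$ and $c^{-1}$ therefore preserves every order listed above, and it imposes the gauge $z_{34}=w_{34}$.

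\textbf{Step 5: the chart coordinates.} Express each coordinate of \eqref{eq:d4-chart} through the original variables and check its order:
\begin{itemize}
\item $A_i=t^4z_i$ and $B_i=t^{-4}w_i$ have orders $0$ and $\ge0$, by Lemma~\ref{lem:d4-val}(ii).
\item $\hat Z=t(z_c)$ and $\hat W=t^{-5}w_c$, with $z_c=Z_T$ and $w_c=W_T$, have order $\ge0$ by Step~3.
\item $\zeta_k=t^{-2}(z_k-z_c)$ and $\eta_k=t^{-2}(w_k-w_c)$ have order $\ge0$ by Lemma~\ref{lem:d4-val}(iii). In particular $\eta_k\to o_k\ne0$ (up to the order-$0$ factor $c^{-1}$), which gives the last claim. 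The coordinates $u,v,p,q$ are linear combinations of the $\zeta_k,\eta_k$.
\item $D_{12}=\delta_{12}$, $D_{ik}=t^{-1}\delta_{ik}$ and $D_{kl}=t^2\delta_{kl}$ have order $0$.
\end{itemize}

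\textbf{Step 6: the limit.} One base change makes all exponents integral, so every coordinate is a power series in $\tau$. The polynomials $F(t,x)$ vanish identically along the germ, so setting $\tau=0$ gives $F_0(y)=0$, that is, $y\in\mathcal B_4(m)$.

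The main obstacle is Step~3, in particular the $Z_T$ bound. It requires that no crossed term in $M_TZ_T$ have order below $-1$. This is exactly where the nonvanishing of all $o_k$ in Lemma~\ref{lem:d4-central} is needed. If some $o_k$ vanished, $\delta_{ik}$ would be larger, the chart would fail, and $\hat Z$ would diverge. That case has already been excluded by the torque argument, so the remaining verification is routine order bookkeeping.
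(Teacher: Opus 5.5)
Your proposal is correct and follows the paper's proof essentially step for step. The shared steps are: relation (23) from Proposition~\ref{prop:d45-first}; $\ord w_{ik}=2$ and $\ord\delta_{ik}=1$ from Lemmas~\ref{lem:d4-val} and~\ref{lem:d4-central}; the centre bounds from the first two identities of Lemma~\ref{lem:d1-syzygies}; the order-zero factor $c$; and inversion of \eqref{eq:d4-chart}. Your remark that the limit of $\eta_k$ is $o_k$ times the nonzero leading coefficient of $c^{-1}$ is a small precision that the paper leaves implicit.
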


\begin{proof}
Relation (23) is Proposition~\ref{prop:d45-first}.  By Lemmas~\ref{lem:d4-val}
and~\ref{lem:d4-central}, $\ord w_{ik}=2$ for all $i\in P$, $k\in T$, hence $\ord\delta_{ik}=1$.
The first two identities of Lemma~\ref{lem:d1-syzygies} then give $\ord Z_T\ge3-4=-1$ and
$\ord W_P\ge3+2=5$, and $\ord W_T=\ord W_P$.  The factor $c$ has order $0$ by
Lemma~\ref{lem:d4-val}(iii), so all orders persist after the renormalization.  Inverting
\eqref{eq:d4-chart}, $A_i=t^4z_i$, $B_i=t^{-4}w_i$, $\hat Z=tZ_T$, $\hat W=t^{-5}W_T$,
$\zeta_k=t^{-2}(z_k-Z_T)$, $\eta_k=t^{-2}(w_k-W_T)$, $D_{12}=\delta_{12}$, $D_{ik}=t^{-1}\delta_{ik}$
and $D_{kl}=t^2\delta_{kl}$; by Lemma~\ref{lem:d4-val} all have nonnegative order, and
$\eta_k\to o_k\ne0$.  The rows of $F$ are monomial multiples of the equations, so they vanish
along $\gamma$, and $F_0(y)=0$.
\end{proof}

\subsubsection{The boundary fibre}\label{d4-fibre}
On $\C^{20}$ consider the scalings
\[
\begin{gathered}
  \alpha\cdot x:\ (A,B,D_{ik})\mapsto(\alpha^2A,\ \alpha^{-2}B,\ \alpha^{-1}D_{ik}),\\
  \beta\cdot x:\ (\zeta,\eta,D_{kl},D_{ik})\mapsto(\beta^2\zeta,\ \beta^2\eta,\ \beta^{-2}D_{kl},\
  \beta^{-1}D_{ik}),
\end{gathered}
\]
{Here $\alpha,\beta\in\C^*$, and coordinates not displayed remain fixed.
Together with translations of $(\hat Z,\hat W)$, these transformations preserve the zero set
of $F_0$: each row changes only by a nonzero factor.
They therefore act on $\mathcal B_4(m)$. We use these boundary symmetries to
identify its tangent spaces and the cokernels of $dF_0$.}

\begin{lemma}\label{lem:d4-fibre}
Let $m\in\Rp^5$ satisfy (23) and let $y\in\mathcal B_4(m)$.
\begin{alphalist}
  \item $D_{12}^3=M_P^{-1}$, $A_2=\rho A_1$ and $B_2=\rho B_1$ with $\rho=-m_1/m_2$,
  $D_{12}^2(A_1-A_2)(B_1-B_2)=1$ and $D_{ik}^2A_i\eta_k=-1$; in particular $A_i,B_i,\eta_k\ne0$.
  The point $(\zeta_3,\zeta_4,\eta_3,\eta_4,D_{34},D_{35},D_{45})$ solves the zero-multiplier
  triangle system, so, after renumbering $T$, it lies on the orbit $O$ of Lemma~\ref{lem:d1-mb}.
  \item $dF_0(y)$ has rank $16$.  Its kernel is spanned by $\partial_{\hat Z}$, $\partial_{\hat W}$
  {and the generators of the two scalings.  The annihilator of its image, equivalently the
dual of its cokernel, is spanned by the linear form}
  \[
    v_P(r)=(B_1-B_2)\,(r^z_1-r^z_2)-(A_1-A_2)\,(r^w_1-r^w_2),
  \]
  where $r^z_i,r^w_i$ are the components of $r\in\C^{21}$ in the rows $t^4E^z_i$, $t^{-4}E^w_i$, and
  by four linear forms that involve only the rows $t^4E^z_k$, $t^4E^w_k$, $\chi_{kl}$ and the gauge.
  \item Near $y$, $\mathcal B_4(m)$ coincides with the orbit of $y$ under the scalings and
  the translations; it is a complex submanifold of dimension $4$ with tangent space
  $\ker dF_0(y)$ at $y$.
\end{alphalist}
\end{lemma}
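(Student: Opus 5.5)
I will read off $F_0$ from \eqref{eq:d4-rows} and the distance rows at $t=0$, and then treat the three blocks (pair, triangle, crossed) separately, following the proof of Lemma~\ref{lem:d2-second}.

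\emph{Part (a).} At $t=0$ the pair rows are $R^z_i=0$, $R^w_i=0$ together with the pair distance row. The weighted sums give $m_1A_1+m_2A_2=0$ and $m_1B_1+m_2B_2=0$, so $A_2=\rho A_1$ and $B_2=\rho B_1$. The differences give $(1-M_PD_{12}^3)(A_1-A_2)=0$. The distance row forces $A_1\ne A_2$, hence $D_{12}^3=M_P^{-1}$. The crossed distance rows at $t=0$ read $D_{ik}^2(-A_i)\eta_k-1=0$, so $A_i,\eta_k\ne0$; the pair distance row gives $B_i\ne0$. The triangle rows are $\phi_k=\psi_k=0$ and $\chi_{kl}=0$, so Lemma~\ref{lem:d1-shape} and Proposition~\ref{prop:d1-first} put the triangle data on $O$ after renumbering $T$. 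The gauge fixes the $\mathbb T$-action to the subgroup realised by the $\beta$-scaling.

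\emph{Part (b).} Order the rows and columns as pair, triangle, crossed. Then $dF_0$ is block lower triangular up to the crossed columns: the crossed rows involve $D_{ik}$ with coefficient $2/D_{ik}\ne0$, and $D_{ik}$ appears nowhere else at $t=0$. The six crossed rows therefore contribute rank $6$ and no cokernel.
\begin{itemize}
\item[] \emph{Triangle block.} Lemma~\ref{lem:d1-mb} gives rank $5$ on the seven coordinates $(\zeta_3,\zeta_4,\eta_3,\eta_4,D_{kl})$, i.e.\ $(u,v,p,q,D_{kl})$, with kernel the two torus generators. The gauge row cuts one combination, so $\ker=\langle$generator of the $\beta$-scaling$\rangle$ and rank $=6$. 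The triangle rows number $6+3+1=10$, since the $\phi_5,\psi_5$ rows are dependent via mass-centre identities; this leaves a $4$-dimensional cokernel supported on triangle rows and the gauge. These are the four forms: the two mass-centre identities $\sum m_k\phi_k=\sum m_k\psi_k=0$, plus the two cokernel directions dual to the torus kernel, one of them killed by the gauge.
\item[] \emph{Pair block.} The coordinates are $A_i,B_i,D_{12}$ and the rows are four coordinate rows plus one distance row. A direct computation as in Lemma~\ref{lem:d2-second} gives kernel spanned by the $\alpha$-generator $(2A,-2B,0)$, rank $4$, and a cokernel spanned by the torque combination $v_P$. This is the linearisation of $\sum_i m_i(B_iR^z_i-A_iR^w_i)$, which vanishes identically for the pair-only system.
\end{itemize}
The coordinates $\hat Z,\hat W$ do not enter $F_0$ at all; this is where $F_0$ was claimed independent of them. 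The total rank is $4+6+6=16$, and the kernel is $\partial_{\hat Z},\partial_{\hat W}$ plus the two scaling generators, which is dimension $20-16=4$. The $\alpha$-scaling also moves the $D_{ik}$, but its kernel vector is completed consistently.

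\emph{Part (c).} The orbit of $y$ under the scalings and translations is a $4$-dimensional immersed submanifold contained in $\mathcal B_4$, with tangent space equal to $\ker dF_0(y)$. Choosing $16$ rows with independent differentials, their zero set near $y$ is a $4$-manifold containing the orbit piece, so the two coincide locally. The orbit also lies in the full zero set, so $\mathcal B_4$ equals that manifold near $y$.

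The main obstacle is the bookkeeping in (b). One must check that the gauge row reduces the triangle kernel from two to one dimension without creating extra cokernel. One must also identify the cokernel forms exactly, in particular that $v_P$ annihilates the image. I would verify that annihilation by differentiating the identity $\sum_i m_i(B_iR^z_i-A_iR^w_i)\equiv0$, which holds at $t=0$ by antisymmetry.
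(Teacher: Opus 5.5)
Your proposal is correct and follows essentially the same route as the paper. The shared steps are the block-triangular structure isolated by the crossed $D_{ik}$ columns, rank $4$ for the pair block, rank $6$ for the triangle block via Lemma~\ref{lem:d1-mb} together with the gauge, and the orbit-versus-rank-theorem argument for (c). For the pair cokernel you use the pair torque $\sum_i m_i(B_iR^z_i-A_iR^w_i)$, whose linearization at $y$ is $\frac{m_1m_2}{M_P}v_P$; this is equivalent to the paper's identity $(B_1-B_2)(R^z_1-R^z_2)-(A_1-A_2)(R^w_1-R^w_2)\equiv0$.

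One small correction to your description of the four triangle forms: the gauge row does not kill a cokernel direction. It takes the value $\zeta_4-\zeta_3\ne0$ on the torus generator $(\zeta,0,-\frac12D)$, so it is independent of the other nine triangle rows and removes a kernel direction instead. Consequently all four annihilating forms have zero gauge coefficient, and only the count $10-6=4$ is actually needed.
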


\begin{proof}
{For (a), the identities $R^z_1-R^z_2=(A_1-A_2)(1-M_PD_{12}^3)$ and
$m_1R^z_1+m_2R^z_2=m_1A_1+m_2A_2$ hold everywhere, as do the corresponding identities for
$R^w$. The distance equation for $12$ gives $A_1\ne A_2$ and $B_1\ne B_2$.
Thus $F_0(y)=0$ implies $D_{12}^3=M_P^{-1}$, $m_1A_1+m_2A_2=0$ and $m_1B_1+m_2B_2=0$.
The crossed distance equations are $D_{ik}^2A_i\eta_k=-1$.
The triangle equations in $F_0$ are $\phi_k,\psi_k,\chi_{kl}$ together with the gauge.
The proof of Proposition~\ref{prop:d1-first} gives the last assertion.

For (b), order the coordinates as $(D_{ik};\,A,B,D_{12};\,u,v,p,q,D_{kl};\,\hat Z,\hat W)$.
Order the rows as the crossed distance equations, the pair equations $R^z_i,R^w_i$
and the distance equation for $12$, followed by the triangle equations.
The pair and triangle equations involve only their respective coordinates.
Each $D_{ik}$ occurs only in its own crossed distance equation, where
$\partial/\partial D_{ik}=-2D_{ik}A_i\eta_k\ne0$.}
{So $dF_0(y)$ is block upper triangular, and its rank is $6+\operatorname{rank}J_P+
\operatorname{rank}J_T$}, {where $J_P$, $J_T$ are the pair and triangle blocks.
We compute their ranks separately.

At $y$, the}
differentials of $R^z_1-R^z_2$ and $R^w_1-R^w_2$ are $-3M_PD_{12}^2$ times
$(A_1-A_2)\,dD_{12}$ and $(B_1-B_2)\,dD_{12}$, those of $m_1R^z_1+m_2R^z_2$ and $m_1R^w_1+m_2R^w_2$
are $m_1dA_1+m_2dA_2$ and $m_1dB_1+m_2dB_2$, and the distance equation for $12$ adds $D_{12}^2$ times
$(B_1-B_2)(dA_1-dA_2)+(A_1-A_2)(dB_1-dB_2)$ modulo $dD_{12}$.
{These relations give $\operatorname{rank}J_P=4$, with kernel spanned by
$(A,-B,0)$. The linear form $v_P$, obtained by differentiating at $y$ the identity}
{$(B_1-B_2)(R^z_1-R^z_2)-(A_1-A_2)(R^w_1-R^w_2)=0$, spans the annihilator of its image.}

{By Lemma~\ref{lem:d1-mb}, the rows $\phi_3,\phi_4,\psi_3,\psi_4,\chi_{kl}$ have rank $5$
at $y$, and their kernel is spanned by the generators $(\zeta,0,-\frac12D)$ and
$(0,\eta,-\frac12D)$ of $\mathbb T$.
The rows $\phi_5$ and $\psi_5$ are redundant because $\sum_km_k\phi_k=\sum_km_k\psi_k=0$.
On the first generator, the gauge row takes the value $\zeta_4-\zeta_3$,
which is nonzero by $\chi_{34}=0$; it takes the value $0$ on $(\zeta,\eta,-D)$.
It follows that $\operatorname{rank}J_T=6$, with kernel spanned by $(\zeta,\eta,-D)$.

The total rank is therefore $16$. Completing the four stated vectors in their $D_{ik}$
components by the crossed distance equations gives a basis of the kernel.
Any linear form annihilating the image has zero coefficients on the crossed distance rows,
as is seen from the columns $D_{ik}$. It is consequently the}
{sum of a form annihilating the image of $J_P$ and one annihilating the image of $J_T$;
the latter annihilator has dimension $10-6=4$.}
{For (c), the differential of the orbit map is onto $\ker dF_0(y)$.
The orbit is thus a submanifold of dimension $4$ near $y$, contained in $\mathcal B_4(m)$.
Conversely, (b) and the rank theorem place $\mathcal B_4(m)$ near $y$ inside a submanifold
of dimension $20-16=4$. The two submanifolds therefore coincide locally.}
\end{proof}

\subsubsection{A Lyapunov--Schmidt lemma}\label{d4-ls}
{We now reduce continuation from the smooth boundary fibre to the successive
Taylor coefficients in the cokernel of $dF_0$. A nonzero coefficient forces
$t=0$ near the boundary point and therefore excludes a germ entering $t\ne0$.
For a polynomial curve $x(t)$, denote by $[F]_j$ the coefficient of $t^j$ in
$F(t,x(t))$.}

\begin{lemma}[Lyapunov--Schmidt]\label{lem:d4-ls}
Let $F(t,x)=\sum_{j\ge0}t^jF_j(x)$ be a polynomial map $\C\times\C^n\to\C^p$, let
$Z=F_0^{-1}(0)$, and let $y\in Z$ be a point near which $Z$ is a complex submanifold with
$T_{y'}Z=\ker dF_0(y')$ for all $y'\in Z$.  For $y'\in Z$ let $Q_{y'}$ be the cokernel of
$dF_0(y')$.  Let $N\ge1$ and suppose that every $y'\in Z$ near $y$ is the value at $t=0$ of a
polynomial curve $x(t)$ with $[F]_j=0$ for $j<N$.
\begin{alphalist}
  \item The class $\Omega_N(y')\in Q_{y'}$ of $[F]_N$ does not depend on the choice of such a curve.
  \item If $\Omega_N(y)\ne0$, no holomorphic germ $c\colon(\C,0)\to(F^{-1}(0),(0,y))$ has
  $t\circ c\not\equiv0$.
\end{alphalist}
For $N=1$ the hypothesis is empty and $\Omega_1(y')$ is the class of $F_1(y')$.
\end{lemma}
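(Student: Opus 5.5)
The plan is to straighten the boundary fibre locally and pair $F$ with a holomorphic frame of the cokernel. By hypothesis $Z$ is a submanifold near $y$ with $T_{y'}Z=\ker dF_0(y')$. The rank of $dF_0$ is therefore constant on $Z$ near $y$, and the cokernels $Q_{y'}$ form a holomorphic vector bundle. I choose holomorphic coordinates $x=(a,b)$ near $y$ with $Z=\{b=0\}$. I also choose a holomorphic matrix of linear forms $\ell(a)$ whose rows span the annihilator of $\operatorname{im}dF_0(a,0)$, and I identify $Q_{(a,0)}$ with $\C^{\operatorname{rank}Q}$ via $\ell(a)$. Since $F_0(a,0)=0$ and $\ell(a)\,\partial_bF_0(a,0)=0$, Taylor expansion in $b$ gives the basic estimate
\[
 \ell(a)F_0(a,b)=O(|b|^2).
\]
Moreover $\partial_bF_0(a,0)$ is injective on the normal directions, so $|F_0(a,b)|\ge c|b|$ near $y$.

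For (a), let $x(t)=(a(t),b(t))$ and $\tilde x(t)$ be two polynomial curves with $x(0)=\tilde x(0)=y'$ and $[F]_j=0$ for $j<N$. I first prove $\ord b\ge1$ for each curve; this is automatic since $b(0)=0$. Then I prove by induction that $\ord b\ge \lceil N/2\rceil$ is not needed: it suffices to know $\ord b\ge1$ when computing $\ell(y')[F]_N$. Write
\[
 [F]_N=\sum_{j\le N}[F_j(x(t))]_{N-j}.
\]
The $j=0$ term is $[F_0(x(t))]_N=dF_0(y')x_N+(\text{terms built from }x_1,\dots,x_{N-1})$. Since $\ell(a(t))F_0(x(t))=O(|b(t)|^2)$, applying $\ell$ and expanding shows that $\ell(y')[F]_N$ depends only on the jets $x_1,\dots,x_{N-1}$, through a universal polynomial expression. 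This expression is also insensitive to the lower jets in the directions constrained by $[F]_j=0$, $j<N$.

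To prove that insensitivity cleanly, I would argue by induction on $k$, the first order at which $x$ and $\tilde x$ differ. If $k\ge N$, the difference of the $N$-th coefficients is $dF_0(y')(\tilde x_N-x_N)$ or $0$, which lies in the image. If $k<N$, the $t^k$ coefficient gives $dF_0(y')(\tilde x_k-x_k)=0$, so $\tilde x_k-x_k\in T_{y'}Z$. I then move the base along $Z$: replace $x$ by $\phi_{t^k h}(x)$, where $\phi$ is the flow of a vector field tangent to $Z$ with value $h=\tilde x_k-x_k$. Then I compare the two curves using $\ell$ and the $O(|b|^2)$ estimate. The extra contributions are of the form $t^k\,\partial_a(\ell F)$ along $Z$, and these vanish because $\ell F_0\equiv0$ on $Z$ to second order in $b$.

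I expect this step to be the main obstacle: showing that the lower-order freedom within $\ker dF_0$ does not alter $\ell[F]_N$, including the dependence of $\ell$ on the base point. If the flow argument becomes messy, I would instead prove the stronger statement that $\ell(a(t))F(t,x(t))$ has $t^N$-coefficient equal to $\Omega_N(a(0))$ for every such curve, reading it off from the $b$-expansion.

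For (b), suppose $c$ is a germ with $t\circ c\not\equiv0$. After reparametrization, $t=\tau^r$ and $x(\tau)=(a(\tau),b(\tau))$ are power series with $F\equiv0$. I would show $\ord_\tau b\ge r$ from $|F_0|\ge c|b|$ and $F_0=-\sum_{j\ge1}t^jF_j$. Truncations of $x$, reexpanded in $t^{1/r}$, then play the role of the curves in (a); the $[F]_j=0$ conditions for $j<N$ follow from $F\equiv0$. Consequently
\[
 0=\ell(a(\tau))F(t,x(\tau))=t^N\bigl(\Omega_N(y)+o(1)\bigr),
\]
which contradicts $\Omega_N(y)\ne0$. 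The case $N=1$ is immediate, because $[F]_1=dF_0(y')x_1+F_1(y')$ and the first term dies modulo the image.
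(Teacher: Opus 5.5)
There is a genuine gap in (a), at the step you flagged, and the mechanism you propose for it is wrong. Suppose two admissible curves first differ at an order $k<N$, and you slide the base point along $Z$. The new terms then come from $t^jF_j$ with $j\ge1$, and also from the $a$-dependence of the quadratic part of $\ell F_0$. The estimate $\ell(a)F_0(a,b)=O(|b|^2)$ does not make them vanish. What kills them is the hypothesis at \emph{every} $y'\in Z$ near $y$, which forces the lower-order obstructions to vanish identically along $Z$. Your argument works at a single base point and never uses this.

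This hypothesis cannot be dropped. Take $F(t,a,b)=(b,ta)$. Then $Z=\{b=0\}$, $TZ=\ker dF_0$, and $\ell=(0,1)$ reads the cokernel. Since $[F]_1=(b_1,a_0)$, admissible curves for $N=2$ exist only at $y'=0$. There, $x=(0,0)$ and $\tilde x=(th,0)$ give the classes $0$ and $h$ for $[F]_2$. Your slide $a\mapsto a+th$ changes $\ell F=ta$ by $t^2h$, so an argument blind to neighbouring points would prove a false statement here.

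Two further problems in (a). The slid curve $\phi_{t^kh}(x(t))$ need not satisfy $[F]_j=0$ for $k<j<N$, so the induction on $k$ does not close. Your fallback is only a restatement of (a), since $[F]_j=0$ for $j<N$ already gives $[\ell(a(t))F]_N=\ell(y')[F]_N$.

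The missing ingredient is the reduction itself, and it fits your coordinates. For $a$ near the base point, $\partial_bF_0(a,0)$ is injective with image complementary to a fixed $R'$. Let $\pi$ project onto $\operatorname{im}dF_0(y)$ along $R'$. The implicit function theorem solves $\pi F(t,a,b)=0$ as $b=\beta(t,a)$, with $\beta(0,a)=0$. Put $g(t,a)=\ell(a)F(t,a,\beta(t,a))=\sum_{j\ge1}t^jg_j(a)$.

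If $[F]_i=0$ for $i<j$, then $b(t)-\beta(t,a(t))=O(t^j)$. Since $\ell(a)\partial_bF(t,a,\beta)=O(t)$, you get $\ell(y')[F]_j=g_j(a(0))$, provided $g_1,\dots,g_{j-1}$ vanish near $a(0)$. Applying this at every nearby base point gives $g_1\equiv\dots\equiv g_{N-1}\equiv0$ by induction. Then (a) follows, with $\Omega_N(y')$ represented by $g_N(a(0))$. This is the paper's proof, with its $G(t,k)$ in place of $g$.

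Part (b) then needs no curves. A germ in $F^{-1}(0)$ has $b=\beta(t,a)$, hence $0=t^N\bigl(g_N(a)+O(t)\bigr)$, which forces $t\equiv0$ when $g_N\neq0$ at the base point.

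Your own route to (b) can be made to work once (a) is proved, but only in one way: apply (a) to $F(s^r,x)$ with $rN$ in place of $N$. The hypothesis transfers through $x(s^r)$ and the obstruction is unchanged. As written, it does not work, because truncations in $\tau=t^{1/r}$ are not polynomial curves in $t$. The bound $\ord_\tau b\ge r$ plays no role.
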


\begin{proof}
{Let $K=\ker dF_0(y)$ and $R=dF_0(y)(\C^n)$.
Choose complements $\C^n=K\oplus K'$ and $\C^p=R\oplus R'$, and let $\pi$ project onto $R$
along $R'$. The map $\pi\,dF_0(y)$ sends $K'$ isomorphically onto $R$.
The implicit function theorem therefore gives a holomorphic function $h(t,k)\in K'$,
defined near $(0,0)\in\C\times K$, with $h(0,0)=0$.
Near $(0,y)$, the equation $\pi F(t,y+k+h')=0$, for small $h'\in K'$, holds precisely when
$h'=h(t,k)$. This uniqueness also holds for formal power series in $t$.

The remaining equations are $G(t,k)=F(t,y+k+h(t,k))\in R'$.
Thus, near $(0,y)$, we have $F^{-1}(0)=\{(t,y+k+h(t,k)):G(t,k)=0\}$.
At the boundary, the map $k\mapsto y+k+h(0,k)$ parametrizes $\{\pi F_0=0\}$.
This submanifold contains $Z$ and has the same dimension $\dim K=\dim Z$;
it therefore coincides locally with $Z$. In particular, $G(0,k)=0$.
Write $G=\sum_{j\ge1}t^jG_j(k)$.
For every $y'\in Z$ near $y$, the space $R'$ remains a complement of $dF_0(y')(\C^n)$,
and hence $R'\to Q_{y'}$ is an isomorphism.

We now compare coefficients along a polynomial curve.
Fix $j\le N$ and $y'=y+k'+h(0,k')$, and suppose that $G_1,\dots,G_{j-1}$ vanish near $k'$.
Let $x(t)=y+\kappa(t)+\eta(t)$, with $\kappa(t)\in K$ and $\eta(t)\in K'$,
be a polynomial curve satisfying $x(0)=y'$ and $[F]_i=0$ for $i<j$.
Since $\pi F(t,x(t))=O(t^j)$, formal uniqueness gives
$\eta(t)-h(t,\kappa(t))=e\,t^j+O(t^{j+1})$ for some $e$.
Taylor's formula yields $F(t,x(t))=F(t,y+\kappa+h(t,\kappa))+dF_0(y')(e)\,t^j+O(t^{j+1})$,
while $F(t,y+\kappa+h(t,\kappa))=G(t,\kappa(t))=t^jG_j(k')+O(t^{j+1})$.
Consequently, $[F]_j\equiv G_j(k')$ modulo $dF_0(y')(\C^n)$.

For $j<N$, the curves in the hypothesis give $G_j(k')\in R'\cap dF_0(y')(\C^n)=0$
for all $k'$ near $0$. Induction shows that $G_1,\dots,G_{N-1}$ vanish near $0$.
The case $j=N$ proves (a): $\Omega_N(y')$ is the image of $G_N(k')$.

If $\Omega_N(y)\ne0$, then $G=t^NH$ for a holomorphic $H$ with $H(0,0)=G_N(0)\ne0$.
Thus, near $(0,0)$, the equation $G=0$ forces $t=0$.
Every germ $c$ in (b) has the form $(t,y+k+h(t,k))$ and satisfies $G(t,k)=0$,
so $t\circ c\equiv0$.}
\end{proof}

\subsubsection{The first three obstructions}\label{d4-obstructions}
{We apply the reduction to the chart equations. The first and third coefficients
will impose the two force balances used to exclude the germ.
For $y\in\mathcal B_4(m)$, define $S_i=\sum_km_kD_{ik}$ and}
\[
  \Theta:=\sum_km_k\,\frac{D_{1k}-D_{2k}}{\eta_k}=A_2\Sigma_2-A_1\Sigma_1 .
\]
{The second expression, as well as $f_i=-S_i/A_i$, follows from
$1/\eta_k=-A_iD_{ik}^2$. Under the boundary scalings, $S_i/A_i$ acquires the factors
$\alpha^{-3}$ and $\beta^{-1}$, while $\Theta$ acquires $\alpha^{-1}$ and $\beta^{-3}$.
Neither expression involves $\hat Z$, $\hat W$, $D_{12}$ or $m_1,m_2$.

These quantities describe the leading forces from the triangle.
On body $i$, the crossed forces are $\sum_km_k\delta_{ik}^3(z_i-z_k)=t^{-1}A_i\Sigma_i+O(t^2)$
and $\sum_km_k\delta_{ik}^3(w_i-w_k)=-t^5f_i+O(t^7)$.
Thus $f_1=f_2$ and $\Theta=0$ express equality of the leading forces on the two outer bodies
in the $w$- and $z$-coordinates, respectively.}

\begin{lemma}\label{lem:d4-obstr}
Let $m\in\Rp^5$ satisfy (23), let $y\in\mathcal B_4(m)$, and let $\Omega_N$ be the classes of
Lemma~\ref{lem:d4-ls} for $F$ and $Z=\mathcal B_4(m)$.
{We regard $v_P$ as a linear functional on the cokernel $Q_y$.}
\begin{alphalist}
  \item $\Omega_1(y)=0$ if and only if $f_1=f_2$, that is, $S_1/A_1=S_2/A_2$.
  \item If $f_1=f_2$ at $y$, then the hypothesis of Lemma~\ref{lem:d4-ls} holds at $y$ for
  $N=3$, and
  \[
    v_P\big(\Omega_3(y)\big)=\tfrac32\,(A_1-A_2)\,\frac{B_1}{A_1}\,\Theta .
  \]
\end{alphalist}
\end{lemma}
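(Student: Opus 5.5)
We propose to compute the obstructions directly from the row expansion \eqref{eq:d4-rows} and the cokernel description in Lemma~\ref{lem:d4-fibre}(b).

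\textbf{Part (a).} By Lemma~\ref{lem:d4-ls}, $\Omega_1(y)$ is the class of $F_1(y)$. As noted after \eqref{eq:d4-rows}, $F_1$ vanishes except in the two rows $t^{-4}E^w_i$, where it equals $f_i$. We evaluate the five cokernel functionals on this vector. The four triangle functionals of Lemma~\ref{lem:d4-fibre}(b) involve only triangle rows, so they vanish on $F_1$. The remaining functional gives
\[
v_P(F_1)=-(A_1-A_2)(f_1-f_2).
\]
Since $A_1\ne A_2$, the class is zero if and only if $f_1=f_2$. Using $f_i=-S_i/A_i$ gives the equivalent form.

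\textbf{Part (b), hypothesis of Lemma~\ref{lem:d4-ls} for $N=3$.} We need, at every $y'\in\mathcal B_4$ near $y$, a polynomial curve killing $[F]_0,[F]_1,[F]_2$. The difficulty is that $f_1=f_2$ must hold on all nearby $y'$, not only at $y$.

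We handle this with the scalings. Along the $4$-dimensional orbit, $S_i/A_i$ only acquires the common factors $\alpha^{-3}\beta^{-1}$, and translations do not affect it. Hence $f_1=f_2$ holds on a whole neighbourhood of $y$ in $\mathcal B_4$, by Lemma~\ref{lem:d4-fibre}(c).

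With this in hand we build the curve in two steps.

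1. Write $x(t)=y'+tx_1+t^2x_2$. Choose $x_1$ with $dF_0(y')x_1=-F_1(y')$; this is possible because the class of $F_1$ vanishes.
2. The order-two coefficient is $F_2+dF_1\,x_1+\tfrac12 d^2F_0[x_1,x_1]+dF_0\,x_2$. We must show its class is zero, then choose $x_2$.

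I expect the class to vanish for explicit reasons. $F_2$ lives only in the crossed distance rows, which lie in the image because of the $D_{ik}$ columns. The relevant $x_1$ can be taken to perturb only $B_i$ (and $D_{12}$). The triangle functionals see none of this. For $v_P$, linearity and the pair structure make the contribution vanish, essentially because $R^w_1-R^w_2$ is linear in $B$ at fixed $D_{12}$.

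\textbf{Part (b), computing $v_P(\Omega_3)$.} We collect the $t^3$ terms. In the rows $t^4E^z_i$, the explicit $t^3$ term is $-A_i\Sigma_i$, and a correction $x_2$ may also contribute. In the rows $t^{-4}E^w_i$, the term $t^3(-B_i\Sigma_i)$ appears, together with contributions from $t f_i$ evaluated along $x_1,x_2$.

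Applying $v_P$, the $z$-part yields
\[
(B_1-B_2)(A_2\Sigma_2-A_1\Sigma_1)=(B_1-B_2)\Theta.
\]
Since $B_2=\rho B_1$ and $A_2=\rho A_1$, we have $(B_1-B_2)=(A_1-A_2)B_1/A_1$.

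The $w$-part contributes $-(A_1-A_2)$ times $(-B_1\Sigma_1+B_2\Sigma_2)$ plus terms from $d f_i\cdot x_2$. These must be rewritten through $\Theta$ using $1/\eta_k=-A_iD_{ik}^2$ and the induced variation of $D_{ik}$ from the crossed distance rows. That variation is where a $\tfrac12$ enters, giving the total coefficient $\tfrac32$.

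\textbf{Main obstacle.} The hardest step is this last bookkeeping: tracking the correction $x_2$ (in particular $\delta\eta_k$ and $\delta D_{ik}$) through $f_i$. I will first try choosing $x_2$ supported on $D_{ik}$ and $B_i$ only, and then verify that the triangle functionals remain zero.
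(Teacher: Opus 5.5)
Your proposal is correct and follows essentially the paper's route. Part (a) and the homogeneity argument giving $f_1=f_2$ near $y$ match the paper; the only difference is that you construct the explicit curve $y'+tx_1+t^2x_2$, with $x_1=-f(\partial_{B_1}+\partial_{B_2})$ for $f=f_1=f_2$ and $x_2$ moving only $D_{ik}$ by $D_{ik}B_i/(2\eta_k)$, whereas the paper analyses an arbitrary curve with $F=O(t^N)$. The bookkeeping you flag works out: $[f_i]_2=\tfrac32B_i\Sigma_i$, so the $z$-rows give $(A_1-A_2)\frac{B_1}{A_1}\Theta$ and the $w$-rows give $\bigl(-1+\tfrac32\bigr)$ times that, which is equivalent to the paper's observation that the explicit $t^3$ terms cancel under $v_P$ (its $\Delta_4=0$) and the whole factor $\tfrac32$ comes from $f_1-f_2$.
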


\begin{proof}
{For (a), Lemma~\ref{lem:d4-fibre}(b) shows that $\Omega_1(y)$, the class of $F_1(y)$,
vanishes precisely when $F_1(y)$ is annihilated by $v_P$ and the four triangle forms.
The triangle components of $F_1$ vanish, while
$v_P(F_1(y))=-(A_1-A_2)(f_1-f_2)$.

For (b), homogeneity implies $f_1=f_2$ throughout the orbit of $y$.
By Lemma~\ref{lem:d4-fibre}(c), this orbit is a neighbourhood of $y$ in $\mathcal B_4(m)$.
Choose a point $y'$ in it and $N\in\{2,3\}$.
Consider a polynomial curve $x(t)$ with $x(0)=y'$ and $F(t,x(t))=O(t^N)$;
part (a) supplies such a curve for $N=2$.
Write $A_i(t),\dots$ for its components and $A_i,\dots$ for their values at $y'$,
and set $\Phi(t)=1-M_PD_{12}(t)^3$.

We first examine the pair equations. Equation~\eqref{eq:d4-rows} gives
$t^4E^z_i=R^z_i+O(t^3)$. Since $R^z_1-R^z_2=(A_1(t)-A_2(t))\Phi(t)$ and
$m_1R^z_1+m_2R^z_2=m_1A_1(t)+m_2A_2(t)$, we obtain $\Phi=O(t^N)$ and
$A_i(t)=A_i(1+a(t))+O(t^N)$ for a common series $a(t)=O(t)$.

Next, the triangle rows of $F$ agree with those of $F_0$ up to $O(t^3)$.
The triangle components of $x(t)$ therefore solve the triangle block of $F_0$ modulo $t^N$.
The proof of Lemma~\ref{lem:d4-fibre}(b),(c) identifies the zero set of that block near $y'$
with the curve $\beta\mapsto\beta\cdot y'$, whose dimension is $\dim\ker J_T$.
Consequently, the triangle components of $x(t)$ agree modulo $t^N$ with a curve on this orbit.
In particular, $\eta_k(t)=\theta(t)\eta_k+O(t^N)$, where $\theta(0)=1$.
For $N=2$, the triangle components of $[F]_2$ therefore belong to the image of $J_T$.

The crossed distance equations give $D_{ik}(t)^2=-1/\big(A_i(t)(\eta_k(t)-t^2B_i)\big)+O(t^N)$,
and hence}
\[
  D_{ik}(t)^3\eta_k(t)=-\frac{D_{ik}(t)}{A_i(t)}\Big(1+t^2\frac{B_i}{\eta_k}\Big)+O(t^N),\qquad
  D_{ik}(t)=\frac{D_{ik}}{\sqrt{(1+a)\theta}}\Big(1+\frac{t^2B_i}{2\eta_k}\Big)+O(t^N),
\]
and, with $f_i(t)=\sum_km_kD_{ik}(t)^3\eta_k(t)$,
\[
  f_i(t)=-\frac{S_i/A_i}{(1+a)^{3/2}\theta^{1/2}}-\frac32\,t^2\,\frac{B_i}{A_i}\sum_km_k\frac{D_{ik}}{\eta_k}
  +O(t^N).
\]
Since $S_1/A_1=S_2/A_2$ and $B_1/A_1=B_2/A_2$, $f_1(t)-f_2(t)=-\frac32\,t^2\,\frac{B_1}{A_1}\,\Theta+O(t^N)$.

{We can now evaluate the obstruction.
Applying $v_P$ to $F(t,x(t))$, with its coefficients fixed at $y'$, gives, by \eqref{eq:d4-rows},}
\[
  \Phi(t)\big[(B_1-B_2)(A_1(t)-A_2(t))-(A_1-A_2)(B_1(t)-B_2(t))\big]
  -(A_1-A_2)\,t\,\big(f_1(t)-f_2(t)\big)-t^3\,{\Delta_4}+O(t^4),
\]
with ${\Delta_4}=(B_1-B_2)(A_1\Sigma_1-A_2\Sigma_2)-(A_1-A_2)(B_1\Sigma_1-B_2\Sigma_2)$, which vanishes
because $A_2=\rho A_1$ and $B_2=\rho B_1$.  The bracket vanishes at $t=0$ and $\Phi=O(t^N)$.

{For $N=2$, this gives $v_P([F]_2)=0$.
Every linear form annihilating the image of $dF_0(y')$ has zero coefficients on the
crossed distance rows. Together with the preceding result for the triangle part,
this gives $\Omega_2(y')=0$. We can therefore correct the curve at order $2$ to obtain}
{a curve with $F(t,x(t))=O(t^3)$.
This proves the hypothesis of the Lyapunov--Schmidt lemma for $N=3$.
For $N=3$, the displayed expression gives
$v_P([F]_3)=-(A_1-A_2)\cdot\big(-\frac32\frac{B_1}{A_1}\Theta\big)$, as claimed.}
\end{proof}

{\subsubsection{Incompatibility of the two force balances}\label{d4-census}
The triangle classification gives a real parametrization of its limiting offsets.
We use it to express the two force balances as scalar equations and then apply
positivity and strict convexity.
After renumbering $T$, write
$(\sqrt{m_3},\sqrt{m_4},\sqrt{m_5})=\lambda(bc,b(b+c),c(b+c))$
with $\lambda,b,c>0$. Exchanging $4$ and $5$ if necessary allows us to assume
$b>c$. The case $b=c$ would give a zero triangle offset, contrary to
Proposition~\ref{prop:d4-cover}. The triangle offsets are proportional to
\[
 (o_3,o_4,o_5)=\Bigl(-\frac{b^2-c^2}{bc},
 \frac{c(2b+c)}{b(b+c)},-\frac{b(b+2c)}{c(b+c)}\Bigr).
\]
Put $a=\sqrt{-o_3/o_4}>0$, $d=\sqrt{o_3/o_5}>0$, and
$(g_3,g_4,g_5)=(1,ia,d)$.  If $x=b/c>1$, then
\[
 a^2=\frac{(x-1)(x+1)^2}{2x+1},\qquad
 d^2=\frac{(x-1)(x+1)^2}{x^2(x+2)}.
\]
In particular $0<d<a$ and $d<1$: the differences of the relevant
denominators are
$x^2(x+2)-(2x+1)=(x-1)(x^2+3x+1)>0$ and
$x^2(x+2)-(x-1)(x+1)^2=x^2+x+1>0$.
The mass-centre relation $\sum_km_ko_k=0$ becomes
\begin{equation}\label{eq:d4-positive-centre}
 \frac{m_4}{a^2}=m_3+\frac{m_5}{d^2}.
\end{equation}
For $s=(s_{13},s_{14},s_{15},s_{23},s_{24},s_{25})\in\{1,-1\}^6$
and $j=1,3$, define
\[
 X_j=\sum_{k\in T}s_{1k}m_kg_k^j,\qquad
 Y_j=\sum_{k\in T}s_{2k}m_kg_k^j.
\]

\begin{lemma}\label{lem:d4-census}
For every sign vector $s$ and every $\nu>0$, the two equations
\[
 \nu^3X_1+iY_1=0,\qquad \nu X_3=iY_3
\]
cannot hold simultaneously.
\end{lemma}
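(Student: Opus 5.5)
We would separate real and imaginary parts, eliminate $\nu$, and reach a contradiction from the mass-centre relation \eqref{eq:d4-positive-centre} together with $0<d<a$, $d<1$. Since $g=(1,ia,d)$, we have $g^3=(1,-ia^3,d^3)$. Put $R_j=s_{13}m_3+s_{15}m_5d^j$ and $I_j=s_{23}m_3+s_{25}m_5d^j$ for $j=1,3$. Then
\[
 X_1=R_1+is_{14}m_4a,\quad Y_1=I_1+is_{24}m_4a,\quad
 X_3=R_3-is_{14}m_4a^3,\quad Y_3=I_3-is_{24}m_4a^3 .
\]
With $\nu$ real, the two complex equations become four real ones:
\[
 \nu^3R_1=s_{24}m_4a,\qquad \nu R_3=s_{24}m_4a^3,\qquad
 \nu^3s_{14}m_4a=-I_1,\qquad \nu s_{14}m_4a^3=-I_3 .
\]
In particular $R_1,R_3,I_1,I_3\ne0$.

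Next we would eliminate $\nu$ in two ways. Multiplying the real equation at order one by the imaginary one, and likewise at order three, gives
\[
 |R_1I_1|=m_4^2a^2,\qquad |R_3I_3|=m_4^2a^6 .
\]
Comparing $\nu^3$ with the cube of $\nu$ gives the homogeneous relations
\[
 |R_3|^3=m_4^2a^8|R_1|,\qquad |I_3|^3=m_4^2a^8|I_1| .
\]
We then use \eqref{eq:d4-positive-centre} in the form $m_4=a^2(m_3+m_5d^{-2})$. Each of $|R_j|,|I_j|$ is at most $m_3+m_5d^j$, and equals $|m_3-m_5d^j|$ when the two signs differ.

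The case $a\ge1$ should be immediate. There $m_4a^3=a^5(m_3+m_5d^{-2})>m_3+m_5d^3$, because $a^5\ge1$ and $d^{-2}>1>d^3$. This contradicts $|R_3I_3|\le(m_3+m_5d^3)^2$.

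For $a<1$ we would split according to whether $s_{13}s_{15}$ and $s_{23}s_{25}$ are $+1$ or $-1$; by the symmetry between the $R$ and $I$ equations, three sign cases remain. In each case, substitute the exact expressions for $|R_j|,|I_j|$ into the two identities $|R_3|^3=m_4^2a^8|R_1|$ and $|I_3|^3=m_4^2a^8|I_1|$. These read as a strict convexity statement for the function $j\mapsto m_3\pm m_5d^j$, or a power-mean (Hölder) comparison between the exponents $1$ and $3$. The aim is to show that they are incompatible with $|R_1I_1|=m_4^2a^2$ once $m_4$ is written through \eqref{eq:d4-positive-centre} and $d<a<1$ is used.

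The main obstacle is this regime $a<1$ when a cancellation $m_3-m_5d^j$ occurs. Crude absolute-value bounds fail there, and we must exploit that the same pair $(m_3,m_5)$ appears at both exponents. We would first try to eliminate $m_3/m_5$ between the two exponent-three relations. This yields a one-variable inequality in $d$ and $a$, which we would then check using $d<a$, $d<1$ and the explicit parametrization $a^2,d^2$ in terms of $x=b/c>1$ if needed.
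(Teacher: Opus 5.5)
Your set-up is correct. The four real equations, the relations $|R_1I_1|=m_4^2a^2$, $|R_3|^3=m_4^2a^8|R_1|$ and $|I_3|^3=m_4^2a^8|I_1|$, and the exclusion of $a\ge1$ via $|R_3I_3|\le(m_3+m_5d^3)^2<(m_4a^3)^2$ all check out.

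The gap is that the case $a<1$, which carries the whole content of the lemma, is only announced (``the aim is'', ``we would first try''). The route you propose also fails as stated. When $s_{13}s_{15}=s_{23}s_{25}$ you have $|R_j|=|I_j|$, so your two exponent-three relations are the same equation, and eliminating $m_3/m_5$ between them gives nothing.

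A sign change in $m_3-m_5d^j$ is not the real difficulty either. Put $r=m_3$, $u=m_4a$, $v=m_5d$, $\varepsilon=s_{13}s_{15}$, $\delta=s_{23}s_{25}$. The mass-centre relation and $a>d$ give $u=a^3r+a^3m_5d^{-2}>v$. If $r+\varepsilon v\le0$ or $r+\delta v\le0$, then $v\ge r$, and $|R_1I_1|$ equals $(v-r)^2$ or $v^2-r^2$; both are smaller than $v^2<u^2$. So the order-one relation alone makes both factors positive, and hence also $r+\varepsilon vd^2$ and $r+\delta vd^2$. The absolute values then disappear, and the signs are forced: $s_{24}=s_{13}$ and $s_{14}=-s_{23}$.

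What closes the argument is a short case analysis in which you keep $\nu$ rather than eliminate it. The system becomes $\nu^3(r+\varepsilon v)=u$, $\nu^3u=r+\delta v$, $\nu(r+\varepsilon vd^2)=ua^2$ and $\nu ua^2=r+\delta vd^2$.

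\begin{itemize}
\item If $\varepsilon=\delta$, the first two relations give $\nu=1$.
\item For $\varepsilon=\delta=-1$ one gets $r=u+v=ua^2+vd^2$. This is impossible, since $u<r$ forces $a<1$ through the mass-centre relation, and $d<1$.
\item For $\varepsilon=\delta=1$, put $w_0=r/(r+m_5d^{-2})$ and $w_1=1-w_0$. The mass-centre relation turns $u=r+v$ and $ua^2=r+vd^2$ into $a^3=w_0+w_1d^3$ and $a^5=w_0+w_1d^5$. This contradicts strict convexity of $t\mapsto t^{5/3}$ at the points $1\ne d^3$, for every $a$. That convexity, with mass-centre weights, is the one actually needed here.
\item If $\varepsilon=-\delta$, add the two relations at each order. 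This gives $2r=u(\nu^3+\nu^{-3})=ua^2(\nu+\nu^{-1})$, so $a^2=\nu^2+\nu^{-2}-1\ge1$. But $u^2=r^2-v^2<r^2$ forces $a<1$, a contradiction.
\end{itemize}

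With these steps your separate treatment of $a\ge1$ is no longer needed.
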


\begin{proof}
Put $r=m_3$, $u=m_4a$, $v=m_5d$ and
$\varepsilon=s_{13}s_{15}$, $\delta=s_{23}s_{25}$.
By \eqref{eq:d4-positive-centre} and $a>d$,
\[
 u=a^3r+\frac{m_5a^3}{d^2}>m_5d=v.
\]
The real and imaginary parts of the first equation give
\[
 \nu^3s_{13}(r+\varepsilon v)=s_{24}u,
 \qquad \nu^3s_{14}u=-s_{23}(r+\delta v).
\]
Thus $|(r+\varepsilon v)(r+\delta v)|=u^2$.
Both factors $r+\varepsilon v$ and $r+\delta v$ must be positive.
Indeed, if a factor is nonpositive then $v\ge r$ and at least one sign
is negative; the absolute product is then either $(v-r)^2$ or
$v^2-r^2$, in both cases strictly less than $v^2<u^2$.
It follows that $s_{24}=s_{13}$ and $s_{14}=-s_{23}$, and hence
\begin{equation}\label{eq:d4-first-real}
 \nu^3(r+\varepsilon v)=u,\qquad
 \nu^3u=r+\delta v.
\end{equation}
Since $g_4^3=-ia^3$, the real and imaginary parts of the second equation
now give
\begin{equation}\label{eq:d4-third-real}
 \nu(r+\varepsilon vd^2)=ua^2,\qquad
 \nu ua^2=r+\delta vd^2.
\end{equation}

If $\varepsilon=\delta=1$, then \eqref{eq:d4-first-real} gives
$\nu=1$ and $u=r+v$.  Put
$w_0=r/(r+m_5/d^2)$ and $w_1=(m_5/d^2)/(r+m_5/d^2)$.
These weights are positive and sum to $1$.
Equations \eqref{eq:d4-positive-centre}, \eqref{eq:d4-first-real} and
\eqref{eq:d4-third-real} give, respectively,
\[
 a^3=w_0+w_1d^3,\qquad a^5=w_0+w_1d^5.
\]
But strict convexity of $t\mapsto t^{5/3}$ on $(0,\infty)$, applied
to $1$ and $d^3\ne1$, gives
$w_0+w_1d^5>(w_0+w_1d^3)^{5/3}=a^5$, a contradiction.

In the other cases \eqref{eq:d4-first-real} gives either
$u=r-v<r$ or $u^2=r^2-v^2<r^2$, so $u<r$.
This forces $a<1$: otherwise \eqref{eq:d4-positive-centre} would give
$r<m_4\le m_4a=u$.  Thus $d<a<1$.
If $\varepsilon=\delta=-1$, then $\nu=1$ and the two balance equations
give $r=u+v=ua^2+vd^2<u+v$, again a contradiction.
Finally, if $\varepsilon=-\delta$, adding the two relations in each
of \eqref{eq:d4-first-real} and \eqref{eq:d4-third-real} yields
\[
 2r=u(\nu^3+\nu^{-3})=ua^2(\nu+\nu^{-1}).
\]
Consequently
$a^2=(\nu^3+\nu^{-3})/(\nu+\nu^{-1})
=\nu^2+\nu^{-2}-1\ge1$, contradicting $a<1$.
\end{proof}

}\subsubsection{Proof of Theorem~\ref{thm:d4}}
{Suppose that $\gamma$ is a Diagram~4 germ at $m\in\Rp^5$.
Proposition~\ref{prop:d4-cover} gives relation (23) and a holomorphic chart representation
$\tau\mapsto(t(\tau),x(\tau))$, with $t=\tau^q$, in $F^{-1}(0)$.
It satisfies $t\not\equiv0$ and $x(0)=y\in\mathcal B_4(m)$.
Lemma~\ref{lem:d4-fibre}(c) permits the application of Lemma~\ref{lem:d4-ls} at $y$,
with $Z=\mathcal B_4(m)$. Taking $N=1$ gives $\Omega_1(y)=0$,
or $f_1=f_2$ by Lemma~\ref{lem:d4-obstr}(a).
Lemma~\ref{lem:d4-obstr}(b) then permits the same argument for $N=3$,
giving $\Omega_3(y)=0$. Since $(A_1-A_2)B_1/A_1\ne0$, it follows that $\Theta=0$ at $y$.}

{Both conditions are symmetric in the bodies of $T$ and homogeneous under
the boundary scalings. We may therefore renumber $T$ as in \S\ref{d4-census} with $b>c$.
Indeed, $b=c$ would contradict $\eta_k(y)\ne0$ in Proposition~\ref{prop:d4-cover},
because $o_3=0$ when $b=c$.
The boundary scalings allow us to normalize $A_1=1$ and $\eta_3=-1$.
Then $\eta_k=-o_k/o_3$ and $D_{1k}^2=-1/\eta_k=g_k^2$.
With $\nu=\sqrt{m_1/m_2}>0$ we have $A_2=-\nu^2$ and
$D_{2k}^2=-D_{1k}^2/\nu^2$, so
$D_{1k}=s_{1k}g_k$ and $D_{2k}=s_{2k}ig_k/\nu$ for a sign vector $s$.
With the original triangle masses in the definitions of $X_j,Y_j$, we obtain
\[
 S_1=X_1,\qquad S_2=iY_1/\nu,\qquad
 \Sigma_1=X_3,\qquad \Sigma_2=-iY_3/\nu^3.
\]
Thus $S_1/A_1=S_2/A_2$ and $A_2\Sigma_2-A_1\Sigma_1=0$ become
$\nu^3X_1+iY_1=0$ and $\nu X_3=iY_3$.
Lemma~\ref{lem:d4-census} gives the contradiction. \qed}

{
\subsection{Diagram 5}\label{sec:d5}

Diagram~5 consists of the isolated $zw$-triangle $345$, with no circled vertex and no
other stroke \cite[\S7.4 and Fig.~11]{AK}.  We write $T=\{3,4,5\}$ and call $1,2$
the outer bodies.  We compare the shrinking triangle with its distances to the outer bodies by
adding the orders of all ten squared distances. Our aim is
$\ord\Delta>0$, which gives the vanishing required in the global argument.

\begin{theorem}\label{thm:d5}
Let $m\in\Rp^5$ and let $\gamma$ be a Diagram~5 germ on an irreducible curve
$\Gamma\subset A_m$.  Then
\[
 \Delta:=\prod_{1\le i<j\le5}r_{ij}^{2}\longrightarrow0
 \quad\hbox{along }\gamma.
\]
\end{theorem}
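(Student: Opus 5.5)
The plan is to split $\ord\Delta$ into the triangle contribution and the contribution of the seven pairs that meet an outer body. I would show that the seven crossed and outer distances cannot grow fast enough to cancel the vanishing of the shrinking triangle. All orders are taken along $\gamma$, normalized as in Lemma~\ref{lem:germ}, with least order $-2\nu$ in $\mathcal Z$ and $\mathcal W$ as in Proposition~\ref{prop:d45-first}.

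\emph{Step 1: the triangle.} By Proposition~\ref{prop:d45-first}, the triangle sides satisfy $\ord z_{kl}=\ord w_{kl}=\nu$, so $\ord r_{kl}^2=2\nu$. The three sides therefore contribute $6\nu$ to $\ord\Delta$. The rescaled limit $(\zeta,\eta,D)$ lies on the orbit $O$ of Lemma~\ref{lem:d1-mb} (after renumbering $T$), with $I_T\neq0$ on $O$. It remains to prove
\[
  \sum_{i\in\{1,2\},\,k\in T}\ord r_{ik}^2+\ord r_{12}^2>-6\nu .
\]

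\emph{Step 2: the outer bodies are not far out.} The bodies $1,2$ carry no circle and no stroke, and $T$ carries no circle. Hence every $z_j,w_j$ has order $>-2\nu$, and no $Z_{ik}$, $W_{ik}$, $Z_{12}$, $W_{12}$ is maximal. This alone only gives $\ord r_{ij}^2>-4\nu$, which is far too weak.

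\emph{Step 3: force--order estimate.} I would sharpen this with the Diagram~1 mechanism. As in the proof of Theorem~\ref{thm:d1}, write $t^{2\nu}E^z_k=t^{2\nu}(z_k-X^z_k)-\phi_k$ and $t^{2\nu}E^w_k=t^{2\nu}(w_k-X^w_k)-\psi_k$, where $X^z_k,X^w_k$ are the crossed forces exerted by bodies $1,2$. Since $\chi\equiv0$ and $y(t)\to O$, Lemma~\ref{lem:d1-estimate} gives $|U_T|\lesssim|\phi,\psi|^2$. The virial identity of Lemma~\ref{lem:d1-virial}, together with $\sum_km_k\zeta_k=0$, gives
\[
  U_T=t^{2\nu}\Bigl(I_T+\textstyle\sum_km_k\zeta_k(\text{rescaled }X^w_k)\Bigr).
\]
There are two cases.
\begin{itemize}
\item If the rescaled crossed forces $t^{2\nu}X^w_k$ (resp.\ $X^z_k$) are $o(1)$ relative to the triangle scale, then $U_T\approx t^{2\nu}I_T$. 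The quadratic bound then forces $\phi,\psi$ to have order at most $\nu$, and the only terms able to produce such $\phi,\psi$ are the crossed forces. This contradicts the case assumption unless the crossed forces are comparatively large.
\item Otherwise the crossed forces are not small.
\end{itemize}
Either way one obtains a lower bound on some $|m_i\delta_{ik}^3z_{ik}|$ and $|m_i\delta_{ik}^3w_{ik}|$, hence an upper bound on $\ord(\delta_{ik}^{-2})$ for the dominant crossed pairs. I would also use $\sum_j m_jz_j=\sum_j m_jw_j=0$ and the torque identity of Lemma~\ref{lem:d1-syzygies}, as in Lemma~\ref{lem:d4-central}, to make the outer bodies balance one another. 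This should show that both outer bodies see the triangle with comparable crossed distances, and that $r_{12}$ is controlled by these crossed distances.

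\emph{Step 4: summation.} I would then add the resulting bounds for the six crossed pairs and the pair $12$, and check that their total exceeds $-6\nu$. The strict inequality in Step~2 and the non-maximality of $Z_{ik},W_{ik}$ give the needed slack. Since $\Delta$ is meromorphic on the germ, a positive order is the same as $\Delta\to0$.

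The main obstacle is Step~3: converting the Morse--Bott estimate into a \emph{quantitative upper} bound on crossed distances, uniform over the possible relative orders of $z_i$, $w_i$ and of $z_{ik}$ versus $w_{ik}$. I would first attempt it in a weighted chart analogous to \eqref{eq:d1-chart}, with the triangle at scale $t^\nu$ and separate exponents for the outer bodies. I would then carry out the case distinction according to which crossed force is dominant.
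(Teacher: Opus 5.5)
Your Steps~1, 2 and~4 are bookkeeping. The whole content of the theorem is the inequality you defer to Step~3: that the seven squared distances involving body $1$ or $2$ have orders summing to more than $-6\nu$. The proposal does not prove this, and the mechanism you propose does not supply it by itself.

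With $\chi\equiv0$, Lemma~\ref{lem:d1-virial} gives $U_T=\sum_km_k\zeta_k\psi_k=t^{3\nu}I_T-t^{2\nu}\sum_km_k\zeta_kX^w_k$. The quadratic bound of Lemma~\ref{lem:d1-estimate} therefore yields $\min(\ord\phi,\ord\psi)\le\tfrac32\nu$, not $\nu$. More importantly, all this says is that the mass-centred (tidal) part of the crossed forcing on the triangle is not negligible: either it reaches order $\tfrac32\nu$ in $\phi$ or $\psi$, or its virial term cancels $t^{3\nu}I_T$. In Diagram~1 this was a contradiction because the diagram forced the crossed forces to be too small to do either. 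In Diagram~5 nothing does: if both outer bodies stay at bounded distance, the tidal term sits exactly at order $3\nu$ and may balance $I_T$.

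What you obtain is a single constraint on the combined tidal field, to which the two bodies contribute terms that can cancel. It says nothing about $r_{12}$. It also does not bound the crossed distances of an individual body: a body with $\ord(z_j-z_c)\ne\ord(w_j-w_c)$ can exert a large forcing in one colour while $\ord r_{jk}^2<0$. Moreover, a lower bound on a crossed force gives a lower, not an upper, bound on $\ord\delta_{ik}^{-2}=\ord r_{ik}^2$, and only for that pair. Nor need the two outer bodies see the triangle at comparable distances; the argument must allow, for instance, one far body and one near body.

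The missing ingredients are those of the paper's case analysis.
\begin{itemize}
\item Each outer body is classified, relative to the triangle scale, as \emph{far}, \emph{near in $z$} or \emph{near in $w$}. A near body carries an excess order $e_j>0$ when its leading offset in the near colour coincides with that of a vertex.
\item In each case the leading terms are compared in the exact outer-body equations \eqref{d5-eq-outer}, whose force matrix $C$ in \eqref{d5-eq-force-matrix} is invertible. They are also compared in the centre-of-mass relations \eqref{d5-eq-centre} and through the pair-force orders \eqref{d5-eq-pair-orders}.
\item Two far bodies require an input absent from your plan: the decomposition $I=I_T+J$ of \eqref{d5-eq-inertia}. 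Here $J$ is bounded because $I$ is constant on $\Gamma$ (Lemma~\ref{lem:U-constant}). Hence a negative minimum among the orders $S,A,B$ must occur twice, and the balance equations then give $A,B,S\ge0$.
\item The Morse--Bott estimate enters only when both bodies are near in the same colour. It is used in the sharper form of Lemma~\ref{d5-lem-mb}: if $\kappa_\psi<\kappa_\phi$, then $\kappa_\phi\le2\kappa_\psi$. This forces the leading external forcing to be equal at the three vertices (Lemma~\ref{d5-lem-uniform}) and yields the contact-vertex balance \eqref{d5-eq-special-balance}.
\end{itemize}
Without these steps, Step~4 has nothing to sum. The slack from Step~2 gives only $\ord r_{ij}^2>-4\nu$, far from the required total.
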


\subsubsection{Coordinates relative to the triangle}\label{d5-frame}
We normalize as in Lemma~\ref{lem:germ} and measure orders in a Puiseux scale $t$ for
which $\epsilon\approx|t|$ and the least orders of both $\mathcal Z$ and
$\mathcal W$ are $-2$.  If initially $\epsilon\approx|s|^\nu$, we take
$t=cs^\nu$, $c\ne0$, after a finite ramification as necessary.  This choice only
multiplies all orders by a positive factor.  As usual, $\ord0=+\infty$.

Let $M_T=\sum_{k\in T}m_k$, $M=\sum_{k=1}^5m_k$, and let $z_c,w_c$ be the
mass centres of the triangle.  We distinguish the internal coordinates of
the triangle from the positions of the outer bodies relative to its centre:
\[
 \begin{aligned}
 \xi_k&=z_k-z_c,\quad \eta_k=w_k-w_c\quad(k\in T),\\
 X_j&=z_j-z_c,\quad Y_j=w_j-w_c\quad(j=1,2).
 \end{aligned}
\]
Proposition~\ref{prop:d45-first} and Lemma~\ref{lem:d1-shape} give relation~(23)
and show that the limiting triangle has zero multiplier.  After a constant
change of $t$ and a constant rotation $(z,w)\mapsto(az,a^{-1}w)$, its internal
coordinates have the form
\begin{equation}\label{d5-eq-offsets}
 \xi_k=t(s_k+o(1)),\qquad \eta_k=t(s_k+o(1)),\qquad
 s_k\in\R,\quad \sum_{k\in T}m_ks_k=0,
\end{equation}
where the three $s_k$ are pairwise distinct.  Every squared distance, as well
as $U$ and $I$, is preserved by these changes.

The rules and estimates in Fact~\ref{fact:rules} now give the orders that we
shall use.  Each coordinate difference within the triangle has order $1$ in
both colours, so each of its three squared distances has order $2$.
Since there is no circled vertex, all body coordinates, and therefore all
differences from the triangle centre, have order strictly greater than $-2$.
Every crossed
force term $\delta_{jk}^3(z_j-z_k)$ and $\delta_{jk}^3(w_j-w_k)$, $j\in\{1,2\}$,
$k\in T$, has order strictly greater than $-2$, since no stroke joins an outer
body to the triangle.  In particular,
\[
 \ord r_{kl}^2=2\quad(k<l,\ k,l\in T),\qquad
 \ord X_j,\ord Y_j>-2\quad(j=1,2),\qquad
 \ord r_{ij}\le1,
\]
with equality in the last bound exactly on $zw$-edges.

The scale of the triangle is thus fixed.  Relative to this scale, each outer
body belongs to exactly one of the following three types:
\begin{itemize}
 \item \emph{far}: $\ord X_j<1$ and $\ord Y_j<1$;
 \item \emph{near in $z$}: $\ord X_j\ge1$ and $\ord Y_j=\beta_j<1$;
 \item \emph{near in $w$}: the preceding type with the colours exchanged.
\end{itemize}
Indeed, $\ord X_j,\ord Y_j\ge1$ would give $\ord r_{jk}\ge1$
for every $k\in T$, hence additional $zw$-edges. In a near type, the coordinate
of order at least $1$ may vanish identically.

For a body near in $z$, the limit $\theta_j=\lim t^{-1}X_j$ exists.
Equation~\eqref{d5-eq-offsets} shows that at least two of the differences
$z_j-z_k$ have order $1$.  The third can have a higher order $h_j>1$ only
when its leading coefficient vanishes at one triangle vertex.  We record this distinction by the excess order
\[
 e_j:=\begin{cases}
 0,&\theta_j\ne s_k\ \text{for every }k\in T,\\
 h_j-1,&\theta_j=s_k\ \text{for some }k\in T.
 \end{cases}
\]
In the second case, we call the unique such $k$ the contact vertex.
The distance constraints give $z_j-z_k\ne0$, so $h_j<\infty$.  Summing
the three squared-distance orders gives
\begin{equation*}
 \sum_{k\in T}\ord r_{jk}^2=3(1+\beta_j)+e_j
 \qquad\hbox{for a body near in }z.
\end{equation*}

We now separate the forces exerted by the triangle on each outer body from
the mutual force of the two outer bodies.  These are, respectively,
\[
 \begin{gathered}
 F_j=\sum_{k\in T}m_k\delta_{jk}^3(z_j-z_k),\qquad
 G_j=\sum_{k\in T}m_k\delta_{jk}^3(w_j-w_k),\\
 P=\delta_{12}^3(X_1-X_2),\qquad H=\delta_{12}^3(Y_1-Y_2).
 \end{gathered}
\]
In these relative coordinates, the original equations and the total
mass-centre identity become
\begin{equation}\label{d5-eq-outer}
 \begin{aligned}
 X_1-X_2&=(m_1+m_2)P+F_1-F_2,\\
 X_1&=m_2P+(1+m_1/M_T)F_1+(m_2/M_T)F_2,\\
 X_2&=-m_1P+(m_1/M_T)F_1+(1+m_2/M_T)F_2.
 \end{aligned}
\end{equation}
The same identities hold with $(X,P,F)$ replaced by $(Y,H,G)$.  Their weighted
sums are
\begin{equation}\label{d5-eq-centre}
 m_1X_1+m_2X_2=\frac M{M_T}(m_1F_1+m_2F_2),\qquad
 m_1Y_1+m_2Y_2=\frac M{M_T}(m_1G_1+m_2G_2).
\end{equation}
For example, multiplying the two original outer $z$-equations by $m_1,m_2$
and adding cancels their mutual force.  The mass-centre identity gives
$m_1z_1+m_2z_2=(M_T/M)(m_1X_1+m_2X_2)$, which proves the first identity.
The coefficients of $(F_1,F_2)$ in the last two equations of
\eqref{d5-eq-outer} form the matrix
\begin{equation}\label{d5-eq-force-matrix}
 C=\begin{pmatrix}1+m_1/M_T&m_2/M_T\\m_1/M_T&1+m_2/M_T\end{pmatrix},
 \qquad
 C^{-1}=\frac1M\begin{pmatrix}M_T+m_2&-m_2\\-m_1&M_T+m_1\end{pmatrix}.
\end{equation}
Its determinant is $M/M_T>0$.  In particular, leading terms of the two
triangle forces cannot cancel in both outer-body equations simultaneously.

Throughout this subsection put
\[
 a=\ord(X_1-X_2),\qquad b=\ord(Y_1-Y_2),\qquad S=a+b,
 \qquad d=\ord\Delta.
\]
Because $12$ is an actual edge, $a,b$ are finite.  The distance constraints give
\begin{equation}\label{d5-eq-pair-orders}
 \ord P=a-\frac32S=-\frac{a+3b}{2},\qquad
 \ord H=b-\frac32S=-\frac{3a+b}{2}.
\end{equation}
If the coordinate differences on a crossed edge have orders $h,v$, its
contributions to $F_j,G_j$ have orders $-(h+3v)/2$ and $-(3h+v)/2$,
respectively.  Cancellation in the sum can only increase the order of a
force.  When $e_j>0$ and body $j$ is near in $z$, the edge to its contact vertex gives
the unique term of lowest order in each force.  Hence
\begin{equation}\label{d5-eq-special-force}
 \ord F_j=-\frac{h_j+3\beta_j}{2},\qquad
 \ord G_j=-\frac{3h_j+\beta_j}{2}.
\end{equation}

When both bodies are far, we shall also need the inertia.
Lemma~\ref{lem:U-constant} shows that $I$ is constant on every irreducible
complex solution curve.  Its decomposition relative to the triangle centre is
\begin{equation}\label{d5-eq-inertia}
 \begin{gathered}
 I=I_T+J,\qquad I_T=\sum_{k\in T}m_k\xi_k\eta_k=O(t^2),\\
 J=\frac{m_1m_2(X_1-X_2)(Y_1-Y_2)+m_1M_TX_1Y_1+m_2M_TX_2Y_2}{M}.
 \end{gathered}
\end{equation}
Since the internal inertia tends to zero, $J$ is bounded.

\subsubsection{The leading external force on the triangle}\label{d5-mb}
When both outer bodies are near in the same colour, the triangle equations
resolve the possible cancellation of their forces. We shall prove that the
leading external force is equal at the three vertices. Subtracting the
mass-weighted mean then removes this force from the internal equations.

To study the internal forces, put $\hat x=\xi/t$, $\hat y=\eta/t$, and let
\[
 \widehat U(\hat x,\hat y)=\sum_{k<l,\ k,l\in T}m_km_l
 \bigl((\hat x_k-\hat x_l)(\hat y_k-\hat y_l)\bigr)^{-1/2},
\]
where the inverse-distance branches are those selected by the germ.
Let $\phi,\psi$ denote the reduced triangle forces in the $z,w$ directions.
Differentiation and homogeneity give
\begin{equation}\label{d5-eq-potential-identities}
 d\widehat U=-\frac12\sum_km_k(\psi_k\,d\hat x_k+\phi_k\,d\hat y_k),
 \qquad
 \widehat U=\sum_km_k\hat x_k\psi_k=\sum_km_k\hat y_k\phi_k.
\end{equation}
In particular, both forces tend to zero at $(\hat x,\hat y)=(s,s)$.

The next estimate compares the orders of the two reduced forces. It will force
the leading external force to vanish after subtraction of its mean.

\begin{lemma}\label{d5-lem-mb}
For a germ of reduced triangles tending to $(s,s)$, put
$\kappa_\psi=\ord\psi$ and $\kappa_\phi=\ord\phi$, where the order of a vector is
the least order of its components.  If $\kappa_\psi<\kappa_\phi$, then
$\kappa_\phi\le2\kappa_\psi$.  The assertion also holds with the two forces
exchanged.
\end{lemma}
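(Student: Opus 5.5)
Lemma~\ref{d5-lem-mb} is a Morse--Bott estimate at the orbit of the limiting triangle, in the same spirit as Lemma~\ref{lem:d1-estimate}. The point $(s,s)$ lies on the orbit $O$ of Lemma~\ref{lem:d1-mb}. There the reduced force map $(\hat x,\hat y)\mapsto(\phi,\psi)$, restricted to the constraint manifold $\{\chi=0\}$ with mass-centred offsets, has rank $2$. Its kernel is spanned by the two torus generators, i.e.\ the independent dilations of $\hat x$ and of $\hat y$.

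The first step writes the linearization explicitly. By the formula in the proof of Lemma~\ref{lem:d1-mb}, with $L=M^{-1}B$ of rank one on the mass-centred plane $H$ and $\ker L=\C s$, we have
\[
 d\phi=-\tfrac12(L\,d\hat x+3L\,d\hat y),\qquad d\psi=-\tfrac12(3L\,d\hat x+L\,d\hat y).
\]
Along the germ, decompose $\hat x-s=\alpha s+x^\perp$ and $\hat y-s=\beta s+y^\perp$. Here $\alpha,\beta$ are scalar Puiseux series, $x^\perp,y^\perp$ lie in a fixed complement of $\C s$ in $H$, and all four tend to zero. Let $\ell$ be the one linear functional on $H/\C s$ detecting $L$, and put $a=\ell(x^\perp)$, $b=\ell(y^\perp)$.

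The second step expands the forces to second order. Exact invariance under the torus lets me first move along the orbit to absorb $\alpha,\beta$. The quadratic remainder then involves only $x^\perp,y^\perp$, which gives
\[
 \phi=-\tfrac12 v\,(a+3b)+O(|x^\perp|^2+|y^\perp|^2),\qquad \psi=-\tfrac12 v\,(3a+b)+O(|x^\perp|^2+|y^\perp|^2),
\]
with $v=Ls^\perp\neq0$ fixed. Since $\ord$ of $x^\perp$ and $y^\perp$ coincides with that of $a$ and $b$ (the transverse space is one-dimensional), set $\kappa=\min(\ord a,\ord b)$. The linear map $(a,b)\mapsto(a+3b,3a+b)$ is invertible because its determinant is $-8$, so $\min(\kappa_\phi,\kappa_\psi)=\kappa$ unless both linear parts cancel to higher order, which invertibility forbids. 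Hence $\kappa_\psi<\kappa_\phi$ forces $\kappa_\psi=\kappa$, and $3a+b$ vanishes to order strictly greater than $\kappa$.

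The third step is to show that $a+3b=-8a+3(3a+b)$ then has order exactly $\kappa$. Indeed, $\ord(3a+b)>\kappa$ forces $\ord a=\ord b=\kappa$. This would give $\kappa_\phi=\kappa=\kappa_\psi$, contradicting the hypothesis. So the hypothesis can hold only if the quadratic remainder is what produces $\kappa_\psi$, that is, the linear part of $\psi$ cancels beyond order $2\kappa$. I expect this case analysis to be the main obstacle: one must separate the linear and quadratic contributions carefully.

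The cleaner route, which I would try first, is the direct inequality
\[
 |\phi|\ \ge\ c\,\bigl|(a,b)\bigr|-C\bigl|(a,b)\bigr|^2 .
\]
Combined with $|(a,b)|\le c'|(\phi,\psi)|$, this gives $|(a,b)|\approx|t|^{\min(\kappa_\phi,\kappa_\psi)}=|t|^{\kappa_\psi}$. Then $\phi=O(|(a,b)|)$ always gives $\kappa_\phi\ge\kappa_\psi$. For the upper bound, the second-order Taylor expansion shows that if the linear combination in $\phi$ cancels to order above $2\kappa$, then the linear part of $\psi$ is $\approx|t|^\kappa$. Otherwise $\phi$ has order at most $2\kappa$. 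In either case $\kappa_\phi\le2\kappa_\psi$.

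The symmetric statement follows by exchanging $z$ and $w$, which swaps $\phi$ and $\psi$ and preserves the orbit and the rank computation.
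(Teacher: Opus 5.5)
There is a genuine gap, and it lies exactly where the content of the lemma is. First, the roles in your case analysis are reversed. With linear parts $-\tfrac12v(a+3b)$ for $\phi$ and $-\tfrac12v(3a+b)$ for $\psi$, the hypothesis $\kappa_\psi<\kappa_\phi$ means $\ord(a,b)=\kappa=\kappa_\psi$ and $\ord(a+3b)>\kappa$. It does not mean $\ord(3a+b)>\kappa$. The ``contradiction'' in your third step comes only from this mislabelling. With the correct labels nothing contradictory happens: for instance $(a,b)=t(-3,1)$ with no orbit drift gives $\kappa_\psi=1<\kappa_\phi$. Your conclusion that the linear part of $\psi$ must cancel beyond order $2\kappa$ is therefore false. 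Your fallback inequality $|\phi|\ge c|(a,b)|-C|(a,b)|^2$ also fails on the line $a+3b=0$, which is precisely the regime the hypothesis forces.

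What must be proved is $\kappa_\phi\le2\kappa$ when $\ord(a+3b)\ge2\kappa$. In that regime the $t^{2\kappa}$ coefficient of $\phi$ is the sum of two terms: the quadratic remainder at the leading vector $(a_0,b_0)$, with $a_0=-3b_0$, and possibly $-\tfrac12v$ times the $t^{2\kappa}$ coefficient of $a+3b$. Nothing in your dichotomy (``cancels above $2\kappa$'' versus ``otherwise'') shows that this sum is nonzero. Knowing that the linear part of $\psi$ has order $\kappa$ says nothing about $\phi$. You need a component of $\phi$ that the linear term cannot reach, and you must show that its quadratic coefficient is nonzero.

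The paper supplies this through the reduced potential. The exact virial identity $\widehat U=\sum_km_k\hat y_k\phi_k$ gives $\kappa_\phi\le\ord\widehat U$. The Morse--Bott expansion $\widehat U=\tfrac12g^{\mathsf T}K(u)^{-1}g+O(|v|^3)$ then gives $\ord\widehat U=2\kappa_\psi$ exactly. Here $g=-\tfrac{m_3}{2}(\psi_3,\phi_3)$ is the transverse gradient. Its leading term is $t^{\kappa_\psi}(\chi,0)$ with $\chi\ne0$, which follows from $\sum_km_k\psi_{k,0}=\sum_km_ks_k\psi_{k,0}=0$, the second relation holding because $\ord\widehat U\ge2\kappa_\psi>\kappa_\psi$. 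Exactness of the order then uses $(K^{-1})_{11}\ne0$.

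In your coordinates, the pairing $\phi\mapsto\sum_km_k\hat y_k\phi_k$ is blind to the image $\C v$ of the linearization at leading order, since $\sum_km_ks_kv_k=s^{\mathsf T}Bs^\perp=0$. Yet this pairing equals $\widehat U$ exactly. The transverse Hessian of $\widehat U$, proportional to $3a^2+2ab+3b^2$, equals $24b^2\ne0$ on the line $a+3b=0$. Without this ingredient, or an equivalent explicit computation of that second-order coefficient, your argument does not close.
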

\begin{proof}
Translations and independent scalings in the two colours preserve the
zero-multiplier equations.  The resulting orbit is
\[
 O=\{(As+u\mathbf1,Bs+v\mathbf1):A,B\ne0\}.
\]
On this orbit $d\widehat U=0$ and $\widehat U=0$ by
\eqref{d5-eq-potential-identities}.  Let $d_{kl}$ be the inverse distances at
$(s,s)$ and define the symmetric matrix
\[
 L_{kl}=-m_km_ld_{kl}^3\quad(k\ne l),\qquad
 L_{kk}=\sum_{l\ne k}m_km_ld_{kl}^3.
\]
The relations $L\mathbf1=Ls=0$ express the translation and scaling
directions.  Since its off-diagonal entries are nonzero, the matrix has
rank one and kernel $\operatorname{span}\{\mathbf1,s\}$.  Explicitly,
$L=c\,nn^{\mathsf T}$, $c\ne0$, for
$n=(s_4-s_5,s_5-s_3,s_3-s_4)^{\mathsf T}$, so $\ell:=L_{33}\ne0$.
At $(As,Bs)$, choose $\varrho^2=AB$ consistently with the branches.  Differentiating
each potential term twice gives
\[
 \nabla^2\widehat U=\frac1{4\varrho}
 \begin{pmatrix}3A^{-2}L&(AB)^{-1}L\\(AB)^{-1}L&3B^{-2}L\end{pmatrix}.
\]
Its kernel is precisely the tangent space of $O$.  The remaining two
directions describe changes of shape.  We may choose the $\hat x_3,\hat y_3$
directions as a complement because $s_4\ne s_5$.  On this complement, the
Hessian $K$ satisfies
\[
 \det K=\frac{\ell^2}{2\varrho^6}\ne0,\qquad
 (K^{-1})_{11}=\frac{3A^2\varrho}{2\ell}\ne0,
 \qquad (K^{-1})_{22}=\frac{3B^2\varrho}{2\ell}\ne0.
\]
A nearby triangle can therefore be written as
$o(u)+v_1e_3^x+v_2e_3^y$, with $o(u)\in O$.
Expanding in the two transverse directions gives
$\widehat U=\frac12v^{\mathsf T}K(u)v+O(|v|^3)$ and
$\partial_v\widehat U=K(u)v+O(|v|^2)=-\frac{m_3}{2}(\psi_3,\phi_3)$.
Here $v=(v_1,v_2)$ measures the departure from the zero-multiplier orbit;
$u$ may vary along the germ.  Since $K(u(t))$ tends to an invertible matrix,
$v(t)$ and $\partial_v\widehat U$ have the same order.  The Taylor estimates
are uniform for nearby orbit coordinates.
Consequently
\[
 \ord\widehat U\ge2\ord v
 =2\ord\partial_v\widehat U\ge2\min(\kappa_\psi,\kappa_\phi).
\]
Suppose $\kappa_\psi<\kappa_\phi$.  Then $0<\kappa_\psi<\infty$.  The leading
coefficient of $\psi$ satisfies both $\sum_km_k\psi_{k,0}=0$ and
$\sum_km_ks_k\psi_{k,0}=0$, the latter by
\eqref{d5-eq-potential-identities} and $\ord\widehat U>\kappa_\psi$.
These two relations imply that its third component is nonzero: otherwise
$s_4\ne s_5$ would force the other two components to vanish as well.
Thus the transverse gradient has leading term
$t^{\kappa_\psi}(\chi,0)$, $\chi\ne0$.  Substituting it into
\[
 \widehat U=\frac12(\partial_v\widehat U)^{\mathsf T}K(u)^{-1}
 (\partial_v\widehat U)+O(|v|^3)
\]
and using $(K^{-1})_{11}\ne0$ gives $\ord\widehat U=2\kappa_\psi$.
The last identity in \eqref{d5-eq-potential-identities} also gives
$\ord\widehat U\ge\kappa_\phi$, which proves the assertion.
The same argument applies if one force vanishes identically.
\end{proof}

\begin{lemma}[Equality of the leading external forces]\label{d5-lem-uniform}
Suppose both outer bodies are near in $z$.  Let $\kappa_*$ be the least order of
the six individual terms $t^2m_j\delta_{jk}^3(w_k-w_j)$, $j\in\{1,2\}$, $k\in T$.
After summing over $j$, the coefficient of $t^{\kappa_*}$ is the same at all
three triangle vertices.  The same assertion holds with the colours exchanged.
\end{lemma}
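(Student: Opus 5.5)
We work with the triangle $w$-equations, rescaled so that the internal forces appear at finite order. For $k\in T$ the $w$-equation reads
\[
 w_k=\sum_{l\in T,\,l\ne k}m_l\delta_{kl}^3(w_k-w_l)+\sum_{j=1,2}m_j\delta_{jk}^3(w_k-w_j).
\]
Write $w_k=w_c+\eta_k=w_c+t\hat y_k$ and $\delta_{kl}=t^{-1}\cdot(\text{reduced inverse distance})$. Multiplying by $t^2$ then gives
\[
 t^2w_c+t^3\hat y_k=-\psi_k+E_k,\qquad E_k:=\sum_{j}t^2m_j\delta_{jk}^3(w_k-w_j).
\]
Here $\psi_k$ is the reduced triangle $w$-force of \S\ref{d5-mb}, and $E_k$ is exactly the sum over $j$ of the six terms in the statement. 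So the claim concerns the leading coefficient of $E_k$ at order $\kappa_*$. The plan is to show that the non-uniform part of this coefficient would be a leading term of $\psi$ that is too large for Lemma~\ref{d5-lem-mb} to allow.

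First, subtract the mass-weighted mean. Since $\sum_km_k\psi_k=0$, we have $\bar E:=M_T^{-1}\sum_km_kE_k=t^2w_c+t^3M_T^{-1}\sum m_k\hat y_k=t^2w_c$. Therefore $\psi_k=E_k-\bar E-t^3\hat y_k$, and $\hat y_k\to s_k$. Suppose the coefficient $c_k$ of $t^{\kappa_*}$ in $E_k$ is not constant in $k$. If $\kappa_*<3$, this gives $\kappa_\psi=\kappa_*$ with leading coefficient $c_k-\bar c$. If $\kappa_*\ge3$, we only need $\kappa_\psi\le\kappa_*$, and the argument below still applies after accounting for the $t^3s_k$ term.

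Second, estimate $\kappa_\phi$ from the $z$-equations. The hypothesis that both outer bodies are near in $z$ means $\ord X_j\ge1$ and $\ord Y_j=\beta_j<1$. On a crossed edge with orders $h\ge1$ (in $z$) and $v=\beta_j$ (in $w$), the $z$-term $t^2m_j\delta_{jk}^3(z_k-z_j)$ has order $2-(h+3\beta_j)/2$. The $w$-term has order $2-(3h+\beta_j)/2$. Because $h>\beta_j$, the $w$-terms are the dominant ones, and the $z$-force terms have order at least $2\cdot\bigl(2-\tfrac{3h+\beta}{2}\bigr)$ plus a positive excess. More precisely, put $x=h-\beta>0$; then $\ord(z\text{-term})=\ord(w\text{-term})+x$. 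Comparing orders edge by edge, including the contact-vertex case with $h_j>1$ via \eqref{d5-eq-special-force}, we aim to establish $\kappa_\phi>2\kappa_\psi$. The same $t^3$ correction $t^3\hat x_k$ appears on the $z$-side. Lemma~\ref{d5-lem-mb} then forbids $\kappa_\psi<\kappa_\phi$ with $\kappa_\phi>2\kappa_\psi$. Hence the non-uniform part must vanish, which is the conclusion.

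The main obstacle is the inequality $\kappa_\phi>2\kappa_\psi$. Edge-by-edge comparison gives only $\kappa_\phi\ge\kappa_\psi+\min x$, so one must use $\kappa_\psi>0$ together with the bounds $h\ge1$ and $\beta<1$ to obtain the factor two. Concretely, $\kappa_*=2-(3h+\beta)/2$ for the dominant edge, and the $z$-order is $2-(h+3\beta)/2$. The required inequality $2-(h+3\beta)/2>2\bigl(2-(3h+\beta)/2\bigr)$ reduces to $5h-\beta>4$, which holds since $h\ge1>\beta$. This must be checked uniformly over the edges realizing the minima. The edge minimizing the $z$-order may differ from the one realizing $\kappa_*$, so the check should be done against the minimum $z$-order. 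If the inequality fails in some case, the fallback is to use the contact-vertex estimate \eqref{d5-eq-special-force} and the centre identities \eqref{d5-eq-centre}. The colour-exchanged statement follows by the $z\leftrightarrow w$ symmetry.
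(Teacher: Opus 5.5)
Your strategy is the paper's. After subtracting the mass-weighted mean, a non-constant coefficient of $t^{\kappa_*}$ becomes the leading term of $\psi$. The edge-by-edge inequality $5h-\beta>4$ (the paper's $\tfrac12(1-\beta_j+5e_{jk})>0$) then gives $\kappa_\phi>2\kappa_\psi$, contradicting Lemma~\ref{d5-lem-mb}. Your worry about uniformity is unnecessary. The inequality holds on every edge and $\kappa_{jk}\ge\kappa_*$ on every edge, so each crossed $z$-forcing term has order $>2\kappa_{jk}\ge2\kappa_*$. No fallback to \eqref{d5-eq-special-force} or \eqref{d5-eq-centre} is needed.

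There is one genuine gap: you never bound $\kappa_*$ from above, and without such a bound the $t^3$ terms break the argument.
\begin{itemize}
\item In $\phi$ the term $t^3\hat x_k$ has leading coefficient $s_k$, which is not identically zero. If $\kappa_*\ge\tfrac32$, the crossed $z$-terms have order $>2\kappa_*\ge3$, so $\kappa_\phi=3\le2\kappa_*=2\kappa_\psi$. Lemma~\ref{d5-lem-mb} then yields no contradiction.
\item Your remark that for $\kappa_*\ge3$ ``the argument still applies'' is also wrong. At $\kappa_*=3$ the coefficient of $t^3$ in $\psi$ is $(c_k-\bar c)-s_k$, which may vanish. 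For $\kappa_*>3$ one gets $\kappa_\psi=3$ and would need $\kappa_\phi>6$, which fails because of $t^3\hat x$.
\end{itemize}

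The missing observation is the one the paper makes. Each outer body near in $z$ has at least two edges to $T$ with $e_{jk}=0$, because its limit $\theta_j$ can equal at most one of the pairwise distinct $s_k$. On such an edge $\kappa_{jk}=\tfrac12(1-\beta_j)<\tfrac32$, since $\beta_j>-2$. Hence $\kappa_*<\tfrac32$. Then $t^3\hat y$ cannot disturb $\kappa_\psi=\kappa_*$, and $t^3\hat x$ has order $3>2\kappa_*$. With this line added, your proof is complete.
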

\begin{proof}
Write $\ord(z_j-z_k)=1+e_{jk}$, $e_{jk}\ge0$, and
$\ord(w_j-w_k)=\beta_j\in(-2,1)$.  The orders of the reduced forcing terms in
the two colours are
\[
 \kappa_{jk}=\frac{1-\beta_j-3e_{jk}}2,
 \qquad
 \frac{3-3\beta_j-e_{jk}}2
 =2\kappa_{jk}+\frac{1-\beta_j+5e_{jk}}2>2\kappa_{jk}.
\]
The absence of crossed strokes gives $\kappa_*>0$.  At least two edges from each outer body satisfy $e_{jk}=0$, and $\beta_j>-2$, so also $\kappa_*<3/2$.
Let $g_k=t^2\sum_jm_j\delta_{jk}^3(w_k-w_j)$ and let $g^z$ be the analogous
$z$-forcing.  A hat on either forcing denotes subtraction of its mass-weighted
mean over $T$.  The exact centred triangle equations are
\[
 \psi=t^3\hat y-\widehat g,\qquad
 \phi=t^3\hat x-\widehat {g^z}.
\]
If the coefficient of $t^{\kappa_*}$ in $g$ were not constant across the
vertices, it would survive subtraction of the mean.  Then
$\kappa_\psi=\kappa_*$, while every term contributing to $\phi$ has order
strictly greater than $2\kappa_*$, including $t^3\hat x$.
This contradicts Lemma~\ref{d5-lem-mb}.
\end{proof}

\begin{lemma}[Support of the leading external force]\label{d5-lem-support}
For a single body near in $z$ with $e_j=0$, its three lowest $w$-forcing coefficients
are not all equal. If $e_j>0$, its lowest forcing is supported at its contact
vertex. If both bodies are near in $z$ and $e_1,e_2>0$, their contact vertices
coincide, their lowest forcings cancel there, and
\begin{equation}\label{d5-eq-special-balance}
 3h_1+\beta_1=3h_2+\beta_2.
\end{equation}
\end{lemma}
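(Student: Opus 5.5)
We work with the forcing at each vertex $k\in T$ coming from body $j$, namely $t^2m_j\delta_{jk}^3(w_k-w_j)$. Because body $j$ is near in $z$, we have $\ord(w_j-w_k)=\beta_j$ and $\ord(z_j-z_k)=1+e_{jk}$. The distance constraint gives $\delta_{jk}^2=1/((z_j-z_k)(w_j-w_k))$, so this term has order $\kappa_{jk}=(1-\beta_j-3e_{jk})/2$ and leading coefficient
\[
-m_j\,\theta_{jk}^{-3/2}\,c_j^{-1/2},
\]
up to branch signs. Here $\theta_{jk}$ is the leading coefficient of $(z_j-z_k)/t^{1+e_{jk}}$ and $c_j$ is that of $w_j-w_k$; note that $c_j$ does not depend on $k$, since $\ord(w_j-w_k)=\beta_j<1$ while the triangle differences $w_k-w_l$ have order $1$.

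\textbf{Case $e_j=0$.} Then $\theta_{jk}=\theta_j-s_k$, and the three coefficients are proportional to $\pm(\theta_j-s_k)^{-3/2}$. If all three were equal, squaring would give $(\theta_j-s_k)^3$ independent of $k$. Then the $\theta_j-s_k$ differ by cube roots of unity, so each difference $s_k-s_l$ would be a nonzero multiple of $\omega^a-\omega^b$ for some $a\ne b$. A short check shows this is incompatible with three distinct real $s_k$. Concretely, $\theta-s_k=r\omega^{a_k}$; the values $a_k$ cannot all coincide, and two distinct values already force $s_k-s_l=r(\omega^{a_l}-\omega^{a_k})$. Applying this to all three pairs with the $s_k$ real leads to a contradiction; if only two of the $a_k$ are distinct, one pair of $s_k$ coincides. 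So not all three coefficients are equal.

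\textbf{Case $e_j>0$.} The contact edge has strictly smaller order $\kappa_{jk}$, so the lowest forcing is supported at the contact vertex only, as noted for \eqref{d5-eq-special-force}.

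\textbf{Both bodies near in $z$ with $e_1,e_2>0$.} Each lowest-order forcing is supported at a single vertex. Apply Lemma~\ref{d5-lem-uniform}: the summed coefficient of $t^{\kappa_*}$ must be equal at the three vertices. A summed forcing supported at one or two vertices can only be uniform if it vanishes. Hence the terms of order $\kappa_*$ must cancel. This requires both bodies to contribute at order $\kappa_*$, at the same vertex: their contact vertices coincide, their leading coefficients cancel, and $\kappa_{1k}=\kappa_{2k}$.

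In terms of $h_j=1+e_j$, this equality reads $(1-\beta_1-3e_1)/2=(1-\beta_2-3e_2)/2$. It is worth noting that \eqref{d5-eq-special-balance} is instead stated with the $G_j$-orders $(3h_j+\beta_j)/2$ from \eqref{d5-eq-special-force}. So I expect the intended bookkeeping uses the $w$-force orders; I would re-derive which colour's forcing enters Lemma~\ref{d5-lem-uniform} under ``near in $z$'' and match the resulting equality of orders to \eqref{d5-eq-special-balance}.

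The main obstacle is twofold. First, the sign and branch bookkeeping in the $e_j=0$ case, where equal coefficients must be excluded using only real distinct $s_k$. Second, checking that the order matching in the last step yields exactly the stated combination \eqref{d5-eq-special-balance}.
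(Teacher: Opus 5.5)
Your proposal is correct and follows essentially the paper's proof. Squaring removes the branch signs, so equal coefficients force equal cubes $(\theta_j-s_k)^3$; the paper closes this step faster by noting that three distinct numbers with equal cubes sum to zero, so $\theta_j=(s_3+s_4+s_5)/3$ is real and injectivity of the real cube gives the contradiction. From there the two arguments coincide: the contact edge gives the unique lowest term when $e_j>0$, and Lemma~\ref{d5-lem-uniform} forces a common contact vertex with cancellation.

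The second ``obstacle'' you flag is not one, and you used the right colour. The contact-edge term $\delta_{jk}^3(w_k-w_j)$ has unscaled order $-(3h_j+\beta_j)/2$, the order of $G_j$ in \eqref{d5-eq-special-force}. Hence $\kappa_{jk}=2-(3h_j+\beta_j)/2=(1-\beta_j-3e_j)/2$, and your equality $\kappa_{1k}=\kappa_{2k}$ is literally \eqref{d5-eq-special-balance} after substituting $e_j=h_j-1$.
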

\begin{proof}
When $e_j=0$,
$z_j-z_k=t(\theta_j-s_k+o(1))$ with all $\theta_j-s_k\ne0$.
The squares of the three forcing coefficients equal a common nonzero constant times
$(\theta_j-s_k)^{-3}$.  If the coefficients were equal, the three distinct
numbers $\theta_j-s_k$ would have equal nonzero cubes and hence would be all
three cube roots of one number.  Their sum would be zero, making
$\theta_j=(s_3+s_4+s_5)/3$ real.  Injectivity of the real cube would then force
the three numbers to coincide, a contradiction.

When $e_j>0$, the edge to the contact vertex gives the unique lowest-order
force term. If $e_1,e_2>0$, unequal lowest orders would leave the lowest coefficient
supported at one vertex. Equal orders with different contact vertices would
leave it supported at two vertices.  Both are forbidden by
Lemma~\ref{d5-lem-uniform}: a coefficient supported at one or two vertices
cannot be constant at all three.  Therefore their vertices and orders agree,
and their leading coefficients cancel at the common vertex.  The unscaled order is
$-(3h_j+\beta_j)/2$, which proves \eqref{d5-eq-special-balance}.
\end{proof}

\subsubsection{Two far bodies}\label{d5-far-far}
We first consider the case in which both outer bodies are far.
Let $u_j=\ord X_j<1$, $v_j=\ord Y_j<1$, and
$A=u_1+v_1$, $B=u_2+v_2$.  All squared distances from body $1$ to the triangle have order
$A$, and those from body $2$ have order $B$, so
\begin{equation}\label{d5-eq-far-product}
 d=6+3A+3B+S.
\end{equation}
Individual force terms give
\[
 \ord F_1\ge u_1-\frac32A,\quad \ord G_1\ge v_1-\frac32A,
 \qquad
 \ord F_2\ge u_2-\frac32B,\quad \ord G_2\ge v_2-\frac32B.
\]
Thus $A<0$ makes the first two force orders strictly greater than $u_1,v_1$,
and $B<0$ gives the analogous conclusion for body $2$.

To show that the product has positive order, we exclude negative orders
among these three groups of distances.  The three terms in $J$ in
\eqref{d5-eq-inertia} have orders $S,A,B$ and nonzero mass coefficients.
Since $J$ is bounded, a negative minimum must occur at least twice.
Thus, if any order is negative, either
$A,B<0$, or, after exchanging the outer bodies, $A\ge0>B$ and $S=B<0$.
Indeed $S<0$ alone would be a unique negative minimum.

Suppose first that $A,B<0$.  In the first equation of \eqref{d5-eq-centre} the
right side has order greater than $\min(u_1,u_2)$, so
\[
 u_1=u_2=u,\qquad m_1\widehat X_1+m_2\widehat X_2=0.
\]
Here the hats denote nonzero leading coefficients.  Since $m_1+m_2>0$, they
are different and $a=u$.  The $w$ equation similarly gives $v_1=v_2=v$ and
$b=v$.  Hence $A=B=S=u+v<0$.  In the first equation of
\eqref{d5-eq-outer}, the left side has order $u$, whereas $P,F_1,F_2$ all have
orders greater than $u$, a contradiction.

Suppose instead that $A\ge0>B$ and $S<0$.  The original equation
$z_2=-m_1P+F_2$ and the total mass centre give
\[
 (m_1+M_T)X_2-m_1X_1=M(-m_1P+F_2).
\]
Its right side has order greater than
$\min(a,u_2)=\min(u_1,u_2)$.  The equality follows directly from
$X_1-X_2$: if $u_1\ne u_2$, then $a=\min(u_1,u_2)$; otherwise $a\ge u_2$.
Consequently $u_1=u_2=u$ and
$(m_1+M_T)\widehat X_2=m_1\widehat X_1$.  Since $M_T>0$, the leading
coefficients differ and $a=u$.  The same argument in $w$ gives
$v_1=v_2=v$ and $b=v$.  This forces $A=B=S$, contradicting $A\ge0>B$.
Exchanging $A,B$ treats the remaining possibility.
Thus $A,B,S\ge0$, and \eqref{d5-eq-far-product} gives $d\ge6>0$.

\subsubsection{One far body and one near body}\label{d5-far-near}
By exchanging the outer bodies and the colours if necessary, we may suppose
that body $1$ is far and body $2$ is near in
$z$.  Write
\[
 u=\ord X_1<1,\quad v=\ord Y_1<1,\quad
 \alpha=\ord X_2\ge1,\quad \beta=\ord Y_2<1.
\]
The orders $u,v,\beta$ are finite and greater than $-2$; $\alpha$ may be infinite.
Here $a=u$.  Put $c=\ord(Y_1-Y_2)\ge\min(v,\beta)>-2$,
$x=u+v$ and $y=1+\beta$, and let $e\ge0$ be the excess order of body $2$.
Then $S=u+c$ and
\begin{equation}\label{d5-eq-mixed-product}
 d=6+S+3x+3y+e.
\end{equation}
If $x\ge0$, then $u>-1$, $S>-3$ and $y>-1$, so $d>0$.

It remains to consider $x<0$.  In this case the triangle forces on body $1$
have higher order than its coordinates:
\[
 \ord F_1\ge u-\frac32x>u,\qquad
 \ord G_1\ge v-\frac32x>v.
\]
The first equation of \eqref{d5-eq-centre} has left side of order $u$, hence
$\ord F_2=u$.  The equation for $X_2$ in \eqref{d5-eq-outer} then forces
$\ord P=u$, since its left side and its $F_1$ term have higher orders.
But $\ord P=-(u+3c)/2$, so $c=-u$ and $S=0$.
As $v<c$, cancellation in $Y_1-Y_2$ requires $\beta=v$ and equal nonzero
leading coefficients in $Y_1,Y_2$.  Positive masses therefore make
$m_1Y_1+m_2Y_2$ have order $v$.  The second equation of
\eqref{d5-eq-centre} now implies $\ord G_2=v$.

If $e>0$, \eqref{d5-eq-special-force} gives
\[
 u=-\frac{h+3v}{2},\qquad v=-\frac{3h+v}{2}.
\]
These identities give $v=-h$ and $u=h>1$, a contradiction.
Thus $e=0$ and
$u=\ord F_2\ge-(1+3v)/2$.  Since $u<1$, this also gives $v>-1$.
Using $S=0$, $\beta=v$ in \eqref{d5-eq-mixed-product}, we obtain
\[
 d=9+3u+6v\ge\frac{15+3v}{2}>6>0.
\]

\subsubsection{Two near bodies of opposite colours}\label{d5-opposite-near}
Let body $1$ be near in $z$ and body $2$ near in $w$, and write
\[
 \ord X_1\ge1,\quad v=\ord Y_1<1,\qquad
 u=\ord X_2<1,\quad \ord Y_2\ge1.
\]
Here $u,v>-2$, $a=u$, $b=v$ and $S=u+v$.  If $e_1,e_2\ge0$ are the two
excess orders, summing the ten squared-distance orders gives
\begin{equation*}
 d=12+4S+e_1+e_2.
\end{equation*}
Suppose $d\le0$.  Then $S\le-3$, and $u,v>-2$ imply $u,v<-1$.
Equation~\eqref{d5-eq-pair-orders} gives $\ord P>u$ and $\ord H>v$.
The mutual forces therefore do not contribute at the leading coordinate
orders.  In matrix form, the outer-body equations read
\[
 C\binom{F_1}{F_2}=\binom{X_1-m_2P}{X_2+m_1P},\qquad
 C\binom{G_1}{G_2}=\binom{Y_1-m_2H}{Y_2+m_1H}.
\]
The right sides have leading terms $(0,X_2)^{\mathsf T}$ at order $u$ and
$(Y_1,0)^{\mathsf T}$ at order $v$, respectively.  Using
\eqref{d5-eq-force-matrix} yields
\[
 F_1\sim-\frac{m_2}{M}X_2,\quad F_2\sim\frac{M_T+m_1}{M}X_2,
 \qquad
 G_1\sim\frac{M_T+m_2}{M}Y_1,\quad G_2\sim-\frac{m_1}{M}Y_1.
\]
All four coefficients are nonzero, so $\ord F_1=\ord F_2=u$ and
$\ord G_1=\ord G_2=v$.
If $e_1=0$, then $\ord G_1\ge-(3+v)/2>v$, a contradiction.
If $e_1>0$, then
$v=-(3h_1+v)/2$ gives $v=-h_1$, whence
$\ord F_1=-(h_1+3v)/2=h_1>1$, also a contradiction.
Thus $d>0$.

\subsubsection{Two near bodies of the same colour}\label{d5-same-near}
It remains to consider two bodies near in $z$; the other colour is symmetric.
Put $\beta_j=\ord Y_j\in(-2,1)$ and retain $e_j\ge0$ for their excess
orders.  Now $a\ge1$, $b\ge\min(\beta_1,\beta_2)$, so $S>-1$, and
\begin{equation*}
 d=12+3(\beta_1+\beta_2)+S+e_1+e_2.
\end{equation*}
Suppose $d\le0$.  If $S\ge0$, the right side is positive since $\beta_j>-2$.
If, for example, $\beta_1\ge-1$, then $\beta_2>-2$ and $S>-1$ make it greater
than $2$.  Therefore
\begin{equation}\label{d5-eq-negative-orders}
 S<0,\qquad \beta_1<-1,\qquad \beta_2<-1.
\end{equation}

If $e_1=e_2=0$, then $\ord G_j\ge-(3+\beta_j)/2>\beta_j$.
The $w$ equation in \eqref{d5-eq-centre} rules out unequal $\beta_1,\beta_2$,
since its left side would have a unique lowest term.  Thus
$\beta_1=\beta_2=\beta$; writing $Y_j=v_jt^\beta(1+o(1))$, it gives
$m_1v_1+m_2v_2=0$.  Positivity makes $v_1\ne v_2$, hence $b=\beta$.
By $S<0$, $\ord H=b-3S/2>\beta$.  Both outer $w$-equations in
\eqref{d5-eq-outer} now have right sides of order greater than $\beta$, whereas
their left sides have order $\beta$, a contradiction.

Suppose next that $e_1>0$ and $e_2=0$, interchanging the bodies if needed,
and put $h=1+e_1>1$.  If the lowest individual $w$-forcing orders differed, only
one body would contribute at the lowest order.  By
Lemma~\ref{d5-lem-support}, its coefficient vector could not be constant
across the triangle vertices in either case $e_j=0$ or $e_j>0$.
Lemma~\ref{d5-lem-uniform} therefore requires the two lowest orders to agree:
\[
 3h+\beta_1=3+\beta_2,\qquad \beta_1<\beta_2.
\]
Their common unscaled order is $q=-(3h+\beta_1)/2=-(3+\beta_2)/2$.
Equation~\eqref{d5-eq-negative-orders} implies
\[
 h+\beta_1=\frac{3+\beta_2+2\beta_1}{3}<0,\qquad
 q-\beta_1=-\frac32(h+\beta_1)>0.
\]
Both $G_j$ have order at least $q$.  But the $w$ equation of
\eqref{d5-eq-centre} has a unique lowest term of order $\beta_1$ on the left,
and right side of order at least $q>\beta_1$, a contradiction.

Finally suppose $e_1,e_2>0$. Lemma~\ref{d5-lem-support} gives a
common contact vertex and $3h_1+\beta_1=3h_2+\beta_2$.
If $h_1=h_2=h$, then $\beta_1=\beta_2=\beta$, $a\ge h$ and $b\ge\beta$.
Thus $h+\beta\le S<0$ and
\[
 \ord G_j=-\frac{3h+\beta}{2}
 =\beta-\frac32(h+\beta)>\beta,\qquad
 \ord H=b-\frac32S>b\ge\beta.
\]
The outer $w$-equations again give a contradiction.
If $h_1\ne h_2$, interchange the bodies to arrange $h_1<h_2$, and put
$\eta=h_2-h_1>0$.  Then
\[
 \beta_1=\beta_2+3\eta,\qquad
 a=h_1=h_2-\eta,\qquad b=\beta_2.
\]
Here $a=h_1$ follows by subtracting the two actual differences to their common
contact vertex. Both $G_j$ have the same exact order
$q=-(3h_2+\beta_2)/2$, while $\ord H=\beta_2-3S/2>\beta_2$.
If $q>\beta_2$, the second outer $w$-equation has no term on the right of order
$\beta_2$.  If $q<\beta_2$, both left sides and $H$ have orders greater than
$q$, so the two nonzero leading coefficients of $G_1,G_2$ would be annihilated
by the invertible matrix $C$, also impossible.  Thus $q=\beta_2$, giving
$\beta_2=-h_2$ and $S=-\eta$.
The exact force orders in \eqref{d5-eq-special-force} and \eqref{d5-eq-pair-orders} now give
\[
 \ord F_1=h_2-4\eta,\qquad
 \ord F_2=h_2,\qquad \ord P=h_2+\frac\eta2.
\]
In the first equation of \eqref{d5-eq-outer}, $F_1$ is the unique lowest term, forcing
$a=h_2-4\eta$, contrary to $a=h_2-\eta$.
Hence $d>0$ also when both bodies are near in the same colour.

\subsubsection{Completion of the proof}
Up to exchanging the outer bodies and the colours, the four cases cover
all possible positions of the outer bodies.  They show that every Diagram~5
germ on a complex solution curve has $\ord\Delta>0$, proving
Theorem~\ref{thm:d5}.

\qed
}

\section{Diagram 6}\label{sec:d6}

We first represent every Diagram~6 germ in a chart with finite boundary limits.
A torque identity restricts the masses and the signs of the crossed inverse
distances. We then classify the corresponding boundary points and use exact
computer-assisted Taylor calculations to prove that they are isolated. This
excludes Diagram~6 and establishes the case $r=6$ of
Theorem~\ref{thm:exclusion}(a).

\subsection{The diagram and the butterfly chart}

{
Diagram~6 is the \emph{butterfly} of \cite[\S7.4]{AK}: a $z$-triangle and a
$w$-triangle, called the \emph{wings}, meet at one vertex.  We use the numbering
of \cite[\S8.6]{AK}.  The $z$-wing $\{1,2,5\}$ has strokes $12,15,25$ and
$z$-circles at $1,2$; the $w$-wing $\{3,4,5\}$ has strokes $34,35,45$ and
$w$-circles at $3,4$.  Vertex $5$ is uncircled, and there is no stroke between
$\{1,2\}$ and $\{3,4\}$.

The three possible circlings of the butterfly give Diagrams~6--8;
$5$ is uncircled in each of them \cite[\S7.4]{AK} (Fact~\ref{fact:fig11}).
Diagram~6 is the case in which both wings are circled.  Rules~1a and~1d place
the circles of the $z$-wing exactly at $1,2$, and give the corresponding
statement for $w$.
}

{
Permuting the bodies together with their masses, or exchanging
$z\leftrightarrow w$, preserves \eqref{eq:system}.  Thus the thirty coloured
labels of Diagram~6, specified by a common vertex and an ordered pair of
wings, are obtained from the numbering above.  The exchange
$z\leftrightarrow w$ composed with $(13)(24)$ preserves this numbering.
It suffices to treat this case at arbitrary positive masses.

The two circled wings have opposite coordinate scales.  We use a chart
adapted to these scales, in which a Diagram~6 germ has a finite limit.
We express the chart coordinates through rotation-invariant combinations,
which allows us to fix the two triangle scales directly.
}

\begin{definition}\label{d6-def-chart}
{
The \emph{butterfly chart} has $21$ coordinates: $t$, $Z_1,\dots,Z_5$,
$W_1,\dots,W_5$, and $D_{kl}$.  They are related to \eqref{eq:system} by
$(z_k,w_k)=(t^{-1}Z_k,tW_k)$ for $k=1,2$,
$(z_k,w_k)=(tZ_k,t^{-1}W_k)$ for $k=3,4$, and
$(z_5,w_5)=(tZ_5,tW_5)$.  On the \emph{crossed pairs} $13,14,23,24$ we set
$\delta_{kl}=tD_{kl}$; on the \emph{wing pairs} $12,15,25,34,35,45$ we set
$\delta_{kl}=D_{kl}$.

The defining equations, also called the \emph{rows}, are obtained as follows.
For each body, we substitute these expressions into the $z$-equation
$z_k-\sum_{l\ne k}m_l\delta_{kl}^3(z_k-z_l)=0$ and the corresponding equation
for $w$.  We multiply the two equations by $(t,t^{-1})$ for $k=1,2$,
by $(t^{-1},t)$ for $k=3,4$, and by $(t,t)$ for $k=5$; the resulting rows are
$R^z_k,R^w_k$.  We also retain the ten distance equations
$\delta_{kl}^2z_{kl}w_{kl}-1=0$ and impose the gauges
$Z_2-Z_1-1=0$, $W_4-W_3-1=0$.

We denote these $22$ rows by $F=0$ and call
$\mathcal B=F^{-1}(0)\cap\{t=0\}$ the \emph{boundary fibre}.
The notation $Z_k,W_k$ for chart coordinates is distinct from the force
terms $Z_{kl}=z_{kl}\delta_{kl}^3$ in \cite[\S3]{AK}.
}
\end{definition}

{\begin{lemma}\label{d6-lem-chart}
Let $m\in\Rp^5$.
\begin{romanlist}
  \item The rows are polynomials in the $21$ coordinates, of degree at most $5$ in $t$.
  \item Let $(z,w,\delta)\in A_m$ (so $z_{12}w_{34}\ne0$), choose $t$ with
  $t^2=(z_{12}w_{34})^{-1}$ and put $\nu=(tz_{12})^{-1}$.  The chart point of the rotated
  solution $(\nu z,\nu^{-1}w,\delta)$ has $Z_k=z_k/z_{12}$, $W_k=w_kz_{12}$ $(k=1,2)$,
  $Z_k=z_kw_{34}$, $W_k=w_k/w_{34}$ $(k=3,4)$, $Z_5=z_5w_{34}$, $W_5=w_5z_{12}$,
  $D_{kl}=\delta_{kl}/t$ on crossed pairs and $D_{kl}=\delta_{kl}$ on wing pairs, and satisfies
  $F=0$: each coordinate row is $t^{\pm1}\nu^{\pm1}$ times an equation of \eqref{eq:system}, the
  distance rows are those of \eqref{eq:system}, and both gauges hold.
  \item At $t=0$ the crossed distance rows read $D_{kl}^2Z_kW_l=-1$, and the wing distance rows
  read $D_{12}^2(Z_2-Z_1)(W_2-W_1)=1$, $-D_{15}^2Z_1(W_5-W_1)=1$, $-D_{25}^2Z_2(W_5-W_2)=1$,
  $D_{34}^2(Z_4-Z_3)(W_4-W_3)=1$, $-D_{35}^2(Z_5-Z_3)W_3=1$, $-D_{45}^2(Z_5-Z_4)W_4=1$.  So
  every $D_{kl}$ is nonzero on $\mathcal B$.
  \item Identically,
  $m_1R^z_1+m_2R^z_2+m_5R^z_5+t^2(m_3R^z_3+m_4R^z_4)=m_1Z_1+m_2Z_2+t^2(m_3Z_3+m_4Z_4+m_5Z_5)$,
  and likewise after $z\leftrightarrow w$, $1\leftrightarrow3$, $2\leftrightarrow4$.  Hence on
  $\mathcal B$: $Z_1=-m_2/(m_1+m_2)$, $Z_2=m_1/(m_1+m_2)$, $W_3=-m_4/(m_3+m_4)$,
  $W_4=m_3/(m_3+m_4)$.
  \item At $t=0$ the rows $R^z_1,R^z_2,R^z_5,R^w_1,R^w_2$, the distance rows of $12,15,25$ and
  the first gauge involve only the \emph{$z$-wing coordinates}
  $Z_1,Z_2,W_1,W_2,W_5,D_{12},D_{15},D_{25}$; the remaining rows except the crossed distance rows
  involve only the \emph{$w$-wing coordinates} $Z_3,Z_4,Z_5,W_3,W_4,D_{34},D_{35},D_{45}$; and
  $D_{13},D_{14},D_{23},D_{24}$ occur only in the crossed distance rows.
\end{romanlist}
\end{lemma}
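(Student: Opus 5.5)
This is a direct verification by substitution, carried out item by item.

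For (i) and (ii), substitute the chart expressions into each coordinate equation and multiply by the prescribed weight. The exponent bookkeeping is the same as in \eqref{eq:d2-rows}. On a wing pair the factor $\delta_{kl}^3(q_k-q_l)$ carries a power of $t$ between $-1$ and $1$. On a crossed pair, $\delta_{kl}^3=t^3D_{kl}^3$ multiplies a difference such as $t^{-1}Z_1-tZ_3$. The most negative power that can occur is $t^{-1}$, and after weighting by $t^{\pm1}$ every exponent is nonnegative. The largest exponent comes from crossed terms in the rows for bodies $3,4$. For example, $t\cdot t^3D^3\cdot(t Z_k - t^{-1}Z_i)$ in $R^w$-type rows gives at most $t^5$, and checking these extreme terms row by row gives the degree bound $5$.

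For (ii), compute each chart coordinate from its definition after the rotation by $\nu$. For $k=1,2$ we get $Z_k=t\nu z_k$, and since $t\nu=z_{12}^{-1}$ this equals $z_k/z_{12}$. For $k=3,4$ we get $W_k=t\nu^{-1}w_k$, and $t\nu^{-1}=t^2z_{12}=w_{34}^{-1}$, so $W_k=w_k/w_{34}$. The remaining coordinates follow the same way. The gauges hold because $Z_2-Z_1=z_{12}/z_{12}=1$ and $W_4-W_3=1$. Each coordinate row is $t^{\pm1}\nu^{\pm1}$ times an equation of \eqref{eq:system}, by the rotation invariance already used in Proposition~\ref{prop:d2-cover}.

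For (iii), set $t=0$ in the distance rows. A crossed difference becomes $z_{kl}w_{kl}=(t^{-1}Z_k-tZ_l)(tW_k-t^{-1}W_l)$, and multiplying by $t^2D^2$ leaves the limit $-D^2Z_kW_l$. Wing differences such as $z_{15}w_{15}$ have $t$-orders $-1$ and $+1$, so the factors of $t$ cancel and the stated limits remain. Nonvanishing of every $D_{kl}$ on $\mathcal B$ is immediate, since each limit equation has the form $D^2\cdot(\cdots)=\pm1$.

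For (iv), use antisymmetry of the internal forces. The weighted sum $\sum m_k\cdot(\text{$z$-equation}_k)$ equals $\sum m_kz_k$. Multiplying by $t$, and rewriting each body's weight in terms of the row weights, gives the stated identity. The bodies $3,4$ appear with the extra factor $t^2$ because their rows carry the weight $t^{-1}$ where the sum needs $t$. At $t=0$ this gives $m_1Z_1+m_2Z_2=0$, and together with the gauge $Z_2-Z_1=1$ it yields the stated values of $Z_1,Z_2$. The $w$-version gives $W_3,W_4$ symmetrically.

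Item (v) follows from inspecting the $t=0$ rows obtained in (i)--(iii), since crossed terms carry positive powers of $t$ except in the crossed distance rows. The only real care needed in the whole lemma is the exponent bookkeeping in (i), in particular checking that body $5$, with weight $(t,t)$, produces no negative powers. For instance, in $R^z_5$ the term $t\cdot D_{15}^3(tZ_5-t^{-1}Z_1)$ has order $0$, which is acceptable.
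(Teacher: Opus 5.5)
Your route is the paper's: substitute the chart into every row (the paper writes out $R^z_i$, $R^w_i$, $R^z_5$ and the distance rows explicitly, which gives (i), (iii) and (v) at once), read off (ii) from $t\nu=z_{12}^{-1}$ and $t\nu^{-1}=w_{34}^{-1}$, and obtain (iv) from the antisymmetric mass-weighted sum multiplied by $t$. A few bookkeeping slips need fixing. Your sample degree-five term $t\cdot t^3D^3(tZ_k-t^{-1}Z_i)$ is, up to sign and mass, the crossed term of $R^z_1,R^z_2$, not of $R^w$-type rows; the other rows reaching $t^5$ are $R^w_3,R^w_4$. The rows weighted by $t^{-1}$ are polynomial because all their terms have order at least $1$, not because the most negative power is $t^{-1}$. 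In (v) the splitting into wings also uses three further facts: $Z_5$ enters the $z$-wing rows only through $t^2Z_5$, $W_5$ enters the $w$-wing rows only through $t^2W_5$, and the other-wing forces in $R^z_5$ and $R^w_5$ carry a factor $t^2$. It is not enough that the crossed terms vanish at $t=0$.
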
}
{\begin{proof}
Put $I=\{1,2\}$ and $J=\{3,4\}$, and let $i'$ be the other element of $I$.
Substitution in the defining equations gives
\begin{align*}
R_i^z={}&Z_i-m_{i'}D_{ii'}^3(Z_i-Z_{i'})
 -m_5D_{i5}^3(Z_i-t^2Z_5)
 -t^3\sum_{j\in J}m_jD_{ij}^3(Z_i-t^2Z_j),\\
R_i^w={}&W_i-m_{i'}D_{ii'}^3(W_i-W_{i'})
 -m_5D_{i5}^3(W_i-W_5)
 -t\sum_{j\in J}m_jD_{ij}^3(t^2W_i-W_j),\\
R_5^z={}&t^2Z_5-\sum_{i\in I}m_iD_{i5}^3(t^2Z_5-Z_i)
 -t^2\sum_{j\in J}m_jD_{j5}^3(Z_5-Z_j).
\end{align*}
The other coordinate rows follow by exchanging the wings and $z,w$.
The distance rows for $ii'$, $i5$, $j5$, and $ij$ are, respectively,
\begin{gather*}
D_{ii'}^2(Z_i-Z_{i'})(W_i-W_{i'})-1,\qquad
D_{i5}^2(Z_i-t^2Z_5)(W_i-W_5)-1,\\
D_{j5}^2(Z_j-Z_5)(W_j-t^2W_5)-1,\qquad
D_{ij}^2(Z_i-t^2Z_j)(t^2W_i-W_j)-1,
\end{gather*}
and the $34$ row is the wing exchange of the $12$ row.
These formulas prove (i), (iii), and (v). For (ii), the displayed change of
variables is precisely the substitution defining these rows, with
$t^2z_{12}w_{34}=1$ and $\nu tz_{12}=1$; the two gauges therefore hold.
Finally, in the weighted sum of the original $z$-equations every pair force
cancels with the force of the reversed pair. Multiplying this identity by
$t$ gives (iv). At $t=0$, its first identity and the first gauge give
$Z_1=-m_2/(m_1+m_2)$ and $Z_2=m_1/(m_1+m_2)$; the other wing is identical.
\end{proof}

}\subsection{Limits in the butterfly chart}

The diagram rules determine the orders of the coordinate differences.
We use these orders to prove that every germ reaches the boundary fibre
with nonzero inverse-distance coordinates.

\begin{proposition}[Limits in the chart]\label{prop:d6-cover}
Let $m\in\Rp^5$ and let $\gamma(s)$, $s\to0$, be a Diagram-6 germ, in the numbering above, on a
curve in $A_m$.  Along $\gamma$ define $t$ and $\nu$ as in Lemma~\ref{d6-lem-chart}(ii), and
let $\tilde\gamma(s)$ be the chart point of the rotated solution.  Then $t\to0$, every chart
coordinate is bounded along $\tilde\gamma$, and $\tilde\gamma(s)$ converges to a point
$p\in\mathcal B$ at which $Z_1,Z_2,W_3,W_4$ and all ten $D_{kl}$ are nonzero.
\end{proposition}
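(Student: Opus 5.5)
We follow the template of Propositions~\ref{prop:d2-cover} and~\ref{prop:d3-cover}: first read off the orders of all coordinates and inverse distances from the diagram rules (Fact~\ref{fact:rules}), in the normalization of Lemma~\ref{lem:germ} with $Z\approx W\approx\epsilon^{-2}$ and orders measured so that the maximal components of $\mathcal Z,\mathcal W$ have order $-2$. Then we translate these orders into the chart via Lemma~\ref{d6-lem-chart}(ii). Rotations by a factor of order $0$ do not affect the chart point, since $t$ and $\nu$ are built from the rotation-invariant $z_{12}w_{34}$ and $tz_{12}$. So it suffices to work in the AK normalization and check $\ord\nu=0$.

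The order estimates proceed as follows.
\begin{enumerate}
\item The $z$-wing $\{1,2,5\}$ is an isolated component of the $z$-diagram with circles, so by Rule~1e it has a maximal stroke, and by Rule~2e all three strokes $12,15,25$ are maximal. Estimate~2 then gives $r_{12},r_{15},r_{25}\approx1$, $z_{12},z_{15},z_{25}\approx\epsilon^{-2}$, $w_{12},w_{15},w_{25}\approx\epsilon^{2}$. Symmetrically, for the $w$-wing we get $r_{34},r_{35},r_{45}\approx1$, $w_{34},w_{35},w_{45}\approx\epsilon^{-2}$, $z_{34},z_{35},z_{45}\approx\epsilon^2$.
\item The circles give $z_1,z_2\approx\epsilon^{-2}$ and $w_3,w_4\approx\epsilon^{-2}$. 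Bodies $3,4,5$ carry no $z$-circle and bodies $1,2,5$ no $w$-circle, so $z_3,z_4,z_5\prec\epsilon^{-2}$ and $w_1,w_2,w_5\prec\epsilon^{-2}$.
\item For the crossed pairs $ij$ ($i\in\{1,2\}$, $j\in\{3,4\}$) this yields $z_{ij}\approx w_{ij}\approx\epsilon^{-2}$, hence $\delta_{ij}\approx\epsilon^2$.
\item Finally, we improve the bounds on the small coordinates by the equations themselves, exactly as in item~5 of Proposition~\ref{prop:d2-cover}. In $z_5=\sum_l m_l\delta_{5l}^3(z_5-z_l)$, the $z$-wing terms have order $-2$ and cancel to leading order. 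Rather than rely on this, we use $z_{5}=z_{35}+z_3$ with $z_{35}\approx\epsilon^2$. In the body-$3$ equation $z_3=m_4\delta_{34}^3z_{34}+m_5\delta_{35}^3z_{35}+\sum_{i}m_i\delta_{3i}^3(z_3-z_i)$, the wing terms are $O(\epsilon^2)$ and the crossed terms are $O(\epsilon^6\epsilon^{-2})=O(\epsilon^4)$. This gives $z_3\preccurlyeq\epsilon^2$, hence $z_4,z_5\preccurlyeq\epsilon^2$, and symmetrically $w_1,w_2,w_5\preccurlyeq\epsilon^2$.
\end{enumerate}

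With these orders, $t^2=(z_{12}w_{34})^{-1}$ gives $\ord t=2$ (i.e.\ $t\approx\epsilon^2$) and $\nu=(tz_{12})^{-1}$ has order $0$. The chart coordinates of Lemma~\ref{d6-lem-chart}(ii) then have nonnegative order. Specifically, $Z_k=z_k/z_{12}$ and $W_k=w_k/w_{34}$ for the circled bodies have order $0$. The quantities $W_1,W_2=w_kz_{12}$, $Z_3,Z_4,Z_5=z_kw_{34}$ and $W_5=w_5z_{12}$ have order $\ge0$ by step~4. The wing $D_{kl}=\delta_{kl}$ have order $0$, and the crossed $D_{kl}=\delta_{kl}/t$ have order $0$. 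Thus all chart coordinates converge, and $t\to0$.

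Since each row is a monomial multiple of an equation of \eqref{eq:system} and the gauges hold identically, $F=0$ along $\tilde\gamma$. Closedness then gives $p\in\mathcal B$. Nonvanishing of $D_{kl}$ at $p$ follows from Lemma~\ref{d6-lem-chart}(iii). The values of $Z_1,Z_2,W_3,W_4$ are nonzero by Lemma~\ref{d6-lem-chart}(iv).

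The main obstacle is step~4, namely controlling $z_5$ and $w_5$ (and $Z_5,W_5$) so that the uncircled common vertex does not produce a negative-order chart coordinate. Since $5$ lies in both wings, the leading wing forces on it cancel in both colours. The bound must therefore come from routing through body $3$ (respectively body $1$) as above, or alternatively from Rule~1c and Estimate~1 applied at vertex $5$. I would first try the routing argument and fall back on the AK rules if the crossed terms in the body-$3$ equation turn out larger than estimated.
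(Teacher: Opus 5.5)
Your argument is correct and is essentially the paper's proof. It uses the same order bookkeeping and the same bound $\ord z_3,\ord z_4\ge2$ (and dually $\ord w_1,\ord w_2\ge2$) from the body equations, where the wing terms have order $2$ and the crossed terms order $4$, so the routing through $z_5=z_3+z_{35}$ works exactly as you estimated; it then translates into chart coordinates via Lemma~\ref{d6-lem-chart}(ii). The differences are minor. The paper obtains the wing orders from the circle orders, the stroke relation $\ord z_{kl}+3\ord w_{kl}=4$ and Estimate~1, rather than from Rules~1e and~2e. Also, the chart point is invariant under every rotation $(z,w)\mapsto(az,a^{-1}w)$, not only those of order $0$, because $\nu$ absorbs $a$; note that $tz_{12}$ is not itself invariant. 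This is what justifies computing in the normalized frame.
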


\begin{proof}
{
We normalize as in \cite[p.~547]{AK}, with the largest components of
$\mathcal Z$ and $\mathcal W$ both having modulus $\epsilon^{-2}$.
Lemma~\ref{lem:germ} realizes this normalization along $\gamma$, with
$\epsilon$ comparable to a positive power of $s$.  We write $\ord$ for order
in $\epsilon$; thus $\ord f>\alpha$ means $f\epsilon^{-\alpha}\to0$.
Since the chart coordinates are invariant under $z\mapsto az$, $w\mapsto a^{-1}w$,
we may compute their orders in this frame.
}

{
The circles give $\ord z_1=\ord z_2=\ord w_3=\ord w_4=-2$.
The uncircled coordinates $z_3,z_4,z_5,w_1,w_2,w_5$ have order $>-2$,
so $\ord z_{15}=\ord z_{25}=-2$.  A $z$-stroke $kl$ means
$\ord(z_{kl}\delta_{kl}^3)=\ord(z_{kl}^{-1/2}w_{kl}^{-3/2})=-2$, or equivalently
$\ord z_{kl}+3\ord w_{kl}=4$.  Thus $\ord w_{15}=\ord w_{25}=2$, and hence
$\ord w_{12}\ge2$.  The stroke $12$ then gives $\ord z_{12}\le-2$.
Combining this with $\ord z_{12}\ge-2$ (Estimate~1, Fact~\ref{fact:rules}),
we obtain $\ord z_{12}=-2$, $\ord w_{12}=2$.
Consequently $r_{kl}^2=z_{kl}w_{kl}$ and $\delta_{kl}$ have order $0$ on
all three $z$-wing pairs; these are the maximal edges with $r\approx1$ in
Estimate~2.  The same argument applies to the $w$-wing.

On a crossed pair, both $z_{kl}$ and $w_{kl}$ are differences of a circled
and an uncircled coordinate.  Both therefore have order $-2$, and
$\ord\delta_{kl}=2$.

We also need bounds on the small coordinate of each outer vertex.
In the $w$-equation $w_1=\sum_{l\ne1}m_l\delta_{1l}^3(w_1-w_l)$,
the terms have orders $2$ for $l=2,5$ and $4$ for $l=3,4$.
It follows that $\ord w_1\ge2$.  Similarly,
$\ord w_2,\ord z_3,\ord z_4\ge2$, while
$w_5=w_1+w_{15}$ and $z_5=z_3+z_{35}$ have order $\ge2$.
These are the estimates $A_w,B_z,m_1w_1+m_2w_2,w_5,B_w,A_z\preccurlyeq\epsilon^2$
of \cite[\S8.6]{AK}.

Finally, $\ord t=-\frac12(\ord z_{12}+\ord w_{34})=2$ and
$\ord\nu=-\ord t-\ord z_{12}=0$.
Lemma~\ref{d6-lem-chart}(ii) now shows that $Z_1,Z_2,W_3,W_4$ and all ten
$D_{kl}$ have order exactly $0$; for the crossed pairs the order is
$2+\frac12(-2-2)=0$.  The remaining coordinates
$W_1,W_2,W_5,Z_3,Z_4,Z_5$ have order $\ge0$.
Thus all chart coordinates are bounded.  Their Puiseux series in $s$
converge, giving $\tilde\gamma(s)\to p\in\mathcal B$.
Coordinates of order exactly $0$ have nonzero limits.
}
\end{proof}
Consequently $D_{12}=\delta_{12}$ and $D_{34}=\delta_{34}$ have finite
nonzero limits. The torque identity can therefore be evaluated at the
boundary point supplied by Proposition~\ref{prop:d6-cover}.

\subsection{A necessary mass condition}

{
The torque identity imposes a condition on the masses before any local
calculation at the boundary points.  Its leading crossed contribution is
$\Sigma=\sum_{k\in\{1,2\},\,l\in\{3,4\}}m_km_lD_{kl}^3Z_kW_l$.
}

{\begin{lemma}\label{d6-lem-torque}
There is a polynomial $Q$ in the chart coordinates, with coefficients polynomial in $m$, such
that identically
\begin{equation}\label{d6-eq-torque}
  \sum_{k=1,2}m_k\bigl(W_kR^z_k-Z_kR^w_k\bigr)+m_5W_5R^z_5=-t\,\Sigma+t^2Q.
\end{equation}
\end{lemma}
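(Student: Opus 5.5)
The identity comes from the exact torque identity $\sum_{k=1}^5 m_k(w_kF^z_k-z_kF^w_k)=0$, used already in Lemma~\ref{lem:d2-syzygy}. Here $F^z_k,F^w_k$ are the coordinate equations of \eqref{eq:system}. It holds identically because each pair contributes $m_km_l\delta_{kl}^3\bigl(w_k(z_k-z_l)-z_k(w_k-w_l)+w_l(z_l-z_k)-z_l(w_l-w_k)\bigr)=0$, and the inertial terms give $w_kz_k-z_kw_k=0$. The plan is to translate this identity into chart rows. First, I check how $W_kR^z_k-Z_kR^w_k$ relates to $w_kF^z_k-z_kF^w_k$ for each body. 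For $k=1,2$ we have $R^z_k=tF^z_k$, $R^w_k=t^{-1}F^w_k$, $W_k=t^{-1}w_k$ and $Z_k=tz_k$, so $W_kR^z_k-Z_kR^w_k=w_kF^z_k-z_kF^w_k$ exactly. The same holds for $k=3,4$. For body $5$, however, $R^z_5=tF^z_5$, $R^w_5=tF^w_5$, $W_5=t^{-1}w_5$ and $Z_5=t^{-1}z_5$, so $W_5R^z_5-Z_5R^w_5=w_5F^z_5-z_5F^w_5$ also holds.

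The left side of \eqref{d6-eq-torque} omits bodies $3,4$ and the term $Z_5R^w_5$. It is therefore not the full torque but the torque on the $z$-wing $\{1,2\}$ together with half of body $5$. Rather than summing all bodies, I would compute the left side directly, as Lemma~\ref{lem:d2-syzygy} does, by substituting the explicit rows from the proof of Lemma~\ref{d6-lem-chart}.

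In $\sum_{k=1,2}m_k(W_kR^z_k-Z_kR^w_k)$, the $12$ pair terms cancel by antisymmetry, since $(Z_1,W_1,Z_2,W_2)$ enter as in \eqref{eq:system}. The $i5$ terms give $-m_im_5D_{i5}^3\bigl(W_i(Z_i-t^2Z_5)-Z_i(W_i-W_5)\bigr)=-m_im_5D_{i5}^3(Z_iW_5-t^2W_iZ_5)$. The term $m_5W_5R^z_5$ contains $+\sum_i m_im_5D_{i5}^3W_5(Z_i-t^2Z_5)$ and $m_5t^2W_5Z_5(1-\dots)$. Adding these, the $Z_iW_5$ parts cancel, leaving only $t^2$ multiples. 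The $J$-block terms in $R^z_5$ also carry the factor $t^2$.

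The crossed pairs remain. From $R^z_i$ they give $-m_i\sum_j m_jD_{ij}^3W_i\,t^3(Z_i-t^2Z_j)$, which is $O(t^2)$. From $-Z_iR^w_i$ they give $+m_iZ_it\sum_jm_jD_{ij}^3(t^2W_i-W_j)=-t\sum_j m_im_jD_{ij}^3Z_iW_j+O(t^3)$. Summing over $i$ gives $-t\Sigma$. Every leftover term is a polynomial times $t^2$ or a higher power, and these define $Q$.

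No step is a genuine obstacle; this is bookkeeping. The only point needing care is to confirm the cancellation of the $O(1)$ and $O(t)$ terms involving body $5$, namely the $Z_iW_5$ terms between $R^z_i$, $R^w_i$ and $R^z_5$. This requires the body-$5$ weight $(t,t)$ from Definition~\ref{d6-def-chart}, and I would verify that cancellation first.
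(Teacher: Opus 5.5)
Your proposal is correct and is essentially the paper's proof. You substitute the explicit rows, let the $12$ terms cancel by antisymmetry, and cancel $-m_im_5D_{i5}^3Z_iW_5$ against the constant part of $m_5W_5R^z_5$. The crossed part of $-Z_iR^w_i$ then gives $-t\Sigma$, and everything else is divisible by $t^2$; the paper only adds the explicit formula for $Q$ and the remark that the $t^3W_iZ_i$ crossed terms cancel exactly, neither of which your argument needs.
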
}
{\begin{proof}
Let $I=\{1,2\}$ and $J=\{3,4\}$. The polynomial in the statement can be chosen as
\begin{align*}
Q={}&t^3\sum_{i\in I,\,j\in J}m_im_jD_{ij}^3W_iZ_j
 +m_5W_5Z_5\\
&+m_5Z_5\sum_{i\in I}m_iD_{i5}^3(W_i-W_5)
 -m_5W_5\sum_{j\in J}m_jD_{j5}^3(Z_5-Z_j).
\end{align*}
Indeed, the terms for the pair $12$ cancel in
$\sum_{i\in I}m_i(W_iR_i^z-Z_iR_i^w)$.
Each crossed pair contributes
$m_im_jD_{ij}^3(-tZ_iW_j+t^5W_iZ_j)$,
and each pair $i5$ contributes
$m_im_5D_{i5}^3(t^2W_iZ_5-Z_iW_5)$.
The terms $-m_im_5D_{i5}^3Z_iW_5$ cancel with the constant part of
$m_5W_5R_5^z$. The remaining terms are exactly $-t\Sigma+t^2Q$.
\end{proof}

}{
To relate this identity to \cite[\S8.6]{AK}, let $\rho^z_k,\rho^w_k$ be
the left-hand sides of \eqref{eq:system}.  The pair torque
$\sum_{k=1,2}m_k(w_k\rho^z_k-z_k\rho^w_k)$ is
$\sum_{k=1,2}\sum_{l=3,4,5}m_km_l\delta_{kl}^3(w_kz_l-z_kw_l)$,
since the terms with $l\in\{1,2\}$ cancel.  It is $2i$ times (31) of that paper.
In the chart, this torque becomes the first sum in \eqref{d6-eq-torque}.
The terms with $l=3,4$ contribute $-t\Sigma+t^5\sum m_km_lD_{kl}^3W_kZ_l$,
and those with $l=5$ contribute
$\sum_{k=1,2}m_km_5D_{k5}^3(t^2W_kZ_5-Z_kW_5)$.
The additional row in the lemma removes the latter contribution at order
zero, because $R^z_5-(m_1D_{15}^3Z_1+m_2D_{25}^3Z_2)$ is divisible by $t^2$.
}

\begin{proposition}[Necessary mass condition]\label{prop:d6-first}
Let $m\in\Rp^5$, let $\gamma$ be a Diagram-6 germ in the numbering above, and let $p$ be its
limit (Proposition~\ref{prop:d6-cover}).  Then:
\begin{romanlist}
  \item $\Sigma(p)=0$, and $\Sigma(p)=-(m_1m_3D_{13}+m_1m_4D_{14}+m_2m_3D_{23}+m_2m_4D_{24})(p)$;
  \item $m_1=m_2$ and $m_3=m_4$; this is relation (32) of \cite[\S8.6]{AK};
  \item $(D_{13},D_{14},D_{23},D_{24})(p)=(2i\eta_{13},\,2\eta_{14},\,2\eta_{23},\,2i\eta_{24})$
  with signs $\eta_{kl}\in\{\pm1\}$ satisfying $\eta_{14}=-\eta_{23}$ and $\eta_{13}=-\eta_{24}$.
\end{romanlist}
In another numbering, wings $\{a,b,c\}$ and $\{d,e,c\}$ force $m_a=m_b$ and $m_d=m_e$.
\end{proposition}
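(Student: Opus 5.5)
The plan is to evaluate the torque identity of Lemma~\ref{d6-lem-torque} along the germ, then solve the resulting boundary equations explicitly.

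\emph{Step (i).} Along $\tilde\gamma$ every row vanishes, so \eqref{d6-eq-torque} gives $t\Sigma=t^2Q$. Since $t\not\equiv0$ and $Q$ is bounded by Proposition~\ref{prop:d6-cover}, we get $\Sigma=tQ\to0$, hence $\Sigma(p)=0$. The crossed distance rows at $t=0$ read $D_{kl}^2Z_kW_l=-1$ (Lemma~\ref{d6-lem-chart}(iii)). Therefore $D_{kl}^3Z_kW_l=-D_{kl}$, which gives the displayed form of $\Sigma(p)$.

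\emph{Step (ii): explicit crossed inverse distances.} By Lemma~\ref{d6-lem-chart}(iv) we have $Z_1=-m_2/\mu$, $Z_2=m_1/\mu$ with $\mu=m_1+m_2$, and $W_3=-m_4/\mu'$, $W_4=m_3/\mu'$ with $\mu'=m_3+m_4$. Hence
\[
D_{13}^2=-\frac{\mu\mu'}{m_2m_4},\quad D_{14}^2=\frac{\mu\mu'}{m_2m_3},\quad D_{23}^2=\frac{\mu\mu'}{m_1m_4},\quad D_{24}^2=-\frac{\mu\mu'}{m_1m_3}.
\]
So $D_{14},D_{23}$ are real and $D_{13},D_{24}$ are purely imaginary. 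Separating real and imaginary parts of $\Sigma(p)=0$ gives two equations:
\[
m_1m_4D_{14}+m_2m_3D_{23}=0,\qquad m_1m_3D_{13}+m_2m_4D_{24}=0.
\]
Squaring the first gives $m_1^2m_4^2/(m_2m_3)=m_2^2m_3^2/(m_1m_4)$, that is, $(m_1m_4)^3=(m_2m_3)^3$, so $m_1m_4=m_2m_3$. Squaring the second gives $m_1m_3=m_2m_4$. Dividing the two relations yields $m_4/m_3=m_3/m_4$, so $m_3=m_4$, and then $m_1=m_2$. This is (ii). The sign relations $\eta_{14}=-\eta_{23}$ and $\eta_{13}=-\eta_{24}$ follow from the linear equations, because the two coefficients in each are equal.

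\emph{Step (iii): normalization.} With equal masses the squares become $D_{13}^2=-\mu\mu'/(m_1m_3)$, and so on. The step I expect to be delicate is matching the claimed values $2i\eta_{13}$ and $2\eta_{14}$, which requires $\mu\mu'/(m_1m_3)=4$; indeed $\mu=2m_1$ and $\mu'=2m_3$ give exactly $4$. It remains to check that the sign conventions agree, i.e.\ that $D_{13}^2=-4$ and $D_{14}^2=4$. We then write $D_{13}=2i\eta_{13}$ and so on.

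The final sentence of the statement follows by applying the symmetries discussed before Definition~\ref{d6-def-chart} (relabelling and the $z\leftrightarrow w$ exchange). The only genuine obstacle is Step (i): we must make sure $Q$ stays bounded, so that $\Sigma=tQ$ really tends to zero. This is immediate from polynomiality of $Q$ and the boundedness of the chart coordinates established in Proposition~\ref{prop:d6-cover}.
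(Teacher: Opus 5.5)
Your proposal is correct and follows essentially the paper's argument: the torque identity with bounded $Q$ gives $\Sigma(p)=0$, the crossed boundary equations make $D_{14},D_{23}$ real and $D_{13},D_{24}$ purely imaginary, and separating real and imaginary parts yields $m_1m_4=m_2m_3$ and $m_1m_3=m_2m_4$. The only cosmetic difference is in how the mass relations are extracted: you square the two linear relations, whereas the paper writes $m_k=\mu_k^2$, first discards the sign classes with $\eta_{14}=\eta_{23}$ or $\eta_{13}=\eta_{24}$, and then factors $x^3-y^3$ with $x=\mu_1\mu_4$, $y=\mu_2\mu_3$.
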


\begin{proof}
All rows vanish along $\tilde\gamma$, so \eqref{d6-eq-torque} gives $t\Sigma=t^2Q$, and $t\ne0$
gives $\Sigma=tQ$.  By Proposition~\ref{prop:d6-cover}, $Q$ is bounded and $t\to0$, so
$\Sigma(p)=0$.  By Lemma~\ref{d6-lem-chart}(iii), $D_{kl}^2Z_kW_l=-1$ at $p$ for the crossed
pairs, so $\Sigma(p)=\sum m_km_lD_{kl}(D_{kl}^2Z_kW_l)=-\sum m_km_lD_{kl}$; this is (i).

By Lemma~\ref{d6-lem-chart}(iv), $Z_1W_3=m_2m_4/s_0^2$, $Z_1W_4=-m_2m_3/s_0^2$,
$Z_2W_3=-m_1m_4/s_0^2$ and $Z_2W_4=m_1m_3/s_0^2$ at $p$, where $s_0>0$,
$s_0^2=(m_1+m_2)(m_3+m_4)$.  Write $m_k=\mu_k^2$ with $\mu_k>0$.  Then $D_{kl}^2=-1/(Z_kW_l)$ gives
\[
  D_{13}=\frac{i\eta_{13}s_0}{\mu_2\mu_4},\quad D_{14}=\frac{\eta_{14}s_0}{\mu_2\mu_3},\quad
  D_{23}=\frac{\eta_{23}s_0}{\mu_1\mu_4},\quad D_{24}=\frac{i\eta_{24}s_0}{\mu_1\mu_3}
\]
with signs $\eta_{kl}\in\{\pm1\}$, and
\[
  \Sigma(p)=-\frac{s_0}{\mu_1\mu_2\mu_3\mu_4}\bigl(P_1+iP_2\bigr),\qquad
  P_1=\eta_{14}\mu_1^3\mu_4^3+\eta_{23}\mu_2^3\mu_3^3,\quad
  P_2=\eta_{13}\mu_1^3\mu_3^3+\eta_{24}\mu_2^3\mu_4^3.
\]
Up to the factor $\pm i$, $P_1+iP_2$ is the left-hand side of (12) of
\cite[\S5.1]{AK} written in the $\mu_k$, and each of its eight sign choices arises from exactly two
of the sixteen sign classes $\eta$.
Since the $\mu_k$ and $s_0$ are real, $\Sigma(p)=0$ forces $P_1=P_2=0$.  If
$\eta_{14}=\eta_{23}$, then $P_1\ne0$, and if $\eta_{13}=\eta_{24}$, then
$P_2\ne0$; this excludes twelve classes.  In the other four,
$P_1=\eta_{14}(x-y)(x^2+xy+y^2)$ with $x=\mu_1\mu_4$, $y=\mu_2\mu_3$ and
$x^2+xy+y^2>0$, so $\mu_1\mu_4=\mu_2\mu_3$; likewise $P_2=0$ gives
$\mu_1\mu_3=\mu_2\mu_4$.  Then
$\mu_3(\mu_1\mu_4-\mu_2\mu_3)-\mu_4(\mu_1\mu_3-\mu_2\mu_4)=\mu_2(\mu_4-\mu_3)(\mu_4+\mu_3)$
vanishes, so $\mu_3=\mu_4$ and then $\mu_1=\mu_2$: this is (ii).  With $m_1=m_2$ and $m_3=m_4$
the displayed values of $D_{kl}$ become those of (iii).
\end{proof}
{
We have thus reduced the boundary limits to the masses
$m_1=m_2$, $m_3=m_4$ and the four sign classes in (iii). This is
relation~(32) of \cite[\S8.6]{AK}. We next solve the remaining boundary
equations on this mass locus.
}

\subsection{The boundary points}

{
We henceforth write $m=(a,a,b,b,c)$ with $a,b,c>0$.
At the boundary, the equations separate into the two wings and the crossed
distance equations.  We first solve the wing equations explicitly.  This
will give a finite list of boundary points at each positive mass.
}

{\begin{lemma}\label{d6-lem-atlas}
The points of $\mathcal B$ are exactly the points with $Z_1=W_3=-\frac12$, $Z_2=W_4=\frac12$,
\begin{gather*}
  (D_{12},D_{15},D_{25})=(d,\lambda d,\omega\lambda d),\\
  W_1=-d(a+2c\lambda),\quad
  W_2=d(a+2c\omega\lambda),\quad W_5=ad\bigl(4\lambda^{-2}-1\bigr),\\
  (D_{34},D_{35},D_{45})=(e,\kappa e,\omega'\kappa e),\\
  Z_3=-e(b+2c\kappa),\quad
  Z_4=e(b+2c\omega'\kappa),\quad Z_5=be\bigl(4\kappa^{-2}-1\bigr),\\
  (D_{13},D_{14},D_{23},D_{24})=(2i\eta_{13},\,2\eta_{14},\,2\eta_{23},\,2i\eta_{24}),\quad
  \eta_{kl}\in\{\pm1\},
\end{gather*}
where
\begin{equation*}
  \omega^3=1,\quad\lambda^2=2(1+\omega),\quad(2a+c\lambda^3)d^3=1,\qquad
  \omega'^3=1,\quad\kappa^2=2(1+\omega'),\quad(2b+c\kappa^3)e^3=1.
\end{equation*}
For each $\omega$ both roots $\lambda$ are nonzero.  The scale denominator $2a+c\lambda^3$
vanishes only for $(\omega,\lambda)=(1,-2)$ and $a=4c$: for $\omega=1$, $\lambda=\pm2$; for
$\omega\ne1$, $\lambda^6=-8$ and the two denominators have product $4a^2+8c^2$.  Hence a wing
has $18$ classes $(\omega,\lambda,d)$ if $a\ne4c$ and $15$ if $a=4c$, and $\mathcal B$ has
$18\cdot18\cdot16=5184$ points if $a\ne4c\ne b$, $4320$ if exactly one of $a,b$ equals $4c$, and
$3600$ if $a=b=4c$.
\end{lemma}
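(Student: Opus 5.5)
The plan is to use Lemma~\ref{d6-lem-chart} to split the boundary fibre at $t=0$ into three independent blocks and solve each one explicitly. The blocks are the $z$-wing system in $(Z_1,Z_2,W_1,W_2,W_5,D_{12},D_{15},D_{25})$, the $w$-wing system in $(Z_3,Z_4,Z_5,W_3,W_4,D_{34},D_{35},D_{45})$, and the four crossed distance rows. By part (v) of that lemma these blocks share no coordinates. Also, the substitution $z\leftrightarrow w$ together with $1\leftrightarrow3$, $2\leftrightarrow4$ carries the $z$-wing block at masses $(a,c)$ to the $w$-wing block at $(b,c)$. So it suffices to solve the $z$-wing once, and the count of boundary points becomes a product of three factors.

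\textbf{Crossed block.} Part (iv) gives $Z_1=-\tfrac12$, $Z_2=\tfrac12$, $W_3=-\tfrac12$, $W_4=\tfrac12$ when $m_1=m_2$ and $m_3=m_4$. The crossed rows in (iii) then read $D_{kl}^2Z_kW_l=-1$, which gives $D_{13}^2=D_{24}^2=-4$ and $D_{14}^2=D_{23}^2=4$. This yields the displayed values with sixteen free sign choices. Here $\mathcal B$ means the full fibre, not only limits of germs, so the torque restriction of Proposition~\ref{prop:d6-first}(iii) is not imposed; hence the factor $16$.

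\textbf{$z$-wing block.} The rows $R^z_1,R^z_2$ at $t=0$ involve only $Z_1,Z_2$ and contain no $Z_5$ term, because $t^2Z_5$ drops out. They read
\[
Z_i-aD_{12}^3(Z_i-Z_{i'})-cD_{i5}^3Z_i=0,\qquad i=1,2.
\]
Inserting $Z_1=-\tfrac12$ and $Z_2=\tfrac12$ gives the single equation $1=2aD_{12}^3+cD_{15}^3=2aD_{12}^3+cD_{25}^3$. It follows that $D_{25}^3=D_{15}^3$, so $D_{25}=\omega D_{15}$ with $\omega^3=1$. The row $R^z_5$ at $t=0$ is $m_1D_{15}^3Z_1+m_2D_{25}^3Z_2=0$, which holds automatically once the cubes agree.

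Write $D_{15}=\lambda d$, where $d=D_{12}$. Then $(2a+c\lambda^3)d^3=1$. To find $\lambda$, use the three distance rows. The $12$-row gives $d^2(W_2-W_1)=1$. The $15$-row gives $\tfrac12\lambda^2d^2(W_5-W_1)=1$, and the $25$-row gives $-\tfrac12\omega^2\lambda^2d^2(W_5-W_2)=1$. Subtracting the last two and comparing with the $12$-row gives
\[
\frac{2}{\lambda^2}+\frac{2\omega}{\lambda^2}=1,\qquad\text{that is,}\qquad \lambda^2=2(1+\omega).
\]
The remaining unknowns $W_1,W_2,W_5$ come from the rows $R^w_1,R^w_2$ together with one distance row. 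At $t=0$ these are linear in the $W$'s. Solving them, and reducing with $\lambda^2=2(1+\omega)$, $\omega^3=1$ and the scale equation, gives $W_1=-d(a+2c\lambda)$, $W_2=d(a+2c\omega\lambda)$ and $W_5=ad(4\lambda^{-2}-1)$. I expect this step to be the main obstacle: the system has to be checked to be consistent rather than overdetermined, and some care is needed to use the right reduction at each stage. As a consistency check, the remaining distance row then holds identically.

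\textbf{Counting.} Each $\omega$ gives two nonzero values of $\lambda$. For $\omega=1$ we have $\lambda^3=\pm8$, so the denominator $2a+c\lambda^3$ equals $2a\pm8c$, which vanishes only when $\lambda=-2$ and $a=4c$. For $\omega\ne1$ we have $\lambda^2=-2\omega^2$, so $\lambda^6=-8$ and $\lambda^3=\pm2\sqrt2\,i$. The two denominators are then complex conjugates with product $4a^2+8c^2\neq0$. Each nonvanishing denominator contributes three cube roots $d$. This gives $6\cdot3=18$ wing classes when $a\neq4c$, and $18-3=15$ when $a=4c$. Multiplying the two wing factors by the $16$ crossed choices gives $5184$, $4320$ and $3600$ in the three cases.
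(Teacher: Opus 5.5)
Your route is the paper's: split $\mathcal B$ using Lemma~\ref{d6-lem-chart}(iv),(v), take the sixteen crossed square roots, solve the $z$-wing from the cube equations and the three distance rows, obtain the $w$-wing by the wing exchange, and count. The derivation of $\omega$, $\lambda$, $d$ and the counting are correct, but one step fails as written. The $W$'s cannot be obtained from $R^w_1$, $R^w_2$ and one distance row. At $t=0$, once $D_{25}^3=D_{15}^3$, one has identically
\[
R^w_1-R^w_2=(W_1-W_2)\bigl(1-2aD_{12}^3-cD_{15}^3\bigr).
\]
So on the zero set of the $z$-rows these two rows are the same linear equation, $ad^3(W_1+W_2)+cD_{15}^3W_5=0$. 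Your $3\times3$ system therefore has rank at most two and leaves a one-parameter family of $W$'s. The difficulty is redundancy, not overdetermination.

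The repair is what the paper does. Use two distance rows, $W_2-W_1=d^{-2}$ and $W_5-W_1=2\lambda^{-2}d^{-2}$, together with $R^w_1$, which then reads $W_1+ad+2c\lambda d=0$. This gives $W_1$, and $W_2$, $W_5$ follow from $d^{-3}=2a+c\lambda^3$ and $\lambda^3-2\lambda=2\omega\lambda$.

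For ``exactly'' you must also verify the converse on all nine wing equations:
\begin{itemize}
\item the third distance row is equivalent to $\lambda^2=2(1+\omega)$;
\item $R^w_2=W_2-ad-2c\omega\lambda d$ vanishes by the formula for $W_2$;
\item $R^z_1$, $R^z_2$, $R^z_5$ and the gauge hold by the cube equations.
\end{itemize}
Your single ``remaining distance row'' check does not cover $R^w_2$ in the corrected scheme.
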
}
{\begin{proof}
By Lemma~\ref{d6-lem-chart}(iv),(v), the boundary equations separate
into the two wing systems and an independent square-root choice on each
crossed pair. On the $z$-wing,
$Z_1=-1/2$, $Z_2=1/2$. Write
$d=D_{12}$, $s=D_{15}$, $q=D_{25}$; all three are nonzero.
The equation $R_5^z=0$ gives $q^3=s^3$, whereas $R_1^z=0$ gives
$2ad^3+cs^3=1$. The distance equations give
\[
W_2-W_1=d^{-2},\qquad W_5-W_1=2s^{-2},\qquad
W_2-W_5=2q^{-2}.
\]
Thus, with $\lambda=s/d$ and $\omega=q/s$,
\[
\omega^3=1,\qquad \lambda^2=2(1+\omega),\qquad
(2a+c\lambda^3)d^3=1.
\]
The equation $R_1^w=0$ now reads $W_1+ad+2cs=0$.
Consequently
\begin{align*}
W_1&=-d(a+2c\lambda),\\
W_2&=W_1+d^{-2}=d(a+2c\omega\lambda),\\
W_5&=W_1+2s^{-2}=ad(4\lambda^{-2}-1),
\end{align*}
where the second equality uses $\lambda^3-2\lambda=2\omega\lambda$.
Conversely, these formulas satisfy the three distance equations,
$R_1^w=0$, and
$R_2^w=W_2-ad-2cq=0$. They also satisfy $R_5^z=0$ and
$R_1^z=R_2^z=0$ by the two cubic equations above, as well as the gauge.
This accounts for all nine wing equations.  Exchanging the wings gives
the second system.  Each crossed equation has the two square roots
displayed in the statement, and the four choices are independent.

For $\omega=1$ the two ratios are $\lambda=\pm2$; only $\lambda=-2$
can have zero scale denominator, precisely at $a=4c$.
For $\omega\ne1$, $\lambda^6=-8$, so the product of the two scale
denominators is $4a^2+8c^2>0$. Every nonzero denominator gives three distinct
cube-root scales.  Counting these choices gives the stated number of
boundary points, including those on the exceptional mass faces.
\end{proof}

}{
The classification in Lemma~\ref{d6-lem-atlas} gives the points at which
we must test continuation into $t\ne0$.
}

\begin{definition}\label{d6-def-labels}
{
A \emph{label} specifies a point of $\mathcal B$ by the choices
$(\omega,\lambda,d)$, $(\omega',\kappa,e)$ and $\eta$ of
Lemma~\ref{d6-lem-atlas}.

We classify the ratios $(\omega,\lambda)$ and $(\omega',\kappa)$ on each wing by
\[
 \omega=1\ \Longleftrightarrow\ \lambda=\pm2,
 \qquad
 \omega\ne1\ \Longleftrightarrow\ \lambda^4-2\lambda^2+4=0.
\]
Thus each wing has two real ratios and four nonreal ratios. The same
classification applies to the primed parameters on the second wing.
A label is called \emph{corank-one} when $\eta_{14}=-\eta_{23}$ and
$\eta_{13}=-\eta_{24}$, and \emph{regular} otherwise.
When $a\ne4c\ne b$, there are $12\cdot324=3888$ regular labels and
$4\cdot324=1296$ corank-one labels.
}
\end{definition}

{
The symmetries reduce the number of symbolic calculations required.
At masses $(a,a,b,b,c)$, $F^{-1}(0)$ is preserved by exchanging $1$ and $2$
together with $t\mapsto it$,
$(Z_1,Z_2,W_1,W_2,W_5)\mapsto(-Z_2,-Z_1,-W_2,-W_1,-W_5)$ and
$D_{kl}\mapsto-iD_{kl}$ on crossed pairs.  It is also preserved by
$(t,D_{13},D_{14},D_{23},D_{24})\mapsto-(t,D_{13},D_{14},D_{23},D_{24})$
and by complex conjugation.  The \emph{wing exchange}
($z\leftrightarrow w$, $1\leftrightarrow3$, $2\leftrightarrow4$) maps the chart
at $(a,a,b,b,c)$ to the chart at $(b,b,a,a,c)$.

A \emph{state} records the ratios $(\omega,\lambda)$, $(\omega',\kappa)$ and $\eta$,
but leaves the two cube-root scales unspecified.  The $576$ states form
$29$ symmetry orbits: $21$ regular and $8$ corank-one
(Appendix~\ref{app:verification}).
In the computations, we express the masses through the scales as
$a=\frac12(d^{-3}-c\lambda^3)$ and $b=\frac12(e^{-3}-c\kappa^3)$.
One symbolic calculation for a state therefore covers its nine choices
of $(d,e)$ at every positive $(a,b,c)$.
}

{
It remains to exclude the corank-one labels allowed by the torque
condition.  The next lemma does this by solving twenty equations locally
and showing that a remaining equation has a nonzero leading Taylor
coefficient.  The determinant and coefficient calculations are
computer-assisted and use exact arithmetic.
}

\begin{lemma}[Computer-assisted isolation]\label{d6-lem-corank}
Every corank-one label is an isolated point of $F^{-1}(0)$.
\end{lemma}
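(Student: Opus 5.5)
The plan follows the pattern of Lemma~\ref{lem:d2-isolation}(c) and the Lyapunov--Schmidt reduction of Lemma~\ref{lem:d4-ls}. At a corank-one label $p$, we will show that the Jacobian of the $22$ rows, taken with respect to the $21$ coordinates, has rank $20$. We then choose $20$ rows whose differentials are independent at $p$; near $p$ their common zero set is a smooth curve $C'$. Finally, we show that one of the two omitted rows does not vanish identically on $C'$, so that $p$ is an isolated point of $F^{-1}(0)$.

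\textbf{Step 1: rank computation.} At $t=0$, Lemma~\ref{d6-lem-chart}(v) makes the Jacobian block triangular, with blocks for the $z$-wing, the $w$-wing and the crossed rows. The crossed block is diagonal with entries $2/D_{kl}\ne0$. The wing blocks are handled as in the elimination proof of Lemma~\ref{d6-lem-atlas}: that proof shows the wing solutions are simple, with $Z_1,Z_2$ fixed by the mass-centre identity and the gauge, and $d$ fixed by the simple cubic $(2a+c\lambda^3)d^3=1$. Hence each wing block is injective on its wing coordinates, and the $20$ non-$t$ columns have rank $20$. The only potential degeneracy comes from the $t$-column. Through the torque identity \eqref{d6-eq-torque}, the left cokernel direction pairs with $\partial_t F$ to give $-\Sigma(p)$, and $\Sigma(p)=0$ on corank-one labels by Proposition~\ref{prop:d6-first}. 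This is why these labels are exactly the ones of corank one.

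\textbf{Step 2: the local curve.} We drop one row from the torque combination, say $R^z_5$ or $R^w_1$, chosen so that the remaining $20$ rows together with $t$ have invertible derivative. If that choice fails, we take the $20$ rows together with an auxiliary coordinate instead, as was done with $h=W_4-W_4(p)$ in Lemma~\ref{lem:d2-L5}. This yields an analytic parametrization $t\mapsto\phi(t)$ of $C'$. Along $C'$, identity \eqref{d6-eq-torque} reduces the omitted row, after division by its nonvanishing coefficient $m_kW_k$ or $m_kZ_k$, to $t(-\Sigma+tQ)$. Isolation therefore follows as soon as the scalar function $q(t)=-\Sigma(\phi(t))+tQ(\phi(t))$ is not identically zero.

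\textbf{Step 3: Taylor coefficients.} The main obstacle is the computer-assisted computation of the first nonzero coefficient of $q$. The constant term vanishes. The first coefficient is $q'(0)=-\partial_t\Sigma(\phi)(0)+Q(p)$, which involves the first-order response $\phi'(0)$. That response is obtained by solving the linear system with the wing-block inverses, where the forcing comes from the $t$-linear terms of $R^w_{1,2}$ and $R^z_{3,4}$.

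We carry this out symbolically for each of the $8$ corank-one symmetry orbits of states. The masses are expressed through the scales as $a=\frac12(d^{-3}-c\lambda^3)$ and $b=\frac12(e^{-3}-c\kappa^3)$, and the computation is done over the appropriate number field adjoined with $c,d,e$. If $q'(0)$ vanishes identically for some orbit, we go to $q''(0)$ or the next order. At that order we must check that the resulting rational function in $(c,d,e)$ has no zero at any positive mass; we would try to prove this by exhibiting a factorization into factors of fixed sign, such as $a$, $b$, $c$, $4a^2+8c^2$, and the nonvanishing scale denominators.

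\textbf{Step 4: transport by symmetry.} The symmetries listed before the lemma are the exchange $1\leftrightarrow2$ with $t\mapsto it$, the reversal of $t$ together with the crossed $D_{kl}$, complex conjugation, and the wing exchange. Each of them preserves isolatedness. They carry each orbit representative to all $1296$ corank-one labels, and also to the faces $a=4c$ or $b=4c$, where fewer labels occur but the computation is unchanged.
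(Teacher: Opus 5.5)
Your proposal is correct and follows the paper's proof. There, the twenty rows other than $R^w_2$ and $R^w_4$, together with $t$, have invertible derivative, so their zeros form an analytic graph $C_p$ over $t$. Exact Taylor expansion at one representative of each of the eight corank-one orbits, with $a,b$ written through the scales $d,e$, shows that the omitted row vanishes to exact order $k\in\{2,3,4\}$, and the chart symmetries then transport isolation to every label.

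The differences are minor. Your torque reduction of the omitted row to a unit multiple of $t(\Sigma-tQ)$ is equivalent to the paper's direct expansion of $R^w_2$, so your $q$ has order $k-1$. Your simple-root argument for the wing blocks replaces the paper's computed Schur-complement determinants. For nonreal wing ratios the leading coefficients are complex, so "factors of fixed sign" does not apply as stated; the paper certifies nonvanishing through norms over the roots of $\kappa^4-2\kappa^2+4$.
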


\begin{table}[ht]
\centering
\small
\begin{tabular}{p{3.3cm}p{0.9cm}p{0.3cm}p{6.6cm}}
\hline
wing ratios ($z$, $w$) & labels & $k$ & $c_k$ is a nonzero multiple of \\
\hline
real, real & $144$ & $4$ &
$Q_{2,2}=-\frac{2a+2b+c}{60(a+4c)(b+4c)}$, $Q_{2,-2}=\frac{2a+2b+c}{4(a+4c)(b-4c)}$,
$Q_{-2,-2}=\frac{2a+2b+c}{68(a-4c)(b-4c)}$ for $(\lambda,\kappa)=(2,2),(2,-2),(-2,-2)$;
$(-2,2)$ by wing exchange \\
one real, one nonreal & $576$ & $3$ &
$P=A\kappa^3-4A\kappa+B\kappa^3-B\kappa^2-4B\kappa+B$ for $\lambda=\pm2$, $\kappa$ nonreal;
product over the nonreal $\kappa$: $9(8A^2+16AB+9B^2)^2$ \\
nonreal, nonreal: orbit of $(\lambda_0,\lambda_0)$ & $288$ & $2$ &
$1-2\kappa^3$; product over the nonreal $\kappa$: $1089$ \\
nonreal, nonreal: orbits of $(\lambda_0,-\lambda_0)$ & $288$ & $2$ &
$2B/(2B+\kappa^3)$; product over the nonreal $\kappa$: $B^4/(B^2+2)^2$ \\
\hline
\end{tabular}
\caption{Order $k$ and leading coefficient of $R^w_2$ along $C_p$ at the corank-one labels
(Lemma~\ref{d6-lem-corank}), at one representative state of each orbit.  Counts are for
$a\ne4c\ne b$; $A=a/c$, $B=b/c$; $\lambda_0=\frac12(\sqrt6+i\sqrt2)$ is a nonreal ratio for
$\omega=\frac12(-1+i\sqrt3)$.  In the first row $R^w_4$ has the same order.}\label{d6-tab-corank}
\end{table}

{
\begin{proof}
We first work at a representative state of each of the eight corank-one orbits.
At each such representative label $p$, exact symbolic computation gives:
\begin{romanlist}
  \item the $20$ rows other than $R^w_2$ and $R^w_4$, together with $t$, have
  invertible derivative, so their common zeros form an analytic graph $C_p$ over $t$;
  \item on this graph, $R^w_2=c_kt^k+O(t^{k+1})$, where $c_k\ne0$ and the
  order and coefficient are listed in Table~\ref{d6-tab-corank}.
\end{romanlist}
The coefficient field is $\mathbb Q(i)(c,d,e)$ when both wing ratios are real;
otherwise it is extended by a root $\kappa$ of $\kappa^4-2\kappa^2+4$.
The computations are given in Appendix~\ref{app:verification}.

For (i), eliminating the diagonal derivative block of the distance rows
reduces the determinant to a Schur complement.
When both wing ratios are real, its determinant, after multiplying the
rows by $d^3e^3$, is
$82944\,d^{61}e^{59}(cd^3-\frac{\lambda}{16})(ce^3-\frac{\kappa}{16})$.
The first factor is nonzero because
$cd^3-\frac{\lambda}{16}=-\frac{\lambda a}{8(2a+c\lambda^3)}\ne0$;
the second is treated by exchanging the wings.

In the other cases, after removing the unit row for $t$, the complement
is block diagonal with one $5\times5$ block for each wing
(Lemma~\ref{d6-lem-chart}(v)).  The computation checks the vanishing of an
off-diagonal block and evaluates the two diagonal blocks.
For one real and one nonreal wing ratio, the determinants are nonzero
multiples of $8cd^3-\frac{\lambda}{2}=-\lambda A/(2A+\lambda^3)$ and of
a polynomial in $ce^3$ and $\kappa$.  The product of the latter polynomial
over the nonreal $\kappa$ is
$(\frac9{16})^4\cdot64B^4(B^4-8B^2+64)/(B^2+2)^4$.
Its nonvanishing follows from $B^4-8B^2+64=(B^2-4)^2+48$.
This product, and those in Table~\ref{d6-tab-corank}, are evaluated using
resultants with $\kappa^4-2\kappa^2+4$.

For (ii), the Taylor coefficients of $C_p$ are found by solving the
$20$ rows in (i) successively in powers of $t$.  Substitution into all
$22$ rows checks the resulting expansions and gives the first nonzero
coefficient listed in Table~\ref{d6-tab-corank}.

Near a representative label $p$, (i) gives $F^{-1}(0)\subset C_p$.
By (ii), $R^w_2$ vanishes on $C_p$ only at $t=0$, so $p$ is isolated.
The chart symmetries described above are invertible and map the full zero set
to the corresponding full zero set, with the wing masses exchanged when required.
Complex conjugation has the same property. Each symmetry therefore carries an
isolated point to an isolated point. Since the representative states cover all
corank-one labels, this proves the assertion at every label.
\end{proof}

The lemma excludes all boundary limits allowed by
Proposition~\ref{prop:d6-first}(iii).
}

\subsection{Exclusion}

We now combine the coverage, mass restriction and isolation results.
Every Diagram~6 germ would converge to an isolated boundary point, which
forces its chart parameter to vanish identically.

\begin{theorem}\label{thm:d6}
Let $m\in\Rp^5$.  No curve in $A_m$ has a Diagram-6 germ, for any numbering of the bodies and
either colouring.  In particular the case $r=6$ of Theorem~\ref{thm:exclusion}(a) holds, at the
complex level.
\end{theorem}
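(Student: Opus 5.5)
The plan is to argue by contradiction, following the pattern of Theorems~\ref{thm:d2} and~\ref{thm:d4}. Suppose some curve in $A_m$ carries a Diagram-6 germ $\gamma$. By the symmetry discussion at the start of this section (permutation of bodies with masses, and $z\leftrightarrow w$), we may assume the numbering with $z$-wing $\{1,2,5\}$ and $w$-wing $\{3,4,5\}$. These operations preserve the diagram type and the property of having a germ, so the conclusion for every numbering and colouring follows from this case.

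Next we apply Proposition~\ref{prop:d6-cover}. It produces the chart parameter $t$ and the rotated chart curve $\tilde\gamma(s)$, which lies in $F^{-1}(0)$ with $t\not\equiv0$ (indeed $t^2=(z_{12}w_{34})^{-1}\ne0$ on $A_m$). This curve converges to a boundary point $p\in\mathcal B$. Proposition~\ref{prop:d6-first} then forces $m=(a,a,b,b,c)$. It also fixes the crossed inverse distances at $p$ to be those of Proposition~\ref{prop:d6-first}(iii), with $\eta_{14}=-\eta_{23}$ and $\eta_{13}=-\eta_{24}$. By Lemma~\ref{d6-lem-atlas}, $p$ is one of the listed boundary points, and the sign conditions say precisely that its label is corank-one in the sense of Definition~\ref{d6-def-labels}.

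The remaining point needs some care. Lemma~\ref{d6-lem-corank} says $p$ is an isolated point of $F^{-1}(0)$, so we must show this is incompatible with the germ. After the reparametrization $s=\tau^N$, $\tilde\gamma$ becomes a continuous (indeed holomorphic) curve $\tau\mapsto\tilde\gamma(\tau)$ in $F^{-1}(0)$ with $\tilde\gamma(0)=p$. Since $p$ is isolated, $\tilde\gamma(\tau)=p$ for all small $\tau$, and hence $t\equiv0$ along it. This contradicts $t^2=(z_{12}w_{34})^{-1}$. So no Diagram-6 germ exists, and in particular no germ with an unbounded four-product exists, which is case $r=6$ of Theorem~\ref{thm:exclusion}(a).

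The main obstacle is ensuring that the case analysis is exhaustive, including the exceptional mass faces $a=4c$ or $b=4c$. Lemma~\ref{d6-lem-corank} is stated for every corank-one label. I would check that the symmetry orbits in the appendix cover the labels that survive on these faces, and that the determinant factors used there stay nonzero on those faces. The factor $cd^3-\lambda/16=-\lambda a/(8(2a+c\lambda^3))$ is finite exactly when the label exists. The denominators $a\pm4c$ and $b\pm4c$ appearing in Table~\ref{d6-tab-corank} occur only for labels whose scale denominator is nonzero.
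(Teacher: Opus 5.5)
Your proposal is correct and matches the paper's proof step for step: the symmetry reduction to the standard numbering, the limit $p\in\mathcal B$ from Proposition~\ref{prop:d6-cover}, the mass and sign restrictions of Proposition~\ref{prop:d6-first} that make $p$ a corank-one label via Lemma~\ref{d6-lem-atlas}, and then the isolation in Lemma~\ref{d6-lem-corank}, which contradicts $t\not\equiv0$ after the substitution $s=\tau^N$. Your concern about the faces $a=4c$ or $b=4c$ is already handled inside Lemma~\ref{d6-lem-corank}: the computations express $a,b$ through the scales $d,e$, so one calculation per state covers every label that actually exists, at every positive $(a,b,c)$.
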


\begin{proof}
{
The symmetries at the beginning of the section allow us to use the
numbering above.  Suppose that $\gamma(s)$ is a Diagram-6 germ on a curve
in $A_m$.  Proposition~\ref{prop:d6-cover} gives a chart germ
$\tilde\gamma(s)$ with $t(s)\to0$, $t(s)\ne0$ for $s\ne0$, bounded
coordinates, and a limit $p\in\mathcal B$.

Proposition~\ref{prop:d6-first} forces $m=(a,a,b,b,c)$ and
$\eta_{14}=-\eta_{23}$, $\eta_{13}=-\eta_{24}$ at $p$.
Lemma~\ref{d6-lem-atlas} therefore identifies $p$ as a label of corank one.
The computer-assisted Lemma~\ref{d6-lem-corank} shows that this point is
isolated in $F^{-1}(0)$.

On the other hand, the coordinates of $\tilde\gamma$ are bounded Puiseux
series in $s$.  After a substitution $s=\sigma^N$, they are convergent
power series in $\sigma$.  Thus $\sigma\mapsto\tilde\gamma(\sigma^N)$
is a holomorphic map from a disc into $F^{-1}(0)$, taking the value $p$
at $\sigma=0$.  Since $p$ is isolated, the map is constant near
$\sigma=0$.  This contradicts $t\not\equiv0$.
}
\end{proof}

\section{Diagram 7}\label{sec:d7}

We exclude Diagram~7 on a curve through a real configuration by showing that
an unbounded four-product forces the inertia to tend to zero.  The key step is
a decomposition of the inertia into terms whose orders are positive.
Diagram~7 is the butterfly without circles in \cite[\S7.4]{AK}; we use its
two triangular clusters to obtain this decomposition.

\begin{theorem}\label{thm:d7}
Let $m\in\Rp^5$.  No Diagram~7 germ with an unbounded four-product lies on a curve through a
real point: if an irreducible curve $\Gamma\subset A_m$ carries such a germ, then $\Gamma$
contains no real point.
\end{theorem}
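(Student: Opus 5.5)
Suppose $\Gamma$ carries a Diagram~7 germ $\gamma$ with an unbounded four-product and contains a real point. The plan is to show that $I\to0$ along $\gamma$. By Lemma~\ref{lem:U-constant}, $U=I$ is constant on $\Gamma$, so the constant is $0$. At a real point, however, $I=\sum_km_k|z_k|^2$. This is strictly positive, because a collision-free configuration with $\sum m_kz_k=0$ cannot have all $z_k=0$. That contradiction proves the theorem. Note that only a real point is needed, not positivity: on a real solution $w_k=\bar z_k$, so $I>0$ regardless of the signs of the $\delta_{kl}$.

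To obtain $I\to0$, I use the numbering of \cite[\S8.7]{AK}. The $z$-wing is $\{1,2,5\}$ and the $w$-wing is $\{3,4,5\}$, with no circles. The first step reads off orders from Fact~\ref{fact:rules} and Lemma~\ref{lem:germ}, as in Proposition~\ref{prop:d6-cover}. With no circles, every $z_k,w_k$ has order $>-2$. Each wing, as an uncircled $z$-triangle (respectively $w$-triangle), has edges with $\ord z_{kl}+3\ord w_{kl}=4$ (respectively the colour-exchanged relation), and these edges are not maximal. I then decompose
\[
 I=\frac1M\sum_{k<l}m_km_l\,z_{kl}w_{kl}=\frac1M\sum_{k<l}m_km_lr_{kl}^2,
\]
which holds by the centre-of-mass identities. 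It therefore suffices to show $\ord r_{kl}^2>0$ for all ten pairs.

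Within the wings, the stroke relation gives $\ord r_{kl}^2=\ord z_{kl}+\ord w_{kl}=\tfrac43+\tfrac23\ord w_{kl}$. Since the edge is nonmaximal, Estimate~2 gives $r_{kl}\prec1$, so these six terms tend to zero. The four crossed pairs $13,14,23,24$ carry no stroke, and here the unbounded four-product enters. The four-products are monomials in the $r^2$'s. A pole of some $p_{ij}$, combined with the identity $\sum_{\{k,l,h\}}\ord$ over the complementary triangle, should force the crossed distances to satisfy order relations compatible with AK's estimates $z_{kl}\preccurlyeq z_{k5}+z_{5l}$ through the common vertex $5$. Concretely, $z_{13}=z_{15}+z_{53}$ and $w_{13}=w_{15}+w_{53}$, so $\ord r_{13}^2\ge\min$ of sums of wing-edge orders. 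I would then use \cite[\S8.7]{AK} (the estimates of Diagram~7, including the relation between the scales of the two wings) to show that each such sum is positive.

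The main obstacle is the crossed pairs. The triangle inequality in orders gives $\ord z_{13}\ge\min(\ord z_{15},\ord z_{35})$. But $z_{35}$ belongs to the $w$-wing, where $z$-differences may be large (negative order) while $w$-differences are small, so the bound on $\ord r_{13}^2$ can come out negative. My first attempt would argue by contradiction. If some crossed $r^2$ had nonpositive order, then, since $I$ is constant and the wing terms vanish, the crossed terms must cancel or be bounded. I would combine this with the torque identity on the pair $\{1,2\}$ (as in Lemma~\ref{lem:d1-syzygies}) and with Estimate~1 to show that crossed $z$- and $w$-differences cannot simultaneously be large. Failing that, I would bound $\ord r_{kl}^2$ via the four-product pole: the unbounded $p_{ij}$ must involve a crossed distance, and the AK boundedness analysis for Diagrams~10--16 suggests that unboundedness is possible only when both wings shrink. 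That scenario is exactly the one that makes all $r^2_{kl}\to0$.
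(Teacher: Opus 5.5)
\textbf{There is a genuine gap, at the crossed pairs, and it is not merely a missing estimate.} The target $\ord r_{kl}^2>0$ for all ten pairs is false in exactly the situation the theorem addresses.

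By Rule~2e and Estimate~2, the upper wing has $\ord z_{kl}=a$ and $\ord w_{kl}=(4-a)/3$, and the lower wing has $\ord w_{kl}=b$ and $\ord z_{kl}=(4-b)/3$, with $-2<a,b<1$. Your own splitting $z_{13}=z_{15}+z_{53}$, $w_{13}=w_{15}+w_{53}$ then gives \emph{equality}: $\ord z_{13}=a$ and $\ord w_{13}=b$, because $(4-b)/3>1>a$ and $(4-a)/3>1>b$. Hence $\ord r_{bc}^2=a+b$ on all four crossed pairs.

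Every four-product contains a wing pair, and for example $\ord p_{15}=\frac83(1+a+b)$. So an unbounded four-product forces $a+b<0$ (indeed $a+b<-1$). The crossed terms of $I=\frac1M\sum_{k<l}m_km_lr_{kl}^2$ are then unbounded and can only cancel among themselves. Put bluntly: if every $r_{kl}^2\to0$, then every $p_{ij}$, being a product of four of them, tends to $0$. So the scenario in your last sentence can never carry an unbounded four-product. Your fallbacks (the torque on $\{1,2\}$, Estimate~1) would amount to proving $a+b\ge0$, that is, excluding such germs locally at the complex level. The paper does not do this, and the theorem does not claim it.

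\textbf{The missing idea} is an expression for $I$ in which the large crossed coordinates enter only through force terms. Since $\sum_km_kz_k=0$, you have $I=\sum_km_kz_k(w_k-w_5)$. For $j=3,4$, write $z_j(w_j-w_5)=w_j(z_j-z_5)+(z_5w_j-w_5z_j)$. The $z$- and $w$-equations of bodies $3,4$ turn $\sum_{j=3,4}m_j(z_5w_j-w_5z_j)$ into a torque, giving on $A_m$
\[
 I=\sum_{k=1,2}m_kz_k(w_k-w_5)+\sum_{j=3,4}m_jw_j(z_j-z_5)+S_5,
\]
where
\[
 S_5=\sum_{j=3,4}\sum_{l=1,2}m_jm_l\delta_{jl}^3\bigl((z_j-z_5)(w_l-w_5)-(w_j-w_5)(z_l-z_5)\bigr).
\]
This is Lemma~\ref{d7-identity}. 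The first two sums have orders at least $(4+2a)/3>0$ and $(4+2b)/3>0$. In $S_5$, the product $(w_j-w_5)(z_l-z_5)$ has order $a+b$, as large as a crossed $r^2$. However, it is multiplied by $\delta_{jl}^3$, of order $-\frac32(a+b)$, so the term has order $-\frac12(a+b)>0$ when $a+b<0$. The other product contributes order $\frac13(8-a-b)-\frac32(a+b)$, which is larger still. Hence $I\to0$ when $a+b<0$.

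The case $a+b\ge0$ is excluded separately: all four-products then tend to $0$ (Lemma~\ref{d7-bounded}). Your endgame is correct and matches the paper: $I$ is constant on $\Gamma$, $I=\sum_km_k|z_k|^2>0$ at any real point, and positivity is not needed.
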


This proves part~(b) of Theorem~\ref{thm:exclusion} for $r=7$.
For the orders $a,b$ introduced in Proposition~\ref{d7-cover}, the proof has
two cases:
\[
\begin{aligned}
a+b\ge0&\quad\Longrightarrow\quad p_{ij}\longrightarrow0
&&\text{for every }i<j,\\
a+b<0&\quad\Longrightarrow\quad I\longrightarrow0
&&\Longrightarrow\quad I|_\Gamma=0.
\end{aligned}
\]
The first implication follows from the distance orders
(Lemma~\ref{d7-bounded}); the second follows from the inertia identity
(Lemma~\ref{d7-vanish}) and its constancy on $\Gamma$.
At a real point, $I=\sum_km_k|z_k|^2>0$, which gives the required contradiction.

\subsection{The diagram and the nested chart}
In the numbering of \cite[\S8.7]{AK}, the \emph{upper wing} $\{1,2,5\}$ is an isolated triangle
of $z$-edges without $z$-circles, the \emph{lower wing} $\{3,4,5\}$ is an isolated triangle of
$w$-edges without $w$-circles, and the four \emph{crossed} pairs $kj$ ($k\in\{1,2\}$,
$j\in\{3,4\}$) are not edges.
Since Diagram~7 has no circles, Rule~1e shows that neither wing has a maximal stroke (see the
proof of Proposition~\ref{d7-cover}(i)).  By Definition~\ref{def:germ} and the convention after
Theorem~\ref{thm:exclusion}, we may use this numbering and colouring.

We first normalize the orders as in Lemma~\ref{lem:germ}.  After renormalization,
$Z$ and $W$ have a common order.  Choosing a power $t$ of the Puiseux parameter makes this
order $-2$.  Thus $t$ plays the role of $\epsilon$ in \cite[\S3]{AK}, and $\ord$ denotes
order in $t$.  The renormalizations $(z,w)\mapsto(\lambda z,\lambda^{-1}w)$ preserve the
diagram, the distance constraints, $I$, $U$ and every $r_{kl}^2=z_{kl}w_{kl}$.

To describe the two clusters at their different scales, we use the nested chart
with exponents $P,Q,R$ and coordinates $A_1,A_2,C,u_3,u_4$,
$B_3,B_4,D,v_1,v_2$ and $D_{kl}$ ($k<l$):
\begin{gather*}
  z_1=A_1,\quad z_2=A_2,\qquad w_3=t^{-Q}B_3,\quad w_4=t^{-Q}B_4,\\
  z_j=C+t^Pm_5u_j\ \ (j=3,4),\qquad z_5=C-t^P(m_3u_3+m_4u_4),\\
  w_k=t^{-Q}D+t^Rm_5v_k\ \ (k=1,2),\qquad w_5=t^{-Q}D-t^R(m_1v_1+m_2v_2),
\end{gather*}
and $\delta_{kl}=t^{e_{kl}}D_{kl}$, where $e_{kl}=-R/2$ on the upper wing, $Q/2$ on the crossed
pairs and $(Q-P)/2$ on the lower wing ($D$ is the scaled cluster centre).
Here $C$ and $t^{-Q}D$ are the centres of mass of the $z$-cluster $\{3,4,5\}$ and
the $w$-cluster $\{1,2,5\}$.  The exponents $P$ and $R$ describe the internal separations
of these clusters, while $-Q$ describes the common $w$-scale.  The choices of $e_{kl}$
make the two sides of all ten distance constraints $\delta_{kl}^2z_{kl}w_{kl}=1$ have the same order.
For the pairs $12$ and $34$, the distance constraints become
\[
  m_5D_{12}^2(A_2-A_1)(v_2-v_1)=1,\qquad m_5D_{34}^2(u_4-u_3)(B_4-B_3)=1.
\]
The \emph{open cone} is $P>Q>0$, $R>0$.

We retain the rotational freedom in this chart and use the equations of
\eqref{eq:system} preceding the gauge $z_{12}=w_{12}$.
The distance constraints give
$(A_2-A_1)(B_4-B_3)\ne0$.
The transformations $(z,w)\mapsto(\lambda z,\lambda^{-1}w)$ preserve these
equations, $I$ and $U$, so the inertia calculation can be carried out directly
in the chart.

\subsection{Orders of the germs and the chart}
We now determine the orders of all separations from the diagram rules.
This isolates the order range relevant to an unbounded four-product and shows that the
rescaled coordinates have finite limits.

\begin{proposition}[Representation in the nested chart]\label{d7-cover}
Let $\gamma$ be a Diagram~7 germ.  There are $a,b$ with $-2<a<1$ and $-2<b<1$ such that the
following hold.
\begin{romanlist}
  \item On the upper wing $\ord z_{kl}=a$ and $\ord w_{kl}=(4-a)/3$; on the lower wing
  $\ord w_{kl}=b$ and $\ord z_{kl}=(4-b)/3$; on the crossed pairs $\ord z_{kl}=a$ and
  $\ord w_{kl}=b$.  Moreover $\ord z_k\ge a$ and $\ord w_k\ge b$ for every $k$.
  \item After the rotation $(z,w)\mapsto(t^{-a}z,t^{a}w)$, $\gamma$ is a germ of the nested
  chart with exponents
  \[
    P=\frac{4-b}{3}-a,\qquad Q=-(a+b),\qquad R=\frac{4+2a}{3};
  \]
  all chart coordinates have nonnegative order, so they converge, and every $D_{kl}$ has order
  zero.  Moreover $R=\frac23(a+2)>0$ and $P-Q=\frac23(b+2)>0$, so $(P,Q,R)$ lies in the open
  cone if and only if $a+b<0$.
\end{romanlist}
\end{proposition}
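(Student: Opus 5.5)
The plan follows the pattern of Proposition~\ref{prop:d6-cover}: read the orders of all separations from the diagram rules in the normalization of Lemma~\ref{lem:germ}, with $\ord$ taken in $t$ and the least order of $\mathcal Z$ and $\mathcal W$ equal to $-2$. Then express each chart coordinate of the nested chart in terms of $z,w,\delta$ and check that it has nonnegative order.

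For (i) we start with the upper wing. By Rule~2e the three $z$-separations of the wing $\{1,2,5\}$ have a common order, which we call $a$, and similarly the $w$-separations of the lower wing have a common order $b$. A $z$-stroke $kl$ means $\ord(z_{kl}\delta_{kl}^3)=-2$, that is, $\ord z_{kl}+3\ord w_{kl}=4$. This gives $\ord w_{kl}=(4-a)/3$ on the upper wing and $\ord z_{kl}=(4-b)/3$ on the lower wing. Since the wing is an isolated component without circles, Rule~1e excludes a maximal stroke. Estimate~2 then gives $r_{kl}\prec1$, so $\ord(z_{kl}w_{kl})>0$, which is $a+(4-a)/3>0$, i.e.\ $a>-2$. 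Non-maximality should give $\ord w_{kl}>\ord z_{kl}$ in the sense of Estimate~1, which yields $(4-a)/3>a$, i.e.\ $a<1$; the same argument gives $-2<b<1$.

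Next we locate the coordinates. Since there are no circles, every $z_k,w_k$ has order $>-2$. Proposition~\ref{fact:prop1}, applied to the $z$-cluster structure as in \cite[\S8.7]{AK}, places the origin of $z$ inside the cluster of scale $a$, so $\ord z_k\ge a$; the analogous statement holds for $w$. I would also try to confirm these bounds from the coordinate equations, by comparing orders term by term as in Proposition~\ref{prop:d6-cover}. For a crossed pair $kj$, $z_{kj}=z_{k5}+z_{5j}$ has order $\min(a,(4-b)/3)=a$, because $(4-b)/3>1>a$. Likewise $w_{kj}$ has order $b$. I need to rule out cancellation in these sums, and the strict inequalities above do this.

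For (ii) we rotate by $t^{-a}$. The $z$-orders drop by $a$ and the $w$-orders rise by $a$, so $z_{12}$ acquires order $0$ and $w_{34}$ acquires order $a+b=-Q$. Inverting the chart then gives the following.
\[
A_k=z_k,\qquad B_j=t^{Q}w_j,\qquad C=z_c,\qquad D=t^{Q}w_c,
\]
\[
u_j\propto t^{-P}(z_j-C),\qquad v_k\propto t^{-R}(w_k-w_c).
\]
With $P=(4-b)/3-a$, each internal lower-wing $z$-separation, after rotation, has order exactly $P$, so the $u_j$ have order $\ge0$. For the upper wing, $(4-a)/3+a=R$, so the $v_k$ have order $\ge0$. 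The centres have order $\ge0$ by (i) after rotation: $z$ has order $\ge0$, and the $w$ centre has order $\ge b+a=-Q$. The $D_{kl}$ are $t^{-e_{kl}}\delta_{kl}$, with $\ord\delta_{kl}=-\tfrac12\ord r_{kl}^2$; the three cases give exactly $e_{kl}$, so every $D_{kl}$ has order $0$.

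Finally, the identities $R=\tfrac23(a+2)$ and $P-Q=\tfrac23(b+2)$ are arithmetic, so the cone condition reduces to $Q>0$, i.e.\ $a+b<0$. The main obstacle is the lower bounds $\ord z_k\ge a$ and $\ord w_k\ge b$, which locate the origin relative to the nested clusters. I would first try Proposition~\ref{fact:prop1}. If it is not directly applicable, I would instead use the centre-of-mass identity together with the body equations, showing that every force term on a wing has order at least that of the wing separation.
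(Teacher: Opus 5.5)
Your outline is the paper's proof. It uses Rule~2e for the common wing orders, the stroke relation $\ord z_{kl}+3\ord w_{kl}=4$, and Rule~1e with Estimate~2 for $a>-2$. The crossed orders come from $z_{kj}=z_{k5}+z_{5j}$ and $w_{kj}=w_{k5}+w_{5j}$. Part~(ii) inverts the nested chart after the rotation, and your orders for $A_k,B_j,C,D,u_j,v_k$ and the exponents $e_{kl}$ match the paper's.

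Two steps need correcting.

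First, $a<1$ does not come from non-maximality; non-maximality is what gives $a>-2$. The bound $a<1$ holds because the upper-wing edges carry no $w$-stroke, so they are not $zw$-edges. In Estimate~2 the case $\ord z_{kl}=1$ occurs exactly for $zw$-edges. Equivalently, the absence of a $w$-stroke means $\ord W_{kl}=-\tfrac12\bigl(3a+\tfrac{4-a}{3}\bigr)>-2$, and this inequality is $a<1$.

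Second, you single out $\ord z_k\ge a$ and $\ord w_k\ge b$ as the main obstacle, but they need neither Prop.~1 of \cite{AK} nor the body equations. The paper uses only $\sum_lm_lz_l=\sum_lm_lw_l=0$ on $A_m$ (proof of Lemma~\ref{lem:distances}), which gives
\[
 z_k=\frac{1}{\sum_lm_l}\sum_{l\ne k}m_l(z_k-z_l).
\]
Every $z$-difference has order at least $a$ by what you have already shown: it is $a$ on the upper wing and the crossed pairs, and $(4-b)/3>1>a$ on the lower wing. Hence $\ord z_k\ge a$, and the same argument gives $\ord w_k\ge b$.

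Your fallback, as phrased, would not work. The claim that every force term on a wing has order at least that of the wing separation fails in the $z$-colour on the upper wing. There the terms $m_l\delta_{kl}^3(z_k-z_l)$ are $z$-strokes of order $-2<a$, so the body equations bound $z_k$ only at order $-2$.

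With these two fixes, the rest of your argument, including part~(ii), goes through as written.
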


\begin{proof}
(i) We use Fact~\ref{fact:rules}.  By Rule~2e \cite[p.~552]{AK} the three $z$-separations of a
wing have one order, and so do its three $w$-separations; let $a$ be the $z$-order of the upper
wing and $b$ the $w$-order of the lower wing.  As $\delta_{kl}^2z_{kl}w_{kl}=1$,
$\ord Z_{kl}=-(\ord z_{kl}+3\ord w_{kl})/2$ and $\ord W_{kl}=-(3\ord z_{kl}+\ord w_{kl})/2$;
since the strokes have order $-2$, this gives the other two wing orders.  The upper wing is an
isolated component of the $z$-diagram without $z$-circles, so it has no maximal $z$-stroke
(Rule~1e, p.~546), and along a $z$-stroke Estimate~2 (p.~548) gives $-2\le\ord z_{kl}\le1$, with
$-2$ exactly for a maximal $z$-edge and $1$ exactly for a $zw$-edge.  Hence $-2<a<1$, and dually
$-2<b<1$.  As $z_{13}=z_{15}+z_{53}$
with $\ord z_{53}=(4-b)/3>1>a$, and $w_{13}=w_{15}+w_{53}$ with $\ord w_{15}=(4-a)/3>1>b$, the
crossed orders are $a$ and $b$; likewise for $14$, $23$, $24$.  Finally
$\sum_km_kz_k=\sum_km_kw_k=0$ on $A_m$ (proof of Lemma~\ref{lem:distances}), so each $z_k$ is a
combination of $z$-differences, all of order at least $a$; likewise $\ord w_k\ge b$.

(ii) After the rotation the $z$-differences have order $0$ on the upper wing and the crossed
pairs and $P$ on the lower wing, the $w$-differences have order $R$ on the upper wing and $-Q$
elsewhere, and $\ord z_k\ge0$, $\ord w_k\ge-Q$.  Inverting the chart map, $A_k=z_k$,
$B_j=t^Qw_j$, $C$ and $t^{-Q}D$ are the centres of the two clusters, $m_5t^Pu_j=z_j-C$ and
$m_5t^Rv_k=w_k-t^{-Q}D$ are combinations of differences inside them, and
$D_{kl}=t^{-e_{kl}}\delta_{kl}$, where $\ord\delta_{kl}=-(\ord z_{kl}+\ord w_{kl})/2=e_{kl}$.  So
every coordinate has nonnegative order and $\ord D_{kl}=0$.
\end{proof}

Thus $a+b<0$ is equivalent to $P>Q>0$ and $R>0$.
These inequalities make each term of the inertia decomposition decay.
For $a+b\ge0$, we will instead apply the distance orders directly to the
four-products in Lemma~\ref{d7-bounded}.

\subsection{The inertia identity}
We express $I$ through the internal separations of the two clusters and their
crossed forces.  The chart then converts each summand into a positive power
of the parameter times a bounded function.
Write
\[
  F^z_k=z_k-\sum_{l\ne k}m_l\delta_{kl}^3(z_k-z_l),\qquad
  F^w_k=w_k-\sum_{l\ne k}m_l\delta_{kl}^3(w_k-w_l)
\]
for the coordinate rows of \eqref{eq:system} and $T_{kl}=\delta_{kl}^2z_{kl}w_{kl}-1$ for the
distance constraints.  Then $I-U=\sum_km_kw_kF^z_k+\sum_{k<l}m_km_l\delta_{kl}T_{kl}$, so $U=I$ on $A_m$
\cite[p.~541]{AK}.

\begin{lemma}[Inertia identity]\label{d7-identity}
Let
\[
  S_5=\sum_{j=3,4}\sum_{l=1,2}m_jm_l\delta_{jl}^3
  \bigl((z_j-z_5)(w_l-w_5)-(w_j-w_5)(z_l-z_5)\bigr).
\]
Then, as polynomials in the masses and in $(z,w,\delta)$,
\begin{multline*}
  I=\sum_{k=1,2}m_kz_k(w_k-w_5)+\sum_{j=3,4}m_jw_j(z_j-z_5)+S_5\\
  -\sum_{j=3,4}m_j\bigl((z_j-z_5)F^w_j-(w_j-w_5)F^z_j\bigr)+w_5\sum_{k=1}^5m_kF^z_k.
\end{multline*}
In particular $I=\sum_{k=1,2}m_kz_k(w_k-w_5)+\sum_{j=3,4}m_jw_j(z_j-z_5)+S_5$ on $A_m$.
\end{lemma}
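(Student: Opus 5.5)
The identity is a polynomial identity in the masses and in $(z,w,\delta)$, so I will prove it by expanding both sides and cancelling; no constraint is used. The second statement then follows at once, because every $F^z_k$ and $F^w_k$ vanishes on $A_m$.

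The plan is to start from the last two correction terms and show that they absorb everything beyond $I$. For the last term, the weighted sum $\sum_{k=1}^5 m_kF^z_k$ equals $\sum_k m_kz_k$, since the force terms cancel in pairs by antisymmetry (as in the proof of Lemma~\ref{lem:distances}). Hence $w_5\sum_km_kF^z_k=\sum_k m_kz_kw_5$. Adding this to the first two sums of the right-hand side gives
\[
 \sum_{k=1,2}m_kz_kw_k+\sum_{j=3,4}m_j\bigl(w_jz_j-w_jz_5+z_jw_5\bigr)+m_5z_5w_5 .
\]
Compared with $I$, this leaves the excess $\sum_{j=3,4}m_j(z_j-z_5)(w_5-w_j)=-\sum_{j=3,4}m_j\bigl((z_j-z_5)w_j-(w_j-w_5)z_j\bigr)$. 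This can be checked directly by writing $z_jw_5-w_jz_5=(z_j-z_5)w_5-(w_j-w_5)z_5\cdot(\ldots)$, but the cleaner route is the next step.

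Next I substitute $F^w_j=w_j-\sum_l m_l\delta_{jl}^3(w_j-w_l)$ and $F^z_j$ into the torque-like term $-\sum_{j=3,4}m_j\bigl((z_j-z_5)F^w_j-(w_j-w_5)F^z_j\bigr)$. Its position parts are $-\sum_j m_j\bigl((z_j-z_5)w_j-(w_j-w_5)z_j\bigr)$. Together with the previous step, these leave only $I$ plus bilinear terms in the positions. I will verify this bookkeeping explicitly: it amounts to $\sum_j m_j\bigl(-z_jw_j+z_5w_j+w_jz_j-w_5z_j\bigr)$ cancelling the excess.

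The force parts give
\[
 \sum_{j=3,4}\sum_{l\ne j}m_jm_l\delta_{jl}^3\bigl((z_j-z_5)(w_j-w_l)-(w_j-w_5)(z_j-z_l)\bigr).
\]
I split these by $l$. For $l=5$ the bracket is $(z_j-z_5)(w_j-w_5)-(w_j-w_5)(z_j-z_5)=0$. For $l\in\{3,4\}$ with $l\ne j$, I rewrite $w_j-w_l=(w_j-w_5)-(w_l-w_5)$ and likewise for $z$. The bracket becomes $-(z_j-z_5)(w_l-w_5)+(w_j-w_5)(z_l-z_5)$, which is antisymmetric in $(j,l)$ while $m_jm_l\delta_{jl}^3$ is symmetric, so the sum over $j\ne l$ in $\{3,4\}$ cancels. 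For $l\in\{1,2\}$ the same rewriting gives $-\bigl((z_j-z_5)(w_l-w_5)-(w_j-w_5)(z_l-z_5)\bigr)$, which is exactly $-S_5$ and cancels the $+S_5$ on the right-hand side.

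The only real obstacle is sign bookkeeping in the last step. I expect to settle it by carrying out one representative term, say $j=3$, $l=1$, in full and checking it against the sign of $S_5$. If a global sign mismatch appears, I would recheck the sign convention in the torque term rather than the structure of the argument, since the antisymmetry cancellations are forced.
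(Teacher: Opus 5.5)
Your argument is correct and is the paper's proof read in the opposite direction. The paper also uses $\sum_k m_kF^z_k=\sum_k m_kz_k$, isolates the position part $w_5z_j-z_5w_j$ of $(z_j-z_5)F^w_j-(w_j-w_5)F^z_j$, and splits the force part by $l$: the $l=5$ terms vanish, the pair $l\in\{3,4\}$ cancels by antisymmetry, and the terms $l\in\{1,2\}$ assemble into $S_5$.

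One line needs correcting. Subtracting $I$ from your displayed expression leaves
\[
\sum_{j=3,4}m_j(z_jw_5-z_5w_j)=+\sum_{j=3,4}m_j\bigl((z_j-z_5)w_j-(w_j-w_5)z_j\bigr).
\]
This is neither $\sum_{j}m_j(z_j-z_5)(w_5-w_j)$ nor the negative you wrote; with your sign, the position parts would double rather than cancel. Your explicit check in the next paragraph already uses the correct sign, so the proof goes through once that sentence and the unfinished ``$\cdot(\ldots)$'' rewriting are fixed.
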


\begin{proof}
Put $\hat z_k=z_k-z_5$ and $\hat w_k=w_k-w_5$.  By antisymmetry of the force terms,
$\sum_km_kF^z_k=\sum_km_kz_k$.  Since $z_j\hat w_j-w_j\hat z_j=z_5w_j-w_5z_j$,
\begin{align*}
  I&=\sum_km_kz_k\hat w_k+w_5\sum_km_kz_k\\
   &=\sum_{k=1,2}m_kz_k\hat w_k+\sum_{j=3,4}m_j\bigl(w_j\hat z_j+z_5w_j-w_5z_j\bigr)
    +w_5\sum_km_kF^z_k.
\end{align*}
For $j\in\{3,4\}$, using $z_j-z_l=\hat z_j-\hat z_l$ and $w_j-w_l=\hat w_j-\hat w_l$,
\[
  \hat z_jF^w_j-\hat w_jF^z_j=(w_5z_j-z_5w_j)
  +\sum_{l\ne j}m_l\delta_{jl}^3(\hat z_j\hat w_l-\hat w_j\hat z_l).
\]
In $\sum_{j=3,4}m_j(\hat z_jF^w_j-\hat w_jF^z_j)$ the terms with $l=5$ vanish, those with
$l\in\{3,4\}$ cancel in pairs, and those with $l\in\{1,2\}$ add up to $S_5$.  Hence
$\sum_{j=3,4}m_j(z_5w_j-w_5z_j)=S_5-\sum_{j=3,4}m_j(\hat z_jF^w_j-\hat w_jF^z_j)$, and
substituting this into the display gives the identity.
\end{proof}

The term $S_5$ is the torque about body~$5$ of the crossed forces on the lower wing; compare
\cite[(31)]{AK}.

\begin{lemma}\label{d7-vanish}
Let $\gamma$ be a Diagram~7 germ on $A_m$ with $a+b<0$ (Proposition~\ref{d7-cover}).  Then
$I(\gamma(t))\to0$ as $t\to0$.
More precisely, in the nested chart of Proposition~\ref{d7-cover}(ii) the three terms
$\sum_{k=1,2}m_kz_k(w_k-w_5)$, $\sum_{j=3,4}m_jw_j(z_j-z_5)$ and $S_5$ are sums of powers $t^e$
times polynomials in the chart coordinates, with exponents
\[
  e=R;\qquad e=P-Q;\qquad e\in\bigl\{\tfrac Q2,\ P+\tfrac Q2,\ \tfrac{3Q}2+R,\ P+\tfrac{3Q}2+R\bigr\},
\]
respectively.
\end{lemma}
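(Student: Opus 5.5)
The plan is to apply the inertia identity of Lemma~\ref{d7-identity}, which holds on $A_m$. We substitute the nested chart of Proposition~\ref{d7-cover}(ii) into each of its three terms and read off the exponents. The chart coordinates converge along $\gamma$, and in the open cone $P>Q>0$, $R>0$ each listed exponent is positive. Hence every term is $t^e$ times a bounded quantity with $e>0$, and $I\to0$ follows.

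\emph{First term.} For $k=1,2$ we have $z_k=A_k$ and $w_k-w_5=t^R\bigl(m_5v_k+m_1v_1+m_2v_2\bigr)$, since the cluster centre $t^{-Q}D$ cancels. So the first term is $t^R$ times a polynomial in $A_k,v_k$.

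\emph{Second term.} For $j=3,4$ we have $w_j=t^{-Q}B_j$ and $z_j-z_5=t^P\bigl(m_5u_j+m_3u_3+m_4u_4\bigr)$, with $C$ cancelling. This gives the single exponent $P-Q$.

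\emph{Torque term $S_5$.} Here $\delta_{jl}^3=t^{3Q/2}D_{jl}^3$ on the crossed pairs. Write $\hat z_j=z_j-z_5=O(t^P)$ as above, and $\hat w_l=w_l-w_5=t^R(\cdot)$ from the first term. Then expand
\[
\hat w_j=t^{-Q}(B_j-D)+t^R(m_1v_1+m_2v_2),\qquad
\hat z_l=A_l-C+t^P(m_3u_3+m_4u_4).
\]
Multiplying out $\hat z_j\hat w_l-\hat w_j\hat z_l$ and including the factor $t^{3Q/2}$ gives the following exponents:
\begin{itemize}
\item[] $\hat z_j\hat w_l$ contributes $P+\tfrac{3Q}{2}+R$;
\item[] the $t^{-Q}$ part of $\hat w_j$ times $\hat z_l$ contributes $\tfrac Q2$ and $P+\tfrac Q2$;
\item[] the $t^R$ part of $\hat w_j$ times $\hat z_l$ contributes $\tfrac{3Q}2+R$ and $P+\tfrac{3Q}2+R$.
\end{itemize}
These are exactly the listed exponents, and all are positive in the cone.

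The only delicate point is the leading exponent $Q/2$ in $S_5$. It is positive precisely because $a+b<0$, so one only has to check that no negative exponent arises, for instance from $B_j-D$ multiplying $A_l-C$. That product carries $t^{3Q/2-Q}=t^{Q/2}$, so no negative exponent appears. By Lemma~\ref{d7-identity} the three terms sum to $I$ on $A_m$, and since every chart coordinate is bounded, $I(\gamma(t))\to0$.
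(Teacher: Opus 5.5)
Your proof is correct and follows the paper's argument: substitute the nested chart into the three surviving terms of Lemma~\ref{d7-identity}, read off the exponents $R$, $P-Q$ and $\{\tfrac Q2,\,P+\tfrac Q2,\,\tfrac{3Q}2+R,\,P+\tfrac{3Q}2+R\}$, and conclude from positivity in the open cone together with boundedness of the chart coordinates. The one point the paper states that you leave implicit is that these three terms are invariant under the rotation $(z,w)\mapsto(t^{-a}z,t^{a}w)$ used to enter the chart, and $I$ is invariant as well; this is what justifies evaluating them on the rotated germ.
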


\begin{proof}
On $A_m$ the last two sums in Lemma~\ref{d7-identity} vanish, and the three remaining terms are
invariant under the rotations, so we may evaluate them on the rotated germ in the chart.  There
\begin{gather*}
  w_k-w_5=t^R\bigl((m_k+m_5)v_k+m_{k'}v_{k'}\bigr),\qquad
  z_j-z_5=t^P\bigl((m_j+m_5)u_j+m_{j'}u_{j'}\bigr),\\
  w_j-w_5=t^{-Q}(B_j-D)+t^R(m_1v_1+m_2v_2),\\
  z_l-z_5=A_l-C+t^P(m_3u_3+m_4u_4),
\end{gather*}
where $\{k,k'\}=\{1,2\}$, $\{j,j'\}=\{3,4\}$ and $l\in\{1,2\}$, and
$\delta_{jl}^3=t^{3Q/2}D_{jl}^3$.  Substituting, together with $z_k=A_k$ and $w_j=t^{-Q}B_j$,
gives the stated exponents.  Since $a+b<0$, Proposition~\ref{d7-cover}(ii) puts $(P,Q,R)$ in the
open cone, so every exponent is positive, and all chart coordinates are bounded.  Hence the three
terms, and with them $I$, tend to zero.
\end{proof}

\subsection{Germs outside the open cone}
We complete the order analysis by treating $a+b\ge0$.
Every squared distance then has nonnegative order, and each four-product
contains at least one squared distance of positive order.

\begin{lemma}\label{d7-bounded}
Let $\gamma$ be a Diagram~7 germ with $a+b\ge0$ (Proposition~\ref{d7-cover}).  Then every
four-product tends to zero along $\gamma$.
\end{lemma}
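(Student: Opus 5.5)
We compute the order of every squared distance $r_{kl}^2=z_{kl}w_{kl}$ from Proposition~\ref{d7-cover}(i) and then add these orders inside each four-product. In the normalized frame (the rotation does not change $r_{kl}^2$) the orders are as follows. On the upper wing, $\ord r_{kl}^2=a+\frac{4-a}3=\frac{4+2a}3=R>0$, since $a>-2$. On the lower wing, $\ord r_{kl}^2=b+\frac{4-b}3=\frac{4+2b}3>0$, since $b>-2$. On the four crossed pairs, $\ord r_{kl}^2=a+b\ge0$ by hypothesis. Thus every squared distance has nonnegative order, and the six wing distances have strictly positive order.

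Next we look at the four-products $p_{ij}=r_{ij}^2r_{kl}^2r_{lh}^2r_{hk}^2$, where $\{k,l,h\}$ is the triangle complementary to $ij$. Each such product contains the three sides of a triangle among the bodies, and we only need one wing pair among its factors. We check this by cases. If the complementary triangle contains body $5$, then two of its sides join $5$ to other bodies. A pair $5x$ is always a wing pair (upper if $x\in\{1,2\}$, lower if $x\in\{3,4\}$), so at least one factor has positive order. If the triangle omits $5$, it lies in $\{1,2,3,4\}$; any three of these four bodies include either both $1,2$ or both $3,4$, so the triangle contains the side $12$ or $34$, which is again a wing pair.

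Hence every $p_{ij}$ is a product of factors of nonnegative order, at least one of them of strictly positive order. Therefore $\ord p_{ij}>0$, and $p_{ij}\to0$ along $\gamma$.

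There is no real obstacle here. The only care needed is to use the bound $\ord z_{kl},\ord w_{kl}$ on the crossed pairs exactly as given in Proposition~\ref{d7-cover}(i), so that the crossed orders equal $a+b$ (with no cancellation), and to confirm that the rotation leaves $r_{kl}^2$ unchanged. Since $a+b\ge0$, no crossed factor can become unbounded.
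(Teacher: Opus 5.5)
Your proof is correct and is essentially the paper's own argument. You read off from Proposition~\ref{d7-cover}(i) the orders $(4+2a)/3>0$, $(4+2b)/3>0$ and $a+b\ge0$ of the squared distances, observe that the triangle complementary to $ij$ always contains a wing pair (a side through body~$5$, or else $12$ or $34$), and conclude that $\ord p_{ij}\ge\min\{(4+2a)/3,(4+2b)/3\}>0$.
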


\begin{proof}
By Proposition~\ref{d7-cover}(i), $r_{kl}^2=z_{kl}w_{kl}$ has order $(4+2a)/3>0$ on the upper
wing, $(4+2b)/3>0$ on the lower wing and $a+b\ge0$ on the crossed pairs.  The four-product
$p_{ij}$ contains the three squared sides of the triangle $\{k,l,m\}$ complementary to
$\{i,j\}$, and this triangle contains a wing pair: if $5\in\{k,l,m\}$, both pairs through~$5$
are wing pairs; otherwise $\{k,l,m\}\subset\{1,2,3,4\}$ contains $\{1,2\}$ or $\{3,4\}$ (the
crossed pairs form the bipartite graph $K_{2,2}$).  Hence
$\ord p_{ij}\ge\min\{(4+2a)/3,(4+2b)/3\}>0$.
\end{proof}

\subsection{Conclusion of the Diagram 7 argument}
We combine the two order estimates with the constancy of the inertia to
exclude an unbounded four-product on a curve through a real point.

\begin{proof}[Proof of Theorem~\ref{thm:d7}]
Let $\gamma$ be a Diagram~7 germ with an unbounded four-product on an irreducible curve
$\Gamma\subset A_m$, numbered as above, and let $a,b$ be as in Proposition~\ref{d7-cover}.  Some
four-product has negative order along $\gamma$ (Definition~\ref{def:germ}), so $a+b<0$ by
Lemma~\ref{d7-bounded}, and $I(\gamma(t))\to0$ by Lemma~\ref{d7-vanish}.  By
Lemma~\ref{lem:U-constant}, $I=U$ is constant on $\Gamma$, with value
$\lim_{t\to0}I(\gamma(t))=0$.  At a real point $w_k=\bar z_k$, so $I=\sum_km_k|z_k|^2$,
which is positive because $m_k>0$ and the distance constraint for the pair $12$ forces $z_1\ne z_2$.  Hence
$\Gamma$ contains no real point.
\end{proof}

\section{Diagram 8}\label{sec:d8}

We exclude Diagram~8 by comparing the force equations with the exact distance
constraints.  The limiting equations determine a mass relation; the first
nonzero terms reduce the admissible masses to a one-parameter family, where
two quadratic balances are incompatible.

Diagram~8 is the butterfly of \cite[\S7.4]{AK} with exactly one circled wing
(Fact~\ref{fact:fig11}).  We use the numbering of \cite[\S8.8]{AK}: the upper wing $\{1,2,5\}$
is an isolated triangle of $z$-edges without $z$-circles, the lower wing $\{3,4,5\}$ is a
triangle of $w$-edges with $w$-circles at $3$ and $4$ (vertex $5$ is never circled
\cite[\S7.4]{AK}, and Rules~1a, 1d place the circles of a circled wing at its other two
vertices), and the crossed pairs $13$, $14$, $23$, $24$ are not edges.
The other circling becomes this one after exchanging $z$ and $w$ and renumbering
$1\leftrightarrow3$, $2\leftrightarrow4$; both operations are symmetries of \eqref{eq:system}
(the second permutes the masses), so it suffices to treat this numbering for every
$m\in\Rp^5$.
The mass condition of the diagram is relation (40) of \cite[\S8.8]{AK},
\begin{equation}\label{eq:d8-40}
  m_5^2(m_1+m_2)^4=m_1^3m_2^3,
\end{equation}
as stated in \cite[p.~578]{AK}.  For the other circling it reads $m_5^2(m_3+m_4)^4=m_3^3m_4^3$, which is their (39).
The proof begins with a chart in which the lower wing has a finite limit.
Every Diagram~8 germ converges to the boundary of this chart
(Proposition~\ref{prop:d8-cover}), and its limiting equations imply
\eqref{eq:d8-40} (Proposition~\ref{prop:d8-first}).  We then use the
{exact distance equations on the upper wing and the coefficients of order nine
to restrict the possible germs to one curve of}
masses.  A second-order expansion excludes the remaining germs, including complex
ones (Theorem~\ref{thm:d8}).

We shall also use two symmetries to normalize the masses and coordinates.  Scaling the
masses by $\lambda>0$ and $(z,w,\delta)$ by
$(\lambda^{1/3},\lambda^{1/3},\lambda^{-1/3})$ maps $A_m$ onto $A_{\lambda m}$.
Every coordinate and every component of $\mathcal Z$, $\mathcal W$ is multiplied by a
nonzero constant, so the orders and diagrams are preserved (Lemma~\ref{lem:germ}).
The equations of \eqref{eq:system}, apart from $z_{12}=w_{12}$, are also invariant under
$z\mapsto cz$, $w\mapsto c^{-1}w$.  This changes the $z$-orders by $\ord c$ and the
$w$-orders by $-\ord c$, preserving the diagram \cite[p.~547]{AK}.

\subsection{A chart in distance ratios}
We normalize one separation in each wing and introduce distance ratios on
the upper wing.  In the resulting coordinates the force and distance equations
extend polynomially to the limiting configurations.
Let $\gamma$ be a Diagram~8 germ.  We rotate it by $z_{12}^{-1}$, so that $z_{12}=1$ along
$\gamma$.  The chart has two parameters $t,h$ and nineteen coordinates, the
positions and shapes $A_1,A_2,C,B_3,B_4,D,u,v,R_{12},R_{25},P,Q$ and the inverse distances
$D_{13},D_{14},D_{23},D_{24},D_{34},D_{35},D_{45}$; we write $x$ for all of them.  It is given by
\begin{gather*}
  z_1=A_1,\quad z_2=A_2,\quad z_3=C+m_5t^6u,\quad z_4=C+m_5t^6v,\\
  z_5=C-t^6(m_3u+m_4v),\quad w_3=t^{-6}B_3,\quad w_4=t^{-6}B_4,\\
  w_1=t^{-6}D+m_5t^4h^{-2}P,\quad w_2=t^{-6}D+m_5t^4h^{-2}Q,\\
  w_5=t^{-6}D-t^4h^{-2}(m_1P+m_2Q),\quad \delta_{15}=t^{-2}h,\\
  \delta_{12}=t^{-2}hR_{12},\quad \delta_{25}=t^{-2}hR_{25},\\
  \delta_{bc}=t^3D_{bc}\ \ (b\in\{1,2\},\ c\in\{3,4\}),\qquad
  \delta_{kl}=D_{kl}\ \ (k,l\in\{3,4,5\}).
\end{gather*}
{On the gauge slice defined below, we take a local sixth root $t$ of
$(w_4-w_3)^{-1}$.  Once this root is fixed and $th\ne0$, the inverse chart map is uniquely
determined: $C$ and $t^{-6}D$ are the centres of mass of $z_3,z_4,z_5$ and of}
$w_1,w_2,w_5$, $h=t^2\delta_{15}$, $R_{12}=\delta_{12}/\delta_{15}$,
$R_{25}=\delta_{25}/\delta_{15}$, $u=(z_3-C)/(m_5t^6)$, $P=h^2(w_1-t^{-6}D)/(m_5t^4)$, and
{the remaining coordinates are recovered in the same way.  Along a germ,
a finite ramification allows all these roots to be chosen consistently.}
The rows of the chart are the gauges $E_{20}:=A_2-A_1-1$ and $E_{21}:=B_4-B_3-1$ (that is,
$z_{12}=1$ and $w_4-w_3=t^{-6}$) and $E_0,\dots,E_{19}$, obtained by substituting the chart into
the twenty equations of \eqref{eq:system} other than $z_{12}=w_{12}$ and multiplying each by a
monomial in $t$ and $h^{1/2}$: $E_{2k-2}$ and $E_{2k-1}$ come from the $z$- and the $w$-equation
of body $k$, and $E_{10},\dots,E_{19}$ from the distance constraints of the pairs
$12,13,14,15,23,24,25,34,35,45$.
In the corner coordinates
\[
  a:=th^{-1/2},\qquad \eta:=t^2h^{1/2}\qquad(\text{so } t^3=a\eta,\ h^{3/2}=\eta\,a^{-2})
\]
we use the monomials
\[
  \mu:=a^6=t^6h^{-3},\qquad \nu:=a^2\eta^2=t^6,\qquad T:=a\eta=t^3,\qquad \eta^2=t^4h,
\]
and we write $X:=A_1-C$, $Y:=A_2-C$, $M_{12}:=m_1+m_2$, $M:=m_3+m_4+m_5$, and
$C_5:=C-\nu(m_3u+m_4v)$, which is $z_5$.

{The chart parameters may be complex.  In asymptotic estimates we therefore
interpret sums of scales as sums of absolute values: $O(\mu+\rho^2)$ means
$O(|\mu|+|\rho|^2)$, and $o(\mu+\rho^2)$ means $o(|\mu|+|\rho|^2)$.
Products inside $O$ and $o$ are interpreted in the same way.  Algebraic identities and
equations between leading coefficients retain their usual complex meaning.}

{\begin{lemma}\label{lem:d8-rows}
The chart equations have the following polynomial identities, valid for all masses.
\begin{romanlist}
\item Every $E_j$ is a polynomial in $a$, $\eta$ and $x$, and its restriction
$f_j:=E_j|_{a=\eta=0}$ to the boundary is
\begin{align*}
  f_0&=-\bigl(m_2(X-Y)R_{12}^3+m_5X\bigr), & f_8&=m_1X+m_2YR_{25}^3,\\
  f_2&=-\bigl(m_1(Y-X)R_{12}^3+m_5YR_{25}^3\bigr), & f_{20}&=A_2-A_1-1,\\
  f_{10}&=m_5(X-Y)(P-Q)R_{12}^2-1, & f_{13}&=X\bigl((m_1+m_5)P+m_2Q\bigr)-1,\\
  f_{16}&=YR_{25}^2\bigl(m_1P+(m_2+m_5)Q\bigr)-1, & f_4&=f_6=C,\quad f_1=f_3=D,\\
  f_{11}&=X(D-B_3)D_{13}^2-1, & f_{12}&=X(D-B_4)D_{14}^2-1,\\
  f_{14}&=Y(D-B_3)D_{23}^2-1, & f_{15}&=Y(D-B_4)D_{24}^2-1,\\
  f_5&=B_3+m_4(B_4-B_3)D_{34}^3-m_5(B_3-D)D_{35}^3, & f_{21}&=B_4-B_3-1,\\
  f_7&=B_4-m_3(B_4-B_3)D_{34}^3-m_5(B_4-D)D_{45}^3,\\
  f_9&=D+m_3(B_3-D)D_{35}^3+m_4(B_4-D)D_{45}^3,\\
  f_{17}&=m_5(B_3-B_4)(u-v)D_{34}^2-1,\\
  f_{18}&=(B_3-D)\bigl((m_3+m_5)u+m_4v\bigr)D_{35}^2-1,\\
  f_{19}&=(B_4-D)\bigl(m_3u+(m_4+m_5)v\bigr)D_{45}^2-1.
\end{align*}
\item $m_1E_0+m_2E_2+m_5E_8+a^6(m_3E_4+m_4E_6)=a^6(m_1A_1+m_2A_2+MC)$.
\item $E_1=D+\eta^2\beta_1+T^3\tau_1+\sum_{i\ge6,\,j\ge2}a^i\eta^j(\cdots)$, and $E_3$ is the
same with $\beta_3,\tau_3$, where
\begin{align*}
  \beta_1&=-m_5\bigl(m_2(P-Q)R_{12}^3+(m_1+m_5)P+m_2Q\bigr),\\
  \beta_3&=-m_5\bigl(m_1(Q-P)R_{12}^3+(m_1P+(m_2+m_5)Q)R_{25}^3\bigr),\\
  \tau_1&=m_3(B_3-D)D_{13}^3+m_4(B_4-D)D_{14}^3,\\
  \tau_3&=m_3(B_3-D)D_{23}^3+m_4(B_4-D)D_{24}^3;
\end{align*}
the monomials of $E_1-E_3$ are $\eta^2$, $T^3$, $a^6\eta^2$ and $a^9\eta^5$.
\item For $k\in\{0,2,8,10,13,16,20\}$, $E_k=f_k(\mathcal I)+\sum_{i\ge6}a^i\eta^j(\cdots)$, where
$f_k$ is read as a function of $(X,Y,R_{12},R_{25},P,Q)$ and
$\mathcal I:=(A_1-C_5,\,A_2-C_5,\,R_{12},\,R_{25},\,P,\,Q)$.
\item $E_4=C+\nu L_3+T^3F_3+T^5G_3$ exactly, where
$L_3=-m_5\bigl(m_4(u-v)D_{34}^3+((m_3+m_5)u+m_4v)D_{35}^3-u\bigr)$,
$F_3=-\bigl(m_1(C-A_1)D_{13}^3+m_2(C-A_2)D_{23}^3\bigr)$ and
$G_3=-m_5u\bigl(m_1D_{13}^3+m_2D_{23}^3\bigr)$.
\end{romanlist}
\end{lemma}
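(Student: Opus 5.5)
The lemma is a bookkeeping statement, and I would prove it by direct substitution of the chart into \eqref{eq:system}, tracking monomials in $(t,h)$ and then in the corner coordinates $(a,\eta)$. The first step fixes the monomial multipliers. Each $E_j$ is the corresponding equation of \eqref{eq:system} times a monomial in $t$ and $h^{1/2}$. The multiplier is chosen so that the lowest-order part is $O(1)$ and every term is a monomial $a^i\eta^j$ with $i,j\ge0$. Concretely, I would write each scale appearing in the chart as a product of powers of $a$ and $\eta$: $t^3=a\eta$, $t^6=a^2\eta^2$, $h^{3/2}=\eta a^{-2}$, $t^{-2}h=\eta^{2/3}\cdots$, and so on. I would choose each multiplier to clear negative powers; for example the $z$-equation of body $1$ is multiplied by a power making the $\delta_{12}^3$ and $\delta_{15}^3$ terms, which carry $t^{-6}h^3$, equal to $O(1)$.

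Part (i) then follows by setting $a=\eta=0$. Only the terms of exact order zero survive, and I would verify the displayed $f_j$ row by row. For example, in $E_0$ the $\delta_{13},\delta_{14}$ terms carry $t^9$ and drop out. The upper-wing terms give $-(m_2(X-Y)R_{12}^3+m_5X)$, after using $z_1-z_5=A_1-C_5$ and $C_5\to C$ at the boundary. Polynomiality holds because only nonnegative integer powers of $a,\eta$ appear once the $h^{1/2}$ powers are absorbed into $\eta$ and $a$.

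Part (ii) is the momentum identity $\sum m_kz_k=\sum m_kF^z_k$ (antisymmetry, as in Lemma~\ref{lem:distances}), multiplied by the common multiplier of the upper-wing $z$-rows. The relative factor $a^6$ records the different multipliers on the lower-wing $z$-rows.

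Parts (iii)--(v) are expansions of specific rows. For $E_1$ I would expand $w_1=t^{-6}D+m_5t^4h^{-2}P$ together with the forces. The $D$-term, the upper-wing forces (giving $\eta^2\beta_1$), and the crossed forces (giving $T^3\tau_1$, since $\delta_{13}^3=t^9D_{13}^3$ and $w_1-w_3=t^{-6}(D-B_3)+\dots$) appear as displayed, and the remainder carries $a^{\ge6}$. Subtracting $E_3$ cancels the $D$-term; I would list the surviving monomials. For (iv) the observation is that $C$ enters the upper-wing rows only through $z_k-z_5=A_k-C_5$, except in terms carrying $z_3,z_4$ with a $t^6$ factor. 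Those terms are multiplied by at least $a^6$, so the rows equal $f_k(\mathcal I)$ plus $a^{\ge6}$ terms. Part (v) is an exact expansion of the $z$-equation of body $3$ with multiplier $1$, splitting the lower-wing ($\nu L_3$), crossed ($T^3F_3$) and crossed-times-$u$ ($T^5G_3$) contributions.

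The main obstacle is purely organizational. The exponents of $t$ and $h$ must be chosen consistently in every one of the 22 rows, so that each row is polynomial in $(a,\eta)$ rather than merely Laurent. I would handle this with a table recording the $(t,h)$ weight of every term, and then check nonnegativity in $(a,\eta)$. That check is the kind of verification best confirmed by an exact symbolic expansion.
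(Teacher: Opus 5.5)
Your proposal is correct and follows the paper's proof: substitute the chart directly with the minimal clearing multipliers, namely $\mu=a^6$ on the $z$-rows of bodies $1,2,5$, $1$ on the $z$-rows of bodies $3,4$ and on the distance rows, and $\nu=a^2\eta^2$ on all $w$-rows; then read off the monomials, and use the antisymmetry (momentum) identity for (ii). One slip in your list of scale conversions: $t^{-2}h=a^{-2}$ exactly, with no power of $\eta$, so $\delta_{15}^3=\mu^{-1}$ and $\delta_{12}^3=\mu^{-1}R_{12}^3$, and this is precisely why the upper-wing force terms become exact functions of $\mathcal I$ in (iv).
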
}
{\begin{proof}
We write out the chart equations to identify both their limiting terms and the
monomials that enter the later order comparisons.
Put $I=\{1,2\}$, $J=\{3,4\}$, $P_1=P$, $P_2=Q$, $u_3=u$, $u_4=v$,
and set
\[
q=R_{12},\quad s=R_{25},\quad V=m_3u+m_4v,\quad S=m_1P+m_2Q,
\quad \gamma=\mu\eta^2=\nu a^4.
\]
The unscaled positions are
\[
z_i=A_i,\quad z_j=C+\nu m_5u_j,\quad z_5=C_5=C-\nu V,
\]
whereas their scaled $w$-coordinates are
\[
\nu w_i=D+\gamma m_5P_i,\qquad
\nu w_j=B_j,\qquad \nu w_5=D-\gamma S.
\]
The inverse-distance cubes are $\mu^{-1}q^3,\mu^{-1},\mu^{-1}s^3$
on $12,15,25$, respectively, $T^3D_{ij}^3$ on crossed pairs, and
$D_{kl}^3$ on the lower wing.
The chart equations are obtained by multiplying the $z$-equations of $1,2,5$
by $\mu$, leaving those of $3,4$ unscaled, and multiplying all $w$-equations by $\nu$.
Writing $X'=A_1-C_5$ and $Y'=A_2-C_5$, the resulting upper $z$-rows are
\begin{align*}
E_0={}&\mu A_1-m_2(A_1-A_2)q^3-m_5X'
 -\mu T^3\sum_{j\in J}m_j(A_1-C-\nu m_5u_j)D_{1j}^3,\\
E_2={}&\mu A_2-m_1(A_2-A_1)q^3-m_5Y's^3
 -\mu T^3\sum_{j\in J}m_j(A_2-C-\nu m_5u_j)D_{2j}^3,\\
E_8={}&\mu C_5+m_1X'+m_2Y's^3
 +\mu\nu\sum_{j\in J}m_j(V+m_5u_j)D_{j5}^3.
\end{align*}
If $j'$ denotes the other element of $J$, the lower $z$-rows are
\begin{align*}
E_{2j-2}={}&C+\nu m_5u_j
 -\nu m_{j'}m_5(u_j-u_{j'})D_{jj'}^3
 -\nu m_5(V+m_5u_j)D_{j5}^3\\
&-T^3\sum_{i\in I}m_i(C-A_i+\nu m_5u_j)D_{ij}^3.
\end{align*}
This formula at $j=3$ proves (v).

The upper $w$-rows are
\begin{align*}
E_1&=D+\eta^2\beta_1+T^3\tau_1
 +\gamma m_5P-\gamma T^3m_5P\sum_{j\in J}m_jD_{1j}^3,\\
E_3&=D+\eta^2\beta_3+T^3\tau_3
 +\gamma m_5Q-\gamma T^3m_5Q\sum_{j\in J}m_jD_{2j}^3.
\end{align*}
Thus (iii) follows, including its complete list of monomials.
The lower and central $w$-rows are
\begin{align*}
E_{2j-1}={}&B_j-m_{j'}(B_j-B_{j'})D_{jj'}^3-m_5(B_j-D)D_{j5}^3
 -\gamma m_5 S D_{j5}^3\\
&-T^3\sum_{i\in I}m_i(B_j-D)D_{ij}^3
 +\gamma T^3m_5\sum_{i\in I}m_iP_iD_{ij}^3,\\
E_9={}&D+\sum_{j\in J}m_j(B_j-D)D_{j5}^3
 +\eta^2\beta_5+\gamma S\Bigl(-1+\sum_{j\in J}m_jD_{j5}^3\Bigr),
\end{align*}
where
\[
\beta_5=m_1((m_1+m_5)P+m_2Q)
 +m_2s^3(m_1P+(m_2+m_5)Q).
\]

Finally the distance rows are obtained by subtracting $1$ from the
following expressions:
\begin{center}
\begin{tabular}{c l}
$12$ & $m_5(A_1-A_2)(P-Q)q^2$\\
$15$ & $X'((m_1+m_5)P+m_2Q)$\\
$25$ & $Y's^2(m_1P+(m_2+m_5)Q)$\\
$ij$ & $(A_i-C-\nu m_5u_j)(D-B_j+\gamma m_5P_i)D_{ij}^2$\\
$34$ & $m_5(u-v)(B_3-B_4)D_{34}^2$\\
$j5$ & $(V+m_5u_j)(B_j-D+\gamma S)D_{j5}^2$.
\end{tabular}
\end{center}
These formulas account for all ten distance rows; the last two rows are
$E_{20}=A_2-A_1-1$ and $E_{21}=B_4-B_3-1$.
Every expression is polynomial.  Setting $a=\eta=0$ gives the boundary
equations listed in (i).
The three upper distance rows and the gauge are already exact functions
of $\mathcal I$; in the upper $z$-rows all other terms have a factor $\mu$.
This proves (iv).  Finally, the mass-weighted sum of the original $z$-equations
has no force terms, since the two terms contributed by each pair cancel.
Multiplying this sum by $\mu$ gives (ii), because the weighted position of
the lower cluster is $MC$.
\end{proof}

}\begin{definition}\label{def:d8-ray}
For a germ in this chart with $t\to0$, $x$ convergent, $\ord a>0$ and
$\ord\eta>0$, we define the order ratios
\[
 r:=\frac{\ord\eta}{\ord a}\in(0,\infty),\qquad
 p:=\frac{\ord h}{\ord t}.
\]
\end{definition}
Since $t^3=a\eta$ and $h^{3/2}=\eta a^{-2}$, $p=(2r-4)/(r+1)$, an increasing bijection of
$(0,\infty)$ onto $(-4,2)$.  In particular,
\[
r=\tfrac12,2,3\quad\Longleftrightarrow\quad
p=-2,0,\tfrac12,
\]
respectively.  The boundary components $\{a=0\}$ and $\{\eta=0\}$
correspond to $p=-4$ and $p=2$.

\subsection{Representation of all Diagram 8 germs}
The diagram rules determine the orders of both wings and the crossed
separations.  We use them to prove convergence of the chart coordinates and
to identify the full range of the order ratios.

\begin{proposition}[Representation in the chart]\label{prop:d8-cover}
Every Diagram~8 germ converges in the chart with $\ord a>0$,
$\ord\eta>0$ and $p\in(-4,2)$; its lower-wing inverse distances have finite
nonzero limits.  Precisely: let $\gamma$ be a Diagram~8 germ, rotated so that $z_{12}=1$, and put
$t:=(w_4-w_3)^{-1/6}$ and $h:=t^2\delta_{15}$ along $\gamma$.  Let $\theta$ be the order of
$z_{12}$ before the rotation, in the normalization $Z=W=\epsilon^{-2}$ with orders in $\epsilon$.
Then $-2<\theta<1$; every coordinate in $x$ converges along $\gamma$, and $R_{12}$, $R_{25}$,
$D_{13},D_{14},D_{23},D_{24}$, $D_{34},D_{35},D_{45}$ have nonzero limits; and
\begin{gather*}
  \ord t=\tfrac{2-\theta}6,\qquad \ord h=-\tfrac{2\theta}3,\qquad
  p=-\tfrac{4\theta}{2-\theta},\\
  \ord a=\tfrac{2+\theta}6,\qquad \ord\eta=\tfrac{2-2\theta}3,\qquad
  r=\tfrac{4(1-\theta)}{2+\theta}.
\end{gather*}
\end{proposition}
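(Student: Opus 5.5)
We follow the pattern of Propositions~\ref{d7-cover} and~\ref{prop:d6-cover}. Normalize $\gamma$ as in Lemma~\ref{lem:germ}, with $Z=W=\epsilon^{-2}$ and orders measured in $\epsilon$. All chart coordinates are rotation-invariant combinations, up to the single rotation by $z_{12}^{-1}$. So we first compute the orders of all separations in this normalized frame. Only at the end do we pass to the rotated frame, which adds $-\theta$ to every $z$-order and $+\theta$ to every $w$-order.

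\textbf{Step 1: orders in the normalized frame.} The upper wing is an isolated $z$-triangle without circles. By Rule~2e its three $z$-separations share an order $\theta$. By Rule~1e it has no maximal $z$-stroke, so Estimate~2 gives $-2<\theta<1$. Since the wing pairs are strokes, $\ord z_{kl}+3\ord w_{kl}=4$, and hence $\ord w_{kl}=(4-\theta)/3$ on $12,15,25$.

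The lower wing carries $w$-circles at $3,4$. As in the proof of Proposition~\ref{prop:d6-cover}, this gives $\ord w_3=\ord w_4=-2$ and $\ord w_{35}=\ord w_{45}=-2$. The stroke and Estimate~1 then give $\ord w_{34}=-2$ and $\ord z_{kl}=2$ on the lower wing, so $\ord\delta_{kl}=0$ there.

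For the crossed pairs, $w_{13}=w_{15}+w_{53}$ has order $-2$, and $z_{13}=z_{15}+z_{53}$ has order $\theta$ because $\theta<1<2$. Hence $\ord\delta_{13}=(2-\theta)/2$, and the same holds for $14,23,24$.

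We also need the small coordinates. Uncircled bodies have $\ord w_1,\ord w_2,\ord w_5>-2$, and $\ord z_3,\ord z_4,\ord z_5>-2$ since these bodies carry no $z$-circle. The centre-of-mass identities bound the cluster centres $C$ and $t^{-6}D$ below by the separations, as in Proposition~\ref{d7-cover}(i).

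\textbf{Step 2: transfer to the chart.} After the rotation, $\ord z_{12}=0$, $\ord w_{34}=-2+\theta$, $\ord w_{15}=(4+2\theta)/3$, and $\ord z_{35}=2-\theta$. From $t^{-6}=w_4-w_3$ we get $\ord t=(2-\theta)/6$. From $h=t^2\delta_{15}$ with $\ord\delta_{15}=-\tfrac12\bigl(0+\tfrac{4+2\theta}3\bigr)$ we get $\ord h=\tfrac{2-\theta}3-\tfrac{2+\theta}3=-\tfrac{2\theta}3$. The formulas for $a,\eta,p,r$ then follow by arithmetic.

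Next we invert the chart coordinate by coordinate:
\begin{itemize}
\item $D_{kl}$ on the lower wing equals $\delta_{kl}$, of order $0$;
\item $D_{bc}=t^{-3}\delta_{bc}$ has order $0$;
\item $R_{12},R_{25}$ are ratios of upper-wing inverse distances of equal order;
\item $u,v$ are $(z_j-C)/(m_5t^6)$ with $\ord(z_j-C)\ge2-\theta=6\ord t$;
\item $P,Q$ are $h^2(w_k-t^{-6}D)/(m_5t^4)$, with order $\tfrac{4+2\theta}3-\tfrac{4\theta}3-\tfrac{4-2\theta}3=0$ at least;
\item $B_j=t^6w_j$ and $A_k=z_k$ have nonnegative order.
\end{itemize}
Each coordinate is thus a Puiseux series of nonnegative order, so it converges, and the listed ones have order exactly $0$ and therefore nonzero limits. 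Finally, $-2<\theta<1$ gives $\ord a>0$, $\ord\eta>0$ and $p\in(-4,2)$.

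\textbf{Main obstacle.} The delicate step is the lower bound on the small coordinates in the rotated frame: $\ord(z_j-C)\ge 6\ord t$, $\ord C\ge 0$, and $\ord(t^6D)\ge0$. The cluster-centre terms mix the two wings. We would handle this exactly as in Proposition~\ref{prop:d6-cover}, estimating the equation $w_k=\sum_l m_l\delta_{kl}^3(w_k-w_l)$ term by term, and invoking \cite[Prop.~1]{AK} if the direct estimate leaves a gap.
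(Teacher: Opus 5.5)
Your proposal is correct and follows essentially the paper's proof: you get the wing and crossed separation orders from the rules and estimates, bound the positions by the centre-of-mass identities, shift orders by $\mp\theta$ under the rotation, and invert the chart coordinate by coordinate. The only variation is that you derive the lower-wing orders from the circles and strokes as in Proposition~\ref{prop:d6-cover}, whereas the paper uses Rules~1e, 2e and the equality case of Estimate~2. The ``main obstacle'' you flag is not one: $\ord(z_j-C)\ge2-\theta$, $\ord C\ge0$ and $\ord D\ge0$ (you wrote $\ord(t^6D)$) follow at once from the separation orders and $\sum_km_kz_k=\sum_km_kw_k=0$, exactly as in your Step~1 and in the paper, so no force-equation estimates or \cite[Prop.~1]{AK} are needed.
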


\begin{proof}
By Fact~\ref{fact:rules} and Lemma~\ref{lem:germ} the rules and estimates of \cite[\S3]{AK}
hold for the orders along $\gamma$.
We begin with the lower wing.  The triangle $\{3,4,5\}$ is an isolated component of
the $w$-diagram with $w$-circles, so it has a maximal $w$-stroke
\cite[Rule~1e, p.~546]{AK}.  By Rule~2e its three
$w$-separations have one order, so all three $w$-strokes are maximal, and the right equality
case of Estimate~2 \cite[p.~548]{AK}, in the colour $w$, gives $\ord w_{kl}=-2$,
$\ord z_{kl}=2$, $\ord r_{kl}=0$ for $kl\in\{34,35,45\}$.
The upper wing $\{1,2,5\}$ is an isolated component of the $z$-diagram without
circles.  It has no maximal $z$-stroke by Rule~1e and no $w$-stroke, hence no $zw$-edge.
Thus
neither equality case of Estimate~2 holds, and by Rule~2e the three $z$-separations have one
order $\theta$ with $-2<\theta<1$.  The $z$-strokes have $\ord Z_{kl}=-2$, so
$\ord r_{kl}=(\theta+2)/3$ and $\ord w_{kl}=(4-\theta)/3$.
These wing orders determine the crossed separations.  For $b\in\{1,2\}$, $c\in\{3,4\}$, $z_{bc}=z_{b5}+z_{5c}$ has order
$\theta$ (as $\theta<2$) and $w_{bc}=w_{b5}+w_{5c}$ has order $-2$ (as $(4-\theta)/3>-2$), so
$\ord r_{bc}=(\theta-2)/2$.
We next recover the orders of the positions.  Every $w_k$ has order $\ge-2$, with equality for $k=3,4$ ($w$-circles).  Let
$C$ be the centre of mass of $z_3,z_4,z_5$.  Then $(m_1+m_2+M)C=-m_1(z_1-C)-m_2(z_2-C)$, and
$z_b-C=z_{5b}+(z_5-C)$ with $\ord z_{5b}=\theta$ and $\ord(z_5-C)\ge2$; so $C$, $z_1$, $z_2$
have order $\ge\theta$, and $z_k-C$ has order $\ge2$ for $k=3,4,5$.
After the rotation, all $z$-orders decrease by $\theta$ and all $w$-orders increase
by $\theta$.  Consequently $\ord(w_4-w_3)=\theta-2$ and
$\ord t=(2-\theta)/6>0$.  The inverse chart formulas now give the orders of the new
coordinates: $A_1,A_2,C$ have order $\ge0$; $u,v$ have order $\ge(2-\theta)-6\ord t=0$; $B_3,B_4$
have order $0$ and $D$ order $\ge0$; $\ord h=2\ord t-\ord r_{15}=-2\theta/3$; $R_{12},R_{25}$
have order $0$; $P,Q$ have order $\ge2\ord h+\frac{4+2\theta}3-4\ord t=0$, because
$w_1-t^{-6}D=(m_2w_{21}+m_5w_{51})/(m_1+m_2+m_5)$ and similarly for $w_2$; $D_{bc}=t^{-3}\delta_{bc}$
has order $\frac{2-\theta}2-3\ord t=0$; and $D_{kl}=\delta_{kl}$ has order $0$.  Hence every
coordinate converges (Lemma~\ref{lem:germ}), the inverse-distance coordinates to nonzero limits,
and $p=\ord h/\ord t=-4\theta/(2-\theta)\in(-4,2)$.  Finally
$\ord a=\ord t-\frac12\ord h=(2+\theta)/6$ and $\ord\eta=2\ord t+\frac12\ord h=(2-2\theta)/3$,
both positive.
\end{proof}
Thus every germ has a finite limit at which all inverse-distance coordinates
are nonzero.  We next solve the equations at these limits.

\subsection{The mass relation and the limiting configuration}
We combine the limiting force equations with the closure of the upper
triangle.  This gives the mass relation and determines the coordinates needed
for the subsequent coefficient comparisons.

\begin{proposition}[Mass relation and limiting coordinates]\label{prop:d8-first}
At positive masses every Diagram~8 germ satisfies \eqref{eq:d8-40}:
$m_5^2(m_1+m_2)^4=m_1^3m_2^3$.  Moreover, in the chart of Proposition~\ref{prop:d8-cover} its
limit $x_0$ has $A_1=-m_2/M_{12}$, $A_2=m_1/M_{12}$, $C=0$, $R_{25}=1$,
$R_{12}=-m_5M_{12}/(m_1m_2)$, $(P,Q)=(P_0,Q_0)$ the unique solution of $f_{10}=f_{13}=0$,
$D=0$, $B_3=-m_4/(m_3+m_4)$, $B_4=m_3/(m_3+m_4)$, and, with $L:=(M_{12}(m_3+m_4))^{1/2}>0$ and
signs $s_{bc}\in\{\pm1\}$,
\begin{gather*}
  D_{13}=\frac{is_{13}L}{\sqrt{m_2m_4}},\qquad D_{14}=\frac{s_{14}L}{\sqrt{m_2m_3}},\\
  D_{23}=\frac{s_{23}L}{\sqrt{m_1m_4}},\qquad D_{24}=\frac{is_{24}L}{\sqrt{m_1m_3}}.
\end{gather*}
\end{proposition}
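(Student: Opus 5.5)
We start from Proposition~\ref{prop:d8-cover}. It puts the limit $x_0$ in the boundary zero set $\{f_j=0\}$ of Lemma~\ref{lem:d8-rows}(i), with $R_{12},R_{25},D_{bc},D_{kl}$ nonzero. We then solve these boundary equations in blocks.

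\emph{Lower wing and centres.} Since $f_4=f_6=0$ and $f_1=f_3=0$, we get $C=D=0$. The lower-wing rows $f_5,f_7,f_9$, together with $f_{21}$, form a two-body-type system. Taking the mass-weighted sum $m_3f_5+m_4f_7+m_5f_9$ with $D=0$ gives $m_3B_3+m_4B_4=0$, and the gauge then gives $B_3=-m_4/(m_3+m_4)$ and $B_4=m_3/(m_3+m_4)$.

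\emph{Pair positions.} The centre identity, Lemma~\ref{lem:d8-rows}(ii), at $a=0$ says $m_1f_0+m_2f_2+m_5f_8\equiv0$. To get $m_1A_1+m_2A_2=0$ I would use the exact identity, divide by $a^6=\mu$, and pass to the limit. This gives $m_1A_1+m_2A_2+MC=0$, so with $C=0$ and the gauge $f_{20}$ we obtain the stated $A_1,A_2$, and in particular $X=A_1$, $Y=A_2$.

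\emph{Crossed inverse distances.} The crossed rows $f_{11},f_{12},f_{14},f_{15}$ read $D_{bc}^2=-1/(A_bB_c)$. The signs of $A_bB_c$ then give exactly the displayed real and imaginary square roots, with $L^2=M_{12}(m_3+m_4)$.

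\emph{Upper wing, the main point.} The rows $f_0,f_2,f_8$ are three linear equations in the force terms. Put $X=-m_2/M_{12}$ and $Y=m_1/M_{12}$ into $f_8=0$: this gives $-m_1m_2+m_2m_1R_{25}^3=0$, so $R_{25}^3=1$. Then $f_0=0$ gives $m_2(X-Y)R_{12}^3=-m_5X$, i.e. $R_{12}^3=-m_5X/(m_2(X-Y))=-m_5/m_2$ after simplification. The cube roots of $R_{25}$ and $R_{12}$ must still be fixed. This step is where I expect the real obstacle: the cube roots are pinned down by the closure of the upper triangle. We have $w_{12}+w_{25}+w_{51}=0$ at leading order, where the $w$-separations are $m_5t^4h^{-2}(P-Q)$ and so on. From $f_{10},f_{13},f_{16}$ one gets $(P-Q)=1/(m_5(X-Y)R_{12}^2)$, $P$-combinations $=1/X$, and $1/(YR_{25}^2)$. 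Adding the three relations with the correct signs gives the closure identity
\[
 \frac{1}{m_5(X-Y)R_{12}^2}\;+\;\cdots\;=0 ,
\]
which in terms of $m_5$, $R_{12}^2$ and $R_{25}^2$ reads $\frac{M_{12}}{m_5 R_{12}^2}\cdot(\ldots)=\ldots$. I plan to use it as the second relation. Eliminating between the cubic relations and the closure identity should force $R_{25}=1$ and $R_{12}^2=m_5^2M_{12}^2/(m_1m_2)^2$. Combining this with $R_{12}^3=-m_5/m_2\cdot(\text{factor})$ yields \eqref{eq:d8-40}, and positivity selects the real root, which gives $R_{12}=-m_5M_{12}/(m_1m_2)$.

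\emph{Remaining coordinates.} Finally $(P,Q)$ is determined by the two linear equations $f_{10}=f_{13}=0$. These are nondegenerate because $X\neq Y$ and $(m_1+m_5)P+m_2Q$ is independent of $P-Q$. The consistency of $f_{16}$ with them is exactly the closure relation used above.
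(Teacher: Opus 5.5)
You follow the paper's route: the centre of mass fixes $A_1,A_2,C$, the rows $f_8$ and $f_0$ give the cubes, the closure of the upper $w$-triangle comes from $f_{10},f_{13},f_{16}$, and positivity finishes. Your lower-wing, crossed and $(P,Q)$ steps are fine. But the step that actually produces \eqref{eq:d8-40} is only asserted (``should force''), and the pieces you write down would not combine to it.

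First, $f_0=0$ with $X-Y=-1$ and $X=-m_2/M_{12}$ gives $R_{12}^3=-m_5/M_{12}$, not $-m_5/m_2$. With your value, no elimination yields \eqref{eq:d8-40}.

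Second, subtracting $f_{16}$ from $f_{13}$ and using $f_{10}$ gives the closure identity $R_{12}^{-2}=(YR_{25}^2)^{-1}-X^{-1}$. At the limit this reads $M_{12}R_{12}^2(m_1R_{25}^2+m_2)=m_1m_2R_{25}^2$. Using $R_{25}^3=1$, it is equivalent to $M_{12}(m_1+m_2R_{25})R_{12}^2=m_1m_2$. It does not give $R_{12}^2=m_5^2M_{12}^2/(m_1m_2)^2$; that is the square of the final answer, valid only once \eqref{eq:d8-40} is known. Nor does it by itself force $R_{25}=1$.

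The missing idea is where positivity enters. It is not needed to choose $R_{12}$: once $R_{12}^3$ and $R_{12}^2$ are known, $R_{12}=R_{12}^3/R_{12}^2$ is determined. It is needed to exclude the two nonreal cube roots of unity for $R_{25}$.

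Cube the closure relation and insert $R_{12}^6=m_5^2/M_{12}^2$. This gives $m_5^2M_{12}(m_1+m_2R_{25})^3=(m_1m_2)^3>0$. Suppose $R_{25}=e^{\pm2\pi i/3}$. Then $m_1+m_2R_{25}$ is a positive combination of $1$ and $R_{25}$, so its argument lies strictly between $0$ and $\pm2\pi/3$. Its cube is therefore not a positive real number, a contradiction, so $R_{25}=1$.

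The paper reaches the same conclusion another way. It writes the relation as $\Phi(R_{25})=\Phi_0+\Phi_1R_{25}=0$ with real $\Phi_0,\Phi_1$; then $\Phi_1=0$ forces $m_1=m_2$, after which $\Phi_0<0$.

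Only with $R_{25}=1$ does the relation become \eqref{eq:d8-40}. Then $R_{12}=-m_5M_{12}/(m_1m_2)$ follows from $R_{12}^3=-m_5/M_{12}$ and $R_{12}^2=m_1m_2/M_{12}^2$.
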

We write $U:=(A_1,A_2,C,R_{12},R_{25},P,Q)$, $U_0$ for its limit, and
$\mathcal I_0:=(X_0,Y_0,R_{12}^0,1,P_0,Q_0)$, with $X_0=-m_2/M_{12}$, $Y_0=m_1/M_{12}$ and
$R_{12}^0=-m_5M_{12}/(m_1m_2)$, for the limit of $\mathcal I$ (note $C_5\to C\to0$).

\begin{proof}
Let $m\in\Rp^5$ and let $\gamma$ be a Diagram~8 germ, in the chart of
Proposition~\ref{prop:d8-cover}, with limit $x_0$.
By Lemma~\ref{lem:d8-rows}\,(i) and Proposition~\ref{prop:d8-cover},
$E_j=f_j(x)+O(|a|+|\eta|)$ along $\gamma$, so $f_j(x_0)=0$ for all $j$.
The centre-of-mass identity in Lemma~\ref{lem:d8-rows}\,(ii), together with
$a\ne0$, gives $m_1A_1+m_2A_2+MC=0$ along $\gamma$.  At the limit this identity,
$f_4=C$ and $f_{20}$ form a linear system with determinant $M_{12}$.
It follows that $A_1=-m_2/M_{12}$, $A_2=m_1/M_{12}$ and $C=0$ at $x_0$.
Thus $X=A_1$, $Y=A_2$ and $X-Y=-1$ there.

At these positions the upper force equations reduce to
$f_8=\frac{m_1m_2}{M_{12}}(R_{25}^3-1)$ and $f_0=\frac{m_2}{M_{12}}F$,
where $F:=M_{12}R_{12}^3+m_5$.  Hence $R_{25}^3=1$ and $F=0$.

The distance equations impose a further relation, expressing the closure of
the $w$-triangle $125$.  The identity
\begin{align*}
  &(X-Y)R_{12}^2Xf_{16}-(X-Y)R_{12}^2YR_{25}^2f_{13}+XYR_{25}^2f_{10}\\
  &\qquad=(X-Y)R_{12}^2(YR_{25}^2-X)-XYR_{25}^2,
\end{align*}
eliminates $P,Q$.  At $x_0$ its right-hand side is $-G/M_{12}^2$, where
$G:=M_{12}R_{12}^2(m_1R_{25}^2+m_2)-m_1m_2R_{25}^2$, so $G=0$.

To combine this relation with the force equations, set
$H:=M_{12}(m_1+m_2R_{25})R_{12}^2-m_1m_2$ and
$\Phi(s):=m_5^2M_{12}(m_1+m_2s)^3-(m_1m_2)^3$.  The identities
\begin{align*}
  H&=R_{25}G+(1-R_{25}^3)\,m_1(M_{12}R_{12}^2-m_2),\\
  \Phi(R_{25})&=H\bigl(H^2+3m_1m_2H+3m_1^2m_2^2\bigr)
  -M_{12}(m_1+m_2R_{25})^3F(F-2m_5),
\end{align*}
give $H=0$ and $\Phi(R_{25})=0$ at $x_0$.

Positivity of the masses excludes the nonreal cube roots.  Indeed, if
$R_{25}\ne1$, then $R_{25}^2+R_{25}+1=0$ and
$\Phi(R_{25})=\Phi_0+\Phi_1R_{25}$ with real $\Phi_0,\Phi_1$.  Since
$R_{25}\notin\R$, both coefficients must vanish: $\Phi_0=\Phi_1=0$.
However, $\Phi_1=3m_1m_2m_5^2M_{12}(m_1-m_2)$ forces $m_1=m_2$, after which
$\Phi_0=-m_1^4(m_1^2+2m_5^2)<0$.  Thus $R_{25}=1$, and $\Phi(1)=0$ is precisely
\eqref{eq:d8-40}.

The remaining limit coordinates are now determined successively.  With $R_{25}=1$,
the equations $F=0$ and $H=M_{12}^2R_{12}^2-m_1m_2=0$ give
$R_{12}=R_{12}^3/R_{12}^2=-m_5M_{12}/(m_1m_2)$.  The equations $f_{10},f_{13}$
are linear in $(P,Q)$ with determinant
$m_2m_5(m_1+m_2+m_5)R_{12}^2/M_{12}\ne0$, so their solution is unique.
Next, $f_1=D$ gives $D=0$.  The identity
$m_1f_1+m_2f_3+m_3f_5+m_4f_7+m_5f_9=(m_1+m_2+m_5)D+m_3B_3+m_4B_4$
and $f_{21}$ determine $B_3$, $B_4$.  Finally, the crossed distance equations
$f_{11},f_{12},f_{14},f_{15}$ yield
$D_{13}^2=-M_{12}(m_3+m_4)/(m_2m_4)$, $D_{14}^2=M_{12}(m_3+m_4)/(m_2m_3)$,
$D_{23}^2=M_{12}(m_3+m_4)/(m_1m_4)$ and $D_{24}^2=-M_{12}(m_3+m_4)/(m_1m_3)$,
which give the stated square roots.
\end{proof}

\subsection{Exclusion}
{We first use the distance equations on the upper wing to exclude the range
$0<p<2$.  For the remaining order ratios, we compare the}
coefficients of $t^9=T^3$ in $E_1$ and $E_3$.  When $-4<p<\frac12$, this
comparison forces the coefficient $\Xi$ defined below to vanish.  A second-order
expansion then excludes the masses and signs on the locus $\Xi=0$.

{\begin{lemma}\label{lem:d8-order-six}
No Diagram~8 germ at positive masses has $0<p<2$.
\end{lemma}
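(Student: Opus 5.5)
Suppose a Diagram~8 germ has $0<p<2$. By the relation $p=(2r-4)/(r+1)$ this means $r>2$, and by Proposition~\ref{prop:d8-cover} it means $-2<\theta<0$, so $\ord\eta>2\ord a$ and $\eta^2=o(\mu^{1/3})$. The plan is to exploit the exact upper-wing equations. These are the rows $E_0,E_2,E_8,E_{10},E_{13},E_{16},E_{20}$, which by Lemma~\ref{lem:d8-rows}(iv) are functions of $\mathcal I$ up to terms carrying $a^6=\mu$. We also use the $w$-rows $E_1,E_3$, whose expansions in Lemma~\ref{lem:d8-rows}(iii) have lowest correction terms $\eta^2\beta_{1,3}$ and $T^3\tau_{1,3}$. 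Throughout, $x$ converges to the limit $x_0$ of Proposition~\ref{prop:d8-first}.

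\emph{Step 1.} Subtract $E_3$ from $E_1$. This gives
\[
0=\eta^2(\beta_1-\beta_3)+T^3(\tau_1-\tau_3)+O(|a^6\eta^2|+|a^9\eta^5|).
\]
The ratio $T^3/\eta^2=a^3\eta$ tends to $0$, so the $\eta^2$ term dominates and $\beta_1-\beta_3$ must vanish at $x_0$. Compute $\beta_1-\beta_3$ at $x_0$, using $R_{25}=1$, $R_{12}=R_{12}^0$ and $(P_0,Q_0)$. Combined with $f_{13}=f_{16}=0$, this gives $m_5(P_0-Q_0)(m_1+m_2)R_{12}^3 = m_5\bigl(m_1P_0+(m_2+m_5)Q_0-(m_1+m_5)P_0-m_2Q_0\bigr)$ up to arrangement, which is $-m_5^2(P_0-Q_0)$. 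Using $M_{12}R_{12}^3=-m_5$ from $F=0$, the two sides would cancel exactly. So this first coefficient vanishes identically, and Step~1 carries no information by itself.

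\emph{Step 2.} Since Step~1 is automatic, the plan turns to the next order. In the range $r>2$ the next term in $E_1-E_3$ is either the $\eta^2$-correction coming from the first-order variation of $U$ or the term $T^3(\tau_1-\tau_3)$. The variation of $U$ is forced by the upper-wing system, where the corrections are $O(\mu)$. Here $\mu/T^3=a^3/\eta$, and this tends to $0$ exactly when $r>3$, i.e.\ $p>\tfrac12$; it tends to $\infty$ when $r<3$.
\begin{itemize}
\item For $\tfrac12<p<2$, the $T^3$ term dominates the $\eta^2\cdot O(\mu)$ term. The leading coefficient then forces $\tau_1=\tau_3$ at $x_0$. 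With the explicit crossed inverse distances of Proposition~\ref{prop:d8-first}, this reads $m_3B_3(D_{13}^3-D_{23}^3)+m_4B_4(D_{14}^3-D_{24}^3)=0$. In this expression $D_{13}^3,D_{24}^3$ are purely imaginary and $D_{14}^3,D_{23}^3$ are real, so splitting into real and imaginary parts gives two sign conditions. Both fail for positive masses, since $B_3<0<B_4$ and the real part is $m_4B_4D_{14}^3-m_3B_3D_{23}^3$. I expect one of the two parts to be a sum of terms of fixed sign times signs $s_{bc}$; that part gives a contradiction after the case split on the signs.
\item For $0<p\le\tfrac12$, the order-$\mu$ correction of $U$ is determined linearly from the upper rows, with the $\mu$-forcing $\mu A_1,\mu A_2,\mu C_5$. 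Substituting it into $\beta_1-\beta_3$ should give a nonzero multiple of $\mu\eta^2$. This term must either vanish or balance $T^3(\tau_1-\tau_3)$; balancing occurs only at $p=\tfrac12$.
\end{itemize}

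The main obstacle is Step~2 in the range $0<p\le\tfrac12$. There one must compute the linearized upper-wing correction explicitly, check that the Jacobian of $(f_0,f_2,f_8,f_{10},f_{13},f_{16},f_{20})$ in $\mathcal I$ is invertible at $\mathcal I_0$, and show that the resulting coefficient of $\mu\eta^2$ in $E_1-E_3$ is nonzero under~\eqref{eq:d8-40}. At $p=\tfrac12$ one must additionally show that it cannot cancel against $\tau_1-\tau_3$, using the real/imaginary splitting as above. I would attempt this with the exact distance relation $G=0$ and its first variation, in the spirit of the elimination identity in the proof of Proposition~\ref{prop:d8-first}.
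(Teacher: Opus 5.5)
\textbf{There is a genuine gap: your Step~2 contains two concrete errors, and it misses the mechanism that excludes $0<p<2$.}

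\emph{First error: the dominance comparison is reversed.} The relevant ratio is $\eta^2\mu/T^3=a^3\eta^{-1}$; note that $\mu/T^3$ itself is $a^3\eta^{-3}$. Its order is $(3-r)\ord a$. So $\eta^2\mu=o(T^3)$ when $r<3$, that is, $0<p<\tfrac12$. When $r>3$, that is, $\tfrac12<p<2$, it is the $T^3$ term that is negligible. On $\tfrac12<p<2$ you therefore cannot conclude $(\tau_1-\tau_3)(x_0)=0$.

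\emph{Second error: $(\tau_1-\tau_3)(x_0)=0$ gives no contradiction where $T^3$ does dominate.} This quantity is a nonzero multiple of
$\Xi=s_{14}(m_1m_4)^{3/2}+s_{23}(m_2m_3)^{3/2}+i\bigl(s_{13}(m_1m_3)^{3/2}+s_{24}(m_2m_4)^{3/2}\bigr)$.
The signs $s_{bc}$ are free, so no part of $\Xi$ has fixed sign. Both parts vanish when $s_{14}=-s_{23}$, $s_{13}=-s_{24}$, $m_1m_4=m_2m_3$ and $m_1m_3=m_2m_4$. Together with \eqref{eq:d8-40}, this is the nonempty locus $m_1=m_2=4m_5$, $m_3=m_4$. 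Your claim that both parts fail for positive masses is false, and the $w$-rows $E_1,E_3$ alone leave this family unexcluded.

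\emph{The second bullet rests on a false premise.} You assume the order-$\mu$ correction of the upper-wing coordinates exists and can be solved for linearly. Your seven functions of six variables cannot have an invertible Jacobian; indeed $m_1f_0+m_2f_2+m_5f_8\equiv0$. The square block $\partial(f_0,f_2,f_{10},f_{13},f_{16},f_{20})/\partial\mathcal I$ is singular when $m_1=m_2$. More importantly, once the centre of mass is included, the order-$\mu$ upper system has no solution at all. That inconsistency is itself the contradiction you need.

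\emph{The missing argument.} For $r>2$ one has $\nu,T^3,T^5=o(\mu)$.
\begin{itemize}
\item The exact row $E_4=C+\nu L_3+T^3F_3+T^5G_3=0$ gives $C=o(\mu)$. The centre of mass and the gauge then pin $X=X_0+o(\mu)$ and $Y=Y_0+o(\mu)$.
\item In $E_0=0$ the crossed forces are $O(\mu T^3)$. This row gives $R_{12}-R_{12}^0=\mu/\bigl(3M_{12}(R_{12}^0)^2\bigr)+o(\mu)$.
\item In $E_2=0$ the $\mu$-terms cancel, so $R_{25}-1=o(\mu)$.
\item The three exact upper distance equations imply the closure identity $R_{12}^{-2}=(YR_{25}^2)^{-1}-X^{-1}$.
\item Its variation is $-2\mu/\bigl(3M_{12}(R_{12}^0)^5\bigr)$ on the left and $o(\mu)$ on the right. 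Since $\mu=a^6\not\equiv0$, this is a contradiction.
\end{itemize}
This excludes the whole range $0<p<2$ at once, with no use of $E_1,E_3$, $\tau_1-\tau_3$ or the signs $s_{bc}$. The paper handles the locus $\Xi=0$ separately, by the different argument of Lemma~\ref{lem:d8-cancel}.
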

\begin{proof}
Suppose that such a germ exists.  Its order ratio satisfies $r>2$, so
$\nu=o(\mu)$, $T^3=o(\mu)$ and $T^5=o(\mu)$.
The exact lower row $E_4=C+\nu L_3+T^3F_3+T^5G_3=0$ gives
$C=o(\mu)$ and hence $C_5=o(\mu)$.
The centre of mass and the gauge then give
\[
A_1=X_0+o(\mu),\quad A_2=Y_0+o(\mu),\quad
X:=A_1-C_5=X_0+o(\mu),\quad Y:=A_2-C_5=Y_0+o(\mu),
\]
where $X_0=-m_2/M_{12}$ and $Y_0=m_1/M_{12}$.
Write $q=R_{12}$, $s=R_{25}$ and $q_0=R_{12}^0\ne0$.
In the upper force equations, the crossed forces contribute
$O(\mu T^3)$, giving
\begin{align*}
0=E_0&=m_2q^3-m_5X+\mu A_1+O(\mu T^3),\\
0=E_2&=-m_1q^3-m_5Ys^3+\mu A_2+O(\mu T^3).
\end{align*}
The first row gives
\[
q^3-q_0^3=\frac{\mu}{M_{12}}+o(\mu),\qquad
q-q_0=\frac{\mu}{3M_{12}q_0^2}+o(\mu).
\]
In the second row the terms
$-m_1\mu/M_{12}$ and $\mu Y_0$ cancel, so
$s^3-1=o(\mu)$ and therefore $s-1=o(\mu)$.

The three exact upper distance equations give
\[
P-Q=-\frac1{m_5q^2},\qquad
(m_1+m_5)P+m_2Q=X^{-1},\qquad
m_1P+(m_2+m_5)Q=(Ys^2)^{-1}.
\]
Subtracting the last two equations yields the closure identity
\[
q^{-2}=(Ys^2)^{-1}-X^{-1}.
\]
Subtracting the limiting closure identity yields
\[
-\frac{2\mu}{3M_{12}q_0^5}+o(\mu)=o(\mu),
\qquad\text{hence}\qquad -\frac{2}{3M_{12}q_0^5}=0,
\]
contradicting $M_{12}>0$ and $q_0\ne0$.
\end{proof}

\begin{lemma}\label{lem:d8-kappa}
Let $m\in\Rp^5$ satisfy \eqref{eq:d8-40}; let $x_0$, $\mathcal I_0$, $U_0$ and the signs
$s_{bc}$ be as in Proposition~\ref{prop:d8-first}, $\beta:=\beta_1-\beta_3$,
$b:=(f_0,f_2,f_{10},f_{13},f_{16},f_{20})$ as a function of
$(X,Y,R_{12},R_{25},P,Q)$, and $B:=\partial b/\partial\mathcal I$ at $\mathcal I_0$.
\begin{alphalist}
\item $\beta(\mathcal I_0)=0$.
\item If $m_1\ne m_2$, then $B$ is invertible.  If $m_1=m_2$ $(=4m_5)$, then $\ker B$ is spanned
by a vector $k_0$, equal to $(\frac{27}{272},\frac{27}{272},\frac9{272},-\frac9{68},-\frac{19}{17},1)$
at $(m_1,m_2,m_5)=(1,1,\frac14)$; $d\beta(\mathcal I_0)\,k_0=0$; and the Jacobian of the eight
functions $f_0,f_2,f_{10},f_{13},f_{16},f_{20}$ (evaluated at $X=A_1-C$, $Y=A_2-C$), $C$ and
$m_1A_1+m_2A_2+MC$ with respect to $U$ at $U_0$ has rank $7$.  (These three facts are proved
at $(m_1,m_2,m_5)=(1,1,\frac14)$ with $m_3,m_4$ free, which suffices by the mass scaling.)
\item With $\Xi:=s_{14}(m_1m_4)^{3/2}+s_{23}(m_2m_3)^{3/2}
+i\bigl(s_{13}(m_1m_3)^{3/2}+s_{24}(m_2m_4)^{3/2}\bigr)$,
\[
  (\tau_1-\tau_3)(x_0)=\frac{m_3m_4}{m_3+m_4}\,L^3\,(m_1m_2m_3m_4)^{-3/2}\,\Xi .
\]
\end{alphalist}
Consequently, if a Diagram~8 germ at the masses $m$ has $-4<p<\frac12$, then $\Xi=0$, that is,
$m_1=m_2=4m_5$, $m_3=m_4$, $s_{13}=-s_{24}$ and $s_{14}=-s_{23}$.
\end{lemma}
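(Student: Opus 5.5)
Parts (a)--(c) are direct evaluations at the limit point $x_0$ of Proposition~\ref{prop:d8-first}; the final consequence is a Lyapunov--Schmidt style comparison of the coefficient of $T^3$ in $E_1-E_3$.

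\textbf{Part (a).} We substitute $R_{25}=1$ into $\beta_1-\beta_3$:
\[
\beta=-m_5\bigl(M_{12}(P-Q)R_{12}^3+(m_1+m_5)P+m_2Q-m_1P-(m_2+m_5)Q\bigr)
=-m_5(P-Q)\bigl(M_{12}R_{12}^3+m_5\bigr).
\]
This vanishes because $F=M_{12}R_{12}^3+m_5=0$ at $x_0$.

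\textbf{Part (b).} The Jacobian $B$ is a $6\times6$ matrix with entries rational in $(m_1,m_2,m_5)$. We compute $\det B$ symbolically after imposing \eqref{eq:d8-40}, for instance by parametrizing $m_5=(m_1m_2)^{3/2}/M_{12}^2$. We expect a factor $(m_1-m_2)$ times a factor that is nonzero at positive masses. At the symmetric point $(1,1,\frac14)$ we find $k_0$ by exact linear algebra and check $d\beta\,k_0=0$. The rank-$7$ statement is an $8\times7$ exact computation. Mass scaling transfers these facts to the whole ray $m_1=m_2=4m_5$. This determinant factorization is the computational core, and I expect it to be computer-assisted.

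\textbf{Part (c).} Insert $D=0$, $B_3,B_4$ and the crossed $D_{bc}$ from Proposition~\ref{prop:d8-first} into $\tau_1-\tau_3$. Each term $m_c(B_c-D)D_{bc}^3$ equals $\pm m_3m_4/(m_3+m_4)$ times a signed cube of $L/\sqrt{\cdot}$. Collecting the four terms and multiplying by $(m_1m_2m_3m_4)^{3/2}$ produces $\Xi$.

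\textbf{Consequence.} Assume $-4<p<\frac12$, i.e.\ $r<3$, so that $T^3$ dominates $a^6\eta^2$ and $a^9\eta^5$. By Lemma~\ref{lem:d8-rows}(iii), along the germ
\[
0=E_1-E_3=\eta^2\beta(x)+T^3(\tau_1-\tau_3)(x)+o(T^3).
\]
It therefore suffices to show $\eta^2\beta(x)=o(T^3)$, since then $(\tau_1-\tau_3)(x_0)=0$ and (c) gives $\Xi=0$.

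To control $\beta(x)$, we use the boundary rows $E_k=b(\mathcal I)+O(a^6)$ with $k\in\{0,2,10,13,16,20\}$ from (iv), together with $b(\mathcal I_0)=0$.

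\emph{Case $m_1\ne m_2$.} Here $B$ is invertible, so $\mathcal I-\mathcal I_0=O(\mu)$. Then (a) gives $\beta=O(\mu)$, hence $\eta^2\beta=O(a^6\eta^2)=o(T^3)$.

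\emph{Case $m_1=m_2$.} Here $\mathcal I-\mathcal I_0=ck_0+O(\mu)$ with a possibly larger $c$, and the linear term of $\beta$ drops because $d\beta\,k_0=0$. So $\beta=O(c^2+\mu)$. The rank-$7$ fact, applied to the full system in $U$ with $C$ controlled by $E_4$, should pin down $c$ (I would try $c=O(\mu+T^3)$). I expect this degenerate kernel direction to be the main obstacle, since one must check that the quadratic remainder $\eta^2c^2$ is still $o(T^3)$ on the whole range $r<3$.

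Finally, we solve $\Xi=0$. Its real and imaginary parts are each a sum of two signed positive terms, so they vanish only if $m_1m_4=m_2m_3$, $m_1m_3=m_2m_4$ and the sign pairs are opposite. These give $m_3=m_4$ and $m_1=m_2$. Relation \eqref{eq:d8-40} with $m_1=m_2$ reads $16m_5^2m_1^4=m_1^6$, which yields $m_1=4m_5$.
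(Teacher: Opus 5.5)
Your route is the paper's. Part (c) is the same substitution. The consequence is the same comparison of the $T^3$-coefficient in $E_1-E_3$, with the same split into $m_1\ne m_2$ and $m_1=m_2$, and your case $m_1\ne m_2$ is exactly the paper's argument.

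\textbf{Parts (a) and (b).} Your proof of (a), via $\beta|_{R_{25}=1}=-m_5(P-Q)(M_{12}R_{12}^3+m_5)$ and $F=0$, is a valid shortcut. The paper instead substitutes the three upper distance equations to get $\beta=M_{12}R_{12}-m_5(X^{-1}-R_{25}/Y)$. That form holds off $R_{25}=1$, so it also gives $d\beta\,k_0=2\delta q+\tfrac12\delta s=0$ in one line, and it is reused in Lemma~\ref{lem:d8-cancel}. Part (b) need not be machine work. After $Y=X+1$ and solving $f_{10},f_{13}$ for $P,Q$, take $m_2=1$, $m_1=\alpha^2$, $m_5=\alpha^3/N^2$ with $N=\alpha^2+1$. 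Linearizing the remaining three equations gives $\delta q=\frac\alpha3\delta X$, $\delta s=-\frac{N^2}{3\alpha^2}\delta X$, and finally $\frac{N^2(1-\alpha^4)}{3\alpha^4}\delta X=0$. This confirms your expected factor $m_1-m_2$.

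\textbf{The gap in the case $m_1=m_2$.} This is the step you leave open, and your tentative bound $c=O(\mu+T^3)$ is false in general. The eight functions of (b) are not $O(\mu+T^3)$ along the germ, for two reasons:
\begin{itemize}
\item The exact row $E_4=C+\nu L_3+T^3F_3+T^5G_3=0$ gives only $C=O(\nu+T^3)$.
\item Since $C_5=C-\nu(m_3u+m_4v)$, you have $f_k(A_1-C,A_2-C,\dots)-f_k(\mathcal I)=O(\nu)$.
\end{itemize}
Combine these with $f_k(\mathcal I)=O(\mu)$ and the identically vanishing centre-of-mass function. The rank-$7$ fact then gives $|U-U_0|+|\mathcal I-\mathcal I_0|=O(\mu+\nu+T^3)$, hence $c=O(\mu+\nu+T^3)$. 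The $\nu$-term is genuinely there: in Lemma~\ref{lem:d8-cancel} the $R_{25}$-deviation is $h_*=h_\nu\nu+\cdots$ with $h_\nu$ possibly nonzero, and $k_0$ has a nonzero $R_{25}$-component.

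\textbf{This bound suffices.} The transverse part of $\mathcal I-\mathcal I_0$ is $O(\mu+c^2)$, not $O(\mu)$ as you wrote; your conclusion $\beta=O(\mu+c^2)$ is unaffected. Hence $\eta^2\beta=O(a^6\eta^2+a^{12}\eta^2+a^4\eta^6+a^6\eta^8)$. Dividing by $T^3=a^3\eta^3$ leaves $O(a^3\eta^{-1}+a^9\eta^{-1}+a\eta^3+a^3\eta^5)$. Each term tends to $0$ for $r<3$, the binding one being $a^3\eta^{-1}$, of order $(3-r)\ord a$. So the kernel direction costs nothing once the correct order $\nu$ is used, and $(\tau_1-\tau_3)(x_0)=0$ follows as you intended.
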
}
{\begin{proof}[Proof of (a)--(c)]
On the zero set of the three upper distance equations at the boundary, writing $q=R_{12}$
and $s=R_{25}$, the same subtraction as above gives
\[
q^{-2}=(Ys^2)^{-1}-X^{-1},\qquad
\beta=M_{12}q-m_5(X^{-1}-s/Y).
\]
The second identity follows by replacing $P-Q$ in the definition of
$\beta$ by $-1/(m_5q^2)$ and the other two linear combinations by their
distance values. At the boundary point it gives $\beta=0$, proving (a).

To prove (b), the gauge eliminates $Y$ by $Y=X+1$.
The equations $f_{10}=f_{13}=0$ determine $P,Q$ analytically, because
their linear coefficient matrix has nonzero determinant proportional to
$m_5q^2X(M_{12}+m_5)$.
After this elimination the remaining three equations are equivalent to
\[
m_2q^3-m_5X=0,\qquad
-m_1q^3-m_5Ys^3=0,\qquad
q^{-2}-(Ys^2)^{-1}+X^{-1}=0.
\]
After scaling, we may take $m_2=1$, $m_1=\alpha^2$.  The mass relation then gives
$m_5=\alpha^3/(\alpha^2+1)^2$; we set $N=\alpha^2+1$.
At the boundary point,
$(X,Y,q,s)=(-1/N,\alpha^2/N,-\alpha/N,1)$.
The linearizations of the first two equations give
\[
\delta q=\frac\alpha3\,\delta X,\qquad
\delta s=-\frac{N^2}{3\alpha^2}\,\delta X.
\]
Substitution in the linearization of the third gives
\[
\frac{N^2(1-\alpha^4)}{3\alpha^4}\,\delta X=0.
\]
Thus $B$ is invertible for $\alpha\ne1$.
For $\alpha=1$, its kernel is one-dimensional; choosing $\delta s=1$
gives
\[
(\delta X,\delta Y,\delta q,\delta s,\delta P,\delta Q)
=(-\tfrac34,-\tfrac34,-\tfrac14,1,\tfrac{76}{9},-\tfrac{68}{9}).
\]
Multiplication by $-9/68$ gives precisely the vector $k_0$ in (b).
Differentiating the displayed formula for $\beta$ on this tangent gives
$d\beta=2\delta q+\tfrac12\delta s=0$.
Finally $C$ fixes the translation variable and the centre-of-mass
condition is nonzero on the remaining tangent, since
$\delta(X+Y)=-3/2$. The eight functions in (b) therefore have rank $7$.

For (c), $D=0$, $B_3=-m_4/(m_3+m_4)$ and $B_4=m_3/(m_3+m_4)$ give
\[
(\tau_1-\tau_3)(x_0)
=\frac{m_3m_4}{m_3+m_4}
(-D_{13}^3+D_{23}^3+D_{14}^3-D_{24}^3).
\]
Substituting the four square roots from Proposition~\ref{prop:d8-first}
gives the following imaginary and real terms, respectively:
\[
\begin{gathered}
 iL^3s_{13}(m_2m_4)^{-3/2},\quad iL^3s_{24}(m_1m_3)^{-3/2},\\
 L^3s_{23}(m_1m_4)^{-3/2},\quad L^3s_{14}(m_2m_3)^{-3/2}.
\end{gathered}
\]
Factoring the common denominator gives
exactly the stated formula involving $\Xi$.
\end{proof}

}\begin{proof}[Proof of the last assertion]
Let $\gamma$ be such a germ.  Along $\gamma$, $\mathcal I\to\mathcal I_0$ and
$b(\mathcal I)=O(a^6)$ by Lemma~\ref{lem:d8-rows}\,(iv).  We claim
\begin{equation}\label{eq:d8-bracket}
  \beta(\mathcal I)=O(a^6+a^4\eta^4).
\end{equation}
When $m_1\ne m_2$, the invertibility in (b) and the inverse function theorem
give $\mathcal I-\mathcal I_0=O(a^6)$.  Since (a) gives the vanishing of the constant
term, this proves \eqref{eq:d8-bracket}.

For $m_1=m_2$, we use the eight functions in (b) to control the direction left
undetermined by the upper equations.  These functions are $O(a^6+\nu+T^3)$ along
$\gamma$.  For the first six, this follows from
$f_k(A_1-C,A_2-C,\dots)-f_k(\mathcal I)=O(\nu)$; the seventh is controlled by
Lemma~\ref{lem:d8-rows}\,(v).  The last is identically zero by
Lemma~\ref{lem:d8-rows}\,(ii), since $a\ne0$.  Their rank therefore gives the bounds
$U-U_0$ and $\mathcal I-\mathcal I_0$ of order $O(a^6+\nu+T^3)$.

Taylor expansion also gives
$B(\mathcal I-\mathcal I_0)=O(a^6+|\mathcal I-\mathcal I_0|^2)$, so the component of
$\mathcal I-\mathcal I_0$ transverse to $k_0$ is
$O(a^6+|\mathcal I-\mathcal I_0|^2)$.  Because $\beta(\mathcal I_0)=0$ and
$d\beta(\mathcal I_0)k_0=0$, the uncontrolled direction has no linear contribution
to the difference of the upper equations.  Consequently
$\beta(\mathcal I)=O(a^6+|\mathcal I-\mathcal I_0|^2)=O(a^6+\nu^2+T^6)$.
This is \eqref{eq:d8-bracket}, since $\nu^2=a^4\eta^4$ and $T^6=a^6\eta^6$.

Now $E_1=E_3=0$ along $\gamma$, so Lemma~\ref{lem:d8-rows}\,(iii) and \eqref{eq:d8-bracket}
give $T^3(\tau_1-\tau_3)(x)=O(a^6\eta^2+a^4\eta^6)$, that is,
$(\tau_1-\tau_3)(x)=O(a^3\eta^{-1}+a\eta^3)$.  As $-4<p<\frac12$ means $r<3$,
$\ord(a^3\eta^{-1})=(3-r)\ord a>0$; so $(\tau_1-\tau_3)(x_0)=0$, and $\Xi=0$ by (c).  Since all
$m_k>0$, the real part $s_{14}(m_1m_4)^{3/2}+s_{23}(m_2m_3)^{3/2}$ of $\Xi$ vanishes only if
$s_{14}=-s_{23}$ and $m_1m_4=m_2m_3$, and its imaginary part only if $s_{13}=-s_{24}$ and
$m_1m_3=m_2m_4$.  These give $m_3=m_4$ and $m_1=m_2$, and then \eqref{eq:d8-40} gives
$m_1=4m_5$.
\end{proof}

The order-nine coefficient vanishes precisely on the mass curve
$m=(4,4,y,y,1)$, $y>0$, up to scaling, with the four sign classes
$s_{13}=-s_{24}$, $s_{14}=-s_{23}$.  On this locus the next obstruction comes from the quadratic terms.
We now show that the upper force and distance equations give incompatible
leading coefficients.

{\begin{lemma}\label{lem:d8-cancel}
Let $m_1=m_2=4m_5$ and $m_3=m_4$, scaled to
$(1,1,m,m,\frac14)$ with $m>0$.
No Diagram~8 germ at these masses has a limit with
$s_{13}=-s_{24}$ and $s_{14}=-s_{23}$.
\end{lemma}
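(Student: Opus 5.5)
We argue by contradiction: suppose a Diagram~8 germ at $(1,1,m,m,\frac14)$ has a limit $x_0$ with $s_{13}=-s_{24}$ and $s_{14}=-s_{23}$. By Lemma~\ref{lem:d8-order-six} its order ratio satisfies $-4<p\le0$ or $p\in(0,\frac12)$ is excluded already, so in any case $r<3$, and in particular $\nu=a^2\eta^2$ and $T^3=a\eta$ dominate or compete with $a^6$. The order-nine coefficient $\Xi$ now vanishes. Consequently the bound of Lemma~\ref{lem:d8-kappa} no longer constrains the germ at leading order, and we must go one order further.

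\textbf{Step 1: expand $\tau_1-\tau_3$ to first order.} By Lemma~\ref{lem:d8-rows}(iii), we have $E_1-E_3=\eta^2\beta+T^3(\tau_1-\tau_3)+O(a^6\eta^2+a^9\eta^5)=0$. Since $\Xi=0$, $(\tau_1-\tau_3)(x)$ is linear in $x-x_0$. To first order, only $D$, $B_3,B_4$ and the crossed $D_{bc}$ enter. I would determine these variations from the lower rows $E_5,E_7,E_9$ and the crossed distance rows $E_{11},E_{12},E_{14},E_{15}$. In those rows the leading corrections come from the $T^3$ crossed forces and from $\nu$ and $\gamma$ terms, and the cancellations $s_{13}=-s_{24}$, $s_{14}=-s_{23}$ together with $m_1=m_2$, $m_3=m_4$ should make the relevant combinations explicit. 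The expected outcome is $(\tau_1-\tau_3)(x)=c_1T^3+c_2\nu+\dots$, with explicit coefficients depending on $m$ and the signs.

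\textbf{Step 2: expand $\beta$ to second order.} Along the kernel direction $k_0$ of Lemma~\ref{lem:d8-kappa}(b), with amplitude $\sigma$, we have $\beta(\mathcal I)=\frac12\beta''[k_0,k_0]\sigma^2+O(a^6+\dots)$. Writing $\mathcal I-\mathcal I_0=\sigma k_0+(\text{transverse})$, the transverse part is fixed by $B$. The amplitude $\sigma$ itself is determined by the next-order terms of the upper equations, namely the solvability condition of $b(\mathcal I)=O(a^6)$ projected onto the cokernel of $B$. This gives a quadratic balance of the form $\kappa_2\sigma^2=\kappa_0\mu+(\text{lower-wing coupling through }C_5)$.

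\textbf{Step 3: compare the two quadratic balances.} Substituting into $E_1-E_3=0$ yields a second relation $\eta^2\cdot\frac12\beta''\sigma^2\sim -T^3(c_1T^3+c_2\nu)$. We then compare orders according to whether $r<\frac12$, $r=\frac12$, or $\frac12<r<3$. In each regime either one leading term is unique, forcing a nonzero coefficient to vanish, or the two balances determine $\sigma^2$ in two incompatible ways. The incompatibility should come from a sign or reality argument, as in Lemma~\ref{lem:d4-census}: for positive $m$, one balance has a real positive ratio and the other a negative or nonreal one.

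The main obstacle is Step~2 together with the case split in Step~3. We must compute the correct second-order coefficient along $k_0$ and track which monomials among $a^6$, $\nu$, $T^3$ and $\nu^2$ lead in each range of $r$. I would first do this exactly at $(1,1,m,m,\frac14)$ with $m$ symbolic, exploiting the symmetry $1\leftrightarrow2$, $3\leftrightarrow4$ that swaps the sign classes, so that only one representative sign class needs to be computed.
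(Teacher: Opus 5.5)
There is a genuine gap in your second balance. In Step~1 you put the $a^6\eta^2$ monomial of $E_1-E_3$ into the remainder. By Lemma~\ref{lem:d8-rows}, however, this term is $\tfrac14\mu\eta^2(P-Q)$, and $P-Q\to P_0-Q_0=-16$, so it equals $-4a^6\eta^2(1+o(1))$. Your first balance (Step~2, from the upper $z$-rows and the exact closure relation) gives $\mu=\tfrac3{16}h_*^2(1+o(1))$ for the kernel amplitude $h_*=R_{25}-1$. On the upper distance locus one also has $\beta=\tfrac{15}{4}h_*^2+o(\mu+h_*^2)$. Hence $\eta^2\beta$ has exactly the order $a^6\eta^2$ of the term you discarded.

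The correct second relation is therefore $\eta^2\beta+\tfrac14\mu\eta^2(P-Q)=o(a^6\eta^2)$, that is, $-4a_0^2+\tfrac{15}{4}h_\nu^2\eta_0^4=0$. The contradiction comes from pairing it with $-a_0^2+\tfrac3{16}h_\nu^2\eta_0^4=0$, a $2\times2$ system with determinant $-3$. This depends on the discarded coefficient: with $-20$ in place of $-4$ the two balances would be compatible. So an argument that drops it is not valid, even if it happens to end in a contradiction.

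The same caution applies to your expansion of $\beta$. A remainder $O(a^6)$ is of the same order as $\sigma^2$, so you must check that $\beta$ has no term linear in $\mu$. It has none, because the linear part $2v_*+\tfrac12h_*$ does not involve $x$.

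Conversely, the term you planned to balance against is negligible, so the coefficients $c_1,c_2$ of Step~1 are not needed. Since $\Xi=0$, $(\tau_1-\tau_3)(x_0)=0$. The Jacobian of the eight lower boundary equations is invertible, and each crossed row has nonzero derivative $2/D_{bc}$ in its own inverse distance. Together these bound the lower and crossed deviations by $O(\nu+\mu+|h_*|)$. Once $\ord\eta=\tfrac12\ord a$, this gives $T^3(\tau_1-\tau_3)=o(a^6\eta^2)$.

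What fixes $\ord\eta=\tfrac12\ord a$, and is missing from your plan, is a separate first-order relation rather than a lower-wing term inside the quadratic balance. The centre-of-mass identity and the exact row $E_4=C+\nu L_3+T^3F_3+T^5G_3$ give $X+Y=(2+M)(\nu\Psi+T^3F_3+T^5G_3)$. The upper expansion gives $X+Y=-\tfrac32h_*+O(\mu+h_*^2)$. Hence $h_*=h_\nu\nu+O(T^3+\nu^2+\mu)$ with $h_\nu=-\tfrac23(2+M)\Psi(x_0)$. If $h_\nu=0$, then $h_*^2=o(\mu)$, which contradicts the first balance. Otherwise the first balance forces $\mu$ and $\nu^2$ to have the same order, i.e.\ $\ord\eta=\tfrac12\ord a$. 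This replaces your three-regime split on $r$; without it, the relative size of $T^3\nu$ and $\mu\eta^2$ is not determined. (Minor: $T^3=a^3\eta^3$, not $a\eta$.)
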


\begin{proof}
Suppose, for contradiction, that such a germ exists, and let $x_0$ be its limit in
the chart of Proposition~\ref{prop:d8-cover}.
We shall use the distance equations to reduce the upper-wing equations to
two scalar relations, then compare their second-order terms.

We first show that the limiting equations on the lower wing have an invertible
Jacobian.  Put $d=D_{34}$, $g=D_{35}$ and $q=D_{45}$.  At a common zero of the lower boundary equations,
$f_1=0$ gives $D=0$. The identity
\[
m f_5+m f_7+\tfrac14 f_9=m(B_3+B_4)+\tfrac14D
\]
and $f_{21}=0$ give $B_3=-1/2$, $B_4=1/2$.
Then $f_9=0$ is $q^3=g^3$, and $f_5=0$ is $8md^3+g^3=4$.
The equations $f_{18}=f_{19}=0$ read
\[
(m+\tfrac14)u+mv=-2g^{-2},\qquad
mu+(m+\tfrac14)v=2q^{-2}.
\]
Their coefficient determinant is $\tfrac14(2m+\tfrac14)>0$.
Subtracting them and using $f_{17}=0$ gives
$2d^2(g^{-2}+q^{-2})=1$.
Writing $q=\omega g$ and $d=\lambda g$, we have thus obtained
\[
\omega^3=1,\qquad 2\lambda^2(1+\omega^{-2})=1,
\qquad g^3(8m\lambda^3+1)=4.
\]
The representation in the chart ensures that all inverse distances are nonzero.
The preceding elimination gives an invertible Jacobian for the eight lower boundary equations:
$f_1$, the displayed weighted sum, and $f_{21}$ determine $D,B_3,B_4$;
the next three successively determine
$\omega,\lambda,g$, with derivatives
\[
3\omega^2,\qquad4\lambda(1+\omega^{-2}),\qquad
3g^2(8m\lambda^3+1)=12/g,
\]
all nonzero; and the last linear system determines $u,v$.

We now turn to the upper wing and write its deviations from the limit as
\[
X=A_1-C_5=-\tfrac12+x,\qquad
Y=A_2-C_5=\tfrac12+x,\qquad
R_{12}=-\tfrac12+v_*,\qquad R_{25}=1+h_*.
\]
The equality $Y-X=1$ follows from the gauge.  The symbols $v_*,h_*$
denote upper-wing deviations and are distinct from the chart coordinates $v,h$.
Write $r_*=R_{12}$ and $s_*=R_{25}$.
The three upper distance equations hold along the whole germ:
\[
P-Q=-4r_*^{-2},\qquad
\tfrac54 P+Q=X^{-1},\qquad
P+\tfrac54Q=(Ys_*^2)^{-1}.
\]
Consequently
\begin{equation}\label{eq:d8-upper-closure-analytic}
r_*^{-2}=(Ys_*^2)^{-1}-X^{-1}.
\end{equation}
They determine $P,Q$ analytically once this closure equation holds.
Also, direct grouping of the upper forces gives
\begin{align*}
E_0&=r_*^3-\tfrac14X+\mu A_1+O(\mu T^3),\\
E_2&=-r_*^3-\tfrac14Ys_*^3+\mu A_2+O(\mu T^3),
\end{align*}
because every crossed force in an upper $z$-row has factor $\mu T^3$.
Here and below all estimates are along the germ, whose chart coordinates
are bounded.

Expanding \eqref{eq:d8-upper-closure-analytic} and $E_2=0$ to second order gives
\begin{align*}
0={}&16v_*+4h_*+48v_*^2-16x^2-8xh_*-6h_*^2
 +O\bigl((|x|+|v_*|+|h_*|)^3\bigr),\\
\tfrac12\mu={}&\tfrac14x+\tfrac34v_*+\tfrac38h_*
 -\tfrac32v_*^2+\tfrac34xh_*+\tfrac38h_*^2
 +O\bigl((|x|+|v_*|+|h_*|)^3\bigr)+o(\mu).
\end{align*}
The derivative of these two equations with respect to $(x,v_*)$ is invertible
at zero. Solving the linear and quadratic terms therefore gives
\begin{equation}\label{eq:d8-upper-expansion-analytic}
x=-\tfrac34h_*+2\mu+o(\mu+h_*^2),\qquad
v_*=-\tfrac14h_*+\tfrac38h_*^2+o(\mu+h_*^2).
\end{equation}
The analytic implicit solution has remainder
$O(h_*^3+\mu h_*+\mu^2)+o(\mu)$ before substitution of the other
bounded chart coordinates, which yields the displayed estimates.
Substituting in $E_0=0$ now gives
\begin{equation}\label{eq:d8-row0-analytic}
-\mu+\tfrac3{16}h_*^2=o(\mu+h_*^2).
\end{equation}

The same exact distance equations simplify the difference
$\beta=\beta_1-\beta_3$ to
\[
\beta=2r_*-\tfrac14(X^{-1}-s_*/Y).
\]
Its quadratic expansion is
$2v_*+\tfrac12h_*+4x^2-xh_*+O((|x|+|v_*|+|h_*|)^3)$.
By \eqref{eq:d8-upper-expansion-analytic},
\begin{equation}\label{eq:d8-beta-analytic}
\beta=\tfrac{15}{4}h_*^2+o(\mu+h_*^2).
\end{equation}
Finally, direct substitution in the two upper $w$-equations gives
\begin{equation}\label{eq:d8-w-difference-analytic}
E_1-E_3=\eta^2\beta+T^3(\tau_1-\tau_3)
 +\tfrac14\mu\eta^2(P-Q)+O(\mu\eta^2T^3).
\end{equation}
In particular, $P_0-Q_0=-16$.

To compare the remaining terms, we need to control the lower and crossed
coordinates.  Their deviations from the limits are
\begin{equation}\label{eq:d8-lower-bound-analytic}
O(\nu+\mu+|h_*|).
\end{equation}
Indeed, in the eight lower rows, the terms beyond the limiting equations carry
factors $T^3$, $\mu\eta^2$, $\mu\eta^2T^3$, or $\eta^2\beta_j$;
the last kind occurs in the upper or central $w$-row. Its coefficient
vanishes at the upper boundary point, where $(P,Q,r_*,s_*)=(-8,8,-1/2,1)$.
For the central row this coefficient is $-4(\beta_1+\beta_3)$.
Thus these errors are $O(\nu+\mu+|h_*|)$, since $T^3=o(\nu)$ and
\eqref{eq:d8-upper-expansion-analytic} bounds the upper deviations by
$O(\mu+|h_*|)$. The crossed distance rows have only the additional factors
$\nu$, $\mu\eta^2$ and $\nu\mu\eta^2$; each crossed boundary equation has derivative
$2/D_{bc}\ne0$ in its own inverse distance. The implicit function theorem
and the lower nondegeneracy proved above give
\eqref{eq:d8-lower-bound-analytic}.

Put $M=2m+\tfrac14$ and
\[
\Psi=L_3+\frac{2m(u+v)}{2+M}.
\]
The centre-of-mass identity and the exact row
$E_4=C+\nu L_3+T^3F_3+T^5G_3=0$ give
\[
X+Y=(2+M)(\nu\Psi+T^3F_3+T^5G_3).
\]
Since $X+Y=2x=-\tfrac32h_*+O(\mu+h_*^2)$, first
$h_*=O(\nu+T^3+\mu)$ and then, by
\eqref{eq:d8-lower-bound-analytic},
\[
h_*=h_\nu\nu+O(T^3+\nu^2+\mu),\qquad
h_\nu=-\tfrac23(2+M)\Psi(x_0).
\]
If $h_\nu=0$, then
$h_*^2=O(T^6+\nu^4+\mu^2)=o(\mu)$, contradicting
\eqref{eq:d8-row0-analytic}.
If $h_\nu\ne0$, then
$h_*^2=h_\nu^2\nu^2+o(\mu+\nu^2)$.
Equation~\eqref{eq:d8-row0-analytic} forces
$\operatorname{ord}\eta=\tfrac12\operatorname{ord}a$ and gives
\begin{equation}\label{eq:d8-first-balance-analytic}
-a_0^2+\tfrac3{16}h_\nu^2\eta_0^4=0,
\end{equation}
where $a_0,\eta_0\ne0$ are the leading coefficients.

The crossed signs in the statement give $(\tau_1-\tau_3)(x_0)=0$.
By \eqref{eq:d8-lower-bound-analytic},
$T^3(\tau_1-\tau_3)=O(T^3(\nu+\mu+|h_*|))$.
Since $\ord\eta=\tfrac12\ord a$, we have
\[
 T^3(\nu+\mu+|h_*|)=o(a^6\eta^2).
\]
Equations~\eqref{eq:d8-beta-analytic} and
\eqref{eq:d8-w-difference-analytic} therefore give the second balance
\[
-4a_0^2+\tfrac{15}{4}h_\nu^2\eta_0^4=0.
\]
Together with \eqref{eq:d8-first-balance-analytic}, this gives
\[
\begin{pmatrix}
-1&3/16\\
-4&15/4
\end{pmatrix}
\begin{pmatrix}a_0^2\\ h_\nu^2\eta_0^4\end{pmatrix}=0,
\qquad
\det\begin{pmatrix}-1&3/16\\-4&15/4\end{pmatrix}=-3.
\]
Thus $a_0^2=0$, contradicting $a_0\ne0$ and excluding the germ.
\end{proof}

}\begin{theorem}\label{thm:d8}
For $m\in\Rp^5$, no curve in $A_m$ carries a Diagram~8 germ, real or complex.
In particular, Theorem~\ref{thm:exclusion}\,(a) holds for $r=8$.
\end{theorem}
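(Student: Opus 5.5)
The plan is to assemble the three preceding lemmas into a case split on the order ratio $p$. Suppose $\gamma$ is a Diagram~8 germ on a curve in $A_m$ with $m\in\Rp^5$. By the symmetry remarks at the start of the section (exchange $z\leftrightarrow w$ together with $1\leftrightarrow3$, $2\leftrightarrow4$), I may assume the numbering of \cite[\S8.8]{AK} in which the lower wing is the circled one.

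Proposition~\ref{prop:d8-cover} then places $\gamma$ in the distance-ratio chart. After rotating so that $z_{12}=1$, it gives $\ord a>0$, $\ord\eta>0$ and $p\in(-4,2)$, and it gives a limit $x_0$ at which all inverse-distance coordinates are finite and nonzero. Proposition~\ref{prop:d8-first} supplies relation~\eqref{eq:d8-40} and identifies $x_0$ explicitly, including the crossed signs $s_{bc}$.

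The range of $p$ is split at $0$, which suffices because $\frac12>0$.

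\textbf{Case $0<p<2$.} Lemma~\ref{lem:d8-order-six} excludes this range directly, through the closure identity of the upper $w$-triangle.

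\textbf{Case $-4<p\le0$.} Here $p<\frac12$, so the last assertion of Lemma~\ref{lem:d8-kappa} applies. It forces $\Xi=0$, hence $m_1=m_2=4m_5$, $m_3=m_4$, $s_{13}=-s_{24}$ and $s_{14}=-s_{23}$. By the mass scaling, this is the normalized situation $(1,1,m,m,\frac14)$ of Lemma~\ref{lem:d8-cancel}, which excludes every germ with these signs.

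The two cases together exhaust $(-4,2)$, so no Diagram~8 germ exists.

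Both lemmas are stated for complex germs, so no positivity of the configuration is used; positivity enters only through the masses. This gives the ``real or complex'' clause, and the case $r=8$ of Theorem~\ref{thm:exclusion}(a) follows because a germ with an unbounded four-product is in particular a germ.

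The only point needing care is that the case split must cover all of $(-4,2)$, including the overlap region $0<p<\frac12$, where both lemmas apply; any assignment of that region works. I also need the symmetry reduction to preserve the hypotheses. The exchange sends relation (39) to (40) and permutes the masses within $\Rp^5$, so this holds. I do not expect a genuine obstacle here, since all the analytic work is contained in the cited lemmas.
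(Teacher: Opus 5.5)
Your proposal is correct and is essentially the paper's own proof. The paper likewise reduces to the numbering of \cite[\S8.8]{AK}, invokes Propositions~\ref{prop:d8-cover} and~\ref{prop:d8-first}, splits at $p=0$ (i.e.\ $r=2$), excludes $0<p<2$ by Lemma~\ref{lem:d8-order-six}, and for $-4<p\le0$ uses Lemma~\ref{lem:d8-kappa} to force $m_1=m_2=4m_5$, $m_3=m_4$, $s_{13}=-s_{24}$, $s_{14}=-s_{23}$, a case that Lemma~\ref{lem:d8-cancel} then excludes.
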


\begin{proof}
By the symmetries at the beginning of this section we may use the numbering of
\cite[\S8.8]{AK}.  Let $\gamma$ be a Diagram~8 germ at $m\in\Rp^5$.  By
Proposition~\ref{prop:d8-cover} it converges in the chart with order ratio
$r\in(0,\infty)$, and by
Proposition~\ref{prop:d8-first} the masses satisfy \eqref{eq:d8-40} and the limit has signs
$s_{bc}$.  The case $r>2$, equivalently $0<p<2$, is excluded by
Lemma~\ref{lem:d8-order-six}.  If $r\le2$, equivalently $-4<p\le0$,
Lemma~\ref{lem:d8-kappa} gives $m_1=m_2=4m_5$, $m_3=m_4$, $s_{13}=-s_{24}$
and $s_{14}=-s_{23}$.  These masses and signs are excluded by
Lemma~\ref{lem:d8-cancel}.  The two ranges exhaust $r>0$, proving the theorem.
\end{proof}

\appendix

\section{Computational statements used in the proof}\label{app:verification}

Table~\ref{tab:verification} lists the verification functions used for the
computer-assisted steps in the proofs of Diagrams~2, 3 and~6.

\begin{table}[ht]
\centering\footnotesize
\begin{tabular}{p{4.8cm}p{8.2cm}}
\hline
Computational step & Verification functions\\
\hline
Lemma~\ref{lem:d2-fibre}, Definition~\ref{def:d2-mass}
 & \raggedright \mbox{\code{verify\_chart\_and\_boundary\_fibre}}, \mbox{\code{verify\_shape\_resultant}}, \mbox{\code{verify\_mass\_resultants}}\tabularnewline
Lemma~\ref{lem:d2-empty}
 & \raggedright \mbox{\code{verify\_L4\_exclusion}}, \mbox{\code{verify\_K\_and\_L5\_exclusions}}\tabularnewline
Lemma~\ref{lem:d2-L5}: jets, minors and boundary-point orbits
 & \raggedright \mbox{\code{verify\_ordinary\_seed}}, \mbox{\code{verify\_fold\_seed}}, \mbox{\code{verify\_equal\_mass\_labels}}\tabularnewline
Lemmas~\ref{lem:d2-rays}, \ref{lem:d2-Lray}
 & \raggedright \mbox{\code{verify\_mass\_rays}}, \mbox{\code{verify\_ray\_responses}}\tabularnewline
Lemma~\ref{lem:d2-K}
 & \raggedright \mbox{\code{verify\_K1\_branches}}, \mbox{\code{verify\_K4\_arcs}}, \mbox{\code{verify\_K5\_mass\_ray}}, \mbox{\code{verify\_all\_K5\_labels}}\tabularnewline
Lemmas~\ref{lem:d2-P}, \ref{lem:d2-fold}
 & \raggedright \mbox{\code{verify\_S5\_response}}, \mbox{\code{verify\_regular\_X}}, \mbox{\code{verify\_regular\_Euler}}, \mbox{\code{verify\_regular\_Lagrange}}, \mbox{\code{verify\_general\_fold}}\tabularnewline
\hline
Lemmas~\ref{lem:d3-order3}, \ref{lem:d3-order4}
 & \raggedright \mbox{\code{verify\_order\_three}}, \mbox{\code{verify\_order\_four}}\tabularnewline
Lemma~\ref{lem:d3-order5}: jet and product identities
 & \raggedright \mbox{\code{verify\_fifth\_order\_jet}}, \mbox{\code{verify\_nine\_root\_norm}}\tabularnewline
Lemma~\ref{lem:d3-order6}: $144$ excluded boundary points
 & \raggedright \mbox{\code{verify\_modular\_order\_six}}\tabularnewline
\hline
Diagram~6 atlas and orbit coverage
 & \raggedright \mbox{\code{verify\_atlas\_substitution\_and\_counts}}, \mbox{\code{verify\_atlas\_inverse\_elimination}}, \mbox{\code{verify\_symmetry\_orbits}}\tabularnewline
Lemma~\ref{d6-lem-corank}: rank and leading jets
 & \raggedright \mbox{\code{verify\_selected\_rank}}, \mbox{\code{verify\_leading\_jets}}, \mbox{\code{verify\_mixed\_norm}}, \mbox{\code{verify\_same\_shape\_norm}}, \mbox{\code{verify\_opposite\_norms}}\tabularnewline
\hline
\end{tabular}
\caption{Functions used for computational verification.}
\label{tab:verification}
\end{table}

\subsection*{Verification code}
\label{app:public-verification-source}
The source code and instructions for running these functions are available at
\url{https://github.com/hitchenzhe/smale-n5-paper}.

\section*{Declaration of AI use}
This manuscript was prepared with assistance from ChatGPT (OpenAI, GPT-6),
including computational verification and the development of Lean~4 proofs.
We provide a Lean~4 formalization of key algebraic and analytic components
of the argument, comprising 20 modules and 121 machine-checked auxiliary
theorems. The formalization is publicly available at
\url{https://github.com/ZhichengTong/smale5-lean}.
The second author has completed a manual verification of the manuscript.

\section*{Acknowledgements}
Z. Tong was supported by the National Natural Science Foundation of China
(Grant No. 12601347) and the China Postdoctoral Science Foundation
(Grant No. 2025M783102).

The initial version of this paper was submitted to arXiv on October 5, 2026 (submission ID: 8179056) by the current second author on behalf of the first author. Subsequently, the manuscript was resubmitted by the first author on October 6, 2026 (submission ID: 8186741), but has remained on hold. Shortly thereafter, \cite{FiveT} (arXiv:2610.13100) appeared reporting identical results. The present version (arXiv:2610.16107) was submitted by the second author. The authors thank T. S. Tomar for a helpful comment on the treatment of Diagram~9.


\begin{thebibliography}{FGMS26}

\bibitem[ACS12]{ACS12}
A. Albouy, H. E. Cabral, A. A. Santos,
Some problems on the classical $n$-body problem.
Celestial Mech. Dynam. Astronom. 113 (2012), no.~4, pp.~369--375.
\url{https://doi.org/10.1007/s10569-012-9431-1}

\bibitem[AFS08]{AFS08}
A. Albouy, Y. Fu, S. Sun,
Symmetry of planar four-body convex central configurations.
Proc. R. Soc. A 464 (2008), pp.~1355--1365.
\url{https://doi.org/10.1098/rspa.2007.0320}

\bibitem[AK12]{AK}
A. Albouy, V. Kaloshin,
Finiteness of central configurations of five bodies in the
plane.
Ann. of Math. (2) 176 (2012), no.~1, pp.~535--588.
\url{https://doi.org/10.4007/annals.2012.176.1.10}

\bibitem[AS25]{AS25}
A. Albouy, J. Sun,
On the eigenvalues of a central configuration.
Preprint, 2025.
\url{https://doi.org/10.48550/arXiv.2512.00661}

\bibitem[BCR98]{BCR}
J. Bochnak, M. Coste, M.-F. Roy,
Real Algebraic Geometry.
Ergeb. Math. Grenzgeb. (3), vol.~36, Springer, 1998.
\url{https://doi.org/10.1007/978-3-662-03718-8}

\bibitem[CC24]{CC1}
K.-M. Chang, K.-C. Chen,
Toward finiteness of central configurations for the planar
six-body problem by symbolic computations. (I) Determine diagrams and orders.
J. Symbolic Comput. 123 (2024), Article~102277.
\url{https://doi.org/10.1016/j.jsc.2023.102277}

\bibitem[CC25]{CC2}
K.-M. Chang, K.-C. Chen,
Toward finiteness of central configurations for the planar
six-body problem by symbolic computations. (II) Determine mass relations.
SIAM J. Appl. Dyn. Syst. 24 (2025), no.~3, pp.~2369--2404.
\url{https://doi.org/10.1137/24M1716070}

\bibitem[CC26]{CCRh26}
K.-M. Chang, K.-C. Chen,
Rhomboidal central configurations of the five-body problem.
Regul. Chaotic Dyn. 31 (2026), no.~3, pp.~468--483.
\url{https://doi.org/10.1134/S1560354725560011}

\bibitem[CH12]{CH12}
K.-C. Chen, J.-S. Hsiao,
Convex central configurations of the $n$-body problem which are not strictly convex.
J. Dynam. Differential Equations 24 (2012), pp.~119--128.
\url{https://doi.org/10.1007/s10884-011-9233-2}

\bibitem[CH18]{CH18}
K.-C. Chen, J.-S. Hsiao,
Strictly convex central configurations of the planar five-body problem.
Trans. Amer. Math. Soc. 370 (2018), pp.~1907--1924.
\url{https://doi.org/10.1090/tran/7010}

\bibitem[CT26]{CT26}
Z. Chen, Z. Tong, 
Finiteness of planar central configurations of six bodies for all positive masses: Part I, exceptional-mass first limits. Preprint, 2026. \url{https://doi.org/10.48550/arXiv.2610.23329}




\bibitem[DH23]{DH}
Y. Deng, M. Hampton,
Equilateral chains and cyclic central configurations of the
planar $5$-body problem.
J. Nonlinear Sci. 33 (2023), Article~4.
\url{https://doi.org/10.1007/s00332-022-09864-z}

\bibitem[FGMS26]{FGMS26}
A. C. Fernandes, B. A. Garcia, L. F. Mello, L. R. dos Santos,
Some convex central configurations in the planar five-body problem.
Z. Angew. Math. Phys. 77 (2026), Article~131.
\url{https://doi.org/10.1007/s00033-026-02777-x}

\bibitem[Ham05]{Ham05}
M. Hampton,
Stacked central configurations: new examples in the planar five-body problem.
Nonlinearity 18 (2005), no.~5, pp.~2299--2304.
\url{https://doi.org/10.1088/0951-7715/18/5/021}

\bibitem[Ham24]{Hamp24}
M. Hampton,
``Twist vectors'' for central configuration equations.
Celestial Mech. Dynam. Astronom. 136 (2024), Article~50.
\url{https://doi.org/10.1007/s10569-024-10222-2}

\bibitem[HJ11]{HJ}
M. Hampton, A. N. Jensen,
Finiteness of spatial central configurations in the
five-body problem.
Celestial Mech. Dynam. Astronom. 109 (2011), pp.~321--332.
\url{https://doi.org/10.1007/s10569-010-9328-9}

\bibitem[HJ15]{HJ15}
M. Hampton, A. N. Jensen,
Finiteness of relative equilibria in the planar generalized $N$-body
problem with fixed subconfigurations.
J. Geom. Mech. 7 (2015), no.~1, pp.~35--42.
\url{https://doi.org/10.3934/jgm.2015.7.35}

\bibitem[HM06]{HM}
M. Hampton, R. Moeckel,
Finiteness of relative equilibria of the four-body problem.
Invent. Math. 163 (2006), pp.~289--312.
\url{https://doi.org/10.1007/s00222-005-0461-0}

\bibitem[HLO20]{HLO20}
X. Hu, Y. Long, Y. Ou,
Linear stability of the elliptic relative equilibrium
with $(1+n)$-gon central configurations in planar $n$-body problem.
Nonlinearity 33 (2020), no.~3, pp.~1016--1045.
\url{https://doi.org/10.1088/1361-6544/ab5927}

\bibitem[HOS25]{HOS25}
X. Hu, Y. Ou, J. Sun,
Quantitative linear stability analysis of elliptic
relative equilibria in the planar $N$-body problem.
Preprint, 2025.
\url{https://doi.org/10.48550/arXiv.2509.09809}

\bibitem[HOT23]{HOT23}
X. Hu, Y. Ou, X. Tang,
Linear stability of an elliptic relative equilibrium
in the spatial $n$-body problem via index theory.
Regul. Chaotic Dyn. 28 (2023), nos.~4--5, pp.~731--755.
\url{https://doi.org/10.1134/S1560354723040135}

\bibitem[HOY21]{HOY21}
X. Hu, Y. Ou, G. Yu,
An index theory for collision, parabolic and hyperbolic
solutions of the Newtonian $n$-body problem.
Arch. Ration. Mech. Anal. 240 (2021), pp.~565--603.
\url{https://doi.org/10.1007/s00205-021-01619-6}

\bibitem[JL25]{JL}
A. N. Jensen, A. Leykin,
Smale's 6th problem for generic masses.
Journal of Experimental Mathematics 1 (2025), no.~2, pp.~306--314.
\url{https://doi.org/10.56994/JXM.001.002.006}

\bibitem[LS09]{LS}
T.-L. Lee, M. Santoprete,
Central configurations of the five-body problem with equal
masses.
Celestial Mech. Dynam. Astronom. 104 (2009), pp.~369--381.
\url{https://doi.org/10.1007/s10569-009-9219-0}

\bibitem[LL26a]{LL26planar}
K. Liu, Y. Liu,
Finiteness of planar central configurations of five bodies
with positive masses.
Preprint, 2026.
\url{https://doi.org/10.48550/arXiv.2610.28168}

\bibitem[LL26b]{LL26dziobek}
K. Liu, Y. Liu,
Finiteness of Dziobek central configurations in the
Newtonian five-body problem.
Preprint, 2026.
\url{https://doi.org/10.48550/arXiv.2610.32751}

\bibitem[LX26]{LX26}
Y. Liu, Z. Xie,
Symmetric central configurations in the concave $4$-body problem with two pairs of equal masses.
Discrete Contin. Dyn. Syst. 55 (2026), pp.~243--262.
\url{https://doi.org/10.3934/dcds.2026085}

\bibitem[MZ19]{MZ19}
M. Moczurad, P. Zgliczy\'nski,
Central configurations in planar $n$-body problem with
equal masses for $n=5,6,7$.
Celestial Mech. Dynam. Astronom. 131 (2019), Article~46.
\url{https://doi.org/10.1007/s10569-019-9920-6}

\bibitem[MZ26]{MZ26}
M. Moczurad, P. Zgliczy\'nski,
Central configurations with unequal masses: finiteness
in several exceptional cases of five bodies.
Preprint, 2026.
\url{https://doi.org/10.48550/arXiv.2601.01165}

\bibitem[Moe01]{Moe01}
R. Moeckel,
Generic finiteness for Dziobek configurations.
Trans. Amer. Math. Soc. 353 (2001), no.~11, pp.~4673--4686.
\url{https://doi.org/10.1090/S0002-9947-01-02828-8}

\bibitem[Mou10]{Moulton}
F. R. Moulton,
The straight line solutions of the problem of $n$ bodies.
Ann. of Math. (2) 12 (1910), pp.~1--17.
\url{https://doi.org/10.2307/2007159}

\bibitem[NWW25]{NWW25}
J. Natrup, Q. Wang, Y. Wang,
Finiteness of non-degenerate central configurations of the planar $n$-body problem with a homogeneous potential.
Preprint, 2025.
\url{https://doi.org/10.48550/arXiv.2502.20228}

\bibitem[OP25]{OP25}
Y. Ou, A. Portaluri,
Morse index for homothetic motions in the gravitational
$n$-body problem.
J. Fixed Point Theory Appl. 27 (2025), Article~10.
\url{https://doi.org/10.1007/s11784-025-01161-y}

\bibitem[OS23]{OS23}
Y. Ou, J. Sun,
Linear stability of the elliptic relative equilibrium with
$(1+7)$-gon central configuration in planar $N$-body problem.
Discrete Contin. Dyn. Syst. 43 (2023), no.~2, pp.~559--567.
\url{https://doi.org/10.3934/dcds.2022162}

\bibitem[OW25]{OW25}
Y. Ou, Y. Wang,
Spectral instability of the regular $n$-gon elliptic
relative equilibrium in the planar $n$-body problem.
Preprint, 2025.
\url{https://doi.org/10.48550/arXiv.2510.26211}

\bibitem[Rob99]{Roberts}
G. E. Roberts,
A continuum of relative equilibria in the five-body problem.
Phys. D 127 (1999), pp.~141--145.
\url{https://doi.org/10.1016/S0167-2789(98)00315-7}

\bibitem[Rob25]{RobK25}
G. E. Roberts,
On kite central configurations.
Nonlinearity 38 (2025), no.~7, Article~075001.
\url{https://doi.org/10.1088/1361-6544/add834}

\bibitem[San21a]{San21a}
M. Santoprete,
On the uniqueness of co-circular four body central configurations.
Arch. Ration. Mech. Anal. 240 (2021), pp.~971--985.
\url{https://doi.org/10.1007/s00205-021-01626-7}

\bibitem[San21b]{San21b}
M. Santoprete,
On the uniqueness of trapezoidal four-body central configurations.
Nonlinearity 34 (2021), pp.~424--437.
\url{https://doi.org/10.1088/1361-6544/abbe61}

\bibitem[Shu71]{Shub71}
M. Shub,
Appendix to Smale's paper: Diagonals and relative equilibria.
In: N. H. Kuiper (ed.), Manifolds---Amsterdam 1970,
Lecture Notes in Math., vol.~197, Springer, 1971, pp.~199--201.
\url{https://doi.org/10.1007/BFb0068619}

\bibitem[Sma98]{Smale}
S. Smale,
Mathematical problems for the next century.
Math. Intelligencer 20 (1998), no.~2, pp.~7--15.
\url{https://doi.org/10.1007/BF03025291}

\bibitem[SXY23]{SXY23}
S. Sun, Z. Xie, P. You,
On the uniqueness of convex central configurations in the planar $4$-body problem.
Regul. Chaotic Dyn. 28 (2023), nos.~4--5, pp.~512--532.
\url{https://doi.org/10.1134/S1560354723520076}

\bibitem[SXY25]{SXY25}
S. Sun, Z. Xie, P. You,
Degeneracy of planar central configurations in the $N$-body problem.
Preprint, 2025.
\url{https://doi.org/10.48550/arXiv.2510.25649}

\bibitem[TZ24]{TZ24}
Z. Tang, S. Zhu,
Perturbing masses: A study of centered co-circular central configurations in power-law $n$-body problems.
Phys. D 461 (2024), Article~134115.
\url{https://doi.org/10.1016/j.physd.2024.134115}

\bibitem[Tom26]{FiveT}
T. S. Tomar,
Smale's sixth problem for five bodies in the plane, with a proof
formalized in Lean.
Preprint, 2026.
\url{https://doi.org/10.48550/arXiv.2610.13100}

\bibitem[Win41]{Wintner}
A. Wintner,
The Analytical Foundations of Celestial Mechanics.
Princeton Math. Ser., vol.~5, Princeton University Press, 1941.

\bibitem[Xia91]{Xia91}
Z. Xia,
Central configurations with many small masses.
J. Differential Equations 91 (1991), no.~1, pp.~168--179.
\url{https://doi.org/10.1016/0022-0396(91)90137-X}

\bibitem[YZ26]{YZ}
X. Yu, S. Zhu,
On the finiteness of four-body central configurations.
Ann. Sc. Norm. Super. Pisa Cl. Sci. (5), to appear.
\url{https://doi.org/10.2422/2036-2145.202501_008}
\end{thebibliography}
\end{document}